\setlist[enumerate,1]{leftmargin=1cm}
\theoremstyle{plain}
\newtheorem{theorem}{Theorem}[section]
\newtheorem{proposition}[theorem]{Proposition}
\newtheorem{corollary}[theorem]{Corollary}
\newtheorem{lemma}[theorem]{Lemma}
\newtheorem{conjecture}{Conjecture}
\theoremstyle{definition}
\newtheorem{definition}[theorem]{Definition}
\newtheorem{notation}[theorem]{Definition}
\theoremstyle{remark}
\newtheorem{remark}[theorem]{Remark}
\newcommand{\gen}{\mathcal{G}}
\newcommand{\abs}[1]{\left\lvert #1 \right\rvert}
\newcommand{\infsimp}{{\nabla}_{\infty}}
\newcommand{\infsimpcl}{\overline{\nabla}_{\infty}}
\newcommand{\ualgebra}{\mathfrak{F}}
\newcommand{\petrovgen}{\mathcal{B}}
\newcommand{\ltwo}{\mathbf{L}^2}
\newcommand{\difftwo}{\widetilde{\Delta}}
\newcommand{\iprod}[1]{\left\langle #1 \right\rangle}
\newcommand{\norm}[1]{\left\lVert #1 \right\rVert}
\newcommand{\ranked}{\textsc{ranked}}
\newcommand{\PV}{\mathbf{P}}
\newcommand{\Prm}{\mathrm{P}}
\newcommand{\procvgen}{\mathcal{A}}
 \DeclareRobustCommand{\checkarg}{\@ifnextchar[{\@witharg}{}}
 \DeclareRobustCommand{\@witharg}[1][]{\ensuremath{\left(#1\right)}}
 \DeclareRobustCommand{\scaleGen}[1]{\@ifnextchar[{\@scalewithargs{#1}}{\odot^{}_{#1}}}
 \def\@scalewithargs#1[#2][#3]{#2 \odot^{}_{#1} #3}
\def\IPspace{\mathcal{I}}
\def\dI{d_{\IPspace}}
\def\HIPspace{\IPspace_H}
\def\dH{d_H}
\def\dJ{d_{\cJ}}
\def\cJ{\mathcal{J}}
\def\dJ{d_{\cJ}}
\def\fs{\mathbf{s}}
\def\Exc{\mathcal{E}}
\def\SExc{\Sigma(\Exc)}
\def\cExc{\SExc}
\def\mBxc{\nu_{\textnormal{BES}}}	
\def\cC{\mathcal{C}}	
\def\ScS{\Sigma(\cC)}
\def\cCRI{\cC([0,\infty),\IPspace)}
\def\cCRIi{\cC([0,\infty),\IPspace_1)}
\def\cD{\mathcal{D}}
\def\ScD{\Sigma(\cD)}
\def\DS{\cD_{\textnormal{stb}}}
\def\SDS{\Sigma(\DS)}
\def\DSxc{\cD_{\textnormal{exc}}}
\def\SDSxc{\Sigma(\DSxc)}
\def\ScDSxc{\SDSxc}
\def\mSxc{\nu_{\textnormal{stb}}}	
\def\H{\mathcal{N}^{\textnormal{\,sp}}}
\def\SH{\Sigma(\H)}
\def\cH{\Sigma(\H)}
\def\Hfin{\H_{\textnormal{fin}}}
\def\SHfin{\Sigma(\Hfin)}
\def\cHfin{\Sigma(\Hfin)}
\def\Hxc#1{\H_{#1\textnormal{cld}}}
\def\SHxc#1{\Sigma\left(\Hxc{#1}\right)}
\def\cHxc#1{\SHxc{#1}}
\def\mClade{\nu_{\textnormal{cld}}}	
\def\Hs{\mathcal{N}^{\textnormal{\,sp},*}}
\def\Hfins{\Hs_{\textnormal{fin}}}
\def\SHfins{\Sigma(\Hfins)}
\def\cN{\mathcal{N}}
\def\cNRHf{\mathcal{N}\big([0,\infty)\times\Hfin\big)}
\def\cNRE{\mathcal{N}\big([0,\infty)\times\Exc\big)}
\def\ScNRE{\Sigma\big(\cNRE\big)}
\def\cNS{\mathcal{N}(\cS)}
\def\cS{\mathcal{S}}
\def\ScS{\Sigma(\mathcal{S})}
\def\cT{\mathcal{T}}
\def\ScT{\Sigma(\mathcal{T})}
\def\len{\textnormal{len}}			
\def\life{\zeta}					
\def\dis{\textnormal{dis}}			
\def\IPmag#1{\left\|\vphantom{I}#1\right\|}		
\def\skewer{\textsc{skewer}}		
\def\skewerP{\widebar{\skewer}}		
\def\cutoffL#1#2{\textsc{cutoff}^{\leq #1}_{#2}}
\def\cutoffG#1#2{\textsc{cutoff}^{\geq #1}_{#2}}
\def\cutoffLB#1#2{\textsc{cutoff}^{\leq #1}_{#2}}
\def\cutoffGB#1#2{\textsc{cutoff}^{\geq #1}_{#2}}
\def\Dirac#1{\delta\left( #1 \right)}
\def\DiracBig#1{\delta\big( #1 \big)}	
\def\reverse{\mathcal{R}}			
\def\reverseexc{\reverse_{\textnormal{BES}}}		
\def\reverseincr{\reverse_{\textnormal{stb}}}		
\def\reverseH{\reverse_{\textnormal{cld}}}			
\def\scale{\scaleGen{}}	
\def\scaleB{\scaleGen{\textnormal{BES}}}		
\def\scaleS{\scaleGen{\textnormal{stb}}}		
\def\scaleH{\scaleGen{\textnormal{cld}}}		
\def\scaleI{\scaleGen{\textnormal{IP}}}	
\def\ShiftRestrict#1#2{#1\big|^{\from}_{#2}} 
\def\shiftrestrict#1#2{#1|^{\from}_{#2}}
\def\Restrict#1#2{#1\big|_{#2}}
\def\restrict#1#2{#1|_{#2}}
\def\Concat{ \mathop{ \raisebox{-2pt}{\Huge$\star$} } }
\def\ConcatIL{ \mbox{\huge $\star$} }
\def\concat{\star}
\def\bN{\mathbf{N}}			
\def\bF{\mathbf{F}}			
\def\bG{\mathbf{G}}			
\def\bX{\mathbf{X}}			
\newcommand{\IPLT}{\mathscr{D}}
\def\bM{\mathbf{M}}			
\def\bff{\mathbf{f}}		
\newcommand{\td}[1]{\widetilde{#1}}
\def\tdN{\widetilde{\mathbf{N}}}
\def\tdF{\widetilde{\mathbf{F}}}
\def\tdX{\widetilde{\bX}}
\def\tdl{\widetilde{\ell}}
\newcommand{\wh}[1]{\widehat{#1}}
\def\whN{\widehat{\mathbf{N}}}
\def\whF{\widehat{\mathbf{F}}}
\def\whX{\widehat{\bX}}
\newcommand{\ol}[1]{\widebar{#1}}
\def\olN{\widebar{\mathbf{N}}}
\def\olF{\widebar{\mathbf{F}}}
\def\olX{\widebar{\bX}}
\def\oll{\widebar{\ell}}
\def\BR{\mathbb{R}}				
\def\BN{\mathbb{N}}				
\def\BQ{\mathbb{Q}}				
\def\Leb{\textnormal{Leb}}		
\def\e{\epsilon}
\def\d{\delta}
\def\to{\rightarrow}
\def\downto{\downarrow}
\def\upto{\uparrow}
\def\from{\leftarrow}
\def\cf{\mathbf{1}}				
\def\Pr{\mathbf{P}}				
\def\BPr{\mathbb{P}}			
\def\bQ{\mathbf{Q}}				
\def\BQ{\mathbb{Q}}				
\def\EV{\mathbf{E}}				
\def\cF{\mathcal{F}}			
\def\cG{\mathcal{G}}			
\def\cH{\mathcal{H}}			
\def\indep{ \mathop{\perp\hspace{-5.5pt}\perp} }
\def\cA{\mathcal{A}}	
\def\distribfont#1{\texttt{\upshape #1}}
\def\ExpDist{\distribfont{Exponential}\checkarg}
\def\GammaDist{\distribfont{Gamma}\checkarg}
\def\InvGammaDist{\distribfont{InverseGamma}\checkarg}
\def\GeomDist{\distribfont{Geometric}\checkarg}
\def\BetaDist{\distribfont{Beta}\checkarg}
\def\PoiDir{\distribfont{PD}\checkarg}
\def\PRM{\distribfont{PRM}\checkarg}
\def\Stable{\distribfont{Stable}\checkarg}
\def\BESQ{\distribfont{BESQ}\checkarg}
\def\PDIP{\distribfont{PDIP}\checkarg}
\def\EKP{\distribfont{EKP}\checkarg}
\def\WF{\distribfont{WF}\checkarg}
\DeclareRobustCommand{\CRP}{\texttt{CRP}\checkarg}
\def\CRPAT{\CRP[\alpha,\theta]}
\def\cadlag{c\`adl\`ag}
\newcommand{\cev}[1]{\accentset{\leftharpoonup}{#1}}
\newcommand{\vecc}[1]{\accentset{\rightharpoonup}{#1}}
\let\save@mathaccent\mathaccent
\newcommand*\if@single[3]{%
  \setbox0\hbox{${\mathaccent"0362{#1}}^H$}%
  \setbox2\hbox{${\mathaccent"0362{\kern0pt#1}}^H$}%
  \ifdim\ht0=\ht2 #3\else #2\fi
  }
\newcommand*\rel@kern[1]{\kern#1\dimexpr\macc@kerna}
\newcommand{\widebar}{}
\DeclareRobustCommand*\widebar[1]{\@ifnextchar^{\wide@bar{#1}{0}}{\wide@bar{#1}{1}}}
\newcommand*\wide@bar[2]{\if@single{#1}{\wide@bar@{#1}{#2}{1}}{\wide@bar@{#1}{#2}{2}}}
\newcommand*\wide@bar@[3]{%
  \begingroup
  \def\mathaccent##1##2{%
    \let\mathaccent\save@mathaccent
    \if#32 \let\macc@nucleus\first@char \fi
    \setbox\z@\hbox{$\macc@style{\macc@nucleus}_{}$}%
    \setbox\tw@\hbox{$\macc@style{\macc@nucleus}{}_{}$}%
    \dimen@\wd\tw@
    \advance\dimen@-\wd\z@
    \divide\dimen@ 3
    \@tempdima\wd\tw@
    \advance\@tempdima-\scriptspace
    \divide\@tempdima 10
    \advance\dimen@-\@tempdima
    \ifdim\dimen@>\z@ \dimen@0pt\fi
    \rel@kern{0.6}\kern-\dimen@
    \if#31
      \overline{\rel@kern{-0.6}\kern\dimen@\macc@nucleus\rel@kern{0.4}\kern\dimen@}%
      \advance\dimen@0.4\dimexpr\macc@kerna
      \let\final@kern#2%
      \ifdim\dimen@<\z@ \let\final@kern1\fi
      \if\final@kern1 \kern-\dimen@\fi
    \else
      \overline{\rel@kern{-0.6}\kern\dimen@#1}%
    \fi
  }%
  \macc@depth\@ne
  \let\math@bgroup\@empty \let\math@egroup\macc@set@skewchar
  \mathsurround\z@ \frozen@everymath{\mathgroup\macc@group\relax}%
  \macc@set@skewchar\relax
  \let\mathaccentV\macc@nested@a
  \if#31
    \macc@nested@a\relax111{#1}%
  \else
    \def\gobble@till@marker##1\endmarker{}%
    \futurelet\first@char\gobble@till@marker#1\endmarker
    \ifcat\noexpand\first@char A\else
      \def\first@char{}%
    \fi
    \macc@nested@a\relax111{\first@char}%
  \fi
  \endgroup
}
\numberwithin{equation}{section}
\numberwithin{figure}{section}
\numberwithin{table}{section}
\begin{document}
\ \vspace{-22pt}

\title[Diffusions on a space of interval partitions]
{Diffusions on a space of interval partitions\\ with Poisson-Dirichlet stationary distributions}
%
%
%
\author[N.~Forman]{Noah Forman$^1$}
\address{$^1$ Department of Statistics\\ University of Oxford\\ 24-29 St Giles'\\ Oxford, OX1 3LB\\ UK}
\email{noah.forman@gmail.com}
\author[S.~Pal]{Soumik Pal$^2$}
\address{$^2$ Department of Mathematics\\ University of Washington\\ Seattle, WA 98195\\ USA}
\email{soumikpal@gmail.com}
\author[D.~Rizzolo]{Douglas Rizzolo$^3$}
\address{$^3$ Department of Mathematics\\ University of Delaware\\ Newark, DE 19716\\ USA}
\email{drizzolo@udel.edu}
\author[M.~Winkel]{Matthias Winkel$^1$}
\email{winkel@stats.ox.ac.uk}


\keywords{Interval partition, Chinese restaurant process, Aldous diffusion, Poisson-Dirichlet distribution, infinitely-many-neutral-alleles model, excursion theory}
\thanks{This research is partially supported by NSF grants {DMS-1204840, DMS-1308340, DMS-1612483}, UW-RRF grant A112251, and EPSRC grant EP/K029797/1}
\subjclass[2010]{Primary 60J25, 60J60, 60J80; Secondary 60G18, 60G52, 60G55}
\date{\today}
\begin{abstract}
 We construct a pair of related diffusions on a space of interval partitions of the unit interval $[0,1]$ that are stationary with the Poisson-Dirichlet laws with parameters $\left(\frac12,0\right)$ and $\left(\frac12,\frac12\right)$ respectively. These are two particular cases of a general construction of such processes obtained by decorating the jumps of a spectrally positive L\'evy process with independent squared Bessel excursions. 
 The processes of ranked interval lengths of our partitions are members of a two parameter family of diffusions introduced by Ethier and Kurtz (1981) and Petrov (2009). The latter diffusions are continuum limits of up-down Markov chains on Chinese restaurant processes. 
 Our construction 
  is also a step towards describing a diffusion on the space of real trees whose existence has been conjectured by Aldous. 
\end{abstract}

\maketitle

\ \vspace{-22pt}

\section{Introduction}
\label{sec:intro}

We define interval partitions, following Aldous \cite[Section 17]{AldousExch} and Pitman \cite[Chapter 4]{CSP}.

\begin{definition}\label{def:IP_1}
 An \emph{interval partition} is a set $\beta$ of disjoint, open subintervals of some finite real interval $[0,M]$, that cover $[0,M]$ up to a Lebesgue-null set. We write $\IPmag{\beta}$ to denote $M$. We refer to the elements of an interval partition as its \emph{blocks}. The Lebesgue measure of a block is called its \emph{width} or \emph{mass}.
\end{definition}

An interval partition represents a totally ordered and summable collection of real numbers, for example, the interval partition generated naturally by the range of a subordinator (see Pitman and Yor \cite{PitmYor92}), or the partition of $[0,1]$ given by the complement of the zero-set of a Brownian bridge (Gnedin and Pitman \cite[Example 3]{GnedPitm05}). They also arise from the so-called stick-breaking schemes; see \cite[Example 2]{GnedPitm05}. More generally, interval partitions occur as limits of \textit{compositions} of natural numbers $n$, i.e.\ sequences of positive integers with sum $n$. Interval partitions serve as extremal points in paintbox representations of composition structures on $\mathbb{N}$; see Gnedin \cite{Gnedin97}. 

In this paper we construct two related diffusion processes on a space of interval partitions. These can be thought of as continuum analogues of natural up-down Markov chains on discrete partitions and admit as stationary laws certain members of a canonical family of probability distributions called the Poisson-Dirichlet interval partitions. On partitions with blocks ordered by decreasing mass, related diffusions 
have been introduced by Ethier and Kurtz in \cite{EthiKurt81} and, more recently, by Petrov in \cite{Petrov09}. Other known processes of interval partitions such as Bertoin's \cite{Bertoin02} are neither path-continuous nor stationary.

The construction requires two ingredients: (i) a spectrally positive L\'evy process that we call a \textit{scaffolding}, and (ii) a family of independent squared Bessel (\BESQ) excursions, called \textit{spindles}, one for each of the countably many jumps of the L\'evy process. For each jump of the scaffolding, the corresponding excursion has a length given by the height of that jump. This allows us to imagine the spindles decorating the jumps. See Figure \ref{fig:skewer_1}, where we consider a scaffolding of finite variation and the spindles are represented by the laterally symmetric spindle-like shapes attached to the jumps. 

The diffusion process at time $y$ is obtained as the output of a \emph{skewer} map at level $y$, as in Figure \ref{fig:skewer_1}. Let us give an informal description of this map. As we move from left to right along the horizontal dotted line in Figure \ref{fig:skewer_1}, we encounter a sequence of spindles. Consider the widths of these spindles when intersected by this line, arrange them sequentially on the positive half-line, and slide them (as if on a skewer) towards the origin to remove gaps between them. The collection of the intervals of these widths now produce an interval partition. As $y$ varies, we get a continuous process of interval partitions, which is our primary interest.    

\begin{figure}[t]
 \centering
 \input{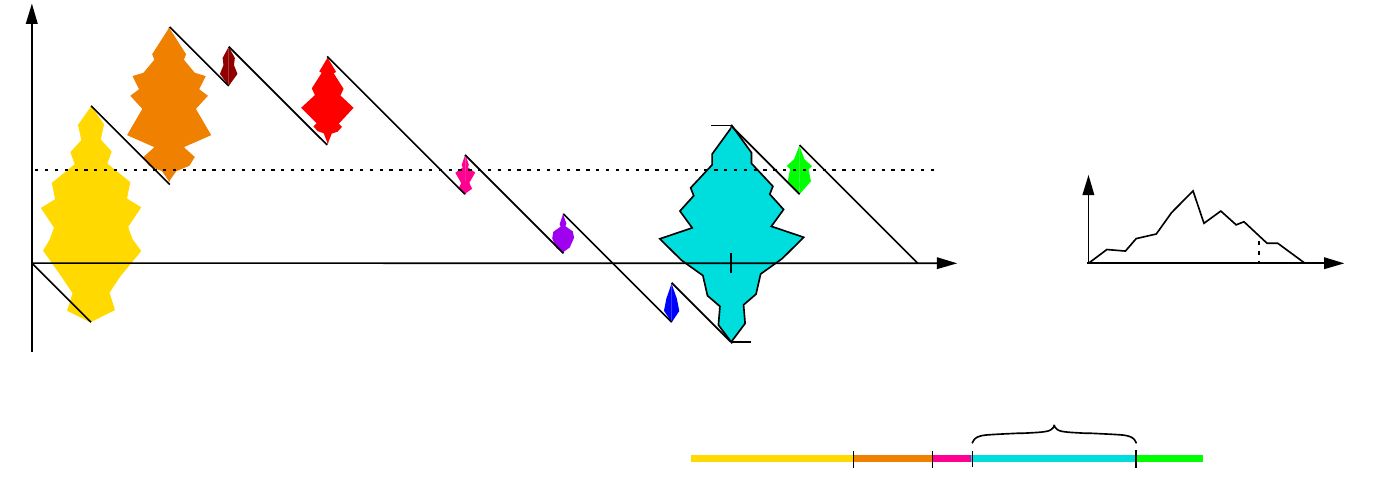_t}
 \caption{Left: The slanted black lines comprise the graph of the scaffolding $X$. Shaded blobs decorating jumps describe the corresponding spindles: points $(t_j,f_j)$ of $N$. Right: Graph of one spindle. Bottom: A skewer, with blocks shaded to correspond to spindles; not drawn to scale.\label{fig:skewer_1}}
\end{figure}

Let us formulate the above ideas in the language of point processes that will be used throughout the rest of the paper. 
Recall that a continuous (positive) \textit{excursion} is a continuous function 
$f\colon \BR \to [0,\infty)$ with the property that, for some $z>0$, we have $f(x) > 0$ if and only if $x\in (0,z)$. That is, the function escapes up from zero at time zero and is killed upon its first return. We write $\zeta(f) = z$; this is the \emph{lifetime} of the excursion. Let $\Exc$ denote a suitable space containing continuous excursions.

For $n \in \BN$, take $0 \leq t_1 < \cdots < t_n \leq T$ and let $f_1,\ldots,f_n$ denote continuous excursions. We represent this collection of pairs $(t_j,f_j)$ in a counting measure $N = \sum_{j=1}^n \Dirac{t_j,f_j}$. Here, $\Dirac{t,f}$ denotes a Dirac point mass at $(t,f)\in [0,\infty)\times\Exc$. For some constant $c_0>0$ we define
\begin{equation}
 X(t) := -c_0t+\int_{[0,t]\times \Exc}\zeta(f)dN(u,f)= -c_0 t + \sum_{j=1}^n \zeta(f_j)\cf\left\{ 0\le t_j\le t \right\} \qquad \text{for }t\in [0,T].\label{eq:discrete_JCCP_eg}
\end{equation}
When $t_j$s arrive at rate 1 and $f_j$s are i.i.d.\ $\nu$, then $X$ is a spectrally positive L\'evy process and $N$ is a Poisson random measure with intensity $\Leb\otimes\nu$, both stopped at $T$. Here, $\Leb$ denotes Lebesgue measure.

\begin{definition}\label{def:skewer}
 For $y\in\BR$, $t\in [0,T]$, the \emph{aggregate mass} in $(N,X)$ at level $y$, up to time $t$ is
 \begin{equation}
  M_{N,X}^y(t) := \int_{[0,t]\times \Exc}f(y - X(s-))dN(s,f).\label{eq:agg_mass_from_spindles}
 \end{equation}
 The \emph{skewer} of $(N,X)$ at level $y$ is
 \begin{equation}
  \skewer(y,N,X) := \left\{\left(M^y_{N,X}(t-),M^y_{N,X}(t)\right)\!\colon t\in [0,T],\ M^y_{N,X}(t-) < M^y_{N,X}(t)\right\}.\label{eq:skewer_def}
 \end{equation}
 The \emph{skewer process} associated with $(N,X)$ is $\skewerP(N,X) := \big( \skewer(y,N,X),\ y\geq 0\big)$.
\end{definition}

Figure \ref{fig:skewer_1} does not capture the level of complexity we require for our results. Specifically, our spindles will be sampled from the $\sigma$-finite excursion measure of the squared Bessel process of dimension $-1$, denoted by $\mBxc$. 
Therefore, the jump distribution of the scaffolding is not a probability distribution. Since lengths of excursions arising from $\mBxc$ are not summable, we will replace the definition of $X$ in \eqref{eq:discrete_JCCP_eg} with a compensated limit. The resulting scaffolding will be 
a spectrally positive \Stable[\frac32] L\'evy process. 

\begin{theorem}\label{thm:diffusion_0}
 Let $\bX$ denote a spectrally positive \Stable[\frac32] process, stopped at a level 0 inverse local time $T$. Then there exists a correspondingly stopped Poisson random measure $\bN$ with intensity $\Leb\otimes\mBxc$ such that 
  $\skewerP(\bN,\bX)$ is a path-continuous strong Markov process in a suitable metric space $(\IPspace,\dI)$ of interval partitions.
\end{theorem}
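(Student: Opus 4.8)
The plan is to establish, in turn, the three assertions packed into Theorem~\ref{thm:diffusion_0}: that each $\skewer(y,\bN,\bX)$ lies in $\IPspace$, that $y\mapsto\skewer(y,\bN,\bX)$ is $\dI$-continuous, and that the resulting process is strong Markov; the pair $(\bN,\bX)$ itself — the compensated limit producing the spectrally positive \Stable[\frac32] scaffolding with jump heights matched to \BESQ[-1] spindle lifetimes, run up to the level-$0$ inverse local time $T$ — I take as already in hand from the preceding development. For the first assertion, observe that for fixed $y\ge 0$ the aggregate mass $M^y_{\bN,\bX}(\cdot)$ is, as a function of $t$, a nondecreasing \emph{pure-jump} function: writing $\bN=\sum_j\delta_{(t_j,f_j)}$, its only increments occur at the $t_j$ with $\bX(t_j-)<y<\bX(t_j)$, with increment the width $f_j(y-\bX(t_j-))$ of the corresponding spindle cut at level $y$. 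Hence, as soon as one knows $M^y_{\bN,\bX}(T)<\infty$ — a moment estimate for $\mBxc$ integrated against $\Leb\otimes\mBxc$ up to $T$, which I would have established as an earlier lemma — $\skewer(y,\bN,\bX)$ is automatically an interval partition of $[0,M^y_{\bN,\bX}(T)]$, since a pure-jump increasing function leaves no continuous mass to escape. Carrying this out simultaneously over all $y$, via a countable dense set of levels together with a suitable monotonicity control, places the whole process in $\IPspace$.

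For path-continuity, the key structural point is that the left-to-right order of the blocks of $\skewer(y,\bN,\bX)$ is recorded by the times $t_j$ and so does \emph{not} vary with $y$, and that a block present at level $y$ either survives to a nearby level $y'$ with continuously varying mass (continuity of $f_j$) or is born or dies with mass tending to $0$ (a continuous spindle vanishes at the endpoints of its lifetime). It therefore suffices to control
\[
  \int_{[0,T]\times\Exc}\bigl|f(y-\bX(s-))-f(y'-\bX(s-))\bigr|\,d\bN(s,f)
\]
as $y'\to y$, since this dominates the $\dI$-distance between the two skewers up to lower-order terms. I would bound it by combining the a.s.\ finiteness of $M^{y''}_{\bN,\bX}(T)$, uniformly for $y''$ in a compact window of levels, with the locally H\"older modulus of continuity of \BESQ[-1] excursions, and then invoke a Kolmogorov-type criterion in the level variable to pass to an a.s.\ continuous modification. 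An equivalent and perhaps cleaner route decomposes $(\bN,\bX)|_{[0,T]}$ into its Poisson family of clades (the decorated excursions of $\bX$ reaching above level~$0$), shows that each clade contributes a $\dI$-continuous partition-valued path equal to the empty partition at and above that clade's extreme levels, and checks that a level-indexed concatenation of such paths, locally finite in the level variable, is itself $\dI$-continuous.

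For the Markov property I would first prove a \emph{spatial} Markov property: for fixed $y\ge 0$, conditionally on the $\sigma$-algebra generated by the part of $(\bN,\bX)$ at or below level $y$ (spindles straddling $y$ being cut at $y$), the shifted process $\bigl(\skewer(y+z,\bN,\bX)\bigr)_{z\ge 0}$ is Markov and depends on the past only through $\beta^y:=\skewer(y,\bN,\bX)$. The mechanism is a decomposition at level $y$: each spindle straddling $y$ splits, by the Markov property of \BESQ[-1], into a below-$y$ part measurable with respect to the past and an above-$y$ part which, given its value at the crossing — the corresponding block mass of $\beta^y$ — is an independent \BESQ[-1]; the pieces of $\bX$ between consecutive level-$y$ crossings are cut using the strong Markov property of the \Stable[\frac32] scaffolding; and the jumps and spindles lying strictly above level $y$ form, by the restriction property of the Poisson random measure $\bN$, fresh Poisson input. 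Reassembling shows that the configuration above level $y$ is, given $\beta^y$, an independent concatenation over the blocks of $\beta^y$ of clade-type pieces seeded by the block masses — a process of the same kind started from $\beta^y$ — which is the asserted Markov property and simultaneously identifies the transition kernel. The strong Markov property then follows by upgrading: with the simple Markov property, $\dI$-continuous (hence right-continuous) paths in the Polish space $(\IPspace,\dI)$, and a transition semigroup continuous in the initial partition (a Feller-type property, visible from the explicit concatenation-over-blocks description of the kernel together with continuity of the clade construction in the seeding mass), the standard approximation of a stopping time from above by discrete stopping times yields strong Markov; alternatively one runs the level-$y$ decomposition directly at a level stopping time, invoking the strong Markov property of $\bX$ as before.

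I expect path-continuity — precisely, the uniform-in-level estimate making the $\dI$-distance between nearby skewers small — to be the main obstacle: one must handle countably many co-present blocks at once, show that the aggregate mass of blocks born or dying between two nearby levels is negligible, and obtain a stochastic bound strong enough to support a Borel--Cantelli argument over a dyadic mesh of levels, where the delicate input is the fine oscillation of \BESQ[-1] excursions near their zero set. By contrast the spatial Markov property, though it is the conceptual heart of the construction, is chiefly an organised application of standard decompositions (the \BESQ Markov property, the strong Markov property of the L\'evy scaffolding, and Poisson restriction), and the hypothesis that $\bX$ is stopped at a level-$0$ inverse local time is exactly what makes those decompositions close up so that the reconstructed object again has the stated law.
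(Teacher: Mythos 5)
Your overall architecture runs parallel to the paper's: the paper also proves this theorem by (a) identifying the skewer of the stopped \PRM\ as a type-1 evolution with a particular initial law (Corollary \ref{cor:type-1:cts_from_Stable}\,\ref{item:c_f_S:determinstic}), (b) proving H\"older path-continuity in level, and (c) proving a level-decomposition Markov property (Proposition \ref{prop:PRM:Fy-_Fy+}, via the mid-spindle Markov property, the strong Markov property of the scaffolding, and Poisson restriction) and upgrading to strong Markov through continuity of the semigroup in the initial state (Propositions \ref{prop:type-1:cts_in_init_state} and \ref{prop:strong_Markov}). Your Markov-property sketch captures that mechanism, including the role of stopping at an inverse local time.

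The genuine gap is your treatment of diversity, which is not a side issue but the defining feature of the state space $(\IPspace,\dI)$. First, knowing that $M^y_{\bN,\bX}(\cdot)$ is a finite, nondecreasing pure-jump function only places $\skewer(y,\bN,\bX)$ in $\HIPspace$; membership in $\IPspace$ requires the existence of the limit \eqref{eq:IPLT} at every point, simultaneously for all levels $y$, and this is exactly the content of Theorem \ref{thm:LT_property_all_levels} (scaffolding local time equals skewer diversity everywhere), a nontrivial result imported from \cite{Paper0} that no monotonicity argument over a dense set of levels will produce for free. Second, your continuity step asserts that the integral $\int\bigl|f(y-\bX(s-))-f(y'-\bX(s-))\bigr|\,d\bN(s,f)$ dominates $\dI\bigl(\skewer(y,\cdot),\skewer(y',\cdot)\bigr)$ up to lower-order terms. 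It does not: the metric $\dI$ (Definition \ref{def:IP:metric}) includes the quantities $\sup_j|\IPLT_{\beta}(U_j)-\IPLT_{\gamma}(V_j)|$ and $|\IPLT_{\beta}(\infty)-\IPLT_{\gamma}(\infty)|$, and diversity discrepancies are in no way controlled by mass discrepancies (a cloud of tiny blocks of negligible total mass can carry arbitrarily large diversity). In the paper these terms are handled by a separate ingredient entirely: the identification of diversities with the occupation-density local times $\ell^y(t)$ together with Boylan's joint H\"older continuity of $(y,t)\mapsto\ell^y(t)$ (Theorem \ref{thm:Boylan}), combined with the uniformly H\"older spindle families of Lemma \ref{lem:spindle_piles} for the mass terms (see the proof of Proposition \ref{prop:PRM:cts_skewer}). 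Without an input of this kind your Kolmogorov-criterion route cannot close, because the quantity you propose to estimate simply does not bound the metric you need to control.
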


The skewer process in the previous theorem is an example of what we call a \emph{type-1 evolution} with a particular initial distribution. To start a type-1 evolution from a given $\beta\in\IPspace$ we define a pair $(\bN_{\beta},\bX_{\beta})$ as follows. For each interval $V\in\beta$ let $\bff_V$ denote a \BESQ[-1] process starting from $\Leb(V)$ and killed upon hitting $0$. We denote its lifetime by $\life(\bff_V)$. Let $\bX_V$ be an independent \Stable[\frac32] process starting from $\life(\bff_V)$, killed upon hitting $0$. We form $\bN_V$ by decorating jumps of $\bX_V$ with independent \BESQ[-1] excursions, and we decorate the time 0 jump to $\life(\bff_V)$ with $\bff_V$. 
We do this independently for each $V\in\beta$. The scaffolding $\bX_\beta$ is then formed by concatenating the excursions $\bX_V$ in the order in which the $V$s arise in $\beta$. We similarly form $\bN_{\beta}$ by concatenating the $\bN_V$s. A continuous version of $\skewerP(\bN_{\beta},\bX_{\beta})$ is a type-1 evolution starting from $\beta$. This construction is made precise in Definition \ref{constr:type-1}.

We will also define \emph{type-0 evolutions}. Informally, the difference between the type-1 and type-0 evolutions is the following. While type-1 evolutions arise from scaffolding processes 
$(X(t),\,t\in [0,T])$, for type-0 evolutions we consider scaffoldings that ``come down from $\infty$'' from time $-\infty$. This is described in detail at the start of Section \ref{sec:type-0}.

\begin{theorem}\label{thm:diffusion}
 Type-1 and type-0 evolutions exist as path-continuous Hunt processes in $(\IPspace,\dI)$.
\end{theorem}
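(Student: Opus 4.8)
The plan is to construct a type-1 evolution from an arbitrary $\beta\in\IPspace$ by concatenating independent one-block pieces $(\bN_V,\bX_V)$ --- one \emph{clade} per block $V$ of $\beta$, as in the precise construction (Definition~\ref{constr:type-1}) --- to push path-continuity and the strong Markov property through the \skewer\ map, and then to obtain type-0 evolutions as a limit of type-1 evolutions whose scaffolding ``comes down from $\infty$'' (Section~\ref{sec:type-0}). Theorem~\ref{thm:diffusion_0} already treats one particular entrance law and serves as a template and a consistency check. Concretely I would carry out four tasks: (i) for each fixed $\beta$, show that $(\bN_\beta,\bX_\beta)$ is well defined and that $\skewerP(\bN_\beta,\bX_\beta)$ admits a version with continuous paths in $(\IPspace,\dI)$; (ii) show that this version is Markov with a transition semigroup intrinsic to $\IPspace$; (iii) upgrade to the strong Markov property and verify the remaining regularity needed for the Hunt property; (iv) construct type-0 evolutions and transfer all of the above.

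For (i) I would first record the one-clade facts: a \BESQ[-1] started from $m>0$ and killed at $0$ has a.s.\ finite lifetime, and a spectrally positive \Stable[\frac32] process started from a positive value and killed on hitting $0$ has a.s.\ finite length. As \BESQ[-1] excursion lifetimes under $\mBxc$ are not summable, the scaffolding $\bX_V$ of a clade must be read as the usual compensated limit, which is still a killed \Stable[\frac32]-type path. Uniform continuity of \BESQ[-1] sample paths, estimates on the aggregate-mass functions $M^y_{\bN_V,\bX_V}(t)$, and the fact that the total mass of a clade's \skewer, seen in the level variable, evolves as a \BESQ[0] diffusion (hence is continuous and absorbed at $0$ in finite time) together give continuity of the one-clade \skewer\ process in $\dI$ along with quantitative bounds on how large in total mass and how long-lived a clade started from mass $m$ can be. Summing these bounds over the blocks of $\beta$, and using independence via a maximal inequality that controls the total mass, over all levels, contributed by clades whose initial blocks have aggregate width at most $\varepsilon$, shows that the left-to-right concatenation of the interval partitions $\skewer(y,\bN_V,\bX_V)$, $V\in\beta$, converges in $(\IPspace,\dI)$, a.s.\ uniformly for $y$ in compact sets; this is the required continuous version.

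Task (ii) is the heart of the argument. I would establish a \emph{Markov-like property} of $(\bN_\beta,\bX_\beta)$ at a fixed level $y$: split each spindle into its parts strictly below and strictly above $y$ and split the scaffolding correspondingly; then show that, conditionally on $\skewer(y,\bN_\beta,\bX_\beta)=\gamma$, the family of above-$y$ parts, re-indexed in left-to-right order and shifted so that $y$ becomes level $0$, has exactly the law of $(\bN_\gamma,\bX_\gamma)$. The ingredients are the strong Markov property of \BESQ[-1] at the hitting time of the value a spindle takes when it straddles level $y$, the disintegration of $\mBxc$ along that hitting time, an It\^o-type excursion decomposition of the spectrally positive \Stable[\frac32] scaffolding across level $y$, and the restriction and marking properties of Poisson random measures. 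Pushing this identity through \skewer\ yields the Markov property at deterministic levels with a transition kernel that makes no reference to the block decomposition of $\gamma$, so the semigroup is well defined on all of $\IPspace$, and its pseudo-stationary law is identified via Theorem~\ref{thm:diffusion_0}. I would then extend the Markov-like property to a level given by a stopping time of the level filtration --- legitimate because the scaffolding is a strong Markov process and the decorations above and below a cut are conditionally independent --- which, with the a.s.\ path-continuity from (i), gives the strong Markov property.

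For (iii), path-continuity supplies quasi-left-continuity and the absence of fixed discontinuities, the transition kernels are Borel in $\beta$ because the concatenation is a measurable function of the clades, and $(\IPspace,\dI)$ is a Lusin space; with the strong Markov property this makes type-1 evolutions Hunt (indeed Borel right, with continuous paths) processes. For (iv), I would realise a type-0 evolution as a limit, as $n\to\infty$, of type-1 evolutions whose scaffolding comes down from $\infty$ --- for instance as a limit of single clades started from mass $n$ and observed above the level at which the clade's total mass first enters a fixed finite range, or through a direct bi-infinite construction from a \Stable[\frac32] process that comes down from $+\infty$ --- the one-clade estimates from (i) providing the uniform control needed for the limit to exist in $(\IPspace,\dI)$ with continuous paths, and the Markov and Hunt properties passing to the limit via continuity of the construction in the initial data together with the Markov-like property of (ii). The main obstacle throughout is task (ii): correctly handling the compensated scaffolding and, especially, the spindles that straddle level $y$ when decomposing $(\bN_\beta,\bX_\beta)$, and then making that decomposition work at a stopping level rather than a fixed one.
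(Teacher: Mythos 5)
Your overall architecture (clade-by-clade Poissonian construction, skewer map, level decomposition via a mid-spindle Markov property, Lusin state space, then the Hunt checklist) is the same as the paper's, but two steps in your plan would not go through as described. First, your task (i) establishes continuity at entry only for the mass part of the metric. A maximal inequality for the total mass contributed by clades whose initial blocks have aggregate width at most $\varepsilon$, together with per-clade bounds summed over blocks, cannot control the diversity terms in $\dI$ at $y=0$: each block of $\beta$ has zero diversity while $\beta$ has positive total diversity, so near level $0$ the diversity of $\alpha^y$ is carried collectively by the infinitely many small clades, and the per-clade distances $\dI(\alpha^0_V,\alpha^y_V)$ have diversity contributions summing to roughly $\IPLT_\beta(\infty)$, so the concatenation bound of Lemma \ref{lem:IP:concat} is useless there. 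The paper needs a genuinely separate argument for this point: the identification of diversity with scaffolding local time simultaneously at all levels (Theorem \ref{thm:LT_property_all_levels}, Proposition \ref{prop:type-1:LT_diversity}) and a domination coupling of $\beta$ with \Stable[\frac12] interval partitions of total diversity $\IPLT_\beta(\infty)\pm\delta$ (Lemma \ref{lem:IP:domination}), which sandwiches $\limsup_{y\downarrow0}\IPLT_{\alpha^y}(\infty)$ and $\liminf_{y\downarrow0}\IPLT_{\alpha^y}(\infty)$ (Proposition \ref{prop:cts_lt_at_0}); block-wise diversities at level $y$ are then matched to those of $\beta$ through a finite correspondence in the proof of Proposition \ref{prop:type-1:cts}. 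Without an ingredient of this kind your construction yields continuity in the Hausdorff-type metric, not in $\dI$.

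Second, your route to the strong Markov property — extending the level decomposition to a stopping time of the level filtration "because the scaffolding is a strong Markov process" — conflates time and level. The scaffolding is strong Markov in its time parameter, not in level; indeed the paper emphasizes that the local time of the stopped L\'evy process is \emph{not} Markov in level, and the Markov-like property across a level (Propositions \ref{prop:PRM:Fy-_Fy+}, \ref{prop:clade:Fy-_Fy+}, \ref{prop:type-1:Fy-_Fy+}) is proved only at fixed levels, with real work even there. The paper's actual route is to prove continuity of the transition operators in the initial state, $\beta\mapsto\BPr^1_{\beta}[f(\alpha^z)]$, by an explicit coupling in which corresponding clades are related by the scaling maps (Proposition \ref{prop:type-1:cts_in_init_state}, extended to finite-dimensional marginals in Corollary \ref{cor:type-1:cts_init_2}); this Feller-type substitute then yields the strong Markov property by the standard dyadic approximation of the stopping time (Proposition \ref{prop:strong_Markov}), and it is also what makes the semigroup Borel for the Hunt framework. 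Your plan omits this continuity-in-initial-state step for type-1 (you invoke it only when transferring to type-0). Relatedly, for type-0 the paper does not take a limit of type-1 evolutions started from large masses: it uses a consistency/Kolmogorov-extension argument for first-passage descents of the \Stable[\frac32] scaffolding, see \eqref{eq:type-0:consistency} and Definition \ref{constr:type_0}, concatenated with an independent type-1 evolution; your limiting construction could presumably be made to work but would require additional uniform estimates that the consistency route avoids.
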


Our processes are self-similar. Their explicit transition kernels are stated in Propositions \ref{prop:type-1:gen_transn} and \ref{prop:type-0:transn}. Among their many remarkable properties, the following are worth stating here. 

\begin{theorem}\label{thm:BESQ_total_mass}
 Consider a type-1 (respectively type-0) evolution $\left( \beta^y,\; y\ge 0  \right)$ with $\beta^0 \in \IPspace$. The total mass process $\left(\IPmag{\beta^y},\,y\ge0\right)$ is a \BESQ[0] (respectively \BESQ[1]) diffusion starting at $\IPmag{\beta^0}$.
\end{theorem}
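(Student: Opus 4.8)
The plan is to reduce the statement to the total mass of a single ``clade'' built from one block of $\beta$, to compute the law of that total mass by excursion theory, and then to reassemble using the additivity property of squared Bessel processes. \emph{Step 1 (reduction to one clade).} By the construction of type-1 evolutions (Definition~\ref{constr:type-1}), the type-1 evolution from $\beta$ is obtained by concatenating, \emph{independently} over the blocks $V\in\beta$, a scaffolding-with-spindles $(\bN_V,\bX_V)$ whose leftmost spindle is a $\BESQ[-1]$ started from $\Leb(V)$ and whose scaffolding is a spectrally positive $\Stable[\frac32]$ process started from that spindle's lifetime and killed at $0$. Since the skewer of a concatenation is the concatenation of the skewers (immediate from Definition~\ref{def:skewer}) and the total mass of a concatenation of interval partitions is the sum of the total masses,
\[
\IPmag{\beta^y}\;=\;\sum_{V\in\beta}\IPmag{\bigl(\skewerP(\bN_V,\bX_V)\bigr)^y},
\]
a sum of independent processes. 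By the Shiga--Watanabe additivity of $\BESQ$ processes (a sum of independent $\BESQ[\delta_i]$'s from $m_i$ is a $\BESQ[\sum_i\delta_i]$ from $\sum_i m_i$, extended to countable sums by a monotone-limit argument using the explicit $\BESQ$ Laplace transforms), it suffices to show that the total mass of a single type-1 clade started from a point mass $m>0$ is a $\BESQ[0]$ started from $m$. For type-0 one argues similarly after a clade decomposition of the ``coming down from infinity'' scaffolding (cf.\ Section~\ref{sec:type-0}): realising the type-0 evolution from $\beta$ as a type-0 evolution from $\emptyset$ (the immortal part) concatenated with the type-1 clades of the blocks of $\beta$, and granting that the former has total mass $\BESQ[1]$ from $0$, additivity turns $\BESQ[1]$ (from $0$) plus $\BESQ[0]$ (from $\IPmag{\beta}$) into the asserted $\BESQ[1]$ from $\IPmag{\beta}$.

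\emph{Step 2 (the marginal of a clade).} Fix $y>0$ and a single type-1 clade from $m$, with leftmost spindle $\bff$ a $\BESQ[-1]$ from $m$ killed at $0$, scaffolding $\bX$ a spectrally positive $\Stable[\frac32]$ process started from $\zeta(\bff)$ and killed at $0$, and spindles $f_j$ decorating its jumps (with $\zeta(f_j)$ the $j$-th jump height). Writing $Y_y:=\IPmag{(\skewerP(\bN,\bX))^y}$, only jumps of $\bX$ crossing level $y$ contribute there, so
\[
Y_y\;=\;\bff(y)\,\cf\{y<\zeta(\bff)\}\;+\;\sum_{j\,:\,\bX(t_j-)<y<\bX(t_j)} f_j\bigl(y-\bX(t_j-)\bigr).
\]
I would compute $\EV_m[\exp(-\lambda Y_y)]$ by conditioning on $\bX$: given its jumps, the $f_j$ are conditionally independent $\BESQ[-1]$ excursions of the prescribed lengths and $\bff$ is an independent $\BESQ[-1]$ from $m$. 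The classical inputs are the disintegration of $\mBxc$ over the lifetime, yielding $\EV[\exp(-\mu f(u))\mid\zeta(f)=\ell]$ for $0<u<\ell$, and the Laplace transform of $\bff(y)$. Summing the resulting multiplicative factors over the jumps of $\bX$ across level $y$ --- using the compensation formula for the Poisson random measure of (jump, spindle) pairs together with the It\^o-excursion description of a spectrally positive stable process straddling a fixed level --- the product should telescope to
\[
\EV_m[\exp(-\lambda Y_y)]\;=\;\exp\!\Bigl(-\frac{\lambda m}{1+2\lambda y}\Bigr),
\]
exactly the $\BESQ[0]$ Laplace transform (no $(1+2\lambda y)^{-\delta/2}$ prefactor, i.e.\ $\delta=0$); the analogous computation for the type-0 immortal part should produce the prefactor $(1+2\lambda y)^{-1/2}$, i.e.\ $\delta=1$.

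\emph{Step 3 (from marginals to the process).} Combining Steps 1 and 2,
\[
\EV_\beta[\exp(-\lambda\,\IPmag{\beta^y})]\;=\;\prod_{V\in\beta}\exp\!\Bigl(-\frac{\lambda\Leb(V)}{1+2\lambda y}\Bigr)\;=\;\exp\!\Bigl(-\frac{\lambda\,\IPmag{\beta}}{1+2\lambda y}\Bigr),
\]
which depends on $\beta$ only through $\IPmag{\beta}$. Because the type-1 (resp.\ type-0) evolution is strong Markov (Theorem~\ref{thm:diffusion}) and, restarted at level $y$, evolves as a fresh such evolution from $\beta^y$, the conditional law of the future total mass given the past depends only on $\IPmag{\beta^y}$; hence $(\IPmag{\beta^y},\,y\ge0)$ is a time-homogeneous Markov process, and the display above identifies its transition semigroup as that of $\BESQ[0]$ (resp.\ $\BESQ[1]$). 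Path-continuity of the skewer process and continuity of $\IPmag{\cdot}$ on $(\IPspace,\dI)$ then give the claimed diffusion. (This is also immediate from the transition kernels of Propositions~\ref{prop:type-1:gen_transn} and \ref{prop:type-0:transn}, whose proofs contain the computation of Step 2.)

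\emph{Main obstacle.} Step 2 is the crux: the sum over spindles crossing a level is genuinely infinite, and the excursion-theoretic bookkeeping of the spectrally positive stable scaffolding must be arranged --- while controlling the $\sigma$-finiteness of $\mBxc$ (recall that $\bX$ itself is only defined as a compensated limit, since $\BESQ[-1]$ excursion lengths are not summable) --- so that the product collapses to the $\BESQ$ Laplace transform. The other delicate point is pinning the dimension exactly, i.e.\ distinguishing absorption at $0$ (type-1, $\delta=0$) from instantaneous reflection (type-0, $\delta=1$): this is precisely the role of the ``coming down from infinity'' construction, and making that construction rigorous enough to extract the extra unit of drift is itself nontrivial. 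A possible alternative to Step 2 is to derive the stochastic differential equation $dY_y=\delta\,dy+2\sqrt{Y_y}\,dB_y$ directly, but identifying the martingale part and its quadratic variation from the skewer appears no easier.
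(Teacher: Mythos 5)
Your architecture coincides with the paper's: reduce to independent single clades and aggregate by \BESQ\ additivity, identify the one-dimensional marginal by a Laplace transform, upgrade to finite-dimensional distributions by the Markov property, and get continuity from continuity of the clades plus \cite[Theorem 4.1 (iv)]{PitmYor82}. The genuine gap is your Step 2, which is the entire content of the theorem and is only asserted: you claim the sum over the (infinitely many) spindles straddling level $y$ ``should telescope'' to $\exp(-\lambda m/(1+2\lambda y))$, and that the immortal part of the type-0 evolution ``should'' contribute the factor $(1+2\lambda y)^{-1/2}$, but neither computation is carried out, and the compensation-formula bookkeeping you gesture at (with the $\sigma$-finite measure $\mBxc$ and the compensated scaffolding) is exactly where the work lies. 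The paper does not redo this excursion computation inside the proof of Theorem \ref{thm:BESQ_total_mass}; instead it reads the level-$y$ marginal of a single clade off the already-established entrance law (Proposition \ref{prop:type-1:transn}: survival probability $1-e^{-a/2y}$, leftmost block $L^y$ with the explicit density \eqref{eq:transn:LMB}, and an inverse-Gaussian subordinator run to an independent exponential time), and then Proposition \ref{prop:BESQ_mass_temp} performs a concrete Laplace-transform calculation (the series expansion of $1-\cosh(u)+u\sinh(u)$ and the Gamma duplication formula) to match the \BESQ[0] transform; the type-0 case similarly matches $R^y(S^y)\sim\GammaDist[\frac12,\frac{1}{2y}]$ with the \BESQ[1] density $q^1_y(0,\cdot)$, checking in particular that there is no atom at $0$. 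Your parenthetical claim that this is ``immediate'' from Propositions \ref{prop:type-1:gen_transn} and \ref{prop:type-0:transn} is too quick: those kernels give the law of $\alpha^y$, but extracting the \BESQ\ Laplace transform from them is precisely the nontrivial calculation above. So either you carry out your excursion/compensation computation honestly (controlling the non-summable jump contributions), or you first establish the entrance law and then do the explicit integral; as written the proof has a hole at its centre, and the same hole recurs for the type-0 immortal part, whose \BESQ[1]-from-$0$ total mass you simply ``grant''.

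Two smaller points. In Step 3 you invoke Theorem \ref{thm:diffusion} (the Hunt/strong Markov property); in the paper that theorem is proved \emph{after}, and partly \emph{by means of}, the total-mass result (path-continuity at time zero in Proposition \ref{prop:type-1:cts} uses Proposition \ref{prop:BESQ_mass_temp}), so to avoid circularity you should instead use the simple Markov property available beforehand (Corollary \ref{cor:type-1:simple_Markov_1} for type-1, Proposition \ref{prop:type-0:simple_Markov} for type-0), exactly as the paper's induction on finite-dimensional marginals does. Also, your phrase ``the conditional law of the future total mass given the past depends only on $\IPmag{\beta^y}$'' needs the inductive argument over time points to be made precise: what the one-time computation gives is that each conditional one-dimensional marginal depends only on the current mass, and the finite-dimensional identification then follows by iterating the Markov property, as in the paper.
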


Theorems \ref{thm:diffusion_0} and \ref{thm:BESQ_total_mass} together can be viewed as a Ray-Knight theorem for a discontinuous L\'evy process. 
The local time of the stopped 
L\'evy process is not Markov in level \cite{EiseKasp93}, but our marking of jumps and skewer map fill in the missing information about jumps to construct a larger Markov process.  Moreover, the local time of the L\'evy process can be measurably recovered from the skewered process; see \cite[Theorem 28]{Paper0} or Theorem \ref{thm:LT_property_all_levels} below. The appearance of \BESQ[0] total mass is an additional connection to the second Brownian Ray-Knight theorem \cite[Theorem XI.(2.3)]{RevuzYor}, in which local time evolves as \BESQ[0].

It is well-known (see \cite{PitmYor82,ShigaWata73}) that the family of laws of $\BESQ$ processes of nonnegative dimensions running on a common time axis satisfies an additivity property. This additivity property does not extend to negative dimensions. Theorem \ref{thm:BESQ_total_mass} states that the sum of countably many squared $\BESQ$ excursions of dimension $-1$ anchored at suitably random positions on the time axis gives a $\BESQ[0]$ process. This can be interpreted as an extension of the additivity of $\BESQ$ processes to negative dimensions. 

To obtain stationary diffusions on partitions of the unit interval
, we employ a procedure that we call \textit{de-Poissonization}. The resulting invariant distributions are members of a two-parameter family of random partitions of the unit interval that we call 
the Poisson-Dirichlet interval partitions, $\PDIP[\alpha, \theta]$. See Pitman and Winkel \cite{PitmWink09} or Proposition \ref{prop:PDIP} below for more details. 
Consider the total mass process $\left(\IPmag{\beta^y},\; y\ge 0 \right)$ from Theorem \ref{thm:BESQ_total_mass} and the time-change
\begin{equation}\label{eq:de_Pois_tc_intro}
 \rho(u) := \inf\left\{  y\ge 0:\;  \int_0^y \IPmag{\beta^z}^{-1} dz > u   \right\}, \quad u \ge 0.
\end{equation}

\begin{theorem}\label{thm:stationary}
 Let $(\beta^y,\,y\ge0)$ denote a type-1 (respectively, type-0) evolution with initial state $\beta^0 \in \IPspace\setminus\{\emptyset\}$. Let $\IPmag{\beta^y}^{-1} \scale \beta^y$ denote the interval partition obtained by dividing each block in $\beta^y$ by the scalar $\IPmag{\beta^y}$. Then the process
 \[
  (\ol\beta^u,\,u\ge0) := \left(  \IPmag{\beta^{\rho(u)}}^{-1} \scale \beta^{\rho(u)},\; u \ge 0  \right)
 \]
 is a path-continuous Hunt process on $(\IPspace_1,\dI)$, where $\IPspace_1 := \{\beta\in\IPspace\colon \IPmag{\beta}=1\}$, with a stationary distribution given by $\PDIP[\frac12,0]$ (respectively, $\PDIP[\frac12, \frac{1}{2}]$).
\end{theorem}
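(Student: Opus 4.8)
The plan is to combine the total-mass result of Theorem~\ref{thm:BESQ_total_mass} with a ``de-Poissonization'' argument modelled on the classical passage from a one-parameter Poisson--Dirichlet family to a fixed-sum (unit) version, and then identify the resulting stationary law via its ranked masses and block structure. First I would establish that the normalized, time-changed process $(\ol\beta^u,\,u\ge0)$ is well-defined and Markov. The time change $\rho$ in \eqref{eq:de_Pois_tc_intro} is the right-continuous inverse of the additive functional $u\mapsto\int_0^y\IPmag{\beta^z}^{-1}dz$; since $(\IPmag{\beta^y})$ is a \BESQ[0] (resp.\ \BESQ[1]) started from $\IPmag{\beta^0}>0$, I need to check this integral is finite for all $y$ before absorption at $0$ but diverges as $y$ approaches the absorption time, so that $\rho(u)$ is finite for all $u\ge0$; this is a standard property of \BESQ. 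Self-similarity of the type-1/type-0 evolution (the scaling relation underlying Theorem~\ref{thm:BESQ_total_mass} and the transition kernels of Propositions~\ref{prop:type-1:gen_transn} and \ref{prop:type-0:transn}) is exactly what makes the normalization $\IPmag{\beta^y}^{-1}\scale\beta^y$ compatible with the time change, yielding a time-homogeneous Markov process on $\IPspace_1$; path-continuity and the Hunt property transfer from $(\beta^y)$ because normalization and the continuous time change $\rho$ are continuous operations and $\IPmag{\beta^y}$ stays bounded away from $0$ on compact $u$-intervals.

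Next I would identify the stationary distribution. The natural route is to exhibit an \emph{entrance law} / pseudo-stationary family: I would look for a measure $\mu$ on $\IPspace$ (for type-1, presumably $\IPmag{\beta}\cdot\PDIP[\frac12,0]$ in a suitable sense, i.e.\ a \BESQ-mixture over the total mass of a $\PDIP[\frac12,0]$-distributed normalized partition) such that, started from $\mu$, the un-normalized evolution $(\beta^y)$ has the property that $\IPmag{\beta^y}^{-1}\scale\beta^y$ is \emph{independent} of $\IPmag{\beta^y}$ and again $\PDIP[\frac12,0]$ for every fixed $y$ — a ``pseudo-stationarity'' of the shape under the raw evolution. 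Given such a family, de-Poissonization is a soft argument: conditioning the raw process on $\IPmag{\beta^y}$ and using the time change $\rho$ to kill the mass fluctuations shows that the normalized time-changed process has $\PDIP[\frac12,0]$ as a stationary law. Concretely I expect to verify pseudo-stationarity by computing, using the explicit transition kernel, the one-dimensional law of $\beta^y$ when $\beta^0$ is drawn from the candidate entrance law, and recognizing the answer; the \BESQ[0] (resp.\ \BESQ[1]) total mass and its self-similar clock do the bookkeeping, and the Poisson--Dirichlet structure should emerge from the fact that the ranked masses of $\beta^y$ follow the Ethier--Kurtz/Petrov diffusion (as stated in the abstract), whose stationary law is $\PDIP$ in ranked form, combined with the exchangeability/ordering of blocks built into $\IPspace$.

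The main obstacle, I expect, is proving the pseudo-stationarity (one-dimensional marginal) identity for the raw type-1/type-0 evolution, i.e.\ that there is an initial law under which the shape $\IPmag{\beta^y}^{-1}\scale\beta^y$ is exactly $\PDIP[\frac12,0]$ and independent of the total mass for all $y>0$. This is where the specific excursion-theoretic construction (scaffolding = spectrally positive \Stable[\frac32], spindles = \BESQ[-1] excursions under $\mBxc$) must be used in an essential way: one has to show that the skewer of the Poissonized picture, at a fixed level and with the right randomization of the starting configuration, reproduces the Poisson--Dirichlet interval partition. I anticipate this reduces, via the skewer map's interaction with the Markov property of the scaffolding, to a calculation with the \BESQ[-1] excursion measure and the jump structure of \Stable[\frac32], together with known descriptions of $\PDIP[\alpha,\theta]$ (via stable subordinators / regenerative composition structures, cf.\ Pitman--Winkel and Proposition~\ref{prop:PDIP}). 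Once pseudo-stationarity is in hand, the de-Poissonization step and the verification that $(\ol\beta^u)$ is Hunt on $(\IPspace_1,\dI)$ are comparatively routine, using the self-similarity, the continuity of $\rho$, and the fact that $\IPspace_1$ is a closed subset of $\IPspace$ on which the normalization map is continuous away from $\emptyset$.
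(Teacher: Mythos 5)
Your overall route --- first a pseudo-stationarity identity for the raw evolution started from an independent product $B(0)\scaleI\ol\beta$ with $\ol\beta\sim\PDIP[\frac12,0]$ (resp.\ $\PDIP[\frac12,\frac12]$), then de-Poissonization, then transfer of the Hunt property through the time change --- is the same as the paper's (Theorem \ref{thm:pseudostat} and Section \ref{sec:dePoissonization}). However, two steps you treat as routine are genuine gaps. First, de-Poissonization is not ``soft'' given only the fixed-time statement that $\IPmag{\beta^y}^{-1}\scaleI\beta^y$ is $\PDIP$ and independent of $\IPmag{\beta^y}$: the time $\rho(u)$ is a random time determined by the whole total-mass path, so to conclude that $\ol\beta^u$ has the $\PDIP$ law you need the normalized shape at level $y$ to be independent of the total-mass \emph{filtration} up to level $y$ (not merely of the single random variable $\IPmag{\beta^y}$), and you need this at \emph{stopping times} of that filtration, since $\rho(u)$ is one. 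The paper supplies exactly this in Lemma \ref{strongstat} (an induction over finite-dimensional mass marginals using the Markov property) and Theorem \ref{thm:pseudostat_strong} (dyadic approximation of the stopping time); ``conditioning on $\IPmag{\beta^y}$ to kill the mass fluctuations'' skips this and, as stated, does not justify evaluating at the random time $\rho(u)$.

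Second, your fallback identification of the stationary law through the Ethier--Kurtz/Petrov ranked-mass diffusion cannot stand on its own: stationarity of the \emph{ranked} masses under $\PoiDir[\frac12,0]$ does not determine the law of the \emph{ordered} interval partition (leftmost block, left-to-right regenerative structure, diversity), which is what $\PDIP[\frac12,0]$ asserts; moreover, in the paper the \EKP\ identification (Theorem \ref{thm:petroviden}) is established only after, and using, Theorem \ref{thm:stationary}, so this route risks circularity. The viable path is the one you yourself flag as the main obstacle: the direct computation of the one-dimensional law of $\beta^y$ from the candidate entrance law. That computation is the bulk of the paper's work --- Proposition \ref{prop:pseudostat:exp} handles exponential/Gamma initial mass via Laplace transforms of clade statistics, subordinator thinning and competing-clock arguments, Proposition \ref{prop:pseudostat:fixed} inverts to fixed initial mass via Bessel-function identities and the scaling Lemma \ref{lem:type-1:scaling}, and mixing gives Theorem \ref{thm:pseudostat} --- so your proposal correctly locates the difficulty but leaves it, together with the strengthening above, unproven. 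Your sketch of the Markov/Hunt transfer is in line with Propositions \ref{prop:dePois:time_change}--\ref{prop:dePois:Markov}; note the paper also invokes Helland's continuity of the time-change map to get continuity of the de-Poissonized semigroup in the initial state.
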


For this reason the type-1 and type-0 evolutions might alternatively be called $\left(\frac12,0\right)$- and $\left(\frac12,\frac12\right)$-interval partition diffusions. 
In order to visualize $\PDIP[\frac 12,0]$ consider a Brownian motion during time $[0,1]$, time-reverse the process, and consider the complement of the zero-set. This is an interval partition of the unit interval that is distributed according to $\PDIP[\frac 12,0]$ \cite[Example 4]{GnedPitm05}. Notice that this interval partition contains a special leftmost block that corresponds to the meander. A similar construction for the Brownian bridge (which is reversible) gives us $\PDIP[\frac 12, \frac 12]$; see \cite[Example 3]{GnedPitm05}.  


Petrov \cite{Petrov09} introduced a two-parameter family of diffusions on the Kingman simplex,
\begin{equation}\label{eq:infsimp_def}
 \infsimp := \left\{ x=\left(x_1, x_2, \ldots   \right)\colon x_1 \ge x_2 \ge \cdots \ge 0,\; \sum_{i=1}^\infty x_i = 1     \right\},
\end{equation}
extending a one-parameter family due to Ethier and Kurtz \cite{EthiKurt81}. For $0\le \alpha < 1$ and $\theta>-\alpha$, these processes are characterized by the following generator acting on symmetric polynomials: 
\begin{equation}
\petrovgen=\sum_{i\ge 1}x_i\frac{\partial^2}{\partial x_i^2}-\sum_{i,j\ge 1}x_ix_j\frac{\partial^2}{\partial x_i\partial x_j}-\sum_{i\ge 1}(\theta x_i+\alpha)\frac{\partial}{\partial x_i}.
\end{equation}
We denote the laws of this two-parameter family by \EKP[\alpha, \theta]. Diffusions with these laws are stationary and reversible with respect to the Poisson-Dirichlet distributions, which we denote by \PoiDir[\alpha,\theta]. 

\begin{theorem}\label{thm:petroviden}
 Consider the de-Poissonized type-1 evolution $\left(\ol{\beta}^u,\, u \ge 0\right)$ of Theorem \ref{thm:stationary}. Let $W(u)$ be the vector of decreasing order statistics of $\left(  \mathrm{Leb}(U),\; U \in \ol{\beta}^u \right)$. Starting from any point in $\infsimp$, the law of the stochastic process $\left(W(u/2), \, u \ge 0\right)$ is $\EKP[\frac{1}{2},0]$. The corresponding statement for the type-0 evolutions gives us $\EKP[\frac{1}{2},\frac{1}{2}]$.
\end{theorem}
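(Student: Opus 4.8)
The plan is to identify the de-Poissonized evolution's ranked-mass projection with the Petrov/Ethier--Kurtz diffusion by matching them through the generator $\petrovgen$ acting on symmetric polynomials, using the fact that \EKP[\alpha,\theta] is the unique (in law) path-continuous Markov process in $\infsimp$ with that generator. So I would first show that $(W(u),\,u\ge 0)$ is a well-defined path-continuous Markov process on $\infsimp$: path-continuity and the Markov property follow from Theorem~\ref{thm:stationary}, since ranking the block masses of $\ol\beta^u$ is a continuous map from $(\IPspace_1,\dI)$ to $\infsimp$ (continuity of the ranked-masses map under $\dI$ should be available from the metric-space results in the paper), and this map is a time-homogeneous factor of a Markov process, hence Markov. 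The substantive work is to compute its generator on the algebra of symmetric polynomials, equivalently on the $p$-th power sums $q_p(x)=\sum_i x_i^p$ for $p\ge 2$, which generate that algebra.

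The second and central step is the generator computation. Here I would exploit the explicit structure behind the type-1 evolution: before de-Poissonization and ranking, the block masses evolve as a countable family of \BESQ[-1] excursions/diffusions whose lengths (= scaffolding jump heights) are themselves re-randomized continuously in level, with the total mass $\IPmag{\beta^y}$ a \BESQ[0] (Theorem~\ref{thm:BESQ_total_mass}). Applying It\^o's formula to $q_p$ evaluated at the (unordered) block-mass collection of $\beta^y$, one gets a drift term coming from the \BESQ[-1] drift $-1$ on each block and a diffusion term $4x_i$ on each block, plus contributions from the birth of new blocks (small excursions emerging from the scaffolding) and the death of blocks hitting $0$. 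The de-Poissonization time change \eqref{eq:de_Pois_tc_intro} divides the generator by $\IPmag{\beta^y}$ and the normalization $\IPmag{\beta^y}^{-1}\scale\beta^y$ projects onto $\IPspace_1$; tracking how $q_p$ transforms under $x\mapsto x/\|x\|$ and combining with It\^o on $\IPmag{\beta^y}$ should, after the extra time change $u\mapsto u/2$, reproduce exactly
\[
 \petrovgen q_p = p(p-1)q_{p-1} - p(p-1)q_p - p\Big(\tfrac12 p + \tfrac\theta2\Big)q_p + \cdots
\]
with $(\alpha,\theta)=(\tfrac12,0)$; the emergence/absorption terms must generate precisely the $\alpha=\tfrac12$ contribution, which is the delicate bookkeeping. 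The transition kernels recorded in Proposition~\ref{prop:type-1:gen_transn} can be used to justify that $q_p$ is in the domain of the generator and that these formal It\^o computations are valid.

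The third step is uniqueness: by Ethier--Kurtz/Petrov, the martingale problem for $\petrovgen$ on symmetric polynomials is well-posed, with a unique path-continuous solution for each initial law, namely \EKP[\alpha,\theta]; since we have produced a path-continuous solution started from an arbitrary point of $\infsimp$, it must be \EKP[\tfrac12,0], giving the claim for type-1, and the type-0 case is identical with $(\alpha,\theta)=(\tfrac12,\tfrac12)$ coming from the \BESQ[1] total mass and the corresponding stationary law $\PDIP[\tfrac12,\tfrac12]$ from Theorem~\ref{thm:stationary}. I expect the main obstacle to be the generator computation, specifically showing that the continual creation of infinitesimally small blocks from the \Stable[\tfrac32] scaffolding contributes exactly the $-\sum_i \alpha\,\partial/\partial x_i$ term with $\alpha=\tfrac12$: this requires carefully quantifying, via the \BESQ[-1] excursion measure $\mBxc$ and the L\'evy measure of the scaffolding, the rate and size-biasing of nascent blocks, and controlling the exchange of limits when passing from finitely many blocks to the countable limit. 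A secondary technical point is verifying that the ranked-masses map is sufficiently regular (continuous, and compatible with the relevant $\sigma$-algebras) for the projected process to inherit path-continuity and the strong Markov property; this should follow from the construction of $(\IPspace,\dI)$ but needs to be stated.
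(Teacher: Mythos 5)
Your overall architecture coincides with the paper's (show the ranked process is a path-continuous Markov process, compute the generator on the power-sum algebra $\ualgebra$, identify with \EKP\ by a uniqueness argument), but the central step as you plan it has a genuine gap. First, your expectation that the emergence of new blocks should produce the $-\alpha\sum_i\partial/\partial x_i$ term with $\alpha=\frac12$ is misplaced: in the correct accounting that term is exactly one half of the $-1$ drift of the \BESQ[-1] masses of the \emph{surviving} leftmost-spindle blocks (the factor $\frac12$ being produced by the time change $u\mapsto u/2$), while the newly created mass contributes nothing at first order to $q_m$ for $m\ge 1$. Indeed, by the transition law of Proposition \ref{prop:type-1:transn} and the Pitman--Yor decomposition used in Lemma \ref{lem:pdhalf}, the new mass at level $y$ inside each clade has total size of order $y$ but is shattered into \PDIP[\frac12,\frac12]-distributed dust, so its $(m+1)$-power sums are $O(y^{m+1})=o(y)$; this is precisely why the $R_i$/dust terms drop out of the computation in Lemmas \ref{thm:identifygen} and \ref{lem:genindivterm}, which are evaluated at initial points where all dust coordinates vanish. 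Chasing the $\alpha$-term in the birth/death bookkeeping therefore cannot close, and if you keep both the per-block $-1$ drift and a putative birth contribution you will double count.

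Second, ``apply It\^o's formula to $q_p$ over the countable block system with births and deaths'' is not a method you have made available: infinitely many blocks are born in every level interval, the blocks do not form a countable family of semimartingales on a common time axis in any sense established in the paper, and saying that the transition kernels ``justify the formal It\^o computations'' concedes the point---once you expand $\BPr^1_x[q_m(W(y))]$ to first order from the kernel you are doing the paper's computation, which proceeds via the decomposition of each clade at level $y$ into leftmost mass $L_i^y$ and dust mass $R_i^y$, the identification (Warren--Yor, Pal) of the de-Poissonized vector of ratios as a Wright--Fisher diffusion with parameters $-\frac12$ and $+\frac12$ run at four times speed, evaluation of that finite-dimensional generator at points with zero dust coordinates, and the truncation bounds of Lemma \ref{lem:bnddeltak} to pass from finitely to countably many blocks; the de-Poissonization time change itself needs the care of Lemma \ref{lem:gentimechange}, since the inverse time change is not integrable. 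Finally, your uniqueness step requires more than the pointwise $y\downarrow0$ limit you plan to prove: to invoke martingale-problem well-posedness you must upgrade that limit to the martingale property (uniform or dominated control of the difference quotients) and justify well-posedness for $(\petrovgen,\ualgebra)$, whereas the paper instead proves strong continuity of the $\ltwo(\PoiDir[\frac12,0])$ semigroup and $(\lambda-\procvgen)\ualgebra=\ualgebra$ (Lemmas \ref{lem:strcontsemigp} and \ref{lem:resolventinverse}) to identify it with Feng--Sun's semigroup and then with Petrov's Feller version. That last point is repairable; the unsubstantiated It\^o-with-births computation and the misattributed origin of the $\alpha=\frac12$ term are the real gaps.
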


In light of Theorem \ref{thm:petroviden}, the de-Poissonized evolutions of Theorem \ref{thm:stationary} may be viewed as labeled variants of \EKP[\frac12,0] and \EKP[\frac12,\frac12] 
diffusions. See Feng and Sun \cite{FengSun10} for a related conjecture. Also see \cite{CdBERS16, FengSunWangXu11}. 
In contrast to the analytic methods in these papers, our Poissonian construction gives a pathwise realization of this diffusion.

Following \cite[equations (82) and (83)]{Pitman03}, for $\alpha\in (0,1)$, the \emph{$\alpha$-diversity} is
\begin{equation}
 \IPLT^{\alpha}_x := \lim_{h\downto 0} \Gamma(1-\alpha)h^{\alpha}\#\{i\geq 1\colon x_i>h\} \qquad \text{for }x\in\infsimp.\label{eq:intro:diversity}
\end{equation}
This may be understood as a continuum analogue to the number of blocks in a partition of $n$. A constant multiple of this is sometimes called the \emph{local time} of $x$ \cite[equation (24)]{PitmYorPDAT}. These quantities arise in a variety of contexts \cite{CSP,PitmWink09}. Ruggiero et\ al.\ \cite{RuggWalkFava13} have studied processes related to \EKP\ diffusions for which $\alpha$-diversity evolves as a diffusion. The following result will be shown to be a corollary of Theorems \ref{thm:stationary} and \ref{thm:petroviden}.

\begin{corollary}\label{cor:Petrov_diversity}
 For $x\in\infsimp$, if $\IPLT^{1/2}_x$ exists, then under \EKP[\frac12,0] or \EKP[\frac12,\frac12] starting from $x$, the diversity $\big(\IPLT^{1/2}_{W(u)},\,u\ge0\big)$ evolves continuously a.s..
\end{corollary}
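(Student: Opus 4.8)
The plan is to push the question off the Kingman simplex $\infsimp$, where the $\frac12$-diversity is only a partially-defined functional (the limit in \eqref{eq:intro:diversity} need not converge), onto the interval-partition space $(\IPspace_1,\dI)$, where the $\frac12$-diversity is a bona fide continuous functional and where Theorem \ref{thm:stationary} already supplies path-continuity.

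First I would lift the initial condition. Given $x\in\infsimp$ with $\IPLT^{1/2}_x$ existing, let $\gamma$ be the interval partition of $[0,1]$ whose blocks, read left to right, have masses $x_1\ge x_2\ge\cdots$. Since the $\alpha$-diversity \eqref{eq:intro:diversity} depends only on the multiset of block masses, $\gamma$ admits $\frac12$-diversity $\IPLT^{1/2}(\gamma)=\IPLT^{1/2}_x$, so $\gamma\in\IPspace_1$; this is the sole point at which the hypothesis on $x$ enters, and it is exactly what puts the lifted state in the space to which Theorems \ref{thm:stationary} and \ref{thm:petroviden} apply.

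Next I would transfer via the identification. Run the de-Poissonized type-1 (resp.\ type-0) evolution $(\ol\beta^u,\,u\ge0)$ from $\ol\beta^0=\gamma$ and let $W(u)$ be the decreasing rearrangement of the masses of $\ol\beta^u$, so $W(0)=x$. By Theorem \ref{thm:petroviden}, $(W(u/2),\,u\ge0)$ has law $\EKP[\frac12,0]$ (resp.\ $\EKP[\frac12,\frac12]$) started from $x$; hence it suffices to show that a.s.\ $u\mapsto\IPLT^{1/2}_{W(u)}$ is finite for every $u$ and continuous, whereupon composing with $u\mapsto u/2$ gives the corollary for both parameter pairs. But $\IPLT^{1/2}_{W(u)}=\IPLT^{1/2}(\ol\beta^u)$, again by symmetry of \eqref{eq:intro:diversity} in the block masses, and by Theorem \ref{thm:stationary} the map $u\mapsto\ol\beta^u$ has continuous paths in $(\IPspace_1,\dI)$ with every element of $\IPspace_1$ carrying a $\frac12$-diversity; since convergence in $(\IPspace_1,\dI)$ forces convergence of $\frac12$-diversities, $u\mapsto\IPLT^{1/2}(\ol\beta^u)$ is continuous a.s.

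The substance is concentrated in the last step: it amounts to the assertion that a type-1 or type-0 evolution that starts with a well-defined diversity keeps one for all later times and varies it continuously --- equivalently, that $\ol\beta$ never leaves the diversity-continuous space $(\IPspace_1,\dI)$ on which Theorem \ref{thm:stationary} places it --- so the only real input beyond Theorems \ref{thm:stationary} and \ref{thm:petroviden} is the (already available) continuity of $\frac12$-diversity under $\dI$. One minor point to record along the way is that the de-Poissonized type-1 evolution is defined for all $u\ge0$ despite its total mass being a \BESQ[0] that hits $0$: near that hitting time $\IPmag{\beta^z}^{-1}$ is not integrable, so $\rho(u)$ of \eqref{eq:de_Pois_tc_intro} is finite for every $u$, in keeping with the stationarity in Theorem \ref{thm:stationary}.
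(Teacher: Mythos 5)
Your argument is correct and is essentially the paper's own proof: the paper likewise deduces continuity of the diversity from the $\dI$-path-continuity in Theorem \ref{thm:stationary} (diversity being controlled by quantity (ii) in Definition \ref{def:IP:metric}), and then transfers the statement to the \EKP\ processes via Theorem \ref{thm:petroviden} together with the compatibility of the diversity definitions \eqref{eq:intro:diversity} and \eqref{eq:IPLT}, the hypothesis on $x$ serving only to lift the initial state into $\IPspace_1$. Your extra remarks (the explicit decreasing-order lift and the finiteness of $\rho(u)$) are fine but not needed beyond what the cited theorems already provide.
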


Since \EKP\ diffusions are reversible, the evolving sequences of ranked block masses in our de-Poissonized evolutions are reversible as well. We make the following conjecture.

\begin{conjecture}\label{conj:reverse_0}
 The de-Poissonized type-0 evolution of Theorem \ref{thm:stationary} is reversible with respect to \PDIP[\frac12,\frac12].
\end{conjecture}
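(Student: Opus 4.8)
The plan is to reduce the conjecture to a pathwise symmetry of the Poissonian construction. Since $\left(\ol\beta^u,\,u\ge0\right)$ is a Hunt process with stationary law $\PDIP[\frac12,\frac12]$ by Theorem \ref{thm:stationary}, reversibility is equivalent to the detailed-balance relation $\PDIP[\frac12,\frac12](d\beta)\,\ol P_u(\beta,d\gamma)=\PDIP[\frac12,\frac12](d\gamma)\,\ol P_u(\gamma,d\beta)$ for the de-Poissonized transition semigroup $\ol P_u$, i.e.\ to $(\ol\beta^u)_{0\le u\le t}\overset{d}{=}(\ol\beta^{t-u})_{0\le u\le t}$ started from $\PDIP[\frac12,\frac12]$ for every $t>0$. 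I would derive this from two pathwise symmetries. First, reversing the horizontal time-axis of a marked L\'evy process $(\bN,\bX)$ reverses the left-to-right order of the blocks in every skewer, and so turns $(\skewer(y,\bN,\bX),\,y\ge0)$ into $(\mathcal{R}\,\skewer(y,\bN,\bX),\,y\ge0)$, where $\mathcal{R}\beta:=\{(\IPmag{\beta}-b,\IPmag{\beta}-a)\colon(a,b)\in\beta\}$ denotes the left-to-right reflection of an interval partition; using the reversal identities for spectrally positive \Stable[\frac32] increments ($\reverseincr$) and for \BESQ[-1] excursions ($\reverseexc$), one checks that the reversal of the data $(\bN_\beta,\bX_\beta)$ underlying a type-0 evolution from $\beta$ is a version of the data underlying a type-0 evolution from $\mathcal{R}\beta$. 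Hence $(\mathcal{R}\ol\beta^u,\,u\ge0)$ is again a de-Poissonized type-0 evolution, and since $\PDIP[\frac12,\frac12]$ is $\mathcal{R}$-invariant --- this is exactly the reversibility of standard Brownian bridge, whose zero-set complement is $\PDIP[\frac12,\frac12]$-distributed \cite[Example 3]{GnedPitm05} --- the stationary type-0 evolution is equal in law to its own $\mathcal{R}$-reflection.

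The crux is then to show that running the stationary de-Poissonized type-0 evolution backwards in time has the same law as reflecting it by $\mathcal{R}$. One route works in the bi-infinite scaffolding picture used to construct type-0 evolutions, a spectrally positive \Stable[\frac32] process that ``comes down from $\infty$'' decorated by $\mBxc$-spindles (see the start of Section \ref{sec:type-0}): since the skewer at level $y$ reads the width of each spindle $f$ at height $y-X(s-)$, the joint transformation combining the reflection $y\mapsto-y$ of levels, the horizontal time reversal of $X$, and the time reversal $f(\cdot)\mapsto f(\zeta(f)-\cdot)$ of each spindle should map this picture to another instance of the same picture. This requires: (i) the time reversal of a \Stable[\frac32] excursion, and of the relevant bridges, is again a spectrally positive \Stable[\frac32] process of the same law; (ii) the excursion measure $\mBxc$ of \BESQ[-1] is invariant under $f(\cdot)\mapsto f(\zeta(f)-\cdot)$, the negative-dimensional analogue of the time-reversal invariance of the It\^o excursion measure; and (iii) the ``coming down from $\infty$'' boundary behaviour is preserved. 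Tracking how the skewer map intertwines with this reflection would identify the time-reversed skewer process as the $\mathcal{R}$-image of a type-0 skewer process, and the Lebesgue time change $\rho$ of \eqref{eq:de_Pois_tc_intro} followed by normalization by total mass is covariant under it thanks to the joint self-similarity of \Stable[\frac32] and \BESQ. An alternative, computational route is to read off the time-reversed transition kernel directly from the explicit kernel of Proposition \ref{prop:type-0:transn} and to recognize it as the $\mathcal{R}$-conjugate of the forward kernel, using reversal identities for $\PDIP[\frac12,\frac12]$-partitions and for their laws conditioned on a distinguished initial sub-interval.

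Combining the two steps, the time reversal of the stationary type-0 evolution equals in law its $\mathcal{R}$-reflection, which by the first step equals the stationary type-0 evolution itself; this is reversibility, and by the time-change relation of Theorem \ref{thm:stationary} it transfers to $(\ol\beta^u,\,u\ge0)$ as stated. The induced statement on ranked block masses --- reversibility of \EKP[\frac12,\frac12] with respect to \PoiDir[\frac12,\frac12] --- is Petrov's theorem \cite{Petrov09}, recovered here through Theorem \ref{thm:petroviden}, and is a necessary consistency check. The scheme is genuinely special to type-0: $\PDIP[\frac12,0]$ has a distinguished leftmost ``meander'' block and is not $\mathcal{R}$-invariant, so the first step fails for type-1 evolutions, consistent with the conjecture being stated for type-0 only.

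The difficulty is concentrated in the crux step. Making the bi-infinite ``coming down from $\infty$'' scaffolding rigorous under time reversal is delicate because the jump heights --- the lifetimes of $\mBxc$-excursions --- are not summable, so $X$ exists only as a compensated limit; one must check that the compensation is compatible with the reflection and that the reflected data still define an admissible $(\bN,\bX)$ whose skewer lives in $(\IPspace,\dI)$. Establishing the precise time-reversal invariance of $\mBxc$, and describing how the escape behaviour at the relevant endpoints transforms, is the key analytic input. The computational route trades this for equally delicate identities governing the left-to-right ordering of $\PDIP[\frac12,\frac12]$-blocks under the transition dynamics. Either way this is the substantial obstacle, which is why the statement is left as a conjecture.
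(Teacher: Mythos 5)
There is a genuine gap here, and in fact there is nothing in the paper to compare your argument against: the statement you were asked about is Conjecture \ref{conj:reverse_0}, which the authors deliberately leave open. The paper only records the weaker fact that the \emph{ranked} block masses evolve reversibly, since by Theorem \ref{thm:petroviden} they follow an \EKP[\frac12,\frac12] diffusion and Petrov's diffusions are reversible; the ordered (interval-partition) statement is not proved anywhere in the text. Your proposal is a reduction strategy, not a proof, and you say so yourself: the ``crux step'' --- that time reversal of the stationary de-Poissonized type-0 evolution agrees in law with its left--right reflection --- is exactly the missing content. Making it rigorous requires, at a minimum, a time-reversal invariance statement for the decorated bi-infinite scaffolding (including an invariance of $\mBxc$ under $\reverseexc$ at the level of the whole Poissonian picture, compatibility of the compensation in \eqref{eq:JCCP_def} with the reversal, and control of the ``coming down from $\infty$'' boundary), none of which is supplied. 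Acknowledging a gap does not close it.

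Moreover, your ``first step'' is also not established and is less innocent than you make it sound. The type-0 construction (Definition \ref{constr:type_0}) is manifestly asymmetric: immigration enters only on the left, while the initial blocks' clades die off on the right. The claim that $(\mathcal{R}\,\ol\beta^u,\,u\ge0)$ is again a de-Poissonized type-0 evolution amounts to the transition kernel of Proposition \ref{prop:type-0:transn} commuting with $\mathcal{R}$, and this does not hold clade-by-clade: given survival, each clade contributes a distinguished \emph{leftmost} block followed by a subordinator-range partition, whose mirror image has the distinguished block on the right, so any such symmetry must intertwine the immigration part with the dying clades globally. The $\mathcal{R}$-invariance of the one-dimensional stationary law $\PDIP[\frac12,\frac12]$ (Proposition \ref{prop:PDIP}) is correct but is far from invariance of the process law. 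So both halves of your scheme --- the reflection equivariance of the dynamics and the identification of time reversal with reflection --- remain unproven, which is precisely why the statement stands as a conjecture rather than a theorem.
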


In \cite{Pal11}, Wright-Fisher diffusions are obtained as a de-Poissonization of a vector of independent \BESQ\ processes via the same time-change as in \eqref{eq:de_Pois_tc_intro}. In that paper, the sum of the \BESQ\ processes turns out to be independent of the de-Poissonized process. See also \cite{Pal13}. A similar result was previously found in a construction of Jacobi diffusions in \cite{WarrYor98}.

\begin{conjecture}\label{conj:mass_struct}
 Consider a type-1 or type-0 evolution $(\beta^y,\,y\ge0)$ starting from $\beta\in\IPspace$. Its total mass process, as in Theorem \ref{thm:BESQ_total_mass}, is independent of its de-Poissonization, as in Theorem \ref{thm:stationary}.
\end{conjecture}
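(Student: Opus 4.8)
The plan is to recognize Conjecture~\ref{conj:mass_struct} as a skew-product statement --- the total mass playing the role of the radial part of a Euclidean Brownian motion, the de-Poissonization that of its (time-changed) angular part --- and to prove it by showing that the two are driven by \emph{orthogonal} noises.

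\emph{Reduction.} Write $R:=(\IPmag{\beta^y},\,y\ge0)$ for the total-mass process, a $\BESQ[\delta]$ from $\IPmag{\beta^0}$ with $\delta=0$ for type-1 and $\delta=1$ for type-0 by Theorem~\ref{thm:BESQ_total_mass}, and $\ol\beta:=(\ol\beta^u,\,u\ge0)$ for the de-Poissonization, a Hunt process on $\IPspace_1$ by Theorem~\ref{thm:stationary}. A $\BESQ[\delta]$ with $\delta<2$ satisfies $\int_0^{\tau}R_z^{-1}\,dz=\infty$ a.s., where $\tau$ is its extinction time (with $\tau=\infty$ when $\delta=1$); hence the clock $A(y):=\int_0^y\IPmag{\beta^z}^{-1}\,dz$ of \eqref{eq:de_Pois_tc_intro} increases to $+\infty$ as $y\uparrow\tau$, so $\rho=A^{-1}$ satisfies $\rho(u)\uparrow\tau$ and $\ol\beta$ is defined on all of $[0,\infty)$. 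Moreover, by the self-similarity of the evolutions --- $(c\scale\beta^{y/c})_{y\ge0}$ is again a type-1 (resp.\ type-0) evolution, started from $c\scale\beta^0$ --- the substitution $z=cw$ in \eqref{eq:de_Pois_tc_intro} shows that the law of $\ol\beta$ given $\beta^0=\beta$ depends on $\beta$ only through $\IPmag{\beta}^{-1}\scale\beta$. Thus the marginal laws are as claimed, and the conjecture reduces to the assertion $R\indep\ol\beta$.

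\emph{Skew-product form.} Since $\beta^y=\IPmag{\beta^y}\scale\ol\beta^{A(y)}$ and $\rho'(u)=\IPmag{\beta^{\rho(u)}}$, put $\sigma(u):=\IPmag{\beta^{\rho(u)}}=R_{\rho(u)}$, the total mass in de-Poissonized time. The process $M_y:=R_y-\IPmag{\beta^0}-\delta y$ is a continuous local martingale with $\langle M\rangle_y=4\int_0^y R_z\,dz$; time-changing by $\rho$ and setting $\widetilde W_u:=\tfrac12\int_0^u\sigma(v)^{-1}\,dM_{\rho(v)}$ (legitimate since $\sigma>0$) produces a Brownian motion in the filtration $\mathcal G_u:=\mathcal F_{\rho(u)}$ --- here $\mathcal F$ is the filtration of $(\bN,\bX)$ --- and $d\sigma(u)=\delta\,\sigma(u)\,du+2\,\sigma(u)\,d\widetilde W_u$, so $\sigma(u)=\IPmag{\beta^0}\,e^{(\delta-2)u+2\widetilde W_u}$ is a geometric Brownian motion. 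As $\widetilde W$ is an explicit functional of $\sigma$, hence of $R$, and conversely $R$ is recovered from $\widetilde W$ by solving the time-change forward, $R\indep\ol\beta$ is equivalent to $\widetilde W\indep\ol\beta$.

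\emph{Orthogonality, and conclusion.} This is the heart of the matter. One must show that, in $(\mathcal G_u)$, the Brownian motion $\widetilde W$ is orthogonal to every martingale of $\ol\beta$: for $\phi$ in the domain of the generator $\bar{\mathcal L}$ of the Hunt process $\ol\beta$, the martingale $N^\phi_u:=\phi(\ol\beta^u)-\phi(\ol\beta^0)-\int_0^u\bar{\mathcal L}\phi(\ol\beta^v)\,dv$ should satisfy $\langle\widetilde W,N^\phi\rangle\equiv0$. Granting this, the pair $(\widetilde W,\ol\beta)$ solves a ``product'' martingale problem; since Wiener measure is the unique law of its part and the law of $\ol\beta$ is determined by its starting point (Theorem~\ref{thm:stationary}), the solution started from $(0,\gamma)$ is the product of Wiener measure and the law of $\ol\beta$ started from $\gamma$, whence $\widetilde W\indep\ol\beta$ and the conjecture. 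Equivalently: conditioning on $R$ leaves $\ol\beta$ a copy of itself, because the radial and angular dynamics do not interact and, as noted in the Reduction step, $A(\infty)=\infty$, so that the entire path of $\ol\beta$ --- not merely a fragment --- is reconstructed.

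\emph{Main obstacle and alternatives.} The difficulty lies entirely in the orthogonality. In this paper the type-1 and type-0 evolutions are available only through transition kernels, not through a generator or an It\^o equation on the infinite-dimensional space $\IPspace$, so proving $\langle\widetilde W,N^\phi\rangle\equiv0$ --- equivalently, that the change of variables $\beta\mapsto(\IPmag{\beta},\IPmag{\beta}^{-1}\scale\beta)$ turns the dynamics into a generator with no cross term between mass and shape --- is not routine. The natural tool is the Poissonian construction: the local-martingale part of $\IPmag{\beta^y}$ is governed by the $\BESQ$ mechanism behind Theorem~\ref{thm:BESQ_total_mass} (a compensated sum over blocks, plus the ``births'' of new blocks), and one would try to evaluate its bracket against the shape martingales using the independence across the blocks $V\in\beta$ in the construction of $(\bN_\beta,\bX_\beta)$, reducing matters to a single $\BESQ[-1]$ clade. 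Two further routes are: (i) to establish the finite-dimensional factorization $\Pr\big(R\in\cdot,\,\ol\beta^{u_1}\in\cdot,\dots,\ol\beta^{u_k}\in\cdot\big)=\Pr(R\in\cdot)\,\Pr\big(\ol\beta^{u_1}\in\cdot,\dots,\ol\beta^{u_k}\in\cdot\big)$ directly from the explicit transition kernels (Propositions~\ref{prop:type-1:gen_transn} and~\ref{prop:type-0:transn}) and the Markov property at the times $\rho(u_i)$, using the scaling identity of the Reduction step; and (ii) to first prove the ranked-mass analogue --- $R$ independent of the $\EKP[\frac12,\theta]$ diffusion of Theorem~\ref{thm:petroviden} --- by adapting the generator computations of Pal~\cite{Pal11} and Warren--Yor~\cite{WarrYor98} from finitely many types to the infinitely-many-types, $\alpha=\frac12$ setting, and then lifting to $\IPspace$.
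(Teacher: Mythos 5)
You have set out to prove a statement that the paper itself leaves open: Conjecture \ref{conj:mass_struct} is stated as a conjecture, with only the weaker Theorem \ref{thm:pseudostat_strong} (strong pseudo-stationarity: independence of the shape from mass-measurable data at a single $(\cF_{\rm mass}^y)$-stopping time, not path-level independence) proved in the paper. Your submission is accordingly a programme rather than a proof, and the gap sits exactly where you locate it. The reduction and skew-product bookkeeping are fine --- self-similarity (Lemma \ref{lem:type-1:scaling}) shows the law of the de-Poissonized process depends only on $\IPmag{\beta}^{-1}\scale\beta$, Theorem \ref{thm:BESQ_total_mass} gives the \BESQ\ total mass, and Proposition \ref{prop:dePois:time_change} gives divergence of the clock --- but the decisive claim, $\langle\widetilde W,N^\phi\rangle\equiv 0$ for all shape martingales, is introduced with ``Granting this'' and never argued. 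That claim is essentially a restatement of the conjecture (absence of any cross term between mass and shape), so nothing has been reduced.

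Moreover, even if orthogonality were established, the step from orthogonality to independence is itself incomplete in this setting: orthogonal martingales are not independent in general, and the ``product martingale problem'' argument requires a well-posed martingale problem (generator plus a core) for the de-Poissonized evolution on $(\IPspace_1,\dI)$, which the paper does not provide --- it only identifies the generator of the ranked-mass image on Petrov's algebra (Theorem \ref{thm:petroviden}), and the interval-partition evolutions themselves are handled through transition kernels, not an It\^o-type description. Your alternative route (ii) runs into the same obstruction you would need to overcome anyway: the block dynamics here are \BESQ[-1] with continual births of new blocks, outside the nonnegative-dimension additivity framework on which the independence arguments of \cite{Pal11,WarrYor98} rest. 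So as it stands the proposal identifies a plausible strategy but proves neither the conjecture nor any nontrivial reduction of it.
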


See Theorem \ref{thm:pseudostat_strong} for a weaker result.

\subsection{An ordered Chinese restaurant process with reseating}
\label{sec:intro:CRP}

We now describe a Markov chain on ordered partitions of $[n]$, for $n \in \BN$, which can be thought of as a discrete approximation of our diffusion on interval partitions. A rigorous convergence theorem, however, is beyond the scope of the current paper.  

The two-parameter Chinese Restaurant Process (denoted by \CRPAT) is a well-known sequential construction for a family of exchangeable random partitions of the natural numbers, due to Dubins and Pitman. See the survey in \cite[Section 3.2]{CSP}. As usual, we represent the blocks of the partition as tables and the natural numbers within the blocks as customers. 

Start with customer 1 sitting at some table. Subsequent customers follow the \CRPAT\ \textit{seating rule}, defined when the parameters $(\alpha,\theta)$ satisfy $0\le \alpha\leq 1$, $\theta \ge 0$. If a table has $m$ customers then the next customer to enter joins that table with probability proportional to $m-\alpha$, or sits alone with probability proportional to $k\alpha+\theta$, where $k$ is the number of tables currently occupied. The \CRPAT\ \emph{with reseating} is a Markov chain on partitions of $[n]$. We start with $n$ numbered customers already seated. At each step, a uniform random customer stands up and selects a new seat according to the \CRPAT\ seating rule.

Petrov \cite{Petrov09} showed that the process of ranked table sizes in this Markov chain converges in a scaling limit, as $n$ tends to infinity, to the \EKP[\alpha,\theta] diffusion of Theorem \ref{thm:petroviden}. There has been considerable recent interest in these and related models \cite{CdBERS16, FengBook, FengSun10, FengSunWangXu11, FengWang07, Fulman05, Ruggiero14, RuggWalk09, RuggWalkFava13}. However, births and deaths of ``tables'' are invisible in Petrov's diffusion limit.

We rectify this by ordering tables from left to right in a special way, following \cite{PitmWink09}. We also turn the \CRP\ into a continuous-time Markov chain on vectors of occupied table sizes. If a table has $m$ customers then a new customer joins that table after an exponential time with rate $m-\alpha$. For each occupied table, a new customer enters and begins a new table immediately to its right after an exponential time with rate $\alpha$. Also, a new customer enters to begin a new table to the left of the leftmost table with rate $\theta$. This corresponds to the usual \CRPAT\ because, starting from a time when there are $k$ tables seating $n$ customers, the total rate at which a customer enters and starts a new table is $k\alpha + \theta$. 
This process gives rise to a regenerative composition structure \cite{GnedPitm05} and is related to a two-parameter family of continuum random trees \cite{PitmWink09}. 

The \emph{ordered Poissonized down-up} \CRPAT\ is defined by modifying this continuous-time Markov chain by having each customer exit after an independent exponential time with rate 1. Consequently the population at each table is a birth-and-death chain with birth rate $m-\alpha$ and death rate $m$, where $m$ is the number of customers currently seated at that table.


The two diffusions described by the main theorems above are continuum analogues of this process, with $(\alpha, \theta)=\left(\frac12,0\right)$ for type-1 evolutions or $\left(\frac12,\frac12\right)$ for type-0 evolutions. These two cases are of special interest, as \PDIP[\frac12,0] and \PDIP[\frac12,\frac12] arise as the interval partitions formed by excursions of Brownian motion and Brownian bridge, respectively. This construction can be generalized to all $\alpha\in (0,1)$, $\theta\ge0$. The authors will take this up in subsequent work.

We relate the ordered Poissonized down-up \CRP\ to scaffoldings with spindles, as in Figure \ref{fig:skewer_1}, by introducing the following tree structure. Whenever a new table is begun immediately to the right of an existing table, we say the older table is the parent of the new one. Then the number of tables evolves like a Crump-Mode-Jagers (CMJ) branching process. See \cite[Chapter 6]{JagersBranching} for a discussion of this family of processes and related references; also see \cite{Nerman81}.

\begin{figure}[t]
 \centering
 \input{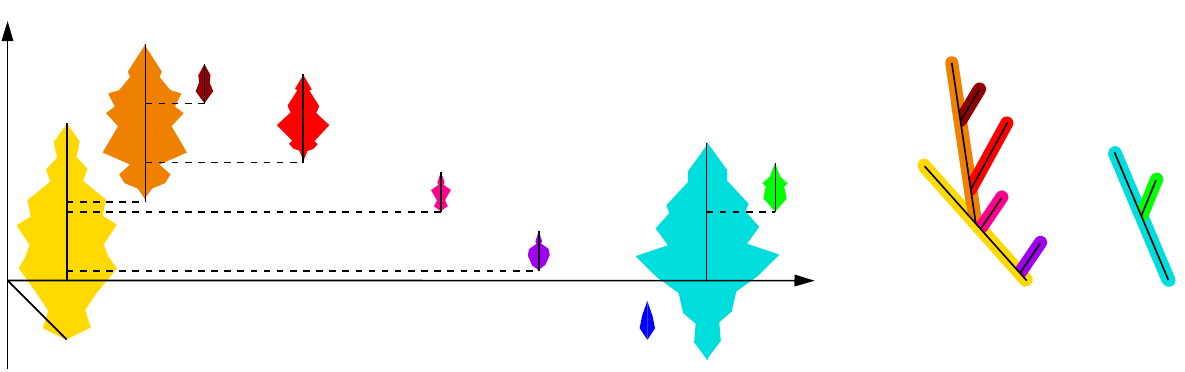_t}
 \caption{The forest of splitting trees corresponding to the JCCP of Figure \ref{fig:skewer_1}.\label{fig:marked_chron_tree}}
\end{figure}

CMJ branching processes are often associated with \emph{chronological trees} or \emph{splitting trees} \cite{Geiger95,GeigKers97,Lambert10}. In a chronological tree, edges have lengths representing time. Branch points represent birth events, leaves represent deaths, and each individual is represented by a certain path directed away from the root, with the sum of edge lengths along the path describing the lifetime of the individual. Consider Figure \ref{fig:marked_chron_tree}, which represents a family tree. The vertical line labeled $1$ represents an individual with three children, labeled $\{2,5,6\}$. Individual $2$ gives rise to two more children $\{3,4\}$. The chronological tree is drawn on the right. 

Note that at each branch point, the path describing the parent continues up in a straight line, while the path describing the child branches off to the right. We may represent a splitting tree by a jumping chronological contour process (JCCP). Imagine a jumping flea traveling around the tree as follows. It begins at the root and jumps up the left-hand side to the leftmost leaf. It walks continuously down at constant speed until it reaches a branch point. Then, it jumps up to the left-most leaf in the resulting branch, and so on. The JCCP is the distance from the root to the flea as it varies over time. Note that the lifespans of individuals correspond to the jumps of the JCCP. If each individual has an i.i.d. lifetime and has children at a constant rate then the JCCP is a L\'evy process \cite{Lambert10}.

There is an equivalent representation of the ordered Poissonized down-up \CRP\ as a JCCP (scaffolding) whose jumps are marked with the evolving number of customers seated at an existing table (discrete spindles). Each table population (spindle) is an excursion of a birth-and-death chain, starting with one individual and dying when it hits zero, independent of every other table. Now suppose the parameters are chosen such that this excursion law has a scaling limit under which it converges vaguely to a $\sigma$-finite measure on $\Exc$. Also assume that the JCCP itself converges, under a similar scaling (and perhaps, compensation), to another spectrally positive L\'evy process. Then we recover the continuous spindles-on-scaffolding picture described around Definition \ref{def:skewer}. It is obvious that the natural skewer map on the discrete JCCP recovers the vector of sizes of tables in the ordered Poissonized down-up \CRP. This inspires a similar definition in continuum.

Notice that, when the scaffolding is of infinite variation (as in our case), the phylogenetic situation is more complex, as ancestors do not have immediate children. Rather, between each ancestor and descendant there are infinitely many intervening generations, most of them extremely short-lived. Recently, the convergence and description of similar phylogenetic structures have been studied in \cite{DelaporteII,Delaporte15} and \cite{LambertBravo16}, including specific discussion of the \Stable[\frac32] case.

\subsection{Bigger picture: Aldous diffusion on continuum trees}
\label{sec:intro:AD}

\begin{figure}\centering
 \input{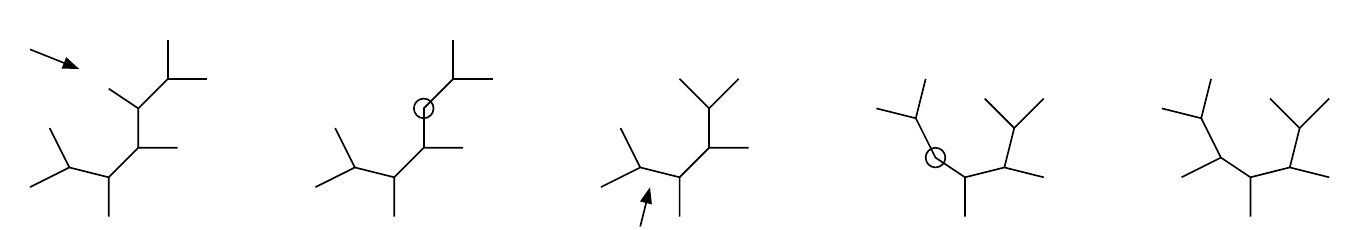_t}
 \caption{From left to right, one Aldous down-up move.\label{fig:AC_move}}
\end{figure}

The Aldous chain \cite{Aldous00,Schweinsberg02} is a Markov chain on the space of (rooted, for our purposes) binary trees with $n$ labeled leaves. Each transition of the chain, called a down-up move, has two stages. First, a uniform random leaf is deleted, and the resulting degree-two branch point is contracted away. Next, a uniform random edge is selected, we insert a new branch point into the middle of that edge, and we extend a new leaf-edge out from that branch point. This is illustrated in Figure \ref{fig:AC_move} where $n=6$ and the leaf labeled $3$ is deleted and re-inserted. The stationary distribution of this chain is the uniform distribution on rooted binary trees with $n$ labeled leaves.

We are interested in the scaling limit of this stochastic process on the space of trees without leaf labels as $n$ tends to infinity, where each edge is endowed with length $1/\sqrt{n}$. It is a long-standing conjecture due to Aldous \cite{AldousDiffusionProblem} that the limit is a diffusion on some space of real trees, with invariant distribution given by the Brownian continuum random tree \cite{AldousCRT1}. 

The current article is the first of a sequence of projects whose goal is to construct and study this conjectured limit, superseding the unpublished preprint \cite{PalPreprint}. We mention this connection only briefly since it will be taken up elsewhere in more detail. First we \textit{Poissonize} the Aldous chain. Attach independent exponential clocks of rate one to each edge and independent exponential clocks of rate two to each leaf. If a leaf clock rings, we perform deletion of the leaf, and if an edge clock rings we perform an insertion at that edge. The idea is that although Poissonization changes the Markov chain, the limiting diffusion of the Poissonized chains should merely differ by a rescaling of time and space from the conjectured limit of the Aldous chain. Hence, a de-Poissonization (as in Theorem \ref{thm:stationary}) of the limit will produce the desired process. For a justification of this choice of rates and discussion of de-Poissonization see \cite{Pal13}.

Now, suppose we mark a uniform random pair of distinct leaves and decompose the tree into: (i) the path, called the \emph{spine}, from the root up to the branch point that separates the two marked leaves, (ii) the sequence of subtrees growing out from the spine, and (iii) the two subtrees above the branch point that respectively contain the two marked leaves. The sequence of leaf-counts in these subtrees, excluding the spine itself, which contains no leaves, will evolve as an ordered \CRP[\frac12,-\frac12]. Although these parameters lie outside the usual domain of the \CRP, this can nevertheless be made rigorous. 
Much of the technical groundwork in order to do this is covered in this article. Our approach towards constructing the Aldous diffusion is then to generalize the two-leaf argument to $k$ leaves, prove consistency, and pass to the limit. 

This inspires the names of our diffusions. The ordered \CRP[\frac12,-\frac12] will have as its continuum analogue a \emph{type-2 evolution}, with two leftmost blocks at any fixed time corresponding to the two top subtrees, whereas the type-1 has a single leftmost block and the type-0 has none.

\tableofcontents
\section{The state space $\IPspace$: interval partitions with diversity}
\label{sec:prelim_IP}

\subsection{Diversity and operations on interval partitions}

\begin{definition}\label{def:diversity_property} Let $\HIPspace$ denote the set of all interval partitions in the sense of Definition \ref{def:IP_1}. 
  We say that an interval partition $\beta\in\HIPspace$ of a finite interval $[0,M]$ has the \emph{$\frac12$-diversity property}, or that $\beta$ is an 
  \emph{interval partition with diversity}, if the following limit exists for every $t\in [0,M]$:
  \begin{equation}
   \IPLT_{\beta}(t) := \sqrt{\pi}\lim_{h\downto 0}\sqrt{h}\#\{(a,b)\in \beta\colon\ |b-a|>h,\ b\leq t\}.\label{eq:IPLT}
  \end{equation}
  We denote by $\IPspace\subset\HIPspace$ the set of interval partitions $\beta$ that possess the $\frac12$-diversity property. We call $\IPLT_{\beta}(t)$ the 
  \emph{diversity} of the interval partition up to $t\in[0,M]$. For $U\in\beta$, $t\in U$, we write $\IPLT_{\beta}(U)=\IPLT_{\beta}(t)$, and we write 
  $\IPLT_{\beta}(\infty) := \IPLT_{\beta}(M)$ to denote the \emph{total ($\frac12$-)diversity} of $\beta$.
\end{definition}

Note that $\IPLT_\beta(U)$ is well-defined, since $\IPLT_{\beta}$ is constant on each interval $U \in \beta$, as the intervals are disjoint. 
For $\beta$ a partition of the unit interval and $x = (x_1,x_2,\ldots)$ the order statistics of its block masses, the total $\frac12$-diversity $\IPLT_{\beta}(\infty)$ defined above agrees with $\IPLT^{1/2}_x$, as in \eqref{eq:intro:diversity}.

\begin{proposition}\label{prop:IP:Stable} Let $Y=(Y(s),s\geq 0)$ denote a \Stable[\frac12] subordinator with Laplace
  exponent $\Phi(\lambda) = \lambda^{1/2}$. Then, for any $T>0$,  
 \begin{equation}\label{eq:IPs_from_subord}
  \beta := \{(Y(s-),Y(s))\colon\ s\in [0,T),\ Y(s-)<Y(s)\}.
 \end{equation}
 is an interval partition with diversity $\IPLT_\beta(\infty)=T$ a.s.. We call $\beta$ a \Stable[\frac12] interval partition. 
\end{proposition}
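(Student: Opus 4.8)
The plan is to reduce the existence of the diversity limit \eqref{eq:IPLT} to a law of large numbers for the jump-counting processes of $Y$, and to identify the resulting diversity function with the inverse subordinator.

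\emph{Setup and the L\'evy measure.} First I would record that a subordinator with Laplace exponent $\Phi(\lambda)=\lambda^{1/2}$ has zero drift and L\'evy measure $\nu(dx)=\tfrac{1}{2\sqrt\pi}\,x^{-3/2}dx$ on $(0,\infty)$; an integration by parts confirms $\int_0^\infty(1-e^{-\lambda x})\nu(dx)=\lambda^{1/2}$, and the quantity that matters for us is the tail $\nu\big((h,\infty)\big)=\pi^{-1/2}h^{-1/2}$ for $h>0$. Since $\nu$ is infinite, $Y$ is a.s.\ strictly increasing and pure jump; hence, a.s., the intervals in \eqref{eq:IPs_from_subord} are pairwise disjoint, their lengths sum to $Y(T-)<\infty$, and their union differs from $[0,Y(T-)]$ by the closure of the range of $Y|_{[0,T)}$, which is Lebesgue-null. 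So $\beta$ is an interval partition of $[0,Y(T-)]$ with $\IPmag{\beta}=Y(T-)$, and it only remains to verify the $\frac12$-diversity property and the value $\IPLT_\beta(\infty)=T$.

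\emph{A law of large numbers for jump counts.} For $h>0$ and $s\ge 0$ set $\Pi_h(s):=\#\{u\in[0,s]\colon Y(u)-Y(u-)>h\}$. By the L\'evy--It\^o description, for each fixed $h$ the process $s\mapsto\Pi_h(s)$ is a Poisson counting process of rate $r_h=\nu((h,\infty))=\pi^{-1/2}h^{-1/2}$; it is non-decreasing in $s$, non-increasing in $h$, and $\#\{(a,b)\in\beta\colon|b-a|>h\}=\Pi_h(T-)$. I claim that, almost surely, $\sqrt\pi\sqrt h\,\Pi_h(s)\to s$ for every $s\ge0$ as $h\downarrow0$. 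For fixed $s$ and $q\in(0,1)$, the variables $\sqrt\pi\sqrt{q^n}\,\Pi_{q^n}(s)$ have mean $\sqrt\pi\sqrt{q^n}\,s\,r_{q^n}=s$ (independent of $n$) and variance $\pi q^n\cdot s\,r_{q^n}=\sqrt\pi\,s\,q^{n/2}$, which is summable in $n$; Chebyshev and Borel--Cantelli give $\sqrt\pi\sqrt{q^n}\,\Pi_{q^n}(s)\to s$ a.s. For general $h\in(q^{n+1},q^n]$, joint monotonicity yields $\sqrt q\cdot\sqrt\pi\sqrt{q^n}\Pi_{q^n}(s)\le\sqrt\pi\sqrt h\,\Pi_h(s)\le q^{-1/2}\cdot\sqrt\pi\sqrt{q^{n+1}}\Pi_{q^{n+1}}(s)$, so $\sqrt q\,s\le\liminf\le\limsup\le q^{-1/2}s$; letting $q\uparrow1$ along a sequence gives the limit $s$ for each fixed $s$, and a countable intersection over rational $s$ together with monotonicity in $s$ upgrades this to a single almost sure event on which $\sqrt\pi\sqrt h\,\Pi_h(s)\to s$ for all $s\ge0$.

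\emph{Identification and conclusion.} For $t\in[0,Y(T-)]$ set $\sigma_t:=\inf\{s\ge0\colon Y(s)>t\}\le T$. From the definition of $\sigma_t$ and strict monotonicity of $Y$, a short argument gives $\Pi_h(\sigma_t-)\le\#\{(a,b)\in\beta\colon|b-a|>h,\ b\le t\}\le\Pi_h(\sigma_t)$, with the two bounds differing by at most $\mathbf 1\{\Delta Y(\sigma_t)>h\}\le1$. Multiplying by $\sqrt\pi\sqrt h$ and invoking the previous paragraph, $\IPLT_\beta(t)=\sigma_t$ for every $t\in[0,Y(T-)]$; in particular the limit \eqref{eq:IPLT} exists, so $\beta\in\IPspace$, and $\IPLT_\beta(\infty)=\IPLT_\beta(Y(T-))=\sigma_{Y(T-)}=T$ a.s.\ (since a.s.\ $Y(T-)=Y(T)$ and $Y$ is strictly increasing). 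The main obstacle is the almost sure convergence of the previous paragraph along the continuum $h\downarrow0$ uniformly in the endpoint $s$: the Poisson counts have mean of order $h^{-1/2}$, hence relative fluctuations of order $h^{1/4}$, and the standard remedy is to prove convergence along geometric subsequences via Borel--Cantelli, squeeze general $h$ using monotonicity of $\Pi_h(s)$ in both variables, and send $q\uparrow1$; the remaining first-passage bookkeeping is routine.
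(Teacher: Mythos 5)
Your proof is correct and follows essentially the same route as the paper, which simply cites the strong law of large numbers for the Poisson process of jumps together with monotonicity in $t$; your Borel--Cantelli argument along geometric subsequences, the monotone sandwiching in $h$ and $s$, and the first-passage identification $\IPLT_\beta(t)=\sigma_t$ are exactly the details the paper leaves implicit.
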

\begin{proof}
 This follows from the Strong Law of Large Numbers for the Poisson process of jumps and the monotonicity of $\IPLT_{\beta}(t)$ in $t$.
\end{proof}

There are various natural operations for interval partitions:


\begin{definition}\label{def:IP:concat}
 We define a \emph{reversal involution} $\reverse_{\textnormal{IP}}\colon\HIPspace\rightarrow\HIPspace$ and a \emph{scaling map} $\scaleI\colon (0,\infty)\times\HIPspace\rightarrow\HIPspace$ by saying, for $c>0$ and $\beta\in\HIPspace$,
 \begin{equation}
  \reverse_{\textnormal{IP}}(\beta)=\left\{\left(\IPmag{\beta}-b,\,\IPmag{\beta}-a\right),\,(a,b)\in\beta\right\}, \qquad \scaleI[c][\beta] = \{(ca,cb)\colon (a,b)\in \beta\}.\label{eq:IP:scale_def}
 \end{equation}
 Let $(\beta_a)_{a\in\mathcal{A}}$ denote a family of interval partitions indexed by a totally ordered set $(\cA,\preceq)$. For the purpose of this definition, let 
 $S_{\beta}(a-) := \sum_{b\prec a}\IPmag{\beta_b}$ for $a\in\mathcal{A}$.
 If $S_{\beta}(a-) < \infty$ for every $a\in \mathcal{A}$, then we define the \emph{concatenation}
 \begin{equation}\label{eq:IP:concat_def}
  \Concat_{a\in\mathcal{A}}\beta_a := \{(x+S_{\beta}(a-),y+S_{\beta}(a-)\colon\ a\in\mathcal{A},\ (x,y)\in \beta_a\}.
 \end{equation}
 When $\mathcal{A}=\{a_1,a_2\}$, we denote this by $\beta_{a_1}\concat\beta_{a_2}$. 
 We call $(\beta_a)_{a\in\mathcal{A}}$ \emph{summable} if $\sum_{a\in\mathcal{A}}\IPmag{\beta_a} < \infty$. 
 It is then \emph{strongly summable} if the concatenated partition satisfies the diversity property \eqref{eq:IPLT}. 
%
\end{definition}

Before specifying our chosen metric on $\IPspace$, we present a few more examples of random interval partitions with the diversity property, selected from the literature.

\begin{proposition}\label{prop:PDIP}\begin{enumerate}[label=(\roman*), ref=(\roman*)]
 \item Consider the zero-set $\mathcal{Z}=\{t\in[0,1]\colon B^{\rm br}_t=0\}$ of standard Brownian bridge $B^{\rm br}$. Then $[0,1)\setminus\mathcal{Z}$ is a union of
   disjoint open intervals that form an interval partition $\gamma$ with diversity a.s.. The ranked interval lengths have \PoiDir[\frac12,\frac12] distribution. We call 
   $\gamma$ a Poisson-Dirichlet interval partition \PDIP[\frac12,\frac12].
 \item For Brownian motion $B$, the interval partition $\gamma^\prime$ of $[0,1]$ associated with its zero-set has diversity a.s.. The ranked
   interval lengths have $\PoiDir[\frac12,0]$ distribution. We call the reversed interval partition $\reverse_{\textnormal{IP}}(\gamma^\prime)$ a \PDIP[\frac12,0].
 \item Let $Y$ be a \Stable[\frac12] subordinator. 
   Let $S$ be independent of $Y$ with \ExpDist[\lambda] distribution and
   $T := \inf\{s > 0\colon\ Y(s) > S\}$. Then $Y(T-)$ and $S-Y(T-)$ are independent \GammaDist[\frac12,\lambda] variables. For $\beta$ as in 
   \eqref{eq:IPs_from_subord} and $\beta^\prime=\{(0,S-Y(T-))\}\concat\beta$, \label{item:PDIP:Stable}
   $$\reverse_{\textnormal{IP}}(\overline{\beta})\stackrel{d}{=}\overline{\beta}:=\IPmag{\beta}^{-1} \scaleI \beta\sim\PDIP[\textstyle\frac12,\frac12]\quad\text{and}\quad\overline{\beta}^\prime:=\IPmag{\beta^\prime}^{-1}\scaleI\beta^\prime\sim\PDIP[\textstyle\frac12,0].$$
  \end{enumerate}
\end{proposition}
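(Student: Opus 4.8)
The plan is to reduce everything to part (iii) together with Proposition~\ref{prop:IP:Stable}, and within (iii) to separate the distributional claims from the claim that the partitions lie in $\IPspace$. The ranked-mass assertions in (i) and (ii) --- that the decreasing rearrangement of the block masses of $\gamma$ is $\PoiDir[\frac12,\frac12]$ and that of $\gamma^\prime$ is $\PoiDir[\frac12,0]$ --- are classical, and there I would simply cite \cite{GnedPitm05}, \cite{CSP} and \cite{PitmYor92}. Two bookkeeping facts I would record once and use repeatedly: $\reverse_{\textnormal{IP}}$ preserves membership in $\IPspace$, since it sends $\IPLT_\eta(t)$ to $\IPLT_\eta(\IPmag\eta)-\IPLT_\eta(\IPmag\eta-t)$; and $\reverse_{\textnormal{IP}}$ commutes with the scaling map, $\reverse_{\textnormal{IP}}(c\scaleI\eta)=c\scaleI\reverse_{\textnormal{IP}}(\eta)$.

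For the Gamma laws in (iii) I would run the classical undershoot computation. Writing $U$ for the renewal measure of $Y$ and $\overline\nu(z):=\nu((z,\infty))$ for the tail of its L\'evy measure, the exponent $\lambda^{1/2}$ forces $U(dx)=\pi^{-1/2}x^{-1/2}\,dx$ and $\overline\nu(z)=\pi^{-1/2}z^{-1/2}$, and the compensation formula applied to the jumps of $Y$ gives
\[
 \mathbf{P}(Y(T-)\in dx,\ S-Y(T-)\in dz)=e^{-\lambda x}\,U(dx)\cdot\lambda e^{-\lambda z}\,\overline\nu(z)\,dz=\frac{\lambda}{\pi}\,x^{-1/2}z^{-1/2}e^{-\lambda(x+z)}\,dx\,dz,
\]
which factorises as a product of two $\GammaDist[\frac12,\lambda]$ densities; this yields both the independence and the marginals in one stroke. (Consistently, $\IPmag\beta=Y(T-)$ and $\IPmag{\beta^\prime}=Y(T-)+(S-Y(T-))=S\sim\ExpDist[\lambda]$.)

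For the partitions, note first that $\beta$ is exactly of the form \eqref{eq:IPs_from_subord}, only with the a.s.\ finite random horizon $T$ in place of a deterministic one; the proof of Proposition~\ref{prop:IP:Stable} --- a strong law for the Poisson process of jumps of $Y$, applied simultaneously along a countable dense set of horizons and extended by monotonicity in the horizon and in $h$ --- is insensitive to this randomisation, so $\beta\in\IPspace$ a.s., and hence $\beta^\prime\in\IPspace$ (one extra block) and $\overline\beta,\overline{\beta}^\prime\in\IPspace$ a.s. To identify the laws I would realise $Y$ as the right-continuous inverse of the (suitably normalised) local time at $0$ of a standard Brownian motion $B$, so that $Y(T-)=g:=\sup\{t\le S\colon B_t=0\}$: then $\beta$ is the interval partition cut out of $[0,g]$ by the zero set of $B|_{[0,g]}$, and $\kappa:=\beta\concat\{(0,S-g)\}$ --- the undershoot block appended \emph{on the right} --- is the one cut out of $[0,S]$ by the zero set of $B|_{[0,S]}$. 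Brownian scaling gives $S^{-1}\scaleI\kappa\stackrel{d}{=}\gamma^\prime$, while the Biane--Yor description of the pre-$g$ path (valid with $S$ in place of $1$, again by scaling; see \cite{CSP}) says that the rescaled path $(g^{-1/2}B_{gv})_{v\in[0,1]}$ is a standard Brownian bridge, independent of $(g,S)$. Hence $\overline\beta=g^{-1}\scaleI\beta\stackrel{d}{=}\gamma\sim\PDIP[\frac12,\frac12]$, and reversibility of the Brownian bridge gives $\reverse_{\textnormal{IP}}(\overline\beta)\stackrel{d}{=}\overline\beta$. For $\beta^\prime$: conditionally on $(g,S)$ the partition $\beta$ is the zero-set partition of a Brownian bridge, hence reversible, so $\reverse_{\textnormal{IP}}(\beta)\stackrel{d}{=}\beta$ jointly with $(g,S)$; since $\reverse_{\textnormal{IP}}(\kappa)=\{(0,S-g)\}\concat\reverse_{\textnormal{IP}}(\beta)$ while $\beta^\prime=\{(0,S-g)\}\concat\beta$, this gives $\beta^\prime\stackrel{d}{=}\reverse_{\textnormal{IP}}(\kappa)$ and therefore $\overline{\beta}^\prime=S^{-1}\scaleI\beta^\prime\stackrel{d}{=}\reverse_{\textnormal{IP}}(S^{-1}\scaleI\kappa)\stackrel{d}{=}\reverse_{\textnormal{IP}}(\gamma^\prime)\sim\PDIP[\frac12,0]$. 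Finally, the equality in law $\overline\beta\stackrel{d}{=}\gamma$ of \emph{ordered} partitions, together with $\overline\beta\in\IPspace$ a.s., yields $\gamma\in\IPspace$ a.s.\ --- the diversity claim of (i) --- and likewise $\overline{\beta}^\prime\stackrel{d}{=}\reverse_{\textnormal{IP}}(\gamma^\prime)$ with $\overline{\beta}^\prime\in\IPspace$ a.s.\ yields $\reverse_{\textnormal{IP}}(\gamma^\prime)\in\IPspace$, and hence $\gamma^\prime\in\IPspace$, a.s., completing (ii).

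I expect the main obstacle to be the order-bookkeeping in (iii): keeping straight whether the undershoot block sits on the left (as in $\beta^\prime$) or on the right (as in $\kappa$), how $\reverse_{\textnormal{IP}}$ and the scaling map interact with the concatenations, and keeping the Brownian local-time normalisation consistent with Laplace exponent $\lambda^{1/2}$ --- a slip in any of these swaps $\PDIP[\frac12,0]$ with $\PDIP[\frac12,\frac12]$. By comparison the analytic inputs (the undershoot density, the Biane--Yor bridge description, and the extension of Proposition~\ref{prop:IP:Stable} to an a.s.\ finite random horizon) are all standard and routine.
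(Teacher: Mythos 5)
Your proof is correct and follows essentially the same route as the paper: both rest on Proposition~\ref{prop:IP:Stable} (for the diversity claims), the identification of the \Stable[\frac12] subordinator with the inverse local time of Brownian motion, and the Perman--Pitman--Yor bridge decomposition \cite[Lemma 3.7]{PermPitmYor92} together with bridge reversibility, with the Poisson--Dirichlet laws read from \cite{CSP}. The only difference is that you spell out steps the paper delegates to citations (the undershoot/compensation-formula computation of the two independent \GammaDist[\frac12,\lambda] variables, the extension of Proposition~\ref{prop:IP:Stable} to the random horizon $T$, and the left/right bookkeeping for the undershoot block), all of which you carry out correctly.
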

\begin{proof} By Proposition \ref{prop:IP:Stable} and the definition of the scaling map $\scaleI$, both $\beta$ and $\overline{\beta}$ are interval partitions with
  diversity. This easily entails the same for $\beta^\prime$ and $\overline{\beta}^\prime$. Recall that the inverse local time of $B$ is a \Stable[\frac12] subordinator. 
  Hence, the remainder is well-known; see for example \cite[Lemma 3.7]{PermPitmYor92}, which states that the last zero $G$ of $B$ is a \BetaDist[\frac12,\frac12] 
  variable independent of a Brownian bridge $(B(uG)/\sqrt{G},0\le u\le 1)$. Finally, the \PoiDir[\frac12,\frac12]\ and \PoiDir[\frac12,0]\ distributions can be read from 
  \cite[Corollary 4.9]{CSP}.
\end{proof}

\subsection{The metric spaces $(\IPspace,\dI)$ and $(\HIPspace,d_H)$}

\begin{definition} \label{def:IP:metric}
 We adopt the standard discrete mathematics notation $[n] := \{1,2,\ldots,n\}$.
 For $\beta,\gamma\in \IPspace$, a \emph{correspondence} from $\beta$ to $\gamma$ is a finite sequence of ordered pairs of intervals $(U_1,V_1),\ldots,(U_n,V_n) \in \beta\times\gamma$, $n\geq 0$, where the sequences $(U_j)_{j\in [n]}$ and $(V_j)_{j\in [n]}$ are each strictly increasing in the left-to-right ordering of the interval partitions.

 The \emph{distortion} of a correspondence $(U_j,V_j)_{j\in [n]}$ from $\beta$ to $\gamma$, denoted by $\dis(\beta,\gamma,(U_j,V_j)_{j\in [n]})$, is defined to be the maximum of the following four quantities:
 \begin{enumerate}[label=(\roman*), ref=(\roman*)]
  \item $\sup_{j\in [n]}|\IPLT_{\beta}(U_j) - \IPLT_{\gamma}(V_j)|$,
  \item $|\IPLT_{\beta}(\infty) - \IPLT_{\gamma}(\infty)|$,
  \item $\sum_{j\in [n]}|\Leb(U_j)-\Leb(V_j)| + \IPmag{\beta} - \sum_{j\in [n]}\Leb(U_j)$, \label{item:IP_m:mass_1}
  \item $\sum_{j\in [n]}|\Leb(U_j)-\Leb(V_j)| + \IPmag{\gamma} - \sum_{j\in [n]}\Leb(V_j)$. \label{item:IP_m:mass_2}
 \end{enumerate}
 Note that the second of these quantities depends only on the partitions $\beta$ and $\gamma$ and not on the correspondence. 
 
 For $\beta,\gamma\in\IPspace$ we define
 \begin{equation}\label{eq:IP:metric_def}
  \dI(\beta,\gamma) := \inf_{n\ge 0,\,(U_j,V_j)_{j\in [n]}}\dis\big(\beta,\gamma,(U_j,V_j)_{j\in [n]}\big),
 \end{equation}
 where the infimum is over all correspondences from $\beta$ to $\gamma$.
\end{definition}

\begin{proposition}\label{prop:IP_metric}
 The map $\dI: \IPspace^2\to [0,\infty)$ is a metric on $\IPspace$.
\end{proposition}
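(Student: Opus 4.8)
The plan is to verify the three metric axioms, with essentially all the work in positivity and the triangle inequality. Nonnegativity is immediate, since each of the four quantities (i)--(iv) defining $\dis$ is nonnegative; for (iii) and (iv) this uses that distinct blocks of an interval partition are disjoint, so $\sum_{j\in[n]}\Leb(U_j)\leq\IPmag{\beta}$ and $\sum_{j\in[n]}\Leb(V_j)\leq\IPmag{\gamma}$. Symmetry is also immediate: reversing pairs, $(U_j,V_j)_{j\in[n]}\mapsto(V_j,U_j)_{j\in[n]}$, is a bijection between correspondences from $\beta$ to $\gamma$ and from $\gamma$ to $\beta$ that fixes quantities (i) and (ii) and interchanges (iii) and (iv); hence $\dis$, and so $\dI$, is symmetric. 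For $\dI(\beta,\beta)=0$ I would take the correspondence $(U_1,U_1),\dots,(U_n,U_n)$ with $U_1<\dots<U_n$ the $n$ blocks of $\beta$ of largest mass (all of them if $\beta$ is finite): then (i) and (ii) vanish and (iii), (iv) both equal $\IPmag{\beta}-\sum_{j\in[n]}\Leb(U_j)$, which tends to $0$ as $n\to\infty$ since $\sum_{U\in\beta}\Leb(U)=\IPmag{\beta}<\infty$.

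The main point is that $\dI(\beta,\gamma)=0$ forces $\beta=\gamma$. It suffices to show that every block $U=(a,a+m)\in\beta$, with $m=\Leb(U)>0$, also lies in $\gamma$; the reverse inclusion follows by symmetry, and two interval partitions with the same set of blocks coincide. Fix $\varepsilon\in(0,m)$ and a correspondence $(U_j,V_j)_{j\in[n]}$ from $\beta$ to $\gamma$ with $\dis<\varepsilon$. Since the leftover mass $\IPmag{\beta}-\sum_{j\in[n]}\Leb(U_j)$ is bounded by quantity (iii), hence by $\dis<\varepsilon<m$, the block $U$ must appear in the correspondence, say $U=U_{j_0}$, and then $V_{j_0}$ is a block of $\gamma$ with $|\Leb(V_{j_0})-m|\leq\dis$. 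The key estimate is on the left endpoint $\ell$ of $V_{j_0}$ in $\gamma$: because $V_1<\dots<V_{j_0-1}$ all lie to the left of $V_{j_0}$, and $U_1<\dots<U_{j_0-1}$ account for all but at most $\dis$ of the mass of $\beta$ to the left of $U$, one gets $\ell\geq a-2\dis$; arguing symmetrically with the masses lying to the right, and using $|\IPmag{\beta}-\IPmag{\gamma}|\leq 3\dis$ (which follows by combining quantities (iii) and (iv)), one gets $\ell\leq a+6\dis$. Letting $\dis\to0$ along a sequence $\varepsilon_k\downarrow0$, the block $V_{j_0}$ of $\gamma$ has mass tending to $m>0$ and left endpoint tending to $a$; since $\gamma$ has summable block masses, only finitely many of its blocks have mass exceeding $m/2$, so along a subsequence $V_{j_0}$ is a fixed block $V^*\in\gamma$, and passing to the limit in the endpoint and mass bounds forces $V^*=(a,a+m)=U$. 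Hence $U\in\gamma$.

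For the triangle inequality $\dI(\beta,\delta)\leq\dI(\beta,\gamma)+\dI(\gamma,\delta)$, I would compose correspondences. Given $(U_j,W_j)_{j\in[m]}$ from $\beta$ to $\gamma$ and $(W'_k,V_k)_{k\in[p]}$ from $\gamma$ to $\delta$, set $J=\{\,j:W_j=W'_{k(j)}\text{ for some }k(j)\,\}$ and form $(U_j,V_{k(j)})_{j\in J}$; strict monotonicity in $\gamma$ of both $(W_j)$ and $(W'_k)$ ensures this is a correspondence from $\beta$ to $\delta$. For quantities (i) and (ii) one uses $W_j=W'_{k(j)}$ and the ordinary triangle inequality for $|\cdot|$. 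For quantity (iii) of the composite one writes $\IPmag{\beta}-\sum_{j\in J}\Leb(U_j)=\big(\IPmag{\beta}-\sum_{j\in[m]}\Leb(U_j)\big)+\sum_{j\in[m]\setminus J}\Leb(U_j)$, bounds $\Leb(U_j)\leq|\Leb(U_j)-\Leb(W_j)|+\Leb(W_j)$ for $j\notin J$, and uses that the blocks $\{W_j:j\notin J\}$ are disjoint from $\{W'_k:k\in[p]\}$ in $\gamma$, so $\sum_{j\notin J}\Leb(W_j)\leq\IPmag{\gamma}-\sum_{k\in[p]}\Leb(W'_k)$; collecting terms yields a bound by $\dis(\beta,\gamma,(U_j,W_j)_{j\in[m]})+\dis(\gamma,\delta,(W'_k,V_k)_{k\in[p]})$. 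Quantity (iv) is symmetric. Taking the infimum over the two correspondences gives the triangle inequality.

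I expect the main obstacle to be the positivity argument: one must combine the order-preservation of correspondences with careful mass accounting to conclude not merely that the partner of a fixed block has approximately the right mass and position, but that it stabilizes to exactly that block. The bookkeeping for quantities (iii) and (iv) in the triangle inequality is routine but needs care so that each contribution is absorbed exactly once.
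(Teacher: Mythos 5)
Your proof is correct, and for the triangle inequality — the only part the paper actually writes out — it follows essentially the same route: compose the two correspondences through the blocks of the middle partition that they share, bound quantities (i) and (ii) by the ordinary triangle inequality, and absorb the mass of the middle blocks used only by the first correspondence into the leftover-mass term of the second, exactly as in the paper's bookkeeping. The paper leaves positive-definiteness "as an exercise for the reader," and your endpoint-and-mass stabilization argument (forcing the partner of a fixed block to converge to, and hence equal, that block) fills that exercise in correctly.
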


\begin{proof}
 Symmetry is built into the definition, and we leave positive-definiteness as an exercise for the reader. We will prove that $\dI$ satisfies the triangle inequality.
 
 Suppose that $\dI(\alpha,\beta) = a$ and $\dI(\beta,\gamma) = b$. Then
 \begin{equation}\label{eq:divbound}
  |\IPLT_{\alpha}(\infty)-\IPLT_{\gamma}(\infty)| \leq |\IPLT_{\alpha}(\infty)-\IPLT_{\beta}(\infty)| + |\IPLT_{\beta}(\infty)-\IPLT_{\gamma}(\infty)| \leq a+b.
 \end{equation}
 Now take $\e > 0$. It suffices to show that $\dI(\alpha,\gamma) \leq a+b+2\e$.
 
 There exist correspondences $(U_j,V_j)_{j\in[m]}$ and $(W_j,X_j)_{j\in[n]}$, from $\alpha$ to $\beta$ and from $\beta$ to $\gamma$ respectively, with distortions less than $a+\e$ and $b+\e$ respectively. We will split these two sequences into two parts each. Let $(\hat V_j)_{j\in[k]} = (\hat W_j)_{j\in[k]}$ denote the subsequence of intervals that appear in both $(V_j)_{j\in[m]}$ and $(W_j)_{j\in[n]}$; note that $k$ may equal zero, i.e.\ the overlap may be empty. For each $j\in[k]$, let $\hat U_j$ and $\hat X_j$ denote the intervals in $\alpha$ and $\gamma$ respectively that are paired with $\hat V_j = \hat W_j$ in the two correspondences. Then, let $(\hat U_j,\hat V_j)_{j\in [m]\setminus[k]}$ denote the remaining terms in the first correspondence not accounted for in the intersection, and let $(\hat W_j,\hat X_j)_{j\in [n]\setminus[k]}$ denote the remaining terms in the second correspondence. So overall, the sequences $(\hat U_j,\hat V_j)_{j\in[m]}$ and $(\hat V_j,\hat W_j)_{j\in[n]}$ are reorderings of the two correspondences.
 
 We will show that the correspondence $(\hat U_j,\hat X_j)_{j\in[k]}$ has distortion less than $a+b+2\e$. There are four quantities, listed in Definition \ref{def:IP:metric}, that we must bound. Quantity (ii) has already been bounded in (\ref{eq:divbound}). To bound (i), observe that
 $$
  \sup_{j\in[k]}|(\IPLT_{\alpha}(\hat U_j) - \IPLT_{\gamma}(\hat X_j)| \leq \sup_{j\in[k]}\left(|\IPLT_{\alpha}(\hat U_j) - \IPLT_{\beta}(\hat V_j)| + |\IPLT_{\beta}(\hat W_j) - \IPLT_{\gamma}(\hat X_j)|\right)
  	< a+b+2\e.
 $$
 We now go about bounding (iii), which is more involved. By the triangle inequality,
 \begin{align*}
  &\sum_{j\in[k]}|\Leb(\hat U_j) - \Leb(\hat X_j)| + \IPmag{\alpha} - \sum_{j\in[k]}\Leb(\hat U_j)\\
  	&\leq \sum_{j\in[m]}|\Leb(U_j) - \Leb(V_j)| + \sum_{j\in[n]}|\Leb(W_j) - \Leb(X_j)|\\
  	&\quad - \sum_{j\in[m]\setminus[k]}|\Leb(\hat U_j) - \Leb(\hat V_j)|
  	 + \left(\IPmag{\alpha} - \sum_{j\in[m]}\Leb(\hat U_j)\right) + \sum_{j\in[m]\setminus[k]}\Leb(\hat U_j).
 \end{align*}
 Since the $(\hat V_j)_{j\in[m]\setminus[k]}$ are members of $\beta$ not listed in $(W_j)_{j\in[n]}$,
 $$
  \sum_{j=k+1}^m\Leb(\hat U_j) - \sum_{j=k+1}^m|\Leb(\hat U_j) - \Leb(\hat V_j)| \leq \sum_{j=k+1}^m\Leb(\hat V_j)
  	\leq \IPmag{\beta} - \sum_{j=1}^n\Leb(W_j),
 $$
 again by the triangle inequality. Thus,
 \begin{align*}
  &\sum_{j\in[k]}|\Leb(\hat U_j) - \Leb(\hat X_j)| + \IPmag{\alpha} - \sum_{j\in[k]}\Leb(\hat U_j)\\
  	&\leq \sum_{j\in[m]}\!|\Leb(U_j)\! -\! \Leb(V_j)|\! +\! \IPmag{\alpha}\! -\! \sum_{j\in[m]}\Leb(U_j)
  	\!+\! \sum_{j\in[n]}\!|\Leb(W_j)\! -\! \Leb(X_j)|\! +\! \IPmag{\beta}\! -\! \sum_{j\in[n]}\Leb(W_j)\\
  	&\leq a+\e + b+\e.
 \end{align*}
 This is the desired bound on quantity (iii) in Definition \ref{def:IP:metric}. The same argument bounds (iv):
 $$
  \sum_{j\in[k]}|\Leb(\hat U_j) - \Leb(\hat X_j)| + \IPmag{\gamma} - \sum_{j\in[k]}\Leb(\hat X_j) \leq a+b+2\e.
 $$
 Therefore $\dI(\alpha,\gamma) \leq a+b+2\e$, as desired.
\end{proof}

\begin{theorem}\label{thm:Lusin}
 $(\IPspace,\dI)$ is Lusin, i.e.\ homeomorphic to a Borel subset of a compact metric space. 
\end{theorem}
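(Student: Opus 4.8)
The plan is to realize $(\IPspace,\dI)$ as a continuous bijective image of a Borel subset of a Polish space; this suffices, since a Borel subset of a Polish space is a Lusin space, and a continuous injective image of a Lusin space inside a metrizable space is again Lusin. Concretely, for $\beta\in\IPspace$, a partition of $[0,M]$ with diversity, I would consider the triple
\[
 \Phi(\beta):=\big(C_\beta,\ \IPLT_\beta,\ \IPmag\beta\big)\ \in\ \mathcal Z:=\mathcal F\big([0,\infty)\big)\times C\big([0,\infty),[0,\infty)\big)\times[0,\infty),
\]
where $C_\beta:=[0,M]\setminus\bigcup_{U\in\beta}U$ is the (compact, Lebesgue-null) gap set, viewed as a point of the Fell space $\mathcal F([0,\infty))$ of closed subsets of $[0,\infty)$ (which is compact metrizable), and $\IPLT_\beta$ is the continuous nondecreasing diversity function, extended to be constant on $[M,\infty)$. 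Then $\mathcal Z$ is Polish, and I would prove: (I) $\Phi$ is injective; (II) $\Phi(\IPspace)$ is Borel in $\mathcal Z$; (III) $\Phi^{-1}\colon\Phi(\IPspace)\to(\IPspace,\dI)$ is continuous. Granting these, $(\IPspace,\dI)$ is Lusin, i.e.\ homeomorphic to a Borel subset of a compact metric space. All three coordinates are genuinely needed: $C_\beta$ alone records neither the diversity nor the total mass --- a finite $\tfrac1n$-net inside $C_\beta$, or $C_\beta$ with one extra far-away point, are gap sets of partitions converging to $C_\beta$ in the Fell topology but not in $\dI$.

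Step (I) is immediate: $\IPmag\beta=\max C_\beta$, and the blocks of $\beta$ are exactly the connected components of $[0,\IPmag\beta]\setminus C_\beta$ (each is a maximal open subinterval), so $\beta$ is recovered from $C_\beta$. For step (III), suppose $\Phi(\beta_n)\to\Phi(\beta)$; then $\IPmag{\beta_n}\to\IPmag\beta=:M$, so eventually the gap sets all lie in $[0,M+1]$, where Fell convergence is Hausdorff convergence, and $\IPLT_{\beta_n}\to\IPLT_\beta$ uniformly on $[0,M+1]$. Given $\e>0$, fix finitely many blocks $U_1<\cdots<U_m$ of $\beta$ of total mass $\geq\IPmag\beta-\e$; for $n$ large, Hausdorff convergence $C_{\beta_n}\to C_\beta$ forces $\beta_n$ to contain blocks $V^n_1<\cdots<V^n_m$ with left and right endpoints converging to those of the $U_i$, so that $\Leb(V^n_i)\to\Leb(U_i)$ and, by uniform convergence of the diversity functions together with uniform continuity of $\IPLT_\beta$, also $\IPLT_{\beta_n}(V^n_i)\to\IPLT_\beta(U_i)$. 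The correspondence $(U_i,V^n_i)_{i\le m}$ then has all four distortion quantities of Definition~\ref{def:IP:metric} at most a fixed multiple of $\e$ for $n$ large: the mass quantities (iii),(iv) because $\IPmag{\beta_n}-\sum_i\Leb(V^n_i)\to\IPmag\beta-\sum_i\Leb(U_i)\le\e$, and quantity (ii) because $|\IPLT_{\beta_n}(\infty)-\IPLT_\beta(\infty)|\to0$. Hence $\dI(\beta_n,\beta)\to0$.

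The crux is step (II). A triple $(C,h,M)\in\mathcal Z$ lies in $\Phi(\IPspace)$ precisely when $M=\max C$, $C$ is Lebesgue-null with $0\in C$, $h$ is nondecreasing with $h(0)=0$ and $h\equiv h(M)$ on $[M,\infty)$, and the \emph{diversity-consistency} relation holds: for every $t\in[0,M]$, the limit in \eqref{eq:IPLT} --- computed for the interval partition whose blocks are the connected components of $[0,M]\setminus C$ --- exists and equals $h(t)$. Every condition but the last is evidently Borel in $(C,h,M)$ (for instance $\Leb\colon\mathcal F([0,\infty))\to[0,\infty]$ is Borel, and $\{h\text{ nondecreasing}\}$ is closed). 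For the diversity-consistency relation, one checks that for fixed rational $u,t>0$ the count $C\mapsto\#\{\text{components of }[0,\max C]\setminus C\text{ of length}>u\text{ with right endpoint}\le t\}$ is Borel --- indeed lower semicontinuous, since the ordered lengths of the largest such components depend continuously on $C$ in Hausdorff distance --- so that $\limsup_{u\downarrow0}\sqrt{\pi u}\,\#\{\cdots\}$ and the matching $\liminf$ are Borel functions of $C$, whence ``these agree, with common value $h(t)$'' for all rational $t$ (enough, by monotonicity and continuity of both sides) cuts out a Borel set. This last point --- forcing the pointwise diversity limit into Borel form, which rests on understanding how the block-counting functions interact with the Fell/Hausdorff topology on gap sets --- is the main obstacle; steps (I) and (III) are routine once $\Phi$ is in hand, the one preliminary worth checking being that $\IPLT_\beta$ is genuinely continuous (no jumps, since at most one block can end at any given point), so that $\Phi$ indeed takes values in $\mathcal Z$.
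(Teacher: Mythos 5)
Your argument rests on the standing assumption that $\IPLT_\beta$ is continuous for every $\beta\in\IPspace$: you need it to place $\Phi(\beta)$ in $C([0,\infty),[0,\infty))$, you invoke it in step (III) (uniform convergence plus ``uniform continuity of $\IPLT_\beta$''), and it is what makes the rational-$t$ reduction in step (II) work. This assumption is false. Definition \ref{def:diversity_property} only requires the limit \eqref{eq:IPLT} to exist at every $t$, and the diversity can jump at a point where infinitely many small blocks accumulate from the right. Concretely, fix $L>0$, put $\ell_j=L^2/\pi j^2$, and let $\beta$ consist of the block $(0,1)$ together with blocks of lengths $\ell_1,\ell_2,\ldots$ laid end to end in $(1,\,1+\sum_j\ell_j]$ from right to left, so that the $j$-th block has right endpoint $1+\sum_{i\ge j}\ell_i$ and the blocks accumulate at $1$. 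For every $t$ the limit \eqref{eq:IPLT} exists: it is $0$ for $t\le 1$ (only the block $(0,1)$ ends by then), while for any $t>1$ only finitely many of the $B_j$ are excluded by the constraint ``right endpoint $\le t$'' and $\sqrt{\pi h}\,\#\{j\colon\ell_j>h\}\to L$, so $\IPLT_\beta(t)=L$. Hence $\beta\in\IPspace$ but $\IPLT_\beta$ jumps at $t=1$; your parenthetical justification (``at most one block can end at any given point'') addresses the wrong mechanism, since the jump comes from accumulation of blocks, not from a single block. Consequently $\Phi$ does not even map $\IPspace$ into your $\mathcal Z$. If you enlarge the second coordinate to \cadlag\ increasing functions, step (III) must be redone (Skorokhod convergence no longer gives convergence of values at your fixed test points), and, more seriously, the rational-$t$ reduction in step (II) fails precisely at the jump times of $h$: agreement at rational times only traps $\liminf$ and $\limsup$ of \eqref{eq:IPLT} between $h(t_0-)$ and $h(t_0)$ at a jump time $t_0$, and neither forces the limit to exist there nor identifies its value; requiring existence of the two-sided limit at each jump time is a genuine extra Borel condition.

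This is exactly the difficulty the paper's proof is organized around: it pairs each $\beta$ with the right-continuous modification $\IPLT_\beta(\cdot\,+)$, embeds $(\IPspace,\dI)$ isometrically into an explicit completion $(\cJ,d_\cJ)$ whose points are pairs of an interval partition and a right-continuous increasing function constant on blocks, proves $(\cJ,d_\cJ)$ complete and separable, and then shows the image is Borel (Lemma \ref{lmcompl}), the delicate part being to express in Borel form both that $f=\IPLT^+_\beta$ (checkable at rational times, via Lemma \ref{lem:dJ_meas}) and that $\IPLT_\beta(t)$ exists at every jump time of $f$, which is handled by measurably enumerating the jumps of $f$. So your overall strategy (a continuous injection into a Polish space with Borel image, then the Lusin--Souslin argument) is viable in principle and differs in flavor from the paper's isometric-completion route, but the step you yourself flag as the main obstacle is where the substance lies, and your treatment of it is not valid as written; repairing it leads essentially to the bookkeeping of Appendix \ref{sec:IP_space}.
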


We give a detailed account of the topological properties of $(\IPspace,\dI)$ in Appendix \ref{sec:IP_space}, including the proof of this proposition.
It follows from Theorem \ref{thm:Lusin} that $(\IPspace,\dI)$ is a Borel space, i.e. bi-measurably in bijective correspondence with a Borel subset of $[0,1]$. In this setting, regular conditional distributions exist; see Kallenberg \cite[Theorem A1.2, Theorem 6.3]{Kallenberg}.

The reader may wonder: Why take the metric space $(\IPspace,\dI)$? Why restrict to interval partitions with diversity? As noted in the introduction, we will define two  
diffusions on $(\IPspace,\dI)$. The transition kernels are such that, if we start one of these processes from any state in $\HIPspace$, then the 
evolving interval partition will immediately enter the subspace $\IPspace$. Furthermore, the evolution of diversities will also be continuous after entering $\IPspace$. 
If we start in $\IPspace$, continuity also holds at time 0. Ruggiero et al.\ \cite{RuggWalkFava13} have studied a related family of diffusions on the Kingman simplex, driven by continuously evolving diversity. 
For us, diversities will play an important role in setting up the Aldous diffusion as an evolution of the Brownian CRT, where
diversities correspond to lengths in the tree. In this context, the particular metric $\dI$ relates to Gromov-Hausdorff and Gromov-Hausdorff-Prohorov
metrics on spaces of (weighted) real trees.

We have elected to represent interval partitions as sets of disjoint open intervals, but they may be alternatively represented as the complements of the unions of these sets. We will write
$$
 G(\alpha) := \big[0,\IPmag{\alpha}\big] \setminus \bigcup\nolimits_{U\in \alpha}U \qquad \text{for }\alpha\in\HIPspace.
$$
Such sets $G(\alpha)$ are closed and bounded, with Lebesgue measure zero. Note that $G$ is injective.

\begin{definition}\label{def:Hausdorff}
 For $\alpha,\beta\in\HIPspace$, we define
 \begin{equation}
  d_H(\alpha,\beta) := \inf\Bigg\{ r>0\colon\ G(\alpha) \subseteq \bigcup_{x\in G(\beta)}(x-r,x+r),\ G(\beta) \subseteq \bigcup_{x\in G(\alpha)}(x-r,x+r) \Bigg\}.
 \end{equation}
 We call $d_H$ the \emph{Hausdorff metric on $\HIPspace$}. 
 Recalling Definition \ref{def:IP:metric} of correspondences and distortion, we define the \emph{Hausdorff distortion} of a correspondence between $\beta,\gamma\in\HIPspace$ to be the maximum of quantities \ref{item:IP_m:mass_1} and \ref{item:IP_m:mass_2} in that definition. We denote this by $\dis_H$. Then we define
 \begin{equation}\label{eq:IP:Hausdorff_2_def}
  d'_H(\beta,\gamma) := \inf_{n\ge 0,\,(U_j,V_j)_{j\in [n]}}\dis_H\big(\beta,\gamma,(U_j,V_j)_{j\in [n]}\big).
 \end{equation}
\end{definition}

Note that $d_H$ is a metric, as it is the pullback via $G$ of the Hausdorff metric on compact subsets of $\BR$. It follows from the proof of Proposition \ref{prop:IP_metric} that $d_H'$ is a metric as well.

\begin{proposition}
 $(\HIPspace,d_H)$ is Lusin. Furthermore, $\HIPspace$ is not closed, but a Borel subset of the locally compact space $(\mathcal{C},d_H)$ of (collections of disjoint open intervals that form the complements of) compact subsets of $[0,\infty)$.
\end{proposition}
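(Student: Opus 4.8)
The plan is to establish the three assertions — that $(\HIPspace, d_H)$ is Lusin, that $\HIPspace$ is not closed in $(\cC, d_H)$, and that $\HIPspace$ is a Borel subset of $(\cC, d_H)$ — by working on the side of closed subsets of $[0,\infty)$, where $G$ is an isometric embedding by construction. First I would recall that the space $\cC$ of closed subsets of $[0,\infty)$ that arise as complements of unions of disjoint open intervals, equipped with the Hausdorff metric, is locally compact and second countable (it is essentially the space of closed subsets of a fixed closed bounded interval once we track the right endpoint $\IPmag{\cdot}$, and the Hausdorff topology on closed subsets of a compact space is compact metrizable by Blaschke's selection theorem). So it suffices to exhibit $\HIPspace$, identified via $G$ with a subset of $\cC$, as a Borel subset of this ambient locally compact second countable metric space; being Borel in a Polish (indeed $\sigma$-compact Polish) space, it is then Lusin.

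For the Borel assertion, the key step is to write the property ``$\alpha$ is an interval partition'' — i.e.\ $G(\alpha)$ has Lebesgue measure zero and $[0,\IPmag{\alpha}]\setminus G(\alpha)$ is a union of the open intervals of $\alpha$ covering up to a null set — as a countable combination of Borel conditions on the closed set. The condition $\Leb(G(\alpha)) = 0$ is Borel: $K \mapsto \Leb(K)$ is upper semicontinuous on closed subsets of a compact interval with respect to the Hausdorff metric, hence Borel, and $\{K : \Leb(K) = 0\}$ is the corresponding Borel set. The requirement that $\alpha$ genuinely be a \emph{finite} interval $[0,M]$ covered by its blocks translates to $0 \in G(\alpha)$, $M = \max G(\alpha) \in G(\alpha)$, and nothing else; these are closed conditions. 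So $\HIPspace \cong \{K \in \cC : \Leb(K) = 0\}$ up to the bookkeeping of endpoints, which is Borel.

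Next, for the non-closedness, I would produce an explicit sequence in $\HIPspace$ converging in $d_H$ to a limit not in $\HIPspace$. The natural choice: take $\alpha_n$ to be a partition of $[0,1]$ into $n$ equal blocks, so $G(\alpha_n) = \{0, 1/n, 2/n, \ldots, 1\}$; these converge in Hausdorff metric to $[0,1]$, which has positive Lebesgue measure and so is not $G(\beta)$ for any interval partition $\beta$. This shows $\{K : \Leb(K) = 0\}$ is not Hausdorff-closed, hence $\HIPspace$ is not closed in $\cC$. (One should also remark that $G(\alpha_n) \in \cC$, i.e.\ each is a complement of finitely many disjoint open intervals, which is immediate.)

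The main obstacle I anticipate is purely the careful setup of the ambient space $(\cC, d_H)$ and verifying its local compactness and second countability, together with checking that $G$ is a homeomorphism onto its image so that topological properties transfer; the measurability of $K \mapsto \Leb(K)$ (via upper semicontinuity on Hausdorff-compact families) is standard but must be invoked cleanly. Once these are in place, the three claims follow as above. As the text indicates, the full details are deferred to the appendix; here I would give the sequence $\alpha_n$ for non-closedness and the identification $\HIPspace \cong \{K \in \cC : \Leb(K) = 0\}$ for the Borel and Lusin claims, citing Theorem \ref{thm:Lusin} and its proof for the parallel analysis of $(\IPspace, \dI)$.
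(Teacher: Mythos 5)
Your proposal is correct and takes essentially the same route as the paper: local compactness of $(\mathcal{C},d_H)$ from compactness of the hyperspace of compact subsets of $[0,M]$, an explicit sequence of finite partitions for non-closedness, and Borel-ness of the image $G(\HIPspace)=\{K\colon 0\in K,\ \Leb(K)=0\}$, which the paper leaves to the reader and which you correctly supply via upper semicontinuity of $K\mapsto\Leb(K)$. The only cosmetic difference is the non-closed example: your limit $[0,1]$ works because $d_H$ is the Hausdorff metric on the compact sets $G(\alpha)$ (so $[0,1]$ is a point of $\mathcal{C}$ distinct from $G(\emptyset)=\{0\}$), whereas the paper appends an extra block $(1,2)$ so that the limit $\{(1,2)\}$ is unambiguously outside $\HIPspace$ even when elements of $\mathcal{C}$ are read as collections of intervals.
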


\begin{proof}
 By \cite[Theorem 7.3.8]{BuraBuraIvan01}, the subspace of compact subsets of $[0,M]$ is compact, hence $(\mathcal{C},d_H)$ is locally compact. $\HIPspace$ is not closed since $\{(j-1)2^{-n},j2^{-n}),j\in[2^n]\}\cup\{(1,2)\}$, $n\ge 1$, is a sequence in $(\HIPspace,d_H)$, even in $(\IPspace,d_H)$, but with limit $\{(1,2)\}$ in $\mathcal{C}\setminus\HIPspace$. The proof that $\HIPspace\subset\mathcal{C}$ is a Borel subset is left to the reader.
\end{proof}   

\begin{proposition}[$d_H$ is equivalent to $d_H'$ and weaker than $\dI$]\label{prop:Hausdorff}
 \begin{enumerate}[label=(\roman*), ref=(\roman*)]
  \item For every $\epsilon > 0$ there exist some $\beta,\gamma\in\IPspace$ for which $d_H(\beta,\gamma) < \epsilon$ and $\dI(\beta,\gamma) > 1/\epsilon$.\label{item:Haus:neq}
  \item For $\beta,\gamma\in\IPspace$, we have $d_H'(\beta,\gamma)\leq \dI(\beta,\gamma)$.\label{item:Haus:leq}
  \item The metrics $d_H$ and $d_H'$ generate the same topology on $\HIPspace$.\label{item:Haus:equiv}
  \item Each of $\dI$, $d_H$, and $d_H'$ generates the same Borel $\sigma$-algebra on $\IPspace$.\label{item:Haus:sig}
 \end{enumerate}
\end{proposition}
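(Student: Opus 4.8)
The plan is to prove the four parts in turn, with \ref{item:Haus:equiv} carrying essentially all the work.

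For \ref{item:Haus:neq} I would exhibit an explicit witness. Take $\gamma = \{(0,1)\}$, so $G(\gamma) = \{0,1\}$ and $\IPLT_\gamma \equiv 0$. Given $\epsilon > 0$, choose $\sigma \in \IPspace$ with $\IPmag{\sigma} = \epsilon/3$ and $\IPLT_{\sigma}(\infty) > 1/\epsilon$; such $\sigma$ exists, for instance with block masses proportional to $i^{-2}$ for $i \ge N$ and $N$ large (or, since $\scaleI$ multiplies total diversity by the square root of the scaling factor, by rescaling a \Stable[\frac12] interval partition from Proposition \ref{prop:IP:Stable}). Put $\beta := \sigma \concat \{(0, 1 - \epsilon/3)\}$, which lies in $\IPspace$ since appending a single block changes neither summability nor the existence of the diversity limit. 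Then $G(\beta) = G(\sigma) \cup \{1\} \subseteq [0,\epsilon/3] \cup \{1\}$, so each of $G(\beta)$, $G(\gamma)$ lies in the $(\epsilon/3)$-neighbourhood of the other and $d_H(\beta,\gamma) \le \epsilon/3 < \epsilon$; on the other hand $|\IPLT_\beta(\infty) - \IPLT_\gamma(\infty)|$ is one of the four quantities bounded by the distortion of every correspondence, so $\dI(\beta,\gamma) \ge \IPLT_\beta(\infty) = \IPLT_\sigma(\infty) > 1/\epsilon$ (the single block contributes nothing to the diversity). Part \ref{item:Haus:leq} is immediate from the definitions: for any correspondence, $\dis_H$ is the maximum of quantities \ref{item:IP_m:mass_1} and \ref{item:IP_m:mass_2} of Definition \ref{def:IP:metric}, hence at most $\dis$; taking the infimum over correspondences yields $d_H' \le \dI$.

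For \ref{item:Haus:equiv}, since $d_H$ and $d_H'$ are metrics it suffices to show they have the same convergent sequences. In one direction I would prove $d_H \le 4 d_H'$ on $\HIPspace$: given a correspondence $(U_j, V_j)_{j \in [n]}$ with $\dis_H < r$, the unmatched $\beta$- and $\gamma$-mass is each $< r$ and $\sum_j |\Leb(U_j) - \Leb(V_j)| < r$, so the left endpoints of $U_j$ and $V_j$ differ by $< 2r$ and the right endpoints by $< 3r$; every point of $G(\beta)$ lies in the closure of a component of $[0,\IPmag{\beta}]$ with all matched blocks removed, such a component contains only unmatched blocks and hence has length $< r$ and abuts $0$, $\IPmag{\beta}$, or a matched endpoint, so the point is within $4r$ of $G(\gamma)$, and symmetrically. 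For the converse, suppose $r_n := d_H(\beta_n, \gamma) \to 0$ with $\gamma$ fixed; fix $\delta > 0$ and $\eta > 0$ such that the finitely many blocks $W_1 < \cdots < W_K$ of $\gamma$ of mass $\ge \eta$ have total mass $> \IPmag{\gamma} - \delta$. Using the identity $d_H(A,B) = \|\mathrm{dist}(\cdot, A) - \mathrm{dist}(\cdot, B)\|_\infty$ for compact $A, B$, for $n$ with $r_n < \eta/2$ the midpoint of $W_i$ (at $G(\gamma)$-distance $\ge \eta/2$) lies in a block $V_i^n$ of $\beta_n$, and a short argument with the same identity shows the $V_i^n$ are distinct, increasing in $\beta_n$, with $|\Leb(V_i^n) - \Leb(W_i)| \le 2 r_n$, while $|\IPmag{\beta_n} - \IPmag{\gamma}| \le r_n$. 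Then the correspondence $(V_i^n, W_i)_{i \in [K]}$ has $\dis_H < \delta + (4K+1) r_n$, so $\limsup_n d_H'(\beta_n, \gamma) \le \delta$; letting $\delta \downarrow 0$ gives $d_H'(\beta_n, \gamma) \to 0$. Hence $d_H$ and $d_H'$ have the same convergent sequences and so generate the same topology on $\HIPspace$, a fortiori on $\IPspace$.

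For \ref{item:Haus:sig}: by \ref{item:Haus:equiv} the metrics $d_H$ and $d_H'$ generate the same Borel $\sigma$-algebra on $\IPspace$, so it remains to identify $\mathcal{B}_{\dI}(\IPspace)$ with $\mathcal{B}_{d_H}(\IPspace)$. From $d_H \le 4 d_H' \le 4 \dI$ (combining the bound above with \ref{item:Haus:leq}), the inclusion $\iota \colon (\IPspace, \dI) \to (\mathcal{C}, d_H)$ is continuous and injective; since $(\IPspace, \dI)$ is Lusin, hence a standard Borel space (Theorem \ref{thm:Lusin}), and $(\mathcal{C}, d_H)$ is Polish (locally compact and $\sigma$-compact), the Lusin--Souslin theorem makes $\iota$ a Borel isomorphism onto its (Borel) image, and since the subspace topology on $\iota(\IPspace) = \IPspace \subseteq \mathcal{C}$ is exactly that of $d_H$, we conclude $\mathcal{B}_{\dI}(\IPspace) = \mathcal{B}_{d_H}(\IPspace)$. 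I expect the main obstacle to be the converse half of \ref{item:Haus:equiv}, namely extracting from $d_H$-proximity alone an order-preserving correspondence matching the large blocks of $\gamma$ with nearby, comparably sized blocks of $\beta_n$; the bookkeeping with block midpoints and the distance-function identity for the Hausdorff metric is the crux.
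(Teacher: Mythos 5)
Your proof is correct, and parts (i)--(iii) follow essentially the paper's own route: for (i) both arguments exploit a low-mass, high-diversity component that is invisible to $d_H$ but forces $\dI$ to exceed $1/\epsilon$ via the total-diversity term (the paper compares $\alpha$ with $\alpha^M_{2\epsilon}$ from \eqref{eq:IP:separate_LT_mass}; your explicit witness works just as well); (ii) is read off the definitions in both; and for (iii) the paper likewise proves a Lipschitz bound $d_H\leq 3d_H'$ (via intermediate partitions $\beta',\gamma'$ and the triangle inequality, where you bound endpoint displacements directly and get the constant $4$ --- immaterial) and then matches the finitely many large blocks by the same midpoint trick, the only cosmetic difference being that the paper fixes the centre $\beta$ of a $d_H$-ball while you fix the limit $\gamma$ of a $d_H$-convergent sequence. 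Part (iv) is where you genuinely diverge: the paper argues by hand, writing each $\dI$-ball as a countable Boolean combination of $d_H'$-measurable sets using the finite-block approximations $(\beta_n,D_{\beta,n})$ of Lemma \ref{lem:dJ_meas} together with Lemma \ref{lem:Haus_Skor_fin}, whereas you embed $(\IPspace,\dI)$ continuously and injectively into the Polish space $(\cC,d_H)$ and invoke the Lusin--Souslin theorem via Theorem \ref{thm:Lusin}. Your soft argument is legitimate and shorter; the one point that must be (and can be) checked is non-circularity, and indeed the paper's proof of Theorem \ref{thm:Lusin} uses only part (ii) of the present proposition (in Lemma \ref{lmcompl}), never part (iv), so quoting it is safe. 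What the paper's explicit route buys is independence from descriptive-set-theoretic machinery and from Theorem \ref{thm:Lusin}; what yours buys is that part (iv) no longer needs the $(\beta_n,D_{\beta,n})$ apparatus at all, at the cost of leaning on a nontrivial classical theorem and on the Polishness (local compactness plus $\sigma$-compactness) of $(\cC,d_H)$, which you correctly note.
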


We prove (i)-(iii) at the beginning of Appendix \ref{sec:IP_space} and (iv) at the end. The following two lemmas may be deduced easily from the definitions of $\dI$, $\dH$, and $\dH'$.

\begin{lemma}\label{lem:IP:concat}
 \begin{enumerate}[label=(\roman*), ref=(\roman*)]
  \item Any totally ordered collection of interval partitions in $\IPspace$ (respectively $\HIPspace$) in which only finitely many are non-empty is strongly summable 
    (respectively summable).
  \item If $(\beta_a)_{a\in \mathcal{A}}\in \IPspace^{\mathcal{A}}$ and $(\gamma_a)_{a\in \mathcal{A}}\in \IPspace^{\mathcal{A}}$ are strongly summable then
  \begin{equation}\label{eq:IP:concat_dist}
   \dI\left(\Concat_{a\in\mathcal{A}}\beta_a,\ \Concat_{a\in\mathcal{A}}\gamma_a\right) \leq \sum_{a\in\mathcal{A}}\dI(\beta_a,\gamma_a).
  \end{equation}
  The same holds for $d_H$ and $d_H'$ when $(\beta_a)_{a\in \mathcal{A}},(\gamma_a)_{a\in \mathcal{A}}\in \HIPspace^{\mathcal{A}}$ are summable.
 \end{enumerate}
\end{lemma}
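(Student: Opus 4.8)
The statement to prove is Lemma~\ref{lem:IP:concat}, which has two parts: (i) that a totally ordered collection of interval partitions with only finitely many nonempty is strongly summable (in $\IPspace$) or summable (in $\HIPspace$), and (ii) a Lipschitz-type bound for concatenation with respect to $\dI$, $d_H$, and $d_H'$.

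\textbf{Plan for part (i).} Summability is essentially immediate: if only finitely many $\beta_a$ are nonempty, then $\sum_{a\in\mathcal{A}}\IPmag{\beta_a}$ is a finite sum of finite reals, hence finite, and moreover $S_\beta(a-) < \infty$ for every $a$, so the concatenation $\Concat_{a\in\mathcal{A}}\beta_a$ is well-defined. For strong summability in $\IPspace$, I need to check that the concatenated partition satisfies the diversity property \eqref{eq:IPLT}. The key observation is that for a concatenation of finitely many blocks-of-partitions, the diversity function $\IPLT$ of the concatenation, evaluated at a point $t$ lying in the copy of $\beta_{a}$, is a finite sum: the total diversities $\IPLT_{\beta_b}(\infty)$ over the finitely many $b \prec a$ that are nonempty, plus the partial diversity $\IPLT_{\beta_a}(t - S_\beta(a-))$ within $\beta_a$. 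Since each $\beta_a \in \IPspace$ has the diversity property, each summand exists; a finite sum of existing limits exists, so the limit defining $\IPLT$ for the concatenation exists at every $t$. This uses that the threshold count $\#\{(c,d)\colon |d-c|>h,\ d\leq t\}$ splits additively over the finitely many constituent partitions, since blocks do not straddle the junction points $S_\beta(a-)$ (those are measure-zero endpoints, and a block of the concatenation is a translate of a block of some single $\beta_a$).

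\textbf{Plan for part (ii).} The inequality \eqref{eq:IP:concat_dist} should follow by concatenating optimal (or near-optimal) correspondences. Given, for each $a$, a correspondence $(U^a_j, V^a_j)_{j}$ from $\beta_a$ to $\gamma_a$ with distortion at most $\dI(\beta_a,\gamma_a) + \e_a$ (choosing $\sum_a \e_a$ small, or working with a finite sub-collection first and a limiting argument for the general strongly summable case), I form the concatenated correspondence: translate each $U^a_j$ by $S_\beta(a-)$ and each $V^a_j$ by $S_\gamma(a-)$, and list them in the order inherited from $\mathcal{A}$ and then from each correspondence. This is a valid correspondence from $\Concat \beta_a$ to $\Concat \gamma_a$ because both index-sequences are strictly increasing left-to-right. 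Then I bound each of the four quantities in Definition~\ref{def:IP:metric} for this concatenated correspondence. Quantity (i), the sup of diversity discrepancies, is bounded by the sup over $a$ and $j$ of $|\IPLT_{\beta_a}(U^a_j) + \sum_{b\prec a}\IPLT_{\beta_b}(\infty) - \IPLT_{\gamma_a}(V^a_j) - \sum_{b\prec a}\IPLT_{\gamma_b}(\infty)| \leq \sum_b |\IPLT_{\beta_b}(\infty) - \IPLT_{\gamma_b}(\infty)| + \sup_a|\IPLT_{\beta_a}(U^a_j) - \IPLT_{\gamma_a}(V^a_j)| \leq \sum_a \dis(\beta_a,\gamma_a,\cdot)$, using that the total-diversity discrepancy $|\IPLT_{\beta_a}(\infty)-\IPLT_{\gamma_a}(\infty)|$ is itself dominated by $\dis(\beta_a,\gamma_a,\cdot)$ (it is quantity (ii) there). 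Quantity (ii) for the concatenation, $|\sum_a \IPLT_{\beta_a}(\infty) - \sum_a \IPLT_{\gamma_a}(\infty)|$, is $\leq \sum_a |\IPLT_{\beta_a}(\infty)-\IPLT_{\gamma_a}(\infty)| \leq \sum_a \dI(\beta_a,\gamma_a)$. Quantities (iii) and (iv), the mass terms, split as sums over $a$ of the corresponding mass terms for the individual correspondences, since $\IPmag{\Concat\beta_a} = \sum_a \IPmag{\beta_a}$ and $\sum_{a,j}\Leb(U^a_j) = \sum_a \sum_j \Leb(U^a_j)$ and likewise for the absolute-difference sums; hence each is $\leq \sum_a \dis(\beta_a,\gamma_a,\cdot)$. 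Taking infima over correspondences (with the $\e_a$'s) gives \eqref{eq:IP:concat_dist}. The same argument, retaining only the mass quantities \ref{item:IP_m:mass_1} and \ref{item:IP_m:mass_2}, proves the bound for $d_H'$; and the bound for $d_H$ then follows from the equivalence of $d_H$ and $d_H'$ (Proposition~\ref{prop:Hausdorff}\ref{item:Haus:equiv}) — or more directly from the fact that $G(\Concat_a \beta_a) = \bigcup_a (S_\beta(a-) + G(\beta_a))$ up to the junction points, so a family of radius-$r_a$ Hausdorff coverings assembles into a covering at radius $\max_a r_a \leq \sum_a r_a$.

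\textbf{Main obstacle.} The routine parts are genuinely routine; the one point requiring care is the passage from \emph{finitely many} constituents to a general \emph{strongly summable} family in part (ii) — one must verify that concatenating countably many near-optimal correspondences still yields a legitimate (finite-length, as required by Definition~\ref{def:IP:metric}) correspondence, or else argue by approximating with finite truncations and using that both $\Concat$ and $\dI$ behave continuously under such truncation. In fact, since Definition~\ref{def:IP:metric} allows only finite correspondences, the cleanest route is: prove \eqref{eq:IP:concat_dist} first for finite $\mathcal{A}$ directly by the concatenation-of-correspondences argument above, then obtain the countable case by truncating $\mathcal{A}$ to a finite initial segment containing all but an arbitrarily small total mass/diversity, bounding the error of truncation on both sides using that a partition's contribution to $\dI$ is controlled by its mass and diversity. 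This truncation estimate is the only place where a small amount of real work is needed, and it is exactly the kind of estimate the remark following the lemma ("may be deduced easily from the definitions") is alluding to.
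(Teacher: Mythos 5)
You cannot be compared against the paper's own argument here, because the paper gives none (the lemma is flagged as following easily from the definitions); judged on its own merits, your part (i) and your part (ii) in the case where only finitely many constituents are non-empty are exactly the intended routine argument and are correct: block counts above a threshold add over finitely many non-empty constituents, so diversities add, and concatenating near-optimal correspondences bounds each of the four quantities of Definition \ref{def:IP:metric} by $\sum_a\dis(\beta_a,\gamma_a,\cdot)$. The genuine gap is the passage to a general strongly summable family. Your bounds on quantities (i) and (ii) of the concatenated correspondence use the identity $\IPLT_{\beta}(S_\beta(a-)+t)=\sum_{b\prec a}\IPLT_{\beta_b}(\infty)+\IPLT_{\beta_a}(t)$ for the concatenation $\beta$ (and its analogue at the right end), and your proposed truncation rests on the claim that a constituent's contribution to $\dI$ is controlled by its mass and diversity. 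Neither is available under Definition \ref{def:IP:concat}: strong summability only asserts that the limit \eqref{eq:IPLT} exists for the concatenation, not that diversity is countably additive across it, and a tail of constituents with arbitrarily small total mass and zero individual diversities can contribute a macroscopic amount of ``emergent'' diversity to the concatenation.

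Concretely, take $\mathcal{A}=\BN$, fix $c\in(0,4/\pi^3)$, and set $\beta_a=\{(0,ca^{-2})\}$, $\gamma_a=\{(0,4ca^{-2})\}$. Each constituent is a single block, so $\IPLT_{\beta_a}\equiv\IPLT_{\gamma_a}\equiv 0$ and the one-pair correspondence gives $\dI(\beta_a,\gamma_a)\le 3ca^{-2}$, whence $\sum_a\dI(\beta_a,\gamma_a)\le c\pi^2/2$. Both families are strongly summable: writing $\beta,\gamma$ for the concatenations, for $t<\IPmag{\beta}$ only finitely many blocks of $\beta$ end by $t$, so the limit in \eqref{eq:IPLT} exists and is $0$, while at $t=\IPmag{\beta}$ it exists and equals $\sqrt{\pi c}$; similarly $\IPLT_{\gamma}(\infty)=2\sqrt{\pi c}$. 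Since quantity (ii) of Definition \ref{def:IP:metric} does not depend on the correspondence, $\dI(\beta,\gamma)\ge\sqrt{\pi c}>c\pi^2/2\ge\sum_a\dI(\beta_a,\gamma_a)$. So no truncation argument can close the gap: beyond the finite case, \eqref{eq:IP:concat_dist} needs the additional hypothesis that the diversity of the concatenation is the cumulative sum of the constituents' diversities (automatic when only finitely many are non-empty, and true in every application in the paper, e.g.\ in Proposition \ref{prop:type-1:cts_in_init_state}, where by Lemma \ref{lem:finite_survivors} only finitely many clades survive). Two smaller remarks: your $d_H'$ argument and your direct covering argument for $d_H$ do extend to infinite families, since only masses enter (use $|\IPmag{\beta_b}-\IPmag{\gamma_b}|\le d_H(\beta_b,\gamma_b)$ to control the junction offsets); but do not try to deduce the $d_H$ bound from Proposition \ref{prop:Hausdorff}(iii), which is only a topological equivalence and carries no Lipschitz constant.
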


\begin{lemma}\label{lem:IP:scale}
 For $c>0$, the scaling functions $\beta\mapsto\scaleI[c][\beta]$ are bijections on $\IPspace$ and on $\HIPspace$; specifically, partitions in the image of $\IPspace$ 
 possess the diversity property with
 \begin{equation}
  \IPLT_{\scaleI[c][\beta]}(ct) = \sqrt{c}\IPLT_{\beta}(t) \qquad \text{for }\beta\in\IPspace,\ t>0,\ c>0.\label{eq:IP:LT_scaling}
 \end{equation}
 Moreover, for $\beta,\gamma\in\IPspace$,
 \begin{gather}
  \dH'(\beta,\scaleI[c][\beta]) = \left|c-1\right|\IPmag{\beta}, \qquad \dH'(c\scaleI\beta,c\scaleI\gamma) = c\dH'(\beta,\gamma),\label{eq:IP:Haus_scale}\\
  \dI(\beta,\scaleI[c][\beta]) \leq \max\left\{\left|\sqrt{c} - 1\right|\IPLT_{\beta}(\infty), \left|c-1\right|\IPmag{\beta}\right\},\label{eq:IP:scaling_dist_1}\\
  \text{and}\quad\min\{c,\sqrt{c}\}\dI(\beta,\gamma) \leq \dI(\scaleI[c][\beta],\scaleI[c][\gamma]) \leq \max\{c,\sqrt{c}\}\dI(\beta,\gamma).\label{eq:IP:scaling_dist_2}
 \end{gather}
\end{lemma}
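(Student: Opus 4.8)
The plan is to verify each assertion directly from the definitions of $\scaleI$, $\IPLT$, $\dH'$, and $\dI$, in roughly the order stated. First, that $\beta\mapsto\scaleI[c][\beta]$ is a bijection on $\HIPspace$ is clear, since $\scaleI[1/c][\scaleI[c][\beta]] = \beta$ and scaling an interval partition of $[0,M]$ by $c$ produces an interval partition of $[0,cM]$ covering it up to a Lebesgue-null set. For the diversity scaling \eqref{eq:IP:LT_scaling}: in the counting expression \eqref{eq:IPLT} defining $\IPLT_{\scaleI[c][\beta]}(ct)$, a block $(ca,cb)\in\scaleI[c][\beta]$ has $|cb-ca| = c|b-a| > h$ iff $|b-a| > h/c$, and $cb \le ct$ iff $b\le t$; so the count equals $\#\{(a,b)\in\beta\colon |b-a| > h/c,\ b\le t\}$. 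Substituting $h' = h/c$ and pulling the constant $\sqrt{c}$ through $\sqrt{h} = \sqrt{c}\sqrt{h'}$ gives $\IPLT_{\scaleI[c][\beta]}(ct) = \sqrt{c}\,\IPLT_{\beta}(t)$, and in particular the limit exists, so $\scaleI[c][\beta]\in\IPspace$ whenever $\beta\in\IPspace$; this also confirms the bijection claim restricted to $\IPspace$.

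Next, the Hausdorff identities \eqref{eq:IP:Haus_scale}. For the first, use the correspondence pairing every block of $\beta$ with its image in $\scaleI[c][\beta]$; the Hausdorff distortion $\dis_H$ is the max of quantities \ref{item:IP_m:mass_1} and \ref{item:IP_m:mass_2} from Definition \ref{def:IP:metric}, and under this full correspondence both reduce to $\sum_{U\in\beta}|c\Leb(U) - \Leb(U)| = |c-1|\IPmag{\beta}$ (the residual mass terms vanish). One must also check no correspondence does better; but any correspondence uses a subsequence $(U_j,cU_j)$ of matched blocks, and the residual-mass terms $\IPmag{\beta} - \sum_j\Leb(U_j)$ and $\IPmag{\scaleI[c][\beta]} - \sum_j c\Leb(U_j) = c(\IPmag{\beta} - \sum_j\Leb(U_j))$ combine with $\sum_j|c-1|\Leb(U_j)$ to give at least $|c-1|\IPmag{\beta}$ in the larger of \ref{item:IP_m:mass_1}, \ref{item:IP_m:mass_2} (considering the cases $c\ge 1$ and $c<1$ separately, since one of the two residuals is multiplied by $c$). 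The second identity in \eqref{eq:IP:Haus_scale} follows because scaling all of $\beta$, $\gamma$ and a correspondence between them by $c$ multiplies every term appearing in $\dis_H$ by exactly $c$, hence multiplies the infimum by $c$.

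For \eqref{eq:IP:scaling_dist_1}, again use the full correspondence between $\beta$ and $\scaleI[c][\beta]$: quantity (i) is $\sup_j|\IPLT_\beta(U_j) - \IPLT_{\scaleI[c][\beta]}(cU_j)| = \sup_j|1-\sqrt{c}|\,\IPLT_\beta(U_j) \le |\sqrt{c}-1|\IPLT_\beta(\infty)$ by \eqref{eq:IP:LT_scaling} and monotonicity; quantity (ii) is the same bound at $t = \IPmag{\beta}$; and quantities (iii), (iv) both equal $|c-1|\IPmag{\beta}$ as above. Taking the max gives the claimed bound. Finally, for the two-sided bound \eqref{eq:IP:scaling_dist_2}: given a correspondence $(U_j,V_j)$ from $\beta$ to $\gamma$, the scaled pairs $(cU_j,cV_j)$ form a correspondence from $\scaleI[c][\beta]$ to $\scaleI[c][\gamma]$; its diversity terms (i),(ii) get multiplied by $\sqrt{c}$ and its mass terms (iii),(iv) by $c$, so $\dis\big(\scaleI[c][\beta],\scaleI[c][\gamma],(cU_j,cV_j)\big) \le \max\{c,\sqrt{c}\}\dis\big(\beta,\gamma,(U_j,V_j)\big)$; taking infima gives the upper bound, and applying it with $c$ replaced by $1/c$ to $\scaleI[c][\beta],\scaleI[c][\gamma]$ gives the lower bound. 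The only mildly delicate point — the main obstacle, such as it is — is the lower-bound half of \eqref{eq:IP:Haus_scale}, i.e.\ ruling out that a strict subcorrespondence beats the full one; everything else is bookkeeping with the four distortion quantities under the scaling maps.
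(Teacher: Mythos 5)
The paper offers no written proof of this lemma (it is dismissed as an easy deduction from the definitions), and your direct verification is exactly the intended route; the counting argument for \eqref{eq:IP:LT_scaling}, the bijection claims, the second identity of \eqref{eq:IP:Haus_scale}, and both halves of \eqref{eq:IP:scaling_dist_2} are correct as written. Two points in your write-up need tightening. First, Definition \ref{def:IP:metric} only allows \emph{finite} correspondences, so the ``full correspondence'' pairing every block of $\beta$ with its image is not admissible when $\beta$ has infinitely many blocks, which is the generic case in $\IPspace$. This is harmless: pair only the $n$ largest blocks $(U_j,cU_j)$, note that the residual-mass terms in quantities \ref{item:IP_m:mass_1} and \ref{item:IP_m:mass_2} tend to $0$ as $n\to\infty$, and use that $\dH'$ and $\dI$ are infima; the upper bounds in \eqref{eq:IP:Haus_scale} and \eqref{eq:IP:scaling_dist_1} then follow in the limit.

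Second, and this is the only genuine flaw, your lower-bound argument for the first identity in \eqref{eq:IP:Haus_scale} rests on the claim that any correspondence from $\beta$ to $\scaleI[c][\beta]$ consists of pairs of the form $(U_j,cU_j)$. That is false: a correspondence may pair a block $U_j$ of $\beta$ with the scaled image $cV_j$ of a \emph{different} block $V_j$ of $\beta$. Fortunately the inequality survives without the diagonal assumption. For $c\ge 1$, bound quantity \ref{item:IP_m:mass_2}: since $|\Leb(U_j)-c\Leb(V_j)|\ge c\Leb(V_j)-\Leb(U_j)$, that quantity is at least
\begin{equation*}
 c\IPmag{\beta}-\sum_{j}\Leb(U_j)\ \ge\ (c-1)\IPmag{\beta},
\end{equation*}
while for $c<1$ the symmetric estimate on quantity \ref{item:IP_m:mass_1} gives at least $\IPmag{\beta}-c\sum_j\Leb(V_j)\ge(1-c)\IPmag{\beta}$. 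With these two repairs your proof is complete and coincides with the verification the paper has in mind.
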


\section{Ingredients for the Poissonian construction}
\label{sec:prelim}

The Poissonian construction will be based on several $\sigma$-finite measures and their disintegrations along certain one-dimensional statistics or more general
measurable functions. We set up a common framework for disintegrations before turning to the ingredients for the construction. 

\begin{definition}[Pushforward and disintegration]\label{def:disintegration} Let $(\cS,\ScS,\mu)$ be a $\sigma$-finite measure space and $\phi\colon\cS\to\cT$
  a measurable function into a measurable space $(\cT,\ScT)$. We use notation $\mu(\phi\in B):=\phi_*\mu(B):=\mu(\phi^{-1}(B))$, $B\in\ScT$, for the \em pushforward \em 
  of $\mu$ under $\phi$. A \emph{$\phi$-disintegration of $\mu$} is a map $y\mapsto\mu_y$ from $\cT$ to probability measures on $(\cS,\ScS)$ with the following properties.
 \begin{enumerate}[label=(\roman*), ref=(\roman*)]
  \item For each $A\in\ScS$ fixed, the map $y\mapsto \mu_y(A)$ is $\ScT$-measurable.
  \item For each $y\in \cT$, we have $\mu_y\{x\in \cS\colon\phi(x)\neq y\} = 0$.
  \item Writing $\nu=\phi_*\mu$, we have 
 \begin{align}
  \mu(A) = \int \mu_y(A)d\nu(y) \qquad \text{for every }A\in\ScS.\label{eq:scl_ker:integration}
 \end{align}
 \end{enumerate}
\end{definition}

This is narrower in several respects than the most general definition of a disintegration. In general, disintegrations are not required to give probability kernels. For an excellent, broader treatment, see \cite{ChanPoll97}.

In the setting of excursion theory, it is a standard technique to view a $\sigma$-finite excursion measure as a product between a $\sigma$-finite measure on $(0,\infty)$ and a probability distribution on the space of normalized excursions. This is possible when the excursion measure has an invariance property relative to some scaling map. This product decomposition is equivalent to a unique disintegration: to each $a>0$ we associate a probability distribution on excursions of length $a$, by scaling the random normalized excursion by a factor of $a$. See \cite{Chaumont97} for work related to this method in the setting of stable L\'evy processes. Williams' decomposition of Brownian excursion fits into this framework, with excursions normalized by amplitude rather than length; see \cite[Theorem 22.15]{Kallenberg} or \cite[Theorems XII.4.2 and XII.4.5]{RevuzYor}.

Here is a more general statement of this principle that scaling maps give rise to unique disintegrations. We leave its proof to the reader.

\begin{lemma}[Scaling and disintegration]\label{lem:scl_ker} Let $(\cS,\ScS,\mu)$ be a $\sigma$-finite measure space and $\theta\colon \cS\to (0,\infty)$ a measurable 
 function. Suppose $\scale\colon (0,\infty)\times \cS \to \cS$ is a measurable \emph{scaling map} in the sense that there exists some $p>0$ such that, for every $b,c>0$, $x\in\cS$, and $A\in \ScS$,
 \begin{gather}
  b \scale \left(c \scale x\right) = \scale[bc][x], \qquad \scale[1][x] = x, \quad \text{and} \quad \mu(\scale[c][A]) = c^{-p}\mu(A).\label{eq:scl_ker:inv}
 \end{gather}
 Suppose further that there exists some $q>0$ such that for every $c>0$ and $x\in \cS$,
 \begin{equation}
  \theta(\scale[c][x])  = c^q\theta(x) \quad \text{and} \quad \mu\{x\in\cS\colon \theta(x) > 1\} < \infty.\label{eq:scl_ker:finite}
 \end{equation}
 
 Then there exists a $\theta$-disintegration of $\mu$, which we denote by $a\mapsto \mu_a$, unique with the property that for every $a>0$, if $X$ has law $\mu_a$ then 
 $\scale[a^{-1/q}][X]$ has law $\mu_1$. Moreover, for each $b>0$, the measure $\mu_1$ equals the pushforward of $\mu(\,\cdot\;|\,\theta(x) > b)$ under the map
 $x\mapsto\scale[\theta(x)^{-1/q}][x]$.
\end{lemma}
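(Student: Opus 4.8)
The plan is to build $\mu_1$ explicitly out of the scaling structure, define $\mu_a$ by transporting $\mu_1$ under $\scale$, and then verify the three clauses of Definition~\ref{def:disintegration} together with the scaling characterization; uniqueness and the final assertion will then come from the same scaling bookkeeping. Assume $\mu\not\equiv 0$ (otherwise the statement is vacuous). Write $s_c\colon\cS\to\cS$, $s_c(x):=\scale[c][x]$, for the scaling maps; by \eqref{eq:scl_ker:inv} each $s_c$ is a bijection with $s_c^{-1}=s_{1/c}$, $s_b\circ s_c=s_{bc}$, and $(s_c)_*\mu=c^p\mu$. Set $\cS_1:=\{x\in\cS\colon\theta(x)=1\}\in\ScS$ and define the measurable map $T\colon\cS\to\cS$ by $T(x):=s_{\theta(x)^{-1/q}}(x)$; by the first identity in \eqref{eq:scl_ker:finite}, $\theta(T(x))=1$, so $T$ takes values in $\cS_1$, and $T(s_{a^{1/q}}(z))=z$ for every $z\in\cS_1$. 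First I would record the elementary consequences of \eqref{eq:scl_ker:inv}--\eqref{eq:scl_ker:finite}: since $\{x\colon\theta(x)>c\}=s_{c^{1/q}}(\{x\colon\theta(x)>1\})$ (from $\theta(s_c(x))=c^q\theta(x)$), the scaling of $\mu$ gives $\mu\{\theta>c\}=c^{-p/q}\mu\{\theta>1\}$. Hence $m:=\mu\{\theta>1\}$ lies in $(0,\infty)$ (finiteness is assumed; if $m=0$ then $\mu\{\theta>c\}=c^{-p/q}m=0$ for all $c>0$, forcing $\mu\equiv 0$), and $\nu:=\theta_*\mu$ is the $\sigma$-finite measure $\nu(da)=\tfrac pq\,m\,a^{-p/q-1}\,da$ on $(0,\infty)$. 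Now define $\mu_1$ to be the pushforward of $m^{-1}\mu(\,\cdot\,\cap\{\theta>1\})$ under $T$, a probability measure concentrated on $\cS_1$, and for $a>0$ let $\mu_a:=(s_{a^{1/q}})_*\mu_1$, a probability measure concentrated on $\{\theta=a\}$. Then $a\mapsto\mu_a(A)=\int\mathbf 1_A(s_{a^{1/q}}(z))\,d\mu_1(z)$ is measurable by joint measurability of $\scale$ and Tonelli, so clauses (i) and (ii) of Definition~\ref{def:disintegration} hold.

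The crux is clause (iii), the identity $\mu(A)=\int_0^\infty\mu_a(A)\,d\nu(a)$, and this is the step I expect to be the main obstacle — not because it is deep, but because it requires careful bookkeeping of powers of the scaling parameter. The strategy: unfold $\mu_a(A)=m^{-1}\mu\big(\{\theta>1\}\cap T^{-1}(s_{a^{-1/q}}(A))\big)$ and observe $T^{-1}(s_{a^{-1/q}}(A))=\{x\colon s_{(a/\theta(x))^{1/q}}(x)\in A\}$; insert the explicit $\nu$; then apply Tonelli repeatedly — all integrands being nonnegative and the spaces $\sigma$-finite — together with the fiberwise reparametrization $a=\theta(x)b^q$ and the change of variables $y=s_b(x)$ (using $(s_b)_*\mu=b^p\mu$ and $\theta(s_{1/b}(y))=b^{-q}\theta(y)$). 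After the dust settles the integral collapses to
\[
 p\int_A \theta(y)^{-p/q}\left(\int_0^{\theta(y)^{1/q}} b^{p-1}\,db\right)d\mu(y)=\int_A 1\,d\mu(y)=\mu(A),
\]
where $p>0$ is used to evaluate the inner integral as $\theta(y)^{p/q}/p$.

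Granting (iii), the rest is quick. The scaling characterization is immediate: if $X\sim\mu_a$ then $s_{a^{-1/q}}(X)$ has law $(s_{a^{-1/q}})_*\mu_a=(s_{a^{-1/q}})_*(s_{a^{1/q}})_*\mu_1=\mu_1$, since $s_{a^{-1/q}}\circ s_{a^{1/q}}=s_1=\mathrm{id}$. For uniqueness, let $a\mapsto\mu_a'$ be any $\theta$-disintegration of $\mu$ with this property; then $\mu_a'=(s_{a^{1/q}})_*\mu_1'$ is determined by $\mu_1'$, and since $\mu_1'$ is concentrated on $\cS_1$ (clause (ii)) and $T(s_{a^{1/q}}(z))=z$ there, one gets $\mu_a'(T^{-1}(C))=\mu_1'(C)$ for measurable $C\subseteq\cS_1$; applying clause (iii) to $A=T^{-1}(C)\cap\{\theta>1\}$, on which $\mu_a'(A)=\mu_1'(C)$ for $a>1$ and $\mu_a'(A)=0$ for $a\le 1$, yields $\mu\big(T^{-1}(C)\cap\{\theta>1\}\big)=\mu_1'(C)\,\nu((1,\infty))=m\,\mu_1'(C)$, i.e.\ $\mu_1'=\mu_1$, whence $\mu_a'=\mu_a$ for all $a$. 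Finally, the ``moreover'' follows from the same identities: $\{\theta>b\}=s_{b^{1/q}}(\{\theta>1\})$, $T\circ s_{b^{1/q}}=T$ (so $T^{-1}(C)$ is $s_{b^{1/q}}$-invariant), and $\mu(s_{b^{1/q}}(E))=b^{-p/q}\mu(E)$ together give that the pushforward of $\mu(\,\cdot\,\cap\{\theta>b\})$ under $T$ equals $b^{-p/q}$ times the pushforward of $\mu(\,\cdot\,\cap\{\theta>1\})$ under $T$; dividing by $\mu\{\theta>b\}=b^{-p/q}m$ recovers $\mu_1$.
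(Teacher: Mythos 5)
The paper gives no proof of this lemma (it is explicitly ``left to the reader''), so there is nothing to compare against; judged on its own, your argument is correct and is the natural one. Defining $\mu_1$ as the normalized pushforward of $\mu$ restricted to $\{\theta>1\}$ under $T(x)=\theta(x)^{-1/q}\scale x$ and setting $\mu_a$ to be its image under scaling by $a^{1/q}$ gives clauses (i) and (ii) of Definition \ref{def:disintegration} exactly as you say, and I checked your clause-(iii) computation in detail: with $\nu(da)=\tfrac{p}{q}\,m\,a^{-p/q-1}da$, the substitution $a=\theta(x)b^q$ followed by the change of variables $y$ equal to $x$ scaled by $b$ (picking up the factor $b^{2p}$ from the scaling of $\mu$ and of $\theta(\cdot)^{-p/q}$) does collapse, via Tonelli, to $p\int_A\theta(y)^{-p/q}\bigl(\int_0^{\theta(y)^{1/q}}b^{p-1}db\bigr)d\mu(y)=\mu(A)$. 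The uniqueness argument (any disintegration with the scaling property is determined by its $\mu_1'$, and evaluating clause (iii) on $A=T^{-1}(C)\cap\{\theta>1\}$ forces $\mu_1'=\mu_1$) and the ``moreover'' clause (using $T\circ s_{b^{1/q}}=T$, $\{\theta>b\}=s_{b^{1/q}}\{\theta>1\}$ and the scaling of $\mu$) are both sound. Your standing assumption $\mu\not\equiv 0$ is appropriate and worth stating, since otherwise $m=0$ and the normalization is undefined; this is implicitly assumed in the lemma's intended applications.
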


 In the context of Lemma \ref{lem:scl_ker} we write $\mu(\,\cdot\;|\,\theta=a)$ to denote the measure called $\mu_a$ in the statement of the lemma. We denote the resulting $\theta$-disintegration by $\mu(\,\cdot\;|\,\theta)$. The role of the scaling map is suppressed in this notation, but this will not pose a problem as we will not associate multiple scaling maps with any single measure space.

\subsection{Spindles: \BESQ ($-1$) excursions for use as block size evolutions}
\label{sec:BESQ}

Let $(\cD,d_\cD)$ denote the Skorohod space of real-valued c\`adl\`ag functions. Recall that its Borel $\sigma$-algebra $\Sigma(\cD)$ is generated by the evaluation
maps $g\mapsto g(t)$, $t\in\mathbb{R}$; see \cite[Theorem 14.5]{Billingsley}. Let $\Exc$ be the subset of non-negative real-valued excursions that are continuous, possibly excepting \cadlag\ jumps at their times of birth (time 0 as elements of $\Exc$) and death:
 \begin{equation}
  \Exc := \left\{f\colon\BR\to [0,\infty)\ \middle| \begin{array}{c}
    \displaystyle \exists\ z\in(0,\infty)\textrm{ s.t.\ }\restrict{f}{(-\infty,0)\cup [z,\infty)} = 0,\\[0.2cm]
    \displaystyle \restrict{f}{(0,z)} > 0\text{, and }\restrict{f}{[0,z)}\textrm{ is continuous}
   \end{array}\right\}.\label{eq:cts_exc_space_def}
 \end{equation}
Let $\SExc$ denote the Borel $\sigma$-algebra on $\Exc$ generated by $d_\cD$. We define the \emph{lifetime} and \emph{amplitude} 
$\life,A\colon \Exc \to (0,\infty)$ via
 \begin{equation}
  \life(f) = \sup\{s\geq 0\colon f(s) > 0\},\quad\mbox{and}\quad A(f)=\sup\{f(s),0\le s\le\zeta(f)\}.
 \end{equation}

Squared Bessel processes (\BESQ) are a one-parameter family of diffusions. They contain the Feller diffusion, which is a continuous-state 
branching process, when the dimension parameter $\delta\in\BR$ is $\delta=0$, with immigration when $\delta>0$. The squared norm of a $\delta$-dimensional Brownian 
motion is a \BESQ[\delta] starting from 0, when $\delta\in\BN$. The case $\delta=-1$ can be interpreted as emigration at unit rate. In this case (as when $\delta=0$), 
the boundary point 0 is not an entrance boundary, while exit at 0 (we then force absorption) happens almost surely. See \cite{PitmYor82,GoinYor03,Pal13}. 

\begin{lemma}[Equation (13) in \cite{GoinYor03}]\label{lem:BESQ:length}
  Let $Z=(Z_s,s\ge 0)$ be a \BESQ[-1] process starting from $z >0$. Then the absorption time $\zeta(Z)=\inf\{s\ge 0\colon Z_s=0\}$ has distribution 
  \InvGammaDist[3/2,z/2], i.e.\ $z/2\zeta(Z)$ has density $(\Gamma(3/2))^{-1}x^{1/2}e^{-x}$, $x\in(0,\infty)$. 
\end{lemma}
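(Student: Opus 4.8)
The plan is to compute the Laplace transform of $\zeta(Z)$ in closed form and recognise it as that of the claimed inverse-gamma law. Recall that $Z$ is the diffusion on $[0,\infty)$ with generator $\mathcal{A}f(z)=2zf''(z)-f'(z)$ on $(0,\infty)$, absorbed at $0$; since $Z_t+t$ is a local martingale while $Z$ is a nonnegative supermartingale, one has $\zeta(Z)=T_0<\infty$ a.s. (in fact $\EV_z[\zeta(Z)]=z$, which already matches the mean of $\InvGammaDist[3/2,z/2]$). Fix $\lambda>0$ and set $u_\lambda(z):=\EV_z[e^{-\lambda\zeta(Z)}]$. First I would show, via It\^o's formula and optional stopping, that whenever $g\in C^2((0,\infty))\cap C([0,\infty))$ is bounded, solves $\mathcal{A}g=\lambda g$ on $(0,\infty)$, and has $g(0)\neq0$, the process $e^{-\lambda(t\wedge\zeta(Z))}g(Z_{t\wedge\zeta(Z)})$ is a bounded martingale; letting $t\to\infty$ and using $Z_{t\wedge\zeta(Z)}\to 0$ gives $u_\lambda(z)=g(z)/g(0)$. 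So it remains to exhibit the bounded solution of the ODE $2zg''-g'=\lambda g$.

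To solve this ODE I would substitute $z=y^2$, $g(z)=v(y)$, which turns it into $v''-\tfrac2y v'-2\lambda v=0$, a modified Bessel equation of index $\tfrac32$; its general solution is $v(y)=y^{3/2}\big(c_1 I_{3/2}(\sqrt{2\lambda}\,y)+c_2 K_{3/2}(\sqrt{2\lambda}\,y)\big)$, hence $g(z)=z^{3/4}\big(c_1 I_{3/2}(\sqrt{2\lambda z})+c_2 K_{3/2}(\sqrt{2\lambda z})\big)$. Boundedness as $z\to\infty$ forces $c_1=0$ (the $I$-term grows exponentially), and the asymptotics $K_{3/2}(w)\sim\tfrac12\Gamma(\tfrac32)(2/w)^{3/2}$ as $w\downto0$ show that $z^{3/4}K_{3/2}(\sqrt{2\lambda z})$ has a finite nonzero limit at $0$. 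Dividing by that limit yields
\[
 u_\lambda(z)=\frac{(2\lambda z)^{3/4}}{\sqrt2\,\Gamma(3/2)}\,K_{3/2}\big(\sqrt{2\lambda z}\big).
\]

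Finally I would match this against the target law. Using the classical integral $\int_0^\infty x^{\nu-1}e^{-ax-b/x}\,dx=2(b/a)^{\nu/2}K_\nu(2\sqrt{ab})$ together with $K_{-\nu}=K_\nu$, the Laplace transform at $\lambda$ of an $\InvGammaDist[3/2,z/2]$ variable is exactly $\tfrac{2}{\Gamma(3/2)}(\lambda z/2)^{3/4}K_{3/2}(\sqrt{2\lambda z})$, which coincides with the displayed expression; by injectivity of the Laplace transform this gives $\zeta(Z)\sim\InvGammaDist[3/2,z/2]$. Equivalently, writing $\gamma$ for a $\GammaDist[3/2,1]$ variable one checks $u_\lambda(z)=\EV[\exp(-\lambda z/(2\gamma))]$, so $z/2\zeta(Z)\stackrel{d}{=}\gamma$, and a one-line change of variables produces the stated density $(\Gamma(3/2))^{-1}x^{1/2}e^{-x}$. (This is the content of equation~(13) of \cite{GoinYor03} and could instead simply be quoted; a purely probabilistic alternative is to invoke the time-reversal duality between $\BESQ[-1]$ absorbed at $0$ and $\BESQ[5]$ started at $0$ run up to its last visit to level $z$, reducing the claim to the known last-exit-time law for a transient Bessel process.)

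The step I expect to be the main obstacle is the middle one: correctly identifying the ODE with the modified Bessel equation of index $\tfrac32$, selecting the decaying ($K$) branch, and extracting the normalising constant precisely from the $w\downto0$ behaviour of $K_{3/2}$; the accompanying It\^o/optional-stopping verification that the probabilistic $u_\lambda$ equals the analytic solution is routine but does need $\zeta(Z)<\infty$ a.s.\ and continuity of the solution up to the boundary $0$.
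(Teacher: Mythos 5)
Your proof is correct, but it is worth noting that the paper does not prove this lemma at all: it is stated as a direct quotation of equation (13) of G\"oing-Jaeschke and Yor \cite{GoinYor03}, which records the first-passage law of squared Bessel processes of dimension $\delta<2$ (there $T_0 \stackrel{d}{=} z/2\gamma_{(2-\delta)/2}$, which for $\delta=-1$ is exactly the \InvGammaDist[3/2,z/2] statement). What you have written is therefore a self-contained re-derivation of the cited fact rather than a reconstruction of an argument in the paper. Your route — computing $u_\lambda(z)=\EV_z[e^{-\lambda\zeta}]$ as the bounded solution of $2zg''-g'=\lambda g$, identifying it as $(2\lambda z)^{3/4}K_{3/2}(\sqrt{2\lambda z})/(\sqrt2\,\Gamma(3/2))$ via the modified Bessel equation of index $\tfrac32$, and matching it with $\int_0^\infty x^{\nu-1}e^{-ax-b/x}dx=2(b/a)^{\nu/2}K_\nu(2\sqrt{ab})$ — checks out; I verified the change of variables $z=y^2$, the index and normalising constant from $K_{3/2}(w)\sim\tfrac12\Gamma(\tfrac32)(2/w)^{3/2}$, and the equality of the two Laplace transforms ($2^{1/4}(\lambda z)^{3/4}$ on both sides). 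Two small points of care, both routine: the parenthetical $\EV_z[\zeta(Z)]=z$ needs uniform integrability of $Z_{t\wedge\zeta}$ (or can simply be read off from the result afterwards), and the optional-stopping step is cleanest if you first stop at the hitting time of $\epsilon>0$ and let $\epsilon\downto0$, using continuity of $g$ at $0$. Your closing alternative — time reversal identifying the absorbed \BESQ[-1] from $z$ with a \BESQ[5] from $0$ run to its last visit to $z$, whose last-exit law is $z/2\gamma_{3/2}$ — is also valid and sits closer in spirit to the Pitman--Yor description the paper actually uses in Lemma \ref{lem:BESQ:existence}, so it would be the more economical way to make the lemma self-contained within this paper's toolkit.
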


Pitman and Yor \cite{PitmYor82} constructed excursion measures $\Lambda$ for diffusions even when there is no reflecting extension (to replace absorption at 0) that has $\Lambda$ as its 
It\^o excursion measure. They gave several descriptions, the first of which yields the following for the special case of \BESQ[-1]. For the purpose of the following, we define \emph{first passage times} $H^a\colon\Exc\to[0,\infty]$ via $H^a(f)=\inf\{s\ge 0\colon f(s)=a\}$, $a>0$.

\begin{lemma}[Section 3 of \cite{PitmYor82}]\label{lem:BESQ:existence}
  There is a measure $\Lambda$ on $\Exc$ such that $\Lambda\{f\in\Exc\colon f(0)\neq 0\}=0$, 
  $\Lambda\{H^a<\infty\}=a^{-3/2}$, $a>0$, and under $\Lambda(\,\cdot\;|\,H^a<\infty)$, the restricted canonical process 
  $f|_{[0,H^a]}$ is a \BESQ[5] process starting from 0 and stopped at the first passage time of $a$, independent of $f(H^a+\cdot\,)$, which is a \BESQ[-1] process 
  starting from $a$. 
\end{lemma}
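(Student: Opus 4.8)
The plan is to construct $\Lambda$ concretely from a known object and then verify the three claimed properties. The natural candidate is built from a \BESQ[5] process: let $Q^{(5)}$ denote the law of a \BESQ[5] process started at $0$, and for a fixed reference level, say $a_0=1$, consider the path which runs as \BESQ[5] from $0$ until its first passage time $H^{1}$, and then continues as an independent \BESQ[-1] from $1$ until absorption at $0$. Call the law of this concatenated path $\Lambda_1$; it is a probability measure on $\Exc$ (absorption is a.s.\ finite for \BESQ[-1] by Lemma \ref{lem:BESQ:length}, and \BESQ[5] reaches level $1$ a.s.). The excursion measure $\Lambda$ will be obtained by ``spreading out'' $\Lambda_1$ over all scales using the \BESQ\ scaling property: if $f$ has law $\Lambda_1$ then for $c>0$, $c f(\cdot/c)$ is again a concatenation of a \BESQ[5] from $0$ to level $c$ with an independent \BESQ[-1] from $c$. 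One then sets $\Lambda$ to be the mixture $\int_0^\infty \Lambda_1 \circ (\text{scale by } c)^{-1}\, c^{-5/2}\,dc$ up to a normalizing constant, chosen precisely so that $\Lambda\{H^a<\infty\}=a^{-3/2}$; equivalently, invoke Lemma \ref{lem:scl_ker} with $\theta=A$ (amplitude) or with $\theta=H^a$-type statistics, using the scaling exponents of \BESQ, to produce $\Lambda$ from the single probability law $\Lambda_1$ as its disintegration fibre.

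\smallskip

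First I would record the relevant scaling: under $c\odot f := (c f(s/c), s\ge0)$, a \BESQ[\delta] started at $x$ maps to a \BESQ[\delta] started at $cx$ with time also rescaled, and first passage levels scale by $c$ while $H^a$ scales by $c$ in the time variable. This gives $A(c\odot f)=cA(f)$ and, crucially, that the property ``run as \BESQ[5] up to first hitting the running max, then as \BESQ[-1] coming down'' is scale-invariant in law. Second, I would use the strong Markov / first-passage description of \BESQ[5] (its first-passage levels form the range of a subordinator, and passage times $H^a$ are monotone) to get the exact form $\Lambda\{H^a<\infty\}=a^{-3/2}$: because \BESQ[5] started at $0$ a.s.\ passes every level, the only way $H^a=\infty$ under $\Lambda$ is if the path's amplitude never reaches $a$, and the $c^{-5/2}\,dc$ mixing weight combined with the deterministic scaling of amplitude produces a power law in $a$; matching the normalization constant to $a^{-3/2}$ pins down $\Lambda$. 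Third — and this is really the content of the lemma — I would verify the conditioned decomposition: for the measure $\Lambda(\,\cdot\mid H^a<\infty)$, the pre-$H^a$ part is \BESQ[5] from $0$ stopped at $H^a$, and the post-$H^a$ part is \BESQ[-1] from $a$, independent of the pre-$H^a$ part. Given the mixture construction this is essentially the strong Markov property of the concatenated path at the stopping time $H^a$, together with the observation that conditioning on $\{H^a<\infty\}$ and on the scale leaves the pre-$H^a$ excursion distributed (after scaling) as the \BESQ[5] initial segment of $\Lambda_1$ up to level $a$.

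\smallskip

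The main obstacle I anticipate is the bookkeeping for the scaling/mixing step: one must choose the mixing measure and the statistic $\theta$ in Lemma \ref{lem:scl_ker} so that (a) the resulting $\Lambda$ is genuinely $\sigma$-finite (finite on $\{H^a<\infty\}$ for each $a>0$ but infinite total mass, since $\Lambda\{f(0)\ne0\}=0$ forces mass to accumulate near the zero excursion), (b) the exponent works out to $a^{-3/2}$ rather than some other power, and (c) the pre- and post-$H^a$ pieces come out with exactly the stated dimensions $5$ and $-1$. The dimension shift from $5$ (going up) to $-1$ (coming down) is the delicate point: it reflects the $h$-transform relating \BESQ[-1] conditioned to hit high levels to \BESQ[5], i.e.\ the fact that $\delta \mapsto 4-\delta$ swaps a \BESQ\ diffusion with its upward-conditioned ($h$-transformed) version for the relevant $h(x)=x^{1-\delta/2}$; I would cite the corresponding computation in \cite{PitmYor82} (their Section 3) rather than redo it, and focus the written proof on assembling these ingredients and checking measurability of the maps $a\mapsto\Lambda(\,\cdot\mid H^a<\infty)$ so that the disintegration statement is legitimate.
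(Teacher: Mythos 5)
The paper does not actually prove this lemma---it is quoted from Section~3 of Pitman--Yor \cite{PitmYor82}---so the only question is whether your construction would stand on its own. It does not: the central step, defining $\Lambda$ as the scaling mixture $\int_0^\infty \big(\Lambda_1\circ\sigma_c^{-1}\big)\,c^{-5/2}\,dc$ of scaled copies of the fixed-scale law $\Lambda_1$, is wrong. By \BESQ\ scaling, $\Lambda_1\circ\sigma_c^{-1}$ is exactly the law $P_c$ of ``\BESQ[5] from $0$ to first passage of $c$, then an independent \BESQ[-1] from $c$''. These laws are not mutually singular across $c$ and are not the fibres of any disintegration of $\Lambda$: a path under $P_c$ with $c<a$ still reaches level $a$ with probability $(c/a)^{3/2}$ (scale function $s(x)=x^{3/2}$ of \BESQ[-1]), so
\begin{equation*}
\int_0^\infty P_c\{H^a<\infty\}\,c^{-5/2}\,dc \;=\; \int_0^a \Big(\frac{c}{a}\Big)^{3/2} c^{-5/2}\,dc \;+\; \int_a^\infty c^{-5/2}\,dc \;=\;\infty ,
\end{equation*}
the first integral diverging logarithmically at $c=0$. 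Worse, no mixing density $w(c)\,dc$ can repair this: requiring $a^{-3/2}\int_0^a c^{3/2}w(c)\,dc+\int_a^\infty w(c)\,dc=a^{-3/2}$ for all $a$ forces $w\equiv 0$ upon differentiating. The conceptual error is conflating $\Lambda_1=\Lambda(\,\cdot\;|\;H^1<\infty)$, a conditional law given an event of finite positive $\Lambda$-mass, with a fibre $\Lambda(\,\cdot\;|\;A=1)$ of a disintegration along amplitude; only the latter could legitimately be mixed, and it is a different object (its post-maximum part is constrained to stay below the maximum, \`a la Williams, not an unconditioned \BESQ[-1] from the level).

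The fix, which is essentially Pitman--Yor's route and needs no mixture at all, is a gluing/projective construction from the candidate conditional laws themselves. Let $P_a$ be as above. Using the scale function $s(x)=x^{3/2}$ and the $h$-transform identity you already invoke (\BESQ[-1] killed at $0$, conditioned to hit $a$ before $0$, is \BESQ[5] stopped at $a$; note $s$ is harmonic for the \BESQ[-1] generator $2x\,d^2/dx^2-d/dx$), together with the strong Markov property, one checks consistency: for $0<b<a$, $P_b\{H^a<\infty\}=(b/a)^{3/2}$ and $P_b(\,\cdot\;|\;H^a<\infty)=P_a$. Hence the definitions $\Restrict{\Lambda}{\{H^a<\infty\}}:=a^{-3/2}P_a$ agree for different $a$, and since $\{H^a<\infty\}\uparrow\{f\in\Exc\colon A(f)>0\}=\Exc$ as $a\downarrow 0$, they glue to a single $\sigma$-finite measure $\Lambda$ on $\Exc$ with $\Lambda\{H^a<\infty\}=a^{-3/2}$, $\Lambda\{f(0)\neq 0\}=0$, and the stated conditional decomposition at $H^a$ holding by construction. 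Your other ingredients (the $\delta\mapsto 4-\delta$ duality, measurability of $a\mapsto\Lambda(\,\cdot\;|\;H^a<\infty)$) are fine and slot into this argument; it is only the mixture-over-scales step that must be abandoned.
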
  

\begin{proposition}\label{prop:E_bdedly_finite}
  We define a metric $d_{\cD}^A$ on $\Exc$ via $d_{\cD}^A(f,g) := d_{\cD}(f,g) + \big|A(f)^{-1} - A(g)^{-1}\big|$, where $d_{\cD}$ is the Skorokhod metric and $A$ the 
  amplitude. This metric is topologically equivalent to $d_{\cD}$ on $\Exc$. Moreover, $(\Exc,d_{\cD}^A)$ is separable and complete and $\Lambda$ is boundedly finite 
  under this metric, i.e.\ $\Lambda(B)<\infty$ for all measurable $B\subset\Exc$ that are bounded in $(\Exc,d_\cD^A)$.
\end{proposition}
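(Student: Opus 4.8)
The plan is to establish the three assertions — topological equivalence of $d_\cD^A$ and $d_\cD$ on $\Exc$, separability and completeness of $(\Exc, d_\cD^A)$, and bounded finiteness of $\Lambda$ — in that order, since each relies on the previous one. For the first assertion, since $d_\cD^A = d_\cD + |A(\cdot)^{-1} - A(\cdot)^{-1}|$ dominates $d_\cD$, convergence in $d_\cD^A$ trivially implies convergence in $d_\cD$; for the converse I would show that $f \mapsto A(f)$ is continuous on $\Exc$ with respect to $d_\cD$ and takes values in $(0,\infty)$, so that $f\mapsto A(f)^{-1}$ is also $d_\cD$-continuous, whence $d_\cD$-convergence forces convergence of the extra term. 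Continuity of the amplitude requires a small argument: Skorohod convergence $f_n \to f$ in $\cD$ with $f, f_n$ supported on bounded intervals gives uniform control after a time-change, and since the $f_n$ are nonnegative with a genuine positive excursion interval, $\sup f_n \to \sup f$; one should note $A(f) > 0$ because $f$ is a nontrivial excursion, so $A(f)^{-1}$ is finite, and no blow-up occurs because on the relevant compact time window the functions stay bounded.

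For separability and completeness, I would leverage that $(\cD, d_\cD)$ restricted to functions supported in a fixed bounded interval is Polish, and that $\Exc$ is (up to the amplitude decoration) a subset of $\cD$. Separability of $(\Exc, d_\cD^A)$ follows because $d_\cD^A$ is topologically equivalent to $d_\cD$ (just shown) and separability is a topological property, while $(\Exc, d_\cD)$ is separable as a subspace of the separable space $\cD$. For completeness under $d_\cD^A$, take a $d_\cD^A$-Cauchy sequence $(f_n)$: it is $d_\cD$-Cauchy, and the numbers $A(f_n)^{-1}$ form a Cauchy sequence in $\BR$, hence converge to some $r \geq 0$. The key point is to rule out $r = 0$ (which would correspond to a limiting "excursion" of infinite amplitude, i.e. an escape from $\Exc$): since $A(f_n)^{-1}$ converges to $r$, the amplitudes $A(f_n)$ are bounded away from $\infty$ iff $r > 0$ — but more carefully, if $r = 0$ then $A(f_n) \to \infty$, and combined with the $d_\cD$-limit (which would have to be a $\cadlag$ excursion-type function, possibly degenerate) one derives a contradiction with $f_n$ being $d_\cD$-Cauchy on a compatible time scale; alternatively one shows directly that the $d_\cD$-limit $f$ lies in $\Exc$ and that $A(f) = 1/r < \infty$, and then $f_n \to f$ in $d_\cD^A$. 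This is where I expect the main technical obstacle: carefully showing the $d_\cD$-limit of a $d_\cD^A$-Cauchy sequence of genuine excursions is again a genuine excursion with finite positive amplitude, handling the jump at birth/death allowed in the definition \eqref{eq:cts_exc_space_def} and ensuring the lifetime does not degenerate to $0$ or escape to $\infty$.

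For the final and most substantive assertion — that $\Lambda$ is boundedly finite under $d_\cD^A$ — the strategy is to show that any $d_\cD^A$-bounded set $B \subseteq \Exc$ has amplitude bounded below: since $f \mapsto A(f)^{-1}$ is $d_\cD^A$-Lipschitz (indeed $|A(f)^{-1} - A(g)^{-1}| \leq d_\cD^A(f,g)$), boundedness of $B$ in $d_\cD^A$ forces $\sup_{f\in B} A(f)^{-1} < \infty$, i.e. there exists $a > 0$ with $A(f) \geq a$ for all $f \in B$, which means $H^a(f) < \infty$ for every $f \in B$. Hence $B \subseteq \{f \in \Exc \colon H^a < \infty\}$, and by Lemma \ref{lem:BESQ:existence} we have $\Lambda(B) \leq \Lambda\{H^a < \infty\} = a^{-3/2} < \infty$. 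This is clean once the Lipschitz bound on $A^{-1}$ and the preceding structural facts are in place; the only care needed is to confirm that "bounded in $(\Exc, d_\cD^A)$" is interpreted as bounded diameter (or contained in a ball), so that the $d_\cD^A$-Lipschitz coordinate $A^{-1}$ is bounded on $B$. I would close by remarking that this is precisely why the amplitude was built into the metric: under $d_\cD$ alone, the set $\{H^a < \infty\}$ need not be $d_\cD$-bounded, and $\Lambda\{H^a < \infty\} = a^{-3/2} \to \infty$ as $a \downarrow 0$ shows $\Lambda$ is genuinely infinite near the "zero-amplitude" boundary that $d_\cD^A$ pushes to infinity.
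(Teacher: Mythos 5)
Your route coincides with the paper's: topological equivalence comes from the $d_{\cD}$-continuity of $1/A$ on $\Exc$ (the other inequality being trivial since $d_{\cD}^A\ge d_{\cD}$), separability is inherited through topological equivalence, completeness is reduced to the behaviour of $d_{\cD}$-Cauchy sequences of excursions, and bounded finiteness follows because a $d_{\cD}^A$-bounded set $B$ has $A^{-1}$ bounded on it, hence amplitude bounded below by some $a>0$, hence is contained (up to the $\Lambda$-null set $\{f\colon f(0)\neq 0\}$, which you should invoke so that $A(f)\ge a$ really forces $H^{a'}(f)<\infty$ for $a'<a$) in a set of $\Lambda$-mass at most $(a')^{-3/2}$ by Lemma \ref{lem:BESQ:existence}. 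On this last point you are in fact more explicit than the paper, which leaves bounded finiteness to the reader.

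Two remarks on your completeness step, which is the only place where the proposal falls short. First, you mis-identify the degeneracy to be excluded: the case you single out, $r=\lim A(f_n)^{-1}=0$, i.e.\ $A(f_n)\to\infty$, is not what the extra term in the metric is designed to control. The relevant degeneracy is convergence of $f_n$ to the zero function, i.e.\ $A(f_n)\to 0$; this is ruled out for free because a Cauchy sequence $\big(A(f_n)^{-1}\big)$ in $\BR$ is bounded, which is exactly the paper's observation that a sequence converging to $0$ under $d_{\cD}$ cannot be $d_{\cD}^A$-Cauchy. Second, the step you yourself flag as the ``main technical obstacle'' --- that the $d_{\cD}$-limit of a $d_{\cD}^A$-Cauchy sequence of excursions, when it is not the zero function, again lies in $\Exc$ with the correct amplitude, so that continuity of $1/A$ upgrades the convergence to $d_{\cD}^A$ --- is precisely what the paper compresses into the assertion that $\Exc\cup\{0\}$ is complete under $d_{\cD}$. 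You acknowledge this but give no argument (neither does the paper, beyond the bare assertion), and it is genuinely the delicate point: $\Exc$ is not $d_{\cD}$-closed, since sequences of excursions whose infimum over an interior subinterval tends to zero, or whose lifetimes are unbounded, have $d_{\cD}$-limits outside $\Exc\cup\{0\}$ that the amplitude term does not penalize. So to turn your sketch into a proof you would have to either justify this closure-type claim under appropriate restrictions or control lifetimes and interior minima along $d_{\cD}^A$-Cauchy sequences directly; as written, this part of your proposal is an identified but unfilled gap.
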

\begin{proof}
 Note that $\Exc\cup\{0\}$ is complete under $d_{\cD}$ and $1/A$ is continuous under $d_{\cD}$ on $\Exc$. It is easily confirmed that a $d_{\cD}$-Cauchy sequence in $\Exc$ with non-zero limit converges to the same limit in $d_{\cD}^A$, and a sequence converging to $0$ under $d_{\cD}$ cannot be Cauchy under $d_{\cD}^A$. This proves the completeness of $d_{\cD}^A$ and its topological equivalence to $d_{\cD}$. Separability follows from topological equivalence. The reader may confirm that $\Lambda$ is boundedly finite under $d_{\cD}^A$.
\end{proof}

We will consider a constant multiple of $\Lambda$ as an intensity of a Poisson random measure on $[0,\infty)\times\Exc$, using the framework of 
\cite{DaleyVereJones1,DaleyVereJones2} as our reference. 
In the setting of scaffoldings and spindles discussed in the introduction, changing the intensity by a constant corresponds to time-changing the scaffolding, which will not impact the skewer map of Definition \ref{def:skewer} or our interval partition diffusions. 
It will help with calculations 
to make the following choice.

\begin{definition}
 We define our \BESQ[-1] \emph{excursion measure} as $\mBxc=(3/2\sqrt{\pi})\Lambda$, where $\Lambda$ is the Pitman-Yor excursion measure of Lemma \ref{lem:BESQ:existence}. We call continuous elements of $\Exc$ such as $\mBxc$-a.e.\ $f\in\Exc$ \emph{spindles} and elements with a discontinuity at birth or death \emph{broken spindles}.
\end{definition}

While every spindle $f\in\Exc$ has an intrinsic lifetime $\life(f)\in[0,\infty)$, the scaffolding of Section \ref{sec:prelim:JCCP} will shift spindles to non-zero birth
times that are not intrinsic to each spindle.   

\begin{lemma}\label{lem:BESQ:exc_length} 
  $\displaystyle\mBxc\{f\in\Exc\colon A \geq m\}=\frac{3}{2\sqrt{\pi}}m^{-3/2}$ 
  and $\displaystyle\mBxc\{f\in\Exc\colon \life(f) \geq y\}= \frac{1}{\pi\sqrt{2}}y^{-3/2}$.
\end{lemma}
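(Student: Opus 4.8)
The plan is to compute the two $\mBxc$-masses by combining the two descriptions of the Pitman--Yor measure $\Lambda$ that are already quoted: the first-passage description in Lemma \ref{lem:BESQ:existence} for the amplitude tail, and the relationship between the length distribution of a \BESQ[-1] process and the inverse-Gamma law from Lemma \ref{lem:BESQ:length} for the lifetime tail. Throughout, recall $\mBxc = (3/(2\sqrt\pi))\Lambda$, so each answer is $(3/(2\sqrt\pi))$ times the corresponding $\Lambda$-mass.

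\emph{Amplitude tail.} The event $\{A \ge m\}$ is exactly $\{H^m < \infty\}$, since a \cadlag\ excursion in $\Exc$ reaches level $m$ if and only if its supremum is at least $m$. Hence Lemma \ref{lem:BESQ:existence} gives $\Lambda\{A \ge m\} = \Lambda\{H^m < \infty\} = m^{-3/2}$ directly, and multiplying by $3/(2\sqrt\pi)$ yields $\mBxc\{f \in \Exc\colon A \ge m\} = \frac{3}{2\sqrt\pi} m^{-3/2}$. This part is essentially immediate.

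\emph{Lifetime tail.} Here I would disintegrate $\Lambda$ over the amplitude using Lemma \ref{lem:BESQ:existence}. Fix a reference level, say $a = 1$. On $\{H^1 < \infty\}$ (mass $1$ under $\Lambda$), the canonical excursion decomposes as an independent concatenation of a \BESQ[5] path run from $0$ until it first hits $1$, followed by a \BESQ[-1] path started at $1$ and run to absorption; the total lifetime $\life(f)$ is the sum of the two durations. Then
\begin{equation}
 \Lambda\{\life(f) \ge y\} = \Lambda\{H^1 < \infty\}\,\Pr\big(T_5 + T_{-1} \ge y\big) \ge \Lambda\{H^1 < \infty\}\,\Pr\big(T_{-1} \ge y\big),
\end{equation}
but to get the exact power law one must instead exploit the Brownian scaling of \BESQ\ to avoid reference-level dependence. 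The cleanest route is to use the scaling-and-disintegration machinery: under the scaling map $\scale[c][f](s) := c f(s/c)$ one has $\Lambda(\scale[c][A]) = c^{-1/2}\Lambda(A)$ (this is the $p=1/2$ self-similarity of \BESQ\ excursion measure, consistent with the $m^{-3/2}$ amplitude law and $\life(\scale[c][f]) = c\,\life(f)$, i.e. $q=1$). Therefore $y \mapsto \Lambda\{\life \ge y\}$ is homogeneous of degree $-1/2 \cdot (1/1) \cdot$... more precisely $\Lambda\{\life(f)\ge y\} = \Lambda\{\life(c f(\cdot/c)) \ge cy\}\cdot$-type reasoning gives $\Lambda\{\life \ge y\} = c_\Lambda\, y^{-1/2}$ for a constant $c_\Lambda$; wait---$\life$ has $q=1$ and $\mu(\scale[c][A]) = c^{-1/2}\mu(A)$, so $\Lambda\{\life > y\} = y^{-1/2}\Lambda\{\life > 1\}$, giving a $y^{-1/2}$ law, not $y^{-3/2}$. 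Let me reconsider: the stated answer $\frac{1}{\pi\sqrt2}y^{-3/2}$ means the correct scaling must be $\mu(\scale[c][A]) = c^{-3/2}\mu(A)$ with $\life(\scale[c][f]) = c\,\life(f)$. I would therefore fix the self-similarity exponent of $\Lambda$ first from the amplitude computation — since $A(\scale[c][f]) = cA(f)$ and $\Lambda\{A \ge m\} = m^{-3/2}$ forces $\Lambda(\scale[c][\,\cdot\,]) = c^{-3/2}\Lambda(\,\cdot\,)$ — and then pin down the constant in $\Lambda\{\life \ge y\} = C\, y^{-3/2}$ by one explicit calculation at $y=1$, or equivalently by computing $\int_0^\infty \Lambda\{\life \ge y\}\,dy$ or $\Lambda[\,1 - e^{-\lambda\life}\,]$ and matching against a known Laplace-transform identity. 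The most concrete way to get $C$ is: condition on amplitude $A = a$, note $\Lambda(\life \ge y \mid A \ge a)$-type formulas are awkward, so instead condition via Lemma \ref{lem:BESQ:existence} on $\{H^a < \infty\}$, and use Lemma \ref{lem:BESQ:length}: a \BESQ[-1] from $a$ is absorbed at a time that is $(a/2)^{-1}$ times an \InvGammaDist[3/2,1]-scaled variable, so $\Pr(T_{-1}^{(a)} > t) = \Pr(\text{InvGamma}(3/2,a/2) > t)$, whose tail is $\sim$ const $\cdot\, (a/t)^{3/2}$ as $a/t \to 0$; integrating the amplitude density $\frac32 a^{-5/2}da$ (the density of $\Lambda\circ A^{-1}$) against this and adding the negligible \BESQ[5] first-passage contribution recovers $C\,y^{-3/2}$ with $C = \frac{1}{\pi\sqrt2}\cdot\frac{2\sqrt\pi}{3}$, i.e. $\mBxc\{\life \ge y\} = \frac{1}{\pi\sqrt2}y^{-3/2}$ after multiplying by $3/(2\sqrt\pi)$.

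\emph{Main obstacle.} The amplitude bound is trivial; the real work is the constant in the lifetime tail. The cleanest honest argument avoids the contribution of the \BESQ[5] first-passage leg entirely by using the exact self-similarity of $\Lambda$ (degree $-3/2$, fixed by the amplitude law) to reduce to computing a single number, $\Lambda\{\life \ge 1\}$, which can be evaluated by the entrance-law / Laplace-transform formulas for \BESQ\ in \cite{PitmYor82} (e.g. $\Lambda[1 - e^{-\lambda\life/2}] = \text{const}\cdot\lambda^{1/2}$, matching the \Stable[1/2] structure and the Laplace exponent $\Phi(\lambda)=\lambda^{1/2}$ of Proposition \ref{prop:IP:Stable}), and then reading off $C$ by Tauberian inversion of $\lambda^{1/2}$, which gives exactly the $\Gamma(3/2)^{-1}$-type constant producing $\frac{1}{\pi\sqrt2}$. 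I would present it this way: (1) establish the degree $-3/2$ self-similarity of $\Lambda$ from the amplitude identity; (2) deduce $\Lambda\{\life \ge y\} = C y^{-3/2}$; (3) compute $C$ via the Laplace transform of $\life$ under $\Lambda$ using the \BESQ\ entrance law, matching $\lambda^{1/2}$; (4) multiply through by $3/(2\sqrt\pi)$. The delicate point to get right is bookkeeping of the factor-of-two conventions in \BESQ\ time (the $z/2\zeta$ normalization in Lemma \ref{lem:BESQ:length}), which is exactly what turns a bare power law into the stated $\frac{1}{\pi\sqrt2}$.
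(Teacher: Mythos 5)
Your first identity is fine and is exactly the paper's argument: $\{A\geq m\}=\{H^m<\infty\}$, so Lemma \ref{lem:BESQ:existence} gives $\Lambda\{A\geq m\}=m^{-3/2}$ and the prefactor $3/2\sqrt{\pi}$ does the rest. The problem is the lifetime tail, where your reduction-to-a-constant strategy is reasonable in outline but both of your concrete routes to the constant break down. First, the scaling step: knowing that the pushforward $\Lambda\{A\geq m\}=m^{-3/2}$ scales does \emph{not} force $\Lambda(\scaleB[c][\,\cdot\,])=c^{-3/2}\Lambda(\,\cdot\,)$; a one-dimensional marginal cannot determine the self-similarity of the whole measure (the conditional law of the normalized excursion given $A$ could vary with $A$). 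The scaling invariance is true, but it has to be proved from the Pitman--Yor description together with \BESQ\ scaling, which is exactly what the paper does later in Lemma \ref{lem:BESQ:invariance}; as written, your justification is circular-in-spirit and invalid.

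Second, neither proposed computation of the constant works. Route (a): the decomposition of Lemma \ref{lem:BESQ:existence} is at the \emph{first passage} time $H^a$, i.e.\ it is a statement conditional on $\{A\geq a\}$, not a disintegration over $\{A\in da\}$; integrating the \InvGammaDist[3/2,a/2] tail of the post-$H^a$ \BESQ[-1] lifetime against the ``amplitude density'' $\tfrac32 a^{-5/2}\,da$ double-counts (every level $a$ below the maximum sees the whole remaining excursion) and in fact produces a divergent integral, since the tail is of order $(a/y)^{3/2}$ and $\int_0 (a/y)^{3/2}a^{-5/2}\,da=y^{-3/2}\int_0 a^{-1}\,da=\infty$; moreover the \BESQ[5] leg $H^a$ is of the same order as the \BESQ[-1] leg and cannot be dropped. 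Route (b): the identity you want to match, $\Lambda\bigl[1-e^{-\lambda\life/2}\bigr]=\mathrm{const}\cdot\lambda^{1/2}$, is false --- since $\Lambda\{\life\geq y\}\propto y^{-3/2}$, the measure $\Lambda(\life\in dy)\propto y^{-5/2}dy$ integrates $(1-e^{-\lambda y})$ to $+\infty$; only the compensated integral $\int(e^{-\lambda y}-1+\lambda y)\,\mBxc(\life\in dy)=\sqrt{2/\pi}\,\lambda^{3/2}$ is finite (Proposition \ref{prop:stable_JCCP}). The exponent $\lambda^{1/2}$ you quote belongs to the \Stable[\frac12] subordinator of \emph{masses} (Proposition \ref{prop:agg_mass_subord}), not to the lifetime intensity, so there is nothing to Tauberian-invert. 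The paper avoids all of this by using a different Pitman--Yor description (their (3.2)): $\Lambda(\life>s)=\EV\bigl[Z_s^{-3/2}\bigr]$ for $Z$ a \BESQ[5] started from $0$, which with the explicit \BESQ[5] density is a one-line Gamma integral giving $\frac{\sqrt2}{3\sqrt\pi}s^{-3/2}$, hence $\frac{1}{\pi\sqrt2}y^{-3/2}$ after multiplying by $3/2\sqrt{\pi}$. To repair your proof you would need either that entrance-law identity or some equally explicit replacement for the constant computation.
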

\begin{proof} The first formula follows straight from Lemma \ref{lem:BESQ:existence}. We can calculate the second one using \cite[Description (3.2)]{PitmYor82} to express
  $\Lambda\{\life>s\}$ in terms of a
  \BESQ[5] process $Z$ starting from 0, whose probability density function at time $s$ is given in \cite[Equation (50)]{GoinYor03}: 
  $$\Lambda(\life>s)=\EV[Z_s^{-3/2}]=\int_0^\infty y^{-3/2}(2s)^{-5/2}\Gamma(5/2)^{-1}y^{3/2}e^{-y/2s}dy=\frac{\sqrt{2}}{3\sqrt{\pi}}s^{-3/2}.$$\vspace{-0.8cm}
  
\end{proof}

\begin{definition}\label{def:BESQ:scaling_def}
 We define a \emph{reversal involution} $\reverseexc\colon\Exc\rightarrow\Exc$ and a \emph{scaling map} $\scaleB\colon (0,\infty)\times\Exc\rightarrow\Exc$ by saying, for $c>0$ and $f\in\Exc$,
 \begin{equation}
  \reverseexc (f) := \big(f\big((\life(f)-y)-\big),\ y\in\BR\big) \qquad \text{and} \qquad  c \scaleB f := \left(cf\left(y/c\right),\ y\in\BR\right).
 \end{equation}
\end{definition}

\begin{lemma}\label{lem:BESQ:invariance}
 For $A\in \cExc$ and $c>0$,
 \begin{equation}
  \mBxc(\reverseexc(A)) = \mBxc(A)\quad\text{and}\quad
  \mBxc(\scaleB[c][A]) = c^{-3/2}\mBxc(A).\label{eq:BESQ:scaling_inv}
 \end{equation}
\end{lemma}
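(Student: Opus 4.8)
The plan is to reduce both invariance statements to the corresponding facts for the Pitman–Yor measure $\Lambda$ (since $\mBxc$ is just the constant multiple $(3/2\sqrt\pi)\Lambda$, and multiplying a measure by a positive constant does not affect any identity of the form $\mu(\phi^{-1}(A)) = c\,\mu(A)$), and then to verify those facts for $\Lambda$ using the explicit description in Lemma \ref{lem:BESQ:existence}. It is in fact cleaner to work directly with $\mBxc$ throughout, using only the two structural features recorded above: that $\mBxc\{H^a < \infty\} = (3/2\sqrt\pi)a^{-3/2}$ and that, conditionally on $\{H^a < \infty\}$, the path consists of an independent \BESQ[5] run from $0$ to its first hit of $a$, concatenated at time $H^a$ with a \BESQ[-1] started from $a$ and run to absorption.

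For the \emph{scaling} identity, first I would check the cocycle/normalization properties $c\scaleB(c'\scaleB f) = (cc')\scaleB f$ and $1\scaleB f = f$ and the fact that $\scaleB$ maps $\Exc$ to $\Exc$ measurably, all of which are immediate from the formula $c\scaleB f = (cf(y/c))$; note also $\life(c\scaleB f) = c\,\life(f)$ and $H^{ca}(c\scaleB f) = c\,H^a(f)$. The substantive input is the Brownian/Bessel scaling property: if $Z$ is a \BESQ[\delta] started from $x$, then $(cZ_{s/c})_{s\ge0}$ is a \BESQ[\delta] started from $cx$, for any dimension $\delta$ (this is standard, e.g.\ from the SDE $dZ = \delta\,ds + 2\sqrt{Z}\,dB$, or from \cite{PitmYor82}). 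Applying this with $\delta=5$ on $[0,H^a]$ and $\delta=-1$ after $H^a$, and using the independence in Lemma \ref{lem:BESQ:existence}, one gets that the pushforward of $\mBxc(\,\cdot\mid H^a<\infty)$ under $f\mapsto c\scaleB f$ equals $\mBxc(\,\cdot\mid H^{ca}<\infty)$; combined with $\mBxc\{H^{ca}<\infty\} = (3/2\sqrt\pi)(ca)^{-3/2} = c^{-3/2}\mBxc\{H^a<\infty\}$ this yields $\mBxc(\scaleB[c][A] \cap \{H^{ca}<\infty\}) = c^{-3/2}\mBxc(A\cap\{H^a<\infty\})$ for every $a>0$ and $A\in\cExc$. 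Letting $a\downarrow 0$, the events $\{H^a<\infty\}$ increase to $\Exc$ up to an $\mBxc$-null set (since every $f\in\Exc$ with $A(f)>0$ — i.e.\ $\mBxc$-a.e.\ $f$ — hits every sufficiently small level), and monotone convergence gives $\mBxc(\scaleB[c][A]) = c^{-3/2}\mBxc(A)$.

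For the \emph{reversal} identity the cleanest route is again via levels. Under $\mBxc(\,\cdot\mid H^a<\infty)$, decompose $f$ at $H^a$ into its pre- and post-$H^a$ pieces; reversal $\reverseexc$ swaps these roles, turning the \BESQ[5]-from-$0$-to-$a$ piece run forwards into the same piece run backwards, and the \BESQ[-1]-from-$a$-to-$0$ piece into the same run backwards. So it suffices to know the time-reversal invariance of each ingredient: the time-reversal of a \BESQ[5] started at $0$ and run up to its first hitting time of $a$ is a \BESQ[-1] started at $a$ and run to absorption at $0$, and vice versa. This is exactly the classical duality between \BESQ[5] and \BESQ[-1] under time reversal (indeed this duality is one of the ways Pitman and Yor \cite{PitmYor82} describe $\Lambda$; it is the Williams-type / $h$-transform relationship between dimensions $\delta$ and $4-\delta$, here $5$ and $-1$). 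Granting this, the two pieces of $\reverseexc(f)$ have, conditionally on $\{H^a<\infty\}$, exactly the joint law of the two pieces of $f$, and since $\mBxc\{H^a<\infty\}$ is unchanged by reversal we get $\mBxc(\reverseexc(A)\cap\{H^a<\infty\}) = \mBxc(A\cap\{H^a<\infty\})$; again let $a\downarrow 0$.

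The main obstacle I anticipate is not the scaling identity — that is a routine application of Brownian scaling plus the monotone limit in the level — but pinning down precisely the time-reversal duality between \BESQ[5] (from $0$ to $H^a$) and \BESQ[-1] (from $a$ to absorption) needed for the reversal statement, and checking that $\reverseexc$ genuinely interchanges the two path fragments of the canonical excursion at the hitting time $H^a$ (care is needed about endpoints, the values at time $0$, $H^a$, and $\life(f)$, and the one-sided continuity conventions built into the definition \eqref{eq:cts_exc_space_def} of $\Exc$). In the write-up I would either invoke this \BESQ\ time-reversal duality directly from \cite{PitmYor82, GoinYor03} or, if one prefers a self-contained argument, derive the reversal invariance of $\Lambda$ from whichever of the Pitman–Yor descriptions of $\Lambda$ makes the symmetry manifest, and then transfer it to $\mBxc$ by the constant multiple.
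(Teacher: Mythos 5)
Your scaling argument is correct and is essentially the paper's: the paper also deduces the scaling relation from Lemma \ref{lem:BESQ:existence} together with the scaling properties of \BESQ[-1] and \BESQ[5] (cf.\ \cite[A.3]{GoinYor03}); your explicit limit $a\downarrow 0$ over the events $\{H^a<\infty\}$ is a harmless way of making that reduction precise.

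The reversal half, however, has a genuine gap as sketched. First, the duality you invoke is misstated: the classical Williams-type reversal says that the time reversal of a \BESQ[-1] started at $a$ and killed at $0$ is a \BESQ[5] started at $0$ and run until its \emph{last} hitting time of $a$, not its first. Indeed the reversal of the \BESQ[5] segment of Lemma \ref{lem:BESQ:existence} (run to the first passage of $a$) is a path that stays strictly below $a$ after time $0$, whereas a \BESQ[-1] started at $a$ crosses level $a$ immediately, so the two cannot be equal in law. Second, even granting the correct duality, the ``swap the two pieces'' step fails because $\reverseexc$ does not carry the splitting time $H^a(f)$ to $H^a(\reverseexc(f))$: the first hitting time of $a$ by the reversed excursion corresponds to the \emph{last} visit of $f$ to level $a$, which a.s.\ occurs strictly after $H^a(f)$ (the \BESQ[-1] piece returns to level $a$ after time $H^a$). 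Consequently the conditional law of $\reverseexc(f)$ given $\{H^a<\infty\}$ does not decompose at its own first passage of $a$ into the reversals of the two pieces of $f$, and your two-line symmetry argument does not close; one would need a finer Williams-type decomposition of the path at both the first and the last passage of level $a$ and a careful regluing, which is exactly the missing input. Your stated fallback—deriving reversal invariance from a description of $\Lambda$ in which the symmetry is manifest—is in fact what the paper does: it reads the time reversibility directly from \cite[(3.3)]{PitmYor82}. Either cite that (or an equivalent statement of the excursion measure's reversal invariance) or supply the two-sided Williams decomposition; as written, the reversal claim is not proved.
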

\begin{proof} The time reversibility can be read from \cite[(3.3)]{PitmYor82}. The scaling relation follows from Lemma \ref{lem:BESQ:existence} and the scaling
  properties of \BESQ[-1] and \BESQ[5] as noted e.g.\ in \cite[A.3]{GoinYor03}. 
\end{proof}

Note that, by Definition \ref{def:BESQ:scaling_def} and \eqref{eq:BESQ:scaling_inv}, the scaling and lifetime maps $\scaleB$ and $\life$ satisfy the hypotheses of Lemma \ref{lem:scl_ker}, with $p = 3/2$ and $q=1$. Thus, the conclusions of that lemma apply to these two maps.

\begin{corollary}\label{cor:BESQ:scl_ker}
  There exists a $\life$-disintegration of $\mBxc$, denoted by $\mBxc(\,\cdot\;|\,\life)$, that is unique with the following property. For every $a,b>0$, if $f$ is random with law $\mBxc(\,\cdot\;|\,\life = a)$ then $\scaleB[(b/a)][f]$ has law $\mBxc(\,\cdot\;|\,\life = b)$.
\end{corollary}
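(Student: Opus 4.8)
The plan is to obtain this as an immediate consequence of Lemma \ref{lem:scl_ker}, applied to the $\sigma$-finite measure space $(\Exc,\SExc,\mBxc)$ (it is $\sigma$-finite since, by Proposition \ref{prop:E_bdedly_finite}, $\mBxc$ is boundedly finite on the separable space $(\Exc,d_\cD^A)$) with scaling map $\scaleB$ and statistic $\theta=\life$. As already noted just before the statement, these satisfy the hypotheses of that lemma with $p=3/2$ and $q=1$: the cocycle identities $b\scaleB(c\scaleB f) = \scaleB[bc][f]$ and $\scaleB[1][f] = f$ are read off directly from Definition \ref{def:BESQ:scaling_def}; the covariance $\mBxc(\scaleB[c][A]) = c^{-3/2}\mBxc(A)$ is the second relation in \eqref{eq:BESQ:scaling_inv} of Lemma \ref{lem:BESQ:invariance}; the identity $\life(\scaleB[c][f]) = c\,\life(f)$ follows from the fact that $cf(y/c)>0$ exactly when $y\in(0,c\,\life(f))$; and $\mBxc\{\life>1\}\le \mBxc\{\life\ge1\} = (\pi\sqrt2)^{-1}<\infty$ by Lemma \ref{lem:BESQ:exc_length}.

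First I would invoke Lemma \ref{lem:scl_ker} to produce a $\life$-disintegration $a\mapsto\mBxc(\,\cdot\;|\,\life=a)$, characterized uniquely among $\life$-disintegrations of $\mBxc$ by the property that if $f$ has law $\mBxc(\,\cdot\;|\,\life=a)$ then $\scaleB[a^{-1}][f]$ has law $\mBxc(\,\cdot\;|\,\life=1)$ (here $a^{-1/q}=a^{-1}$ since $q=1$). Next I would upgrade this normalized form to the stated two-parameter form. Applying the characterizing property with parameter $b$ in place of $a$, the map $g\mapsto\scaleB[b^{-1}][g]$ pushes $\mBxc(\,\cdot\;|\,\life=b)$ forward to $\mBxc(\,\cdot\;|\,\life=1)$; since $g\mapsto\scaleB[b^{-1}][g]$ is a bijection on $\Exc$ with inverse $g\mapsto\scaleB[b][g]$, it follows that $g\sim\mBxc(\,\cdot\;|\,\life=1)$ implies $\scaleB[b][g]\sim\mBxc(\,\cdot\;|\,\life=b)$. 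Composing with the case of parameter $a$ and using the cocycle identity $\scaleB[b][\scaleB[a^{-1}][f]] = \scaleB[b/a][f]$ yields exactly: $f\sim\mBxc(\,\cdot\;|\,\life=a)$ implies $\scaleB[(b/a)][f]\sim\mBxc(\,\cdot\;|\,\life=b)$.

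For the uniqueness assertion in the corollary, I would observe that any $\life$-disintegration of $\mBxc$ satisfying the two-parameter property satisfies, upon specializing $b=1$, precisely the one-parameter normalization property that characterizes $\mBxc(\,\cdot\;|\,\life)$ in Lemma \ref{lem:scl_ker}; hence it coincides with it. I do not expect any genuine obstacle here: the substantive content is entirely contained in Lemma \ref{lem:scl_ker}, and all that remains is the elementary verification of the scaling hypotheses (recorded in Definition \ref{def:BESQ:scaling_def} and Lemmas \ref{lem:BESQ:invariance}--\ref{lem:BESQ:exc_length}) together with the bookkeeping translating between the ``normalize to unit lifetime'' formulation and the ``rescale from lifetime $a$ to lifetime $b$'' formulation.
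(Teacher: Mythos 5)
Your proposal is correct and follows exactly the paper's route: the paper simply notes (in the paragraph preceding the corollary) that $\scaleB$ and $\life$ satisfy the hypotheses of Lemma \ref{lem:scl_ker} with $p=3/2$ and $q=1$, and reads the corollary off from that lemma. Your extra bookkeeping translating the unit-lifetime normalization of Lemma \ref{lem:scl_ker} into the two-parameter $a\to b$ form, and the verification of the scaling hypotheses via Lemmas \ref{lem:BESQ:invariance} and \ref{lem:BESQ:exc_length}, is exactly the (implicit) content of the paper's argument, just spelled out.
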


Finally, we will require better smoothness than just continuity for \BESQ[-1] excursions.

\begin{definition}
 \begin{enumerate}[label=(\roman*), ref=(\roman*)]
  \item A function $f\colon\cS\to\cT$ between metric spaces $(\cS,d_\cS)$ and $(\cT,d_\cT)$ is \emph{H\"older continuous with parameter $\theta$} if, for some $C\in (0,\infty)$,
  \begin{align}
   \sup_{r,s\in \cS}\frac{d_\cT(f(r),f(s))}{d_\cS(r,s)^{\theta}} \leq C.
  \end{align}
  For brevity, we will say that $f$ is \emph{H\"older-$\theta$} with \emph{H\"older constant $C$}.
  
  \item A family of functions $(f_a)_{a\in \cA}$ is \emph{uniformly H\"older-$\theta$} if, for some $C\in(0,\infty)$, 
  \begin{align}
   \sup_{a\in\cA}\sup_{r,s\in \cS}\frac{d_\cT(f_a(r),f_a(s))}{d_\cS(r,s)^{\theta}} \leq C.
  \end{align}
  
  \item A bivariate function $f\colon \cS\times \cT \to \mathcal{U}$ into a third metric space $(\mathcal{U},d_\mathcal{U})$ is said to be \emph{uniformly H\"older-$\theta$ in its first coordinate} if the family of functions $(f(\cdot,t))_{t\in \cT}$ is uniformly H\"older-$\theta$, and correspondingly for the second coordinate.
 \end{enumerate}
\end{definition}

\begin{lemma}[e.g.\ Corollary 3 of \cite{Paper0}]\label{lem:BESQ:Holder}
 For every $\theta\in (0,\frac12)$, $\mBxc$-a.e.\ excursion is H\"older-$\theta$.
\end{lemma}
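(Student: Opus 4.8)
The plan is to reduce the statement to the two building blocks in the Pitman--Yor decomposition of $\mBxc$ (Lemma \ref{lem:BESQ:existence}) and then to invoke the Kolmogorov continuity criterion for each. Fix $\theta\in(0,\tfrac12)$ and let $N\subseteq\Exc$ be the set of excursions that fail to be H\"older-$\theta$. Since $A(f)\in(0,\infty)$ for $\mBxc$-a.e.\ $f$ (by definition $A>0$ on $\Exc$, while $\mBxc\{A=\infty\}=0$ by Lemma \ref{lem:BESQ:exc_length}) and $\{H^a<\infty\}=\{A\geq a\}$, I would first note that $\Exc=\bigcup_{k\geq1}\{H^{1/k}<\infty\}$ up to a $\mBxc$-null set, with $\mBxc\{H^{1/k}<\infty\}<\infty$ for each $k$ by Lemma \ref{lem:BESQ:exc_length}. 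Hence it suffices to prove that, for each $a>0$, $\mBxc(\,\cdot\mid H^a<\infty)$-a.e.\ excursion is H\"older-$\theta$; the conclusion then follows since $\mBxc(N)=\lim_k\mBxc\big(N\cap\{H^{1/k}<\infty\}\big)=0$.

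Next, fixing $a>0$ and working under the probability measure $\mBxc(\,\cdot\mid H^a<\infty)$, I would apply Lemma \ref{lem:BESQ:existence} to split $f$ at the time $H^a$: the piece $f|_{[0,H^a]}$ is a \BESQ[5] started from $0$ and stopped at its first passage time of $a$ (so $H^a<\infty$ and $\sup_{[0,H^a]}f=a$ a.s.), and the independent piece $f(H^a+\cdot\,)$ is a \BESQ[-1] started from $a$, run until its a.s.\ finite absorption time (Lemma \ref{lem:BESQ:length}). The general fact I would use is that on a compact time interval the sample paths of any \BESQ diffusion are a.s.\ H\"older-$\theta$ for every $\theta<\tfrac12$: from the SDE $dZ_t=2\sqrt{Z_t}\,dW_t+\delta\,dt$, the drift is Lipschitz in $t$, and by Dambis--Dubins--Schwarz the local-martingale part equals $\beta_{\langle M\rangle_t}$ for a Brownian motion $\beta$, where $\langle M\rangle_t=4\int_0^t Z_s\,ds$ is Lipschitz in $t$ with a.s.\ finite Lipschitz constant (since $Z$ is a.s.\ bounded on the compact interval); composing the Kolmogorov-continuous path of $\beta$ on an a.s.\ bounded interval with this Lipschitz time change gives the claim. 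Applying this to both pieces, and using that $f$ vanishes off $(0,\zeta(f))$, approaches $0$ continuously at its birth and death times, and takes the common value $a$ at the junction $H^a$, I would conclude via the elementary observation that two functions that are H\"older-$\theta$ on $[0,h]$ and $[h,h+\ell]$ and agree at $h$ concatenate to a function H\"older-$\theta$ on all of $\BR$ (using $\theta<1$ to get $|f(t)-f(s)|\leq C(|t-h|^\theta+|h-s|^\theta)\leq 2C|t-s|^\theta$ when $s\leq h\leq t$).

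The step I expect to be the main obstacle is controlling the descending \BESQ[-1] piece $f(H^a+\cdot\,)$ as it approaches its absorption level $0$: one must rule out a non-H\"older plunge to $0$ near the absorption time. This is exactly what the Dambis--Dubins--Schwarz representation above handles, since $\langle M\rangle$ stays Lipschitz and $\beta$ stays Kolmogorov-continuous all the way up to the (a.s.\ finite) absorption time; alternatively, one could invoke time-reversal of squared Bessel processes, under which a neighbourhood of the absorption time of a \BESQ[-1] looks like a \BESQ[5] started from $0$, reducing it to the ascending case. Everything else — boundedness of $Z$ on the relevant compact intervals, a.s.\ finiteness of the H\"older constants, and the near-$0$ estimates at birth and death (e.g.\ $f(t)=|B_t|^2\leq C_f^2\,t^{2\theta}$ near $0$ for a $5$-dimensional Brownian motion $B$ with $f|_{[0,H^a]}=|B|^2$) — is routine.
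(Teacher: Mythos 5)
Your proof is correct, but note that the paper itself contains no argument for this lemma: it is quoted wholesale from \cite{Paper0} (Corollary 3 there), so there is no internal proof to match. What you give is a legitimate self-contained alternative: reduce to the finite-mass sets $\{H^{1/k}<\infty\}$ (which do cover $\Exc$ up to a $\mBxc$-null set, since $\mBxc$-a.e.\ excursion is continuous, so $A(f)>1/k$ forces $H^{1/k}<\infty$ by the intermediate value theorem), then use the Pitman--Yor splitting of Lemma \ref{lem:BESQ:existence} at $H^a$ and prove H\"older-$\theta$ regularity of the \BESQ[5] ascent and the \BESQ[-1] descent separately. The key analytic step — Dambis--Dubins--Schwarz applied to the local-martingale part, whose bracket $4\int_0^t Z_s\,ds$ is Lipschitz on the compact lifetime because $Z$ is bounded there and the absorption time is a.s.\ finite (Lemma \ref{lem:BESQ:length}) — does handle the approach to absorption correctly; the only technical footnotes are that DDS with $\langle M\rangle$ finite at the terminal time requires the usual enlargement of the probability space, that the resulting H\"older constant is random (which is all the statement asks for, since it is a per-excursion property), and that the set of non-H\"older-$\theta$ excursions is measurable (checkable on rational time pairs for continuous paths). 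Your concatenation lemma at the junction $H^a$ and the extension by zero outside $[0,\life(f)]$ are handled correctly, so the argument goes through; the time-reversal route you sketch (\BESQ[-1] reversed at absorption is a \BESQ[5] entrance from $0$) would work equally well and is closer in spirit to how such statements are usually reduced to the ascending case.
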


In Lemma \ref{lem:spindle_piles}, we exhibit subfamilies of the \BESQ[-1] excursions of a \PRM[\Leb\otimes\mBxc] that are uniformly H\"older-$\theta$. 
Here, \PRM[\mu] is short for Poisson random measure with intensity $\mu$.

\subsection{Scaffolding: \Stable[\frac32] processes to describe births and deaths of blocks}
\label{sec:prelim:JCCP}

Let $\bN$ denote a \PRM[\Leb\otimes\mBxc] on $[0,\infty)\times \Exc$. The key idea of the pair $(N,X)$ and the skewer map of Definition \ref{def:skewer} is to 
associate with each atom $(t,f)$ of $N$ a spindle birth time $X(t-)$ and to extract $f(y-X(t-))$ as its mass at level $y\in\BR$. Furthermore, the scaffolding $X$ is such
that $\Delta X(t):= X(t)-X(t-)=\zeta(f)$. By Lemma \ref{lem:BESQ:exc_length} and standard mapping of Poisson random measures, $\int\delta(s,\zeta(f))d\bN(s,f)$ is a
\PRM[\Leb\otimes\mBxc(\zeta\in\cdot\,)], where 
$$\int_{(z,\infty]}x\,\mBxc(\zeta\in dx)=\int_\Exc\cf\{\zeta(f)>z\}\zeta(f)d\mBxc(f) = \frac{3}{\pi\sqrt{2}}z^{-1/2}\longrightarrow\infty\qquad\text{as }z\downarrow 0.$$
Thus, if we take the lifetimes of the spindles $f$ that occur as points in $\bN$ to be the heights of jumps for a \cadlag\ path, in the manner of 
\eqref{eq:discrete_JCCP_eg}, then these jumps are almost surely not summable. To define a path $\bX$ associated with $\bN$ in this manner, we require a limit with
compensation. We give a general definition that will also apply to random measures constructed in various ways from independent copies of $\bN$. Our reference for  
measures on Polish metric spaces is \cite{DaleyVereJones1,DaleyVereJones2}.

\begin{definition}\label{def:JCCP} For a complete, separable metric space $(\cS,d_\cS)$, denote by 
  $\cN(\cS)$ the set of counting measures $N$ on $\cS$ that are boundedly finite: $N(B)<\infty$ for all bounded Borel sets $B\subset\cS$. We equip 
  $\cN(\cS)$ with the $\sigma$-algebra $\Sigma(\cN(\cS))$ generated by evaluation maps $N\mapsto N(B)$. 

  Recall $(\Exc,d_{\cD}^A)$ from Proposition \ref{prop:E_bdedly_finite}. We metrize $[0,\infty)\times\Exc$ via the sum of the Euclidean metric in the first coordinate and $d_{\cD}^A$ in the second. For $N\in \cNRE$, let 
 \begin{equation}
  \len(N) := \inf\Big\{t>0\colon N\big([t,\infty)\times\Exc\big) = 0\Big\} \in [0,\infty].\label{eq:assemblage:len_def}
 \end{equation}
 When the following limit exists for $t\in [0,\len(N)]\cap [0,\infty)$, we further define 
 \begin{equation}
  \xi_N(t) := \lim_{z\downto 0}\left(\int_{[0,t]\times\{g\in\Exc\colon\zeta(g) > z\}}\life(f)dN(s,f) - \frac{3tz^{-1/2}}{\pi\sqrt{2}}\right).\label{eq:JCCP_def}
 \end{equation}   
 When typographically convenient, we will also write
 $\xi(N) := \big( \xi_N(t),\ t\in [0,\len(N)]\cap [0,\infty) \big)$.
\end{definition}

Note that the limit in \eqref{eq:JCCP_def} does not exist for every $N$ and $t$.

\begin{proposition}\label{prop:stable_JCCP}
  For $\bN$ a \PRM[\Leb\otimes\mBxc] on $[0,\infty)\times \Exc$, the convergence in \eqref{eq:JCCP_def} holds a.s.\ uniformly in $t$ on any bounded interval.   
  Moreover, the scaffolding $\xi(\bN)$ is a spectrally positive stable L\'evy process of index $3/2$, with L\'evy measure and Laplace exponent given by 
 \begin{equation}
  \mBxc(\zeta\in dx)=\frac{3}{2\pi\sqrt{2}}x^{-5/2}dx\qquad\mbox{and}\qquad\psi(\lambda) = \sqrt{\frac{2}{\pi}}\lambda^{3/2}.\label{eq:JCCP:Laplace}
 \end{equation}
\end{proposition}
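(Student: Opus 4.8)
Proposition \ref{prop:stable_JCCP} should follow from the classical L\'evy--It\^o construction of a spectrally positive stable process, carried out level by level in the jump-size variable. First I would reduce to a statement about the jump sizes alone: since $\xi(\bN)$ in \eqref{eq:JCCP_def} depends on $\bN$ only through the pushforward of $\bN$ under $(s,f)\mapsto(s,\life(f))$, the mapping theorem for Poisson random measures shows this pushforward is a $\PRM[\Leb\otimes\mBxc(\life\in\cdot\,)]$ on $[0,\infty)\times(0,\infty)$; and by Lemma \ref{lem:BESQ:exc_length}, differentiating the tail $\mBxc\{\life\ge x\}=\frac{1}{\pi\sqrt2}x^{-3/2}$ gives $\mBxc(\life\in dx)=\frac{3}{2\pi\sqrt2}x^{-5/2}\,dx$, which is the first claimed formula. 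I would also record that the compensator $\frac{3}{\pi\sqrt2}z^{-1/2}$ in \eqref{eq:JCCP_def} is exactly $\int_{(z,\infty)}x\,\mBxc(\life\in dx)$, the mean intensity of the truncated jump sum, so \eqref{eq:JCCP_def} is the standard compensated sum of \eqref{eq:discrete_JCCP_eg}.

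For the convergence I would split jumps at size $1$. For $0<z<1$ write $\xi_{\bN}^{(z)}(t)=A^{(z)}(t)+B(t)$, where $B$ collects the jumps of size $>1$ together with their finite compensator and is independent of $z$ --- a compensated compound Poisson process, hence a \cadlag\ process with stationary independent increments --- and $A^{(z)}$ collects jumps in $(z,1]$ minus $\frac{3t}{\pi\sqrt2}(z^{-1/2}-1)$. Since $\mBxc(\life\in\cdot\,)$ restricted to $(0,1]$ has finite second moment $\int_0^1 x^2\,\mBxc(\life\in dx)=\frac{3}{\pi\sqrt2}<\infty$, for $0<z'<z\le1$ the increment $A^{(z')}-A^{(z)}$ is a mean-zero $L^2$-martingale in $t$ with $\mathrm{Var}\big(A^{(z')}(T)-A^{(z)}(T)\big)=T\int_{(z',z]}x^2\,\mBxc(\life\in dx)\to0$ as $z,z'\downto0$. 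Along dyadic cutoffs $z=2^{-n}$, Doob's $L^2$ maximal inequality bounds $\EV\big[\sup_{t\le T}|A^{(2^{-n-1})}(t)-A^{(2^{-n})}(t)|^2\big]=O(2^{-n/2})$, so Borel--Cantelli gives a.s.\ uniform-in-$t$ convergence of $A^{(2^{-n})}$ on $[0,T]$. To pass to the full limit $z\downto0$, I would use that for fixed $t$ the cutoff-indexed process $z\mapsto A^{(z)}(t)$ is itself a reverse-time martingale, and that the two-parameter process $(t,z)\mapsto A^{(z)}(t)$ satisfies the conditional-independence hypotheses for a Cairoli-type two-parameter maximal inequality, because the time coordinate and the mark-size coordinate of $\bN$ are independent; this controls $\sup_{t\le T,\ z\in(2^{-n-1},2^{-n}]}$ of the fluctuation by a constant multiple of the $L^2$-norm at $(T,2^{-n-1})$, and a further Borel--Cantelli finishes it. Summing, $\xi_{\bN}^{(z)}\to\xi_{\bN}$ a.s.\ uniformly in $t$ on bounded intervals; a uniform limit of \cadlag\ processes with stationary independent increments has the same properties, so $\xi(\bN)$ is a L\'evy process.

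It remains to identify the law. For $z\in(0,1]$ one has $\EV[e^{-\lambda A^{(z)}(t)}]=\exp\!\big(t\int_{(z,1]}(e^{-\lambda x}-1+\lambda x)\,\mBxc(\life\in dx)\big)$; as $\int_{(0,1]}(e^{-\lambda x}-1+\lambda x)\,\mBxc(\life\in dx)<\infty$ and, replacing $\lambda$ by $2\lambda$, the family $\{e^{-\lambda A^{(z)}(t)}\}_{z\le1}$ is $L^2$-bounded, hence uniformly integrable, I can pass to the limit and multiply in the large-jump factor to obtain $\EV[e^{-\lambda\xi_{\bN}(t)}]=\exp(t\psi(\lambda))$ with $\psi(\lambda)=\int_{(0,\infty)}(e^{-\lambda x}-1+\lambda x)\,\mBxc(\life\in dx)$. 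Evaluating the elementary integral $\int_0^\infty(e^{-\lambda x}-1+\lambda x)x^{-5/2}\,dx=\frac{\Gamma(1/2)}{(3/2)(1/2)}\lambda^{3/2}=\frac{4\sqrt\pi}{3}\lambda^{3/2}$ gives $\psi(\lambda)=\frac{3}{2\pi\sqrt2}\cdot\frac{4\sqrt\pi}{3}\lambda^{3/2}=\sqrt{2/\pi}\,\lambda^{3/2}$. A L\'evy process with no Gaussian part, no residual drift beyond the compensation, and L\'evy measure $\frac{3}{2\pi\sqrt2}x^{-5/2}\,dx$ supported on $(0,\infty)$ is by definition a spectrally positive stable process of index $3/2$, which is the conclusion.

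I expect the main obstacle to be the refinement in the second paragraph: upgrading the easy $L^2$-convergence along a fixed dyadic sequence of cutoffs to a.s.\ convergence through the entire continuum of cutoff levels $z\downto0$, uniformly in $t$ as well, via the two-parameter maximal inequality. The remaining ingredients --- the mapping theorem, the L\'evy--It\^o bookkeeping with the jump-size split at $1$, and the Gamma-integral evaluation of $\psi$ --- are routine.
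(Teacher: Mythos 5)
Your proposal is correct, and the formulas you obtain (jump intensity, compensator, Laplace exponent) all check out; but it takes a more self-contained route than the paper. The paper's proof is essentially two lines: it observes, via Lemma \ref{lem:BESQ:exc_length}, that the pre-limiting quantity in \eqref{eq:JCCP_def} is a compensated compound Poisson process, and then cites the L\'evy--It\^o decomposition (Sato, Theorem 19.2) to obtain both the a.s.\ uniform convergence as $z\downto 0$ and the identification of the limit as a L\'evy process, finishing with exactly the same Gamma-integral evaluation of $\psi$ that you perform. What you do instead is reprove the relevant piece of L\'evy--It\^o by hand: mapping theorem, split of jumps at size $1$, Doob's $L^2$ maximal inequality plus Borel--Cantelli along dyadic cutoffs, and a Cairoli-type two-parameter maximal inequality to sweep up the continuum of cutoff levels. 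That last step does work (the filtrations generated by $\bN$ on disjoint time--by--size rectangles commute, and $z\mapsto A^{(z)}(t)$ is regular enough for the sup to be taken over a countable dense set), but it is heavier machinery than needed: since all jumps are positive, for $z\in(2^{-n-1},2^{-n}]$ the discrepancy $A^{(z)}(t)-A^{(2^{-n})}(t)$ is bounded in absolute value, uniformly in $t\le T$ and in $z$, by the total jump mass in the band $(2^{-n-1},2^{-n}]$ up to time $T$ plus its compensator $T\int_{(2^{-n-1},2^{-n}]}x\,\mBxc(\life\in dx)$, both of which are $O(2^{-n/2})$ in expectation, so Markov plus Borel--Cantelli already closes the gap between dyadic levels. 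In short: your argument buys independence from the citation at the cost of length; the paper's buys brevity by outsourcing the convergence statement to the standard L\'evy--It\^o theorem.
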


\begin{proof} By Lemma \ref{lem:BESQ:exc_length} and elementary Poisson random measure arguments, the pre-limiting quantity is a compensated compound Poisson process.
  By the L\'evy-It\^o decomposition of L\'evy processes, e.g.\ in \cite[Theorem 19.2]{Sato}, the remaining conclusions follow. Specifically, we obtain 
  $$\psi(\lambda)=\int_0^\infty(e^{-\lambda x}-1+\lambda x)\mBxc(\life\in dx)=\int_0^\infty(e^{-\lambda x}-1+\lambda x)\frac{3}{2\pi\sqrt{2}}x^{-5/2}dx=\sqrt{\frac{2}{\pi}}\lambda^{3/2}.\quad$$
  \vspace{-0.8cm}
  
\end{proof}

\begin{notation}
 Henceforth we write ``\Stable[\frac32]'' to refer exclusively to L\'evy processes with the Laplace exponent specified in \eqref{eq:JCCP:Laplace}. In particular, such processes are spectrally positive. 
  We write $\bX := \xi(\bN)$.
\end{notation}

\begin{definition}[$\H$, $\Hfin$, point processes of spindles]\label{def:assemblage_m}
 Let $\Hfin\subset \cNRE$ denote the set of all counting measures $N$ on $[0,\infty)\times \Exc$ with the following additional properties:
 \begin{enumerate}[label=(\roman*), ref=(\roman*)]
  \item $N\big( \{t\}\times\Exc \big)\leq 1$ for every $t\in [0,\infty)$,
  \item $N\big( [0,t]\times\{f\in\Exc\colon \life(f) > z\} \big) < \infty$ for every $t,z > 0$, 
  \item the \emph{length} of $N$, defined to be $\len(N)$ in the sense of Definition \ref{def:JCCP}, is finite and the convergence in (\ref{eq:JCCP_def}) holds 
    uniformly in $t\in[0,\len(N)]$.
 \end{enumerate}
 
 We define $\H\subset\cNRE$ by saying $N\in\H$ if and only if $\restrict{N}{[0,t]} \in \Hfin$ for every $t>0$. We call the members of $\H$ \emph{point processes of spindles}. We denote by $\cH$ and $\cHfin$ the restrictions of $\ScNRE$ to the subsets of $\H$ and $\Hfin$ respectively.
\end{definition}

\begin{proposition}\label{prop:H_Lusin} 
 The $\sigma$-algebra $\ScNRE$ on $\cNRE$ generated by the evaluation maps is the Borel
 $\sigma$-algebra of a Polish topology. The sets $\H$ and $\Hfin$ are Borel-measurable subsets of $\cN\big([0,\infty)\times\Exc\big)$.
\end{proposition}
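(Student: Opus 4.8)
The plan is to establish two things: first that $\ScNRE := \Sigma(\cNRE)$, the $\sigma$-algebra generated by evaluation maps $N\mapsto N(B)$, is in fact the Borel $\sigma$-algebra of a Polish topology on $\cNRE$; and second that $\H$ and $\Hfin$ are Borel subsets thereof.

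For the first part, I would invoke the standard theory of boundedly finite measures on a complete separable metric space, which is precisely the framework of Daley and Vere-Jones \cite{DaleyVereJones2}. The space $[0,\infty)\times\Exc$, metrized by the sum of the Euclidean metric and $d_{\cD}^A$, is complete and separable by Proposition \ref{prop:E_bdedly_finite}. By \cite[Theorem A2.6.III (or the surrounding discussion in \S A2.6)]{DaleyVereJones2}, the space $\cN(\cS)$ of boundedly finite counting measures on a c.s.m.s.\ $\cS$ carries a metrizable topology (the weak-hash topology) under which it is itself a complete separable metric space, and the Borel $\sigma$-algebra of this topology coincides with the $\sigma$-algebra generated by the evaluation maps $N\mapsto N(B)$ for bounded Borel $B$ — indeed it suffices to take $B$ ranging over a countable generating ring. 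This gives the first sentence of the proposition directly.

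For the second part, I would exhibit $\Hfin$ and $\H$ as countable combinations of sets defined by countably many evaluation-type conditions, hence Borel. Fix a countable base. Property (i) of Definition \ref{def:assemblage_m}, that $N(\{t\}\times\Exc)\le 1$ for all $t$, is equivalent to the statement that for every $n$ and every dyadic interval $I$ of length $2^{-n}$ at scale $n$, $N$ has at most one atom of lifetime exceeding $1/m$ in $I\times\Exc$ \emph{in the limit} — more carefully, "$N$ is simple in the first coordinate" is a standard Borel condition; it can be written as $\bigcap_m \{N : \text{for each dyadic } t, \lim_n N((t-2^{-n},t+2^{-n})\times\{\life>1/m\})\le 1\}$, a countable intersection of sets in $\ScNRE$. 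Property (ii), local finiteness of $N([0,t]\times\{\life>z\})$, is $\bigcap_{t,z\in\BQ_{>0}}\{N: N([0,t]\times\{\life>z\})<\infty\}$, again a countable intersection of measurable sets since $\{\life>z\}\subset\Exc$ is Borel (the lifetime map is measurable) and $[0,t]\times\{\life>z\}$ is bounded in our metric — one checks boundedness using that $A\ge$ something forces $\life$ bounded below on $\Exc$ via the excursion structure, or more simply that these sets are Borel even if one does not need bounded. Property (iii), finiteness of $\len(N)$ and uniform convergence in \eqref{eq:JCCP_def}, requires the most care: $\{\len(N)<\infty\} = \bigcup_{t\in\BN}\{N([t,\infty)\times\Exc)=0\}$ is measurable, and the uniform-convergence-on-$[0,\len(N)]$ clause can be encoded as the assertion that the net of compensated partial sums in \eqref{eq:JCCP_def} is uniformly Cauchy, i.e.\ $\bigcap_{k}\bigcup_{z_0\in\BQ_{>0}}\bigcap_{0<z'<z\le z_0,\ z,z'\in\BQ}\{\sup_{t\le \len(N)}|\xi_N^{(z)}(t)-\xi_N^{(z')}(t)|\le 1/k\}$, where $\xi_N^{(z)}(t)$ is the truncated-and-compensated integral; each such sup is a measurable function of $N$ (it is a sup over rational $t$ by right-continuity considerations, of differences of evaluation integrals). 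Then $\H = \bigcap_{t\in\BN}\{N:\restrict{N}{[0,t]}\in\Hfin\}$, and the restriction maps $N\mapsto\restrict{N}{[0,t]}$ are measurable, so $\H$ is Borel as well.

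The main obstacle, I expect, is the bookkeeping around property (iii): turning the phrase "the convergence in \eqref{eq:JCCP_def} holds uniformly in $t\in[0,\len(N)]$" into an explicit countable Boolean combination of manifestly measurable events. The subtleties are that the compensation term $3tz^{-1/2}/(\pi\sqrt2)$ depends continuously on the real parameter $z$ (so one must restrict to rational $z$ and argue that rational approximation suffices, using monotonicity of the truncation in $z$), that the relevant supremum over $t$ must be reduced to a supremum over a countable dense set (using that the pre-limit processes are \cadlag\ in $t$, hence their sup over $[0,\len(N)]$ equals the sup over rationals together with the endpoint, and $\len(N)$ itself is measurable), and that one must ensure the integrals $\int_{[0,t]\times\{\life>z\}}\life\, dN$ are measurable functions of $N$ — which they are, being monotone limits of finite sums of evaluations of $N$ on a generating ring. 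None of this is deep, but it needs to be laid out carefully; alternatively one can shortcut much of it by citing the fact, established in the companion paper \cite{Paper0}, that these pathwise convergence properties hold on a measurable set, together with general measurable-selection/measurability results for \cadlag-valued maps. I would present the argument at the level of "each defining condition is a countable combination of events in $\ScNRE$, hence $\Hfin,\H\in\ScNRE$," referring the detailed verification of the $\len$ and uniform-convergence clauses to \cite{DaleyVereJones2} and \cite{Paper0} as needed.
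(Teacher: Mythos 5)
Your overall route is the same as the paper's: cite Daley--Vere-Jones (Theorem A2.6.III) for the fact that boundedly finite counting measures on the c.s.m.s.\ $[0,\infty)\times\Exc$ form a Polish space whose Borel $\sigma$-algebra is generated by the evaluation maps, and then check that each condition in Definition \ref{def:assemblage_m} is a countable combination of evaluation events; your treatment of conditions (ii) and (iii) (rational truncation levels, rational times plus the \cadlag\ property, monotone limits for the truncated integrals) is in fact more explicit than the paper's one-line appeal to measurability of evaluation maps.

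The one concrete misstep is your explicit encoding of condition (i). The set $\bigcap_m\{N:\text{for each dyadic }t,\ \lim_n N((t-2^{-n},t+2^{-n})\times\{\zeta>1/m\})\le 1\}$ does not characterize simplicity in the first coordinate: if two atoms share a non-dyadic time $t^*$, then for every fixed dyadic $t\neq t^*$ the shrinking windows eventually exclude $t^*$, so the displayed limit never sees the violation and the condition is vacuously satisfied. (A secondary issue is that $\{\zeta>1/m\}$ is not bounded in $d_{\cD}^A$ -- long excursions of small amplitude lie outside every ball -- so these counts can be infinite for general $N\in\cNRE$; this does not hurt measurability but does complicate any finite-count argument.) A correct encoding must either quantify over shrinking covers of a compact time window, restricted to a set where the relevant counts are finite, or -- as the paper does -- simply invoke the existence of measurable enumerations of the points of a counting measure \cite[Proposition 9.1.XII]{DaleyVereJones2}, after which condition (i) is the countable intersection of the events that the $i$th and $j$th enumerated points have distinct time coordinates. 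With that repair, your argument goes through and coincides with the paper's proof.
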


\begin{proof}
 By Proposition \ref{prop:E_bdedly_finite}, $(\Exc,d_\cD^A)$ is complete and separable. Thus, the first assertion follows from \cite[Theorem A2.6.III]{DaleyVereJones1}. 
 The measurability of condition (i) in Definition \ref{def:assemblage_m} follows from the existence of measurable enumerations of points of counting measures; see 
 \cite[Proposition 9.1.XII]{DaleyVereJones2}. Finally, the measurability of conditions (ii) and (iii) in Definition \ref{def:assemblage_m} follows from the measurability
 of the evaluation maps $N\mapsto N(B)$, $B\in\ScNRE$.
\end{proof}

\begin{proposition}\label{prop:JCCP_meas}
 The map $\xi\colon \H \to \cD$ is well-defined and measurable, where $\cD$ is the space of real-valued \cadlag\ functions equipped with the Skorohod topology.
\end{proposition}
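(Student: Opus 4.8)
The plan is to verify the two assertions separately: first, that for every $N\in\H$ the path $\xi(N)$ is genuinely a \cadlag\ function, so that $\xi$ does land in $\cD$; and second, that $\xi$ is measurable when $\H$ carries the trace of $\ScNRE$ and $\cD$ carries its Borel $\sigma$-algebra $\Sigma(\cD)$.

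For well-definedness, fix $N\in\H$ and, for $z>0$, set
\[
 \xi^{(z)}_N(t):=\int_{[0,t]\times\{g\in\Exc\colon\zeta(g)>z\}}\life(f)\,dN(s,f)-\frac{3tz^{-1/2}}{\pi\sqrt{2}}.
\]
By condition (ii) of Definition \ref{def:assemblage_m}, $N$ has only finitely many atoms $(s,f)$ with $\zeta(f)>z$ in any strip $[0,t]\times\Exc$, so $t\mapsto\xi^{(z)}_N(t)$ is a finite sum of upward jumps at those finitely many times minus the continuous drift $3tz^{-1/2}/(\pi\sqrt2)$; in particular it is \cadlag. Condition (iii), applied to $\restrict{N}{[0,s]}\in\Hfin$ for each $s>0$ (which leaves the partial integrals over $[0,s]$ unchanged), says exactly that $\xi^{(z)}_N\to\xi_N$ uniformly on $[0,\len(\restrict{N}{[0,s]})]$ as $z\downarrow0$, hence uniformly on every compact subinterval of the domain of $\xi(N)$. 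A uniform-on-compacts limit of \cadlag\ functions is \cadlag, so $\xi(N)\in\cD$ and $\xi$ is well-defined.

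For measurability, recall from \cite[Theorem 14.5]{Billingsley} that $\Sigma(\cD)$ is generated by the coordinate maps $g\mapsto g(t)$; it therefore suffices to show that $N\mapsto\xi_N(t)$ is measurable on $\H$ for each fixed $t$. Since the limit defining $\xi_N(t)$ exists for every $N\in\H$, $\xi_N(t)=\lim_{n\to\infty}\xi^{(1/n)}_N(t)$ is a pointwise limit of a sequence, so it is enough to treat $N\mapsto\xi^{(z)}_N(t)$ for fixed $z>0$; dropping the deterministic term, this reduces to measurability of $N\mapsto\int h\,dN$ for the fixed nonnegative Borel function $h(s,f)=\life(f)\cf\{s\le t\}\cf\{\zeta(f)>z\}$ on $[0,\infty)\times\Exc$. (Here $\life$ is Borel on $\Exc$, since $\{\life\le a\}=\bigcap_{q\in\BQ\cap(a,\infty)}\{f\colon f(q)=0\}$ and $\Sigma(\cD)$ is generated by evaluations.) This is the standard monotone-class argument for integrals against a random measure: $N\mapsto N(B)$ is among the generators of $\ScNRE$, hence measurable, for bounded Borel $B$; for arbitrary Borel $B$ one writes $N(B)=\lim_m N(B\cap C_m)$ along bounded Borel sets $C_m\uparrow[0,\infty)\times\Exc$ (such $C_m$ exist since $(\Exc,d_\cD^A)$ is separable by Proposition \ref{prop:E_bdedly_finite}); linearity handles simple functions; and monotone convergence extends the conclusion to every nonnegative Borel $h$, in particular to the $h$ above. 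Thus $N\mapsto\xi^{(z)}_N(t)$, then $N\mapsto\xi_N(t)$, and finally $\xi$ itself are measurable.

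The statement is essentially bookkeeping; the only point that needs a moment's attention is that the integrand $h$ is unbounded and its support $[0,t]\times\{\zeta>z\}$ is not a bounded subset of $(\Exc,d_\cD^A)$, so one cannot quote measurability of $N\mapsto N(B)$ for bounded $B$ directly and must first pass through the exhaustion argument above. Everything else --- the \cadlag\ property of the pre-limit paths, the upgrade from a pointwise to a \cadlag\ limit via the uniform convergence built into Definition \ref{def:assemblage_m}, and the reduction of $\Sigma(\cD)$-measurability to measurability of coordinates --- is routine. As an alternative to the monotone-class step one may invoke a measurable enumeration $N=\sum_i\delta_{(s_i(N),f_i(N))}$ of the atoms of $N$, as in the proof of Proposition \ref{prop:H_Lusin} via \cite[Proposition 9.1.XII]{DaleyVereJones2}, and write $\xi^{(z)}_N(t)=\sum_i\life(f_i(N))\cf\{s_i(N)\le t,\ \zeta(f_i(N))>z\}-3tz^{-1/2}/(\pi\sqrt2)$, which is manifestly a measurable function of $N$.
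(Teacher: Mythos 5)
Your proof is correct and follows essentially the same route as the paper: condition (iii) of Definition \ref{def:assemblage_m} gives uniform convergence of the compensated partial sums on bounded intervals, so the limit is \cadlag, and since both $\Sigma(\cN)$ and $\Sigma(\cD)$ are generated by evaluation maps, pointwise measurability of $N\mapsto\xi_N(t)$ yields measurability of $\xi$. The only difference is that you spell out (via the monotone-class/exhaustion step, or the measurable enumeration) what the paper compresses into ``by definition of $\SH$, the function $N\mapsto\xi_N(t)$ is measurable,'' which is a harmless elaboration.
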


\begin{proof}
 Definition \ref{def:assemblage_m} (iii) ensures that for $N\in\H$, the convergence to $\xi_N(t)$ in \eqref{eq:JCCP_def} holds uniformly in $t$ for any bounded interval.
 Uniform limits preserve the \cadlag\ property, so $\xi(N)$ exists and is \cadlag. By definition of $\SH$, the function  
 $N\mapsto\xi_N(t)$ is measurable for each $t\geq 0$. By \cite[Theorem 14.5]{Billingsley}, the Borel $\sigma$-algebra $\ScD$ associated with the Skorohod topology is 
 generated by evaluation maps $g\mapsto g(t)$, $t\ge 0$, so we conclude that $\xi$ is measurable.
\end{proof}

Most of the constructions in this paper begin with a point process $N\in\H$ and from there obtain a scaffolding $X=\xi(N)$. However, it is useful to be able to go in the other direction, to begin with a scaffolding $X$ and to define a point process $N\in\H$ by marking the jumps of $X$ with continuous excursions. 

\begin{proposition}[The \PRM\ of spindles via marking jumps]\label{prop:marking_jumps}
 Let $\bX$ denote a \Stable[\frac32] process with Laplace exponent as in \eqref{eq:JCCP:Laplace}. Let $\bM = \sum_{t\geq 0\colon\Delta \bX(t)>0}\Dirac{t,\Delta\bX(t)}$.
 Use the marking kernel $x\mapsto\mBxc(\,\cdot\;|\,\zeta=x)$ to mark each point $(t,\Delta\bX(t))$ of $\bM$ by an excursion $f_t$ with length $\life(f_t)=\Delta\bX(t)$.
 Then $\bN := \sum_{t\geq 0\colon\Delta\bX(t)>0}\Dirac{t,f_t}$ is a \PRM[\Leb\otimes\mBxc] and $\bX = \xi(\bN)$.
\end{proposition}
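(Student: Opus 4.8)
\emph{Proposal.} The plan is to read Proposition \ref{prop:stable_JCCP} ``in reverse'': I would combine the L\'evy--It\^o decomposition of $\bX$ with the Poisson marking theorem and the disintegration identity for $\mBxc$, so that the only genuine content is matching compensating constants.

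First I would invoke the L\'evy--It\^o decomposition (e.g.\ \cite[Theorem 19.2]{Sato}). Since $\bX$ is \Stable[\frac32] with Laplace exponent \eqref{eq:JCCP:Laplace}, the jump measure $\bM$ is a \PRM\ on $[0,\infty)\times(0,\infty)$ with intensity $\Leb\otimes\Pi$, where $\Pi=\mBxc(\zeta\in\cdot\,)$ has density $\frac{3}{2\pi\sqrt2}x^{-5/2}$ by \eqref{eq:JCCP:Laplace}, and moreover
\[
 \bX(t) = \lim_{z\downto 0}\left(\sum_{s\le t\colon\Delta\bX(s)>z}\Delta\bX(s) \;-\; t\int_{(z,\infty)}x\,\Pi(dx)\right)
\]
uniformly on bounded intervals a.s.. A one-line integration gives $\int_{(z,\infty)}x\,\Pi(dx)=\frac{3}{2\pi\sqrt2}\int_z^\infty x^{-3/2}dx=\frac{3}{\pi\sqrt2}z^{-1/2}$, which is exactly the compensator appearing in \eqref{eq:JCCP_def}.

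Next I would apply the marking theorem for Poisson random measures (e.g.\ \cite{Kallenberg} or \cite{DaleyVereJones1}) to the marking kernel $x\mapsto\mBxc(\,\cdot\;|\,\zeta=x)$, which is a genuine probability kernel on $\Exc$ by Corollary \ref{cor:BESQ:scl_ker}: the marked point process $\sum_{t}\Dirac{t,\Delta\bX(t),f_t}$ on $[0,\infty)\times(0,\infty)\times\Exc$ is a \PRM\ with intensity $\Leb(dt)\,\mBxc(\zeta\in dx)\,\mBxc(df\,|\,\zeta=x)$. By property (ii) of Definition \ref{def:disintegration} it is a.s.\ supported on $\{(t,x,f)\colon\zeta(f)=x\}$, so its image under the coordinate projection $(t,x,f)\mapsto(t,f)$ is exactly $\bN$, and (being the image of a \PRM\ under a measurable, essentially injective map) is a \PRM\ with intensity the corresponding pushforward. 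By the disintegration identity \eqref{eq:scl_ker:integration}, $\int_{(0,\infty)}\mBxc(\,\cdot\;|\,\zeta=x)\,\mBxc(\zeta\in dx)=\mBxc$, so this pushforward equals $\Leb\otimes\mBxc$; hence $\bN$ is a \PRM[\Leb\otimes\mBxc]. Consequently $\bN\in\H$ a.s.\ and the convergence in \eqref{eq:JCCP_def} holds uniformly on bounded intervals, by Proposition \ref{prop:stable_JCCP} (conditions (i)--(ii) of Definition \ref{def:assemblage_m} being immediate). Finally, for $\bX=\xi(\bN)$: by construction the atoms $(t,f_t)$ of $\bN$ biject with the jump times of $\bX$ with $\zeta(f_t)=\Delta\bX(t)$, so $\int_{[0,t]\times\{g\colon\zeta(g)>z\}}\zeta(f)\,d\bN(s,f)=\sum_{s\le t\colon\Delta\bX(s)>z}\Delta\bX(s)$; substituting this and the computed compensator into \eqref{eq:JCCP_def} reproduces the displayed L\'evy--It\^o limit, so $\xi_{\bN}(t)=\bX(t)$ for all $t$, a.s..

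I do not expect a real obstacle here --- the statement is essentially Proposition \ref{prop:stable_JCCP} with $\bN$ built from a prescribed scaffolding rather than the scaffolding built from $\bN$. The only point needing care is the bookkeeping in the marking step: that the marking kernel is measurable (exactly what Corollary \ref{cor:BESQ:scl_ker} supplies) and that forgetting the redundant lifetime coordinate returns the intensity $\Leb\otimes\mBxc$, which is precisely the integration property \eqref{eq:scl_ker:integration}. The integration of $\Pi$ and the bijection of atoms with jumps are routine.
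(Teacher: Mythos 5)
Your proposal is correct and follows essentially the same route as the paper's proof: $\bM$ is a \PRM\ with intensity $\mBxc(\zeta\in\cdot\,)$ because $\bX$ is a L\'evy process with Laplace exponent \eqref{eq:JCCP:Laplace}, the marking theorem makes $\bN$ a \PRM, and the disintegration identity \eqref{eq:scl_ker:integration} identifies the intensity as $\Leb\otimes\mBxc$. Your extra step verifying $\bX=\xi(\bN)$ via the L\'evy--It\^o compensated limit is a harmless (and slightly more explicit) elaboration of what the paper leaves implicit through Proposition \ref{prop:stable_JCCP}.
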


\begin{proof} 
 Since $\bX$ is a L\'evy process, $\bM$ is a \PRM. By \eqref{eq:JCCP:Laplace}, its intensity is $\mBxc(\life\in\cdot\,)$. 
 It is well-known that marking constructions like that above result in \PRM s; see \cite[Proposition 6.4.VI]{DaleyVereJones1}. Thus, $\bN$ is a \PRM. 
 Since $\mBxc(\,\cdot\;|\,\life)$ is a disintegration of $\mBxc$,
 it follows from the defining property \eqref{eq:scl_ker:integration} of disintegrations that $\bN$ has intensity $\Leb\otimes\mBxc$.
\end{proof}

\begin{definition}\label{def:concat}
 Let $(N_a)_{a\in\mathcal{A}}$ denote a family of elements of $\H$ indexed by a totally ordered set $(\cA,\preceq)$. For the purpose of this definition, set
 \begin{equation}\label{eq:length_partial_sums}
  S(a) := \sum_{b\preceq a}\len(N_b) \quad \text{and} \quad S(a-) := \sum_{b\prec a}\len(N_b) \quad \text{for each }a\in \mathcal{A}.
 \end{equation}
 If $S(a-) < \infty$ for every $a\in \mathcal{A}$ and if for every consecutive $a\prec b$ in $\mathcal{A}$ we have $N_a(\{\len(N_a)\}\times\Exc) + N_b(\{0\}\times\Exc)\leq 1$, then we define the \emph{concatenation} of $(N_a)_{a\in\mathcal{A}}$ to be the counting measure
 $$\Concat_{a\in\mathcal{A}}N_a := \sum_{a\in\mathcal{A}}\int\Dirac{S(a-)+t,f}dN_a(t,f).$$
\end{definition}

We can now give a formal version of the construction stated before Theorem \ref{thm:diffusion}.

\begin{definition}[$\Pr^1_{\beta}$, pre-type-1 evolution]\label{constr:type-1}
 Take $\beta\in\IPspace$. If $\beta = \emptyset$ then $\bN_{\beta} := 0$. Otherwise, for each $U\in\beta$ we carry out the following construction independently. Let $\bN$ denote a \PRM[\Leb\otimes\mBxc], let $\bff$ be an independent \BESQ[-1] started from $\Leb(U)$ and absorbed at 0, and consider the hitting time $T := \inf\{t>0\colon \xi_{\bN}(t) = -\life(\bff)\}$. Let $\bN_U := \Dirac{0,\bff}+\restrict{\bN}{[0,T]}$. Let $\bN_{\beta} := \ConcatIL_{U\in\beta}\bN_U$. 
 We write $\Pr^1_{\beta}$ to denote the law of $\bN_{\beta}$ on $\Hfin$. For probability distributions $\mu$ on $\IPspace$, we write $\Pr^1_{\mu} := \int \Pr^1_{\beta}\mu(d\beta)$ to denote the $\mu$-mixture of the laws $\Pr^1_{\beta}$. 
 Recalling Definition \ref{def:skewer}, 
 we call $(\alpha^y,\,y\ge0) := \skewerP(\bN_{\beta},\xi(\bN_{\beta}))$ a \emph{pre-type-1 evolution} starting from $\beta$.
\end{definition}

We confirm in Proposition \ref{prop:clade_lengths_summable} that $\sum_{U\in\beta}\len(\bN_U)<\infty$ a.s.\ for each $\beta\in\IPspace$ and that $\beta\mapsto\Pr^1_{\beta}$ is a stochastic kernel. In Lemma \ref{lem:type-1:wd} we show that $\alpha^y\in\IPspace$ for every $y\geq 0$, a.s.. Finally, in Proposition \ref{prop:type-1:cts}, we show that pre-type-1 evolutions admit continuous versions; following Definition \ref{def:type-1}, the latter will be called type-1 evolutions.

\subsection{Scaffolding levels: excursion theory for \Stable[\frac32] processes}
\label{sec:exc}

Excursion theory for Markov processes was first developed by It\^o \cite{Ito72}. Bertoin \cite[Ch.\ IV]{BertoinLevy} offers a nice treatment of this theory in the setting of L\'evy processes. 

\begin{definition}\label{def:LT}
 For a \cadlag\ function $g\colon [0,\infty) \to\BR$, a bivariate measurable function $(y,t)\mapsto \ell^y(t)$ from $\BR\times [0,\infty]$ to $[0,\infty]$, is an \emph{(occupation density) local time} for $g$ if $t\mapsto\ell^y(t)$ is increasing for all $y\in\BR$ and if for every bounded and measurable $f\colon \BR\to[0,\infty)$,
 \begin{equation}
  \int_{-\infty}^{\infty} f(y)\ell^y(t)dy = \int_0^t f\big(g(s)\big)ds.\label{eq:LT_int_ident}
 \end{equation}
 We call $t$ the \emph{time parameter} and $y$ the \emph{space parameter} and say $\ell^y(t)$ is the local time of $g$ at level $y$, up to time $t$.
\end{definition}

\begin{theorem}[Boylan \cite{Boylan64}, equations (4.4) and (4.5)]\label{thm:Boylan}
 For $\bX$ a \Stable[\frac32] process, there exists an a.s.\ unique jointly continuous stochastic process $\ell=(\ell^y(t);\ y\in\BR,t\geq 0)$ such that $\ell$ is almost surely a local time for $\bX$. It is a.s.\ the case that for every $t\geq 0$ and $y\in\BR$
 \begin{equation}
  \ell^y(t) = \lim_{h\downto 0} \frac{1}{h}\int_0^t \cf\{y-h<\bX(s)<y\}ds=\lim_{h\downto 0} \frac{1}{h}\int_0^t \cf\{y<\bX(s)<y+h\}ds.\label{eq:LT_def}
 \end{equation}
 Moreover, for every $\theta\in (0,\frac15)$, $\theta'\in (0,\frac14)$, and each bounded space-time rectangle $R$, the restriction of $(y,t)\mapsto\ell^y(t)$ to $R$ is uniformly H\"older-$\theta$ in the time coordinate and uniformly H\"older-$\theta'$ in the space coordinate.
\end{theorem}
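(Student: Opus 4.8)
This is Boylan's theorem \cite{Boylan64}, applied to the \Stable[\frac32] process; we describe the argument one would follow. First, one records that $\bX$ admits local times: since the index $3/2$ exceeds $1$, the characteristic exponent $\Psi$ of $\bX$ obeys $|\Psi(\xi)|\asymp|\xi|^{3/2}$, so $\int_{\BR}\mathrm{Re}\,(q+\Psi(\xi))^{-1}\,d\xi<\infty$ for every $q>0$; by the standard L\'evy-process theory (see e.g.\ \cite[Ch.~IV]{BertoinLevy}) the $q$-resolvent measure of $\bX$ then has a bounded continuous density $u^q$, and $\bX$ possesses a jointly measurable occupation density $\ell=(\ell^y(t))$ satisfying \eqref{eq:LT_int_ident}. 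One also notes the standard local behaviour $u^q(0)-u^q(x)\sim c_q|x|^{1/2}$ as $x\to0$, which is the analytic manifestation of the index $3/2$.

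The core of the argument is joint continuity, to be obtained from Kolmogorov's continuity criterion applied to the field $(y,t)\mapsto\ell^y(t)$ on a bounded space--time rectangle. Running $\bX$ up to an independent exponential time $\mathbf{e}_q$ of rate $q$ and invoking the moment formula for local times, one writes $\EV_x\big[\prod_{i=1}^n\ell^{y_i}(\mathbf{e}_q)\big]$ as a sum over permutations of products of values of $u^q$ at consecutive differences of the $y_i$'s; from this, the H\"older-$\tfrac12$ behaviour of $u^q$ at the origin, and the self-similarity scaling $\ell^{c^{2/3}y}(ct)\stackrel{d}{=}c^{1/3}\ell^y(t)$ (equivalently, that the inverse local time at a fixed level is a \Stable[\frac13] subordinator), one derives polynomial moment bounds for the increments $\ell^y(t)-\ell^{y'}(t')$ of the form $C_n(|y-y'|+|t-t'|)^{n\rho}$. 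Feeding these into Kolmogorov--Chentsov on $\BR^2$ and optimising over $n$ produces an a.s.\ continuous modification that, on each bounded rectangle, is uniformly H\"older-$\theta$ in time for every $\theta<\tfrac15$ and uniformly H\"older-$\theta'$ in space for every $\theta'<\tfrac14$; these are exactly the exponents these estimates yield. A monotone-class argument then upgrades \eqref{eq:LT_int_ident} from ``for a.e.\ $y$'' to ``for all $(y,t)$ simultaneously'' for this modification, so it is a.s.\ a local time for $\bX$.

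Finally, the approximation formula \eqref{eq:LT_def} and a.s.\ uniqueness come for free from spatial continuity: applying \eqref{eq:LT_int_ident} with $f=h^{-1}\cf_{(y-h,y)}$ and with $f=h^{-1}\cf_{(y,y+h)}$ turns the left-hand side into the two averages $h^{-1}\int_{y-h}^{y}\ell^z(t)\,dz$ and $h^{-1}\int_{y}^{y+h}\ell^z(t)\,dz$, both converging to $\ell^y(t)$ as $h\downto0$ by continuity of $z\mapsto\ell^z(t)$; this both identifies the limits in \eqref{eq:LT_def} with $\ell^y(t)$, uniformly in $(y,t)$, and shows any two jointly continuous local times agree everywhere. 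The one genuinely technical ingredient is the set of moment estimates with the sharp exponents; the rest is soft, and all of it is carried out in \cite{Boylan64}.
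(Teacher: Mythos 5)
Your proposal is correct in outline, but note that the paper does not prove this statement at all: it is imported verbatim from Boylan \cite{Boylan64}, with the H\"older exponents $\frac15$ and $\frac14$ read off his equations (4.4) and (4.5). So there is no internal proof to match; the fair comparison is with Boylan's argument itself. What you sketch --- existence of an occupation density from Hawkes-type integrability of $\mathrm{Re}\,(q+\Psi)^{-1}$, the local behaviour $u^q(0)-u^q(x)\asymp|x|^{1/2}$, Kac-type moment formulas at an independent exponential time, and a two-parameter Kolmogorov--Chentsov argument --- is a legitimate modern route (in the spirit of later treatments by Getoor--Kesten and Barlow) rather than Boylan's own method, which works directly from a H\"older condition on differences of the $1$-resolvent kernel and a uniform-continuity/entropy argument to build the continuous version. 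Both routes deliver the theorem, and yours has the advantage of being self-contained modulo standard L\'evy-process facts; Boylan's has the advantage that the specific exponents come out explicitly. The one thin spot in your sketch is precisely there: you assert that optimising the moment bounds ``yields exactly'' H\"older-$\theta$ in time for $\theta<\frac15$ and H\"older-$\theta'$ in space for $\theta'<\frac14$, but you neither display the moment estimates with the requisite powers nor explain how the time increments are controlled uniformly in the level (the Kac formula gives spatial increments at a fixed exponential time; the time direction needs an additional additivity/Markov argument, and the joint two-parameter exponent bookkeeping is exactly where $\frac15$ rather than the naive $\frac13$ appears). Since you explicitly defer these technical estimates to \cite{Boylan64}, this is acceptable and matches the paper's level of rigour, but as a standalone proof the exponent derivation would still need to be supplied. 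Your final paragraph --- deducing \eqref{eq:LT_def} and a.s.\ uniqueness from spatial continuity via \eqref{eq:LT_int_ident} --- is correct and is indeed the standard soft step.
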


\begin{definition}\label{def:stable_exc}
 We define the following subsets of the Skorokhod space $\cD$:
 \begin{align}\label{eq:stable_exc_space_def}
  \DS &:= \xi(\H)=\left\{\xi(N)\colon N\in\H\right\},\\[0.2cm]
  \DSxc &:= \left\{\xi(N)\colon N\in\Hfin,\ \xi_N|_{(0,\len(N))}\neq 0,\ \xi_N(\len(N))=0\right\}.
 \end{align}
 We take $\SDS$ and $\SDSxc$ to denote the $\sigma$-algebras on these spaces generated by the evaluation maps. We say that members of $\DS$ are \emph{\Stable[\frac32]-like processes} and members of $\DSxc$ are \emph{\Stable[\frac32]-like excursions}.
 
 For $g=\xi(N)\in\DS$, we define the \emph{length} of $g$ to be $\len(g)=\len(N)$. We will use the convention $g(0-)=0$.  
\end{definition}

Note that if $g\in \DSxc$ then $\len(g) = \sup\{t>0\colon g(t)\neq 0\}.$ 

\begin{proposition}\label{prop:stable_Skor_meas}
 The sets $\DS$ and $\DSxc$ are measurable subsets of the Skorohod space $\cD$.
\end{proposition}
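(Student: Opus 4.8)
The plan is to realise $\DS$ and $\DSxc$ as the Borel loci on which a measurable ``canonical marking'' of jumps inverts the scaffolding map $\xi$. By Proposition~\ref{prop:JCCP_meas}, $\xi\colon\H\to\cD$ is measurable, and by Proposition~\ref{prop:H_Lusin} the sets $\H$ and $\Hfin$ are Borel; so it suffices to produce a measurable partial right inverse to $\xi$. The key observation is that $\xi$ reads off nothing about a point process $N\in\H$ beyond the \emph{times} of its atoms and the \emph{lifetimes} of the spindles sitting there: conditions (i)--(iii) of Definition~\ref{def:assemblage_m} (and hence membership in $\H$ and $\Hfin$) refer only to atom-times and to $\life$, and the compensated sum \eqref{eq:JCCP_def} involves $N$ only through $\int(\cdot)\,\life(f)\,dN$ restricted to sets $\{\life>z\}$. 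Hence the spindle shapes may be replaced by arbitrary continuous excursions of the prescribed lengths without changing whether $N\in\H$ or the value of $\xi_N$.

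First I would fix, for each $\ell>0$, the triangular excursion $\bar f_\ell(x):=\min\{x,\ell-x\}$ for $x\in(0,\ell)$ and $\bar f_\ell(x):=0$ otherwise, so that $\life(\bar f_\ell)=\ell$, $A(\bar f_\ell)=\ell/2$, and $\ell\mapsto\bar f_\ell$ is continuous into $(\Exc,d_\cD^A)$; then, for $g\in\cD$, put
$$\Psi(g):=\sum_{s\ge0\colon\Delta g(s)>0}\Dirac{s,\,\bar f_{\Delta g(s)}},\qquad \Delta g(s):=g(s)-g(s-).$$
The first task is to show that $\Psi$ is a measurable map from $\cD$ to $\cN\big([0,\infty)\times\Exc\big)$. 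Bounded finiteness: a \cadlag\ path has, in each bounded time-interval, only finitely many jumps exceeding any fixed height, and by Proposition~\ref{prop:E_bdedly_finite} a bounded subset $B$ of $(\Exc,d_\cD^A)$ has amplitudes bounded away from $0$, so $\bar f_{\Delta g(s)}\in B$ forces $\Delta g(s)\ge c_B>0$; hence $\Psi(g)$ has finitely many atoms in each set $[0,t]\times B$. Measurability: the evaluation maps generate the Borel $\sigma$-algebra of $\cD$ by \cite[Theorem~14.5]{Billingsley}, so $g\mapsto\Delta g(t)$ is measurable for each $t$, the jump point process $g\mapsto\sum_s\Dirac{s,\Delta g(s)}\cf\{\Delta g(s)\neq0\}$ is a measurable functional (obtained as a limit over refining partitions), and composing with the Borel map $\ell\mapsto\bar f_\ell$ on the positive jumps gives the claim.

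Next I would prove the identity
$$\DS=\big\{g\in\cD\colon\ \Psi(g)\in\H\ \text{and}\ \xi(\Psi(g))=g\big\},$$
together with the analogue $\DSxc=\{g\colon\Psi(g)\in\Hfin,\ \xi(\Psi(g))=g,\ \xi_{\Psi(g)}|_{(0,\len(\Psi(g)))}\not\equiv0,\ \xi_{\Psi(g)}(\len(\Psi(g)))=0\}$. The ``$\supseteq$'' inclusions are immediate from the definitions $\DS=\xi(\H)$ and of $\DSxc$, noting that $\xi_{\Psi(g)}=g$ on the sets in question. For ``$\subseteq$'': if $g=\xi(N)$ with $N\in\H$, then, since $\Delta\xi_N(s)=\life(f_s)$ at each atom $(s,f_s)$ of $N$ while the compensator in \eqref{eq:JCCP_def} is continuous, the jumps of $g$ occur exactly at the atom-times of $N$ with sizes equal to the corresponding lifetimes; replacing those spindles by triangular excursions of equal length affects neither the conditions defining $\H$ (resp.\ $\Hfin$, and the value of $\len$) nor the value of $\xi_N$ in \eqref{eq:JCCP_def}, so $\Psi(g)\in\H$ (resp.\ $\Hfin$) and $\xi(\Psi(g))=\xi(N)=g$, and the endpoint/nontriviality conditions for $\DSxc$ are likewise inherited. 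Granting these identities, the conclusion follows by measurable bookkeeping: $\{\Psi(g)\in\H\}$ and $\{\Psi(g)\in\Hfin\}$ are Borel because $\H,\Hfin$ are Borel (Proposition~\ref{prop:H_Lusin}) and $\Psi$ is measurable; on these sets $g\mapsto\xi(\Psi(g))$ is measurable into the metric space $(\cD,d_\cD)$, so $\{\xi(\Psi(g))=g\}$ is Borel since the diagonal of $\cD\times\cD$ is closed; $g\mapsto\len(\Psi(g))$ is a measurable $[0,\infty]$-valued functional (built from evaluation maps of the counting measure); and ``$g\not\equiv0$ on $(0,r)$'' and ``$g(r)=0$'' are jointly Borel in $(g,r)$ via the evaluation maps of $\cD$. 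Intersecting, $\DS$ and $\DSxc$ are Borel subsets of $\cD$.

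I expect the only genuine obstacle to be the verification in the second step that $\Psi$ is a well-defined \emph{measurable} map into $\cN\big([0,\infty)\times\Exc\big)$ --- that is, that the jump point process of a \cadlag\ path is a boundedly finite counting measure on $[0,\infty)\times(0,\infty)$ depending measurably on the path. This is classical for the Skorokhod space, but I would state it as a short lemma and pay attention to the interplay between bounded subsets of $(\Exc,d_\cD^A)$ and the amplitudes (hence lengths) of the marking excursions. Everything else is bookkeeping against the definitions of $\H$, $\Hfin$, $\xi$ and $\len$.
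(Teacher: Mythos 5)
Your argument is correct, but it is packaged differently from the paper's. The paper treats this as a one-line exercise: re-express conditions (i)--(iii) of Definition \ref{def:assemblage_m} directly as conditions on the path $g=\xi(N)$ (only positive jumps, at most one per time, finitely many above any threshold on compacts, and the uniform compensated-jump-sum identity recovering $g$ from its jumps), and check that each translated condition is Borel in $g$, e.g.\ via countable suprema and limits over rational $t$ and $z$ using the evaluation maps. You instead build a measurable canonical section $\Psi$ of $\xi$ (marking each jump with a deterministic triangular spindle of matching lifetime) and characterize $\DS$ and $\DSxc$ as the loci $\{\Psi(g)\in\H\ (\text{resp.\ }\Hfin,\dots),\ \xi(\Psi(g))=g\}$, thereby reducing everything to facts already proved: $\H$ and $\Hfin$ are Borel (Proposition \ref{prop:H_Lusin}) and $\xi$ is measurable (Proposition \ref{prop:JCCP_meas}). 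The hinge of your proof --- that membership in $\H$/$\Hfin$, the value of $\len$, and the value of $\xi_N$ depend on $N$ only through atom times and lifetimes, which are exactly the jump times and sizes of $g$ (uniform convergence in \eqref{eq:JCCP_def} together with continuity of the compensator gives $\Delta\xi_N(t)=\life(f_t)$ at atoms and $0$ elsewhere) --- is the same observation the paper's sketch rests on, but your route buys a shorter verification: the only genuinely new measurability input is the classical measurability of the jump point process of a \cadlag\ path, which the paper itself invokes via \cite[Proposition II.(1.16)]{JacodShiryaev} in the proof of Proposition \ref{prop:bi-clade_PRM}. The cost is the bookkeeping you already flag: bounded finiteness of $\Psi(g)$ in $\big([0,\infty)\times\Exc,\,d_\cD^A\big)$ (your amplitude argument handles this), and the set $\{\xi(\Psi(g))=g\}$, which is most cleanly seen to be Borel by testing equality at rational times and invoking right-continuity rather than appealing to the diagonal of $\cD\times\cD$.
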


This is a straightforward exercise in topology and measure theory, starting by expressing the conditions on $N$ of Definition \ref{def:assemblage_m} in
terms of the \cadlag\ function $g=\xi(N)$.

\begin{definition}\label{def:inverse_LT}
 For $y\in\BR$ and $g\in\DS$ we make the following definitions.
 \begin{enumerate}[label=(\roman*), ref=(\roman*)]
  \item We define the set $V^y(g)$ of intervals of complete excursions, i.e.\ those beginning and ending at $y$ in the following weak sense,\label{item:i_LT:exc_int}
    $$
    V^y(g) := \left\{[a,b]\subset [0,\len(g)]\ \middle|\begin{array}{r}
    		 a < b < \infty;\hfill g(t-)\neq y\neq g(t)\text{ for }t\in (a,b);\\[.2cm]
    		 g(a-) = y \text{ or }g(a) = y; 
    		 \text{ and } g(b-) = y \text{ or }g(b) = y
         	\end{array}\!\!\right\}.$$
  \item We define the set $V^y_0(g)\supseteq V^y(g)$ to include incomplete first and/or last excursions. Let
  \begin{equation*}
  \begin{split}
   T^y(g)   &:= \inf\left(\left\{t\in [0,\len(g)]\colon g(t)=y \text{ or }g(t-)=y\right\}\cup\{\len(g)\}\right),\\
   T^y_*(g) &:= \sup\left(\left\{t\in [0,\len(g)]\colon g(t)=y \text{ or }g(t-)=y\right\}\cup\{0\}\right)
  \end{split}
  \end{equation*}
  If $y\neq 0$ then we include $[0,T^y(g)]\cap [0,\infty)$ in $V^y_0(g)$. If $T^y_*(g) < \len(g)$ or $g(\len(g)) \neq y$, then we include $[T^y_*(g),\len(g)]\cap [0,\infty)$ in $V^y_0(g)$.
  \label{item:i_LT:exc_int_0}
  \item If the level $y$ local time associated with $g$ exists, in the sense that the limits \eqref{eq:LT_def} exist for all $t\ge 0$, with $g$ in the place of $\bX$, 
    then we denote this by $(\ell^y_g(t),t\geq 0)$. Then the \emph{(right-continuous) inverse local time} is $\tau^y(s) := \inf\{t \geq 0\colon\ell^y_g(t) > s\}$ for $s\geq 0$.\label{item:i_LT:LT}
 \end{enumerate}
 In each of the preceding notations we may replace $g$ with $N\in\H$ to denote the corresponding object with $g = \xi(N)$.
\end{definition}

In the sequel, we will suppress the `$g$' in the above notations when we refer to these objects applied to $g = \bX$, where $\bX$ denotes the 
\Stable[\frac32] scaffolding process $\xi(\bN)$, as in Proposition \ref{prop:stable_JCCP}. Let $\ell$ denote the jointly H\"older continuous version of local time 
specified in Theorem \ref{thm:Boylan}.
 Note that for $y\in\BR$ fixed, $T^y = \tau^y(0)$ a.s., but this is not simultaneously the case for all $y\in\BR$.

\begin{definition}\label{def:shifted_restriction}
  Let $[a,b]$ be an interval, $f\colon\BR\to\cS$ a function and $N\in\cN([0,\infty)\times\cS)$ a counting measure. We define \emph{shifted restrictions} by setting for $x\in\BR$, $B\in\ScS$ and $I\subset\BR$ Borel
 $$
 \ShiftRestrict{f}{[a,b]}(x):=\Restrict{f}{[a,b]}(x+a),\quad\mbox{and}\quad\ShiftRestrict{N}{[a,b]}(I\times B):=\Restrict{N}{[a,b]}((I+a)\times B).
 $$
 We will use similar notation with open and half-open intervals $(a,b)$, $[a,b)$, and $(a,b]$.

  Adapting the definition of $d_\cD^A$ of Proposition \ref{prop:E_bdedly_finite}, we equip $\DSxc\setminus\{0\}$ with the
  metric $d_\cD^A(f,g)=d_\cD(f,g)+|A(f)^{-1}-A(g)^{-1}|$, where 
  $A(g):=\sup\{|g(t)|,0\le t\le\len(g)\}$ with $\len(g)$ as in Definition \ref{def:stable_exc}.
  We define an excursion counting measure and an associated intensity
 \begin{align}
  \mathbf{G}^y &:= \sum_{(a,b)\in V^y} \Dirac{\ell^y(a),\ShiftRestrict{\bX}{[a,b]}};\label{eq:stable:exc_PRM}\\
  \mSxc(B) &:= \EV[\mathbf{G}^0([0,1]\times B)] \quad \text{for }B\in\Sigma(\DSxc\setminus\{0\}).\label{eq:stable_exc_measure}
 \end{align}
\end{definition}

\begin{theorem}[
see e.g.\ \cite{BertoinLevy} Theorems IV.8 and IV.10]\label{thm:excursion_PRM} 
\begin{enumerate}[label=(\roman*), ref=(\roman*)]\item[(i)] The measure $\mSxc$ is boundedly finite for $(\DSxc\setminus\{0\},d_\cD^A)$, and it satisfies $\int (\len(g)\wedge 1)d\mSxc(g) < \infty$.
 \item[(ii)] For each $y\in\BR$, the inverse local-time $\tau^y(s)$ is a.s.\ finite for every $s\ge 0$, and $\mathbf{G}^y$ is a \PRM[\Leb\otimes\mSxc] on 
   $[0,\infty)\times\DSxc$.
\end{enumerate}
\end{theorem}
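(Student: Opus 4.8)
The statement is classical It\^o excursion theory for the L\'evy process $\bX$ (see \cite[Ch.~IV]{BertoinLevy}), repackaged in our metric space $(\DSxc\setminus\{0\},d_{\cD}^A)$, and my plan is to split it into three parts: recurrence, giving finiteness of $\tau^y$; the It\^o--Poisson structure of $\mathbf{G}^y$; and the two finiteness properties of $\mSxc$ in~(i), which are also exactly what make $\mathbf{G}^y$ a \emph{boundedly finite} point measure. For the first part I would invoke the standard facts that a stable process of index $3/2\in(1,2)$ oscillates, is recurrent, and has every singleton regular for itself and non-polar (e.g.\ \cite{BertoinLevy}). Combined with the jointly continuous local time of Theorem~\ref{thm:Boylan}, this gives, for each fixed $y$, that $t\mapsto\ell^y(t)$ is continuous, nondecreasing and increases to $+\infty$ a.s.; hence its right-continuous inverse $\tau^y$ is a.s.\ finite at every $s\ge0$. (As the text around Definition~\ref{def:inverse_LT} notes, this holds for each fixed $y$, not simultaneously in $y$.)

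By spatial homogeneity of $\bX$ it suffices to treat $y=0$: translating by $-y$ identifies the excursions of $\bX$ away from $y$ with those away from $0$ and preserves the law (for $y\ne0$ one re-anchors the excursion paths at height $0$ before reading off $\mSxc$). For $y=0$, the intervals $[a,b]\in V^0(\bX)$ are the closures of the maximal open intervals on which $\bX$ avoids $0$; each path $\ShiftRestrict{\bX}{[a,b]}$ equals the scaffolding $\xi(\ShiftRestrict{\bN}{[a,b]})$ of a finite-length point process of spindles and so lies in $\DSxc$, while $\ell^0(a)$ records the local time accrued when that excursion begins. It\^o's excursion theorem for L\'evy processes (\cite[Theorems~IV.8 and IV.10]{BertoinLevy}) then asserts exactly that $\mathbf{G}^0=\sum_{(a,b)\in V^0}\Dirac{\ell^0(a),\,\ShiftRestrict{\bX}{[a,b]}}$ is a Poisson point process on $[0,\infty)\times\DSxc$ whose intensity has the product form $\Leb\otimes\mSxc$ for a $\sigma$-finite excursion measure $\mSxc$, and $\mSxc=\EV[\mathbf{G}^0([0,1]\times\,\cdot\,)]$ holds by the defining property of a Poisson intensity, which is \eqref{eq:stable_exc_measure}. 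The bounded finiteness established next is what upgrades this abstract point process to a genuine element of $\cN([0,\infty)\times\DSxc)$ in the sense of Definition~\ref{def:JCCP}.

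For~(i), first note that any $B\subseteq\DSxc\setminus\{0\}$ bounded in $d_{\cD}^A$ has $\sup_{g\in B}A(g)^{-1}<\infty$, hence $B\subseteq\{g\colon A(g)\ge\varepsilon\}$ for some $\varepsilon>0$; so bounded finiteness reduces to $\mSxc\{A\ge\varepsilon\}<\infty$ for every $\varepsilon>0$. This is the classical statement that excursions of amplitude at least $\varepsilon$ are emitted at finite rate in local time, since between two successive such excursions $\bX$ must travel distance $\varepsilon$ away from and back to $0$, consuming a positive expected amount of local time; I would deduce it from the standard excursion estimate (via the exit system, or via the scale function of $\bX$). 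For the integrability of $\len$: $\mSxc(\len\in\,\cdot\,)$ is the L\'evy measure of the subordinator $\tau^0$ (the inverse local time at $0$, which is driftless since $\{t\colon\bX(t)=0\}$ is Lebesgue-null), and every subordinator L\'evy measure $\Pi$ satisfies $\int(x\wedge1)\,\Pi(dx)<\infty$; hence $\int(\len(g)\wedge1)\,d\mSxc(g)=\int_{(0,\infty)}(x\wedge1)\,\mSxc(\len\in dx)<\infty$. (The $\wedge1$ is essential: $\tau^0$ is a stable subordinator of index $1/3$, so $\EV[\tau^0(1)]=\infty$.)

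The only real work — and it is bookkeeping rather than anything deep — I expect to be matching conventions: checking that the excursions cut out by our sets $V^0(\bX)$, normalized by the particular local time of Theorem~\ref{thm:Boylan}, are literally the excursion process and It\^o measure of the cited references (in particular that those references accommodate excursion paths with \cadlag\ jumps at their birth and death times, which they do since $\bX$ has jumps), and that ``boundedly finite for $d_{\cD}^A$'' is genuinely captured by the amplitude bound $\mSxc\{A\ge\varepsilon\}<\infty$ rather than by some finer feature of the Skorokhod topology on $\DSxc$.
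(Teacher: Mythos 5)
Your proposal is correct and follows essentially the same route as the paper, which states this result without proof as a direct appeal to It\^o excursion theory for L\'evy processes (\cite{BertoinLevy}, Theorems IV.8 and IV.10). The additional verifications you supply — reducing bounded finiteness in $d_{\cD}^A$ to $\mSxc\{A\ge\varepsilon\}<\infty$ for every $\varepsilon>0$, and reading $\int(\len(g)\wedge 1)\,d\mSxc(g)<\infty$ off the L\'evy measure of the inverse local time subordinator — are exactly the routine bookkeeping that the paper's citation leaves implicit.
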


This theorem has the consequence that, for fixed $y\in\BR$,
\begin{align}
 V^y = \big\{[\tau^y(t-),\tau^y(t)]\colon\ t>0,\ \tau^y(t-)\neq \tau^y(t)\big\}\quad\text{a.s.}\label{eq:inv_LT_intervals}
\end{align}

\begin{definition}\label{def:stable:xforms}
 Let $\reverseincr$ denote the increment-reversal involution on excursions $g\in \DSxc$:
 \begin{align}
  \reverseincr(g) = \big(-g\big((\len(g)-t)-\big),\ t\in [0,\len(g)]\big).\label{eq:stable:reversal_def}
 \end{align}
 Let $\scaleS$ denote the \Stable[\frac32] scaling map from $(0,\infty)\times\DS$ to $\DS$:
 \begin{align}
  c \scaleS g := \left(cg\left(c^{-3/2}t\right),\ t\in [0,c^{3/2}\len(g)]\right).\label{eq:stable:scaling_def}
 \end{align}
\end{definition}

\begin{lemma}[Invariance of \Stable $(\frac32)$ excursions]
\label{lem:stable:invariance}
 For $B\in \SDSxc$ and $c>0$,
 \begin{align}
  \mSxc(\reverseincr(B)) = \mSxc(B)\quad \text{and} \quad \mSxc(\scaleS[c][B]) = c^{-1/2}\mSxc(B).\label{eq:stable:scaling_inv}
 \end{align}
\end{lemma}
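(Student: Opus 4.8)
The plan is to derive both identities from one fact, supplied by Theorem~\ref{thm:excursion_PRM}, namely that the excursion point process $\mathbf{G}^0$ of \eqref{eq:stable:exc_PRM} is a $\PRM[\Leb\otimes\mSxc]$, together with the observation that the intensity of a Poisson random measure is determined by its law (here $\mSxc$ is $\sigma$-finite by Theorem~\ref{thm:excursion_PRM}(i)). For each of the two maps it then suffices to produce a law-preserving transformation of the path $\bX$ and to read off its effect on $\mathbf{G}^0$.

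\emph{Scaling.} Comparing Laplace exponents in \eqref{eq:JCCP:Laplace} shows that $Z:=\scaleS[c][\bX]=(c\bX(c^{-3/2}t),\ t\ge0)$ is again a \Stable[\frac32] process, so $Z\stackrel{d}{=}\bX$. From the occupation-density characterization of local time (Definition~\ref{def:LT}) one computes $\ell^0_Z(t)=c^{1/2}\ell^0_{\bX}(c^{-3/2}t)$, hence $\tau^0_Z(s)=c^{3/2}\tau^0_{\bX}(c^{-1/2}s)$; by \eqref{eq:inv_LT_intervals} the excursion intervals of $Z$ are exactly the $c^{3/2}$-dilates of those of $\bX$, the excursion of $Z$ over a dilated interval is the image of the corresponding excursion of $\bX$ under $g\mapsto\scaleS[c][g]$, and it is anchored at $c^{1/2}$ times the original level of local time. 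Thus $\mathbf{G}^0_Z$ is the image of $\mathbf{G}^0$ under $(s,g)\mapsto(c^{1/2}s,\scaleS[c][g])$, which is a Poisson random measure with intensity $(c^{-1/2}\Leb)\otimes\nu_c$, where $\nu_c$ denotes the pushforward of $\mSxc$ under $g\mapsto\scaleS[c][g]$ (so $\nu_c(B)=\mSxc(\scaleS[1/c][B])$). Equating this with the intensity $\Leb\otimes\mSxc$ of $\mathbf{G}^0_Z\stackrel{d}{=}\mathbf{G}^0$, and testing against sets of the form $[0,1]\times B$, gives $\nu_c=c^{1/2}\mSxc$, that is $\mSxc(\scaleS[1/c][B])=c^{1/2}\mSxc(B)$; replacing $c$ by $1/c$ yields $\mSxc(\scaleS[c][B])=c^{-1/2}\mSxc(B)$.

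\emph{Reversal.} Fix $s_0>0$ and set $\tau:=\tau^0(s_0)$, a.s.\ finite by Theorem~\ref{thm:excursion_PRM}(ii), with $\bX(\tau)=0$ a.s. By the time-reversal duality for L\'evy processes at a fixed level of the local time at $0$ — the classical duality lemma, extended from deterministic times to $\tau$ via the regenerative structure of $(\tau^0,\mathbf{G}^0)$; see \cite{BertoinLevy} — the reversed-and-negated path $\check\bX:=(-\bX((\tau-u)-),\ 0\le u\le\tau)$ has the same law as $\bX$ run up to $\tau^0(s_0)$, jointly with its local time at $0$, which the transformation carries to $u\mapsto s_0-\ell^0_{\bX}((\tau-u)-)$. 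A direct check shows that every excursion interval $[a,b]\in V^0$ with $b\le\tau$ corresponds to the excursion interval $[\tau-b,\tau-a]$ of $\check\bX$, over which $\check\bX$ performs the excursion $\reverseincr(\ShiftRestrict{\bX}{[a,b]})$ — time-reversed and negated, i.e.\ precisely the map $\reverseincr$ of Definition~\ref{def:stable:xforms} — anchored at the level $s_0-\ell^0_{\bX}(a)$ of local time. Hence $\mathbf{G}^0_{\check\bX}$ is the image of $\mathbf{G}^0$ restricted to $[0,s_0]\times\DSxc$ under $(s,g)\mapsto(s_0-s,\reverseincr(g))$; since $\Leb$ restricted to $[0,s_0]$ is invariant under $s\mapsto s_0-s$, this image is a $\PRM[(\Leb|_{[0,s_0]})\otimes(\reverseincr)_*\mSxc]$. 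Comparing with $\mathbf{G}^0_{\check\bX}\stackrel{d}{=}\mathbf{G}^0|_{[0,s_0]\times\DSxc}\sim\PRM[(\Leb|_{[0,s_0]})\otimes\mSxc]$ gives $(\reverseincr)_*\mSxc=\mSxc$, i.e.\ $\mSxc(\reverseincr(B))=\mSxc(B)$, using that $\reverseincr$ is a ($\mSxc$-a.e.) involution of $\DSxc$.

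\emph{Main obstacle.} The scaling part is routine bookkeeping with the \Stable[\frac32] scaling relation and uniqueness of Poisson intensities. The substantive point is the reversal: one must justify carefully that reversing-and-negating the path of $\bX$ up to the inverse local time $\tau^0(s_0)$ — a random time which is not a stopping time — produces a process with the same joint law as $\bX$ together with its local-time clock, and then match the two excursion decompositions term by term, combining the per-excursion reversal $\reverseincr$ with the order-reversal $s\mapsto s_0-s$ of the local-time labels. This is classical L\'evy-process excursion theory, but it is where the real content lies; granting it, the identity follows as above.
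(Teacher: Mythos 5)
Your scaling argument is correct and complete: the identity $Z:=\scaleS[c][\bX]\stackrel{d}{=}\bX$, the local-time scaling $\ell^0_Z(t)=c^{1/2}\ell^0_{\bX}(c^{-3/2}t)$, and the uniqueness of the intensity of a Poisson random measure do yield $\mSxc(\scaleS[c][B])=c^{-1/2}\mSxc(B)$. The paper's own proof of this half is the one-line remark that it follows from the scaling invariance of $\bX$, so there you are consistent with (indeed more explicit than) the paper.

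The reversal half has a genuine gap. The paper proves it by citing Getoor and Sharpe \cite[Theorem (4.8)]{GetoShar81} on excursions of dual processes; that citation carries the entire content. Your route reduces $(\reverseincr)_*\mSxc=\mSxc$ to the statement that, with $\tau=\tau^0(s_0)$, the path $(-\bX((\tau-u)-),\,0\le u\le\tau)$ has the same law as $(\bX(u),\,0\le u\le\tau)$ jointly with its local time at $0$. The reduction is fine, but observe that, via exactly the excursion bookkeeping you perform, this path statement is \emph{equivalent} to the invariance you are trying to prove; so the argument has content only if that statement is established independently. Your justification --- ``the classical duality lemma, extended from deterministic times to $\tau$ via the regenerative structure, see \cite{BertoinLevy}'' --- does not go through as stated: the duality lemma reverses at a fixed deterministic time $t$, and the reversed path's excursions away from $0$ then correspond to excursions of $\bX$ away from the level $\bX(t)$, not away from $0$, so there is no direct ``extension via regenerativity''; moreover $\tau^0(s_0)$ is neither deterministic nor independent of the path, and reversal at general random times is not law-preserving. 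Making the step rigorous requires genuine input, e.g.\ a Nagasawa-type reversal at the last zero before an independent exponential time, or precisely the dual-process excursion theory of Getoor--Sharpe that the paper invokes. You correctly identified where the real content lies, but the proposal asserts it rather than proving it, so as written the increment-reversal identity remains unproved.
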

\begin{proof} The increment-reversal invariance was obtained by Getoor and Sharpe \cite[Theorem (4.8)]{GetoShar81}. The scaling invariance follows from the scaling invariance of $\bX$. 
\end{proof}


It is important for us to distinguish between degenerate phenomena that may occur on a null set of levels and those that occur nowhere. To that end we offer the following two results.

\begin{proposition}\label{prop:exc_intervals}
 It is a.s.\ the case that for every $y\in\BR$, the following properties hold.
 \begin{enumerate}[label=(\roman*), ref=(\roman*)]
  \item $\displaystyle V^y = \left\{[a,b]\subset (0,\infty)\ \middle|\ a < b;\ \bX(a-) = y = \bX(b)\text{; and }\bX(t)\neq y\text{ for }t\in (a,b)\right\}.$
  \item For $I,J\in V^y_0$, $I\neq J$, the set $I\cap J$ is either empty or a single shared endpoint.
  \item If two intervals $[a,b],[b,c]\in V^y_0$ share an endpoint $b$ then $\bX$ does not jump at time $b$.
  \item For every $t\notin \bigcup_{I\in V^y_0}I$, we find $\bX(t-) = \bX(t) = y$.
  \item $\displaystyle\Leb\left( [0,\infty)\setminus \bigcup\nolimits_{I\in V^y_0}I\right) = 0$.
 \end{enumerate}
\end{proposition}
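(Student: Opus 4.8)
The plan is to treat (ii), (iv) and (v) as essentially deterministic consequences of the definitions together with the joint continuity of local time, and to concentrate the probabilistic work on (i), from which (iii) follows at once. For (ii), if two members $[a,b]$ and $[c,d]$ of $V^y_0$ shared more than one point, then, relabelling so that $a\le c$, the point $c$ would lie in the open interior of $[a,b]$, where by the conditions of Definition~\ref{def:inverse_LT} one has $\bX\ne y\ne\bX(\cdot-)$, contradicting the endpoint requirement of $[c,d]$ at $c$; similarly $b=d$, so $[a,b]=[c,d]$. For (iv), if $t\notin\bigcup_{I\in V^y_0}I$ I would set $\alpha:=\sup\{s<t\colon\bX(s)=y\text{ or }\bX(s-)=y\}$ and $\beta:=\inf\{s>t\colon\bX(s)=y\text{ or }\bX(s-)=y\}$; if $\alpha<t$ or $\beta>t$ then, using right-continuity and existence of left limits of $\bX$ together with spectral positivity, the interval $[\alpha,\beta]$ meets the defining conditions of a member of $V^y_0$ and contains $t$ in its interior, a contradiction; hence $\alpha=\beta=t$, which forces $\bX(t-)=\bX(t)=y$. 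Granting (iv), (v) follows from $\Leb\{s\le t\colon\bX(s)=y\}\le\int_{y-h}^{y+h}\ell^z(t)\,dz\to0$ as $h\downto0$, which by \eqref{eq:LT_int_ident} and the local boundedness of the jointly continuous local time of Theorem~\ref{thm:Boylan} holds a.s.\ simultaneously for all $y$ and all $t$.

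For (i), note first that the conditions literally defining $V^y$ in Definition~\ref{def:inverse_LT} already force $\bX\ne y$ strictly inside each interval: a spectrally positive path cannot re-enter one side of $y$ after leaving it without first touching $y$, since it descends through every level continuously, so within an excursion interval it can cross $y$ at most once, by a single jump strictly over $y$. The remaining task is to upgrade the weak endpoint conditions ``$\bX(a-)=y$ or $\bX(a)=y$'' and ``$\bX(b-)=y$ or $\bX(b)=y$'' to $\bX(a-)=y$ and $\bX(b)=y$, and to exclude $a=0$. The case $a=0$ is immediate: for $y\ne0$ one has $\bX(0-)=\bX(0)=0\ne y$, and for $y=0$ the regularity of $0$ for both half-lines (valid since $\bX$ has unbounded variation) shows $\bX$ returns to $0$ in every right-neighbourhood of $0$, so no member of $V^0$ starts at $0$. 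The endpoint upgrade amounts to excluding, for every $y$, three degenerate jump configurations: (B1) a jump landing exactly at $y$ and beginning a member of $V^y$; (B2) a jump time $\beta$ with $\bX(\beta-)=y$ ending a member of $V^y$ while $\bX>y$ on a left-neighbourhood of $\beta$; and (B3) the mirror configuration with $\bX<y$ on a left-neighbourhood of $\beta$. Configuration (B1) never occurs: if $\bX(\alpha)=y$ then, by regularity, $\bX$ returns to $y$ in every right-neighbourhood of $\alpha$, so no member of $V^y$ can start at $\alpha$, with no exceptional set of levels. Configurations (B2) and (B3) both assert that some jump time $\beta$ of $\bX$ is a left-local extremum, i.e.\ $\bX(\beta-)$ is a left-local minimum or a left-local maximum of $\bX$.

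It therefore suffices to show that, a.s., no jump time of $\bX$ is a left-local extremum. Since there are only countably many jump times, the compensation formula for the Poisson point process of jumps (equivalently: conditionally on a jump at a given instant $\beta$, the pre-jump path $(\bX(s))_{s<\beta}$ is a L\'evy process run to the deterministic time $\beta$) reduces this to the statement that for each fixed $t$, a.s.\ $\bX(t-)$ is not a left-local extremum of $\bX$. That in turn follows from the time-reversal identity $(\bX(t)-\bX((t-s)-))_{0\le s\le t}\stackrel{d}{=}(\bX(s))_{0\le s\le t}$ together with the regularity of $0$ for both $(0,\infty)$ and $(-\infty,0)$ for the unbounded-variation process $\bX$: the reversed path enters both half-lines immediately, so $\bX$ takes values on both sides of $\bX(t-)$ in every left-neighbourhood of $t$. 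This gives (i) for every $y$, and (iii) is then immediate: a shared endpoint $b$ of $[a,b],[b,c]\in V^y_0$ is the continuous return endpoint of $[a,b]$, so $\bX(b)=y$, and the starting endpoint of $[b,c]$, so $\bX(b-)=y$, whence $\bX$ does not jump at $b$.

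The hard part will be the passage from a fixed level to all levels at once. The usual device of a countable dense set of ``good'' levels yields the five properties only off a Lebesgue-null set of levels, whereas the statement is asked for \emph{every} level, including those that happen to coincide, for the given path, with a jump value or a jump left-limit. What makes the simultaneous statement work, and where I expect the real care to be needed, is twofold: spectral positivity, which makes every downcrossing continuous and so controls the right endpoints of excursion intervals at all levels with no exceptional set whatsoever; and the regularity of the unbounded-variation process $\bX$ at $0$ for both half-lines, which disposes of the degenerate endpoint configurations (B1)--(B3) at the countably many jump times of $\bX$, hence at every level.
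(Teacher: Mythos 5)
Your proof is correct and follows essentially the same route as the paper: the probabilistic core is that $0$ is regular for both half-lines immediately after each jump time (strong Markov/Poisson structure) and, by time reversal (compensation formula), immediately before, combined with spectral positivity giving continuous downcrossings, while your (iv) uses the same sup/inf construction and your (v) the same occupation-density/local-time continuity argument as the paper. The only deviations are cosmetic — you treat (ii) and (iv) deterministically rather than via (i), and in configuration (B1) the step from half-line regularity to immediate return to the level should be completed with the downcrossing argument you already invoke elsewhere — none of which affects the validity of the approach.
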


\begin{proof}
 (i), (ii), and (iii). These properties follow from a common observation. In the terminology of Bertoin \cite{BertoinLevy}, $0$ is regular for $(-\infty,0)$ and for $(0,\infty)$ after and, by time reversal, before each of the countably many jump times.
 
 (iv). Take $t>T^y$ and set $a = \sup\{s \leq t\colon \bX(s-) = y\}$ and $b = \inf\{s \geq t\colon \bX(s) = y\}$. If $a=b=t$ then, by the \cadlag\ property of $\bX$, we have $\bX(t-)= \bX(t) = y$. Otherwise, by assertion (i), $[a,b]\in V^y$ and $t\in [a,b]$.
 
 (v). This follows from (iv) and the a.s.\ existence of occupation density local time at all levels, per Theorem \ref{thm:Boylan}. Since occupation measure therefore has a derivative in level, it cannot jump at any level.
\end{proof}

\begin{definition}\label{def:typical_exc}
 We call an excursion $g\in\DSxc$ \emph{typical} if there exists some time $T_0^+(g) \in (0,\len(g))$ such that: (i) $g(t) < 0$ for $t\in (0,T_0^+(g))$, and (ii) $g(t) > 0$ for $t\in [T_0^+(g),\len(g))$. We call $g$ \emph{degenerate} if it is not typical.
\end{definition}

A typical excursion may be decomposed around $T_0^+$ into: (1) an initial escape downwards from zero, (2) a single jump up across zero, and (3) a final passage back down to zero. 

\begin{proposition}\label{prop:nice_level}
 For each $y\in\BR$ it is a.s.\ the case that level $y$ is \emph{nice} for $\bX$ in the following sense.
 \begin{enumerate}[label=(\roman*), ref=(\roman*)]
  \item There are no degenerate excursions of $\bX$ about level $y$.
  \item Local times $(\ell^y(t),\,t\ge0)$ exist. For $[a,b],[c,d] \in V^y_0$, $\ell^y(a)\neq \ell^y(c)$ unless $[a,b]=[c,d]$.
  \item If $y>0$, we also have $T^y>T^{\geq y}:=\inf\{t\ge 0\colon\bX\geq y\}$.
 \end{enumerate}
\end{proposition}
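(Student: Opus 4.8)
The three assertions are essentially independent, and each is a full-probability statement for the fixed level $y$, so it suffices to prove them separately and intersect the resulting events. Throughout I use the jointly continuous local time $\ell$ of $\bX$ supplied by Theorem \ref{thm:Boylan}; this in particular gives the existence of $(\ell^y(t),\,t\ge0)$ asserted in (ii) for free. For (ii), recall from \eqref{eq:inv_LT_intervals} that a.s.\ $V^y=\{[\tau^y(t-),\tau^y(t)]\colon t>0,\ \tau^y(t-)\neq\tau^y(t)\}$. Since $\ell^y$ is continuous with right-continuous inverse $\tau^y$, one has $\ell^y(\tau^y(t-))=t$, and $\ell^y$ is constant on each such interval; hence the left endpoint of the interval indexed by $t$ carries $\ell^y$-value $t$, and distinct intervals of $V^y$ carry distinct values. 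If $y=0$ then $T^0=0$, so $V^0_0=V^0$ and (ii) follows. If $y\neq0$ then $V^y_0=V^y\cup\{[0,T^y]\}$, and for the fixed $y$ we have $T^y=\tau^y(0)$ a.s., so $\ell^y\equiv 0$ on $[0,T^y]$, distinct from the positive values carried by the genuine excursion intervals. This proves (ii).

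Turning to (i): by Theorem \ref{thm:excursion_PRM}(ii) the excursions of $\bX$ about level $y$ are encoded by the atoms of the \PRM[\Leb\otimes\mSxc] $\mathbf{G}^y$, so if $D\subset\DSxc$ denotes the (measurable) set of degenerate excursions, then $\EV[\mathbf{G}^y([0,n]\times D)]=n\,\mSxc(D)$, and it is enough to show $\mSxc(D)=0$. I would deduce this from three features of the spectrally positive \Stable[\frac32] process $\bX$: (a) $\bX$ has only positive jumps; (b) by the L\'evy system together with the fact that $\bX$ spends zero Lebesgue time at any fixed level (occupation density, Theorem \ref{thm:Boylan}) and the diffuseness of the L\'evy measure $\frac{3}{2\pi\sqrt 2}x^{-5/2}dx$, a.s.\ no jump of $\bX$ starts or ends at level $0$, whence, using $\mSxc=\EV[\mathbf{G}^0([0,1]\times\cdot\,)]$ and the description of $V^0$ in Proposition \ref{prop:exc_intervals}(i), $\mSxc$-a.e.\ excursion has no jump at birth or death; (c) $\bX$ does not creep upward (the ascending ladder height subordinator of a spectrally positive \Stable[\frac32] process is a driftless \Stable[\frac12] subordinator; see e.g.\ \cite{BertoinLevy}), so $\mSxc$-a.e.\ excursion neither leaves $0$ nor returns to $0$ by an upward continuous passage. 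Granting (a)--(c), a $\mSxc$-a.e.\ excursion $g$ leaves $0$ continuously and strictly downward (it cannot jump away, cannot creep up, and cannot oscillate about $0$ since it is nonzero on $(0,\life(g))$), returns to $0$ continuously from strictly above, and on $(0,\life(g))$ is never $0$ with only upward jumps; hence it is negative until a single jump across $0$ and positive thereafter, i.e.\ typical in the sense of Definition \ref{def:typical_exc} with $T_0^+(g)$ equal to that jump time. Thus $\mSxc(D)=0$, proving (i).

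For (iii), fix $y>0$. Spectral positivity gives $\bX(t-)\leq\bX(t)<y$ for every $t<T^{\geq y}$, so $T^y\geq T^{\geq y}$; and the no-upward-creeping fact from (c) gives $\bX(T^{\geq y}-)<y<\bX(T^{\geq y})$ a.s.. Consequently, at and before time $T^{\geq y}$ the path neither equals $y$ nor has a left limit equal to $y$, while $\bX>y$ on $[T^{\geq y},T^{\geq y}+\epsilon)$ for some $\epsilon>0$ by right-continuity; hence $T^y>T^{\geq y}$.

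The main difficulty is the null-set estimate $\mSxc(D)=0$ in (i), and within it the claims (b) and (c) that the It\^o excursion measure of $\bX$ charges neither excursions with a jump at an endpoint nor excursions that creep to or from $0$. These rest on the fine path behaviour of a spectrally positive \Stable[\frac32] process at a point --- zero occupation time at a level, diffuseness of the jump measure, and absence of upward creeping --- rather than on anything specific to the skewer construction. Parts (ii) and (iii) are routine once the jointly continuous local time and the excursion PRM of Theorems \ref{thm:Boylan} and \ref{thm:excursion_PRM} are in hand.
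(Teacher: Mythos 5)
Your overall route is the paper's: (ii) is handled exactly as in the paper (joint continuity of local time plus the inverse-local-time description of $V^y$), and for (i) you reduce to $\mSxc(D)=0$ and invoke the same fine path properties at a fixed level (diffuse jump law, zero occupation time of a level, no upward creeping). There is, however, one genuine gap in (i). Absence of upward creeping is a statement about first-passage times: it rules out excursions that \emph{end} by a continuous upward approach to the level from below (localize at a rational time in the final negative stretch and apply the Markov property), but it does not rule out excursions that \emph{begin} by moving continuously upward from the level. Once the end-degeneracies are excluded, such a start is an isolated visit to $y$ with the path strictly above $y$ on both sides, i.e.\ a local minimum of $\bX$ sitting exactly at level $y$; this is not a first passage from below after any earlier time, so Millar-type no-creeping results do not apply to it directly. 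These degenerate excursions genuinely occur at random levels (the paper notes they arise around local extrema of $\bX$), so a fixed-level argument is required. The paper supplies it ``by time reversal'': the increment-reversal invariance of $\mSxc$ (Lemma \ref{lem:stable:invariance}) maps start-degeneracies to end-degeneracies, which you have already shown to be $\mSxc$-null; alternatively, one can note that the infimum of $\bX$ over any rational interval has a diffuse law, so a.s.\ no local minimum equals $y$. Your sentence ``neither leaves $0$ nor returns to $0$ by an upward continuous passage'' presents both halves as consequences of (c), and the ``leaves'' half is exactly the unproved case.

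A smaller point in (iii): no upward creeping gives the strict overshoot $\bX(T^{\geq y})>y$, but the strict undershoot $\bX(T^{\geq y}-)<y$ is the statement that no jump of $\bX$ departs from exactly level $y$, which is your fact (b) (whose occupation-time argument is level-independent), not (c). With (b) cited there and the time-reversal (or local-minimum) step added in (i), your argument is complete and essentially coincides with the paper's proof.
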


\begin{proof} 
 (i) There are four cases of potential degeneracy: start with a jump or creep up from the starting level; end with a jump or creep up to the end level. Millar \cite{Millar73} showed that spectrally positive \Stable[\frac32] processes a.s.\ do not creep up to a fixed level. The distributions of pre-jump levels and jump levels are absolutely continuous, so a.s.\ no jump ends at a fixed level. Hence, there is a.s.\ no degeneracy at ends of excursions. By time reversal, the same holds at the start of excursions. 
 
 (ii) Existence of local times has been addressed in Theorem \ref{thm:Boylan}. The Poisson random measure $\bG^y$ of Theorem \ref{thm:excursion_PRM} places all excursions at different local times a.s..
 
 (iii) As noted in (i), there is a.s.\ no creeping up to a level. Hence, $T^y>T^{\geq y}$ a.s..
\end{proof}

Nonetheless, each jump of $\bX$, say at time $T$, occurs at the start of a (possibly incomplete) excursion about level $\bX(T-)$, which includes no initial escape down 
from the level; and it occurs at the end of a (possibly incomplete) excursion about level $\bX(T)$, which has no part above that level. So while there are a.s.\ no 
degenerate excursions about any fixed level, there are also a.s.\ infinitely many degenerate excursions that occur at random levels. There are also degenerate excursions
that begin by creeping continuously up from a level or end by creeping continuously up to a level, or levels at which two excursions occur at the same local time, such as around
local extrema of $\bX$. In fact, there are uncountably many levels that are not nice, e.g.\ in the closure of the ladder height set 
$\{\bX(T^{\geq y}),y\ge 0\}$.

\subsection{Bi-clades: level filtrations via excursions of scaffolding with spindles}
\label{sec:clade_filtration}

In the preceding section, we have looked at excursions of the \Stable[\frac32] scaffolding process. In this section, we consider such excursions with jumps marked by \BESQ[-1] spindles.

\begin{definition}\label{def:bi-clade}
 We define
 \begin{gather*}
  \Hxc{\pm} := \big\{N\in \Hfin\colon \xi(N)\in\DSxc\big\} = \big\{N\in \Hfin\colon \inf\{t > 0\colon \xi_N(t)=0\} = \len(N)\big\},\\
  \Hxc{+}	:= \big\{N\in \Hxc{\pm}\colon \inf\nolimits_t\xi_N(t)=0\big\}, \quad \text{and} \quad \Hxc{-}	:= \big\{N\in \Hxc{\pm}\colon \sup\nolimits_t\xi_N(t)=0\big\},
 \end{gather*}
 where $\Hfin$ is as in Definition \ref{def:assemblage_m}. Let $\Sigma(\Hxc{\pm})$, $\Sigma(\Hxc{+})$, and $\Sigma(\Hxc{-})$ denote the restrictions of $\ScNRE$ to subsets of $\Hxc{\pm}$, $\Hxc{+}$, and $\Hxc{-}$ respectively. We call the members of $\Hxc{+}$ the \emph{clades} and those of $\Hxc{-}$ the \emph{anti-clades}. Members of $\Hxc{\pm}$ are called \emph{bi-clades}.
\end{definition}

\begin{figure}
 \centering
 \input{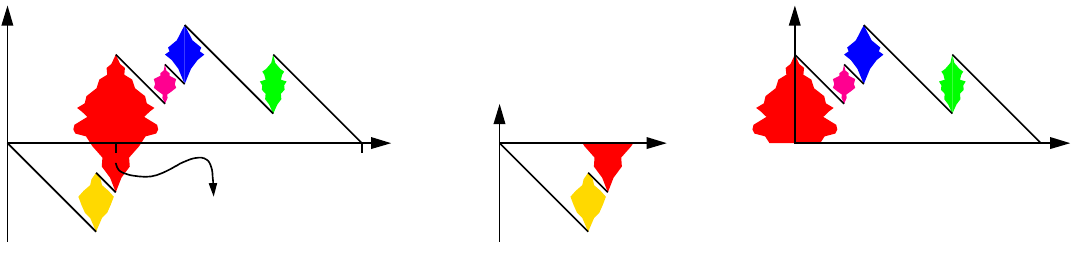_t}
 \caption{Decomposition of a bi-clade into an anti-clade and a clade.\label{fig:bi-clade_decomp}}
\end{figure}

Recall Definition \ref{def:typical_exc} of a typical \Stable[\frac32]-like excursion $g\in\DSxc$. Such an excursion may be decomposed into an initial escape downwards, a jump up across zero, and a final first-passage descent. Correspondingly, as illustrated in Figure \ref{fig:bi-clade_decomp}, a bi-clade $N\in\Hxc{\pm}$ for which $\xi(N)$ is typical may be split into two components around the jump of $\xi(N)$ across zero. The initial component, corresponding to the negative path-segment of $\xi(N)$, is an anti-clade, and the subsequent component, on which $\xi(N)$ is positive, is a clade. In order to make this decomposition, we must break the spindle marking the middle jump of the excursion into two parts, above and below level zero.

\begin{figure}
 \centering
 \input{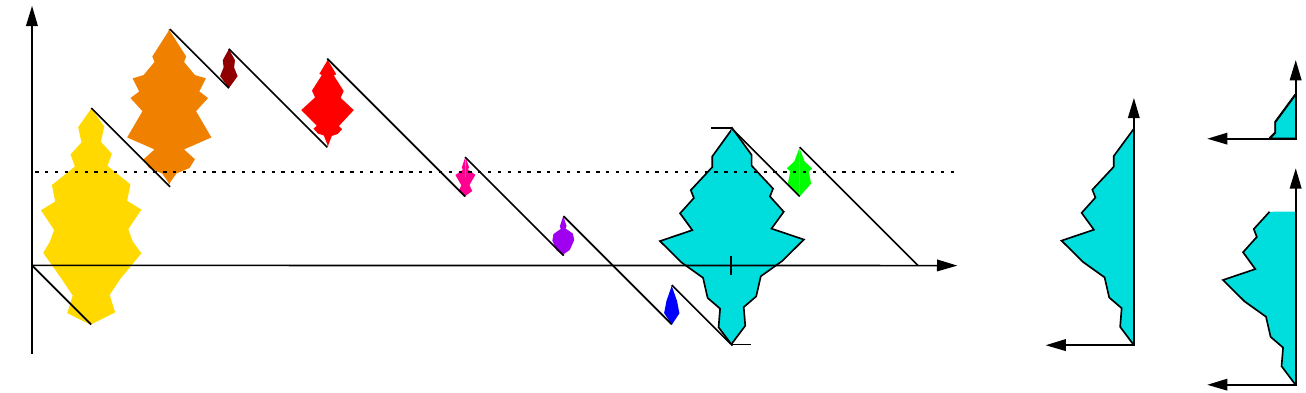_t}
 \caption{Splitting a spindle of some $N\in \H$ about some level $y$, as in \eqref{eq:spindle_split}.\label{fig:spindle_split}}
\end{figure}

If a measure $N\in \H$ has a point $(t,f_t)$ with $y\in (\xi_N(t-),\xi_N(t))$, then we define
\begin{equation}
 \left.\begin{array}{r@{\ :=\ }l}
     \check f_{t}^y(z) & f_{t}(z)\cf\{z \in [0,y-\xi_N(t-))\}\\[0.2cm]
     \text{and}\quad\hat f_{t}^y(z) & f_{t}(y - \xi_N(t-) + z)\cf\{z \in [0,\xi_N(t)-y]\}.
 \end{array}\right.\label{eq:spindle_split}
\end{equation}
This splits the spindle $f_{t}$ into two parts, corresponding to the part of the jump of $\xi(N)$ that goes from $\xi_N(t-)$ up to $y$, and the part extending from $y$ up to $\xi_N(t)$. This is illustrated in Figure \ref{fig:spindle_split}. Following Definition \ref{def:typical_exc}, for $N\in\Hxc{\pm}$ the \emph{crossing time} is
\begin{equation}
 T_0^+(N) := \inf\{t \in (0,\len(N)] \colon \xi_N(t) \geq 0\}.\label{eq:clade:crossing_def}
\end{equation}

Fix $N\in\Hxc{\pm}\setminus(\Hxc{-}\cup\Hxc{+})$. For the purposes of the following definitions, we abbreviate the crossing time $T_0^+ := T_0^+(N)$. We split the bi-clade into anti-clade and clade components, denoted by $(N^-,N^+)$, as follows:
\begin{equation}\label{eq:bi-clade_split}
 N^- := \Restrict{N}{[0,T_0^+)} + \Dirac{T_0^+,\check f_{T_0^+}^0} \qquad \text{and} \qquad N^+ := \Dirac{0,\hat f_{T_0^+}^0} + \ShiftRestrict{N}{(T_0^+,\infty)}.
\end{equation}
For $N\in\Hxc{-}$, let $(N^-,N^+) := (N,0)$. For $N\in\Hxc{+}$, let $(N^-,N^+) := (0,N)$.

More generally, we may define scaffolding and spindles \emph{cut off} above and below a level $y\in\BR$. These processes are illustrated in Figure \ref{fig:cutoff}. For the purpose of the following, for $N\in\H$ and $y\in\BR$, let
\begin{equation*}
 \sigma^y_N(t) := \Leb\{u\leq t\colon \xi_N(u)\leq y\} \qquad \text{for }t\ge0.
\end{equation*}
In other words, $\sigma^y_N(t)$ is the amount of time that $\xi(N)$ spends below level $y$, up to time $t$. Then for $t\ge 0$,
\begin{equation}
\begin{split}
  &\cutoffL{y}{\xi(N)}(t) := \xi_N\big( \sup\{t\ge 0\colon \sigma^y_N(t) \le s\} \big) - \min\{y,0\},\\
  &\cutoffG{y}{\xi(N)}(t) := \xi_N\big( \sup\{t\ge 0\colon t - \sigma^y_N(t) \le s\} \big) - \max\{y,0\},\\
 \label{eq:cutoff_def}
  &\cutoffL{y}{N} := \sum_{\text{points }(t,f_t)\text{ of }N} \left(\begin{array}{r@{\,}l}
 		\cf\big\{y\in (\xi_N(t-),\xi_N(t))\big\}&\DiracBig{\sigma^y_N(t),\check f^y_t}\\[3pt]
 		+\; \cf\big\{\xi_N(t) \leq y\big\}&\DiracBig{\sigma^y_N(t),f_t}
 		\end{array}\right),\\
 \text{and} \quad &\cutoffG{y}{N} := \sum_{\text{points }(t,f_t)\text{ of }N} \left(\begin{array}{r@{\,}l}
 		\cf\big\{y\in (\xi_N(t-),\xi_N(t))\big\}&\DiracBig{t-\sigma^y_N(t),\hat f^y_t}\\[3pt]
 		+\; \cf\big\{\xi_N(t-) \geq y\big\}&\DiracBig{t-\sigma^y_N(t),f_t}
 		\end{array}\right).
\end{split} 
\end{equation}

\begin{figure}
 \centering
 \input{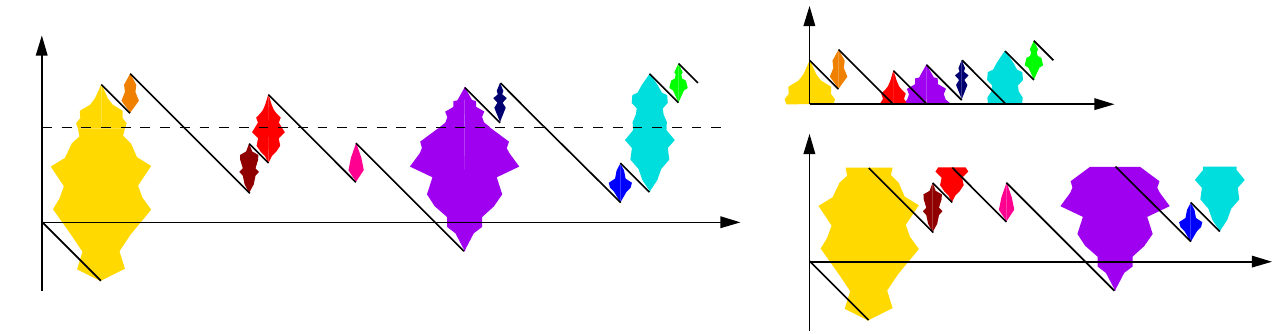_t}\vspace{-.3cm}
 \caption{Left: $(N,\xi(N))$. Right: $\big(\cutoffG{y}{N},\cutoffG{y}{\xi(N)}\big)$ (above) and $\big(\cutoffL{y}{N}, \cutoffL{y}{\xi(N)}\big)$, as in \eqref{eq:cutoff_def}.\label{fig:cutoff}}
\end{figure}

We note the following elementary result.

\begin{lemma}\label{lem:cutoff_meas}
 $N\mapsto \left(\cutoffL{y}{\xi(N)}, \cutoffG{y}{\xi(N)}, \cutoffL{y}{N}, \cutoffG{y}{N}\right)$ is a measurable map.
\end{lemma}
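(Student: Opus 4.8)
The plan is to split the displayed map into its four coordinates and check each separately, working throughout with the $\sigma$-algebras generated by evaluation maps: $\ScNRE$ on $\cNRE$ (generated by $N\mapsto N(B)$), $\ScD$ on $\cD$ (generated by $g\mapsto g(t)$, by \cite[Theorem 14.5]{Billingsley}), and the trace of these on $\Exc$.

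For the two scaffolding cut-offs, note that $\cutoffL{y}{\xi(N)}$ and $\cutoffG{y}{\xi(N)}$ depend on $N$ only through $g:=\xi(N)$, and $\xi\colon\H\to\cD$ is measurable by Proposition \ref{prop:JCCP_meas}; so it suffices to show $g\mapsto\bigl(\cutoffL{y}{g},\cutoffG{y}{g}\bigr)$ is measurable on $\cD$. First I would verify that $(g,t)\mapsto\int_0^t\cf\{g(u)\le y\}\,du$ (which is $\sigma^y_N(t)$ when $g=\xi_N$) is jointly measurable: for fixed $t$ it is a pointwise limit of left-endpoint Riemann sums — which converge because $\cf\{g(\cdot)\le y\}$ has a countable, hence Lebesgue-null, discontinuity set — each sum being a finite combination of evaluation maps, and joint measurability then follows from continuity in $t$. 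The right-continuous time-change $s\mapsto\sup\{t\ge0\colon\sigma^y_g(t)\le s\}$ is then jointly measurable (a supremum over rationals of measurable conditions), and evaluation $(g,t)\mapsto g(t)$ is jointly measurable on $\cD\times[0,\infty)$ by a similar countable-mesh limit using right-continuity; composing these two maps and subtracting the constant $\min\{y,0\}$ exhibits $g\mapsto\cutoffL{y}{g}(s)$ as measurable for each fixed $s$, which suffices since $\ScD$ is generated by evaluations. The same argument, with $\sigma^y_g(t)$ replaced by $t-\sigma^y_g(t)$, handles $\cutoffG{y}{g}$.

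For the two point-process cut-offs, I would fix a measurable enumeration $((t_i,f_i))_{i\ge1}$ of the atoms of $N$, available by \cite[Proposition 9.1.XII]{DaleyVereJones2}. Then $\life(f_i)$ is measurable in $N$, and $\xi_N(t_i)$ is measurable in $N$ by joint measurability of evaluation on $\cD$ together with measurability of $\xi$ and of $t_i$; hence so is $\xi_N(t_i-)=\xi_N(t_i)-\life(f_i)$. The split spindles $\check f^y_{t_i}$ and $\hat f^y_{t_i}$ of \eqref{eq:spindle_split} are obtained from $f_i$ by a deterministic ``shift by $y-\xi_N(t_i-)$ and truncate at an endpoint governed by $\life(f_i)$'' recipe; one checks directly that these pieces lie in $\Exc$ (as broken spindles, with a jump at birth, resp.\ at death) and that, for each fixed shift, composing the splitting operation with any evaluation map on $\Exc$ yields a combination of evaluation maps and an interval indicator, so the splitting map is jointly measurable in $(f_i,\,y-\xi_N(t_i-))$ and hence measurable in $N$. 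The quantities $\sigma^y_N(t_i)$ and $t_i-\sigma^y_N(t_i)$ are measurable in $N$ by the previous paragraph. Substituting all of this into \eqref{eq:cutoff_def}, for each bounded Borel $B$ the value $\cutoffL{y}{N}(B)$ is a countable sum of products of indicators of measurable events depending on $N$, hence measurable; letting $B$ range over bounded Borel sets gives measurability of $N\mapsto\cutoffL{y}{N}$ into $(\cNRE,\ScNRE)$, and symmetrically for $\cutoffG{y}{N}$. Measurability of the product map into the product $\sigma$-algebra then follows.

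I expect the only genuinely delicate points to be (a) the joint measurability of the occupation functional $\sigma^y$ and of evaluation on Skorohod space, which I would handle by the countable-mesh approximations above using the \cadlag\ property, and (b) confirming that the split spindles of \eqref{eq:spindle_split} lie in $\Exc$ and that the splitting map is measurable into $(\Exc,\SExc)$; for the latter I would invoke Proposition \ref{prop:E_bdedly_finite} to reduce $d^A_\cD$-measurability to $d_\cD$-measurability, since those two metrics are topologically equivalent on $\Exc$ and so generate the same Borel $\sigma$-algebra, which is in turn generated by evaluation maps. Everything else is routine bookkeeping with generating classes.
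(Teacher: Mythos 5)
The paper records this lemma as an elementary observation and gives no proof at all, so there is nothing to compare line by line; your write-up supplies exactly the routine verification that was left implicit, and its overall architecture — factor the scaffolding cut-offs through $\xi$ using Proposition \ref{prop:JCCP_meas}, build the occupation functional and the time-change from evaluation maps, and handle the point-process cut-offs via a measurable enumeration of atoms, the splitting maps of \eqref{eq:spindle_split}, and countable sums of indicators — is sound.

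One justification, however, is wrong as stated. You claim that the left-endpoint Riemann sums of $u\mapsto\cf\{g(u)\le y\}$ converge to $\int_0^t\cf\{g(u)\le y\}\,du$ because this indicator ``has a countable, hence Lebesgue-null, discontinuity set.'' For a general \cadlag\ (even continuous) $g$ this is false: the discontinuity set of the indicator is essentially the boundary of $\{u\colon g(u)\le y\}$, which can have positive Lebesgue measure (take $g=y$ on a fat Cantor set and $g>y$ on bumps filling the complementary intervals), in which case the indicator is not Riemann integrable and the Riemann sums need not converge to the occupation time; restricting to $g\in\DS$ does not obviously rescue the claim. The repair is immediate and uses a tool you already deploy: since $(g,u)\mapsto g(u)$ is jointly measurable on $\cD\times[0,\infty)$ (your countable-mesh argument from right-continuity), $(g,u)\mapsto\cf\{g(u)\le y\}$ is jointly measurable, and Tonelli gives measurability of $g\mapsto\int_0^t\cf\{g(u)\le y\}\,du$ for each fixed $t$; joint measurability in $(g,t)$ then follows from continuity in $t$. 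With this substitution, the rest of your argument — the time-change as a supremum over rationals, evaluation at the time-changed instant, measurability of $\xi_N(t_i)$ and $\xi_N(t_i-)=\xi_N(t_i)-\life(f_i)$, the splitting into broken spindles lying in $\Exc$, and the countable sums giving $\cutoffL{y}{N}(B)$ and $\cutoffG{y}{N}(B)$ — goes through as written. (You also tacitly place the point-process cut-offs in the space of boundedly finite counting measures; for arbitrary $N\in\H$ this membership is not entirely obvious, but the paper is equally silent on the codomain, and measurability with respect to the evaluation $\sigma$-algebra is what is actually used downstream.)
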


Recall Figure \ref{fig:skewer_1} and Definition \ref{def:skewer} of the skewer map. We are ultimately interested in processes $\big(\skewer(y,\bN_{\beta},\xi(\bN_{\beta})),\ y\geq 0\big)$, where $\bN_{\beta}$ is as in Definition \ref{constr:type-1}. We view such processes as evolving in \emph{level} rather than in \emph{time}, as the parameter $y$ of this process corresponds to values, or levels, in the scaffolding function $\xi(\bN_{\beta})$. From this standpoint, $\cutoffL{y}{N}$ describes the past up to level $y$, and $\cutoffG{y}{N}$ describes the future beyond level $y$. This motivates the following. Throughout, superscripts refer to level whereas subscripts refer to time.

\begin{definition}\label{def:filtrations}
 \begin{enumerate}[label=(\roman*), ref=(\roman*)]
  \item We define the \emph{filtration in level} on $\H$, denoted by $(\cF^y)_{y\geq 0}$, to be the least right-continuous filtration under which $N\mapsto \left(\cutoffL{y}{\xi(N)},\cutoffL{y}{N}\right)$ is $\cF^y$-measurable for $y\geq 0$; see \cite[Section 1.3]{DuquLeGall02} for a similar definition on Skorokhod space.
  
  \item The \emph{filtration in time} on $\H$, denoted by $(\cF_t,\ t\geq 0)$, is defined to be the least right-continuous filtration under which $N\mapsto \restrict{N}{[0,t]}$ is $\cF_t$-measurable for every $t\geq 0$.
  
  \item We write $(\cF^{y-})$ and $(\cF_{t-})$ to denote left-continuous versions of the filtrations.
 \end{enumerate}
\end{definition}

In Lemma \ref{lem:cutoff_vs_PP} we will alternatively form the cutoff point processes by concatenating (anti-) clades. For $N\in\H$ and $y\in\BR$, recall Definition \ref{def:inverse_LT} of the sets $V^y(N)$ and $V^y_0(N)$ of excursion intervals of $g:=\xi(N)\in\DS$. For $[a,b]\in V^y_0(N)$ we define $I^y_N(a,b)$ to equal one of $[a,b]$, $(a,b]$, $[a,b)$, or $(a,b)$, as follows. We exclude the endpoint $a$ from $I^y_N(a,b)$ if and only if both $a<b$ and $g(a-) < y = g(a)$. We exclude $b$ if and only if both $a < b$ and $g(b-) = y < g(b)$. 

We make the following elementary observation. 

\begin{proposition}\label{prop:partn_spindles}
 Take $y\in\BR$ and $N\in\H$. Then for $[a,b]\in V^y(N)$, the process $\shiftrestrict{N}{I^y_N(a,b)}$ is a bi-clade. Moreover, the set $\big\{\shiftrestrict{N}{I^y_N(a,b)}\colon [a,b]\in V_0^y(N)\big\}$ partitions the spindles of $N$, in the sense that for each point $(t,f_t)$ of $N$ there is a unique $[a,b]\in V^y_0(N)$ for which $t\in I_N^y(a,b)$. 
\end{proposition}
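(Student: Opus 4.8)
The plan is to treat the two assertions separately, writing $g:=\xi(N)$ throughout. Two preliminary remarks drive the proof. First, $g$ is spectrally positive: its only jumps are the lifetimes $\zeta(f_t)>0$ of the spindles of $N$, so $g(s-)\le g(s)$ for every $s$. Second, the jump times of $N$ are dense in $[0,\len(N)]$; otherwise $g$ would equal $-\infty$ on a jump-free subinterval, since there the compensator in \eqref{eq:JCCP_def} is unopposed, contradicting $\xi_N(t)\in\BR$ for $t\in[0,\len(N)]$, which is part of $N\in\H$ via Definition \ref{def:assemblage_m}(iii).

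\emph{The shifted restriction is a bi-clade.} Fix $[a,b]\in V^y(N)$ and set $M:=\shiftrestrict{N}{I^y_N(a,b)}$. The key point is that the compensator in \eqref{eq:JCCP_def} is additive in the time variable, so for $0\le c\le d\le\len(N)$ and $s\in[0,d-c]$,
\begin{equation*}
 \xi_{\shiftrestrict{N}{[c,d]}}(s) = \xi_N(c+s) - \xi_N(c-) \qquad\text{and}\qquad \xi_{\shiftrestrict{N}{(c,d]}}(s) = \xi_N(c+s) - \xi_N(c),
\end{equation*}
straight from the definitions (with the convention $g(0-)=0$ when $c=0$). Now I would run through the inclusion rule for $I^y_N(a,b)$: since $[a,b]\in V^y(N)$ forces $g(a-)=y$ or $g(a)=y$, spectral positivity leaves only the possibilities $g(a-)=y$ (then $a$ is kept and the first display gives $\xi_M(s)=g(a+s)-y$) and $g(a-)<y=g(a)$ (then $a$ is dropped and the second display gives $\xi_M(s)=g(a+s)-y$); either way $\xi_M(\cdot)=g(a+\cdot)-y$, and the mirror dichotomy at $b$ gives $g(b-)=y$ or $g(b)=y$. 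Next, $M\in\Hfin$: conditions (i),(ii) of Definition \ref{def:assemblage_m} are inherited because $M$ is dominated by a shift of $N$; $\len(M)\le b-a<\infty$; and the truncated approximations to $\xi_M$ are time-increments of those for $N$, so the uniform convergence on $[0,b]$ that is part of $N\in\H$ transfers to $[0,b-a]$. Finally, density of the jump times of $N$ forces $\len(M)=b-a$, and then $\xi_M(\cdot)=g(a+\cdot)-y$ is nonzero on $(0,b-a)$ (as $g(t)\ne y$ for $t\in(a,b)$) and vanishes at $b-a$ (as $g(b)=y$ or $g(b-)=y$). By Definition \ref{def:stable_exc} this says $\xi(M)\in\DSxc$, i.e.\ $M\in\Hxc{\pm}$ is a bi-clade.

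\emph{The $I$-parts partition the spindle times.} It suffices to show the sets $I^y_N(a,b)$, $[a,b]\in V^y_0(N)$, are pairwise disjoint and that their union contains every jump time of $N$. For disjointness: if the open interiors of distinct $[a,b],[a',b']\in V^y(N)$ met, an endpoint of one would lie in the open interior of the other, contradicting that at every endpoint $s$ one has $g(s-)=y$ or $g(s)=y$; the at most two further intervals in $V^y_0(N)\setminus V^y(N)$ have open interiors $(0,T^y(N))$ and $(T^y_*(N),\len(N))$, disjoint from everything by the definitions of $T^y$ and $T^y_*$; and if a jump time $c$ of $N$ is the shared endpoint of two consecutive intervals of $V^y_0(N)$, then a jump at $c$ precludes $g(c-)=g(c)=y$, so exactly one of $g(c-)=y$, $g(c)=y$ holds and the inclusion rule retains $c$ in exactly one of the two corresponding $I$-parts. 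For coverage: fix a jump time $t$ of $N$. If $g(t-)\ne y\ne g(t)$, then $t\notin Z^y:=\{s\in[0,\len(N)]:g(s)=y\text{ or }g(s-)=y\}$, which is closed by a routine c\`adl\`ag argument, so $t$ lies strictly between $\sup(Z^y\cap[0,t))$ and $\inf(Z^y\cap(t,\len(N)])$; these are the two endpoints of an element of $V^y_0(N)$ — a complete excursion interval, or the first or last incomplete one — whose open interior contains $t$, hence $t$ is in its $I$-part. If $g(t-)=y$, then $g(t)>y$ and $t$ is the left endpoint of the element of $V^y_0(N)$ starting at $t$; since $g(t-)=y$ (not $<y$), the rule keeps $t$ there, and it is dropped from the interval ending at $t$; the case $g(t)=y$ is symmetric. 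Uniqueness then follows from disjointness.

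The main obstacle is not depth but the endpoint bookkeeping: one must reconcile the precise inclusion/exclusion rule defining $I^y_N(a,b)$ with spectral positivity of $g$ and with the cases in which a jump abuts or straddles level $y$, both to secure the clean identity $\xi_{\shiftrestrict{N}{I^y_N(a,b)}}(\cdot)=g(a+\cdot)-y$ and to see that each jump time is claimed by exactly one $I$-part. The only mildly global input is the density of the jump times of $N$, used to identify $\len\big(\shiftrestrict{N}{I^y_N(a,b)}\big)$ with $b-a$, and that is immediate from finiteness of $\xi_N$.
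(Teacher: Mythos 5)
The paper offers no proof of this proposition at all — it is introduced as ``the following elementary observation'' — so there is no argument of the paper's to compare against; your job was to supply the missing elementary verification, and your proof does this correctly. The two structural inputs you isolate (spectral positivity of $\xi(N)$ and density of the jump times of $N$ in $[0,\len(N)]$, the latter forced by finiteness of the compensated limit in \eqref{eq:JCCP_def}), together with the increment identities $\xi_{\shiftrestrict{N}{[c,d]}}(s)=\xi_N(c+s)-\xi_N(c-)$ and $\xi_{\shiftrestrict{N}{(c,d]}}(s)=\xi_N(c+s)-\xi_N(c)$, are exactly what is needed, and your case analysis of the in/exclusion rule at the endpoints is the right bookkeeping.

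Two small points should be tidied, though neither is a gap. First, the asserted identity $\xi_M(\cdot)=g(a+\cdot)-y$ fails at the single time $s=b-a$ in the case where $b$ is excluded (i.e.\ $g(b-)=y<g(b)$): there the spindle at time $b$ is not a point of $M$, so $\xi_M(b-a)=\xi_N(b-)-\xi_N(a-)=g(b-)-y=0$, not $g(b)-y$; this is in fact precisely why $\xi_M(\len(M))=0$ in that case, and it follows from your own increment formula for the half-open restriction, so only the phrasing needs repair. Second, your reduction ``it suffices that the sets $I^y_N(a,b)$ are pairwise disjoint'' claims more than you prove and more than is true: if two consecutive excursion intervals share an endpoint $c$ at which $g$ is continuous with $g(c-)=g(c)=y$, both $I$-parts contain $c$. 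What you actually establish — interiors are pairwise disjoint, and any shared endpoint that is a \emph{jump time} of $N$ is retained by exactly one of the two $I$-parts, since a jump at $c$ rules out $g(c-)=g(c)=y$ — is exactly what the proposition needs, because only jump times carry spindles; so restate the sufficiency condition in terms of jump times and the argument stands as written.
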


Recall Definition \ref{def:concat} of concatenation on $\Hfin$. To form $\cutoffL{y}{N}$, we concatenate all of the andi-clades of $N$ below level $y$, along with potentially incomplete anti-clades at the start $[0,T^y]$ and/or end $[T^y_*,\len(N)]$, as in Definition \ref{def:inverse_LT} \ref{item:i_LT:exc_int_0}. In order to formally describe these incomplete anti-clades and the corresponding incomplete clades, we specify their crossing times:
\begin{equation*}
\begin{split}
 					T^{\geq y} &:= \inf\big(\{t\in [0,\len(N)]\colon \xi_N(t)  \geq y\}\cup\{\len(N)\}\big)\\
 \text{and} \quad T^{\geq y}_* &:= \sup\big(\{t\in [0,\len(N)]\colon \xi_N(t-) \leq y\}\cup\{0\}      \big).
\end{split}
\end{equation*}
Note that $T^{\geq y} = T^{\geq y}_*$ if and only if $\xi(N)$ is a single incomplete excursion about level $y$ that neither begins nor ends at $y$. 
To avoid duplication in our formulas, we adopt the convention that in this case, this sole incomplete bi-clade is called the last, and there is no first.
\begin{equation*}
\!\!\begin{array}{l@{\ :=\ }l@{\;+\;}l}
 N^{\leq y}_{\textnormal{first}}	& \Big(\Restrict{N}{[0,T^{\geq y})}				&\cf\big\{\xi_N\left(T^{\geq y}-\right) < y\big\}
 	\Dirac{T^{\geq y},\check f^y_{T^{\geq y}}}\!\Big)\cf\!\left\{T^{\geq y}\neq T^{\geq y}_*\right\}\!,\\[.2cm]
 N^{\geq y}_{\textnormal{first}}	& \Big(\ShiftRestrict{N}{(T^{\geq y},T^y]}		&\cf\big\{y\neq 0;\;\xi_N\!\left(T^{\geq y}\right) > (y\vee 0)\big\}
 	\Dirac{0,\hat f^y_{T^{\geq y}}}\!\Big)\cf\!\left\{T^{\geq y}\neq T^{\geq y}_*\right\}\!,\\[.2cm]
 N^{\leq y}_{\textnormal{last}} 	& \ShiftRestrict{N}{[T^y_*,T^{\geq y}_*)}	&\cf\big\{y\!\neq\!\xi_N(\len(N));\;\xi_N\big(T^{\geq y}_*\!-\!\big)\!<\big(y\wedge \xi_N\big(T^{\geq y}_*\big)\big)\big\}
 	\delta\!\left(T^{\geq y}_*\!-T^y_*,\check f^y_{T^{\geq y}_*}\right)\!,\\[.2cm]
 N^{\geq y}_{\textnormal{last}}		& \ShiftRestrict{N}{(T^{\geq y}_*,\len(N)]}	&\cf\big\{\xi_N \big(T^{\geq y}_*\big) > y\big\}
 	\Dirac{0,\hat f^y_{T^{\geq y}_*}}\!.
\end{array}
\end{equation*}
The first bi-clade is complete if and only if $y=0$, in which case $N^{\leq y}_{\textnormal{first}}=N^{\geq y}_{\textnormal{first}} = 0$. Similarly, the last bi-clade is complete if and only if $y = \xi_N(\len(N))$, in which case $N^{\leq y}_{\textnormal{last}} = N^{\geq y}_{\textnormal{last}} = 0$.

\begin{definition}\label{def:bi-clade_PP}
 Take $N\in\H$ and $y\in\BR$. If the level $y$ local time $(\ell^y_N(t),\ t\geq 0)$ of Definition \ref{def:inverse_LT} \ref{item:i_LT:LT} exists, we define the following counting measures of (bi-/anti-)clades.
  \begin{gather*}
   F^{y}(N) := \sum_{[a,b]\in V^y(N)}\Dirac{\ell^y(a),\shiftrestrict{N}{I^y_N(a,b)}}, \qquad
   F^{\geq y}(N) := \sum_{[a,b]\in V^y(N)}\Dirac{\ell^y(a),\left(\shiftrestrict{N}{I^y_N(a,b)}\right)^+},\\
   F^y_0(N) := \sum_{[a,b]\in V^y_0(N)}\Dirac{\ell^y(a),\shiftrestrict{N}{I^y_N(a,b)}},\qquad
   F^{\leq y}(N) := \sum_{[a,b]\in V^y(N)}\Dirac{\ell^y(a),\left(\shiftrestrict{N}{I^y_N(a,b)}\right)^-},\\
   F^{\leq y}_0(N) := F^{\leq y}(N) + \cf\left\{N^{\leq y}_{\textnormal{first}} \neq 0\right\}\Dirac{0,N^{\leq y}_{\textnormal{first}}} + \cf\left\{N^{\leq y}_{\textnormal{last}} \neq 0\right\}\Dirac{\ell^y(T^y_*),N^{\leq y}_{\textnormal{last}}},\\
   F^{\geq y}_0(N) := F^{\geq y}(N) + \cf\left\{N^{\geq y}_{\textnormal{first}} \neq 0\right\}\Dirac{0,N^{\geq y}_{\textnormal{first}}} + \cf\left\{N^{\geq y}_{\textnormal{last}} \neq 0\right\}\Dirac{\ell^y(T^y_*),N^{\geq y}_{\textnormal{last}}}.
  \end{gather*}
  If $\ell^y_N(t)$ is undefined for some $t\in [0,\len(N)]$ then we set all six of these measures equal to zero.
\end{definition}

Recall the \PRM\ $\bN$ studied in Sections \ref{sec:prelim:JCCP} and \ref{sec:exc}. We plan to use standard techniques from the study of counting measures, as in \cite{DaleyVereJones1,DaleyVereJones2}, to manipulate the measures of Definition \ref{def:bi-clade_PP}. To justify the use of such techniques, we require the following.

\begin{lemma}\label{lem:bi-clade_PP_meas}
 There exists a metric $d^A_{\cN}$ on $\Hfin$ and a set $\td\cN^{\textnormal{sp}}\subset\H$ with the following properties.
 \begin{enumerate}[label=(\roman*), ref=(\roman*)]
  \item $(\Hfin,d^A_{\cN})$ is isometric to a Borel subset of a complete, separable metric space.  \label{item:bcPPm:Lusin}
  \item The Borel $\sigma$-algebra generated by $d^A_{\cN}$ equals that generated by the evaluation maps on measurable subsets of $[0,\infty)\times\Exc$.\label{item:bcPPm:metric}
  \item For $y\in\BR$, the maps $F^y,\ F^y_0,\ F^{\leq y},\ F^{\leq y}_0,\ F^{\geq y}$, and $F^{\geq y}_0$ are measurable maps from $\td\cN^{\textnormal{sp}}$ to $\cNRHf$, where the latter is the space of counting measures that are boundedly finite on $[0,\infty)\times\Hfin$, in the sense of Definition \ref{def:JCCP}. \label{item:bcPPm:meas_maps}
  \item $\Hfin\subset \td\cN^{\textnormal{sp}}$, and the law of the \PRM[\Leb\otimes\mBxc] on $[0,\infty)\times\Exc$ is supported on $\td\cN^{\textnormal{sp}}$.\label{item:bcPPm:PRM}
 \end{enumerate}
\end{lemma}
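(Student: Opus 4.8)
The plan is to build $d^A_\cN$ in close analogy with the metric $d_\cD^A$ on $\Exc$ from Proposition~\ref{prop:E_bdedly_finite}. Let $d_\cN$ be the Polish metric on $\cNRE$ furnished by \cite[Theorem A2.6.III]{DaleyVereJones1}, so that by Proposition~\ref{prop:H_Lusin} its Borel $\sigma$-algebra is $\ScNRE$, and for $N\in\Hfin$ put $\Sigma(N):=\sup\{\life(f)\colon(t,f)\text{ an atom of }N\}\in(0,\infty]$ (with $\sup\emptyset:=\infty$) — the largest jump of the scaffolding $\xi(N)$; one could equally use the tallest spindle's amplitude, but the lifetime is the statistic controlled by condition~(ii) of Definition~\ref{def:assemblage_m}, which makes the boundedly-finiteness argument below cleanest. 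Set $d^A_\cN(N_1,N_2):=d_\cN(N_1,N_2)+|\Sigma(N_1)^{-1}-\Sigma(N_2)^{-1}|$. First I would check that $\Sigma$ is $\ScNRE$-measurable, using a measurable enumeration of the atoms of a counting measure (\cite[Proposition~9.1.XII]{DaleyVereJones2}) together with measurability of $\life$ on $\Exc$; then $N\mapsto(N,\Sigma(N)^{-1})$ is an isometry of $(\Hfin,d^A_\cN)$ onto the graph of $\Sigma(\cdot)^{-1}$, which is a Borel subset of the Polish space $\cNRE\times\BR$, yielding property~(i), and separability of $(\Hfin,d^A_\cN)$ is then automatic. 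For property~(ii): since $d^A_\cN\ge d_\cN$, the $d^A_\cN$-Borel $\sigma$-algebra contains $\ScNRE|_{\Hfin}$; conversely every $d^A_\cN$-ball is a $d_\cN$-Borel set intersected with a level set of the measurable function $\Sigma$, and, $(\Hfin,d^A_\cN)$ being separable, its Borel sets are generated by balls. Finally one metrizes $[0,\infty)\times\Hfin$ by the Euclidean metric plus $d^A_\cN$ and defines $\cNRHf$ via Definition~\ref{def:JCCP} over this space (passing to its closure in $\cNRE\times\BR$ where completeness is needed); the extra term does its job precisely because bounded subsets of $(\Hfin,d^A_\cN)$ carry a uniform positive lower bound on $\Sigma$.

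Next I would define $\td\cN^{\textnormal{sp}}$ as the union of $\Hfin$ with the set of $N\in\H$ for which the limits \eqref{eq:LT_def} defining $\ell^y_N(t)$ exist for all $y\in\BR$ and $t\ge0$ and give a jointly continuous local time, the inverse local times $\tau^y_N$ are finite, and the all-levels structural properties of Proposition~\ref{prop:exc_intervals} hold (so that $V^y(N)$ is recovered from $\tau^y_N$ through \eqref{eq:inv_LT_intervals}). Borel-measurability of this set follows by rephrasing each ``for all $y,t$'' clause over rationals and invoking continuity, together with measurability of $N\mapsto\xi_N(t)$ from Proposition~\ref{prop:JCCP_meas}. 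The second half of property~(iv), that $\bN$ lies in $\td\cN^{\textnormal{sp}}$ almost surely, is then exactly Theorems~\ref{thm:Boylan} and~\ref{thm:excursion_PRM} and Proposition~\ref{prop:exc_intervals}. The inclusion $\Hfin\subseteq\td\cN^{\textnormal{sp}}$ is immediate from the definition; its content is that $F^y(N),\dots,F^{\ge y}_0(N)$ still land in $\cNRHf$ for $N\in\Hfin$ — which is where the convention in Definition~\ref{def:bi-clade_PP} (these measures are $0$ when local time is undefined) does its work, and where, when local time does exist, boundedly-finiteness of $F^y(N)$ reduces to finiteness of the number of spindles of lifetime at least $z$ below the finite time horizon $\tau^y_N(t)$, i.e.\ to condition~(ii) of Definition~\ref{def:assemblage_m}.

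For property~(iii) I would use that, by Definition~\ref{def:JCCP}, a map into $\cNRHf$ is measurable iff $N\mapsto F^{\bullet}(N)(B)$ is measurable for every bounded Borel $B\subseteq[0,\infty)\times\Hfin$. The steps: (a) show $(N,y,t)\mapsto\ell^y_N(t)$ is jointly measurable on $\td\cN^{\textnormal{sp}}$, as a limit of the measurable maps $N\mapsto h^{-1}\int_0^t\cf\{y<\xi_N(s)<y+h\}\,ds$, hence so are the $\tau^y_N$; (b) produce a measurable enumeration $([a_k(N),b_k(N)])_{k\ge1}$ of $V^y(N)$ — e.g.\ the jump times of $\tau^y_N$ ordered by decreasing jump size, ties broken left-to-right; (c) verify that for each $k$ the maps $N\mapsto\ell^y_N(a_k(N))$ and $N\mapsto\shiftrestrict{N}{I^y_N(a_k(N),b_k(N))}$ are measurable, the latter because the shifted restriction of a counting measure to a measurably-varying interval is a measurable operation and the endpoint-inclusion rule defining $I^y_N$ depends measurably on $\xi_N(a_k-),\xi_N(a_k),\xi_N(b_k-),\xi_N(b_k)$; and (d) for the decorated variants, check measurability of the bi-clade splitting \eqref{eq:bi-clade_split}, the spindle splitting \eqref{eq:spindle_split}, the crossing time \eqref{eq:clade:crossing_def}, and the incomplete-(bi-)clade objects $N^{\le y}_{\textnormal{first}},\dots,N^{\ge y}_{\textnormal{last}}$, all of which are assembled from the measurable times $T^{\ge y},T^{\ge y}_*,T^y,T^y_*$. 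Then each $F^{\bullet}(N)(B)$ is a countable sum of indicators of measurable events, hence measurable, and is finite on $\td\cN^{\textnormal{sp}}$ by the previous step.

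The main obstacle I anticipate is pinning down a definition of $\td\cN^{\textnormal{sp}}$ that is simultaneously Borel, large enough to contain all of $\Hfin$ and carry the PRM law, and small enough that the excursion-interval enumeration in step~(b) is genuinely measurable and the outputs are genuinely boundedly finite. The measurable enumeration of $V^y(N)$ is the classical delicate point — it is exactly where one must use the fixed-level statements of Proposition~\ref{prop:exc_intervals} rather than the merely almost-sure-for-fixed-$y$ statements of Proposition~\ref{prop:nice_level} — and the boundedly-finiteness bookkeeping, matching the scale statistic $\Sigma$ in $d^A_\cN$ to condition~(ii) of Definition~\ref{def:assemblage_m}, is where the precise form of the metric earns its keep.
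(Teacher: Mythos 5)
Your proposal follows the same architecture as the paper's proof: a metric of the form $d_{\cN}+|\text{(scale statistic)}^{-1}-\text{(scale statistic)}^{-1}|$ on $\Hfin$, a graph/product embedding for (i), separability plus measurability of the scale statistic for (ii), a domain $\td\cN^{\textnormal{sp}}$ carved out by existence of local time at every level with finite inverse local times, and the same measurability bookkeeping for (iii). The one substantive difference is the choice of scale statistic: the paper uses the amplitude $\wh A(\xi(N))$ of the scaffolding (with a special value for $N=0$), whereas you use the largest spindle lifetime $\Sigma(N)$, i.e.\ the largest jump of $\xi(N)$. Your choice does work, and the bounded-finiteness argument it buys is arguably cleaner: a bi-clade with $\Sigma\ge\delta$ contains a spindle of lifetime $\ge\delta$, distinct bi-clades about level $y$ carry disjoint spindles by Proposition \ref{prop:partn_spindles}, and finiteness then comes straight from Definition \ref{def:assemblage_m}(ii) on the finite horizon $\tau^y_N(s)$; the paper instead counts excursions of amplitude $\ge\epsilon$ of a \cadlag\ path on a compact interval. (Note the statistics are not equivalent -- a bi-clade can have large amplitude through a deep continuous descent while all its jumps are tiny -- so your metric declares fewer sets bounded; but since $\Sigma\le 2A(\xi(\cdot))$ for bi-clades, $\mClade$ and the measures $F^y(N),\dots,F^{\ge y}_0(N)$ remain boundedly finite, and the Borel $\sigma$-algebra on $\Hfin$ is unchanged, so nothing downstream is affected.)

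Two loose points to tighten. First, for the support claim in (iv) you cite Theorem \ref{thm:Boylan}, Theorem \ref{thm:excursion_PRM} and Proposition \ref{prop:exc_intervals}; but Theorem \ref{thm:excursion_PRM}(ii) gives finiteness of $\tau^y(s)$ only almost surely \emph{for each fixed} $y$, while membership in $\td\cN^{\textnormal{sp}}$ requires $\lim_{t\uparrow\infty}\ell^y(t)=\infty$ simultaneously for all $y\in\BR$; this all-levels statement does not follow from intersecting fixed-level null sets and is exactly what the paper imports from \cite[Lemma 20]{Paper0}, so you should either cite such a result or supply an argument (e.g.\ via the joint continuity of Theorem \ref{thm:Boylan} together with recurrence). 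Second, you assert that $\td\cN^{\textnormal{sp}}$ is Borel ``by rephrasing the for-all clauses over rationals''; this is both doubtful as stated (existence of local time at irrational levels is not a rational reduction) and unnecessary -- the lemma only requires measurability of the maps on $\td\cN^{\textnormal{sp}}$ with its trace $\sigma$-algebra and that the \PRM\ law be carried by it, and indeed the paper's own $\td\cN^{\textnormal{sp}}$, an uncountable intersection, is never claimed to be Borel.
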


\begin{proof}
 By Proposition \ref{prop:H_Lusin} there is a complete, separable metric $d_{\cN}$ on $\cNRE$ that generates the same $\sigma$-algebra as the evaluation maps. Following Proposition \ref{prop:E_bdedly_finite} and Definition \ref{def:shifted_restriction}, we define a modified metric $d^A_{\cN}(N,M) := d_{\cN}(N,M) + |\wh A(\xi(N))^{-1}-\wh A(\xi(M))^{-1}|$ on $\Hfin$, where for $g\neq 0$, $\wh A(g) = A(g)$ denotes amplitude, and we set $\wh A(0) := -1$. Let
 $$\td\cN^{\textnormal{sp}} := \bigcap_{y\in\BR}\cN^y \qquad \text{where} \qquad \cN^y := \Hfin\cup\Big\{N\in\H\colon \ell^y_N(t)\text{ exists }\forall t>0,\ \lim_{t\upto\infty}\ell^y_N(t) = \infty\Big\}.$$
 
 (i) Consider $\cNRE\times (\{-1\}\cup [0,\infty))$ under the sum of $d_{\cN}$ in the first coordinate plus the Euclidean metric in the second coordinate. This space is complete and separable. Moreover, $(\Hfin,d^A_{\cN})$ is isometric to a measurable subset of this space via the map $N\mapsto (N,\wh A(\xi(N))^{-1})$.
 
 (ii) Since $d^A_{\cN}$ is stronger than $d_{\cN}$, it follows that $\SHfin$ is contained in the Borel $\sigma$-algebra generated by $d^A_{\cN}$. By separability of $(\Hfin,d^A_{\cN})$, open sets under $d^A_{\cN}$ can be described as countable unions of sets of the form $B_{\cN}(N,r)\cap B^A(N,r)$, where this denotes the intersection of a ball under $d_{\cN}$ with a ball under the pseudometric $|\wh A(\xi(\cdot))^{-1} - \wh A(\xi(\cdot))^{-1}|$. Thus, it suffices to confirm that $B^A(N,r)\in\SHfin$ for every $N\in\Hfin$ and $r>0$. Indeed, balls in the pseudometric $|A(\cdot)^{-1} - A(\cdot)^{-1}|$ are open in the Skorokhod topology on $\{g\in \DS\colon \len(g)\in (0,\infty)\}$. Finally, since Proposition \ref{prop:JCCP_meas} indicates that $\xi\colon \Hfin\setminus\{0\}\to \{g\in \DS\colon \len(g)\in (0,\infty)\}$ is a measurable map, the $B^A(N,r)$ are measurable.
 
 (iii) Fix $y\in\BR$, $N\in\cN^y$, and $\epsilon > 0$. By the right-continuity of $\xi(N)$, this process can only have finitely many excursions of amplitude at least $\epsilon$ up to any inverse local time $\tau^y_N(s)$, $s\ge0$. Thus, the desired bounded finiteness follows from the property that sequences of bi-clades $N_i$ with $\lim_{i\upto\infty}A(\xi(N_i)) = 0$ cannot be bounded in $d^A_{\cN}$. As for measurability, this follows from the measurability of the local time process $(\ell^y_N(t),\ t\geq 0)$, Definition \ref{def:inverse_LT} of $V^y_0(N)$, and the straightforward measurability of restriction maps on $\H$ and $\Exc$.
 
 (iv) It is immediate from our definition of $\td\cN^{\textnormal{sp}}$ that $\Hfin\subset \td\cN^{\textnormal{sp}}$. The second claim, that the \PRM\ law is supported on $\td\cN^{\textnormal{sp}}$, follows from Theorem \ref{thm:Boylan} and \cite[Lemma 20]{Paper0}.
\end{proof}

\begin{definition}\label{def:scaff_concat}
 Let $(N_a)_{a\in\mathcal{A}}$ denote a family of elements of $\Hfin$ indexed by a totally ordered set $(\cA,\preceq)$, with all but finitely many being bi-clades. Let $S\colon \mathcal{A}\to[0,\infty]$ be as in \eqref{eq:length_partial_sums}. We require that: (i) $S(a-)<\infty$ for all $a\in\mathcal{A}$ and (ii) there is no infinite $\mathcal{B}\subseteq\mathcal{A}$ with $\inf_{b\in \mathcal{B}}\sup_t|\xi_{N_b}(t)| > 0$. 
 Then we define the \emph{concatenation of scaffoldings} by setting,
 \begin{equation*}
  \text{for }t\in \left[0,\sum_{a\in\mathcal{A}}\len(N_a)\right],\quad
  \Big(\Concat_{a\in\mathcal{A}}\xi(N_a)\Big)(t) := \sum_{a\in\mathcal{A}}\left\{\begin{array}{ll}
  		\xi_{N_a}(\len(N_a))	& \text{if }S(a)\le t,\\[2pt]
  		\xi_{N_a}(t - S(a-))	& \text{if }t\in [S(a-),S(a)),\\[2pt]
  		0						& \text{otherwise.}
  	\end{array}\right.
 \end{equation*}
 Note that $\xi_{N_a}(\len(N_a))=0$ if $N_a$ is a bi-clade.
\end{definition}

\begin{lemma}\label{lem:cutoff_vs_PP}
 Take $N\in\H$. If level $y\in\BR$ is nice for $\xi(N)$ as in Proposition \ref{prop:nice_level}, then
 \begin{equation*}
 \begin{split}
  \cutoffL{y}{N} = \Concat_{\text{points }(s,N^-_s)\text{ of }F^{\leq y}_0(N)}N^-_s, &\qquad \cutoffL{y}{\xi(N)} = \Concat_{\text{points }(s,N^-_s)\text{ of }F^{\leq y}_0(N)}\xi(N^-_s),\\
  \cutoffG{y}{N} = \Concat_{\text{points }(s,N^+_s)\text{ of }F^{\geq y}_0(N)}N^+_s, &\qquad \cutoffG{y}{\xi(N)} = \Concat_{\text{points }(s,N^+_s)\text{ of }F^{\geq y}_0(N)}\xi(N^+_s).
 \end{split}
 \end{equation*}
 In this event, $\cutoffL{y}{N}$ is a measurable function of $F^{\leq y}_0(N)$, and likewise for $F^{\geq y}_0(N)$ and $\cutoffG{y}{N}$. Moreover, $F^{\leq y}_0$ generates $\cF^y$ up to \PRM[\Leb\otimes\mBxc]-null sets.
\end{lemma}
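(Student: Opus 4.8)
The plan is to reduce everything to the elementary excursion-interval decomposition of Proposition \ref{prop:partn_spindles} together with the interval structure provided by niceness (Proposition \ref{prop:nice_level}). First I would fix a nice level $y$ and recall that, by Proposition \ref{prop:nice_level}(i)--(ii), local times $(\ell^y_N(t),t\ge0)$ exist, there are no degenerate excursions about $y$, and distinct intervals in $V^y_0(N)$ have distinct left-endpoint local times; hence $F^{\le y}_0(N)$ and $F^{\ge y}_0(N)$ are well-defined and their atoms are in bijection with the elements of $V^y_0(N)$. By Proposition \ref{prop:partn_spindles}, the shifted restrictions $\shiftrestrict{N}{I^y_N(a,b)}$ over $[a,b]\in V^y_0(N)$ partition the spindles of $N$; each complete interval gives a bi-clade whose $(\cdot)^-$ part is an anti-clade and whose $(\cdot)^+$ part is a clade, while the (at most two) incomplete intervals at the start and end contribute the boundary terms $N^{\le y}_{\textnormal{first}}$, $N^{\le y}_{\textnormal{last}}$, $N^{\ge y}_{\textnormal{first}}$, $N^{\ge y}_{\textnormal{last}}$ exactly as spelled out before Definition \ref{def:bi-clade_PP}.

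Next I would verify the concatenation identities. Order the atoms of $F^{\le y}_0(N)$ by their local-time coordinate; since on $[0,\tau^y(s)]$ there are only finitely many excursions of amplitude $\ge\epsilon$ (right-continuity of $\xi(N)$, as used in the proof of Lemma \ref{lem:bi-clade_PP_meas}(iii)), the partial sums $S(a-)$ of the lengths $\len(N^-_s)$ are finite and the family of anti-clades satisfies hypothesis (ii) of Definition \ref{def:scaff_concat}, so $\Concat$ is defined. I would then check that this concatenation reproduces $\cutoffL{y}{N}$ atom-by-atom: the time-shift $S(a-)$ assigned to the $a$-th anti-clade in the concatenation equals $\sigma^y_N(a)$, the amount of time $\xi(N)$ spends weakly below $y$ before the $a$-th below-$y$ excursion interval, because between consecutive such intervals $\xi(N)$ is strictly above $y$ and contributes nothing to $\sigma^y_N$, while the spindle-splitting \eqref{eq:spindle_split} used in \eqref{eq:bi-clade_split} matches the splitting used in the definition \eqref{eq:cutoff_def} of $\cutoffL{y}{N}$ (the $\check f^y_t$ for the crossing spindle, the whole spindle when $\xi_N(t)\le y$). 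This is a direct comparison of the two defining sums, using that niceness rules out coincidences of local times and degenerate crossings. The scaffolding identity $\cutoffL{y}{\xi(N)} = \Concat \xi(N^-_s)$ follows the same way, comparing Definition \ref{def:scaff_concat} with the time-change description of $\cutoffL{y}{\xi(N)}$ via $\sigma^y_N$ in \eqref{eq:cutoff_def}; and the $\ge y$ statements are symmetric, reading time from the cumulative ``above-$y$'' clock $t-\sigma^y_N(t)$ instead. Measurability of $\cutoffL{y}{N}$ as a function of $F^{\le y}_0(N)$ is then immediate, since $\Concat$ is a measurable operation on $\cNRHf$ (Lemma \ref{lem:bi-clade_PP_meas}(iii) and Definition \ref{def:concat}), and likewise for $\cutoffG{y}{N}$ and $F^{\ge y}_0(N)$.

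For the final assertion, that $F^{\le y}_0$ generates $\cF^y$ up to $\PRM[\Leb\otimes\mBxc]$-null sets, I would argue both inclusions. One direction: by Definition \ref{def:filtrations}, $\cF^y$ is the least right-continuous filtration making $N\mapsto(\cutoffL{y}{\xi(N)},\cutoffL{y}{N})$ measurable, and the identities just proved express this pair as a measurable function of $F^{\le y}_0(N)$, so $\sigma(F^{\le y}_0)$ contains $\cF^y$ up to null sets once one also knows right-continuity is not an obstruction — here I would invoke that $\PRM$-a.s.\ all levels one cares about are nice (Proposition \ref{prop:nice_level} together with Lemma \ref{lem:bi-clade_PP_meas}(iv)), so the exceptional set is null. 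Conversely, each atom $(s,N^-_s)$ of $F^{\le y}_0(N)$ can be recovered from the pair $(\cutoffL{y}{N},\cutoffL{y}{\xi(N)})$: the anti-clade structure of $\cutoffL{y}{\xi(N)}$ picks out, via its excursions about $0$ (Getoor--Sharpe/Bertoin excursion theory as in Theorem \ref{thm:excursion_PRM}), exactly the intervals carrying the $N^-_s$, and the local-time coordinate $\ell^y(a)$ of each is recovered from $\ell^0$ of $\cutoffL{y}{\xi(N)}$ since cutting off at $y$ does not disturb level-$y$ (now level-$0$) local time. Hence $F^{\le y}_0$ is a measurable function of $(\cutoffL{y}{N},\cutoffL{y}{\xi(N)})$ up to null sets, giving the reverse inclusion.

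\textbf{Main obstacle.} The delicate point is the boundary bookkeeping: matching the incomplete first/last (anti-)clades $N^{\le y}_{\textnormal{first}}$, $N^{\le y}_{\textnormal{last}}$ (and their $\ge y$ analogues, with the degenerate convention $T^{\ge y}=T^{\ge y}_*$) against the corresponding head/tail pieces of $\cutoffL{y}{N}$ and $\cutoffG{y}{N}$, and handling the special role of $y=0$ versus $y\neq0$ in the clade/anti-clade split \eqref{eq:bi-clade_split}. The interior of the argument is a routine atom-by-atom comparison once niceness is in force; it is at the two ends, and in the claim that $F^{\le y}_0$ (rather than $F^{\le y}$) generates the full $\cF^y$, that one must be careful — in particular that the incomplete pieces, which carry no local-time label of their own in $\bG^y$, are nonetheless measurable functions of the cutoff processes, which is where the ``up to null sets'' qualifier and Lemma \ref{lem:bi-clade_PP_meas}(iv) do the work.
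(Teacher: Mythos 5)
Your proposal is correct and follows essentially the same route as the paper: under niceness the atoms of $F^{\leq y}_0(N)$ are totally ordered by local time, the concatenation identities are exactly the bi-clade/excursion decompositions of the cutoff processes, measurability comes from expressing the concatenation measurably in $F^{\leq y}_0(N)$ (the paper writes this as an explicit double integral against $F^{\leq y}_0$, where you appeal to measurability of $\Concat$), and the generation of $\cF^y$ is settled modulo the \PRM-null event that level $y$ fails to be nice. Your extra care with the boundary (first/last incomplete) pieces and with the reverse inclusion $\sigma(F^{\leq y}_0)\subseteq\cF^y$ only elaborates what the paper treats as immediate.
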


\begin{proof}
 If level $y$ is nice then, in particular, there is no $s$ for which $F^y_0(N)$ has multiple points at local time $s$. Thus, the points are totally ordered by local time. In light of this, the claimed formulas are clear, asserting decompositions of the cutoff processes corresponding to the decompositions of $N$ into bi-clades about level $y$ and of $\xi(N)$ into excursions. 
 
 We establish measurability by expressing the concatenation formulas in terms of integrals. For convenience, we write $F^{\leq y}_{0,N} := F^{\leq y}_0(N)$. Setting
 \begin{equation*}
 \begin{split}
  \tau^{\leq y}_N(s-) &:= \int_{[0,s)\times\H}\len(N')dF^{\leq y}_{0,N}(r,N'),\\
  \cutoffL{y}{N} &= \int\int\Dirac{\tau^{\leq y}_N(s-)+t,f}dN'(t,f)dF^{\leq y}_{0,N}(s,N'),
 \end{split}
 \end{equation*}
 and correspondingly for $\cutoffG{y}{N}$. Finally, $F^{\leq y}_0$ generates $\cF^y$ up to events on which level $y$ is not nice for $N$, which by Proposition \ref{prop:nice_level} are null for the \PRM.
\end{proof}

We would like to say that $\cutoffL{y}{\xi(N)} = \xi\big(\cutoffL{y}{N}\big)$. We will find in \eqref{eq:cutoff_scaffold_commute} that this holds a.s.\ in the setting of the processes $\bN_{\beta}$ of Definition \ref{constr:type-1}. A more general result of this type may be true, but we do not need it here. A challenge to proving such a result is that in general, the scaffolding map does not commute with concatenation: $\xi(\ConcatIL_a N_a) \neq \ConcatIL_a \xi(N_a)$.  Consider, for example, the excursions of a \Stable[\frac32] process $\xi(\bN)$ above its past minimum. This decomposition into positive excursions corresponds to a decomposition of a \PRM\ $\bN$ of spindles into degenerate clades: clades lacking a (broken) leftmost spindle, unlike the clade in Figure \ref{fig:bi-clade_decomp}. For each of these degenerate clades $N$, the associated scaffolding $\xi(N)$ is a degenerate \Stable[\frac32]-like excursion above level zero; but when we concatenate these clades, in the manner of Definition \ref{def:concat}, to recover the full point process $\bN$, the resulting scaffolding $\xi(\bN)$ sets these clades above progressively lower levels.

\section{The type-1 evolution in a \Stable$\left(\frac32\right)$ process with spindles}
\label{sec:clades_type-1}

\subsection{Bi-clade It\^o measure and invariance}
\label{sec:biclade_PRM}

Let $\bN$ denote a \PRM[\Leb\otimes\mBxc] on $[0,\infty)\times \Exc$. As in Section \ref{sec:exc}, we adopt the convention of suppressing the parameter $\bN$ when referring various functions of $\bN$, including the local time $(\ell^y(t))$, inverse local time $(\tau^y(s))$, hitting and crossing times $T^y$ and $T^{\geq y}$, and sets of excursion intervals $V^y$ and $V^y_0$. Refer back to Section \ref{sec:exc} for definitions of these objects. We write $\bX := \xi(\bN)$. We restrict to the a.s.\ events that: (a) $V^y$ and $V^y_0$ have the properties enumerated in Proposition \ref{prop:exc_intervals} for every $y\in\BR$, (b) $(\ell^y(t),\ y\in\BR,\ t\geq 0)$ is H\"older continuous in $(y,t)$, as in Theorem \ref{thm:Boylan}, and (c) $\bN\in\td\cN^{\textnormal{sp}}$, as in Lemma \ref{lem:bi-clade_PP_meas} \ref{item:bcPPm:PRM}, so that the counting measures of Definition \ref{def:bi-clade_PP} are all boundedly finite. We use notation such as $\bF^y := F^y(\bN)$ and $\bF^{\geq y} := F^{\geq y}(\bN)$ for those counting measures. 

\begin{table}
 \centering
 \begin{tabular}{|c|c|c|Sc|c|}\hline
  		&	PRM of spindles				&	Spindle intensity				&	PRM of bi-clades				&	Bi-clade intensity\\\hline
  $\bX$				&	$\sum\Dirac{t,\Delta X_t}$	&	$(3/2\pi\sqrt{2})x^{-5/2}dx$	&	$\bG^y = \sum \Dirac{s,g^y_s}$	&	$\mSxc$\\[1pt]\hline
  $(\bN,\xi(\bN))$	&	$\bN = \sum\Dirac{t,f_t}$	&	$\mBxc$							&	$\bF^y = \sum \Dirac{s,N^y_s}$	&	$\mClade$\\[1pt]\hline
 \end{tabular}\vspace{3pt}
 \caption{Objects from L\'evy process (excursion) theory and analogous objects in the setting of bi-clades.\label{tbl:exc_vs_cld}}
\end{table}

\begin{definition}\label{def:bi-clade_Ito}
 We define the \emph{It\^o measures on bi-clades}, \emph{clades}, and \emph{anti-clades} respectively by saying that for $A\in\cHxc{\pm}$, $B\in\cHxc{+}$, and $C\in\cHxc{-}$,
 \begin{equation*}
  \mClade(A) := \EV\big[\bF^0([0,1]\times A)\big], \quad \mClade^+(B) := \EV\big[\bF^{\geq 0}([0,1]\times B)\big], \quad\mClade^-(C) := \EV\big[\bF^{\leq 0}([0,1]\times C)\big].
 \end{equation*}
\end{definition}

In Proposition \ref{prop:marking_jumps} we construct $\bN$ by marking jumps of the scaffolding $\bX$ with independent \BESQ[-1] excursion spindles. After an auxiliary lemma, we give a similar description of $\mClade$.

\begin{lemma}\label{lem:meas_marking}
 Consider two complete and separable metric spaces $(\cS,d_\cS)$ and $(\cT,d_\cT)$, equipped with their Borel $\sigma$-algebras $\ScS$ and $\ScT$, respectively. Let $\kappa\colon\cS\times\ScT\rightarrow[0,1]$ denote a stochastic kernel. Let $M\in\cNS$ be a counting measure on $\cS$. Consider for each point $x$ of $M$ an independent mark $m_x$ with distribution $\kappa(x,\cdot)$. Then the map that associates with $M$ the distribution of the marked point process $\int\delta(x,m_x)dM(x)\in\cN(\cS\times\cT)$ is Borel measurable.
\end{lemma}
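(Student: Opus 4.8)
The plan is to reduce the measurability of the map $M \mapsto \mathcal{L}(\int \delta(x,m_x)dM(x))$ to the measurability of evaluation maps on the space $\cN(\cS\times\cT)$ of boundedly finite counting measures, together with a monotone class argument. First I would recall that, by the definition of the $\sigma$-algebra on $\cN(\cS\times\cT)$ (Definition \ref{def:JCCP}), a map $M \mapsto P_M$ from $\cNS$ into the space of probability measures on $\cN(\cS\times\cT)$ is Borel measurable if and only if $M \mapsto P_M(\{K \colon K(D) = n\})$ is measurable for every $n\ge 0$ and every bounded Borel set $D\subseteq \cS\times\cT$; equivalently, by a standard argument, it suffices to show $M \mapsto \int g\, dP_M$ is measurable for a sufficiently rich class of bounded measurable test functions $g$ on $\cN(\cS\times\cT)$, e.g.\ the functions $K \mapsto \exp(-\int h\, dK)$ for $h\colon \cS\times\cT \to [0,\infty)$ bounded, measurable, and of bounded support. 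Since finite products and pointwise limits of such exponentials generate the relevant $\sigma$-algebra, proving measurability of $M \mapsto \int \exp(-\int h\,dK)\,dP_M(K)$ for each such $h$ will suffice.

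Next I would compute this Laplace functional explicitly using the conditional independence of the marks. Writing $M = \sum_i \delta_{x_i}$ via a measurable enumeration of its points — which exists by \cite[Proposition 9.1.XII]{DaleyVereJones2} and depends measurably on $M$ — the marked process is $\sum_i \delta_{(x_i,m_{x_i})}$ with the $m_{x_i}$ conditionally independent given $M$ and $m_{x_i}\sim\kappa(x_i,\cdot)$. Hence
\begin{equation*}
 \int \exp\Big(-\int h\,dK\Big) dP_M(K) = \prod_i \int_{\cT} e^{-h(x_i,t)}\kappa(x_i,dt) = \exp\left( \int_{\cS} \log\Big( \int_{\cT} e^{-h(x,t)}\kappa(x,dt)\Big) dM(x) \right).
\end{equation*}
Here the infinite product converges because $h$ has bounded support, so only finitely many factors differ from $1$, by bounded finiteness of $M$ and the assumption (to be used) that $M$ puts finite mass on the support of $h$; in the general case where $M$ may be infinite on $\cS$, one restricts attention to the bounded set containing $\operatorname{supp}(h)$. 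The inner function $\psi_h(x) := \log\int_{\cT} e^{-h(x,t)}\kappa(x,dt)$ is bounded and $\ScS$-measurable: measurability follows since $(x,B)\mapsto\kappa(x,B)$ is a stochastic kernel and $(x,t)\mapsto e^{-h(x,t)}$ is bounded measurable, so $x\mapsto \int e^{-h(x,t)}\kappa(x,dt)$ is measurable by the usual monotone class / simple function approximation argument, and $\log$ is continuous on $(0,1]$.

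Finally, the map $M \mapsto \int \psi_h\,dM$ is measurable: this is exactly the assertion that integration of a fixed bounded measurable function of bounded support against a counting measure is a measurable operation on $\cNS$, which follows from the definition of $\ScNS$ as generated by the evaluation maps $M\mapsto M(B)$ by approximating $\psi_h$ by simple functions. Composing with the continuous map $r\mapsto e^r$ gives measurability of $M \mapsto \int \exp(-\int h\,dK)\,dP_M(K)$, and then the monotone class argument outlined above upgrades this to Borel measurability of $M\mapsto P_M$ as a map into the space of probability measures on $\cN(\cS\times\cT)$ (equipped with the $\sigma$-algebra generated by evaluations $\mu \mapsto \mu(\cdot)$, or equivalently by the weak topology). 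I expect the only genuinely delicate point to be bookkeeping around bounded finiteness: one must be careful that $\cN(\cS\times\cT)$ is the space of boundedly finite measures and that the test functions $h$ should be taken with bounded support so that all the products and integrals above are finite and the Laplace-functional characterization of the $\sigma$-algebra genuinely applies; the measurable-enumeration step and the kernel-measurability step are standard and can be cited.
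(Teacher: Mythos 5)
Your argument is correct, but it takes a genuinely different route from the paper. The paper's proof is purely structural: it factors the map $M\mapsto P_M$ as a composition of three measurable operations --- a measurable enumeration $\phi\colon\cNS\to\cS^*$ of the points of $M$ (citing \cite[Proposition 9.1.XII]{DaleyVereJones2}), the induced product kernel $\kappa^*$ on sequence space that marks each coordinate independently, and the measurable assembly map $\Lambda\colon(\cS\times\cT)^*\to\cN(\cS\times\cT)$ sending a marked sequence to its counting measure --- so measurability follows at once from closure of stochastic kernels under composition with measurable maps, with no computation. You instead characterize $P_M$ by its Laplace functional: for $h\ge0$ bounded, measurable, of bounded support you show
$\int\exp\big(-\int h\,dK\big)\,dP_M(K)=\exp\big(\int_\cS\psi_h\,dM\big)$ with $\psi_h(x)=\log\int_\cT e^{-h(x,t)}\kappa(x,dt)$, which is bounded (since $e^{-h}\ge e^{-\|h\|_\infty}>0$), measurable by the kernel property, and vanishes off the $\cS$-projection of $\operatorname{supp}h$ because $\kappa(x,\cdot)$ is a probability measure; hence $M\mapsto\int\exp(-\int h\,dK)\,dP_M(K)$ is measurable, and a functional monotone class argument over the multiplicative family of such exponentials, which generates the evaluation $\sigma$-algebra on $\cN(\cS\times\cT)$, upgrades this to measurability of $M\mapsto P_M(A)$ for every $A$. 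Both routes are standard; yours buys an explicit formula for the Laplace functional of the marked process (and thus an independent identification of its law) at the cost of the monotone-class bookkeeping, while the paper's is shorter given the enumeration result. Two small points in your write-up: the finiteness of the product needs only that the $\cS$-projection of $\operatorname{supp}h$ is bounded, so bounded finiteness of $M$ already suffices and there is no extra ``assumption to be used''; and you invoke the measurable enumeration only to justify the product formula --- the final expression $\exp(\int\psi_h\,dM)$ does not depend on the enumeration, which is exactly why the argument closes.
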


\begin{proof}
 This is a direct consequence of the following results. First, there exists a measurable enumeration map $\phi\colon\cNS\rightarrow\cS^* := \bigcup_{0\le n\le\infty}\cS^n$ that sends a counting measure to a list of all its points; see \cite[Proposition 9.1.XII]{DaleyVereJones2}. Second, the marking kernel $\kappa$ induces a natural kernel $\kappa^*\colon\cS^*\times\Sigma((\cS\times\cT)^*)\to [0,1]$ marking each of the points in the sequence independently. Finally, the map $\Lambda\colon(\cS\times\cT)^*\rightarrow\cN(\cS\times\cT)$ that sends $((x_j,m_j)) \mapsto \sum_j\delta(x_j,m_j)$ is measurable.
\end{proof}

\begin{proposition}[Bi-clade It\^o measure via marking jumps]\label{prop:bi-clade_PRM}
 For $g\in\DSxc$, let $\bN_g$ be derived from $g$ in the manner in which $\bN$ is derived from $\bX$ in Proposition \ref{prop:marking_jumps} -- i.e.\ by passing from a \cadlag\ path to a point process of jumps and marking jumps of height $z$ with excursions with law $\mBxc(\,\cdot\;|\;\life = z)$.
 \begin{enumerate}[label=(\roman*), ref=(\roman*)]
  \item For every $g\in\DSxc$, this $\bN_g$ is a random member of $\Hxc{\pm}$. Let $\mu_g$ denote its law.
  \item The map $g\mapsto\mu_g$ on $\DSxc$ is a $\xi$-disintegration of $\mClade$, in the sense of Definition \ref{def:disintegration}. As in Lemma \ref{lem:scl_ker}, we denote this disintegration by $\mClade(\,\cdot\;|\;\xi)$.
  \item For every $y\in\BR$, $\bF^y$ is a \PRM[\Leb\otimes\mClade] on $[0,\infty)\times \Hxc{\pm}$.
 \end{enumerate}
\end{proposition}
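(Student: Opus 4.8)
The plan is to deduce all three parts from the excursion theory of the scaffolding $\bX=\xi(\bN)$ developed in Section~\ref{sec:exc}, by showing that ``marking the jumps'' intertwines the excursion PRM $\bG^y$ of $\bX$ about a level $y$ with the bi-clade PRM $\bF^y$ of $\bN$ about that level. For part~(i), fix $g\in\DSxc$; its jump measure $\bM_g:=\sum_{t\colon\Delta g(t)>0}\Dirac{t,\Delta g(t)}$ is boundedly finite with at most one atom per time, so marking each atom $(t,z)$ by an independent spindle of law $\mBxc(\,\cdot\mid\life=z)$ produces a well-defined random counting measure $\bN_g$ whose atoms carry spindles of lifetime equal to the corresponding jump height of $g$. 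Since the integrand in \eqref{eq:JCCP_def} depends on a point process only through the times and heights of its atoms, and $\bN_g$ shares these with any $N\in\Hfin$ with $\xi(N)=g$ (one exists since $g\in\DSxc$), the compensated limit \eqref{eq:JCCP_def} converges uniformly for $\bN_g$ and $\xi(\bN_g)=g$; hence $\bN_g\in\Hfin$, and $\xi(\bN_g)=g\in\DSxc$ forces $\bN_g\in\Hxc{\pm}$. The map $g\mapsto\bM_g$ is measurable, so Lemma~\ref{lem:meas_marking} applied with $\kappa(z,\,\cdot\,)=\mBxc(\,\cdot\mid\life=z)$ shows $g\mapsto\mu_g$ is measurable, and $\mu_g$ is a probability measure concentrated on $\{N\colon\xi(N)=g\}$.

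For parts~(ii) and~(iii), the crux is to realise $\bF^0$ as the $\mu_\bullet$-marking of $\bG^0$. Because $\mBxc(\,\cdot\mid\life)$ is a $\life$-disintegration of $\mBxc$ (Corollary~\ref{cor:BESQ:scl_ker}), the standard disintegration of a Poisson random measure applied to the lifetime map shows that the jump measure $\bM:=\int\Dirac{s,\life(f)}\,d\bN(s,f)$ of $\bX$ is $\PRM[\Leb\otimes\mBxc(\life\in\cdot\,)]$ and that, conditionally on $\bX$, the measure $\bN$ is obtained by marking each jump $(s,z)$ of $\bX$ by an independent $\mBxc(\,\cdot\mid\life=z)$ spindle, the extra randomness being independent of $\bX$ --- this recovers the construction of Proposition~\ref{prop:marking_jumps}. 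A spindle of $\bN$ occurs only at a jump time of $\bX$, and on the a.s.\ event that level $0$ is nice (Propositions~\ref{prop:exc_intervals} and~\ref{prop:nice_level}) every such time lies in a unique excursion interval, the excursions occurring at distinct local times; hence $\bN$ decomposes exactly as $\bF^0=\sum_{[a,b]\in V^0}\Dirac{\ell^0(a),\,\shiftrestrict{\bN}{I^0(a,b)}}$. Conditionally on $\bX$, each $\shiftrestrict{\bN}{I^0(a,b)}$ consists of the deterministic excursion $g=\shiftrestrict{\bX}{[a,b]}$ together with independent marks on its jumps, i.e.\ it is an independent draw from $\mu_g$, and these are independent across $[a,b]\in V^0$. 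Since $\bG^0=\sum_{[a,b]\in V^0}\Dirac{\ell^0(a),\shiftrestrict{\bX}{[a,b]}}$ is $\PRM[\Leb\otimes\mSxc]$ (Theorem~\ref{thm:excursion_PRM}) and a bi-clade determines its excursion via $\xi$, this exhibits $\bF^0$ as a marked Poisson random measure over $\bG^0$ with mark kernel $g\mapsto\mu_g$. By the marking theorem \cite[Proposition~6.4.VI]{DaleyVereJones1}, $\bF^0$ is $\PRM[\Leb\otimes\nu]$ on $[0,\infty)\times\Hxc{\pm}$ with $\nu(A)=\int_{\DSxc}\mu_g(A)\,d\mSxc(g)$; taking $\EV[\bF^0([0,1]\times\cdot\,)]$ gives $\mClade=\nu$, which is~(iii) for $y=0$. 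For~(ii), $\xi_*\mClade(B)=\int\mu_g(\xi^{-1}B)\,d\mSxc(g)=\int\cf\{g\in B\}\,d\mSxc(g)=\mSxc(B)$ because $\xi(\bN_g)=g$ a.s., so $\mClade(A)=\int\mu_g(A)\,d(\xi_*\mClade)(g)$, which together with the measurability and fibre-support from part~(i) says precisely that $g\mapsto\mu_g$ is a $\xi$-disintegration of $\mClade$. Finally, for arbitrary $y\in\BR$, Theorem~\ref{thm:excursion_PRM} gives $\bG^y\sim\PRM[\Leb\otimes\mSxc]$ with the \emph{same} $\mSxc$, and the conditional jump-marking description of $\bN$ given $\bX$ is spatially homogeneous, so the same argument yields $\bF^y\sim\PRM[\Leb\otimes\mClade]$.

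The step requiring genuine care is the identification of $\bF^0$ with the $\mu_\bullet$-marking of $\bG^0$: one must verify, within the marked-PRM formalism of \cite{DaleyVereJones1,DaleyVereJones2}, that restricting the jump-marking of $\bX$ to the level-$0$ excursion intervals really produces bi-clades that are conditionally independent given $\bX$ with the stated laws, and that no spindles escape the decomposition --- the latter because the excursion intervals miss only the Lebesgue-null zero set of $\bX$, which carries no jumps. Everything else is bookkeeping around the disintegration and marking theorems, Theorem~\ref{thm:excursion_PRM}, and Propositions~\ref{prop:exc_intervals}--\ref{prop:nice_level}.
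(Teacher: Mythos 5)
Your proof is correct and follows the same overall architecture as the paper's: establish in (i) that $\mu_g$ is a probability law on the fibre $\{\xi=g\}$ with $g\mapsto\mu_g$ measurable via Lemma \ref{lem:meas_marking}, then realise $\bF^y$ as the $\mu_\bullet$-marking of the excursion measure $\bG^y$ of Theorem \ref{thm:excursion_PRM}, conclude that it is a \PRM\ with intensity $\Leb\otimes\int\mu_g\,d\mSxc(g)$, identify this intensity with $\mClade$ through Definition \ref{def:bi-clade_Ito}, and read off the disintegration from the pushforward identity $\xi_*\mClade=\mSxc$. Where you diverge is precisely at the step you flag as delicate: you apply the marking theorem \cite[Proposition 6.4.VI]{DaleyVereJones1} directly to the full, $\sigma$-finite point process $\bG^y$, asserting that conditionally on $\bX$ the bi-clades over the level-$y$ excursion intervals are independent draws from $\mu_{g_s}$ (via Proposition \ref{prop:marking_jumps} and the partition of spindles in Proposition \ref{prop:partn_spindles}). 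The paper instead avoids formalising this infinite conditional marking structure: it restricts to $A=\xi^{-1}(B)$ with $\mSxc(B)<\infty$, enumerates the points of $\restrict{\bF^y}{[0,\infty)\times A}$ chronologically as $(S_j,\bN_j)$, checks that the $\bN_j$ are conditionally i.i.d.\ with law $\mu'_A(\cdot)/\mu'_A(A)$ while the $S_j$ have i.i.d.\ exponential spacings, invokes the characterisation of \PRM s in \cite[Theorem 2.1]{GreePitm80}, and then passes to the limit along an exhausting sequence $B_i\uparrow\DSxc$. Your route is shorter and more conceptual; the paper's buys a reduction to finite-rate marked Poisson processes where the independence bookkeeping is elementary. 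If you keep your version, two small points deserve a sentence each: (a) the marking theorem conditions on the base process $\bG^y$, whereas Proposition \ref{prop:marking_jumps} conditions on $\bX$, so you should note that the conditional law of the bi-clade marks given $\bX$ is $\bigotimes_s\mu_{g_s}$, a $\sigma(\bG^y)$-measurable quantity, whence the same conditional law holds given $\bG^y$ by the tower property; and (b) the restriction to the a.s.\ event that level $y$ is nice (Proposition \ref{prop:nice_level}) should be stated for the arbitrary fixed $y$ of part (iii), not only for $y=0$, so that distinct excursions occur at distinct local times and the identification of $\bF^y$ with the marked $\bG^y$ is exact. Neither point is a gap in substance; your argument for part (i), including the transfer of the compensated convergence in \eqref{eq:JCCP_def} from some $N_0\in\Hfin$ with $\xi(N_0)=g$ to $\bN_g$, is a valid (and more explicit) version of the paper's one-line justification.
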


The reader may find Table \ref{tbl:exc_vs_cld} helpful regarding the counting measures that we have introduced.

\begin{proof}
 (i). Refer back to the marking construction in Proposition \ref{prop:marking_jumps}. By definition, the laws $\mBxc(\,\cdot\;|\;\life = z)$ are supported on $\{f\in\Exc\colon \life(f) = z\}$, for each $z$. Thus $\bN_g\in\Hxc{\pm}$ with $\xi(\bN_g) = g$. That $\bN_g$ is a random variable follows from Lemma \ref{lem:meas_marking} via measurability of the map from \cadlag\ functions to point processes of jumps \cite[Proposition II.(1.16)]{JacodShiryaev}.
 
 (ii) and (iii). We saw $\mu_g(\xi=g)=1$ in (i). Also, the map $g\mapsto\mu_g$ is measurable by Lemma \ref{lem:meas_marking}. To complete the proof of (ii), we need to check (\ref{eq:scl_ker:integration}). We derive this together with (iii). Take $B\in\SDSxc$ with $\mSxc(B)<\infty$. Fix $y\in\BR$ and let $\bG^y$ denote the point process of excursions of $\bX$ about level $y$, as in \eqref{eq:stable:exc_PRM}. By Theorem \ref{thm:excursion_PRM}, $\bG^y$ is a \PRM[\Leb\otimes\mSxc]. From elementary properties of Poisson point processes, 
 $\restrict{\bG^y}{[0,\infty)\times B} = \sum_{j\geq 1}\Dirac{S_j,X_j}$
 for some sequence of random pairs $\big((S_j,X_j),\ j\geq 1\big)$, where the $S_j$ are strictly increasing. It is a standard result, e.g.\ in \cite[Theorem 2.1]{GreePitm80}, that the $(X_j)$ are i.i.d.\ with law $\mSxc(\,\cdot\;|\;B)$ and the $(S_j)$ have i.i.d.\ \ExpDist[\mSxc(B)] differences.
 
 Let $A := \xi^{-1}(B) \subset \Hxc{\pm}$. By Proposition \ref{prop:nice_level}, level $y$ is a.s.\ nice for $\bX$. Moreover, by the description of $V^y$ in \eqref{eq:inv_LT_intervals}, it follows that
 \begin{equation*}
  \Restrict{\bF^y}{[0,\infty)\times A} = \sum_{j\geq 1}\Dirac{S_j,\bN_j} \qquad \text{where} \qquad \bN_j := \shiftrestrict{\bN}{\left[\tau^y(S_j-),\tau^y(S_j)\right]}, \qquad \text{almost surely.}
 \end{equation*}
 Note in particular that $\xi(\bN_j) = X_j$ for every $j\geq 1$, almost surely. Thus, by Proposition \ref{prop:marking_jumps}, there are $f_t$ for all $t\ge 0$ with $\Delta\bX_t>0$, conditionally independent given $\bX$, such that
 \begin{equation*}
  \bN_j = \sum_{t\in[\tau^y(S_j-),\tau^y(S_j)]\colon\Delta\bX_t>0}\Dirac{t - \tau(S_j-),f_t}\cf\{\ell^y(t) = S_j\} \qquad \text{for }j\geq 1.
 \end{equation*}
 Therefore, the $(\bN_j,\ j\geq 1)$ are conditionally independent given $\bX$, with conditional laws $\bN_j \sim \mu_{X_j}$, where $\mu_g$ is as in assertion (i), above. Set
 \begin{equation}\label{eq:shape_disint_int}
  \mu'_A(C) := \int\mu_g(C\cap A)d\mSxc(g) =  \int\mu_g(C)\cf\{g\in B\}d\mSxc(g) \qquad \text{for }C\in \cHxc{\pm}.
 \end{equation}
 Then the sequence $(\bN_j)$ is i.i.d.\ with law $\mu'_A(\,\cdot\,)/\mu'_A(A)$. Since the inter-arrival times are exponential, referring again to 
 \cite[Theorem 2.1]{GreePitm80}, we conclude that $\restrict{\bF^y}{[0,\infty)\times A}$ is a \PRM[\Leb\otimes\mu'_A].
 
 Now consider an increasing sequence of subsets $B_i\in\ScDSxc$ with $\mSxc(B_i)<\infty$ and $\bigcup_i B_i = \DSxc$, and for each $i$ set $A_i = \xi^{-1}(B_i)$. The previous argument applies to each $A_i$, and the resulting measures $\mu'_{A_i}$ are consistent in the sense that $\mu'_{A_i} = \Restrict{\mu'_{A_j}}{A_i}$ for $j>i$. 
 Thus, by \cite[Theorem 2.1]{GreePitm80}, $\bF^y$ itself is a \PRM[\Leb\otimes\mu'], where $\mu'$ is the increasing limit of the $\mu'_{A_i}$. By Definition \ref{def:bi-clade_Ito}, we conclude that $\mClade = \mu'$, completing the proof of (iii). Finally, by the definition of $\mu'$ via \eqref{eq:shape_disint_int}, and since $\mu_g(\xi=g)=1$,
 $$\mClade(\xi\in B) = \lim_{i\rightarrow\infty}\mu_{A_i}^\prime(\xi\in B) = \int\mu_g(\xi\in B)d\mSxc(g) = \int\cf\{g\in B\}d\mSxc(g) = \mSxc(B).$$
 Therefore, by Definition \ref{def:disintegration} of disintegration, $g\mapsto\mu_g$ is a $\xi$-disintegration of 
 $\mClade$, as desired.
\end{proof}

\begin{corollary}\label{cor:clade_PRM}
 $\bF^{\geq y}$ is a \PRM[\Leb\otimes\mClade^+] on $[0,\infty)\times \Hxc{+}$. Correspondingly, $\bF^{\leq y}$ is a \PRM[\Leb\otimes\mClade^-] on $[0,\infty)\times \Hxc{-}$.
\end{corollary}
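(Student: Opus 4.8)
The plan is to obtain the corollary by pushing forward the Poisson random measure of Proposition~\ref{prop:bi-clade_PRM}(iii) under the clade- and anti-clade-component maps. Comparing the formulas in Definition~\ref{def:bi-clade_PP}, one sees that $\bF^{\geq y}$ is obtained from $\bF^y$ by replacing each atom $(s,N)$ with $(s,N^+)$, where $N\mapsto N^+$ is the clade-component map of \eqref{eq:bi-clade_split} (extended to all of $\Hxc{\pm}$ by $N^+:=N$ on $\Hxc{+}$ and $N^+:=0$ on $\Hxc{-}$); likewise $\bF^{\leq y}$ is obtained from $\bF^y$ by replacing each atom $(s,N)$ with $(s,N^-)$. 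The maps $\phi^+\colon N\mapsto N^+$ and $\phi^-\colon N\mapsto N^-$ from $\Hxc{\pm}$ into $\Hxc{+}$ and $\Hxc{-}$ are Borel measurable (cf.\ Lemma~\ref{lem:cutoff_meas}), being composites of locating the crossing time $T_0^+(N)$, splitting the middle spindle as in \eqref{eq:spindle_split}, and restricting. On the a.s.\ event of Section~\ref{sec:biclade_PRM}, level~$y$ carries no degenerate excursions (Proposition~\ref{prop:nice_level}(i)), so every atom of $\bF^y$ is a bi-clade whose scaffolding is typical, which is what makes the component formulas in Definition~\ref{def:bi-clade_PP} agree with this atomwise description.

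Next I would identify the pushforward intensities. Fix $B\in\cHxc{+}$. Since applying $\phi^+$ atomwise to $\bF^0$ sends the atoms in $[0,1]\times(\phi^+)^{-1}(B)$ bijectively onto the atoms of $\bF^{\geq 0}$ in $[0,1]\times B$, we have the pathwise identity $\bF^{\geq 0}\big([0,1]\times B\big) = \bF^0\big([0,1]\times(\phi^+)^{-1}(B)\big)$. Taking expectations and recalling Definition~\ref{def:bi-clade_Ito},
\[
 (\phi^+)_*\mClade(B) = \mClade\big((\phi^+)^{-1}(B)\big) = \EV\big[\bF^0\big([0,1]\times(\phi^+)^{-1}(B)\big)\big] = \EV\big[\bF^{\geq 0}([0,1]\times B)\big] = \mClade^+(B),
\]
and symmetrically $(\phi^-)_*\mClade=\mClade^-$.

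Then I would invoke the mapping theorem for Poisson random measures in the boundedly finite setting of \cite{DaleyVereJones1}: the image of the \PRM[\Leb\otimes\mClade] $\bF^y$ under the measurable map $(s,N)\mapsto(s,\phi^+(N))$, which is the identity in the first coordinate, is a Poisson random measure on $[0,\infty)\times\Hxc{+}$ with intensity $(\mathrm{id}\otimes\phi^+)_*(\Leb\otimes\mClade)=\Leb\otimes\mClade^+$. The one hypothesis to check is that this intensity is boundedly finite. This follows from the standing assumptions of Section~\ref{sec:biclade_PRM}, under which $\bF^{\geq y}$ is a.s.\ a boundedly finite counting measure (Lemma~\ref{lem:bi-clade_PP_meas}): were $\Leb\otimes\mClade^+(D)=\infty$ for some bounded $D\subseteq[0,\infty)\times\Hxc{+}$, then $\bF^y$ would a.s.\ assign infinite mass to the set $(\mathrm{id}\otimes\phi^+)^{-1}(D)$ of infinite $\Leb\otimes\mClade$-measure, forcing $\bF^{\geq y}(D)=\infty$ a.s., a contradiction. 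The same argument with $\phi^-$ shows $\bF^{\leq y}$ is a \PRM[\Leb\otimes\mClade^-].

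I do not anticipate a genuine obstacle; the corollary is essentially a bookkeeping consequence of Proposition~\ref{prop:bi-clade_PRM}. The only points needing a little care are the pathwise identity $\bF^{\geq y}=(\mathrm{id}\otimes\phi^+)(\bF^y)$, which is immediate from Definition~\ref{def:bi-clade_PP}, and verifying bounded finiteness of $\Leb\otimes\mClade^+$ (and $\Leb\otimes\mClade^-$) so that the mapping theorem applies in exactly the form stated.
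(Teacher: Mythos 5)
Your argument is correct and is exactly the intended route: the paper states this corollary without a separate proof, as an immediate consequence of Proposition \ref{prop:bi-clade_PRM}(iii), Definition \ref{def:bi-clade_Ito}, and the atomwise relation between $\bF^y$ and $\bF^{\geq y}$, $\bF^{\leq y}$ in Definition \ref{def:bi-clade_PP}, which is precisely the pushforward/mapping-theorem argument you spell out. The details you supply — measurability of the clade and anti-clade component maps, exclusion of degenerate bi-clades at the fixed level via Proposition \ref{prop:nice_level}, and bounded finiteness of $\Leb\otimes\mClade^{+}$ and $\Leb\otimes\mClade^{-}$ so the mapping theorem applies — are the right ones.
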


We define a time-reversal involution and a scaling operator via
\begin{equation}\label{eq:clade:xform_def}
 \begin{split}
  \reverseH(N) &:= \int \Dirac{\len(N)-t,\reverseexc(f)}dN(t,f) \qquad \text{for }N\in\Hfin\\
  \text{and} \quad \scaleH[c][N] &:= \int \Dirac{c^{3/2}t,\scaleB[c][f]}dN(t,f) \qquad \text{for }N\in\H,\ c>0,
\end{split}
\end{equation}
where $\scaleB$ and $\reverseexc$ are as in Definition \ref{def:BESQ:scaling_def}. The map $\reverseH$, in particular, reverses the order of spindles and reverses time within each spindle.

\begin{lemma}[Bi-clade invariance properties]\label{lem:clade:invariance}
 For $A\in\cHxc{\pm}$ and $c>0$,
 $$\mClade(\reverseH(A)) = \mClade(A) \quad \text{and} \quad \mClade(\scaleH[c][A]) = c^{-1/2}\mClade(A).$$
 Moreover, for $\bN$ a \PRM\ as above, 
 $\scaleH[c][\bN] \stackrel{d}{=} \bN$. 
\end{lemma}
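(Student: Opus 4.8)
The plan is to reduce all three claims to the scaling and reversal invariances of $\mBxc$ (Lemma~\ref{lem:BESQ:invariance}, Corollary~\ref{cor:BESQ:scl_ker}) and of $\mSxc$ (Lemma~\ref{lem:stable:invariance}), by working through the $\xi$-disintegration $g\mapsto\mu_g$ of $\mClade$ from Proposition~\ref{prop:bi-clade_PRM}. The key step is the \emph{equivariance} of $g\mapsto\mu_g$: for every $g\in\DSxc$ and $c>0$, if $N$ has law $\mu_g$ then $\reverseH(N)$ has law $\mu_{\reverseincr(g)}$ and $\scaleH[c][N]$ has law $\mu_{\scaleS[c][g]}$. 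Recall from Proposition~\ref{prop:bi-clade_PRM}(i) that a variable $\bN_g$ with law $\mu_g$ lists the jump times $t$ of $g$, of heights $\Delta g(t):=g(t)-g(t-)$, and attaches to each an independent spindle $f_t$ with law $\mBxc(\,\cdot\mid\life=\Delta g(t))$. From \eqref{eq:stable:reversal_def} and \eqref{eq:stable:scaling_def}, $\reverseincr(g)$ has exactly the jumps $(\len(g)-t,\Delta g(t))$ and $\scaleS[c][g]$ has exactly the jumps $(c^{3/2}t,\,c\,\Delta g(t))$; and from \eqref{eq:clade:xform_def}, $\reverseH(\bN_g)$ attaches $\reverseexc(f_t)$ at time $\len(g)-t$ while $\scaleH[c][\bN_g]$ attaches $\scaleB[c][f_t]$ at time $c^{3/2}t$. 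So the equivariance reduces to checking $\reverseexc(f_t)\sim\mBxc(\,\cdot\mid\life=\Delta g(t))$ and $\scaleB[c][f_t]\sim\mBxc(\,\cdot\mid\life=c\,\Delta g(t))$, conditionally independently; the latter is exactly Corollary~\ref{cor:BESQ:scl_ker}. For the former, $\reverseexc$ is an involution of $\Exc$ that commutes with $\life$ and with the scaling map $\scaleB$ (a one-line check from Definition~\ref{def:BESQ:scaling_def}) and preserves $\mBxc$ (Lemma~\ref{lem:BESQ:invariance}); hence $z\mapsto(\reverseexc)_*\mBxc(\,\cdot\mid\life=z)$ is again a $\life$-disintegration of $\mBxc$ and inherits the scaling property that characterises $\mBxc(\,\cdot\mid\life)$ in Corollary~\ref{cor:BESQ:scl_ker}, so by that uniqueness it coincides with $\mBxc(\,\cdot\mid\life)$, which gives the claim.

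Granting the equivariance, the two formulas for $\mClade$ follow by integrating against $\mSxc=\xi_*\mClade$ and using Definition~\ref{def:disintegration}(iii). Since $\reverseH$ is an involution, $\bN_g\in\reverseH(A)$ is equivalent to $\reverseH(\bN_g)\in A$, so $\mu_g(\reverseH(A))=\mu_{\reverseincr(g)}(A)$; and since $\reverseincr$ is an involution of $\DSxc$ (clear from \eqref{eq:stable:reversal_def}) that preserves $\mSxc$ (Lemma~\ref{lem:stable:invariance}), a change of variables gives
\begin{equation*}
 \mClade(\reverseH(A))=\int\mu_g(\reverseH(A))\,d\mSxc(g)=\int\mu_{\reverseincr(g)}(A)\,d\mSxc(g)=\int\mu_h(A)\,d\mSxc(h)=\mClade(A).
\end{equation*}
Likewise, since the scaling maps $\scaleH[c][\cdot]$ and $\scaleH[c^{-1}][\cdot]$ are mutually inverse (inherited from $\scaleB$ via Definition~\ref{def:BESQ:scaling_def}), $\bN_g\in\scaleH[c][A]$ is equivalent to $\scaleH[c^{-1}][\bN_g]\in A$, so the equivariance gives $\mu_g(\scaleH[c][A])=\mu_{\scaleS[c^{-1}][g]}(A)$; and the preimage of $B\in\SDSxc$ under $g\mapsto\scaleS[c^{-1}][g]$ is $\scaleS[c][B]$, of $\mSxc$-mass $c^{-1/2}\mSxc(B)$ by Lemma~\ref{lem:stable:invariance}, whence
\begin{equation*}
 \mClade(\scaleH[c][A])=\int\mu_{\scaleS[c^{-1}][g]}(A)\,d\mSxc(g)=c^{-1/2}\int\mu_h(A)\,d\mSxc(h)=c^{-1/2}\mClade(A).
\end{equation*}

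Finally, for $\bN$ a \PRM[\Leb\otimes\mBxc] on $[0,\infty)\times\Exc$, consider the map $(t,f)\mapsto(c^{3/2}t,\scaleB[c][f])$: it sends Lebesgue measure in the first coordinate to $c^{-3/2}\Leb$, and by Lemma~\ref{lem:BESQ:invariance} it sends $\mBxc$ in the second coordinate to $c^{3/2}\mBxc$, hence it sends $\Leb\otimes\mBxc$ to itself. By the mapping theorem for Poisson random measures, $\scaleH[c][\bN]$ --- the image of $\bN$ under this map --- is again a \PRM[\Leb\otimes\mBxc], so $\scaleH[c][\bN]\stackrel{d}{=}\bN$.

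The routine-but-delicate part is the bookkeeping in the equivariance step --- tracking how $\reverseincr$ and $\scaleS$ rearrange the jump data of $g$, and how the marking laws $\mBxc(\,\cdot\mid\life=z)$ transform --- together with the short uniqueness-of-disintegration argument for $\reverseexc$-invariance of $\mBxc(\,\cdot\mid\life)$; the remaining steps are formal manipulations of $\sigma$-finite and Poisson random measures, and the required measurability is already available from Lemmas~\ref{lem:meas_marking}, \ref{lem:cutoff_meas} and~\ref{lem:bi-clade_PP_meas}.
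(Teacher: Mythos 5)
Your proposal is correct and follows essentially the same route as the paper: both pass through the $\xi$-disintegration $g\mapsto\mu_g$ of Proposition \ref{prop:bi-clade_PRM}, establish that $\reverseH(\bN_g)\stackrel{d}{=}\bN_{\reverseincr(g)}$ and $\scaleH[c][\bN_g]\stackrel{d}{=}\bN_{\scaleS[c][g]}$ via the invariances of $\mBxc$, and then integrate against $\mSxc$ using Lemma \ref{lem:stable:invariance}, with the final claim coming from the PRM mapping/marking structure. Your uniqueness-of-disintegration justification that $\mBxc(\,\cdot\mid\life=z)$ is $\reverseexc$-invariant is a slightly more explicit version of a step the paper leaves terse, but it is the same argument in substance.
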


\begin{proof}
 For the first identity, observe that in the notation of Proposition \ref{prop:bi-clade_PRM}, we have 
 \begin{equation*}
 \begin{split}
  \reverseH{\bN_g}&=\sum_{t\in[0,\len(g)]\colon\Delta g(t)>0}\delta(\len(g)-t,\reverseexc(f_{\len(g)-t}))\\
                 &\stackrel{d}{=}\sum_{t\in[0,\len(g)]\colon\Delta g(t)>0}\delta(\len(g)-t,f_{\len(g)-t})
                 =\sum_{s\in[0,\len(g)]\colon\Delta\reverseincr(g)(s)>0}\delta(s,f_s) =: \bN_{\reverseincr(g)},
 \end{split}
 \end{equation*}
 where the equality in distribution follows from the \BESQ[-1] excursion invariance properties in Lemma \ref{lem:BESQ:invariance} and, again recalling Proposition \ref{prop:bi-clade_PRM}, $\bN_{\reverseincr(g)}$ has law $\mu_{\reverseincr(g)}$. By the \Stable[\frac32] invariance properties in Lemma \ref{lem:stable:invariance},
 \begin{equation*}
 \begin{split}
  \mClade(\reverseH(A))&=\int_{\DSxc}\Pr(\bN_g\in\reverseH(A))d\mSxc(g)=\int_{\DSxc}\Pr(\reverseH(\bN_g)\in A)d\mSxc(g)\\
                        &=\int_{\DSxc}\Pr(\bN_{\reverseincr(g)}\in A)d\mSxc(g)=\int_{\DSxc}\Pr(\bN_{g}\in A)d\mSxc(g)=\mClade(A).
 \end{split}
 \end{equation*}
 By a monotone class theorem, the scaling invariance of Lemma \ref{lem:stable:invariance} yields that, for $G$ bounded and $\SDSxc$-measurable, 
 $\int_{\DSxc} G(\scaleS[(1/c)][g])d\mSxc(g)=c^{-1/2}\int_{\DSxc} G(g)d\mSxc(g)$. Hence,
 \begin{equation*}
 \begin{split}
  \mClade(\scaleH[c][A])&=\int_{\DSxc}\Pr(\bN_g\in\scaleH[c][A])d\mSxc(g)=\int_{\DSxc}\Pr(\scaleH[(1/c)][\bN_g]\in A)d\mSxc(g)\\
                         &=c^{-1/2}\int_{\DSxc}\Pr(\bN_g\in A)d\mSxc(g)=c^{-1/2}\mClade(A),
 \end{split}
 \end{equation*}
 which is the second identity. Finally, the third identity follows from Lemma \ref{lem:BESQ:invariance} and the definition of $\bN$ as a \PRM[\Leb\otimes\mBxc]. 
\end{proof}

\subsection{Mid-spindle Markov property and conditioning bi-clade It\^o measure}
\label{sec:clade_anti-clade}

Take $N\in\H$. A spindle $f_t$ that arises at time $t$ in $N$ is said to be born at level $\xi_N(t-)$ and die at level $\xi_N(t)$. Thus, at each level $z\in\BR$ it has mass $f_t(z-\xi_N(t-))$. In particular, the spindle crosses level $z$ only if $f_t(z-\xi_N(t-)) > 0$. In a bi-clade $N$ for which $\xi(N)$ is typical, in the sense of Definition \ref{def:typical_exc}, there is a single spindle that crosses level $0$. Otherwise, if $\xi(N)$ is degenerate, there is no such spindle. The following formula isolates the level-$0$ mass of this unique spindle, when it exists. Moreover, the formula is sufficiently general that it may be applied to clades and anti-clades as well. The \emph{(central spindle) mass} of $N\in\Hxc{\pm}$ is
\begin{equation}
 m^0(N) := \int \max\Big\{\ f\big((-\xi_N(s-))-\big),\ f\big(-\xi_N(s-)\big)\ \Big\}dN(s,f).\label{eq:clade:mass_def}
\end{equation}
See Figure \ref{fig:clade_stats} for an illustration highlighting this and other quantities. Consider $N\in\Hxc{\pm}$ for which $\xi(N)$ is typical. Recalling the definition of broken spindles in \eqref{eq:spindle_split}, $f_{T_0^+}(-\xi_N(T_0^+-)) = \hat f_{T_0^+}^0(0) = \check f_{T_0^+}^0\big((-\xi_N(T_0^+-))-\big)$. Thus, $m^0(N) = m^0(N^+) = m^0(N^-)$.

\begin{lemma}\label{lem:heavy_clades_discrete}
 Under $\mClade$, the variable $m^0$ satisfies $\mClade\{m^0 > 1\} < \infty$.
\end{lemma}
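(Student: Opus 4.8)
The plan is to use the disintegration of $\mClade$ along $\xi$ given in Proposition \ref{prop:bi-clade_PRM}(ii), namely $\mClade(\,\cdot\,) = \int_{\DSxc}\mu_g(\,\cdot\,)\,d\mSxc(g)$ where $\mu_g$ marks the jumps of $g$ by $\BESQ[-1]$ excursions of the appropriate lengths. Under $\mu_g$, the central spindle mass $m^0$ is simply the level-$0$ mass of the (a.s.\ unique, when $g$ is typical) spindle marking the jump of $g$ across $0$; if that jump goes from $\xi_g(T_0^+-) = -a$ (with $a>0$) up to some positive value, then conditionally on $g$ the marking excursion $f$ has law $\mBxc(\,\cdot\mid\zeta = \Delta g(T_0^+))$, and $m^0 = f(a)$ (up to the $\max$ over left/right limits, which a.s.\ coincides). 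So first I would reduce, via this disintegration, to controlling $\int_{\DSxc}\mu_g\{m^0 > 1\}\,d\mSxc(g)$, and use that $\mSxc$-a.e.\ $g$ is typical (no initial or final jump, by the discussion following Proposition \ref{prop:nice_level} and the $\mSxc$-null set of degenerate excursions).

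Next I would compute $\mu_g\{m^0 > 1\}$ in terms of the two quantities that describe the central jump: the depth $a = -\xi_g(T_0^+-) > 0$ of $g$ just before it crosses $0$, and the jump height $r = \Delta g(T_0^+) = \xi_g(T_0^+) - \xi_g(T_0^+-) > a$. Given these, $m^0$ has the law of $f(a)$ where $f \sim \mBxc(\,\cdot\mid\zeta = r)$; by the scaling relation in Corollary \ref{cor:BESQ:scl_ker} (scaling map $\scaleB$, with $p=3/2$, $q=1$), $f(a) \stackrel{d}{=} r\,\tilde f(a/r)$ for $\tilde f \sim \mBxc(\,\cdot\mid\zeta = 1)$, so $\mu_g\{m^0 > 1\} = \mBxc\{f(a/r) > 1/r \mid \zeta = 1\}$. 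The key point is that this is bounded by a function of $a$ and $r$ that is integrable against the joint law, under $\mSxc$, of the pre-crossing depth and the crossing jump. For the crude bound $\mClade\{m^0 > 1\} < \infty$ it suffices to observe $m^0 \le A(f) \le A(g)$ where $A$ is amplitude: indeed the central spindle's mass at level $0$ is at most the amplitude of that spindle, which is at most the amplitude of $g$. Hence $\{m^0 > 1\} \subseteq \{A(g) > 1\} = \xi^{-1}\{A > 1\}$, and so by the disintegration identity $\mClade\{m^0 > 1\} \le \mClade(\xi \in \{A > 1\}) = \mSxc\{g\colon A(g) > 1\}$.

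Finally I would invoke Theorem \ref{thm:excursion_PRM}(i): $\mSxc$ is boundedly finite on $(\DSxc\setminus\{0\}, d_\cD^A)$, and $\{g\colon A(g) > 1\} = \{g\colon A(g)^{-1} < 1\}$ is a bounded set in that metric (it sits inside the ball $\{d_\cD^A(g, g_0) < C\}$ for suitable $C$, since $A^{-1}$ is bounded there and $d_\cD$ is bounded on excursions of amplitude exceeding $1$ once one also controls that... in fact $\{A > 1\}$ is exactly the complement of a $d_\cD^A$-neighbourhood of $0$, hence bounded). Therefore $\mSxc\{A > 1\} < \infty$, giving $\mClade\{m^0 > 1\} < \infty$ as claimed. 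The only mild subtlety — the main obstacle — is confirming that $\{g\in\DSxc\colon A(g) > 1\}$ is genuinely bounded in $(\DSxc\setminus\{0\}, d_\cD^A)$; this is where the definition of $d_\cD^A$ via $A^{-1}$ does its work, mirroring exactly the role played by the analogous metric $d_\cD^A$ on $\Exc$ in Proposition \ref{prop:E_bdedly_finite}, and I would handle it by the same argument as there: a $d_\cD^A$-bounded set is one on which $A$ is bounded away from $0$ and $d_\cD$ is bounded, and conversely $\{A > 1\}$ has $A^{-1} < 1$ bounded, while $d_\cD$-boundedness on $\DSxc$ needs only that lengths are bounded, which follows since $\int(\len(g)\wedge 1)\,d\mSxc(g) < \infty$ forces $\mSxc\{\len > M\} \to 0$, so one may further intersect with $\{\len \le M\}$ and let $M\to\infty$.
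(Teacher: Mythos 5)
There is a genuine gap, and it sits exactly at the step you call ``crude'': the inequality $m^0\le A(f)\le A(g)$ is false in its second part. The amplitude $A(f)$ of the central spindle lives on the mass axis, while the amplitude $A(g)$ of the scaffolding excursion $g=\xi(N)$ lives on the level axis; the only feature of the spindle that $g$ records is its lifetime $\life(f)$, which equals the jump $g(T_0^+)-g(T_0^+-)$ across zero. Under the marking kernel defining $\mu_g$, the central spindle has law $\mBxc(\,\cdot\mid\life=r)$ with $r$ the crossing jump, and by the scaling in Corollary \ref{cor:BESQ:scl_ker} its amplitude is distributed as $r\,A(\tilde f)$ with $\tilde f\sim\mBxc(\,\cdot\mid\life=1)$, where $A(\tilde f)$ is unbounded. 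So a bi-clade whose scaffolding has amplitude far below $1$ can still have $m^0>1$ with positive conditional probability; the containment $\{m^0>1\}\subseteq\xi^{-1}\{A>1\}$, on which your bound $\mClade\{m^0>1\}\le\mSxc\{A>1\}$ rests, fails, and the conclusion does not follow from bounded finiteness of $\mSxc$. (Your first inequality $m^0\le A(f)$ is fine, and your reading of Theorem \ref{thm:excursion_PRM}(i) is the right way to bound $\mSxc\{A>1\}$ --- it is just not the relevant quantity.)

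To rescue your disintegration route you would need to keep the conditional computation you set up and then bypassed: bound $\mu_g\{m^0>1\}\le\Pr\{A(\tilde f)>1/r\}$ and integrate against the $\mSxc$-law of the crossing jump $r$. That needs quantitative tails (for the jump across zero under $\mSxc$ and for $A$ under $\mBxc(\,\cdot\mid\life=1)$) which are not established at this point of the paper. The paper's own argument sidesteps all of this: since $m^0(N)$ evaluates a single spindle at a single level, the count $\bF^0([0,1]\times\{m^0>1\})$ is at most the number of spindles of $\bN$ on $[0,\tau^0(1)]$ with amplitude exceeding $1$; this is a.s.\ finite because $\tau^0(1)<\infty$ a.s.\ (Theorem \ref{thm:excursion_PRM}) and $\mBxc\{A>1\}<\infty$ (Lemma \ref{lem:BESQ:exc_length}), and then the \PRM[\Leb\otimes\mClade] description of $\bF^0$ (Proposition \ref{prop:bi-clade_PRM}) forces $\mClade\{m^0>1\}<\infty$, since an infinite intensity would make that count a.s.\ infinite. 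You should either adopt that counting argument or supply the missing tail estimates for your integral.
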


\begin{proof}
 Since $m^0(N)$ evaluates a single spindle in $N$ at a single point,
 \begin{equation*}
  \bF^0\big([0,1]\times \{m^0 > 1\}\big) \leq \bN\left( (0,\tau^0(1))\times\left\{f\in \Exc\colon \sup\nolimits_{y\in\BR}f(y) > 1\right\}\right).
 \end{equation*}
 Theorem \ref{thm:excursion_PRM} implies that $\tau^0(1)$ is a.s.\ finite. Since $\bN$ is a \PRM[\Leb\otimes\mBxc], by Lemma \ref{lem:BESQ:exc_length} the right-hand side is a.s.\ finite. The desired formula follows from the \PRM\ description of $\bF^0$ in Proposition \ref{prop:bi-clade_PRM}.
\end{proof}

Fix $y\in\BR$ and $n,j\in\BN$. For the purpose of the following, let
\begin{equation}\label{eq:Tnj}
 T^{y}_{n,j} := \inf\left\{ t > 0\colon \int_{s\in [0,t]}\cf\left\{f\big( y-\bX(s-) \big) > 1/n\right\} d\bN(s,f) \geq j \right\}.
\end{equation}
This is the $j^{\text{th}}$ time at which a spindle of $\bN$ crosses level $y$ with mass at least $1/n$.

\begin{lemma}[Mid-spindle Markov property]\label{lem:mid_spindle_Markov}
 Let $T$ be either the stopping time $T^{\geq y}$ for some $y > 0$ or $T^y_{n,j}$ for some $y\in\BR,\ n,j\in\BN$. Let $f_T$ denote the spindle of $\bN$ at this time. Let $\hat f^y_T$ and $\check f^y_T$ denote the split of this spindle about level $y$, as in \eqref{eq:spindle_split}. Then, given $f_T(y-\bX(T-)) = a > 0$,
  \begin{equation*}
   \left( \Restrict{\bN}{[0,T)},\check f^y_T\right) \quad \text{is conditionally independent of} \quad \left( \ShiftRestrict{\bN}{(T,\infty)},\hat f^y_T\right).
  \end{equation*}
  Under this conditional law, $\shiftrestrict{\bN}{(T,\infty)}$ is a \PRM[\Leb\otimes\mBxc] independent of $\hat f^y_T$, which is a \BESQ[-1] started at $a$ and killed upon hitting zero.
\end{lemma}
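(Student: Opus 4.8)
The plan is to reduce the statement to a splitting property of the underlying \PRM\ $\bN$ together with the strong Markov property of \BESQ$[-1]$, by treating the two families of stopping times uniformly. First I would verify that in each of the two cases — $T = T^{\geq y}$ with $y>0$, or $T = T^y_{n,j}$ — the time $T$ is an $(\cF_t)$-stopping time and that, on the event $\{f_T(y-\bX(T-)) = a > 0\}$, there is indeed a (unique) spindle of $\bN$ straddling level $y$ at time $T$. For $T^{\geq y}$ this uses Proposition \ref{prop:nice_level}(iii): since $\bX$ a.s.\ does not creep up to the fixed level $y$, the first passage above $y$ occurs by a jump, i.e.\ inside a spindle, which therefore crosses level $y$; for $T^y_{n,j}$ it is immediate from the definition \eqref{eq:Tnj}, which explicitly counts spindles crossing $y$ with mass exceeding $1/n$. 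In both cases $T$ is a jump time of $\bX$, and the jump is the height $\life(f_T)$ of the straddling spindle.

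Next I would invoke the strong Markov property of $\bN$ as a \PRM\ at the stopping time $T$. Conditionally on $\cF_T$, the shifted restriction $\shiftrestrict{\bN}{(T,\infty)}$ — that is, $\bN$ with its atoms to the right of $T$ translated back by $T$, and with the straddling atom at $T$ itself removed — is again a \PRM$[\Leb\otimes\mBxc]$, independent of $\cF_T$ and hence of $\Restrict{\bN}{[0,T)}$. This is the standard Poisson strong Markov property (see \cite{DaleyVereJones1,DaleyVereJones2}); the only care needed is that we must treat the marking of the single jump at time $T$ separately, since that atom carries the spindle $f_T$ that we want to split. So I would recall the marking construction of Proposition \ref{prop:marking_jumps}: $\bN$ can be built by first producing the \cadlag\ scaffolding $\bX$ and then, independently for each jump of height $z$, attaching a spindle with law $\mBxc(\,\cdot\;|\;\life = z)$. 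Under this construction, conditionally on $\bX$ and on the jump height $\Delta\bX(T) = \life(f_T) = z$, the spindle $f_T$ is an independent draw from $\mBxc(\,\cdot\;|\;\life=z)$, and all other spindles are conditionally independent of it.

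The third step is the mid-spindle conditioning itself. Write $f_T$ in terms of its split $\check f^y_T$, $\hat f^y_T$ about level $y$ as in \eqref{eq:spindle_split}: $\check f^y_T$ is the part of the excursion below the crossing and $\hat f^y_T$ the part above, with $\hat f^y_T(0) = f_T(y - \bX(T-))$. The point is that under $\mBxc(\,\cdot\;|\;\life=z)$ — equivalently, by Corollary \ref{cor:BESQ:scl_ker}, under the $\life$-disintegration of $\mBxc$ — the canonical excursion process run up to its last hit of level $a$ and afterwards decomposes at that time by the strong Markov property of \BESQ$[-1]$: after the straddle, $\hat f^y_T$ evolves as a \BESQ$[-1]$ started from $a$ and killed at $0$, independently of the pre-straddle part $\check f^y_T$, once we condition on the crossing value being exactly $a$. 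I would derive this from the Pitman–Yor description of $\mBxc$ (Lemma \ref{lem:BESQ:existence}): up to the first passage to a high level the excursion is a time-reversed \BESQ, and from that passage onward it is \BESQ$[-1]$; the conditioning on $f_T(y-\bX(T-)) = a$ picks out the crossing height. Combining: given $\{f_T(y-\bX(T-)) = a\}$, the pair $(\Restrict{\bN}{[0,T)},\check f^y_T)$ is a measurable function of $\cF_T$ together with the pre-straddle part of the independent spindle $f_T$, while $(\shiftrestrict{\bN}{(T,\infty)},\hat f^y_T)$ is built from the post-$T$ \PRM\ and the post-straddle part of $f_T$; the two groups are conditionally independent, the first factor being a \PRM$[\Leb\otimes\mBxc]$ and $\hat f^y_T$ a killed \BESQ$[-1]$ from $a$.

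The main obstacle I anticipate is the careful bookkeeping around the single straddling spindle: one must justify that conditioning on the one-dimensional statistic $f_T(y-\bX(T-))$ (rather than on the full pre-$T$ trajectory of that spindle) is what cleanly decouples $\check f^y_T$ from $\hat f^y_T$, and that this conditioning is compatible with the \PRM\ strong Markov property for everything else. For $T = T^{\geq y}$ there is the additional subtlety that $T$ is not a fixed-level hitting time of $\bN$ in the naive sense — it is determined by the scaffolding $\bX$ — so one must check that $T$ is genuinely an $(\cF_t)$-stopping time and that no ambiguity arises from the possibility of multiple spindles touching level $y$ (ruled out a.s.\ because $\bX$ does not creep up to a fixed level, and because two distinct spindles straddling the same fixed level would force a coincidence of jump endpoints, a null event by absolute continuity of jump heights and pre-jump levels — cf.\ the proof of Proposition \ref{prop:nice_level}). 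Once these measurability and uniqueness points are nailed down, the rest is an application of the two strong Markov properties already available to us.
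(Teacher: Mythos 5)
There is a genuine gap, and it sits exactly at the step you flag as "the third step". Your plan is to condition on the scaffolding $\bX$ via the marking construction of Proposition \ref{prop:marking_jumps} and then split the spindle $f_T$ under $\mBxc(\,\cdot\;|\;\life=z)$. But conditioning on $\bX$ means conditioning on the jump height $\Delta\bX(T)=\life(f_T)$, hence on $\life\big(\hat f^y_T\big)=\bX(T)-y$. Under the lifetime-conditioned law $\mBxc(\,\cdot\;|\;\life=z)$, given the crossing value $f_T(y-\bX(T-))=a$, the post-crossing piece $\hat f^y_T$ is a \BESQ[-1] \emph{first-passage bridge} forced to die at the fixed time $z-(y-\bX(T-))$, not a free \BESQ[-1] killed at $0$; and since $\bX(T)=y+\life\big(\hat f^y_T\big)$, this piece is entangled with the level from which the post-$T$ scaffolding restarts. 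So the conditional independence and the \BESQ[-1] law you assert do not follow from this decomposition as written; to recover them you would have to avoid ever conditioning on the total lifetime of the straddling spindle, which your route does at the outset. Your appeal to Lemma \ref{lem:BESQ:existence} also does not do what you need: that lemma splits the excursion at the first passage time $H^a$ of a level $a$ under $\Lambda(\,\cdot\;|\;H^a<\infty)$, not at the deterministic intrinsic time $y-\bX(T-)$ under the lifetime-conditioned law, and converting one into the other is precisely the content that requires proof.

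A second, separate problem concerns the case $T=T^y_{n,j}$: this time is \emph{not} measurable with respect to $\bX$ alone, since by \eqref{eq:Tnj} it selects a jump according to whether its spindle has mass at least $1/n$ at level $y$. Consequently, given $\bX$, the spindle at $T$ is not an unbiased draw from $\mBxc(\,\cdot\;|\;\life=z)$: its law is biased by the event $\{f_T(y-\bX(T-))\ge 1/n\}$ and by the failure of the earlier straddling spindles, so your step 3 fails in this case even before the lifetime issue arises. The paper's proof is built to dodge both problems: it lists the stopping times $T_i$ at which spindles of amplitude exceeding $1/n$ occur, reveals each spindle only up to its crossing level through the filtration $\cG_i=\sigma\big(\cF_{T_i-},\restrict{f_i}{(-\infty,\rho_i)}\big)$ (never conditioning on the spindle's total lifetime), uses Lemma \ref{lem:BESQ:existence} to see that after first passage to $1/n$ the spindle runs as a free \BESQ[-1], applies that process's Markov property at the crossing level, and handles the selection of the relevant index by optional stopping at $J=\inf\{i\colon\rho_i<\life(f_i)\}$; the case $T^{\geq y}$ is then obtained as $\inf_n T^y_{n,1}$, a.s.\ attained by Proposition \ref{prop:nice_level}, rather than attacked directly. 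Your outline would need to be reorganized along these lines — conditioning only on pre-crossing information — before the conclusion can be drawn.
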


\begin{proof}
 We start by proving the case $T = T^y_{n,j}$. By the strong Markov property of $\bN$, it suffices to prove this with $j=1$. For the purpose of the following, let $\Exc_n := \big\{f\in\Exc\colon \sup_{u}f(u) > \frac1n\big\}$, where $\Exc$ is the space of excursions of \eqref{eq:cts_exc_space_def}. Lemma \ref{lem:BESQ:exc_length} asserts that $\mBxc(\Exc_n) < \infty$. Thus, we may sequentially list the points of $\bN$ in $\Exc_n$:
 \begin{equation*}
  \Restrict{\bN}{[0,\infty)\times \Exc_n} = \sum_{i=1}^\infty \Dirac{T_i,f_i} \quad \text{with} \quad T_1 < T_2 < \ldots.
 \end{equation*}
 
 First, note that each time $T_i$ is a stopping time in the time-filtration $(\cF_t)$; thus, by the Poisson property of $\bN$, each $f_i$ is independent of $\cF_{T_i-}$. Also the $(f_i)$ are i.i.d.\ with the law $\mBxc(\,\cdot\;|\;\Exc_n)$. We define first passage times of $f_i$, $H_i := \inf\{z > 0\colon f_{i}(z) = 1/n\}$. Then by Lemma \ref{lem:BESQ:existence}, for each $i$ the process $\big(f_i(H_i+z),\ z\geq 0\big)$ is a \BESQ[-1] starting from $1/n$. We define a stopping $\rho_i$ for $f_i$ as follows. If $f_i(y-\bX(T_i-)) > 1/n$, set $\rho_i := y-\bX(T_i-)$; otherwise, set $\rho_i := \zeta(f_i)$. Thus, $\rho_i$ is always greater than $H_i$, and hence $\rho_i-H_i$ is a stopping time for $\big(f_i(H_i+z),\ z\geq 0\big)$.
 
 Recall Definition \ref{def:filtrations} of $(\cF_t,\ t\geq 0)$. For the purpose of the following, for $i\geq 1$ let $\cG_i := \sigma(\cF_{T_i-},\restrict{f_{i}}{(-\infty,\rho_i)})$. The sequence of pairs $\big(\bX(T_i-),\ \restrict{f_i}{(-\infty,\rho_i)}\big)$ is a Markov chain in this filtration. Indeed, in the case $\rho_i = \life(f_i)$, the process $\bX$ simply runs forward from its value $\bX(T_i) = \bX(T_i-)+\life(f_i)$ until the $(\cF_t)$-stopping time $T_{i+1}$. In the case $\rho_i < \life(f_i)$, we have $f_i(\rho_i) > 1/n$. Then by the Markov property of $\big(f_i(H_i+z),\ z\geq 0\big)$ at $\rho_i-H_i$, conditionally given $\cG_i$ the process $\hat f^y_i = \shiftrestrict{f_i}{[\rho_i,\infty)}$ is a \BESQ[-1] starting from $f_i(\rho_i)$. In particular, $\hat f^y_i$ is conditionally independent of $\cG_i$ given $f_i(\rho_i)$. Then $\bX(T_i) = y + \life\big(\hat f^y_i\big)$.
 
 Let $J := \inf\big\{i \geq 1\colon \rho_i < \life(f_i)\big\}$, so $T_J = T$. This $J$ is a stopping time for $(\cG_i)$. Therefore, conditionally given $f_J(\rho_J)$, the process $\hat f^y_J$ is independent of $\cG_J$, distributed like a \BESQ[-1] starting from $f_J(\rho_J)$. By the strong Markov property of $\bN$, the process $\ShiftRestrict{\bN}{(T,\infty)}$ is a \PRM[\Leb\otimes\mClade], independent of $\Restrict{\bN}{[0,T]}$, as desired.
 
 For the case $T = T^{\geq y}$, note that $T^{\geq y} = \inf_{n \geq 1}T^y_{n,1}$. It follows from Proposition \ref{prop:nice_level} that this infimum is almost surely attained by some $n$. Thus, the result in this case follows from the previous case.
\end{proof}

\begin{lemma}\label{lem:clade:mass_ker}
 The It\^o measure $\mClade$ admits a unique $m^0$-disintegration $\mClade(\,\cdot\;|\;m^0)$ with the scaling property
 \begin{equation}
  \mClade(B\;|\;m^0 = a) = \mClade\left(\scaleH[\frac1a][B]\;\middle|\;m^0 = 1\right) \qquad \text{for }a>0,\ B\in\SHxc{\pm}.\label{eq:clade_mass_ker}
 \end{equation}
 Likewise, $\mClade^+$ and $\mClade^-$ admit unique $m^0$-disintegrations with this same scaling property.
\end{lemma}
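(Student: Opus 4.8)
The plan is to invoke the scaling-and-disintegration machinery of Lemma \ref{lem:scl_ker} directly. To do so I must verify that the scaling map $\scaleH$ and the statistic $m^0$ satisfy the hypotheses of that lemma on each of the three measure spaces $(\Hxc{\pm},\cHxc{\pm},\mClade)$, $(\Hxc{+},\cHxc{+},\mClade^+)$, and $(\Hxc{-},\cHxc{-},\mClade^-)$. First I would check the abstract scaling-map axioms \eqref{eq:scl_ker:inv}: that $b\scaleH(c\scaleH N) = \scaleH[bc][N]$ and $\scaleH[1][N] = N$ follow immediately from the definition \eqref{eq:clade:xform_def} together with the corresponding identities for $\scaleB$ on excursions in Definition \ref{def:BESQ:scaling_def}, and the measure-scaling relation $\mClade(\scaleH[c][A]) = c^{-1/2}\mClade(A)$ is exactly the content of Lemma \ref{lem:clade:invariance} (with $p = 1/2$); the same identity for $\mClade^+$ and $\mClade^-$ follows because $\scaleH$ commutes with the bi-clade decomposition $N \mapsto (N^-,N^+)$, so one may apply Lemma \ref{lem:clade:invariance} after conditioning on the clade/anti-clade pieces, or re-run the proof of that lemma with $\mClade^\pm$ in place of $\mClade$ using Corollary \ref{cor:clade_PRM}.

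Next I would verify \eqref{eq:scl_ker:finite} for $\theta = m^0$. The homogeneity $m^0(\scaleH[c][N]) = c\, m^0(N)$ (i.e.\ $q=1$) is a direct computation from \eqref{eq:clade:mass_def}: scaling by $\scaleH$ replaces each spindle $f$ by $cf(\cdot/c)$ and stretches time in $\xi_N$ by $c^{3/2}$, so the argument $-\xi_N(s-)$ inside $f$ rescales consistently and the outer factor $c$ comes from $f \mapsto cf$; one should be slightly careful that the ``$\max$'' of the left and right limits in \eqref{eq:clade:mass_def} behaves correctly, but since $\scaleB$ acts pointwise on the time axis this is routine. The finiteness condition $\mClade\{m^0 > 1\} < \infty$ is exactly Lemma \ref{lem:heavy_clades_discrete}; the analogous statements $\mClade^{\pm}\{m^0 > 1\} < \infty$ follow from the identity $m^0(N) = m^0(N^+) = m^0(N^-)$ noted just before that lemma, together with the fact that $\mClade^+$ and $\mClade^-$ are pushforwards of $\mClade$ under the (measurable) maps $N\mapsto N^+$ and $N\mapsto N^-$, so $\{m^0>1\}$ has the same (finite) mass under all three.

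With both clusters of hypotheses checked, Lemma \ref{lem:scl_ker} applies verbatim and produces, for each of the three measures, a unique $m^0$-disintegration $\mClade(\,\cdot\;|\;m^0)$ characterized by the property that if $X\sim\mClade(\,\cdot\;|\;m^0 = a)$ then $\scaleH[a^{-1}][X] \sim \mClade(\,\cdot\;|\;m^0 = 1)$, which is precisely the scaling identity \eqref{eq:clade_mass_ker} since $q=1$; and likewise for $\mClade^+$, $\mClade^-$. The one genuine point requiring care — the main obstacle — is confirming that $m^0$ is a bona fide $(0,\infty)$-valued measurable function on the relevant spaces: $m^0$ is defined on all of $\Hxc{\pm}$ by \eqref{eq:clade:mass_def}, but it vanishes on the degenerate bi-clades (those $N$ for which $\xi(N)$ is not typical, where no spindle crosses level $0$), so strictly speaking the hypotheses of Lemma \ref{lem:scl_ker} hold only on the measurable subset of typical bi-clades. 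This is harmless because, by Proposition \ref{prop:nice_level}(i) applied at level $0$ together with the \PRM\ description of $\bF^0$ in Proposition \ref{prop:bi-clade_PRM}(iii), $\mClade$ (and hence $\mClade^{\pm}$) assigns zero mass to the degenerate bi-clades; so one first restricts each measure to its typical part, applies Lemma \ref{lem:scl_ker} there, and then notes the disintegration extends trivially (and uniquely, up to the null set of degenerate bi-clades) to the whole space. Measurability of $m^0$ itself is immediate from the measurability of evaluation and restriction maps on $\H$ and $\Exc$.
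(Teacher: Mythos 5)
Your proposal is correct and follows essentially the same route as the paper: the paper's proof likewise consists of checking that Lemma \ref{lem:clade:invariance}, Lemma \ref{lem:heavy_clades_discrete}, and the scaling $m^0(c\scaleH N)=cm^0(N)$ from Table \ref{tbl:clade_scaling} fulfil the hypotheses of Lemma \ref{lem:scl_ker}. Your extra care about degenerate bi-clades (where $m^0=0$) and about the $\mClade^{\pm}$ cases is a sound, if slightly more detailed, filling-in of points the paper leaves implicit.
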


\begin{proof}
 Lemmas \ref{lem:clade:invariance} and \ref{lem:heavy_clades_discrete} and the scaling property noted in Table \ref{tbl:clade_scaling} satisfy the hypotheses of Lemma \ref{lem:scl_ker}, which then implies the claimed result.
\end{proof}

\begin{proposition}\label{prop:clade_splitting}
 \begin{enumerate}[label=(\roman*), ref=(\roman*)]
  \item Fix $a>0$. Let $\ol N_a$ have distribution $\mClade(\,\cdot\;|\;m^0 = a)$, and let $(\ol N_a^+,\ol N_a^-)$ denote its decomposition into clade and anti-clade. Then $(\ol N_a^+,\ol N_a^-)$ has distribution
   \begin{equation}
    \mClade^+(\,\cdot\;|\;m^0 = a) \otimes \mClade^-(\,\cdot\;|\;m^0 = a);\label{eq:clade_split_joint_law}
   \end{equation}
   in particular, $\ol N_a^+$ and $\ol N_a^-$ are independent.\label{prop:c_s:split}
  \item Let $\hat f$ denote a \BESQ[-1] started at $\hat f(0) = a$ and absorbed upon hitting $0$, independent of $\bN$, and let $\widehat T^0 := \inf\big\{t>0\colon \xi_{\bN}(t) = -\zeta(\hat f)\big\}$. We define
   \begin{equation}
    \ol N_a^+ := \Dirac{0,\hat f} + \Restrict{\bN}{\left[0,\widehat T^0\right)}.
   \end{equation}
   Then $\ol N_a^+$ has the law $\mClade^+(\,\cdot\;|\;m^0 = a)$ and $\reverseH(\ol N_a^+)$ has the law $\mClade^-(\,\cdot\;|\;m^0 = a)$, where $\reverseH$ is time reversal as in \eqref{eq:clade:xform_def}.\label{prop:c_s:clade_law}
 \end{enumerate}
\end{proposition}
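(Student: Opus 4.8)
The plan is to deduce the whole proposition from the mid-spindle Markov property (Lemma~\ref{lem:mid_spindle_Markov}) applied at the stopping times $T^0_{n,1}$ of \eqref{eq:Tnj}, together with the uniqueness of scaling-disintegrations (Lemmas~\ref{lem:scl_ker} and~\ref{lem:clade:mass_ker}) and the reversal-invariance of $\mClade$ (Lemma~\ref{lem:clade:invariance}). First I would record two purely formal reductions. Since $\bF^{\geq 0}$, resp.\ $\bF^{\leq 0}$, is obtained from $\bF^0$ by replacing each bi-clade atom $N$ by $N^+$, resp.\ $N^-$ (Definition~\ref{def:bi-clade_PP}), Proposition~\ref{prop:bi-clade_PRM}(iii) gives $\mClade^+=(N\mapsto N^+)_*\mClade$ and $\mClade^-=(N\mapsto N^-)_*\mClade$. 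As $\mClade$-a.e.\ bi-clade is typical, so that $m^0(N^{\pm})=m^0(N)$ (the identity recorded just before Lemma~\ref{lem:heavy_clades_discrete}), and as $N\mapsto N^{\pm}$ commutes with $\scaleH$, the maps $a\mapsto(N\mapsto N^{\pm})_*\bigl(\mClade(\,\cdot\;|\;m^0=a)\bigr)$ are $m^0$-disintegrations of $\mClade^{\pm}$ with the scaling property \eqref{eq:clade_mass_ker}; by the uniqueness in Lemma~\ref{lem:clade:mass_ker} they equal $\mClade^{\pm}(\,\cdot\;|\;m^0=a)$ for every $a>0$. In particular, if $\ol N_a\sim\mClade(\,\cdot\;|\;m^0=a)$ then its clade and anti-clade parts have marginal laws $\mClade^+(\,\cdot\;|\;m^0=a)$ and $\mClade^-(\,\cdot\;|\;m^0=a)$. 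This is the ``marginals'' half of~(i), and it reduces everything to three points: \textbf{(A)} identify this clade marginal with the explicit law in~(ii); \textbf{(B)} the $\reverseH$-relation between the two marginals; \textbf{(C)} the conditional independence in~(i).

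For \textbf{(A)} and \textbf{(C)} I would fix $n$ and consider the bi-clade $\bN^{(n)}$ of $\bN$ about level $0$ containing $T:=T^0_{n,1}$. Since $T$ is the first time any spindle of $\bN$ crosses level $0$ with level-$0$ mass $>1/n$, and a typical bi-clade has exactly one crossing spindle whose level-$0$ mass is its $m^0$, the bi-clade $\bN^{(n)}$ is the first atom of $\bF^0$ (in time, equivalently in local time) with $m^0>1/n$; because $\mClade\{m^0>1/n\}=n^{1/2}\mClade\{m^0>1\}<\infty$ by Lemma~\ref{lem:heavy_clades_discrete} and scaling, the $\PRM[\Leb\otimes\mClade]$ property of $\bF^0$ forces $\bN^{(n)}\sim\mClade(\,\cdot\;|\;m^0>1/n)$. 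Write $a=m^0(\bN^{(n)})=f_T\bigl(-\bX(T-)\bigr)$ (continuity of $f_T$ makes the left-limit in \eqref{eq:clade:mass_def} irrelevant). Using \eqref{eq:bi-clade_split}, \eqref{eq:spindle_split}, the identity $\zeta(\hat f^0_T)=\xi_{\bN}(T)$, and additivity of $\xi$ under splitting at $T$, the clade part is $(\bN^{(n)})^+=\Dirac{0,\hat f^0_T}+\Restrict{\bN'}{[0,u^*)}$ with $\bN'=\ShiftRestrict{\bN}{(T,\infty)}$ and $u^*=\inf\{u>0\colon\xi_{\bN'}(u)=-\zeta(\hat f^0_T)\}$, hence a measurable function of $\bigl(\bN',\hat f^0_T\bigr)$; symmetrically $(\bN^{(n)})^-$ is a measurable function of $\bigl(\Restrict{\bN}{[0,T)},\check f^0_T\bigr)$. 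Lemma~\ref{lem:mid_spindle_Markov} then gives, conditionally on $m^0=a$, that these two data packets are independent, so $(\bN^{(n)})^+\perp(\bN^{(n)})^-$; and that $\hat f^0_T$ is a $\BESQ[-1]$ from $a$ independent of the fresh $\PRM[\Leb\otimes\mBxc]$ $\bN'$, so $(\bN^{(n)})^+$ is distributed exactly as the $\ol N_a^+$ built in part~(ii). Conditioning $\mClade(\,\cdot\;|\;m^0>1/n)$ further on $m^0=a$ (legitimate since $\{m^0>1/n\}$ is $m^0$-measurable), combining with the first paragraph, and letting $n\to\infty$ shows, for Lebesgue-a.e.\ $a>0$, that $\mClade^+(\,\cdot\;|\;m^0=a)$ is the law of $\ol N_a^+$ and that the clade and anti-clade parts of $\mClade(\,\cdot\;|\;m^0=a)$ are independent; since both the explicit construction $\ol N_a^+$ and the scaling-disintegrations transform the same way under $\scaleH[c][\,\cdot\,]$ (BESQ$(-1)$-scaling together with $\scaleH[c][\bN]\stackrel d=\bN$ and $\mClade(\scaleH[c][\,\cdot\,])=c^{-1/2}\mClade$ from Lemma~\ref{lem:clade:invariance}), these identities upgrade from a.e.\ $a$ to every $a>0$. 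This settles \textbf{(A)} and \textbf{(C)}, hence all of~(i) and the first assertion of~(ii).

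For \textbf{(B)}: a short computation with \eqref{eq:bi-clade_split}, \eqref{eq:spindle_split} and Definition~\ref{def:BESQ:scaling_def} shows that for typical $N$ one has $\reverseH(N^-)=(\reverseH(N))^+$ and $m^0(\reverseH(N))=m^0(N)$ (reversing a spindle does not change its value at any level). Combined with $\reverseH_*\mClade=\mClade$ (Lemma~\ref{lem:clade:invariance}) and the first paragraph, this yields $\mClade^-=(N\mapsto N^-)_*\mClade=(N\mapsto(\reverseH(N))^-)_*\mClade=\reverseH_*\,(N\mapsto N^+)_*\mClade=\reverseH_*\mClade^+$, and, disintegrating with $m^0\circ\reverseH=m^0$, $\mClade^-(\,\cdot\;|\;m^0=a)=\reverseH_*\bigl(\mClade^+(\,\cdot\;|\;m^0=a)\bigr)$ for all $a>0$; applying $\reverseH$ to the first assertion of~(ii) gives the second. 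The step I expect to be the main obstacle is the middle paragraph's bookkeeping: checking carefully that the two halves of $\bN^{(n)}$ really are measurable functions of the ``before-$T$'' and ``after-$T$'' data \emph{separately} — so that the conditional independence of Lemma~\ref{lem:mid_spindle_Markov} can be invoked — and handling the passage from a.e.\ $a$ to every $a$ cleanly via the scaling relations; the remaining ingredients are an organised application of results already in hand.
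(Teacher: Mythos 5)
Your proposal is correct and follows essentially the same route as the paper: realize the conditioned bi-clade law via the first bi-clade of $\bF^0$ whose central spindle crosses level $0$ with mass above a threshold, apply the mid-spindle Markov property (Lemma \ref{lem:mid_spindle_Markov}) at that crossing time to obtain both the independence of the clade/anti-clade halves and the explicit description of the clade half, and transfer to the anti-clade via $\reverseH$-invariance together with $(\reverseH(N))^- = \reverseH(N^+)$ and $m^0\circ\reverseH=m^0$. The differences are only expository: where you condition on $m^0=a$ inside $\mClade(\,\cdot\;|\;m^0>1/n)$ and upgrade from Lebesgue-a.e.\ $a$ to all $a$ by scaling, the paper (taking $n=1$) reaches exact values directly by rescaling the random bi-clade through the last clause of Lemma \ref{lem:scl_ker}, and your pushforward/uniqueness-of-disintegration argument simply makes precise the paper's remark that the marginal laws ``follow straight from the definitions.''
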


\begin{proof} 
 Let $T = T^y_{n,j}$ be as in \eqref{eq:Tnj}, with $y=0$, $n=j=1$, and let $S=\ell^0(T)$ and $N_S=\ShiftRestrict{\bN}{(\tau^0(S-)\tau^0(S))}$.
 Then $(S,N_S)$ is the earliest point of $\bF^0$ in $\{N\in\Hxc{\pm}\colon m^0(N) > 1\}$ 
  and $(T,f_T)$ is the spindle in $\bN$ that corresponds to the jump of $\xi(N_S)$ across level zero.  
 By the \PRM\ description of $\bF^0$, the bi-clade $N_S$ has law $\mClade(\,\cdot\;|\;m^0 > 1)$. 
 Moreover, by Lemma \ref{lem:scl_ker}, $\ol N:= \scaleH[\frac{1}{m^0(N_S)}][N_S]$ has distribution $\mClade(\,\cdot\;|\;m^0 = 1)$, 
  and $\ol N_a:=\scaleH[a][\ol N]$ has law $\mClade(\,\cdot\;|\;m^0=a)$.
 
 The marginal distributions of $\ol N_a^+$ and $\ol N_a^-$ stated in (i) follow straight from the definitions of $\mClade$, $\mClade^+$ and $\mClade^-$. 
 The independence of $\ol N_a^+$ and $\ol N_a^-$ asserted in (i) and the description of $\mClade^+$ stated in (ii) follow from Lemma \ref{lem:mid_spindle_Markov}. 
 To obtain the corresponding description of $\mClade^-$ in (ii), observe that
 \begin{equation}
  (\reverseH(N))^- = \reverseH(N^+) \quad \text{and} \quad m^0(\reverseH(N)) = m^0(N).
 \end{equation}
 (We refer the reader back to Figure \ref{fig:bi-clade_decomp} for an illustration; $\reverseH$ time-reversal corresponds to holding the page upside down.) 
 By Lemma \ref{lem:clade:invariance}, if $\ol N_a$ has law $\mClade(\,\cdot\;|\;m^0=a)$ then so does $\reverseH(\ol N_a)$. 
 Thus, $\reverseH(\ol N_a^+)$ has law $\mClade^-(\,\cdot\;|\;m^0=a)$, as desired.
\end{proof}

\begin{definition}\label{def:clade_stats}
 We define several statistics of bi-clades $N\in\Hxc{\pm}$.\vspace{-8pt}
 \begin{list}{}{\setlength{\itemsep}{2pt}\setlength{\rightmargin}{0pt}\setlength{\leftmargin}{5pt}\setlength{\topsep}{-6pt}}
 \begin{multicols}{2}
  \item[\emph{Overshoot:}] $J^+(N) := \xi_N(T_0^+(N))$.
  \item[\emph{Undershoot:}] $J^-(N) := -\xi_N(T_0^+(N)-)$.
  \item[\emph{Crossing size:}] $J(N) := J^+(N) + J^-(N)$.
  \item[\emph{Lifetime:}] $\displaystyle\life^+(N) := \sup_{t\in[0,\len(N)]} \xi_N(t)$.
  \item[\emph{Anti-clade lifetime:}] $\displaystyle\life^-(N) := -\!\!\!\!\inf_{t\in[0,\len(N)]}\!\xi_N(t)$.
 \end{multicols}\vspace{-13pt}
\end{list}
\end{definition}

\begin{figure}
 \centering
 \input{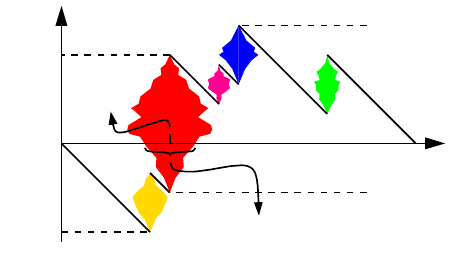_t}
 \caption{A bi-clade, with the statistics of \eqref{eq:clade:mass_def} and Definition \ref{def:clade_stats} labeled.\label{fig:clade_stats}}
\end{figure}

We call $\life^+$ ``lifetime'' rather than ``maximum'' since values in the scaffolding function play the role of times in the evolving interval partitions $(\skewer(y,N),\ y\geq 0)$ that we ultimately wish to study. The above quantities appear labeled in Figure \ref{fig:clade_stats}. The rates at which they scale under $\scaleH$ are listed in Table \ref{tbl:clade_scaling}. By Lemma \ref{lem:scl_ker}, $\mClade$ admits unique kernels with scaling properties that allow us to condition on the exact value of any one of these quantities and get a resulting probability distribution.

\begin{table}
 \centering
 \begin{tabular}{|c|Sc|c|}\hline
  $J^+(\scaleH[c][N]) = cJ^+(N)$ & $J^-(\scaleH[c][N]) = cJ^-(N)$ & $J(\scaleH[c][N]) = cJ(N)$\\\hline
  $\life^+(\scaleH[c][N]) = c\life^+(N)$ & $\life^-(\scaleH[c][N]) = c\life^-(N)$ & $m^0(c\scaleH N) = cm^0(N)$\\\hline
  $T_0^+(c\scaleH N) = c^{3/2}T_0^+(N)$ & $\len(c \scaleH N) = c^{3/2}\len(N)$ & $\ell^y_{c \scaleH N}(t) = c^{1/2}\ell^{y/c}_{N}(tc^{-3/2})$\\\hline
 \end{tabular}\vspace{4pt}
 \caption{How statistics of \eqref{eq:clade:mass_def} and Definition \ref{def:clade_stats} scale as $N$ is scaled, as in \eqref{eq:clade:xform_def}.\label{tbl:clade_scaling}}
\end{table}

\begin{proposition}\label{prop:clade:stats}
 \begin{enumerate}[label=(\roman*), ref=(\roman*)]
  \item $\displaystyle \mClade\big\{ m^0 > a \big\} = \frac{1}{\sqrt{\pi}}a^{-1/2}.$\label{item:CS:mass}
  \item $\displaystyle \mClade\big\{ \life^+ > z \big\} = \frac{1}{\sqrt{2}}z^{-1/2}.$\label{item:CS:max}
  \item $\displaystyle \mClade\big\{ J^+ \in dy\;\big|\;m^0 = a\big\} = \frac{a^{3/2}}{\sqrt{2\pi}y^{5/2}}e^{-a/2y}dy.$\label{item:CS:over:mass}
  \item $\displaystyle \mClade\big\{ \life^+ \leq z\;\big|\;m^0 = a\big\} = e^{-a/2z}.$\label{item:CS:max:mass}
  \item $\displaystyle \mClade\big\{ m^0 \leq a\;\big|\;J^+ = y\big\} = 1 - e^{-a/2y}.$\label{item:CS:mass:over}
  \item $\displaystyle \mClade\big\{ \life^+ \leq z\;\big|\;J^+ = y\big\} = \cf\{z\geq y\}\sqrt{\frac{z-y}{z}}.$\label{item:CS:max:over}
  \item $\displaystyle \mClade\big\{ m^0 \in da\;\big|\;\life^+ \geq z\big\} = \frac{z^{1/2}}{\sqrt{2\pi}a^{3/2}}(1-e^{-a/2z})da.$\label{item:CS:mass:max}
 \end{enumerate}\vspace{-2pt}\ 
 
 \noindent All of these equations remain true if we replace all superscript `+'s with `-'s.
\end{proposition}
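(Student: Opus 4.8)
The plan is to reduce all seven formulas, together with the final $+\!\leftrightarrow\!-$ symmetry, to three essentially independent inputs: the conditional law of $J^+$ given $m^0$ (which yields \itemref{item:CS:over:mass}), a two-sided exit computation for the scaffolding (which yields \itemref{item:CS:max:over}), and the tail of $m^0$ under $\mClade$ (which yields \itemref{item:CS:mass}); everything else then follows by elementary integration and Bayes' rule. Throughout I use Table \ref{tbl:clade_scaling} together with $\mClade(\scaleH[c][\,\cdot\,]) = c^{-1/2}\mClade$ from Lemma \ref{lem:clade:invariance}, which reduce each \emph{tail} in \itemref{item:CS:mass}, \itemref{item:CS:max} to a single constant, and the elementary identities $m^0(N)=m^0(N^+)$, $J^+(N)=\xi_N(T_0^+(N))=\life(\hat f^0_{T_0^+})$ and $\life^+(N)=\sup_{t\ge T_0^+(N)}\xi_N(t)=\life^+(N^+)$, which pass from a bi-clade to its clade part $N^+$.

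For \itemref{item:CS:over:mass}: by Proposition \ref{prop:clade_splitting}\itemref{prop:c_s:clade_law}, under $\mClade(\,\cdot\mid m^0=a)$ the upper piece $\hat f^0_{T_0^+}$ of the central spindle is a \BESQ[-1] started at $a$ and absorbed at $0$, and $J^+$ is its lifetime; Lemma \ref{lem:BESQ:length} makes this lifetime $\InvGammaDist[3/2,a/2]$, and the change of variables from $a/2J^+$ to $J^+$ (with $\Gamma(3/2)=\sqrt\pi/2$) produces the stated density. For \itemref{item:CS:max:over}: by Proposition \ref{prop:clade_splitting}\itemref{prop:c_s:clade_law} again, under $\mClade(\,\cdot\mid J^+=y)$ the scaffolding $\xi(N^+)$ is, after its time-$0$ jump from $0$ to $y$, a copy of $\bX$ started at $y$ and stopped when it first returns to $0$ (continuously, by spectral positivity), so $\mClade\{\life^+\le z\mid J^+=y\}$ is for $z\ge y$ the probability that a spectrally positive \Stable[\frac32] process started at $y$ reaches $0$ before entering $[z,\infty)$; passing to $-\bX$, which is spectrally negative with scale function a constant multiple of $x\mapsto x^{1/2}$ (see \cite{BertoinLevy}), the classical ratio $W(z-y)/W(z)$ gives $\sqrt{(z-y)/z}$, while for $z<y$ the probability is $0$ since $\life^+\ge J^+$.

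For \itemref{item:CS:mass}: by the \PRM\ description of $\bF^0$ in Proposition \ref{prop:bi-clade_PRM}\itemref{prop:c_s:split}, $\mClade\{m^0>a\}$ is the expected number of spindles $(t,f)$ of $\bN$ with $\bX(t-)<0$, $f(-\bX(t-))>a$, $\ell^0(t)\le1$; the compensation formula for $\bN$ turns this into $\EV\big[\int_0^{\tau^0(1)}\cf\{\bX(s)<0\}\,\mBxc\{f(-\bX(s))>a\}\,ds\big]$. From Lemma \ref{lem:BESQ:existence} and the \BESQ[5] transition density \cite[Equation (50)]{GoinYor03} one gets the entrance-law tail $\mBxc\{f(u)>a\}=\frac{1}{\pi\sqrt2}u^{-3/2}e^{-a/2u}$ for $u>0$ (which correctly reduces to $\mBxc\{\life>u\}$ as $a\downto0$), and the occupation-density formula rewrites the expectation as $\frac{1}{\pi\sqrt2}\int_0^\infty u^{-3/2}e^{-a/2u}\,\EV[\ell^{-u}(\tau^0(1))]\,du$. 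Excursion theory of $\bX$ about level $0$ then identifies $\EV[\ell^{-u}(\tau^0(1))]$ as the product of the expected number of sub-zero excursions reaching $-u$ at local times in $[0,1]$, namely $\mSxc\{\inf g<-u\}=\mSxc\{\sup g>u\}$ by the reversal invariance in Lemma \ref{lem:stable:invariance}, and the expected local time at $-u$ contributed by one such excursion, which by the strong Markov property at first passage to $-u$ equals $\EV_0[\ell^0(T^{\ge u})]=1/\mSxc\{\sup g>u\}$ (the standard identity that excursions reaching height $u$ occur at local-time rate $\mSxc\{\sup g>u\}$); hence $\EV[\ell^{-u}(\tau^0(1))]=1$, and a Gamma integral gives $\mClade\{m^0>a\}=\frac{1}{\sqrt\pi}a^{-1/2}$.

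The rest are assembled from these. Differentiating \itemref{item:CS:mass} gives $\mClade\{m^0\in da\}=\frac{1}{2\sqrt\pi}a^{-3/2}\,da$. For \itemref{item:CS:max:mass} I integrate \itemref{item:CS:over:mass} against \itemref{item:CS:max:over}: $\mClade\{\life^+\le z\mid m^0=a\}=\int_0^z\sqrt{(z-y)/z}\,\mClade\{J^+\in dy\mid m^0=a\}$, and the substitution $s=z/y$ turns the integrand into $\sqrt{s-1}\,e^{-as/2z}$, so the Gamma integral yields $e^{-a/2z}$. For \itemref{item:CS:max}, $\mClade\{\life^+>z\}=\int_0^\infty(1-e^{-a/2z})\,\mClade\{m^0\in da\}$ by \itemref{item:CS:max:mass}, and $\int_0^\infty(1-e^{-\lambda a})a^{-3/2}\,da=2\sqrt{\pi\lambda}$ gives $\frac{1}{\sqrt2}z^{-1/2}$. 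Parts \itemref{item:CS:mass:over} and \itemref{item:CS:mass:max} are Bayes' rule on the densities already found: $\mClade\{m^0\in da\mid J^+=y\}\propto y^{-5/2}e^{-a/2y}\,da$ normalizes to $\ExpDist[1/2y]$, which is \itemref{item:CS:mass:over}; and $\mClade\{m^0\in da\mid\life^+\ge z\}\propto(1-e^{-a/2z})a^{-3/2}\,da$, normalized via \itemref{item:CS:max}, is the density in \itemref{item:CS:mass:max}. Finally $\reverseH$ of \eqref{eq:clade:xform_def} preserves $\mClade$ (Lemma \ref{lem:clade:invariance}), fixes $m^0$, and interchanges $(J^+,\life^+)$ with $(J^-,\life^-)$ (cf.\ Figure \ref{fig:bi-clade_decomp}), giving all the `$-$' versions. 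The one genuinely non-formal step is the constant $1/\sqrt\pi$ in \itemref{item:CS:mass}, and inside it the evaluation $\EV[\ell^{-u}(\tau^0(1))]=1$: although this drops out cleanly as a product of two reciprocal excursion-rate quantities, it rests on the fluctuation theory of spectrally one-sided stable processes (downward creeping, and the local-time rate of excursions of a given height) — the same circle of facts used in the scale-function computation for \itemref{item:CS:max:over} — and keeping the measurability bookkeeping for the compensation formula honest (predictability of $\cf\{\bX(t-)<0,\,\ell^0(t)\le1\}$) is a routine but necessary technicality.
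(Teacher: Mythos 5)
Your proposal is correct, and for items \itemref{item:CS:over:mass}, \itemref{item:CS:max:over} and the assembly of \itemref{item:CS:max:mass}, \itemref{item:CS:max}, \itemref{item:CS:mass:over}, \itemref{item:CS:mass:max} plus the $\reverseH$-symmetry it coincides with the paper's proof (Appendix~\ref{sec:clade_stats}): the \BESQ[-1] lifetime from Proposition~\ref{prop:clade_splitting} and Lemma~\ref{lem:BESQ:length}, the two-sided exit via scale functions, and the same Gamma integrals and Bayes manipulations. Where you genuinely diverge is the normalization in \itemref{item:CS:mass}. The paper fixes the constant in $\mClade\{m^0\in da\}=Ca^{-3/2}da$ indirectly: it first computes the length tail from the inverse-local-time subordinator (Proposition~\ref{prop:inv_LT:subord}), then pins $\mClade\{J>y\}$ by matching Laplace exponents against the first-passage subordinator (Proposition~\ref{prop:hitting_time:subord}), and finally convolves the two i.i.d.\ \InvGammaDist[\frac32,\frac a2] over/undershoots to solve for $C$. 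You instead compute $\mClade\{m^0>a\}$ directly from Definition~\ref{def:bi-clade_Ito} via the compensation formula for $\bN$, the entrance-law tail $\mBxc\{f(u)>a\}=\tfrac{1}{\pi\sqrt2}u^{-3/2}e^{-a/2u}$, and the identity $\EV[\ell^{-u}(\tau^0(1))]=1$; I checked that identity and it is right: the rate $\mSxc\{\inf g<-u\}=\mSxc\{\sup g>u\}$ of excursions reaching $-u$ cancels against the expected local time $\EV_{-u}[\ell^{-u}(T^0)]=\EV_0[\ell^0(T^u)]=\EV_0[\ell^0(T^{\geq u})]=1/\mSxc\{\sup g>u\}$ per such excursion, the middle equality using spectral positivity (no local time at $0$ accrues between $T^{\geq u}$ and $T^u$). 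Your route buys a self-contained derivation that never touches the length or crossing-size laws and whose normalization-dependent quantities cancel; the paper's route buys the $\len$ and $J$ tails (items \itemref{item:CS2:len}, \itemref{item:CS2:crossing} of Proposition~\ref{prop:clade:stats2}) as by-products, which it uses elsewhere.

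Two small points of bookkeeping, neither a gap. First, your entrance-law tail does not quite follow from Lemma~\ref{lem:BESQ:existence} alone; the clean source is the Pitman--Yor description $\Lambda\{f(u)\in dz,\,\life>u\}=z^{-3/2}q^5_u(0,z)\,dz$, i.e.\ the same description (3.2) the paper invokes for Lemma~\ref{lem:BESQ:exc_length}, and your formula is consistent with that lemma as $a\downto0$. Second, you rightly flag the predictability needed for the compensation formula with the integrand $\cf\{\bX(s-)<0,\,\ell^0(s)\le1\}\,\mBxc\{f(-\bX(s-))>a\}$; since $\bX(s-)$ and $\ell^0(s)$ are left-continuous adapted functionals of $\restrict{\bN}{[0,s)}$, this is indeed routine.
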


The proof of this is given in Appendix \ref{sec:clade_stats}; it is based on Proposition \ref{prop:clade_splitting} and well-known properties of our spectrally one-sided \Stable[\frac32] process.

\begin{corollary}\label{cor:clade_law_given_over}
 Take $y>0$. Let $A\sim \ExpDist[1/2y]$. Conditionally given $A$, let $\bff$ denote a \BESQ[-1] first-passage bridge from $\bff(0) = A$ to $\bff(y) = 0$, in the sense of \cite{BertChauPitm03}. Let $\bN$ be a \PRM[\Leb\otimes\mBxc], with $T^{-y}$ the hitting time of $-y$ in $\xi(\bN)$. Then $\Dirac{0,\bff}+\restrict{\bN}{[0,T^{-y}]}$ has law $\mClade^+(\,\cdot\;|\;J^+ = y)$.
\end{corollary}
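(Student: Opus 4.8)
The plan is to read the answer off two facts already established: the description of $\mClade^+$ conditioned on the central-spindle mass $m^0$ in Proposition \ref{prop:clade_splitting}\ref{prop:c_s:clade_law}, and the conditional law of $m^0$ given the overshoot $J^+$ in Proposition \ref{prop:clade:stats}\ref{item:CS:mass:over}. Recall that under $\mClade^+(\,\cdot\;|\;m^0 = a)$ a clade is realized as $\Dirac{0,\widehat f}+\Restrict{\bN}{[0,\widehat T^0)}$, with $\widehat f$ a \BESQ[-1] started at $a$ and absorbed at $0$, $\bN$ an independent \PRM[\Leb\otimes\mBxc], and $\widehat T^0 = \inf\{t>0\colon\xi_{\bN}(t) = -\zeta(\widehat f)\}$; for this realization the overshoot equals $J^+ = \zeta(\widehat f)$, the absorption time of $\widehat f$. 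Since both $m^0$ and $J^+$ factor through the clade part $N\mapsto N^+$ (so that the pushforwards of $\mClade^+$ under $m^0$ and $J^+$ agree with those of $\mClade$), the conditional law of $m^0$ given $J^+$ is the same under $\mClade$ and $\mClade^+$.

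First I would further condition this realization on $\{J^+ = y\}$. Because $\bN$ is independent of $\widehat f$ and $\{J^+ = y\} = \{\zeta(\widehat f) = y\}$ is $\widehat f$-measurable (with a genuine density by Lemma \ref{lem:BESQ:length}), the conditioning alters only the law of $\widehat f$: it becomes a \BESQ[-1] started at $a$ forced to be absorbed at $0$ at time exactly $y$, i.e.\ the \BESQ[-1] first-passage bridge $\bff_{a,y}$ from $a$ to $0$ of length $y$ in the sense of \cite{BertChauPitm03}. On this event $\widehat T^0 = T^{-y}$ is a function of $\bN$ alone, so $\Restrict{\bN}{[0,\widehat T^0)}$ keeps its unconditioned law $\Restrict{\bN}{[0,T^{-y})}$ and stays independent of the spindle; and as a fixed level is a.s.\ not a jump time of the spectrally positive process $\xi(\bN)$, we may replace $\Restrict{\bN}{[0,T^{-y})}$ by $\Restrict{\bN}{[0,T^{-y}]}$. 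This identifies $\mClade^+(\,\cdot\;|\;m^0 = a,\,J^+ = y)$ as the law of $\Dirac{0,\bff_{a,y}}+\Restrict{\bN}{[0,T^{-y}]}$. The rigorous version of ``conditioning on $J^+ = y$'' rests on the regular conditional distributions that exist because $\Hxc{+}$ is a Borel space (Theorem \ref{thm:Lusin}, Lemma \ref{lem:bi-clade_PP_meas}), organized through the scaling disintegrations of Lemma \ref{lem:scl_ker}.

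Next I would integrate out $a$. Disintegrating $\mClade^+$ first over $m^0$ and then over $J^+$ within each fibre, and applying Fubini, yields that $\mClade^+(\,\cdot\;|\;J^+ = y)$ is the $\mu_y$-mixture over $a>0$ of the laws just described, where $\mu_y$ is the conditional law of $m^0$ given $J^+ = y$; by Proposition \ref{prop:clade:stats}\ref{item:CS:mass:over} one has $\mu_y = \ExpDist[1/2y]$. Since $\bff_{A,y}$ with $A\sim\ExpDist[1/2y]$ is precisely the bridge $\bff$ in the statement, this gives $\mClade^+(\,\cdot\;|\;J^+ = y)$ as the law of $\Dirac{0,\bff}+\Restrict{\bN}{[0,T^{-y}]}$. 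The disintegration identity delivers this for $(J^+)_*\mClade^+$-a.e.\ $y$; to obtain it for every $y>0$ (i.e.\ for the canonical version produced by Lemma \ref{lem:scl_ker}) I would note that both sides transform the same way under $\scaleH[c][\cdot]$ --- the left side by the uniqueness clause of Lemma \ref{lem:scl_ker} together with $\mClade^+(\scaleH[c][\cdot]) = c^{-1/2}\mClade^+(\cdot)$ and $J^+(\scaleH[c][N]) = cJ^+(N)$ (Lemma \ref{lem:clade:invariance}, Table \ref{tbl:clade_scaling}), the right side by the scaling of \BESQ[-1] first-passage bridges under $\scaleB$ and of \PRM[\Leb\otimes\mBxc] under $\scaleH$ (Lemma \ref{lem:clade:invariance}) --- so it suffices to check the identity at $y=1$, which holds since $(J^+)_*\mClade^+$ has support $(0,\infty)$.

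The step I expect to be the main obstacle is the measure-theoretic bookkeeping of the iterated conditioning: as $\mClade^+$ is merely $\sigma$-finite one cannot naively condition first on $m^0 = a$ and then on $J^+ = y$, and one must instead assemble the joint $(m^0,J^+)$-disintegration from the one-dimensional scaling disintegrations of Lemma \ref{lem:scl_ker} and verify their compatibility. Everything else --- the independence of $\widehat f$ and $\bN$, the identification of the lifetime-conditioned \BESQ[-1] as a first-passage bridge, and the evaluation $\mu_y = \ExpDist[1/2y]$ --- is either immediate or supplied by the cited results.
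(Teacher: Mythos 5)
Your proposal is correct and follows essentially the same route as the paper's proof: describe $\mClade^+(\,\cdot\;|\;m^0=a)$ via Proposition \ref{prop:clade_splitting}, note $J^+=\zeta(\bff)$ so that further conditioning on $J^+=y$ turns the initial spindle into a \BESQ[-1] first-passage bridge independent of the stopped \PRM, and then mix over $a$ with $\mClade^+(m^0\in da\;|\;J^+=y)=\ExpDist[1/2y]$ from Proposition \ref{prop:clade:stats}\ref{item:CS:mass:over}. The only difference is that you spell out the disintegration bookkeeping and the scaling argument upgrading the identity from a.e.\ $y$ to every $y$, which the paper leaves implicit.
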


\begin{proof}
 By Proposition \ref{prop:clade_splitting}, under $\mClade^+(\,\cdot\;|\;m^0 = a)$ a clade $N^+$ has the form $\DiracBig{0,\bff} + \bN'$. Here, $\bff$ is a \BESQ[-1] starting from $a$ and, conditionally given $\bff$, the point process $\bN'$ is distributed like $\bN$ stopped at time $T^{-\life(\bff)}$. Then $J^+(N^+) = \zeta(\bff)$. Thus, we may further condition $\mClade^+(\,\cdot\;|\;m^0 = a,\ J^+ = y)$. Under this new law, $N^+$ has the same form, and $\bff$ is now distributed like a \BESQ[-1] first-passage bridge from $a$ to $0$ in time $y$. So, since its lifetime is fixed, in this setting $\bff$ is independent of $\bN'$. Now,
 $$\mClade^+(\,\cdot\;|\;J^+ = y) = \int \mClade^+(\,\cdot\;|\;m^0 = a,\ J^+ = y)\mClade^+(m^0\in da\;|\;J^+ = y).$$
 The conditional law of $m^0$ above appears in Proposition \ref{prop:clade:stats} \ref{item:CS:mass:over}. In particular, under this law, $m^0\sim \ExpDist[1/2y]$.
\end{proof}

\subsection{The skewer map and type-1 evolutions}
\label{sec:type-1:def}

We now make a slight modification to Definition \ref{def:skewer} of the aggregate mass process, with the aim of having it apply nicely to anti-clades. Compare this to \eqref{eq:clade:mass_def}, which defines $m^0$.

\begin{definition}\label{def:skewer_2}
 For $N\in\H$ and $y\in\BR$, the \emph{aggregate mass process} of $N$ at level $y$ is
 $$M^y_N(t) := M^y_{N,\xi(N)}(t) := \int_{[0,t]\times\Exc} \max\Big\{\ f\big((y-\xi_N(u-))-\big),\ f\big(y-\xi_N(u-)\big)\ \Big\}dN(u,f)$$
 for $t\geq 0$. We leave the definition of the skewer map unchanged, but abbreviate it
 $$\skewer(y,N) := \skewer(y,N,\xi(N)) = \left\{\left(M^y_{N}(t-),M^y_{N}(t)\right)\colon t\geq 0,\ M^y_{N}(t-) < M^y_{N}(t)\right\}.$$
 Finally, we abbreviate $\skewerP(N) := \skewerP(N,\xi(N)) = \big(\skewer(y,N),\ y\geq 0\big)$.
\end{definition}

Recall the level $y$ inverse local time of $\bX$, $(\tau^y(s),\ s\geq 0)$ of Definition \ref{def:inverse_LT}. 

\begin{proposition}[Aggregate mass from $F^y(N)$]\label{prop:agg_mass_subord}
 Take $N\in\H$ and $y\in\BR$ and suppose that level $y$ is nice for $\xi(N)$, in the sense of Proposition \ref{prop:nice_level}. Suppose also that either $\len(N) = \infty$ or $\xi_N(\len(N)) < y$. We write $F^y_{N} := F^y(N)$. For every $s\geq 0$,
 \begin{equation}\label{eq:agg_mass_from_clades}
 \begin{split}
  M^y_{N}\circ\tau^y_N(s) &= M^y_N\circ\tau^y_N(0) + \int_{(0,s]\times\Hxc{\pm}} m^0(N')dF^y_N(r,N')\\
  \text{and} \quad \skewer(y,N) &= \big\{ \big(M^y_{N}\circ\tau^y_N(s-),M^y_{N}\circ\tau^y_N(s)\big) \colon s\geq 0,\ \tau^y(s) > \tau^y(s-) \big\},
 \end{split}
 \end{equation}
 where we take $\tau^y(0-) := 0$. In particular, for fixed $y\in\BR$ this holds for $N = \bN$ almost surely. For fixed $y\in\BR$, the process $\big(M^y_{\bN}\circ\tau^y_{\bN}(s) - M^y_{\bN}\circ\tau^y_{\bN}(0),\ s\geq 0\big)$ is a \Stable[\frac12] subordinator with Laplace exponent $\Phi(\lambda) = \sqrt{\lambda}$.
\end{proposition}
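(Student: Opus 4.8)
The plan is to derive the two displayed identities from the bi-clade decomposition of $N$ about level $y$ and then read off the subordinator statement from the $\PRM$ description of $\bF^y(\bN)$. First I would invoke Proposition \ref{prop:partn_spindles} and Lemma \ref{lem:cutoff_vs_PP}: since level $y$ is nice, the spindles of $N$ are partitioned by the bi-clades $\shiftrestrict{N}{I^y_N(a,b)}$ for $[a,b]\in V^y_0(N)$, and these are totally ordered by local time. The aggregate mass $M^y_N(t)$ only ever evaluates a spindle $f_u$ at the single argument $y-\xi_N(u-)$, so $f_u$ contributes to $M^y_N$ exactly when $f_u$ straddles level $y$, i.e.\ when $y\in(\xi_N(u-),\xi_N(u))$ or the spindle touches $y$; for each complete bi-clade $\shiftrestrict{N}{I^y_N(a,b)}$ with $\xi(\shiftrestrict{N}{\cdot})$ typical, there is exactly one such spindle, and its level-$y$ mass is precisely $m^0(\shiftrestrict{N}{I^y_N(a,b)})$ by the definition \eqref{eq:clade:mass_def} (and the identity $m^0(N')=m^0((N')^+)=m^0((N')^-)$ noted before Lemma \ref{lem:heavy_clades_discrete}). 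The hypothesis that $\len(N)=\infty$ or $\xi_N(\len(N))<y$ guarantees that the last (possibly incomplete) bi-clade contributes nothing new at level $y$ beyond what is already captured, so that no boundary spindle is double-counted or omitted; the term $M^y_N\circ\tau^y_N(0)$ absorbs the contribution of the incomplete initial excursion. Running the local-time clock, $M^y_N\circ\tau^y_N(s)$ increases only by jumps, one for each bi-clade whose local-time coordinate $\ell^y(a)$ is $\le s$, of size $m^0$ of that bi-clade; integrating against $\bF^y_N$ gives the first identity. The second identity is then immediate: the blocks of $\skewer(y,N)$ are, by Definition \ref{def:skewer_2}, exactly the jumps of $M^y_N$, and $M^y_N$ is constant on the complement of $\bigcup_{[a,b]\in V^y_0}I^y_N(a,b)$ (this uses Proposition \ref{prop:exc_intervals}(v), that this complement is Lebesgue-null in time and, more to the point, carries no spindle that straddles $y$), so every jump of $M^y_N$ occurs at some time $\tau^y(s)$ with $\tau^y(s-)<\tau^y(s)$.

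For the case $N=\bN$: Proposition \ref{prop:nice_level} says level $y$ is a.s.\ nice for fixed $y$, and since $\bX=\xi(\bN)$ is a spectrally positive \Stable[\frac32] process it is a.s.\ not of the form "stopped at a finite length ending below $y$", so the hypotheses hold a.s.. Hence \eqref{eq:agg_mass_from_clades} holds a.s..

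Finally, for the subordinator statement, fix $y$ and write $\Sigma(s):=M^y_{\bN}\circ\tau^y_{\bN}(s)-M^y_{\bN}\circ\tau^y_{\bN}(0)=\int_{(0,s]\times\Hxc{\pm}}m^0(N')\,d\bF^y(r,N')$. By Proposition \ref{prop:bi-clade_PRM}(iii), $\bF^y$ is a $\PRM[\Leb\otimes\mClade]$ on $[0,\infty)\times\Hxc{\pm}$, so $\Sigma$ is the sum over points of a Poisson measure of the i.i.d.\ increments $m^0(N')$ indexed by Poissonian local times; by the mapping theorem, $\sum\delta(r,m^0(N'))$ over points of $\bF^y$ is a $\PRM[\Leb\otimes\mClade(m^0\in\cdot)]$, hence $\Sigma$ is a pure-jump subordinator with Lévy measure $\mClade(m^0\in\cdot)$ — provided this measure integrates $x\wedge 1$, which follows from $\mClade\{m^0>a\}=\pi^{-1/2}a^{-1/2}$ in Proposition \ref{prop:clade:stats}\ref{item:CS:mass}. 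From that tail one computes the Laplace exponent: $\Phi(\lambda)=\int_0^\infty(1-e^{-\lambda x})\,\mClade(m^0\in dx)=\int_0^\infty\lambda e^{-\lambda x}\mClade\{m^0>x\}\,dx=\pi^{-1/2}\lambda\int_0^\infty e^{-\lambda x}x^{-1/2}\,dx=\pi^{-1/2}\lambda\cdot\lambda^{-1/2}\Gamma(1/2)=\sqrt{\lambda}$, as claimed.

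I expect the main obstacle to be the bookkeeping around the incomplete first/last bi-clades and the boundary spindles: one must check carefully that the hypothesis "$\len(N)=\infty$ or $\xi_N(\len(N))<y$" exactly rules out a spurious contribution at the right end, and that the $M^y_N\circ\tau^y_N(0)$ term correctly accounts for the left end, so that summing $m^0$ over the \emph{complete} bi-clades (which is what $\bF^y$, as opposed to $\bF^y_0$, records) reconstructs all of $M^y_N\circ\tau^y_N$ up to that constant. The Poissonian and Laplace-transform parts are routine given Propositions \ref{prop:bi-clade_PRM} and \ref{prop:clade:stats}.
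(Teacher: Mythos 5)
Your proposal is correct and follows essentially the same route as the paper: the bi-clade partition of spindles (Proposition \ref{prop:partn_spindles}) with one level-$y$ crossing of mass $m^0$ per complete bi-clade, the hypothesis on $\len(N)$ disposing of the final incomplete bi-clade and $M^y_N\circ\tau^y_N(0)$ absorbing the initial one, niceness giving distinct local times for the skewer identity, and then the \PRM\ property of $\bF^y$ together with Proposition \ref{prop:clade:stats}\ref{item:CS:mass} yielding the \Stable[\frac12] subordinator with exponent $\sqrt{\lambda}$. The only cosmetic remark is that for $N=\bN$ the hypothesis holds trivially because $\len(\bN)=\infty$ for the unstopped \PRM, so no argument about the endpoint is needed there.
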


\begin{proof}
 As noted in Proposition \ref{prop:partn_spindles}, the bi-clades of $F^y(N)$, along with the potential initial and final incomplete bi-clades, partition the spindles of $N$. At most one of the spindles in the initial incomplete bi-clade crosses level $y$. Each subsequent excursion interval $I^y_N(a,b)$ with $[a,b]\in V^y(N)$ includes at most one jump of $\xi(N)$ that crosses level $y$. If $N' := \shiftrestrict{N}{I^y_N(a,b)}$ then this spindle crosses with mass $m^0(N')$. Finally, our requirement that either $\len(N) = \infty$ or $\xi_N(\len(N)) < y$ implies that either there is no final incomplete bi-clade about $y$ or, if there is, then this bi-clade dies during the incomplete anti-clade $N^{\leq y}_{\textnormal{last}}$, without contributing mass at level $y$. This gives us the claimed description of $M^y_N\circ\tau^y_N$. The subsequent description of $\skewer(y,N)$ follows from our assumption that level $y$ is nice for $\xi(N)$, whereby no two level $y$ bi-clades, complete or incomplete, arise at the same local time.
 
 If $N=\bN$ then by Proposition \ref{prop:nice_level}, level $y$ is nice for $\bX$ almost surely. By the Poisson property of $\bF^y$, $M^y_{\bN}$ is a shifted subordinator with L\'evy measure specified by Proposition \ref{prop:clade:stats} \ref{item:CS:mass}. Thus, its Laplace exponent follows from $\int_0^\infty (1-e^{-\lambda x})x^{-3/2}dx = 2\sqrt{\pi\lambda}$.
\end{proof}

\begin{theorem}[Scaffolding local time equals skewer diversity everywhere; Theorem 28 of \cite{Paper0}]\label{thm:LT_property_all_levels}
 There is an event of probability 1 on which, for every $y\in\BR$ and $s\geq 0$, the partition $\alpha^y_s := \skewer\big(y,\ \restrict{\bN}{[0,\tau^y(s)]}\big)$ 
 possesses the diversity property of \eqref{eq:IPLT}, and for all $t\in[0,\tau^y(s)]$,
 \begin{equation}
  \ell^y(t) = \IPLT_{\alpha^y_s}\left(M^y_{\bN}(t)\right) = \sqrt{\pi}\lim_{h\downto 0} \sqrt{h}\int \cf\{m^0(N) > h,\tau^y(r)\le t\}d\bF^y(r,N).\label{eq:inft_skewer_LT_cnvgc}
 \end{equation} 
\end{theorem}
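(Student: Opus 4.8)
The plan is to establish the identity for a single fixed level $y$ and then to upgrade it to all levels simultaneously.

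\emph{Fixed level.} Fix $y\in\BR$. I would apply Proposition~\ref{prop:agg_mass_subord}: almost surely level $y$ is nice for $\bX$, $\bF^y$ is a \PRM[\Leb\otimes\mClade] on $[0,\infty)\times\Hxc{\pm}$, and $s\mapsto M^y_{\bN}(\tau^y(s))-M^y_{\bN}(\tau^y(0))$ is a \Stable[\frac12] subordinator whose jump at local time $r$ has height $m^0(N')$, for each atom $(r,N')$ of $\bF^y$. Proposition~\ref{prop:clade:stats}\,\ref{item:CS:mass} gives $\mClade\{m^0>h\}=\pi^{-1/2}h^{-1/2}$, so for each $h>0$ the atoms of $\bF^y$ with $m^0>h$ form a Poisson process of rate $\pi^{-1/2}h^{-1/2}$ in the local-time coordinate. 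The Strong Law of Large Numbers for that Poisson process, together with monotonicity in both $h$ and $s$ (exactly as in the proof of Proposition~\ref{prop:IP:Stable}), yields, almost surely and simultaneously for all $s\ge0$,
\[
 \sqrt{\pi}\,\lim_{h\downto0}\sqrt{h}\,\#\{(r,N')\in\bF^y\colon m^0(N')>h,\ r\le s\}=s.
\]
By the second formula in \eqref{eq:agg_mass_from_clades}, the blocks of $\alpha^y_s$ are exactly the jumps of this subordinator: they are in order- and mass-preserving bijection with the atoms $(r,N')$ of $\bF^y$ having $r\le s$, the block matched to $(r,N')$ having width $m^0(N')$ and right endpoint $M^y_{\bN}(\tau^y(r))$. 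Since level $y$ is nice, $M^y_{\bN}(\tau^y(r))\le M^y_{\bN}(t)$ amounts to $\tau^y(r)\le t$ for these excursion-interval atoms; reading off the display above with the cutoff at $s=\ell^y(t)$ then simultaneously shows that $\alpha^y_s$ has the diversity property at the point $M^y_{\bN}(t)$, that $\IPLT_{\alpha^y_s}(M^y_{\bN}(t))=\ell^y(t)$, and that this equals $\sqrt{\pi}\lim_{h\downto0}\sqrt{h}\int\cf\{m^0(N')>h,\ \tau^y(r)\le t\}\,d\bF^y(r,N')$. So the asserted identity holds off a null set depending on $y$.

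\emph{All levels.} To produce a single null set valid for every level, I would fix a countable dense set $D\subseteq\BR$ and intersect the null sets from the previous step over $y\in D$ with the a.s.\ events (a)--(c) described at the start of Section~\ref{sec:biclade_PRM} --- in particular the joint continuity of $(y,t)\mapsto\ell^y(t)$ of Theorem~\ref{thm:Boylan} and the event $\bN\in\td\cN^{\textnormal{sp}}$ --- so that the identity holds for all $y\in D$ on one event of probability $1$. For arbitrary $y\in\BR$ one then takes $y_n\in D$ with $y_n\to y$ and passes to the limit in all three quantities: $\ell^{y_n}(t)\to\ell^y(t)$ by joint continuity of local time; $M^{y_n}_{\bN}(t)\to M^y_{\bN}(t)$ by continuity of the aggregate mass in level; and, most importantly, the counting functionals $\int\cf\{m^0(N')>h,\ \tau^{y_n}(r)\le t\}\,d\bF^{y_n}(r,N')$ are controlled uniformly in $n$, so that the double limit in $h$ and $n$ may be interchanged and matched with $\ell^y(t)$. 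The natural device is a sandwiching estimate: a spindle crossing level $y$ with mass exceeding $2h$ also crosses every sufficiently nearby level $y'$ with mass exceeding $h$ (using the uniform Hölder control on spindles, cf.\ Lemma~\ref{lem:BESQ:Holder}), unless its birth or death level lies in a narrow window about $y$; and both the number and the total level-$y$ mass of spindles born or dying in such a window are small.

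\emph{The main difficulty.} The crux is this last point --- the passage from a countable dense set of levels to \emph{all} levels. There is no monotonicity of the counting functional in the level parameter, and the conclusion must hold at the uncountably many non-nice levels (Proposition~\ref{prop:nice_level} and the remark following it), where the clean subordinator picture of the first step is unavailable. Proving the required uniform-in-level estimate --- controlling how many spindles are born or die within an $\epsilon$-window of a given level and how much level-$y$ mass they carry --- is the genuine content of the theorem; the subordinator computation and the limiting argument around it are comparatively routine.
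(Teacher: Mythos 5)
First, a point of orientation: the paper does not actually prove Theorem \ref{thm:LT_property_all_levels} --- it is imported wholesale, as ``Theorem 28 of \cite{Paper0}'', from the companion paper, so there is no in-paper argument to compare yours against. Judged on its own merits, your fixed-level step is sound, but it is essentially the observation the authors themselves make immediately after the statement: Proposition \ref{prop:agg_mass_subord}, the Poisson description of $\bF^y$ with $\mClade\{m^0>h\}=\pi^{-1/2}h^{-1/2}$, the strong law and monotonicity give \eqref{eq:inft_skewer_LT_cnvgc} almost surely for each fixed $y$ (simultaneously in $s$ and $t$). The text explicitly calls this the ``weaker result''; the whole content of the theorem is a single null set valid at \emph{every} level at once.

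That content is exactly what your proposal does not supply. Your ``all levels'' paragraph is a plan, not a proof: you intersect over a countable dense set, propose a sandwich based on H\"older continuity of spindles, and then concede that the required uniform-in-level estimate ``is the genuine content of the theorem''. Concretely, several things would have to be established and are not: (a) individual spindles are H\"older (Lemma \ref{lem:BESQ:Holder}) with no uniform constant, and the uniform control of Lemma \ref{lem:spindle_piles} only yields summable constants for sequences of disjointly supported spindles of a stopped process; turning this into your two-sided sandwich requires a bound, uniform in the level, on the number of spindles whose level-$y$ mass exceeds $h$ while their birth or death level lies in an $\epsilon$-window of $y$ --- nothing in the sketch bounds this, and it is precisely the delicate step; (b) the exceptional non-nice levels of Proposition \ref{prop:nice_level} (excursions beginning or ending with jumps, coinciding local times) are exactly where the identity must still be verified, and the counting functional $\int\cf\{m^0>h,\,\tau^{y}(r)\le t\}\,d\bF^y$ is neither monotone nor continuous in $y$, so approximation from nice levels $y_n$ does not pass through automatically; (c) even the interchange of the $h$- and $n$-limits and the identification of the diversity at the limiting point $M^y_{\bN}(t)$ rest on the same missing uniform control. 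In short, the first half of your argument reproves what the paper already notes follows from Proposition \ref{prop:agg_mass_subord}, and the second half names the real difficulty without resolving it; the actual proof lives in \cite{Paper0} and is a substantial separate argument, so the proposal has a genuine gap.
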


The strength of the preceding result is that it holds a.s.\ simultaneously at every level $y$. Proposition \ref{prop:agg_mass_subord} implies the weaker result that \eqref{eq:inft_skewer_LT_cnvgc} holds a.s.\ for every $s\geq 0$, for any fixed $y$. Recall Definition \ref{def:assemblage_m} of the measurable spaces $(\H,\SH)$ and $(\Hfin,\SHfin)$. We are interested in diffusions on $(\IPspace,\dI)$. To that end we require measures $N\in \Hfin$ for which $\skewerP(N)$ is path-continuous in $(\IPspace,\dI)$. 

\begin{definition}[$\Hs,\ \Hfins$]\label{def:domain_for_skewer}
 Let $\Hs$ denote the set of all $N\in\H$ with the following additional properties.
 \begin{enumerate}[label=(\roman*), ref=(\roman*)]
  \item The aggregate mass $M^y_N(t)$ is finite for every $y\in\BR$ and $t\geq 0$.\label{item:d_f_s:fin}
  \item The occupation density local time $(\ell^y_N(t))$ is continuous on $t\geq 0$, $\inf_{u\in [0,\len(N)]}\xi_N(u) < y < \sup_{u\in [0,\len(N)]}\xi_N(u)$, and for every $(y,t)$ in this range,\label{item:d_f_s:div}
   \begin{equation}
    \IPLT_{\skewer(y,\restrict{N}{[0,t]})}\big( M^y_N(u) \big) = \ell^y_N(u), \qquad u\in [0,t].\label{eq:div_LT_condition}
   \end{equation}
  \item For every $t>0$, the skewer process $\skewerP\left(\restrict{N}{[0,t]}\right)$ is continuous in $(\IPspace,\dI)$.\label{item:d_f_s:cts}
 \end{enumerate}
 
 Let $\Hfins := \Hfin\cap\Hs$. Let $\Sigma(\Hs) := \{A\cap\Hs\colon A\in\SH\}$, and correspondingly define $\Sigma(\Hfins)$.
\end{definition}

In condition (ii) above, we restrict $y$ away from boundary values because \eqref{eq:div_LT_condition} can fail at $y = 0$ for the point processes $\bN_{\beta}$ constructed in Definition \ref{constr:type-1}.

\begin{proposition}\label{prop:skewer_measurable}
 The map $\skewerP$ is measurable from $(\Hfins,\SHfins)$ to the space of continuous functions $\mathcal{C}([0,\infty),\IPspace)$ under the Borel $\sigma$-algebra generated by uniform convergence.
\end{proposition}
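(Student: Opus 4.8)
The plan is to reduce the measurability of $\skewerP\colon\Hfins\to\mathcal{C}([0,\infty),\IPspace)$ to the measurability of the individual evaluation maps $N\mapsto\skewer(y,N)$ for fixed $y\ge0$, together with a continuity argument that lets us upgrade ``measurable at each fixed level'' to ``measurable as a path.'' Since the target carries the $\sigma$-algebra generated by uniform convergence, it suffices to show (a) that for each fixed $y\ge0$ the map $N\mapsto\skewer(y,N)$ is measurable from $(\Hfins,\SHfins)$ to $(\IPspace,\dI)$, and (b) that this already determines a measurable map into $\mathcal{C}([0,\infty),\IPspace)$. Step (b) is the soft part: on $\Hfins$, condition \itemref{item:d_f_s:cts} of Definition \ref{def:domain_for_skewer} guarantees $y\mapsto\skewer(y,N)$ is continuous, so its restriction to rational levels determines the whole path; the evaluation maps at rational $y$ generate the Borel $\sigma$-algebra of $\mathcal{C}([0,\infty),\IPspace)$ for the topology of uniform convergence (using that $(\IPspace,\dI)$ is Lusin, hence its Borel $\sigma$-algebra is countably generated and the usual argument identifying the uniform-convergence Borel $\sigma$-algebra with the one generated by coordinate projections over a dense set of times applies). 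So everything hinges on step (a).

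For step (a), I would build $\skewer(y,N)$ as an explicit composition of measurable operations on the point process $N$. First, the aggregate mass process $t\mapsto M^y_N(t)$ of Definition \ref{def:skewer_2} is, for fixed $y$, an integral $\int_{[0,t]\times\Exc}\Phi_y(\xi_N(u-),f)\,dN(u,f)$ against $N$, where $\Phi_y(x,f):=\max\{f((y-x)-),f(y-x)\}$; this integrand is jointly measurable in $(u,f)$ once we know $u\mapsto\xi_N(u-)$ is measurable, and measurability of $N\mapsto\xi(N)$ is exactly Proposition \ref{prop:JCCP_meas}. Standard results on measurability of integrals with respect to a random counting measure (e.g.\ via a measurable enumeration of the atoms of $N$, as in \cite[Proposition 9.1.XII]{DaleyVereJones2}, applied as in the proof of Lemma \ref{lem:meas_marking}) then give that $N\mapsto M^y_N(t)$ is $\SHfins$-measurable for each $t$, hence $N\mapsto (M^y_N(t),\,t\ge0)$ is measurable into Skorokhod space. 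Second, $\skewer(y,N)$ is the interval partition read off from the ranges of the \cadlag\ nondecreasing function $M^y_N$: its blocks are the intervals $(M^y_N(t-),M^y_N(t))$ over jump times $t$. The map sending a nondecreasing \cadlag\ function to this interval partition is measurable into $(\HIPspace,d_H)$ — jump locations and sizes of a \cadlag\ function are measurable functionals — and on $\Hfins$ the output lands in $\IPspace$ by Theorem \ref{thm:LT_property_all_levels} (or directly by \itemref{item:d_f_s:div}). Finally, by Proposition \ref{prop:Hausdorff}\itemref{item:Haus:sig}, the Borel $\sigma$-algebra on $\IPspace$ generated by $\dI$ coincides with that generated by $d_H$, so measurability into $(\IPspace,d_H)$ upgrades for free to measurability into $(\IPspace,\dI)$. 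Composing these pieces gives (a).

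The main obstacle I anticipate is the bookkeeping in step (a) around the \emph{jointly measurable} dependence of the integrand $\Phi_y(\xi_N(u-),f)$ on the pair $(u,f)$ while $N$ varies: one must be careful that $u\mapsto\xi_N(u-)$ is a measurable functional of $N$ (left limits of a measurable \cadlag-valued map are fine, but this should be stated), that $\Phi_y$ is Borel on $\BR\times\Exc$ under the metric $d_\cD^A$ of Proposition \ref{prop:E_bdedly_finite} (evaluation and one-sided-limit maps $f\mapsto f(z\pm)$ are Borel on Skorokhod space by \cite[Theorem 14.5]{Billingsley}), and that the map $N\mapsto\int g\,dN$ is measurable for fixed Borel $g$ — which is immediate from the definition of $\SH$ via evaluation maps for indicator $g$ and extends by monotone-class. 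A second, milder subtlety is confirming on $\Hfins$ that the various incomplete-clade corrections do not spoil the identification of $\skewer(y,N)$ with the range decomposition of $M^y_N$; but this is precisely what conditions \itemref{item:d_f_s:fin}–\itemref{item:d_f_s:cts} are designed to provide, so no new argument is needed. Everything else is routine measure theory, and I would relegate it to a brief paragraph citing \cite{DaleyVereJones1,DaleyVereJones2,Billingsley} and the results already established above.
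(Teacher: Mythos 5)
Your proposal is correct and follows essentially the same route as the paper: fixed-level measurability of $N\mapsto M^y_N$ via Proposition \ref{prop:JCCP_meas}, reading off $\skewer(y,N)$ from the range of this nondecreasing \cadlag\ function to get measurability into $(\HIPspace,\dH)$, upgrading to $(\IPspace,\dI)$ by Proposition \ref{prop:Hausdorff} \itemref{item:Haus:sig}, and then passing to the path space using separability of $(\IPspace,\dI)$ so that the uniform-convergence Borel $\sigma$-algebra is generated by the evaluation maps. The only cosmetic difference is that the paper phrases the range-reading step as continuity of $g\mapsto G^{-1}(\mathrm{range}(g))$ from the Skorokhod topology to the Hausdorff topology, while you argue measurability via jump functionals; both are adequate.
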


\begin{proof}
 We have established in Proposition \ref{prop:JCCP_meas} that $N\mapsto \xi(N)$ is a measurable map from $(\H,\SH)$ to $(\DS,\SDS)$. This measurability passes to the restriction $\restrict{\xi}{\Hfins}$. Thence, and by Definition \ref{def:skewer_2} of the aggregate mass process, we deduce the measurability of the map $N\mapsto M^y_{N,\xi(N)}(t)$ from $\Hfins$ to $[0,\infty)$, for $(y,t)$ fixed. Let $\cD^*$ denote the set of non-decreasing \cadlag\ functions supported on intervals $[0,L]$, $L\in [0,\infty)$, whose closed ranges have zero Lebesgue measure. Since $M^y_{N,\xi(N)}(t)$ is non-decreasing in $t$, we conclude that for fixed $y$ the map $N\mapsto (M^y_{N,\xi(N)}(t),\ t\in [0,\len(N)])$ is measurable from $\Hfins$ to $\cD^*$.
 
 Recall the map $G$ of Definition \ref{def:Hausdorff} that takes $\alpha\in\HIPspace$ to a closed, bounded, Lebesgue-null set $G(\alpha)$. The map $g\mapsto G^{-1}(\text{range}(g))$ is continuous from the Skorokhod topology on $\cD^*$ to the Hausdorff topology on $\HIPspace$. Thus, $N\mapsto\skewer(y,N)$ is measurable from $\Hfins$ to $(\HIPspace,\dH)$ for fixed $y$. From Proposition \ref{prop:Hausdorff} \ref{item:Haus:sig}, the Borel $\sigma$-algebra generated by $\dI$ on $\IPspace$ equals that generated by the Hausdorff metric, $\dH$. Thus, the map $N\mapsto \skewer(y,N)$ is measurable from $\Hfins$ to $(\IPspace,\dI)$ for $y$ fixed. Finally, by Theorem \ref{thm:Lusin}, $(\IPspace,\dI)$ is separable. From this it follows that the $\sigma$-algebra on $\mathcal{C}([0,\infty),\IPspace)$ generated by uniform convergence equals that generated by the evaluation maps \cite[Theorem 14.5]{Billingsley}.
\end{proof}

We do not claim that $\Hs\in\SH$. However, we will show in Proposition \ref{prop:type-1:cts} that the processes that we wish to study admit $\Hs$-versions, in the following sense.

\begin{definition}\label{def:version}
 Let $(\Omega,\cF,\Pr)$ denote a probability space, $(\cS,\ScS)$ a measurable space, and $X,Y: \Omega \to \cS$ a pair of random variables. We call $Y$ a \emph{version} of $X$ if $X=Y$ a.s. If $Y$ only takes values in $B\subseteq \cS$ then we call $Y$ a \emph{$B$-version} of $X$. Recall that the \emph{outer measure} associated with $\Pr$ is a map $\Pr^{\circ}\colon 2^{\Omega}\to [0,1]$ given by $\Pr^{\circ}(A) := \inf\left\{ \Pr(B)\colon A\subseteq B\in\cF\right\}$.
\end{definition}

\begin{lemma}\label{lem:version}
 We follow the notation of the preceding definition. Suppose $\cS^*\subset \cS$ is not necessarily measurable, but it satisfies $\Pr^\circ\{ X \in \cS\setminus \cS^*\} = 0$. Suppose also that there is some $y\in\cS^*$ for which $\{y\}\in\ScS$. Then there exists an $\cS^*$-version of $X$.
\end{lemma}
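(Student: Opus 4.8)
The statement to prove is Lemma~\ref{lem:version}: given $X\colon\Omega\to\cS$ with $\Pr^\circ\{X\in\cS\setminus\cS^*\}=0$ and some singleton $\{y\}\in\ScS$ with $y\in\cS^*$, there is an $\cS^*$-version of $X$.

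\textbf{Plan.} The idea is to find a measurable ``bad'' set $B\in\cF$ that contains $\{X\in\cS\setminus\cS^*\}$ and has probability zero, and then redefine $X$ to equal $y$ on $B$. First I would invoke the definition of outer measure: since $\Pr^\circ\{X\in\cS\setminus\cS^*\}=0$, by definition of the infimum there exists, for each $n\in\BN$, a set $B_n\in\cF$ with $\{X\in\cS\setminus\cS^*\}\subseteq B_n$ and $\Pr(B_n)<1/n$. Set $B:=\bigcap_{n\ge1}B_n$. Then $B\in\cF$, $\Pr(B)=0$, and $\{X\in\cS\setminus\cS^*\}\subseteq B$. (Alternatively, a single $B\in\cF$ with $\Pr(B)=0$ and $\{X\in\cS\setminus\cS^*\}\subseteq B$ already exists because the infimum defining $\Pr^\circ$ is attained for outer measure when the value is $0$; but taking a countable intersection avoids having to cite that.)

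Next I would define $Y\colon\Omega\to\cS$ by
\[
 Y(\omega) := \begin{cases} X(\omega) & \text{if }\omega\notin B,\\ y & \text{if }\omega\in B.\end{cases}
\]
I need to check three things. First, $Y$ is measurable: for any $A\in\ScS$, $Y^{-1}(A) = \big(X^{-1}(A)\cap B^c\big)\cup\big(B\cdot\cf\{y\in A\}\big)$, which is a union of elements of $\cF$ since $X$ is measurable, $B\in\cF$, and $B^c\in\cF$; here one uses that $\{y\}\in\ScS$ only implicitly, in that it guarantees $y$ is a legitimate value of a measurable map into $(\cS,\ScS)$ --- more precisely, the constant map $\omega\mapsto y$ is $\cF/\ScS$-measurable because $\{y\}\in\ScS$ ensures preimages of measurable sets are $\emptyset$ or $\Omega$. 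Second, $Y$ takes values in $\cS^*$: if $\omega\notin B$ then $\omega\notin\{X\in\cS\setminus\cS^*\}$, so $X(\omega)\in\cS^*$ and $Y(\omega)=X(\omega)\in\cS^*$; if $\omega\in B$ then $Y(\omega)=y\in\cS^*$ by hypothesis. Third, $Y$ is a version of $X$: the set $\{X\ne Y\}\subseteq B$, and $\Pr(B)=0$, so $X=Y$ a.s.

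\textbf{Main obstacle.} There is no real obstacle; this is a soft measure-theoretic argument. The one point that requires a moment's care is the role of the hypothesis $\{y\}\in\ScS$: it is exactly what makes the constant function $\omega\mapsto y$ measurable into $(\cS,\ScS)$, so that the surgically modified $Y$ is again a bona fide $\cS$-valued random variable --- without some such assumption one could not guarantee that ``replace $X$ by the constant $y$ on $B$'' yields a measurable map. Everything else (existence of the null superset $B$, the a.s.\ agreement, and the valued-in-$\cS^*$ property) is immediate from the definitions of outer measure and of a version.
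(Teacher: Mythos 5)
Your proof is correct and is essentially the paper's own argument: take a measurable null set containing $\{X\in\cS\setminus\cS^*\}$ and redefine $X$ to equal $y$ there, with your countable-intersection construction of that null set simply making explicit what the paper asserts directly. (One small quibble: constant maps into $(\cS,\ScS)$ are measurable whether or not $\{y\}\in\ScS$, so that hypothesis is not what drives the measurability of the modified variable.)
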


A related notion may be found in \cite[Lemma II.(6.1)]{RogersWilliams}.

\begin{proof}
 Let $y$ be as above. There is some $A\in\cF$ with $X^{-1}(\cS\setminus \cS^*)\subseteq A$ and $\Pr(A) = 0$. For $\omega\in A$ define $X^*(\omega) := y$. For $\omega\in\Omega\setminus A$ define $X^*(\omega) := X(\omega)$. Then $X^*$ is such a version.
\end{proof}

Recall Definition \ref{constr:type-1} of pre-type-1 evolutions, with $\bN_{\beta} = \ConcatIL_{U\in\beta}\bN_U$. Comparing that construction to Proposition \ref{prop:clade_splitting}, we see that each $\bN_U$ has distribution $\mClade^+(\,\cdot\;|\;m^0=\Leb(U))$. 

\begin{definition}\label{constr:type-1_2}
 For $\bN_{\beta}$ as in Definition \ref{constr:type-1}, we abuse notation to write $F^{\geq 0}_0(\bN_{\beta}) := \sum_{U\in\beta}\Dirac{\IPLT_{\beta}(U),\bN_U}$, substituting diversities in the place of local times in Definition \ref{def:bi-clade_PP}. We write $\Pr^1_{\beta}\{F^{\geq 0}_0\in\cdot\,\}$ to denote its distribution, and correspondingly for $\Pr^1_{\mu}$.
\end{definition}

We will find from Propositions \ref{prop:type-1:LT_diversity} and \ref{prop:cts_lt_at_0} that 
 almost surely for all $t\geq 0$, if $t$ falls within the segment of $\bN_{\beta}$ corresponding to $\bN_U$, then
$$\IPLT_{\beta}(U) = \lim_{y\downto 0} \ell^y_{\bN_{\beta}}(t) = \lim_{h\downto 0}h^{-1}\Leb\{u\in [0,t]\colon \xi_{\bN_{\beta}}(u)\in [0,h]\}.$$

\begin{proposition}\label{prop:clade_lengths_summable}
 \begin{enumerate}[label=(\roman*), ref=(\roman*)]
  \item For every $\beta\in\IPspace$, the point process $\bN_{\beta}$ of Definition \ref{constr:type-1} a.s.\ has finite length: in the notation of that definition, $\sum_{U\in\beta}\len(\bN_U) < \infty$ a.s..\label{item:CLS:CLS}
  \item The map $\beta\mapsto \Pr^1_{\beta}$ is a stochastic kernel.\label{item:CLS:kernel}
  \item We have $\xi(\bN_{\beta}) = \ConcatIL_{U\in\beta}\xi(\bN_U)$, where concatenation is as in Definition \ref{def:scaff_concat}.\label{item:CLS:scaffold}
  \item For the purpose of the following, let $\cS := \{N\in\Hfin\colon \skewer(0,N)\in\IPspace\}$. The laws $\Pr^1_{\beta}$ are supported on $\cS$. There exists a measurable map $\phi\colon \cS\to \cNRHf$ such that $F^{\geq 0}_0(\bN_{\beta})=\phi(\bN_{\beta})$. Moreover, the map $\beta \mapsto \Pr^1_{\beta}\{F^{\geq 0}_0\in \cdot\,\}$ is a stochastic kernel.\label{item:CLS:PP}
 \end{enumerate}
\end{proposition}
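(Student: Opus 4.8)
The plan is to prove the four assertions in the order (i), (iii), (ii), (iv), with (i) carrying essentially all of the analytic content.

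\textbf{Part (i).} The clades $\bN_U$, $U\in\beta$, are by construction mutually independent, and each has law $\mClade^+(\,\cdot\;|\;m^0=\Leb(U))$. In the realization of Definition~\ref{constr:type-1}, $\len(\bN_U)$ is the first-passage time of an independent \Stable[\frac32] process $\xi(\bN^{(U)})$ to the level $-\life(\bff_U)$. Since $\xi(\bN^{(U)})$ is spectrally positive, its downward first-passage process $(\sigma_U(z))_{z\ge0}$ is a stable subordinator of index $\frac23$, with $\EV[e^{-q\sigma_U(z)}]=\exp(-z(\pi/2)^{1/3}q^{2/3})$, independent of $\bff_U$; by stable scaling, conditionally on $\life(\bff_U)$ one has $\len(\bN_U)\stackrel{d}{=}\life(\bff_U)^{3/2}\sigma_U(1)$. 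Lemma~\ref{lem:BESQ:length} gives $\life(\bff_U)\stackrel{d}{=}\Leb(U)/(2G_U)$ with $G_U\sim\GammaDist[\frac32,1]$. Hence, for any $p\in(\frac13,\frac23)$, since $\EV[\sigma_U(1)^p]<\infty$ (as $p<\frac23$) and $\EV[\life(\bff_U)^{3p/2}]=(\Leb(U)/2)^{3p/2}\Gamma(\frac32-\frac{3p}2)/\Gamma(\frac32)<\infty$ (as $\frac{3p}2<1$), we obtain $\EV[\len(\bN_U)^p]=c_p\Leb(U)^{3p/2}$ with $c_p<\infty$. Using $\min(x,1)\le x^p$ ($0<p\le1$), $\sum_{U\in\beta}\EV[\min(\len(\bN_U),1)]\le c_p\sum_{U\in\beta}\Leb(U)^{3p/2}$. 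Since $\beta\in\IPspace$ has the $\frac12$-diversity property, $\#\{U\in\beta\colon\Leb(U)>h\}=O(h^{-1/2})$ as $h\downto0$, so the ranked masses satisfy $\Leb(U_{(n)})=O(n^{-2})$, whence $\sum_U\Leb(U)^{3p/2}=O(\sum_n n^{-3p})<\infty$ because $3p>1$. Finally, for independent nonnegative random variables $(X_U)$ one has $\EV[e^{-\sum X_U}]=\prod_U\EV[e^{-X_U}]>0$ iff $\sum_U(1-\EV[e^{-X_U}])<\infty$ iff $\sum_U\EV[\min(X_U,1)]<\infty$, and $\{\sum_U X_U<\infty\}$ is a tail event, so $\sum_{U}\len(\bN_U)<\infty$ a.s.

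\textbf{Part (iii).} Given (i), the partial sums $S(U-)=\sum_{V\prec U}\len(\bN_V)$ are a.s.\ finite, so $\bN_\beta=\ConcatIL_{U}\bN_U$ is well-defined; moreover, by Proposition~\ref{prop:clade:stats}\,(iv) together with Proposition~\ref{prop:clade_splitting}\,(i), $\Pr^1_\beta(\life^+(\bN_U)>\epsilon)=1-e^{-\Leb(U)/2\epsilon}\le\Leb(U)/2\epsilon$, so $\sum_U\Pr^1_\beta(\life^+(\bN_U)>\epsilon)\le\IPmag{\beta}/2\epsilon<\infty$ and Borel--Cantelli gives, a.s., only finitely many $U$ with $\life^+(\bN_U)>\epsilon$ for every $\epsilon>0$; this is condition (ii) of Definition~\ref{def:scaff_concat} and, with finiteness of $\len(\bN_\beta)$ and the one-spindle-per-time property (consecutive clades do not both charge their shared endpoint), shows $\bN_\beta\in\Hfin$, so $\ConcatIL_{U}\xi(\bN_U)$ is also well-defined. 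For (iii) itself, fix $t$ and let $U^*$ be the block straddling $t$. The pre-limit in \eqref{eq:JCCP_def} splits additively over the concatenation; for each completed clade $\bN_V$ ($V\prec U^*$) the corresponding term, after removing the small-jump compensation, equals $\xi_{\bN_V}(\len(\bN_V))=0$ because $\xi(\bN_V)$ is a clade excursion returning to $0$. The only subtlety is that infinitely many completed clades contribute; but writing their small-jump compensation errors as $M_V(z)$, these are independent, mean-zero, with $\EV[M_V(z)^2\mid\len(\bN_V)]=\tfrac{3}{\pi\sqrt2}\sqrt z\,\len(\bN_V)$, so conditionally on the lengths $\mathrm{Var}(\sum_{V\prec U^*}M_V(z))=\tfrac{3}{\pi\sqrt2}\sqrt z\,S(U^*-)\to0$ as $z\downto0$. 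Hence the compensated limit at $t$ equals $\xi_{\bN_{U^*}}(t-S(U^*-))=\big(\ConcatIL_U\xi(\bN_U)\big)(t)$, first for fixed $t$ a.s.\ and then simultaneously by right-continuity of both sides. (This can also be quoted from the foundational results of \cite{Paper0}.)

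\textbf{Parts (ii) and (iv).} By the above $\Pr^1_\beta$ is a probability measure on $(\Hfin,\SHfin)$. By Theorem~\ref{thm:Lusin}, $(\IPspace,\dI)$ is a Borel space, hence admits a measurable enumeration $\beta\mapsto(U_n(\beta))_{n\ge1}$ of its blocks together with their left-to-right ranks. Feeding the masses $\Leb(U_n(\beta))$ and i.i.d.\ ``clade seeds'' (a \BESQ[-1] started at $\Leb(U_n)$ and an independent \PRM[\Leb\otimes\mBxc]) into the construction of Proposition~\ref{prop:clade_splitting}\,(ii) produces $\bN_{U_n}\sim\mClade^+(\,\cdot\;|\;m^0=\Leb(U_n))$ as a jointly measurable function of $\beta$ and the seeds (measurability of the marking step is Lemma~\ref{lem:meas_marking}); concatenation in left-to-right order is again measurable since the position partial sums are a.s.\ finite by (i). Thus $(\beta,\omega)\mapsto\bN_\beta(\omega)$ is jointly measurable and $\beta\mapsto\Pr^1_\beta$ is a stochastic kernel, proving (ii). For (iv): at level $0$ only the leftmost spindle $\bff_U$ of each $\bN_U$ contributes mass (every other spindle of $\bN_U$ is born at a level $>0$, hence has mass $0$ at level $0$), contributing $\bff_U(0)=\Leb(U)$, so $\skewer(0,\bN_\beta)=\beta\in\IPspace$ and $\Pr^1_\beta$ is supported on $\cS$. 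Define $\phi\colon\cS\to\cNRHf$ by applying the recipe of Definition~\ref{def:bi-clade_PP} for $F^{\geq0}_0$ to $g=\xi(N)$ at level $0$, but with $\ell^0_N(a)$ replaced by $\IPLT_{\skewer(0,N)}(M^0_N(a))$ (finite for $N\in\cS$, since $\skewer(0,N)\in\IPspace$); this is measurable, and since for $N\in\Hfin$ the \cadlag\ path $\xi(N)$ has only finitely many excursions of amplitude $\ge\epsilon$, $\phi(N)$ is boundedly finite. On $N=\bN_\beta$, using (iii) (so $\xi(\bN_\beta)\ge0$ with excursion intervals above $0$ indexed by the blocks of $\beta$, each $(\shiftrestrict{\bN_\beta}{I^0_{\bN_\beta}(a,b)})^+=\bN_U$, and no incomplete first/last pieces) together with $\IPLT_\beta(M^0_{\bN_\beta}(S(U-)))=\IPLT_\beta(U)$, one gets $\phi(\bN_\beta)=\sum_U\Dirac{\IPLT_\beta(U),\bN_U}=F^{\geq0}_0(\bN_\beta)$. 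Consequently $\beta\mapsto\Pr^1_\beta\{F^{\geq0}_0\in A\}=\Pr^1_\beta(\phi^{-1}(A))$ is measurable, so $\beta\mapsto\Pr^1_\beta\{F^{\geq0}_0\in\cdot\,\}$ is a stochastic kernel.

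\textbf{Main obstacle.} The crux is part (i): because $\len(\bN_U)$ has infinite mean (the downward first-passage of a \Stable[\frac32] process is only $\frac23$-stable), a first-moment bound is hopeless, and one must pair a fractional-moment estimate, valid exactly for $p<\frac23$, with the polynomial decay $\Leb(U_{(n)})=O(n^{-2})$ forced by the $\frac12$-diversity property, which needs $3p>1$; the window $p\in(\frac13,\frac23)$ is nonempty, which is what makes the argument go through. A secondary nuisance is (iii): the scaffolding map does not commute with concatenation in general, and one must verify that the contributions of the infinitely many completed clades genuinely vanish, which is where the $L^2$-control of the small-jump compensation error (finite thanks to (i)) is used.
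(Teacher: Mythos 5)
Your treatment of (i), (ii) and (iv) is essentially sound. For (i) you take a genuinely different route from the paper: a fractional-moment bound $\EV[\len(\bN_U)^p]=c_p\,\Leb(U)^{3p/2}$ for $p\in(\frac13,\frac23)$, combined with the decay of ranked masses, $\Leb(U_{(n)})=O(n^{-2})$, forced by the $\frac12$-diversity property. The paper instead computes the Laplace transform $\mClade^+\big[e^{-\theta\len}\,\big|\,m^0=a\big]$ in closed form and Taylor-expands, which requires only $\sum_{U\in\beta}\Leb(U)<\infty$. Your argument is correct on $\IPspace$, but note that it genuinely needs diversity (a sum $\sum_U\Leb(U)^q$ with $q<1$ need not converge for summable masses), whereas the paper's proof is later reused verbatim for $\beta\in\HIPspace$ in Section~\ref{sec:Hausdorff}; your version would not extend there.

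The genuine gap is in (iii). You denote by $M_V(z)$ the small-jump compensation error of the completed clade $V$ at truncation level $z$ and assert that, conditionally on $\len(\bN_V)$, it has mean zero and second moment $\frac{3}{\pi\sqrt2}\sqrt z\,\len(\bN_V)$. This is unjustified, and as an identity it is false: $\len(\bN_V)$ is the first-passage time of the clade's own scaffolding, hence a functional of all of its jumps including the small ones, so conditioning on it biases the small-jump point process (in the finite-variation caricature of drift $-1$ with positive jumps, conditioning the passage time to $-y$ to equal exactly $y$ forces the jump count to be zero, not its unconditional Poisson value). Nor can you retreat to an unconditional bound: $\EV[\len(\bN_V)]=\infty$ because the downward passage time is only $\frac23$-stable, so $\EV[M_V(z)^2]$ is not controlled, and even $\EV[M_V(z)]=0$ is not available by optional stopping, since $M^{(z)}$ is a dependent component of the very process whose first passage defines the stopping time (for that process itself optional stopping visibly fails, as $\xi_{\bN_V}$ ends at $-\life(\bff_V)$, not at its mean). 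Consequently the interchange of the $z\downto0$ limit with the infinite sum over completed clades — the step you yourself flag as the crux of (iii) — is not established, and neither is the uniform-in-$t$ convergence needed for $\bN_\beta\in\Hfin$. The paper avoids all of this with a coupling: it realizes $\bN_\beta$ as a single \PRM[\Leb\otimes\mBxc] stopped at the hitting time of $-L$, where $L=\sum_{U\in\beta}\life(\bff_U)$ satisfies $\EV[L]=\IPmag{\beta}<\infty$, with the leftmost spindles inserted as additional atoms at the successive hitting times of the levels $-S(U-)$; since the inserted jump heights are summable, the compensated limit \eqref{eq:JCCP_def} for the concatenation is inherited from Proposition~\ref{prop:stable_JCCP} (the stable path plus an absolutely convergent sum of added jumps), and $\xi(\bN_\beta)=\ConcatIL_{U\in\beta}\xi(\bN_U)$ follows at once. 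You should either adopt this device or supply a correct argument for the interchange; the conditional-variance identity as stated will not do.
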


\begin{proof}
 \ref{item:CLS:CLS} By Corollary \ref{cor:clade_law_given_over}, Proposition \ref{prop:stable_JCCP}, and standard fluctuation theory for L\'evy processes (\cite[Theorem VII.1]{BertoinLevy}; see also Proposition \ref{prop:hitting_time:subord} in our appendix),
 \begin{equation*}
  \mClade^+\left\{ \exp(-\theta \len(N))\;\middle|\;J^+ = y\right\} = \EV\left[e^{-\theta T^{-y}}\right] = \exp\left( -y\psi^{-1}(\theta) \right) = \exp\left( -y\left(\frac{\pi}{2}\right)^{1/3}\theta^{2/3} \right).
 \end{equation*}
 Let $\widetilde{\theta}=\theta\sqrt{\pi/2}$. Applying Proposition \ref{prop:clade:stats} \ref{item:CS:over:mass} and mixing,
 \begin{equation*} 
 \begin{split}
  &\mClade^+\left\{ \exp\left(-\theta \len(N)\right)\;\middle|\;m^0 = a\right\} = \int_0^{\infty} \exp\left(-y\widetilde{\theta}^{2/3}\right) \frac{a^{3/2}}{\sqrt{2\pi}y^{5/2}}e^{-a/2y}dy\\
  	&\qquad\qquad = \frac{1}{\sqrt{2\pi}} \int_0^{\infty} \exp\left(-av\widetilde{\theta}^{2/3}\right) v^{-5/2}e^{-1/2v}dv
  	= \left(1 + \sqrt{2a}\,\widetilde{\theta}^{1/3}\right)\exp\left(-\sqrt{2a}\,\widetilde{\theta}^{1/3}\right),
 \end{split}
 \end{equation*}
 where the second equality comes from the change of variables $y = av$ and the third from integrals that can be found e.g.\ in \cite[Example 33.15]{Sato}. Thus,
 \begin{equation*}
 \begin{split} \EV\left[\exp\left(-\theta \sum\nolimits_{U\in\beta}\len(\bN_U)\right)\right] 
   &=\exp\left(\sum_{U\in\beta}\left(\log\left(1+\sqrt{2\Leb(U)}\widetilde{\theta}^{1/3}\right)-\sqrt{2\Leb(U)}\widetilde{\theta}^{1/3}\right)\right)\\
   &\ge \exp\left( \sum\nolimits_{U\in\beta} -\Leb(U)\widetilde{\theta}^{2/3} -C(\Leb(U))^{3/2}\widetilde{\theta}\right)
 \end{split}
 \end{equation*}
 for some $C>0$ and $\theta$ sufficiently small, since a Taylor series approximation of $\log(1+x)$ gives that the exponent in the above is 
 $-a\widetilde{\theta}^{2/3} + O(a^{3/2}\widetilde{\theta})$.
 Since $\sum_{U\in\beta}\Leb(U) < \infty$, we conclude that the above goes to one as $\theta$ goes to zero. Thus, $\sum_{U\in\beta}\len(\bN_U)$ is a.s.\ finite.
 
 \ref{item:CLS:kernel} For $\beta\in\IPspace$, the counting measure $\sum_{(a,b)\in\beta}\Dirac{a,|b-a|}$ is a measurable function of $\beta$. By Lemma \ref{lem:meas_marking}, the map from $\beta$ to the law of $\bG_{\beta} := \sum_{(a,b)\in\beta}\Dirac{a,\bN_{(a,b)}}$ is a kernel, as we have marked the points via the kernel $|b-a|\mapsto \mClade^+(\,\cdot\,|\; m^0=|b-a|)$. And finally, the map that takes $\bG_{\beta}$ to $\bN_{\beta} = \ConcatIL \bN_{(a,b)}$ by concatenating over points of $\bG_{\beta}$ is measurable.
 
 \ref{item:CLS:scaffold} Let $(\bff_U,\,U\in\beta)$ denote an independent family of \BESQ[-1] processes absorbed at $0$, with each $\bff_U$ starting from $\Leb(U)$. By Lemma \ref{lem:BESQ:length},
 $$\EV[\life(\bff_U)] = \frac{1}{\Gamma(3/2)} \int_0^{\infty} \frac{\Leb(U)}{2x}\sqrt{x}e^{-x}dx = \Leb(U).$$
 For each $U\in \beta$, let $S(U-) := \sum_{V\in\beta\colon V<U}\life(\bff_V)$ and $S(U) := S(U-)+\Leb(U)$. Let $L:= \sup_{U\in\beta}S(U)$. Then
 \begin{equation}
  \EV[L] = \sum_{U\in\beta} \EV\big[\life(\bff_U)\big] = \sum_{U\in\beta} \Leb(U) = \IPmag{\beta} < \infty.
 \end{equation}
 
  Let $\bN$ denote a \PRM[\Leb\otimes\mBxc], independent of $(\bff_U,\,U\in\beta)$. Let $H_U$ denote the first hitting time of $-S(U-)$ in $\xi(\bN)$, let $T$ denote the hitting time of $-L$, and set
 $$\bN_{\beta}' := \Restrict{\bN}{[0,T]} + \sum_{U\in\beta}\DiracBig{S(U),\bff_U}.$$
 It follows from the strong Markov property of $\bN$ that $\bN_{\beta}'$ has law $\Pr^1_{\beta}$. Adding spindles $\DiracBig{S(U),\bff_U}$ to $\bN$ with summable lifetimes modifies the associated scaffolding $\xi(\bN_{\beta}')$ only by adding jumps of the corresponding heights. In particular, $\xi(\bN_{\beta}')$ is formed by concatenating the paths of the excursions $\xi\big(\shiftrestrict{\bN_{\beta}'}{[S(U-),S(U)]}\big)$. Thus, the claimed identity holds a.s.\ under $\Pr^1_{\beta}$.
 
 \ref{item:CLS:PP} First, $\skewer(0,\bN_{\beta}) = \beta$ for every $\beta$, so the laws $\Pr^1_{\beta}$ are supported on $\cS$, as claimed. Now, we need only construct the desired measurable map $\phi$, as the stochastic kernel claim follows from this and assertion (ii). We present $\phi(N)$ in the case $N=\bN_{\beta}$, but this construction applies to any $N\in\cS$. 
 Define
 $$\bG_1 := \sum_{[a,b]\in V^0_0(\bN_{\beta})}\Dirac{a,\shiftrestrict{\bN_{\beta}}{[a,b)},m^0\left(\shiftrestrict{\bN_{\beta}}{[a,b)}\right)}.$$
 Then for each $U \in\beta$ we get $\bN_U = \shiftrestrict{\bN_{\beta}}{[a,b)}$ for some $[a,b]\in V^0_0(\bN_{\beta})$, and
 $$\IPLT_{\beta}(U) = \sqrt{\pi}\lim_{h\downto 0} \sqrt{h}\bG_1\left([0,a)\times\Hxc{\pm}\times (h,\infty)\right) =:  D_{[a,b]}.$$
 Finally, $F^{\geq 0}_0(\bN_{\beta}) = \phi(\bN_{\beta}) := \sum_{[a,b]\in V^0_0(\bN_{\beta})}\DiracBig{D_{[a,b]},\shiftrestrict{\bN_{\beta}}{[a,b)}}$. The measurability of the preceding transformations follows from the measurability of restriction maps and the existence of measurable enumerations of points of a point process, per \cite[Proposition 9.1.XII]{DaleyVereJones2}.
\end{proof}

\begin{definition}[Type-1 evolution]\label{def:type-1}
 For $\beta\in\IPspace$, a \emph{type-1 evolution} starting from $\beta$ is a process $(\alpha^y,\,y\ge0)\in\cCRI$ distributed like $\skewerP(\bN_{\beta}^*)$, where $\bN_{\beta}^*$ is an $\Hfins$-version of $\bN_{\beta}$.
\end{definition}

We will confirm in Proposition \ref{prop:type-1:cts} that such $\Hfins$-versions exist for every $\beta$. We now relate point processes of clades to the skewer process. Recall the cutoff processes of \eqref{eq:cutoff_def}.

\begin{lemma}\label{lem:cutoff_skewer}
 Take $N\in\H$, $y,z\geq 0$, and suppose $M^y_N(t) < \infty$ for all $t < \len(N)$.
 \begin{enumerate}[label=(\roman*), ref=(\roman*)]
  \item We have \label{item:CPS:cutoff_skewer}
  $\displaystyle\skewer(y,N) = \left\{\begin{array}{ll}
  		\skewer\Big(y,\cutoffL{z}{N},\cutoffL{z}{\xi(N)}\Big)		& \text{if }y < z,\\[6pt]
  		\skewer\left(y-z,\cutoffG{z}{N},\cutoffG{z}{\xi(N)}\right)	& \text{if }y > z.
  	\end{array}\right.$\vspace{4pt}
  \item \label{item:CPS:cutoff_skewer_2}
  	For $\bN$ a \PRM[\Leb\otimes\mBxc], it is a.s.\ the case that for every $t\ge 0$,
  	\begin{equation*}
  	 \skewer\left(y,\Restrict{\bN}{[0,t]}\right) = \left\{\begin{array}{ll}
  		\skewer\Big(y,\cutoffL{z}{\restrict{\bN}{[0,t]}}\Big)		& \text{if }y \leq z,\\[6pt]
  		\skewer\left(y-z,\cutoffG{z}{\restrict{\bN}{[0,t]}}\right)	& \text{if }y \geq z.
  	 \end{array}\right.
  	\end{equation*}
  	The same holds for $\bN_{\beta}$, for any $\beta\in\HIPspace$, with $z>0$.
  \item \label{item:CPS:clades_skewer}
  	If level $z$ is nice for $\xi(N)$, in the sense of Proposition \ref{prop:nice_level}, then
  	\begin{equation}\label{eq:skewer_from_clade_PP}
  	 \skewer(y,N) = \Concat_{\text{points }(s,N^+_s)\text{ of }F^{\geq z}_0(N)} \skewer(y-z,N^+_s) \qquad \text{for }y\geq z.
  	\end{equation}
  \item \label{item:CPS:clades_skewer_0}
  	Suppose $\beta\in\IPspace$ is \emph{nice} in the sense that, for $U,V\in\beta$, if $U\neq V$ then $\IPLT_{\beta}(U)\neq \IPLT_{\beta}(V)$. Let $\bN_{\beta}$ and $F^{\geq 0}_0(\bN_{\beta})$ be as in Definition \ref{constr:type-1_2}. In the event that $M^y_{\bN_{\beta}}(t) < \infty$ for all $t < \len(\bN_{\beta})$, \eqref{eq:skewer_from_clade_PP} holds with $z=0$ and $N = \bN_{\beta}$.
 \end{enumerate}
\end{lemma}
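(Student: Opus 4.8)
The plan is to prove (i) by a spindle-by-spindle bookkeeping and then deduce (ii), (iii), (iv) from it. For (i), recall from Definition \ref{def:skewer_2} that $\skewer(y,N,\xi(N))$ is read off from the range and jump set of the non-decreasing aggregate-mass process $M^y_N$, and that $M^y_N$ registers a spindle $f_t$ of $N$ only when $\max\{f_t((y-\xi_N(t-))-),f_t(y-\xi_N(t-))\}>0$, i.e.\ when $f_t$ is born at a level $\le y$ and reaches level $y$. The idea is that the cutoff operations neither add nor delete such spindles and preserve their level-$y$ masses and their time order. For $y<z$: such a spindle is born strictly below $z$, so by \eqref{eq:cutoff_def} it survives in $\cutoffL{z}{N}$, intact if it dies at a level $\le z$ and as its lower part $\check f^z_t$ if it crosses level $z$; in the scaffolding $\cutoffL{z}{\xi(N)}$ it is (re)born at the same level $\xi_N(t-)$, and since $\check f^z_t$ agrees with $f_t$ on $[0,y-\xi_N(t-)]$ when $y<z$, its level-$y$ mass is unchanged. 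As the time change $t\mapsto\sigma^z_N(t)$ is non-decreasing, the time-ordered list of level-$y$ crossing spindles and masses coincides for $N$ and for $(\cutoffL{z}{N},\cutoffL{z}{\xi(N)})$, so the two aggregate-mass processes have the same range and jumps. The case $y>z$ is the mirror image, using $\cutoffG{z}{N}$, $\cutoffG{z}{\xi(N)}$, the vertical shift by $z$, and the identity $\hat f^z_t(w)=f_t(z-\xi_N(t-)+w)$: a level-$y$ crossing spindle is retained intact (born at shifted level $\xi_N(t-)-z$) if $\xi_N(t-)\ge z$ and as $\hat f^z_t$ (born at shifted level $0$) otherwise, and in either case contributes mass $f_t(y-\xi_N(t-))$ at shifted level $y-z$.

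For (ii), fix $y$ and $z$. Proposition \ref{prop:agg_mass_subord} gives that $M^y_{\bN}\circ\tau^y(s)$ is an a.s.-finite \Stable[\frac12] subordinator, whence $M^y_{\bN}(t)<\infty$ for every $t\ge0$ a.s.\ (each level-$y$ excursion interval of $\xi(\bN)$ contributes the finite mass of a single crossing spindle); the analogous a.s.\ finiteness for $\bN_\beta$ at a fixed level $z>0$ is obtained the same way inside the concatenation of clades constituting $\bN_\beta$. On this event $M^y_{\restrict{\bN}{[0,t]}}(s)=M^y_{\bN}(s)<\infty$ for $s\le t$, so (i) applies to $\restrict{\bN}{[0,t]}$ (and to $\bN_\beta$) for every $t$ whenever $y\ne z$. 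The boundary case $y=z$ is where the modified aggregate mass of Definition \ref{def:skewer_2} does its work: I would rerun the bookkeeping of (i) and note that a spindle crossing level $z$ appears in $\cutoffL{z}{N}$ as $\check f^z_t$, which vanishes at $w=z-\xi_N(t-)$ but has left limit $f_t((z-\xi_N(t-))-)=f_t(z-\xi_N(t-))$ there, so the maximum of value and left limit preserves its contribution; symmetrically with $\cutoffG{z}{}$. Hence $\skewer(z,N)=\skewer(z,\cutoffL{z}{N},\cutoffL{z}{\xi(N)})=\skewer(0,\cutoffG{z}{N},\cutoffG{z}{\xi(N)})$, finishing (ii).

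For (iii) and (iv) I would first establish that the skewer commutes with concatenation of (bi-)clades: if $(N_a)_{a\in\cA}$ is a family as in Definition \ref{def:scaff_concat}, then for $w\ge0$,
\begin{equation*}
 \skewer\Big(w,\Concat_{a\in\cA}N_a,\Concat_{a\in\cA}\xi(N_a)\Big)=\Concat_{a\in\cA}\skewer(w,N_a,\xi(N_a)),
\end{equation*}
because each $\xi(N_a)$ returns to $0$, so $\Concat_a\xi(N_a)$ lays the pieces $\xi(N_a)$ side by side at a common level; consequently the aggregate mass accumulated over the $a$-th time block equals $M^w_{N_a}(\len(N_a))$, the aggregate-mass process of $\Concat_aN_a$ is the concatenation of the $M^w_{N_a}$, and its jump set is the disjoint union of those of the $M^w_{N_a}$ with the appropriate offsets. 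Then (iii) is immediate: (i), extended to $y=z$ as above, gives $\skewer(y,N)=\skewer(y-z,\cutoffG{z}{N},\cutoffG{z}{\xi(N)})$ for $y\ge z$; Lemma \ref{lem:cutoff_vs_PP} (valid since $z$ is nice) rewrites $\cutoffG{z}{N}$ and $\cutoffG{z}{\xi(N)}$ as the concatenations over the points of $F^{\geq z}_0(N)$, totally ordered by local time; and the identity above finishes it. For (iv), by Definition \ref{constr:type-1} the process $\bN_\beta=\ConcatIL_{U\in\beta}\bN_U$ is already a concatenation of clades, with $\xi(\bN_\beta)=\ConcatIL_{U\in\beta}\xi(\bN_U)$ by Proposition \ref{prop:clade_lengths_summable}\ref{item:CLS:scaffold}; niceness of $\beta$ makes the diversities $\IPLT_\beta(U)$ pairwise distinct, hence strictly increasing along $\beta$, so $F^{\geq 0}_0(\bN_\beta)=\sum_U\Dirac{\IPLT_\beta(U),\bN_U}$ lists the $\bN_U$ in $\beta$-order, and the concatenation identity (applicable on the stated finiteness event, which ensures the concatenated interval partition is well defined) yields \eqref{eq:skewer_from_clade_PP} with $z=0$ and $N=\bN_\beta$.

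I expect the main obstacle to be the endpoint bookkeeping in (i): one must verify that the birth level relevant to the skewer of the cutoff process is the one recorded by the \emph{given} scaffolding $\cutoffL{z}{\xi(N)}$ (resp.\ $\cutoffG{z}{\xi(N)}$), which need not coincide with $\xi(\cutoffL{z}{N})$ in general, and that the half-open conventions in the splitting \eqref{eq:spindle_split}, in the cutoff maps, and in the aggregate-mass maximum all line up — in particular, the $y=z$ case hinges on exactly this. The argument is elementary but this is where a slip would be easiest to make.
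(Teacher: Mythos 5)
Your architecture matches the paper's (part (i) from the definitions, (iii) via Lemma \ref{lem:cutoff_vs_PP} plus the fact that the skewer commutes with concatenation, (iv) via niceness of $\beta$ letting $F^{\geq 0}_0(\bN_{\beta})$ recover $\bN_{\beta}$), but there is a genuine gap in your treatment of the boundary case $y=z$ in (ii), which then propagates into your (iii). Your chain $\skewer(z,N)=\skewer(z,\cutoffL{z}{N},\cutoffL{z}{\xi(N)})=\skewer(0,\cutoffG{z}{N},\cutoffG{z}{\xi(N)})$ is \emph{not} a deterministic consequence of the max-with-left-limit bookkeeping: your argument only accounts for spindles that cross level $z$. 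A spindle that is alive at level $z$ without crossing it --- one that dies discontinuously exactly at level $z$, or is born discontinuously there --- contributes a block to $\skewer(z,N)$ but leaves no trace in $\cutoffG{z}{N}$ (respectively $\cutoffL{z}{N}$), so for general $N\in\H$ the displayed equalities fail; the paper spells out precisely this counterexample right after its proof of (i). What rescues (ii) is probabilistic: $\mBxc$-a.e.\ spindle is continuous, and in $\bN_{\beta}$ the only broken spindles are the leftmost ones, broken at level $0$; hence a.s.\ no spindle of $\bN$ (or of $\bN_{\beta}$ at levels $z>0$) is born or dies discontinuously at level $z$, which is exactly why the strict inequalities of (i) can be weakened and why the statement for $\bN_{\beta}$ carries the restriction $z>0$ --- a restriction your argument cannot explain. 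Likewise, in (iii) your appeal to ``(i) extended to $y=z$ as above'' needs the absence of degenerate behaviour at level $z$, which is what niceness of the level (Proposition \ref{prop:nice_level}) provides and should be invoked.

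A second unresolved point: the statement of (ii) (through Definition \ref{def:skewer_2}) skewers the cutoff measure against its \emph{own} scaffolding $\xi\big(\cutoffL{z}{\restrict{\bN}{[0,t]}}\big)$, whereas (i) gives the identity with the cut-off scaffolding $\cutoffL{z}{\xi(N)}$. You flag this mismatch as ``the main obstacle'' but never close it, and it is not automatic: $\xi$ does not commute with concatenation in general, as the paper warns at the end of Section 3. The paper closes it with the identity \eqref{eq:cutoff_scaffold_commute}, deduced from the summability argument in the proof of Proposition \ref{prop:clade_lengths_summable}\ref{item:CLS:scaffold}; some such argument is needed in your (ii). Your parts (iii) and (iv) otherwise avoid this issue correctly, since Lemma \ref{lem:cutoff_vs_PP} supplies both the concatenated measure and the concatenated scaffolding as a pair.
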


\begin{proof}
 \ref{item:CPS:cutoff_skewer}. 
 This follows from the definitions in \eqref{eq:cutoff_def}. See Figure \ref{fig:cutoff}.
 
 Before proceeding to prove \ref{item:CPS:cutoff_skewer_2} we note that, in the general setting of \ref{item:CPS:cutoff_skewer}, we make no claim in the case $y=z$. These formulas fail in cases where $y=z$ and this is a level at which a spindle of $N$ is born or dies discontinuously. For example, if a spindle $f_t$ of $N$ dies discontinuously at level $z$, then there is no broken spindle $\hat f^z_t$ incorporated into $\cutoffG{z}{N}$. Thus, while $\skewer(z,N)$ has a block of size $f_t((z-\xi_N(t-))-)$ corresponding to this spindle, $\skewer(0,\cutoffG{z}{N},\cutoffG{z}{\xi(N)})$ has no such block.
 
 
 \ref{item:CPS:cutoff_skewer_2}. The argument in the proof of Proposition \ref{prop:clade_lengths_summable} \ref{item:CLS:scaffold} also shows that
 \begin{equation}\label{eq:cutoff_scaffold_commute}
  \xi\left(\cutoffL{z}{\restrict{\bN}{[0,t]}}\right) = \cutoffL{z}{\restrict{\xi(\bN)}{[0,t]}}, \qquad \xi\left(\cutoffL{z}{\restrict{\bN_{\beta}}{[0,t]}}\right) = \cutoffL{z}{\restrict{\xi(\bN_{\beta})}{[0,t]}},
 \end{equation}
 and likewise for $\cutoffG{z}{}$. In particular, that argument uses the a.s.\ summability of lifetimes of leftmost spindles $\life(\bff_U)$ to show that the concatenation of the scaffoldings of clades $\ConcatIL_U\xi(\bN_U)$ equals the scaffolding of their concatenation $\xi(\ConcatIL_U\bN_U)$. The same argument applies here.
 
 Next, since no spindles in $\bN$ or $\bN_{\beta}$ are born or die discontinuously at any level above 0, every spindle $f_t$ of $\bN$ or $\bN_{\beta}$ that is alive at level $z>0$ has non-trivial broken components $\check f^z_t$ and $\hat f^z_t$ about level $z$. Thus, the strict inequalities on the conditions for the two expressions in \ref{item:CPS:cutoff_skewer} can be replaced by weak inequalities.
 
 \ref{item:CPS:clades_skewer}. This follows from assertion \ref{item:CPS:cutoff_skewer} via Lemma \ref{lem:cutoff_vs_PP}, which relates $\cutoffG{y}{\cdot\,}$ to $F^{\geq y}_0$, and via the observation that the skewer map commutes with concatenation of bi-clades.
 
 \ref{item:CPS:clades_skewer_0}. Definition \ref{constr:type-1_2} of $F^{\geq 0}_0(\bN_{\beta})$ has the property that, if $\beta$ is nice in the sense described in the assertion, then $F^{\geq 0}_0(\bN_{\beta})$ does not have two points coinciding at the same time. Thus, the conclusion of Lemma \ref{lem:cutoff_vs_PP} applies to it, and $\cutoffG{0}{\bN_{\beta}}$, which equals $\bN_{\beta}$, may be recovered from $F^{\geq 0}_0(\bN_{\beta})$. Thus, the claim follows by the same argument as for assertion (ii).
%
\end{proof}


In light of Lemma \ref{lem:cutoff_skewer}, the skewer process $\skewerP$ as a map on $\Hfins$ is adapted to the filtration $(\cF^{y,*},\,y\geq 0)$, where we take $\cF^{y,*} := \{A\cap\Hfins\colon A\in\cF^y\}$.

\subsection{Path-continuous type-1 evolution from Poisson-Dirichlet initial distribution}
\label{sec:type-1:PRM}

Let $\bN$ be a \PRM[\Leb\otimes\mBxc] living on a probability space $(\Omega,\cA,\Pr)$. We continue to use the notation of the first paragraph of Section \ref{sec:biclade_PRM} for objects related to $\bN$. Let $(\ol\cF_t,\,t\ge0)$ and $(\ol\cF^y,\,y\ge0)$ denote $\Pr$-completions of the time- and level-filtrations on $(\Omega,\cA)$ generated by $\bN$, as in Definition \ref{def:filtrations}, augmented to allow an independent random variable $S$ measurable in $\ol\cF^0\cap\ol\cF_0$. That is, these are formed by augmenting the $\Pr$-completions of the pullbacks, via $\bN\colon \Omega\to\H$, of the time- and level-filtrations on $\H$. 

We define $\tdN := \restrict{\bN}{[0,T)}$, where $T$ is an a.s.\ finite $(\ol\cF_t)$-stopping time. We take ``twiddled versions'' of our earlier notation to denote the corresponding objects for $\tdN$; for instance, $\tdl$ will denote the jointly H\"older continuous version of the local time process associated with $\tdX := \xi(\tdN)$. 
It follows from Proposition \ref{prop:agg_mass_subord} and the a.s.\ finiteness of $T$ that for each $y\geq 0$ we have $\td\alpha^y := \skewer(y,\tdN)\in\IPspace$ almost surely. I.e.\ $\td\alpha^y$ is almost surely a finite interval partition with the diversity property. 

\begin{proposition}\label{prop:PRM:Fy-_Fy+}
 Suppose $T$ has the properties: (a) $S^0 := \ell^0(T)$ is measurable in $\ol\cF^0$, and (b) $\bX < 0$ on the time interval $(\tau^0(S^0-),T)$. Then for each $y\geq 0$, the measure $\tdF^{\geq y}_0 = F^{\geq y}_0(\tdN)$ is conditionally independent of $\ol\cF^{y}$ given $\td\alpha^y$, with the regular conditional distribution (r.c.d.) $\Pr^1_{\td\alpha^y}\{F^{\geq 0}_0\in\cdot\,\}$ of Definition \ref{constr:type-1_2}.
\end{proposition}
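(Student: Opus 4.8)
The plan is to prove this strong‑Markov‑type statement by combining four ingredients already at hand: the bi‑clade It\^o‑measure description of $\bF^y$ (Proposition~\ref{prop:bi-clade_PRM}(iii)), the clade/anti‑clade independence (Proposition~\ref{prop:clade_splitting}), the mid‑spindle Markov property (Lemma~\ref{lem:mid_spindle_Markov}), and the identification of skewer diversity with scaffolding local time (Theorem~\ref{thm:LT_property_all_levels}); the remaining work is to account for the effect of the time‑stopping at $T$, which is exactly what conditions (a)--(b) govern.

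First I would dispose of the trivial case $\td\alpha^y=\emptyset$, in which $\tdF^{\geq y}_0=0=\bN_\emptyset$, and restrict to the full‑probability event on which level $y$ is nice for $\tdX$ (Proposition~\ref{prop:nice_level}) and Theorem~\ref{thm:LT_property_all_levels} holds. On this event the spindles of $\tdN$ crossing level $y$ are in bijection with the blocks of $\td\alpha^y=\skewer(y,\tdN)$, and each crossing spindle is the central spindle of exactly one bi‑clade of $\tdN$ about level $y$ --- the incomplete first one (present iff $y>0$), a complete one, or the incomplete last one. Under this correspondence the clade half $N^+$ of a bi‑clade satisfies $m^0(N^+)=\Leb(U)$ for the matching block $U$ and is anchored in $\tdF^{\geq y}_0$ at local time $\IPLT_{\td\alpha^y}(U)$ (Proposition~\ref{prop:agg_mass_subord}, Theorem~\ref{thm:LT_property_all_levels}); in particular $\td S^y := \tdl^y(\len(\tdN)) = \IPLT_{\td\alpha^y}(\infty)$, and using conditions (a)--(b) one verifies that $\cutoffL{y}{\tdN}$, and hence $\td\alpha^y$ and $\td S^y$, are $\ol\cF^y$‑measurable. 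Conditions (a)--(b), namely that $S^0=\ell^0(T)$ is $\ol\cF^0$‑measurable and $\bX<0$ on $(\tau^0(S^0-),T)$, force $\tdX$ to stay negative, hence below every level $\ge0$, on $(\tau^0(S^0-),T)$; so for $y\ge0$ the incomplete last bi‑clade of $\tdN$ about $y$ lies below $y$ and contributes no clade, and every clade appearing in $\tdF^{\geq y}_0$ coincides with a clade of the unstopped $\bN$ occurring at level‑$y$ local time $<\td S^y$. This yields the explicit representation $\tdF^{\geq y}_0 = \cf\{y>0\}\,\DiracBig{0,N^{\geq y}_{\textnormal{first}}(\bN)} + \Restrict{F^{\geq y}(\bN)}{[0,\td S^y)}$.

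For the conditional law, note that $\ol\cF^y$ is generated modulo null sets by $S$ together with the anti‑clade halves of the bi‑clades about level $y$ and their local times (Lemma~\ref{lem:cutoff_vs_PP}); these determine every central mass $m^0$ --- read off the top of the broken anti‑clade spindle --- and hence $\td\alpha^y$, but carry no information about the clade halves. For the incomplete first bi‑clade I would invoke Lemma~\ref{lem:mid_spindle_Markov} at $T^{\geq y}$: conditionally on its central mass $a$, the pair $\big(\Restrict{\bN}{[0,T^{\geq y})},\check f^y_{T^{\geq y}}\big)$ is independent of $\big(\ShiftRestrict{\bN}{(T^{\geq y},\infty)},\hat f^y_{T^{\geq y}}\big)$, with $\hat f^y_{T^{\geq y}}$ a \BESQ[-1] from $a$ and $\ShiftRestrict{\bN}{(T^{\geq y},\infty)}$ a fresh \PRM[\Leb\otimes\mBxc], so Proposition~\ref{prop:clade_splitting}(ii) gives $N^{\geq y}_{\textnormal{first}}(\bN)\sim\mClade^+(\,\cdot\;|\;m^0=a)$ conditionally, independent of the below‑$y$ data of the first bi‑clade and of the scaffolding‑with‑spindles past $T^y$. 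For the complete bi‑clades, shifting at $T^y=\tau^y(0)$ identifies $F^y(\bN)$ with $F^0$ of the fresh \PRM\ $\ShiftRestrict{\bN}{[T^y,\infty)}$, which is a \PRM[\Leb\otimes\mClade] by Proposition~\ref{prop:bi-clade_PRM}(iii); conditioning on its marks $m^0$, Proposition~\ref{prop:clade_splitting}(i) makes the clade and anti‑clade halves independent with respective laws $\mClade^+(\,\cdot\;|\;m^0)$ and $\mClade^-(\,\cdot\;|\;m^0)$. Since $\ol\cF^y$ is a function of the anti‑clade halves and of $S$ alone, it follows that conditionally on $\ol\cF^y$ the clades in $\tdF^{\geq y}_0$ are mutually independent, the one at local time $\IPLT_{\td\alpha^y}(U)$ having law $\mClade^+(\,\cdot\;|\;m^0=\Leb(U))$ as $U$ ranges over the blocks of $\td\alpha^y$ --- which, recalling that $\bN_U\sim\mClade^+(\,\cdot\;|\;m^0=\Leb(U))$, is precisely $\Pr^1_{\td\alpha^y}\{F^{\geq 0}_0\in\cdot\,\}$ of Definition~\ref{constr:type-1_2}. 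As this conditional law depends on $\ol\cF^y$ only through $\td\alpha^y$, the claimed conditional independence and the stated r.c.d.\ follow.

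The hardest part will be the bookkeeping around conditions (a)--(b): although the time‑stopping time $T$ need not be $\ol\cF^0$‑measurable, these conditions are exactly what forces the stopping into the final level‑$0$ anti‑clade, hence strictly below every level $y\ge0$, so that no clade above level $y$ is truncated, the incomplete last clade vanishes, and the only extra datum the stopping contributes to $\ol\cF^y$ is the local‑time cutoff $\td S^y$, which Theorem~\ref{thm:LT_property_all_levels} identifies with $\IPLT_{\td\alpha^y}(\infty)$, a function of $\td\alpha^y$. Making precise that the dependence of $\tdN$ on $T$ leaks into $\ol\cF^y$ only through this cutoff, and not through any finer feature of the clades lying above level $y$, is where I expect the argument to require the most care.
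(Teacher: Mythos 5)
Your outline follows essentially the same route as the paper's proof: the mid-spindle Markov property (Lemma \ref{lem:mid_spindle_Markov}) at $T^{\geq y}$ for the first incomplete clade, the \PRM\ description of $\bF^y$ together with the clade/anti-clade independence of Proposition \ref{prop:clade_splitting} for the complete bi-clades, Lemma \ref{lem:cutoff_vs_PP} to identify $\ol\cF^y$ (up to null sets) with the anti-clade data plus $S$, and finally the observation that conditions (a)--(b) force the stop to occur below every level $y\ge 0$, so that $\tdF^{\geq y}_0=\restrict{\bF^{\geq y}_0}{[0,S^y)}$ with $S^y=\ell^y(T)$. The paper merely organizes this in stages (first $T=\tau^y(s-)$ at a fixed level, then the unstopped process, then general $T$).

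However, the two points you flag as ``requiring care'' are precisely where the paper does real work, and your sketch asserts rather than supplies them. First, the $\ol\cF^y$-measurability of $S^y$ for $y>0$ is not automatic: the paper derives it from condition (b) via $S^y=\ell^y(\tau^0(S^0-))$, shows $S^y=\inf\{s\ge0\colon \ell^0(\tau^y(s))\ge S^0\}$ after ruling out the boundary case, and then expresses $\ell^0(\tau^y(s))$ through $N^{\leq y}_{\textnormal{first}}$ and $\bF^{\leq y}$, which are $\ol\cF^y$-measurable. Your stronger parenthetical claim that $\cutoffL{y}{\tdN}$ is $\ol\cF^y$-measurable is neither needed nor justified: conditions (a)--(b) do not determine where inside the final below-level excursion the time $T$ falls, so the truncated last anti-clade of $\tdN$ need not be level-measurable; only $\td\alpha^y$ and $S^y$ are. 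Second, even granted the conditional law of the full $\bF^{\geq y}_0$ given $\ol\cF^y$, you still must justify that restricting to the random, $\ol\cF^y$-measurable local-time window $[0,S^y)$ yields the r.c.d.\ $\Pr^1_{\td\alpha^y}\{F^{\geq 0}_0\in\cdot\,\}$; this uses that under $\Pr^1_{\beta_\infty}$ the clade point process restricted to $[0,s)$ is independent of its remainder, and is carried out in the paper by an explicit disintegration over $(\alpha^y_\infty,S^y)$ (the $G_{<s}$, $\beta_{<s}$ computation in Step 3). Your phrase that the stopping ``leaks into $\ol\cF^y$ only through the cutoff'' conflates this: $\ol\cF^y$ does not depend on $T$ at all; what needs proving is that $\tdF^{\geq y}_0$ depends on $T$ only through $S^y$. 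Finally, your displayed representation of $\tdF^{\geq y}_0$ wrongly includes $N^{\geq y}_{\textnormal{first}}(\bN)$ when $y>0$ but $T<T^{\geq y}$; this is harmless after your initial reduction to $\{\td\alpha^y\neq\emptyset\}$, but the paper's form $\restrict{\bF^{\geq y}_0}{[0,S^y)}$ covers all cases. So the plan is the right one, but as written it leaves the proposition's genuinely delicate content --- condition (b)'s role in making $S^y$ level-measurable and the cut at a random local time --- unproved.
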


In light of Lemma \ref{lem:cutoff_skewer} \ref{item:CPS:cutoff_skewer}, this proposition is very close to a simple Markov property for $(\td\alpha^y,\ y\geq 0)$. In order to minimize our involvement with measure-theoretic technicalities, we will postpone pinning this connection down until Corollary \ref{cor:type-1:simple_Markov}.

\begin{proof}
 Step 1 of this proof establishes the claimed result at a fixed level $y\geq 0$ when $T = \tau^y(s-)$, where $s>0$ is fixed. Note that this time does not satisfy conditions (a) and (b). In Step 1, $T$ is specific to a fixed level $y$, whereas in the proposition, the result holds at each level for a single time $T$. 
 In Step 2, we extend this to describe the unstopped point process $\bF^{\geq y}_0$. Finally, in Step 3, we extend our results to the regime of the proposition.
 
 \emph{Step 1}: Assume $T = \tau^y(s-)$. Note that $\tdF^y_0 = \restrict{\bF^y_0}{[0,s)}$. The strong Markov property of $\bN$ tells us that $\restrict{\bN}{[0,T^y)}$ is independent of $\shiftrestrict{\bN}{[T^y,\infty)}$. Rephrasing this in the notation of Definition \ref{def:bi-clade_PP}, $(\bN^{\leq y}_{\textnormal{first}},\bN^{\geq y}_{\textnormal{first}})$ is independent of $(\bF^{\leq y},\bF^{\geq y})$. This will allow us to consider conditioning separately for the first pair and the second. Let $m^y\colon \H\to [0,\infty)$ denote the mass of the leftmost spindle at level $y$:
 \begin{equation}\label{eq:LMB_def}
  m^y(N) := M^y_N\big(\inf\{ t\geq 0\colon M^y_N(t) >0\}\big) \qquad \text{for }N\in\H.
 \end{equation}
 We apply the mid-spindle Markov property, Lemma \ref{lem:mid_spindle_Markov}, at time $T^{\geq y}$. Together with the description of $\mClade^+$ in Proposition \ref{prop:clade_splitting}, this implies that the clade $\bN^{\geq y}_{\textnormal{first}}$ has conditional law $\mClade^+(\,\cdot\;|\;m^0 = m^y(\bN^{\leq y}_{\textnormal{first}}))$ given $\bN^{\leq y}_{\textnormal{first}}$, as desired.
 
 Now, let $\td\gamma^y$ denote $\td\alpha^y$ minus its leftmost block, so that $\td\alpha^y = \{(0,m^y(\tdN))\}\concat\td\gamma^y$. Proposition \ref{prop:agg_mass_subord} indicates two properties of $\td\gamma^y$: (a) it is a \Stable[\frac12] interval partition with total diversity $s$, in the sense of Proposition \ref{prop:IP:Stable}, and (b) it a.s.\ equals a function of $\tdF^y$. For $\beta\in\IPspace$ let $(N^{\pm}_U,\ U\in\beta)$ denote a family of independent bi-clades with respective distributions $N^{\pm}_U \sim \mClade\{\,\cdot\;|\,m^0=\Leb(U)\}$,
 $$G_{\beta} := \sum_{U\in\beta} \Dirac{\IPLT_{\beta}(U),\Leb(U)}, \quad \text{and} \quad G^{\pm}_{\beta} := \sum_{U\in\beta} \Dirac{\IPLT_{\beta}(U),N^{\pm}_U}.$$
 Then $G_{\td\gamma^y}$ is a \PRM[\Leb\otimes \mClade\{m^0\in \cdot\,\}] on $[0,s)\times (0,\infty)$. Moreover, $G^{\pm}_{\td\gamma^y}$ is a \PRM[\Leb\otimes\mClade] on $[0,s)\times \Hxc{\pm}$, as it may be obtained by marking the points of $G_{\beta}$ via the stochastic kernel $a\mapsto \mClade\{\,\cdot\;|\,m^0 = a)$, and this is an $m^0$-disintegration of $\mClade$. By Proposition \ref{prop:bi-clade_PRM}, $\tdF^{y}$ has the same \PRM\ distribution as $G^{\pm}_{\td\gamma^y}$. Thus, the distribution of $G^{\pm}_{\td\gamma^y}$ is a regular conditional distribution for $\tdF^y$ given $\td\gamma^y$.
 
 Extending the preceding construction of $G^{\pm}_\beta$, for each $U\in\beta$ let $(N^+_U,N^-_U)$ denote the clade and anti-clade components of $N^{\pm}_U$, respectively. By Proposition \ref{prop:clade_splitting} these are independent. Thus,
 $$G^+_{\beta} := \sum_{U\in\beta} \Dirac{\IPLT_{\beta}(U),N^{+}_U} \quad \text{is independent of} \quad G^{-}_{\beta} := \sum_{U\in\beta} \Dirac{\IPLT_{\beta}(U),N^{-}_U}.$$
 Moreover, $G^+_{\beta}$ has law $\Pr^1_{\beta}\{F^{\geq 0}_0\in\cdot\,\}$, as in Definition \ref{constr:type-1_2}. Thus, given $\td\gamma^y$, the measure $\tdF^{\geq y}$ is conditionally independent of $\tdF^{\leq y}$ with regular conditional distribution $\Pr^1_{\td\gamma^y}\{F^{\geq 0}_0\in\cdot\,\}$. By another application of the strong Markov property of $\bN$ at time $T$, this conditional independence extends to conditional independence between $\tdF^{\geq y}$ and $\bF^{\leq y}$. 
 We finish with an appeal to the general principle that from $\cF_1\indep_{\cH_1}\cG_1$, $\cF_2\indep_{\cH_2}\cG_2$, and $(\cF_1,\cG_1,\cH_1)\indep (\cF_2,\cG_2,\cH_2)$, we may deduce $(\cF_1,\cF_2)\indep_{\cH_1,\cH_2}(\cG_1,\cG_2)$; see e.g.\ \cite[Propositions 6.6-6.8]{Kallenberg}. Thus, $\tdF^{\geq y}_0$ is conditionally independent of $\ol\cF^y$ given $\td\alpha^y$, with regular conditional distribution $\Pr^1_{\td\alpha^y}\{F^{\geq 0}_0\in\cdot\,\}$.
 
 
 \emph{Step 2}: For $s>0$ let $\alpha^y_s := \skewer(y,\restrict{\bN}{[0,\tau^y(s-))}$. We write $\alpha^y_\infty := (\alpha^y_n,\ n\in\BN)$; this takes values in the subset of $\IPspace^{\BN}$ comprising projectively consistent sequences. We equip $\IPspace^{\BN}$ with the product $\sigma$-algebra. In the regime of such projectively consistent sequences, Definition \ref{constr:type-1_2} extends naturally to define a kernel $\beta_\infty = (\beta_n,\ n\geq 1) \mapsto \Pr^1_{\beta_\infty}\{F^{\geq 0}_0\in\cdot\,\}$; i.e.\ a point process $G$ has this law if $\restrict{G}{[0,n)} \stackrel{d}{=} \sum_{U\in\beta_{n}} \Dirac{\IPLT_{\beta_{n}}(U),N^+_U}$ for every $n\geq 0$, 
 where the $(N^+_U)$ are as above. Extending the conditioning in the conclusion of Step 1, we find that $\restrict{\bF^{\geq y}_0}{[0,n)}$ is conditionally independent of $\ol\cF^y$ given $\alpha^y_\infty$. By consistency, $\bF^{\geq y}_0$ is conditionally independent of $\ol\cF^y$ given $\alpha^y_\infty$, with r.c.d.\ $\Pr^1_{\alpha^y_\infty}\{F^{\geq 0}_0\in\cdot\,\}$.
 
 \emph{Step 3}: Assume $T$ satisfies conditions (a) and (b) stated in the proposition. We now show that $S^y := \ell^y(T)$ is measurable in $\ol\cF^y$. For $y=0$, this is exactly condition (a), so assume $y>0$. 
 From condition (b), $S^y = \ell^y(\tau^0(S^0-))$. Thus, $\tau^0(S^0-)\in (\tau^y(S^y-),\tau^y(S^y))$. By monotonicity of $\ell^0$ we have $S^0\in [\ell^0(\tau^y(S^y-)),\ell^0(\tau^y(S^y))]$. In fact, we cannot have $S^0 = \ell^0(\tau^y(S^y-))$, since then we would have $\tau^y(S^y-)\in (\tau^0(S^0-),T)$ while $\bX(\tau^y(S^y-)) = y > 0$, which would violate condition (b). We conclude that $S^y = \inf\{s\ge0\colon \ell^0(\tau^y(s)) \ge S^0\}$. Finally,
 $$\ell^0(\tau^y(s)) = \ell^0_{N^{\leq y}_{\textnormal{first}}}(\infty) + \int_{[0,s]\times\Hxc{-}}\ell^{-y}_N(\infty)d\bF^{\leq y}(r,N),$$
 which is measurable in $\ol\cF^y$, as desired. 
 
 
 Condition (b) has the additional consequence that time $T$ occurs in the midst of a (possibly incomplete) bi-clade about level $y$ at local time $S^y$, no later than the jump across level $y$. Thus, the clade that follows at local time $S^y$ is entirely excluded from $\tdN$, so $\tdF^{\geq y}_0 = \restrict{\bF^{\geq y}_0}{[0,S^y)}$.
 
 Appealing to the result of Step 2, $S^y$ is conditionally independent of $\bF^{\geq y}_0$ given $\alpha^y_\infty$. Thus, $\Pr^1_{\alpha^y_\infty}\{F^{\geq 0}_0\in\cdot\,\}$ is a regular conditional distribution for $\bF^{\geq y}_0$ given $(\alpha^y_\infty,S^y)$. Consequently, for $f$ non-negative and measurable on the appropriate domain,
 $$\EV\big[ f\big(\bF^{\geq y}_0,S^y\big) \big] = \int f(G,s)\Pr^1_{\beta_\infty}\{F^{\geq 0}_0\in dG\}\Pr\{\alpha^y_\infty\in d\beta_\infty,S^y\in ds\}.$$
 For the purpose of the following, for $(G,s)$ as above we will write $G_{< s} := \restrict{G}{[0,s)}$ and $G_{\geq s} := \shiftrestrict{G}{[s,\infty)}$. Similarly, modifying our earlier notation, for $\beta_\infty = (\beta_n,\ n\geq 1)$ as in Step 2, we will write $\beta_{<s}$ to denote the set of blocks of $\beta_{\infty}$ prior to diversity $s$, and $\beta_{\geq s}$ will denote the remainder, shifted to start at left endpoint zero. More formally, $\beta_{<s} := \{U\in \beta_{\lceil s\rceil}\colon \IPLT_{\beta_{\lceil s\rceil}}(U) < s\}$ and $\beta_{\geq s} := (\beta_{\geq s,n},\ n\geq 1)$ where, for $n\geq 1$,
 $$\beta_{\geq s,n} := \big\{(a-\IPmag{\beta_{<s}},b-\IPmag{\beta_{<s}})\colon (a,b)\in \beta_{\lceil s\rceil + n},\ \IPLT_{\beta_{\lceil s\rceil + n}}(a) \in [s,s+n)\big\}.$$
 Now, suppose that in our earlier disintegration calculation, $f(G,s) := h(G_{<s})$. Then
 \begin{equation*}
 \begin{split}
  \EV\big[ h\big(\tdF^{\geq y}_0\big) \big] &= \int h(G_{<s})\Pr^1_{\beta_\infty}\{F^{\geq 0}_0\in dG\}\Pr\{\alpha^y_\infty\in d\beta_\infty,S^y\in ds\}\\
  		&= \int h(G_{<s})\Pr^1_{\beta_{<s}}\{F^{\geq 0}_0\in dG_{<s}\}\Pr^1_{\beta_{\geq s}}\{F^{\geq 0}_0\in dG_{\geq s}\}\Pr\{\alpha^y_\infty\in d\beta_\infty,S^y\in ds\}\\
  		&= \int h(G_{<s})\Pr^1_{\beta_{<s}}\{F^{\geq 0}_0\in dG_{<s}\}\Pr\{\alpha^y_\infty\in d\beta_\infty,S^y\in ds\}.
 \end{split}
 \end{equation*}
 The second line above comes from noting that $\restrict{F^{\geq 0}_0}{[0,s)}$ is independent of $\shiftrestrict{F^{\geq 0}_0}{[s,\infty)}$ under $\Pr^1_{\beta_\infty}$, and the third line comes from integrating out the $\Pr^1_{\beta_{\geq s}}$ term. Noting that $\td\alpha^y = \alpha^y_{< S^y}$, we conclude that $\Pr^1_{\td\alpha^y}\{F^{\geq 0}_0\in\cdot\,\}$ is a regular conditional distribution for $\tdF^{\geq y}_0$ given $(\alpha^y_\infty,S^y)$. We already have the desired conditional independence from $\ol\cF^y$. Finally, since this r.c.d.\ depends only on $\td\alpha^y$, it is also an r.c.d.\ given $\td\alpha^y$.
\end{proof}

We now revert to the general setting of $T$ being an a.s.\ finite $(\ol\cF_t)$-stopping time.

\begin{lemma}[Proposition 6 of \cite{Paper0}]\label{lem:spindle_piles}
 Fix $\theta\in \left(0,\frac12\right)$. For each spindle $(t,f)$ of $\tdN$, let $\bar f(y) := f(y-\tdX(t-))$, $y\in \BR$. These translated excursions can a.s.\ be partitioned into sequences $(g^n_j,\,j\geq 1)$, for $n\geq 1$, in such a way that in each sequence $(g^n_j,\,j\geq 1)$:
 (i) the excursions have disjoint support, and (ii) they are uniformly H\"older-$\theta$ with some constants $D_n$, with $\sum_{n\ge1}D_n < \infty$.
\end{lemma}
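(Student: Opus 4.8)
The statement is quoted from \cite[Proposition~6]{Paper0}; here is the line of argument I would follow. Write $\tdX=\xi(\tdN)$. The support of the translated excursion $\bar f$ of a spindle $(t,f)$ of $\tdN$ is exactly the jump interval $\big(\tdX(t-),\tdX(t)\big)$ on the level axis, an interval of length $\life(f)=\Delta\tdX(t)$, and $\bar f$ has the same H\"older-$\theta$ constant as $f$. Since $T$ is a.s.\ finite and $\tdX$ is c\`adl\`ag, the range of $\tdX$ on $[0,T]$ is a.s.\ bounded, so all these supports lie in one bounded interval. The plan is: (1) sort the spindles into dyadic \emph{amplitude bands} $\Exc_n:=\{f\in\Exc\colon A(f)\in[2^{-n-1},2^{-n})\}$, $n\ge1$, each of which a.s.\ contains only finitely many spindles of $\tdN$; (2) split band $n$ into a controlled number of \emph{disjoint-support} subfamilies by an interval-colouring argument; (3) relabel all the subfamilies obtained (over all $n$) as a single sequence of sequences $(g^m_j)_j$, $m\ge1$.

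\emph{Within-band H\"older control.} Since $A(c\scaleB f)=cA(f)$ and $\mBxc\{A>1\}<\infty$ by Lemma~\ref{lem:BESQ:exc_length}, the pair $(\scaleB,A)$ satisfies the hypotheses of Lemma~\ref{lem:scl_ker}; hence $\mBxc$ disintegrates along amplitude, and conditionally on the amplitudes the normalised shapes $\hat f:=A(f)^{-1}\scaleB f$ of the band-$n$ spindles are i.i.d.\ with law $\mBxc(\,\cdot\mid A=1)$. From $c\scaleB f=(cf(\cdot/c))$ one reads off that $f$ is H\"older-$\theta$ with constant at most $A(f)^{1-\theta}C(\hat f)$, where $C(\cdot)$ denotes the H\"older-$\theta$ constant, and $\hat f$ is a.s.\ H\"older-$\theta$ by Lemma~\ref{lem:BESQ:Holder}. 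Standard tail bounds for H\"older norms of such excursions (e.g.\ via Garsia--Rodemich--Rumsey) give enough integrability of $C(\hat f)$ that, writing $K_n$ for the number of band-$n$ spindles (Poisson with mean $\asymp 2^{3n/2}$ given $T$), one has $\max_{i\le K_n}C(\hat f_i)\le\phi(n)$ eventually a.s., for some subexponentially growing $\phi$.

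\emph{Overlap-multiplicity control.} Fix $y$ in the range of $\restrict{\tdX}{[0,T]}$. A band-$n$ support contains $y$ only if $\tdX$ jumps across $y$ via a spindle of amplitude $\ge2^{-n-1}$; such crossings are the central spindles of excursions of $\tdX$ about level $y$, and by Proposition~\ref{prop:bi-clade_PRM} these excursions form a $\PRM[\Leb\otimes\mClade]$. Writing $A^0$ for the amplitude of the central spindle of a bi-clade, the usual computation (the invariance of $\mClade$ under $\scaleH$ from Lemma~\ref{lem:clade:invariance}, together with $\mClade\{A^0>1\}<\infty$) gives $\mClade\{A^0>\epsilon\}=c_*\epsilon^{-1/2}$ for a finite constant $c_*$. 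Hence the number of band-$n$ supports containing $y$ is, given $\ell^y(T)$, dominated by a Poisson variable of mean $\asymp\ell^y(T)\,2^{n/2}\le\big(\sup_z\ell^z(T)\big)2^{n/2}$, and $\sup_z\ell^z(T)<\infty$ a.s.\ by joint continuity of local time (Theorem~\ref{thm:Boylan}). Discretising $y$ using that same continuity of $\ell^\cdot(T)$, together with Borel--Cantelli, upgrades this to a bound $\psi(n)2^{n/2}$ holding simultaneously at all $y$ for $n$ large, with $\psi$ subexponential.

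\emph{Conclusion and the hard point.} For each $n$, the band-$n$ supports form a finite family of open intervals; interval graphs being perfect, this family can be properly coloured with a number of colours equal to its maximum point-multiplicity, i.e.\ at most $\psi(n)2^{n/2}$, realised by a greedy left-to-right sweep. Each colour class is a disjoint-support family of continuous excursions, each H\"older-$\theta$ with constant at most $(2^{-n})^{1-\theta}\phi(n)$. Listing all colour classes of all bands produces the sequences $(g^m_j)_j$ of (i); for (ii), assign to the $m$-th sequence the constant of its parent band, so that
\[
 \sum_{n\ge1}\big(\#\text{colours in band }n\big)\,(2^{-n})^{1-\theta}\phi(n)\ \le\ \sum_{n\ge1}\psi(n)\phi(n)\,2^{-n(1/2-\theta)}\ <\ \infty
\]
since $\theta<\tfrac12$ and $\phi,\psi$ grow subexponentially (the finitely many small-$n$ bands contribute only a finite amount regardless). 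I expect the overlap-multiplicity step to be the main obstacle: the crossing counts at a \emph{fixed} level follow cleanly from the bi-clade Poisson structure, but making the bound uniform over the continuum of levels $y$ needs a chaining/discretisation argument resting on the H\"older continuity of $\ell^\cdot(T)$, and this is really the content of the cited proposition. One must also take care to band by amplitude rather than lifetime, so that both the per-band spindle count and the crossing rate are powers of $2$ in $n$ that are beaten by the gain $2^{-n(1-\theta)}$.
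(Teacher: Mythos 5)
The paper itself does not prove this lemma --- it is imported verbatim as Proposition 6 of \cite{Paper0} --- so there is no in-paper argument to compare against; what follows assesses your sketch on its own terms. Your architecture (dyadic banding by spindle size, the scaling relation giving per-spindle H\"older constants of order $2^{-n(1-\theta)}$, an overlap count of order $2^{n/2}$ per unit local time from the bi-clade Poisson structure, a perfect-graph colouring within each band, and summability because $\theta<\tfrac12$) is the right skeleton and is in the spirit of what the cited result requires. But as written there are genuine gaps at both quantitative pillars. First, the within-band H\"older control: Lemma \ref{lem:BESQ:Holder} gives only a.s.\ finiteness of the H\"older-$\theta$ constant, with no tail bound, while your argument needs the maximum over $K_n\approx 2^{3n/2}$ i.i.d.\ normalised shapes to grow subexponentially in $n$. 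If that constant had merely polynomial tails of index $p$, the maximum would grow like $2^{3n/(2p)}$ and your series $\sum_n \psi(n)\phi(n)2^{-n(1/2-\theta)}$ would diverge whenever $\theta>\tfrac12-\tfrac{3}{2p}$; so you genuinely need (stretched-)exponential tails for the H\"older norm of the excursion conditioned on unit amplitude (or unit lifetime), which requires a quantitative Garsia--Rodemich--Rumsey/moment estimate together with control of the conditioned lifetime (a confinement-type bound). This is asserted, not proved, and it is not a cosmetic step.

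Second, the uniform-in-level overlap bound. The fixed-level computation is fine (though note that $\mClade\{A^0>1\}<\infty$, which you need before invoking scaling, is itself not automatic and your phrasing is circular; it follows, e.g., by conditioning on $m^0$ and using the scale function of \BESQ[-1]). The upgrade to a bound holding simultaneously in $y$ is, as you say, the heart of the matter, and your discretisation does not work as stated under amplitude banding: a band-$n$ spindle may have lifetime, hence support length, much smaller than any fixed mesh, so counts at grid levels do not dominate $\sup_y N_n(y)$. Banding by jump size (lifetime), so that band-$n$ supports have length in $[2^{-n-1},2^{-n})$ and therefore contain a grid point of mesh $2^{-n-1}$, makes the comparison $\sup_y N_n(y)\le 2\max_k N_n(y_k)$ immediate, after which a Chernoff bound, a union bound over the $O(2^n)$ grid points in the (a.s.\ bounded) range of $\tdX$, and Borel--Cantelli finish the step; the per-band spindle counts and crossing rates are the same powers of $2$ under either banding, so your closing claim that one must band by amplitude rather than lifetime is backwards. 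Two minor points: your bands $n\ge1$ omit the a.s.\ finitely many spindles of amplitude at least $\tfrac12$ (each can simply be its own sequence), and the pile constants should be stated as the band-wide bound $2\cdot(2^{-n})^{1-\theta}\phi(n)$ so that concatenation across disjoint supports stays H\"older with a uniform constant.
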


\begin{corollary}\label{cor:PRM:type-1_wd}
 It is a.s.\ the case that $M^y_{\tdN}(T)$ is finite for all $y\in\BR$. Moreover, $(\td\alpha^y,\ y\in\BR)$ a.s.\ takes values in $\IPspace$ for all $y\in\BR$.
\end{corollary}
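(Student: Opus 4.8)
The plan is to prove the two assertions separately: the finiteness of $M^y_{\tdN}(T)$ will come from Lemma \ref{lem:spindle_piles}, and membership of $\td\alpha^y$ in $\IPspace$ for every level will then follow by reducing to the inverse-local-time statement of Theorem \ref{thm:LT_property_all_levels}.

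First I would note that $\restrict{\tdX}{[0,T]}$ is a.s.\ a \cadlag\ path on a compact interval, so the heights of its jumps -- equivalently the lifetimes $\zeta(f)$ of the spindles $(t,f)$ of $\tdN$ -- are bounded by some a.s.\ finite $Z$. Fix $\theta\in(0,\tfrac12)$ and invoke Lemma \ref{lem:spindle_piles} to split the translated excursions $\bar f(\cdot)=f(\cdot-\tdX(t-))$ into sequences $(g^n_j,\,j\ge1)$, $n\ge1$, each uniformly H\"older-$\theta$ with constant $D_n$ (with $\sum_n D_n<\infty$) and consisting of excursions with pairwise disjoint support. Since $\mBxc$-a.e.\ spindle is continuous, a.s.\ every $g^n_j$ vanishes at the endpoints of its support, an interval of length at most $Z$, so H\"older continuity gives $\sup_y g^n_j(y)\le D_n Z^\theta$ for all $j$; and disjointness of supports means that, at any fixed level $y$, at most one member of the $n$-th sequence is nonzero. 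Summing over $n$ (recalling that for continuous spindles the maximum in Definition \ref{def:skewer_2} is just $\bar f(y)$) yields the level-uniform bound $\sup_{y\in\BR}M^y_{\tdN}(T)\le Z^\theta\sum_n D_n<\infty$ a.s., which in particular gives $M^y_{\tdN}(T)<\infty$ for all $y$ and hence that each $\td\alpha^y=\skewer(y,\tdN)$ is a.s.\ a finite interval partition.

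It then remains to check the $\tfrac12$-diversity property of $\td\alpha^y$ for every $y$ simultaneously. I would work on the intersection of the a.s.\ event above with the events that $\ell$ is jointly continuous (Theorem \ref{thm:Boylan}), that $T<\infty$, and that the conclusion of Theorem \ref{thm:LT_property_all_levels} holds. For fixed $y$, continuity and finiteness of $t\mapsto\ell^y(t)$ let me pick $s>\ell^y(T)$ with $\tau^y(s)>T$, so that $\tdN$ is an initial restriction of $\restrict{\bN}{[0,\tau^y(s)]}$. Since $M^y_{\bN}$ is a purely atomic nondecreasing function whose atoms are the blocks of $\alpha^y_s:=\skewer(y,\restrict{\bN}{[0,\tau^y(s)]})$ in their left-to-right order, the blocks of $\td\alpha^y$ are exactly those blocks of $\alpha^y_s$ lying to the left of $M^y_{\bN}(T-)=\IPmag{\td\alpha^y}$, with no block of $\alpha^y_s$ straddling that point; hence the block-counting functions of $\td\alpha^y$ and $\alpha^y_s$ agree on $[0,\IPmag{\td\alpha^y}]$, and the diversity of $\td\alpha^y$ exists because that of $\alpha^y_s$ does by Theorem \ref{thm:LT_property_all_levels}. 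Thus $\td\alpha^y\in\IPspace$ for every $y$ on a single a.s.\ event.

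The main obstacle I expect is not the finiteness bound, which is a routine consequence of Lemma \ref{lem:spindle_piles}, but the bookkeeping in the diversity step: Theorem \ref{thm:LT_property_all_levels} is phrased via the per-level inverse local times $\tau^y(s)$, whereas $T$ here is an arbitrary a.s.\ finite time-filtration stopping time, so I must carefully argue that $\skewer(y,\tdN)$ is genuinely an initial segment of some $\skewer(y,\restrict{\bN}{[0,\tau^y(s)]})$, rule out a block of the latter straddling the truncation point $M^y_{\bN}(T-)$, and ensure all of this can be arranged on one event that is simultaneously good for every level $y$.
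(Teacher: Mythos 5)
Your proof is correct, and its skeleton is the same as the paper's: Lemma \ref{lem:spindle_piles} drives the finiteness of $M^y_{\tdN}(T)$, and the diversity claim is reduced to Theorem \ref{thm:LT_property_all_levels}. The one genuine difference is in the finiteness step. The paper writes $M^y_{\tdN}(T)=\sum_n g^n(y)$ with $g^n:=\sum_j g^n_j$ H\"older-$\theta$ with constant $D_n$, so that $y\mapsto M^y_{\tdN}(T)$ is H\"older-$\theta$ with constant $\sum_n D_n$, and then imports finiteness at the single level $y=0$ from the subordinator description in Proposition \ref{prop:agg_mass_subord}; finiteness at all levels follows from H\"older continuity. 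You instead bound the jump heights of the \cadlag\ path $\tdX$ on the compact time interval $[0,T]$ by an a.s.\ finite $Z$, note that each $g^n_j$ vanishes at the endpoints of a support of length at most $Z$, and use disjointness of supports within each pile to get the level-uniform bound $\sup_y M^y_{\tdN}(T)\le Z^\theta\sum_n D_n$ -- a slightly more self-contained argument that avoids Proposition \ref{prop:agg_mass_subord} altogether, at the (harmless) cost of not producing the H\"older modulus in $y$ that the paper's version yields as a by-product. For the second assertion you do what the paper compresses into one sentence: you pick $s>\ell^y(T)$ with $\tau^y(s)>T$, observe that $M^y_{\tdN}$ agrees with $M^y_{\bN}$ on $[0,T)$ so the blocks of $\td\alpha^y$ are exactly the blocks of $\alpha^y_s$ with right endpoint at most $M^y_{\bN}(T-)$ (no block can straddle this point, since every block is of the form $(M^y_{\bN}(t-),M^y_{\bN}(t))$ with $t<T$ or $t\ge T$), so the block-counting functions agree on $[0,\IPmag{\td\alpha^y}]$ and the diversity property transfers from Theorem \ref{thm:LT_property_all_levels}; this bookkeeping is exactly the justification the paper leaves implicit, and your handling of it is sound.
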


\begin{proof}
 It suffices to check the first assertion: by Theorem \ref{thm:LT_property_all_levels}, this implies that $\td\alpha^y\in\IPspace$ for all $y$ simultaneously, almost surely. Fix $\theta\in \left(0,\frac12\right)$. For each $n\geq 1$ let the $(g^n_j,\ j\geq 1)$ and $D_n$ be as in Lemma \ref{lem:spindle_piles}. Definition \ref{def:skewer_2} of $M^y$ gives
 \begin{equation}
  M^y_{\tdN}(T) = \sum_{n\geq 1}\sum_{j\geq 1} g^n_j(y) \qquad \text{for }y\in\BR.\label{eq:agg_mass_sum_piles}
 \end{equation}
 Let $g^n := \sum_{j\geq 1} g^n_j$ for each $n\geq 1$. Since the $g^n_j$ in each sequence have disjoint support, $g^n$ is H\"older-$\theta$ with constant $D_n$. Proposition \ref{prop:agg_mass_subord} implies that $M^0_{\tdN}(T)$ is a.s.\ finite. Thus, by \eqref{eq:agg_mass_sum_piles}, $y\mapsto M^{y}_{\tdN}(T)$ is almost surely H\"older-$\theta$ with constant bounded by $\sum_{n\geq 1}D_n$.
\end{proof}

Recall Definition \ref{def:domain_for_skewer} of $\Hfins$, the subspace of $\H$ on which the skewer map measurably produces a continuously evolving interval partition.

\begin{proposition}\label{prop:PRM:cts_skewer}
 There exists an $\Hfins$-version of $\tdN$ and an $\Hs$-version of $\bN$. In particular, $(\td\alpha^y,\,y\ge0)$ is a.s.\ H\"older-$\theta$ in $(\IPspace,\dI)$ for every $\theta\in \big(0,\frac14\big)$.
\end{proposition}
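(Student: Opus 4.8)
The plan is to construct the desired $\Hfins$-version of $\tdN$ (and $\Hs$-version of $\bN$) by verifying, on an event of probability one, the three conditions of Definition~\ref{def:domain_for_skewer}, and then invoking Lemma~\ref{lem:version}. Condition~\ref{item:d_f_s:fin}, finiteness of aggregate mass, is Corollary~\ref{cor:PRM:type-1_wd}. Condition~\ref{item:d_f_s:div}, the identification of skewer-diversity with local time, holds a.s.\ simultaneously at all levels $y$ strictly between the infimum and supremum of $\tdX$ by Theorem~\ref{thm:LT_property_all_levels}. So the real content is condition~\ref{item:d_f_s:cts}: the path-continuity of $y\mapsto\td\alpha^y=\skewer(y,\tdN)$ in $(\IPspace,\dI)$, and moreover the claimed H\"older-$\theta$ bound for every $\theta\in(0,\tfrac14)$.

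For the continuity estimate I would work through the spindle-pile decomposition of Lemma~\ref{lem:spindle_piles}. Fix $\theta\in(0,\tfrac12)$ and partition the translated excursions $\bar f(y)=f(y-\tdX(t-))$ into sequences $(g^n_j,\,j\ge1)$, $n\ge1$, each sequence consisting of excursions with disjoint support that are uniformly H\"older-$\theta$ with constant $D_n$, $\sum_n D_n<\infty$. The key observation is that for any two levels $y<y'$, the function $g^n:=\sum_{j\ge1}g^n_j$ is itself H\"older-$\theta$ with constant $D_n$ (disjoint supports mean that at each point only one summand is nonzero, and where one excursion ends and another begins both are zero). Now to bound $\dI(\td\alpha^y,\td\alpha^{y'})$ I would build an explicit correspondence between the two interval partitions: for each spindle $(t,f)$ alive at both levels, pair its block in $\td\alpha^y$ with its block in $\td\alpha^{y'}$. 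The four distortion quantities in Definition~\ref{def:IP:metric} are then controlled as follows. The mass-type quantities \ref{item:IP_m:mass_1}, \ref{item:IP_m:mass_2} are bounded by $\sum_{n,j}|g^n_j(y)-g^n_j(y')|\le\sum_n D_n|y-y'|^\theta$, using disjoint supports within each pile and summability of the $D_n$; this also handles the ``unmatched mass'' terms, since a block present at one level but not the other corresponds to a spindle that is born or dies between $y$ and $y'$, hence has mass at most $D_n|y-y'|^\theta$ on the relevant side. The total-diversity term \ref{item:IP:metric}(ii) and the matched-diversity term (i) are controlled by the joint H\"older continuity of the \Stable[\tfrac32] local time $(\ell^y(t))$ in the space variable from Theorem~\ref{thm:Boylan}, which is uniformly H\"older-$\theta'$ for every $\theta'\in(0,\tfrac14)$ on bounded space-time rectangles, combined with Theorem~\ref{thm:LT_property_all_levels} identifying these diversities with local times $\ell^y(\tau^y(s))$. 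Since $T$ is a.s.\ finite, $\tdX$ is bounded, so the relevant rectangle is bounded a.s., and the diversity of $\td\alpha^y$ at any block equals a value of $\ell^y$ at a time in $[0,T]$.

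Putting these four bounds together gives $\dI(\td\alpha^y,\td\alpha^{y'})\le C(\omega)|y-y'|^{\min(\theta,\theta')}$ for $y,y'$ in any fixed bounded window, with $C(\omega)$ a.s.\ finite. Taking $\theta$ close to $\tfrac12$ and $\theta'$ close to $\tfrac14$, the binding exponent is $\theta'<\tfrac14$, which yields the asserted H\"older-$\theta$ regularity for every $\theta\in(0,\tfrac14)$ and in particular path-continuity; this establishes condition~\ref{item:d_f_s:cts} and hence $\tdN\in\Hfins$ a.s. Since a single point of $\Hfins$ is a measurable singleton (e.g.\ the zero measure, after adjusting $\beta$), Lemma~\ref{lem:version} produces a genuine $\Hfins$-valued random variable equal to $\tdN$ a.s.; the same argument applied on each bounded window $[0,n]$ and patched gives the $\Hs$-version of $\bN$. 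The main obstacle is the bookkeeping in the correspondence argument for condition~\ref{item:d_f_s:cts}: one must carefully track how blocks appear and disappear between nearby levels (births and deaths of spindles, and the leftmost-spindle block in $\bN_\beta$ versus the unbroken structure here), and verify that the ``leftover'' mass terms in Definition~\ref{def:IP:metric}\ref{item:IP_m:mass_1}--\ref{item:IP_m:mass_2} are genuinely absorbed into the $\sum_n D_n|y-y'|^\theta$ bound rather than accumulating; the diversity bounds, by contrast, are essentially a direct quotation of Theorems~\ref{thm:Boylan} and~\ref{thm:LT_property_all_levels}.
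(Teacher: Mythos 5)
Your proposal is correct and follows essentially the same route as the paper's proof: conditions (i)–(ii) of Definition \ref{def:domain_for_skewer} are quoted from Corollary \ref{cor:PRM:type-1_wd} and Theorem \ref{thm:LT_property_all_levels}, and the H\"older bound comes from the spindle piles of Lemma \ref{lem:spindle_piles} (disjoint supports giving $g^n$ H\"older with constant $D_n$, unmatched blocks absorbed into $\sum_n D_n|y-z|^\theta$) together with the correspondence of blocks of surviving spindles and the H\"older-in-level local time of Theorem \ref{thm:Boylan}, exactly as in the paper, where the diversity terms indeed carry the binding exponent $\tfrac14$.
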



\begin{proof}
 We have already shown in Theorem \ref{thm:LT_property_all_levels} and Corollary \ref{cor:PRM:type-1_wd} that $\tdN$ (respectively $\bN$) satisfies the first two conditions in Definition \ref{def:domain_for_skewer} for membership in $\Hfins$ (resp.\ $\Hs$). It remains only to prove the claimed H\"older continuity.
 
 Fix $\theta\in \left(0,\frac14\right)$. For $n,j\geq 1$, let $g^n_j$ and $D_n$ be as in Lemma \ref{lem:spindle_piles} and let $g^n := \sum_{j\geq 1} g^n_j$. Since $\tdN$ is stopped at an a.s.\ finite time, the path of $\tdX$ lies within a random bounded space-time rectangle. We restrict our attention to the intersection of the almost sure events posited by Lemma \ref{lem:spindle_piles}, Corollary \ref{cor:PRM:type-1_wd}, and Theorem \ref{thm:Boylan}: that the H\"older constants $D_n$ are summable, the process $(\td\alpha^y)$ lies in $\IPspace$, and the local times $(\tdl^y(t))$ are uniformly H\"older-$\theta$ in level and continuous in time. Let
 \begin{equation*}
  C := \sup_{-\infty<y<z<\infty}\sup_{t\in [0,T]}\frac{|\tdl^z(t) - \tdl^y(t)|}{|z-y|^{\theta}} \qquad \text{and} \qquad D := \sum_{n=1}^\infty D_n.
 \end{equation*}
 
 Fix $y,z\in \BR$ with $y<z$. Let $A := \{n\geq 1\colon \Restrict{g^n}{[y,z]} > 0\}$. That is, $A$ is the set of indices $n$ for which a single spindle in the sequence $(g^n_j)_{j\geq 1}$ survives the interval $[y,z]$. For each $n\in A$, let $t_n$ denote the time at which that particular spindle arises as a point in $\tdN$. Recall Definition \ref{def:IP:metric} of $\dI$ and correspondences between interval partitions. Consider the correspondence from $\td\alpha^y$ to $\td\alpha^z$ that, for each $n\in A$, pairs the block $U_n^y\in\td\alpha^y$ with $U_n^z\in\td\alpha^z$, where there are the blocks corresponding to $g^n$. This is indeed a correspondence, respecting order in the two interval partitions, since each paired block corresponds to the same spindle as its partner.
 
 Note that for $n\notin A$ there is some $x\in [y,z]$ for which $g^n(x) = 0$. Thus, by its H\"older continuity, both $g^n(y)$ and $g^n(z)$ are bounded by $D_n(z-y)^{\theta}$. Therefore,
 \begin{equation*}
 \begin{split}
  \sum_{n\in A} |g^n(z) - g^n(y)| + \max\left\{\sum_{n\notin A}g^n(y),\sum_{n\notin A}g^n(z)\right\} &\leq \sum_{n=1}^\infty D_n(z-y)^{\theta},\\
  \sup_{n\in A}\left|\IPLT_{\td\alpha^z}(U_n^z) - \IPLT_{\td\alpha^y}(U_n^y)\right| = \sup_{n\in A}\left|\tdl^z(t_n) - \tdl^y(t_n)\right| &\leq C(z-y)^{\theta},\\
  \text{and} \quad \left|\IPLT_{\td\alpha^z}(\infty) - \IPLT_{\td\alpha^y}(\infty)\right| = \left|\tdl^z(T) - \tdl^y(T)\right| &\leq C(z-y)^{\theta}.
 \end{split}
 \end{equation*}
 By Definition \ref{def:IP:metric} of $\dI$, we conclude that $(\td\alpha^y,\ y\in\BR)$ is H\"older-$\theta$ with constant bounded by $\max\{C,D\}$.
\end{proof}

We can now prove the existence of certain type-1 evolutions. For the purpose of the following, let $\tdN^*$ denote an $\Hfins$-version of $\tdN$ and let $(\alpha^{y,*},\ y\geq 0) := \skewerP\big(\tdN^*\big)$.

\begin{corollary}\label{cor:type-1:cts_from_Stable}
 Let $S>0$ be independent of $\bN$.
 \begin{enumerate}[label=(\roman*), ref=(\roman*)]
  \item If $T := \tau^0(S)$ then $\alpha^{0,*}$ is a \Stable[\frac12]\ interval partition with total diversity $S$.\label{item:c_f_S:determinstic}
  \item If $T := \inf\big\{t > 0\colon M^0_{\bN}(t) > S\big\}$ and $S\sim\ExpDist[\rho]$ for some $\rho>0$ then $\alpha^{0,*}$ is distributed like a \PDIP[\frac12,\frac12] multiplied by an independent $\GammaDist[\frac12,\rho]$ scaling factor.\label{item:c_f_S:mass}
 \end{enumerate}
 In either case, $(\alpha^{y,*},\ y\geq 0)$ is a type-1 evolution.
\end{corollary}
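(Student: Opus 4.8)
The plan is to deduce Corollary~\ref{cor:type-1:cts_from_Stable} by combining the identification of $\alpha^{0,*}$ as a specific random interval partition (from Proposition~\ref{prop:PDIP} and Proposition~\ref{prop:agg_mass_subord}) with the Markov-property machinery of Proposition~\ref{prop:PRM:Fy-_Fy+} and the continuity statement of Proposition~\ref{prop:PRM:cts_skewer}. First I would verify the two distributional claims about $\alpha^{0,*}$ at level $0$. For \ref{item:c_f_S:determinstic}: with $T=\tau^0(S)$, Proposition~\ref{prop:agg_mass_subord} tells us that $\big(M^0_{\bN}\circ\tau^0(s),\,s\ge0\big)$ is a \Stable[\frac12] subordinator with Laplace exponent $\Phi(\lambda)=\sqrt{\lambda}$, so by Proposition~\ref{prop:IP:Stable} the interval partition $\skewer(0,\restrict{\bN}{[0,\tau^0(S)]})$ is a \Stable[\frac12] interval partition with total diversity $S$ a.s.; this is precisely $\alpha^{0,*}$ since $\tdN^*$ is a version of $\tdN=\restrict{\bN}{[0,T)}$ (and the single spindle possibly straddling $\tau^0(S)$ contributes no mass at level $0$ by niceness of level $0$). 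For \ref{item:c_f_S:mass}: with $T=\inf\{t>0\colon M^0_{\bN}(t)>S\}$ and $S\sim\ExpDist[\rho]$, the subordinator from Proposition~\ref{prop:agg_mass_subord} is sampled until it first exceeds an independent exponential level $S$. By Proposition~\ref{prop:PDIP}\ref{item:PDIP:Stable} (taking the \Stable[\frac12] subordinator there to be the one arising from $M^0_{\bN}\circ\tau^0$), the block masses $\big(\skewer(0,\restrict{\bN}{[0,T)})\big)$ are exactly of the form in \eqref{eq:IPs_from_subord} run until an independent exponential passage, whose normalization is \PDIP[\frac12,\frac12] and whose total mass $M^0_{\bN}(T-)=Y(T'-)$ (in the notation there) is $\GammaDist[\frac12,\rho]$, independent of the normalized partition. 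Hence $\alpha^{0,*}$ is a \PDIP[\frac12,\frac12] scaled by an independent $\GammaDist[\frac12,\rho]$ factor.

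Next I would check that in both cases the stopping time $T$ satisfies hypotheses (a) and (b) of Proposition~\ref{prop:PRM:Fy-_Fy+}: $S^0=\ell^0(T)$ must be $\ol\cF^0$-measurable and $\bX<0$ on $(\tau^0(S^0-),T)$. In case \ref{item:c_f_S:determinstic}, $T=\tau^0(S)$ gives $S^0=S$, which is $\ol\cF^0$-measurable by construction (the augmentation allowing the independent variable $S$), and $\bX<0$ strictly between the last level-$0$ visit before $\tau^0(S)$ and $\tau^0(S)$ itself, using niceness of level $0$ and that $\tau^0$ is right-continuous; condition (b) holds because $T=\tau^0(S)$ is itself a level-$0$ local-time value, so in fact $\tau^0(S^0-)\le T$ with $\bX$ negative on the intervening excursion interval (one should be slightly careful here, but the required property is that $T$ lands no later than the crossing of level $0$ in its bi-clade, which holds since $T$ is an inverse-local-time point). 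In case \ref{item:c_f_S:mass}, $T$ is the first time the aggregate mass process at level $0$ exceeds $S$; by Proposition~\ref{prop:agg_mass_subord} this equals $\tau^0(\Sigma)$ where $\Sigma=\inf\{s\colon M^0_{\bN}\circ\tau^0(s)>S\}$ is a stopping time in the subordinator, hence $S^0=\ell^0(T)=\Sigma$ is $\ol\cF^0$-measurable (being a measurable functional of $\bF^0$ restricted to mass accumulated, which is $\ol\cF^0$-adapted), and again $T$ is an inverse-local-time point so condition (b) holds. Then Proposition~\ref{prop:PRM:Fy-_Fy+} gives, for every $y\ge0$, that $\tdF^{\ge y}_0=F^{\ge y}_0(\tdN)$ is conditionally independent of $\ol\cF^y$ given $\td\alpha^y$ with regular conditional distribution $\Pr^1_{\td\alpha^y}\{F^{\ge0}_0\in\cdot\,\}$.

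To conclude that $(\alpha^{y,*},\,y\ge0)$ is a type-1 evolution in the sense of Definition~\ref{def:type-1}, I would argue as follows. By Proposition~\ref{prop:PRM:cts_skewer}, $\tdN$ admits an $\Hfins$-version $\tdN^*$, and $(\alpha^{y,*},\,y\ge0)=\skewerP(\tdN^*)$ is a.s.\ continuous in $(\IPspace,\dI)$; by Proposition~\ref{prop:skewer_measurable} it is a genuine $\cCRI$-valued random element. It remains to identify its law with that of $\skewerP(\bN^*_{\beta})$ for the appropriate initial distribution, i.e.\ to show $\alpha^{0,*}$ has the stated law (done above) and that conditionally on $\alpha^{0,*}=\beta$ the remainder of the process is distributed as a type-1 evolution from $\beta$. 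This last point follows by combining the $y=0$ case of Proposition~\ref{prop:PRM:Fy-_Fy+} --- which says $\tdF^{\ge0}_0$ has conditional law $\Pr^1_{\td\alpha^0}\{F^{\ge0}_0\in\cdot\,\}$ given $\td\alpha^0$ --- with Lemma~\ref{lem:cutoff_skewer}\ref{item:CPS:clades_skewer}, expressing $\skewer(y,\tdN)$ for $y\ge0$ as the concatenation $\ConcatIL_{(s,N^+_s)\text{ of }F^{\ge0}_0(\tdN)}\skewer(y,N^+_s)$ (valid since $\td\alpha^0$ is a.s.\ nice, being a \Stable[\frac12] interval partition), and with Proposition~\ref{prop:clade_lengths_summable}\ref{item:CLS:PP} relating $F^{\ge0}_0(\bN_\beta)$ to $\bN_\beta$. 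Thus $\skewerP(\tdN^*)$ has the same law as $\skewerP(\bN^*_{\alpha^{0,*}})$, i.e.\ it is a type-1 evolution with initial distribution the law of $\alpha^{0,*}$.

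The main obstacle I expect is the careful verification of conditions (a) and (b) of Proposition~\ref{prop:PRM:Fy-_Fy+} for the stopping time in case \ref{item:c_f_S:mass}, namely confirming that $S^0=\ell^0(T)$ is $\ol\cF^0$-measurable when $T$ is defined via the aggregate mass threshold rather than directly via local time --- this requires knowing that $M^0_{\bN}\circ\tau^0$ is $\ol\cF^0$-adapted (which follows from Proposition~\ref{prop:agg_mass_subord}, since the subordinator is a measurable functional of $\bF^0=F^0(\bN)$) and that the first-passage time $\Sigma$ of this subordinator over the independent level $S$ is therefore $\ol\cF^0$-measurable in the augmented filtration. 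A secondary subtlety is making precise the passage from "conditional law of $\tdF^{\ge0}_0$ given $\td\alpha^0$" to "law of the whole skewer process given $\alpha^{0,*}$", which needs the a.s.\ measurable reconstruction of $\skewerP(\tdN)$ from $F^{\ge0}_0(\tdN)$ provided by Lemma~\ref{lem:cutoff_skewer} together with niceness of the \Stable[\frac12] interval partition $\td\alpha^0$; this is routine but must be stated with care since Definition~\ref{def:type-1} refers to an $\Hfins$-version and one must match up the two constructions of type-1 evolutions.
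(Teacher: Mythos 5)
Your overall route is the same as the paper's: identify the law of $\alpha^{0,*}$ via Propositions \ref{prop:agg_mass_subord}, \ref{prop:IP:Stable} and \ref{prop:PDIP}, then apply Proposition \ref{prop:PRM:Fy-_Fy+} at level $0$ and recover the law of the whole skewer process from $F^{\geq 0}_0$ via Lemma \ref{lem:cutoff_skewer} \ref{item:CPS:clades_skewer_0} and niceness of level $0$ (Proposition \ref{prop:nice_level}). The one step that does not hold as written is your verification of hypothesis (b) of Proposition \ref{prop:PRM:Fy-_Fy+} in case \ref{item:c_f_S:mass}. You assert $T=\tau^0(\Sigma)$, where $\Sigma:=\inf\{s\ge0\colon M^0_{\bN}(\tau^0(s))>S\}$, and argue that (b) holds ``because $T$ is an inverse-local-time point.'' Both the identification and the inference are wrong: $M^0_{\bN}$ first exceeds $S$ at the time of the spindle of the final bi-clade that crosses level $0$, i.e.\ at the crossing time of that bi-clade, which lies strictly inside the excursion interval $\big(\tau^0(\Sigma-),\tau^0(\Sigma)\big)$, not at its right endpoint. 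Moreover, if $T$ really were $\tau^0(\Sigma)$, then (b) would \emph{fail}, since $\bX$ is positive on the clade portion of that excursion, and $\tdN$ would contain the entire final bi-clade including its clade above level $0$ — an extra level-$0$ clade not matched by any block of $\td\alpha^0$ — so the claimed r.c.d.\ $\Pr^1_{\td\alpha^0}\{F^{\geq 0}_0\in\cdot\,\}$ would be false. The correct justification, which is exactly what the paper records, is that $T$ is the time of the middle spindle: on $\big(\tau^0(\Sigma-),T\big)$ the scaffolding is in the anti-clade part of that excursion and hence strictly negative, so (b) holds and $\tdN$ cuts off before the clade component of the final incomplete bi-clade. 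Your condition-(a) argument is unaffected, since $\ell^0(T)=\Sigma$ anyway because local time is constant over the excursion interval.

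In case \ref{item:c_f_S:determinstic} your justification of (b) (``$\bX$ negative on the intervening excursion interval'') is also not right as stated — an excursion straddling local time $S$ could be positive, and an inverse-local-time point sits at the \emph{end} of its bi-clade, after the crossing — but the conclusion is fine for a different reason: since $S$ is independent of $\bN$ and the subordinator $\tau^0$ a.s.\ has no jump at any fixed time, a.s.\ $\tau^0(S-)=\tau^0(S)=T$, so the interval in (b) is empty (equivalently, a.s.\ no bi-clade of $\bN$ about level $0$ occurs at local time $S$, as the paper notes). With these two local repairs, the rest of your argument — the level-$0$ distributional identifications, the application of Proposition \ref{prop:PRM:Fy-_Fy+} at $y=0$, and the reconstruction of $\skewerP(\tdN^*)$ from $F^{\geq 0}_0(\tdN^*)$ — is correct and essentially identical to the paper's proof.
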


\begin{proof}
 First, the claimed distributions for $\td\alpha^0$ (and therefore for $\alpha^{0,*}$) follow from the \Stable[\frac12] description of $M^y_\bN$ in Proposition \ref{prop:agg_mass_subord} and the definitions of the \Stable\ and \PDIP\ interval partition laws in Propositions \ref{prop:IP:Stable} and \ref{prop:PDIP}.
 
 Next, note that in either of the cases, $\tdF^{\geq 0} = \restrict{\bF^{\geq 0}}{[0,\ell^0(T))}$ almost surely. In case (i) there is a.s.\ no bi-clade of $\bN$ about level 0 at local time $\ell^0(T) = S$. In case (ii), time $T$ occurs at the middle spindle of a bi-clade, so $\tdN$ cuts off before the clade component of that final incomplete bi-clade. From Proposition \ref{prop:PRM:Fy-_Fy+} applied at level $0$, we see that $\tdF^{\geq 0}_0$ has regular conditional distribution $\Pr^1_{\td\alpha^0}\{F^{\geq 0}_0\in\cdot\,\}$ given $\td\alpha^0$. Thus, it has law $\Pr^1_{\mu}\{F^{\geq 0}_0\in\cdot\,\}$, where $\mu$ is the law of $\td\alpha^0$. Therefore, $F^{\geq 0}_0(\tdN^*)$ has law $\Pr^{1,*}_{\mu}\{F^{\geq 0}_0\in\cdot\,\}$. From Lemma \ref{lem:cutoff_skewer} \ref{item:CPS:clades_skewer_0} and Proposition \ref{prop:nice_level}, since level 0 is a.s.\ nice for $\bN$ and thus for $\tdN^*$, we conclude that $(\alpha^{y,*},\ y\geq 0)$ has law $\Pr^{1,*}_{\mu}\{\skewerP\in\cdot\,\}$. Therefore, it satisfies Definition \ref{def:type-1} of a type-1 evolution.
\end{proof}

\subsection{Entrance law of type-1 evolution starting from a single block}
\label{sec:type-1:clade}


On a suitable probability space $(\Omega,\cA,\Pr)$ let $\bN$ be a \PRM[\Leb\otimes\mBxc]. We continue to use the notation of the first paragraph of Section \ref{sec:biclade_PRM} for objects related to $\bN$. 
%
%
Fix $a>0$ and let $\bff$ be a \BESQ[-1] starting from $a$ and absorbed upon hitting zero, independent of $\bN$. 
Let $\olN := \Dirac{0,\bff} + \bN$. We use barred versions of our earlier notation to refer to the corresponding objects associated with $\olN$. For example, $\olX = \bX + \zeta(\bff)$. Let $\ol T^0 = T^{-\life(\bff)}$ denote the first hitting time of 0 by $\olX$ and set $\widehat\bN := \restrict{\olN}{[0,\ol T^0)}$. By Proposition \ref{prop:clade_splitting}, $\widehat\bN$ has distribution $\mClade^+(\,\cdot\;|\;m^0 = a)$. We use hatted versions of our earlier notation to refer to the corresponding objects associated with $\widehat\bN$. Set $(\widehat\alpha^y,\,y\geq 0) := \skewerP(\widehat\bN)$.

Let $(\ol\cF_t,\,t\ge0)$ and $(\ol\cF^y,\,y\ge0)$ denote 
$\Pr$-completions of the pullbacks, via $\olN\colon \Omega\to\H$, of the time- and level-filtrations on $\H$, as in Definition \ref{def:filtrations}.

\begin{corollary}\label{cor:clade:cts_skewer}
 There exists an $\Hfins$-version of $\widehat\bN$. In particular, $(\widehat\alpha^y,\ y\geq 0)$ is a type-1 evolution starting from $\{(0,a)\}$, and it is a.s.\ H\"older-$\theta$ in $(\IPspace,\dI)$ for every $\theta\in \big(0,\frac14\big)$.
\end{corollary}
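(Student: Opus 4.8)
The plan is to deduce Corollary~\ref{cor:clade:cts_skewer} from the preceding results with only a small amount of extra work, essentially by recognizing $\widehat\bN$ as a restriction of a \PRM\ with an added leftmost spindle, so that Proposition~\ref{prop:PRM:cts_skewer} applies. First I would set $\olN := \Dirac{0,\bff} + \bN$ with $\bff$ a \BESQ[-1] from $a$, independent of $\bN$, and $\ol T^0 := T^{-\life(\bff)}(\olX)$, where $\olX = \xi(\olN) = \bX + \life(\bff)$. The key point is that adding a single spindle of finite lifetime $\life(\bff)$ to $\bN$ only shifts the scaffolding up by the constant $\life(\bff)$ and inserts one jump at time $0$; in particular $\ol T^0$ is an $(\ol\cF_t)$-stopping time and it is a.s.\ finite, since $\bX$ is a \Stable[\frac32] process which a.s.\ hits any negative level in finite time (Proposition~\ref{prop:stable_JCCP}, together with standard fluctuation theory as in \cite[Theorem~VII.1]{BertoinLevy}; cf.\ the computation in the proof of Proposition~\ref{prop:clade_lengths_summable}\ref{item:CLS:CLS}). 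Hence $\widehat\bN := \restrict{\olN}{[0,\ol T^0)}$ is of the form ``\PRM\ plus one summable leftmost spindle, stopped at an a.s.\ finite stopping time,'' which is precisely the setup of $\tdN = \restrict{\bN}{[0,T)}$ in Section~\ref{sec:type-1:PRM} once one observes that $\bN$ there may equally carry the extra spindle (the arguments in Corollary~\ref{cor:PRM:type-1_wd} and Proposition~\ref{prop:PRM:cts_skewer} use only the a.s.\ finiteness of the stopping time, the \PRM\ structure away from the leftmost spindle, Lemma~\ref{lem:spindle_piles}, Theorem~\ref{thm:LT_property_all_levels}, and Theorem~\ref{thm:Boylan}).

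Concretely, I would argue: (i) $\widehat\bN$ a.s.\ satisfies conditions \itemref{item:d_f_s:fin} and \itemref{item:d_f_s:div} of Definition~\ref{def:domain_for_skewer}. Finiteness of $M^y_{\widehat\bN}(\ol T^0)$ for all $y$ follows exactly as in Corollary~\ref{cor:PRM:type-1_wd}: Proposition~\ref{prop:agg_mass_subord} gives $M^0$ finite (the added spindle contributes a finite amount at level $0$, or one uses that $\ol T^0<\infty$), and then Lemma~\ref{lem:spindle_piles} applied to the translated excursions of $\widehat\bN$ (which includes $\bar\bff$) together with \eqref{eq:agg_mass_sum_piles} upgrades this to finiteness and H\"older continuity of $y\mapsto M^y_{\widehat\bN}(\ol T^0)$. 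Condition \itemref{item:d_f_s:div} then holds by Theorem~\ref{thm:LT_property_all_levels} applied to $\widehat\bN$ (valid since $\widehat\bN$ restricted to $[0,\tau^y(s)]$ is a stopped piece of $\olN$, and the local time / diversity identity is a pathwise statement holding simultaneously at all levels). (ii) $\widehat\bN$ a.s.\ satisfies \itemref{item:d_f_s:cts}: repeat verbatim the H\"older-$\theta$ estimate in the proof of Proposition~\ref{prop:PRM:cts_skewer}, using the partition of translated spindles from Lemma~\ref{lem:spindle_piles}, the summable H\"older constants $D_n$, and the uniform H\"older continuity of $(\ell^y(t))$ in level from Theorem~\ref{thm:Boylan}; this bounds $\dI(\widehat\alpha^y,\widehat\alpha^z)$ by $\max\{C,D\}|z-y|^\theta$ for $\theta\in(0,\tfrac14)$. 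Having checked all three conditions a.s., Lemma~\ref{lem:version} (applied with the non-measurable set $\H\setminus\Hfins$, whose outer probability is $0$ under the law of $\widehat\bN$, and the measurable singleton $\{0\}\in\SH$) yields an $\Hfins$-version $\widehat\bN^*$ of $\widehat\bN$.

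Finally, to identify the law of $\skewerP(\widehat\bN^*)$ as a type-1 evolution started from $\{(0,a)\}$, I would invoke Proposition~\ref{prop:clade_splitting}\ref{prop:c_s:clade_law}, which states that $\widehat\bN = \restrict{\olN}{[0,\ol T^0)}$ has law $\mClade^+(\,\cdot\;|\;m^0=a)$, combined with the observation (noted just before Definition~\ref{constr:type-1_2}) that under $\Pr^1_{\{(0,a)\}}$ the point process $\bN_{\{(0,a)\}}$ also has law $\mClade^+(\,\cdot\;|\;m^0=a)$ — indeed Definition~\ref{constr:type-1} with $\beta=\{(0,a)\}$ produces exactly $\Dirac{0,\bff}+\restrict{\bN}{[0,T]}$ with $T=\inf\{t:\xi_{\bN}(t)=-\life(\bff)\}$, which is $\widehat\bN$ up to the convention on the endpoint. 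Since $\widehat\bN^*$ is an $\Hfins$-version of $\widehat\bN$, it is therefore an $\Hfins$-version of $\bN_{\{(0,a)\}}$, so $\skewerP(\widehat\bN^*)$ matches Definition~\ref{def:type-1} of a type-1 evolution starting from $\{(0,a)\}$. The H\"older-$\theta$ claim for every $\theta\in(0,\tfrac14)$ is exactly the bound established in step (ii). The only mild obstacle I anticipate is bookkeeping around the endpoint conventions (half-open $[0,\ol T^0)$ versus closed $[0,T]$, and whether the final incomplete bi-clade contributes at level $0$), but this is handled exactly as in the proof of Corollary~\ref{cor:type-1:cts_from_Stable}: time $\ol T^0$ occurs at or before the central jump of the incomplete bi-clade about level $0$, so the clade component is cut off and $\widehat F^{\geq 0}_0 = \restrict{\bF^{\geq 0}}{[0,\ell^0(\ol T^0))}$, causing no loss of mass at level $0$; hence $\widehat\alpha^0 = \{(0,a)\}$ as required.
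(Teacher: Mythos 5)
Your proposal is correct and takes essentially the same route as the paper: both reduce the problem to the stopped-\PRM\ setting of Section \ref{sec:type-1:PRM} (Corollary \ref{cor:PRM:type-1_wd} and Proposition \ref{prop:PRM:cts_skewer}), handle the single added spindle via its own regularity, and identify the law as $\mClade^+(\,\cdot\;|\;m^0=a)=\Pr^1_{\{(0,a)\}}$ through Proposition \ref{prop:clade_splitting}. The only difference is packaging: the paper splits off the leftmost spindle through the identity $\widehat\alpha^y=\{(0,\bff(y))\}\concat\td\alpha^{y-\life(\bff)}$ and then cites Lemma \ref{lem:BESQ:Holder} together with the concatenation bound of Lemma \ref{lem:IP:concat}, whereas you fold $\bar\bff$ directly into the estimates; since Lemma \ref{lem:spindle_piles} and Theorem \ref{thm:LT_property_all_levels} are stated only for the \PRM\ part, you should say explicitly that the extra spindle is added as one more uniformly H\"older-$\theta$ pile (via Lemma \ref{lem:BESQ:Holder}) and contributes a single block of zero diversity, a one-line addition rather than a genuine gap.
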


\begin{proof}
 For the purpose of the following let $\td\bN := \restrict{\bN}{[0,T^{-\life(\bff)}]} = \widehat\bN - \Dirac{0,\bff}$. Note that $\td\bN$ is in the regime of processes considered in Section \ref{sec:type-1:PRM}. 
 By Proposition \ref{prop:PRM:cts_skewer}, $\td\bN \in \Hfins$ almost surely. Let $\td\bX := \xi(\tdN)$ and $(\td\alpha^y,\,y\ge0) := \skewerP(\tdN)$. Then
 \begin{equation*}
  \widehat\bX = \td\bX + \life(\bff), \quad M^y_{\widehat\bN}\left(\ol T^0\right) = M^{y-\life(\bff)}_{\td\bN}\left(\ol T^0\right) + \bff(y), \quad \text{and} \quad \widehat\alpha^y = \big\{(0,\bff(y))\big\} \concat \td\alpha^{y-\life(\bff)}.
 \end{equation*}
 By Definition \ref{def:domain_for_skewer}, in order to have $\widehat\bN\in\Hfins$ we require that: $(\widehat\alpha^y,\,y\ge0)$ is continuous in $y$,
 $$M^y_{\widehat\bN}\left(\ol T^0\right) < \infty, \qquad \text{and} \qquad \wh\ell^y(t) = \IPLT_{\widehat\alpha^y}\left(M^y_{\widehat\bN}(t)\right)\qquad \text{for }t\geq 0,\ y\in\left(0,\life^+\left(\whN\right)\right),$$
 where $\life^+$ denotes clade lifetime, as in Definition \ref{def:clade_stats}. 
 In light of the connections between $\widehat\bN$ and $\td\bN$ mentioned above, these three properties follow from the corresponding properties for $\td\bN$, noted in Proposition \ref{prop:PRM:cts_skewer}. That proposition further implies that $(\td\alpha^y)$ is a.s.\ H\"older-$\theta$ for $\theta\in \big(0,\frac14\big)$. By Lemma \ref{lem:BESQ:Holder}, $\bff$ is a.s.\ H\"older-$\theta$ for $\theta\in \big(0,\frac12\big)$. Thus, $(\widehat\alpha^y)$ is a.s.\ formed by concatenating two H\"older-$\theta$ processes, so the claimed H\"older continuity follows from Lemma \ref{lem:IP:concat} on concatenation. 
\end{proof}

We consider a non-negative random variable $L^y$ with law
\begin{equation}
 \Pr(L^y\in db) = \frac{1}{\sqrt{2\pi}}\frac{\sqrt y}{b^{3/2}}\frac{e^{-b/2y}}{e^{a/2y}-1}
		\left(1 - \cosh\left(\frac{\sqrt{ab}}{y}\right) + \frac{\sqrt{ab}}{y}\sinh\left(\frac{\sqrt{ab}}{y}\right)\right)db.\label{eq:transn:LMB}
\end{equation}
Let $R^y$ be an inverse-Gaussian subordinator with L\'evy measure $\Pi^y$ and Laplace exponent $\Phi^y$ as follows:
\begin{equation}
 \Pi^y(dx) = \frac{1}{2\sqrt{\pi}}x^{-3/2}e^{-x/2y}dx	\qquad \text{and} \qquad \Phi^y(\lambda) = \left(\lambda+\frac{1}{2y}\right)^{1/2}-\left(\frac{1}{2y}\right)^{1/2}.\label{eq:transn:Levy_meas}
\end{equation}

\begin{proposition}[Entrance law for type-1 evolution from $\{(0,a)\}$]\label{prop:type-1:transn}
 The lifetime of $(\widehat\alpha^y)$ has \InvGammaDist[1,a/2] distribution, i.e.
 \begin{equation}
  \Pr\big\{\life^+\big(\whN\big)> y\big\} = \Pr(\widehat\alpha^y \not= \emptyset) = 1 - e^{-a/2y} \qquad \text{for }y>0.\label{eq:transn:lifetime}
 \end{equation}
 For $y>0$, consider an independent triple $(L^y,S^y,R^y)$, where $S^y \sim \ExpDist[(2y)^{-1/2}]$ and $L^y$ and $R^y$ are as in \eqref{eq:transn:LMB}-\eqref{eq:transn:Levy_meas}. 
 The conditional law of $\widehat\alpha^y$ given the event $\{\widehat\alpha^y\neq \emptyset\}$ equals the law of
 \begin{equation}
  \{(0,L^y)\} \concat \{(R^y(t-),R^y(t))\colon t\in [0,S^y],\ R^y(t-)<R^y(t)\}. \label{eq:transn:given_life}
 \end{equation}
\end{proposition}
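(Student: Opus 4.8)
The plan is to decompose the clade $\whN$ about level $y$: its first clade about level $y$ will yield the leftmost block $(0,L^y)$, and the remaining clades will assemble into an interval partition driven by a subordinator. Throughout I use the representation of Proposition~\ref{prop:clade_splitting}\,\ref{prop:c_s:clade_law}, namely $\whN=\Dirac{0,\bff}+\Restrict{\bN}{[0,\ol T^0)}$ with $\bff$ a \BESQ[-1] started from $a$ and absorbed at $0$, $\bN$ an independent \PRM[\Leb\otimes\mBxc], and $\ol T^0$ the first hitting time of $-\life(\bff)$ by $\xi(\bN)$. For \eqref{eq:transn:lifetime}: for fixed $y>0$, a.s.\ level $y$ is nice for the scaffolding (Proposition~\ref{prop:nice_level}) and no jump of the scaffolding has an endpoint exactly at $y$, so any time the scaffolding exceeds $y$ it does so through a jump that strictly crosses $y$, whose spindle then has positive mass at level $y$, while conversely any spindle alive at level $y$ forces the scaffolding above $y$; hence $\widehat\alpha^y\neq\emptyset$ iff $\life^+(\whN)>y$. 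Since $\life^+$ depends on $\whN$ only through $\xi(\whN)$ and $\life^+(N)=\life^+(N^+)$ for every bi-clade $N$, Proposition~\ref{prop:clade_splitting}\,\ref{prop:c_s:split} gives $\mClade^+\{\life^+>y\mid m^0=a\}=\mClade\{\life^+>y\mid m^0=a\}$, which Proposition~\ref{prop:clade:stats}\,\ref{item:CS:max:mass} evaluates to $1-e^{-a/2y}$.

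Now fix $y>0$ and condition on $\{\widehat\alpha^y\neq\emptyset\}$. Since a.s.\ no spindle of $\whN$ is born or dies exactly at level $y$, Lemma~\ref{lem:cutoff_skewer}\,\ref{item:CPS:cutoff_skewer} and Lemma~\ref{lem:cutoff_vs_PP} identify $\widehat\alpha^y$ with the concatenation, over the points $(s,N^+_s)$ of the clade point process $F^{\geq y}_0(\whN)$ indexed by level-$y$ local time $s$, of the partitions $\skewer(0,N^+_s)$. For a clade $N^+$ the scaffolding is strictly positive on the interior of its support, so only its leftmost broken spindle contributes at level $0$ and $\skewer(0,N^+)=\{(0,m^0(N^+))\}$; thus $\widehat\alpha^y$ is the concatenation of single blocks of sizes $m^0(N^+_s)$ ordered by $s$. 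The unique incomplete clade at $s=0$ contributes the leftmost block, of mass $m^y(\whN)=:L^y$ (see \eqref{eq:LMB_def}); writing $S^y:=\ell^y_{\whN}(\ol T^0)$ for the total level-$y$ local time, the complete clades lie at local times in $(0,S^y]$ and produce $\widehat\gamma^y$, the interval partition whose block at local time $t$ is the jump there of $t\mapsto M^y_{\whN}\circ\tau^y_{\whN}(t)-L^y$.

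Next I would identify $S^y$, the subordinator, and the independence structure. Apply the mid-spindle Markov property (Lemma~\ref{lem:mid_spindle_Markov}; in the degenerate case where $\bff$ itself survives to level $y$, use the \BESQ[-1] Markov property at time $y$) at the first crossing of level $y$ by a surviving spindle: conditionally on $L^y$, the future is independent of the past and consists of a fresh \PRM[\Leb\otimes\mBxc] together with the broken overshoot spindle, a \BESQ[-1] from $L^y$ of lifetime $\sim\InvGammaDist[3/2,L^y/2]$ (Lemma~\ref{lem:BESQ:length}). Re-centering by the strong Markov property, the part of $\whN$ beyond its first clade about level $y$ is a fresh \PRM\ $\bN''$ run until its scaffolding hits the level representing $0$ in $\whN$, with level $y$ of $\whN$ at level $0$ of $\bN''$. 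By Corollary~\ref{cor:clade_PRM} the bi-clades of $\bN''$ about level $0$ form a \PRM[\Leb\otimes\mClade]; the scaffolding escapes downward and kills $\whN$ during the initial descent of the first bi-clade with $\life^-\geq y$, so Poisson thinning gives $S^y\sim\ExpDist[\mClade\{\life^->y\}]=\ExpDist[(2y)^{-1/2}]$ (Proposition~\ref{prop:clade:stats}\,\ref{item:CS:max}, with $\life^-$ in place of $\life^+$), independent of the complementary \PRM\ of bi-clades with $\life^-<y$. The clades feeding $\widehat\gamma^y$ are the above-parts of these latter bi-clades; since within a bi-clade the above- and below-parts are conditionally independent given $m^0$ (Proposition~\ref{prop:clade_splitting}\,\ref{prop:c_s:split}) and $\mClade\{\life^-<y\mid m^0=a\}=e^{-a/2y}$ (Proposition~\ref{prop:clade:stats}\,\ref{item:CS:max:mass}, with $\life^-$), these above-parts form a \PRM\ whose $m^0$-marginal intensity is the exponentially tilted measure $e^{-a/2y}\,\frac{1}{2\sqrt{\pi}}a^{-3/2}\,da=\Pi^y(da)$. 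By the argument of Proposition~\ref{prop:agg_mass_subord}, the subordinator $t\mapsto M^y_{\whN}\circ\tau^y_{\whN}(t)-L^y$ is therefore the inverse-Gaussian subordinator $R^y$ of \eqref{eq:transn:Levy_meas}, run for the independent time $S^y$; since none of this depends on the overshoot spindle's lifetime, integrating it out leaves $(L^y,S^y,R^y)$ mutually independent, giving \eqref{eq:transn:given_life} once the law of $L^y$ is known.

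Finally, I would compute the conditional law of $L^y$ given $\{\widehat\alpha^y\neq\emptyset\}$ by splitting on whether $\bff$ survives to level $y$. On $\{\life(\bff)>y\}$ one has $L^y=\bff(y)$, whose sub-density is the transition density $q_y(a,\cdot)$ of \BESQ[-1] started from $a$, which through $I_{3/2}(z)=\sqrt{\frac{2}{\pi z}}\,\bigl(\cosh z-z^{-1}\sinh z\bigr)$ contributes the $\cosh(\sqrt{ab}/y)$ and $\sinh(\sqrt{ab}/y)$ terms. On $\{\life(\bff)\leq y,\ \life^+(\whN)>y\}$ --- whose conditional probability given $\life(\bff)$ is $1-\sqrt{(y-\life(\bff))/y}$ by standard \Stable[\frac32] fluctuation theory (the relevant scale function being proportional to $x^{1/2}$) --- $L^y$ is instead the level-$y$ mass of the \Stable[\frac32] jump that first crosses level $y$, evaluated via the $\life^-$-analogues of the overshoot/mass identities of Proposition~\ref{prop:clade:stats} together with the \Stable[\frac32] last-passage description; this contributes the constant term and corrects the $\sinh$ coefficient. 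Adding the two pieces and dividing by $\Pr\{\widehat\alpha^y\neq\emptyset\}=1-e^{-a/2y}$ should yield \eqref{eq:transn:LMB} after simplification. The hardest part will be exactly this computation --- matching the mixture of the two cases, especially the \Stable[\frac32] overshoot calculation in the second, to the precise closed form in \eqref{eq:transn:LMB}; by comparison, propagating the independence assertions of the previous paragraph through the conditioning on $\{\widehat\alpha^y\neq\emptyset\}$ is routine.
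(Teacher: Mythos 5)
Your structural argument is the same as the paper's: the identity $\{\widehat\alpha^y\neq\emptyset\}=\{\life^+(\whN)>y\}$ with \eqref{eq:transn:lifetime} read off Proposition \ref{prop:clade:stats} \ref{item:CS:max:mass}; the decomposition of $\widehat\alpha^y$ into the leftmost block $m^y(\whN)$ plus the blocks coming from the level-$y$ clades; $S^y\sim\ExpDist[(2y)^{-1/2}]$ by Poisson thinning against the first anti-clade of depth $\geq y$ (via $\mClade\{\life^->y\}$); the tilted intensity $e^{-b/2y}\mClade\{m^0\in db\}=\Pi^y(db)$ giving the inverse-Gaussian subordinator $R^y$ through Proposition \ref{prop:agg_mass_subord}; and independence of the leftmost block from the rest via the mid-spindle/strong Markov property at the first crossing of level $y$. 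All of that is sound and mirrors the paper's proof.

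The genuine gap is the law \eqref{eq:transn:LMB} of $L^y$, which you defer with ``should yield \eqref{eq:transn:LMB} after simplification.'' Your split on $\{\life(\bff)>y\}$ versus $\{\life(\bff)\leq y,\ \life^+>y\}$ is indeed the paper's split (on $J^+>y$ versus $J^+\leq y$, since $J^+(\whN)=\life(\bff)$), and the first case is a routine \BESQ[-1] transition-density computation. But in the second case the quantity you need is the conditional law, given $J^+=z<y$ and given that the first-passage descent reaches above level $y$ before hitting $0$, of the level-$y$ mass of the spindle making that first crossing. This is \emph{not} a direct consequence of the $\life^-$-analogues of the identities in Proposition \ref{prop:clade:stats}: those describe the unconditioned It\^o measure $\mClade$ of complete bi-clades about a fixed level, whereas here the crossing spindle is the middle spindle of an \emph{incomplete} first bi-clade about level $y$, conditioned on its anti-clade depth lying in a window (below $y$ but deep enough to have been reached from $z$). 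The paper bridges exactly this point with Lemma \ref{lem:LMB_reversal}, a time-reversal coupling showing that this conditional law equals $\mClade^+\{m^0\in\cdot\;|\;\life^+\in(y-z,y)\}$, and then performs the explicit mixture over $\mClade\{J^+\in dz\;|\;m^0=a\}$ in Lemma \ref{lem:LMB} to produce the $1-\cosh+w\sinh$ form. Your phrase ``last-passage description'' gestures at such a reversal but no argument is given, and the claim that it ``contributes the constant term and corrects the $\sinh$ coefficient'' is reverse-engineered from the target formula rather than derived. As it stands the proposal establishes the structure of \eqref{eq:transn:given_life} but not the distribution \eqref{eq:transn:LMB} of the leftmost block, which is the technical heart of the proposition.
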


\begin{remark}\label{rmk:transn:PDIP}
 Following \cite[Proposition 21]{PitmYorPDAT} and the characterization of the \PDIP\ laws in Proposition \ref{prop:PDIP} \ref{item:PDIP:Stable}, if we take $B^y\sim\GammaDist[\frac12,1/2y]$ independent of $\ol\gamma\sim\PDIP[\frac12,\frac12]$, then
 \begin{equation}\label{eq:transn:PDIP}
  \{(R^y(t-),R^y(t))\colon t\in [0,S^y],\ R^y(t-)<R^y(t)\} \stackrel{d}{=} B^y\scaleI\ol\gamma.
 \end{equation}
\end{remark}

We restate the claim regarding $L^y$ in terms of the leftmost spindle mass, $m^y(\whN)$, of \eqref{eq:LMB_def}.

\begin{lemma}\label{lem:LMB}
 For $a,b,y>0$,
 \begin{equation*}
  \mClade\left\{ m^y\in db\;\middle|\begin{array}{l} m^0 = a,\\ \life^+ > y \end{array}\right\} =
  		\frac{1}{\sqrt{2\pi}}\frac{\sqrt y}{b^{3/2}}\frac{e^{-b/2y}}{e^{a/2y}-1}
		\left(1 - \cosh\left(\frac{\sqrt{ab}}{y}\right) + \frac{\sqrt{ab}}{y}\sinh\left(\frac{\sqrt{ab}}{y}\right)\right)db.
 \end{equation*}
\end{lemma}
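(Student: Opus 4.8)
\textbf{Proof plan for Lemma \ref{lem:LMB}.}

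The plan is to compute the conditional law of $m^y$ via the bi-clade splitting structure of Proposition \ref{prop:clade_splitting} together with the mid-spindle Markov property, Lemma \ref{lem:mid_spindle_Markov}. First I would condition on $m^0 = a$ and use Proposition \ref{prop:clade_splitting}\ref{prop:c_s:clade_law}: under $\mClade^+(\,\cdot\mid m^0=a)$, a clade has the form $\Dirac{0,\bff} + \restrict{\bN}{[0,\widehat T^0)}$, where $\bff$ is a \BESQ[-1] started at $a$ and absorbed at $0$ and $\widehat T^0$ is the hitting time of $-\zeta(\bff)$ in $\xi(\bN)$. The leftmost spindle of the clade at level $y$ is then: the value $\bff(y)$ if $\life(\bff) > y$ (i.e.\ the initial spindle still survives level $y$); otherwise it is the mass at level $y$ of the first spindle of $\bN$ that crosses level $y$ within the time interval $[0,\widehat T^0)$. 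So I would split the computation into these two cases, keeping in mind that we also condition on $\life^+ > y$, i.e.\ on the clade reaching level $y$ at all.

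The key steps, in order. (1) Record the relevant \BESQ[-1] facts: by Lemma \ref{lem:BESQ:length}, $\Pr(\life(\bff) > y)$ and more precisely the density of $\bff(y)$ on the event $\{\life(\bff)>y\}$ — this is the standard (sub-probability) transition density of \BESQ[-1], which can be read off from the known \BESQ[\delta] densities (e.g.\ via the $\delta = -1$ case of the formulas in \cite{GoinYor03} or \cite{PitmYor82}); its total mass is $e^{-a/2y}$ since $\Pr(\life(\bff)\le y) = 1-e^{-a/2y}$ by Lemma \ref{lem:BESQ:length}. (2) On the complementary event $\{\life(\bff)\le y\}$, condition on $\life(\bff) = \zeta$ with $\zeta \le y$; then $\xi(\bN)$ must first reach level $-\zeta$, and by the mid-spindle Markov property applied at the first crossing of level $y - \zeta$ (a level in the interior, since here $\widehat\bX = \bX + \zeta$ so level $y$ of $\widehat\bX$ is level $y-\zeta$ of $\bX$), the first spindle of $\bN$ to cross that level, conditioned to cross with mass $b$, does so with a density governed by Proposition \ref{prop:clade:stats}; more directly, one uses that by Lemma \ref{lem:mid_spindle_Markov} and Corollary \ref{cor:clade_law_given_over}-type reasoning the crossing mass has the same $\mClade$-law as the central spindle mass $m^0$ of the remaining clade about that level. (3) Alternatively, and probably more cleanly, observe that $m^y(\whN)$ under $\mClade^+(\,\cdot \mid m^0=a,\,\life^+>y)$ is precisely $m^0$ of the clade $\cutoffG{y}{\whN}$ (the clade cut off above level $y$, which has $m^0$ equal to the leftmost-spindle mass at level $y$), and use the $\mClade$-conditioning identities of Proposition \ref{prop:clade:stats} — in particular \ref{item:CS:over:mass}, \ref{item:CS:max:mass} and \ref{item:CS:mass} — to assemble the answer. (4) Integrate/convolve the two cases: one contribution is a point-mass-free density proportional to the \BESQ[-1] transition density $a \mapsto b$ (contributing the $\cosh$/$\sinh$ Bessel-type terms through $I_{1/2}$ and $I_{-1/2}$, which are exactly $\sinh$ and $\cosh$ up to the $\sqrt{\phantom{x}}$ prefactors), and the other contribution comes from $\life(\bff)\le y$ followed by a fresh crossing spindle. (5) Normalize by $\Pr(\life^+ > y \mid m^0 = a) = 1 - e^{-a/2y}$, from Proposition \ref{prop:clade:stats}\ref{item:CS:max:mass}, to get the stated conditional density; the $e^{a/2y}-1$ in the denominator is exactly this normalization (after multiplying through by $e^{a/2y}$).

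I expect the main obstacle to be the bookkeeping in step (4): correctly identifying which of the two mechanisms (original leftmost spindle survives to level $y$ vs.\ it dies and a new spindle takes over) contributes which analytic term, and verifying that their sum collapses to the compact closed form with the $1 - \cosh(\sqrt{ab}/y) + (\sqrt{ab}/y)\sinh(\sqrt{ab}/y)$ factor. The $\cosh$ and $\sinh$ strongly suggest that the \BESQ[-1] transition density (which involves a modified Bessel function $I_{1/2}$, hence $\sinh$) enters directly for the first mechanism, while the second mechanism — a convolution over the death level $\zeta \in (0,y)$ of the \InvGammaDist\ density of $\zeta$ against the crossing-spindle mass density from Proposition \ref{prop:clade:stats}\ref{item:CS:mass:over} — produces the remaining pieces; checking that the algebra genuinely telescopes is the delicate part. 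Everything else is an assembly of results already in hand (Lemmas \ref{lem:BESQ:length}, \ref{lem:BESQ:existence}, \ref{lem:mid_spindle_Markov}, Proposition \ref{prop:clade_splitting}, Proposition \ref{prop:clade:stats}), plus standard \BESQ\ density formulas and elementary integrals of the type already cited from \cite{Sato}. Once Lemma \ref{lem:LMB} is established, the statement about $L^y$ in Proposition \ref{prop:type-1:transn} is immediate by specializing to the clade $\whN$ with $m^0 = a$ and unconditioning appropriately.
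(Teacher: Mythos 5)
Your case split (the clade's initial leftmost spindle survives to level $y$, i.e.\ $J^+>y$, versus it dies below $y$) and the final normalization by $\mClade\{\life^+>y\;|\;m^0=a\}=1-e^{-a/2y}$ are exactly the paper's structure, and the surviving-spindle contribution is indeed the killed \BESQ[-1] transition density (note that this involves $I_{3/2}$, via $q^{-1}_y(a,b)=q^5_y(b,a)$, not $I_{\pm1/2}$ -- a small slip, but it is where the $w\cosh w-\sinh w$ term actually comes from, $w=\sqrt{ab}/y$). The genuine gap is in your treatment of the other case. After conditioning on $\life(\bff)=J^+=z\le y$, what you need is the law of the mass with which the first spindle of the driving \PRM\ crosses level $y$ (in clade coordinates), \emph{conditioned on the scaffolding reaching $y$ before hitting $0$}. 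Neither the mid-spindle Markov property (Lemma \ref{lem:mid_spindle_Markov}) nor Proposition \ref{prop:clade:stats} gives this law: the mid-spindle Markov property only yields conditional independence of past and future \emph{given} the crossing mass, and the entries of Proposition \ref{prop:clade:stats} condition $m^0$ on statistics of its own bi-clade, whereas here the conditioning event $\{\life^+>y\}$ constrains the path \emph{before} the crossing (it must not return to $0$ first). Your ``cleaner'' alternative (3) is circular for the same reason: the conditional law of $m^0\big(\cutoffG{y}{\whN}\big)$ given the sub-level-$y$ information is exactly the unknown law of $m^y$, so Proposition \ref{prop:clade:stats} cannot be applied to it directly.

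What is missing is precisely the paper's Lemma \ref{lem:LMB_reversal}: under $\mClade^+(\,\cdot\;|\;J^+=z,\,\life^+>y)$ the crossing mass $m^y$ has the law of $m^0$ under $\mClade^+(\,\cdot\;|\;\life^+\in(y-z,y))$, an identity proved by a coupling/time-reversal argument (couple the first-passage descent from $z$ with the tail of a descent from $y$, identify the level-$y$ crossing spindle with the central spindle of the first bi-clade about that level whose anti-clade lifetime exceeds $y-z$, then use the reversal invariance of $\mClade$). Only after this identification do the formulas of Proposition \ref{prop:clade:stats2} (overshoot given $m^0$, lifetime tails, $m^0$ given lifetime) let you write the second contribution as an explicit integral over $z\in(0,y)$ and verify that the two pieces add up to the stated $1-\cosh(\sqrt{ab}/y)+(\sqrt{ab}/y)\sinh(\sqrt{ab}/y)$ factor. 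Your plan treats this crossing-mass law as already available from results in hand; it is not, and supplying it (or an equivalent fluctuation-theoretic computation) is the real content of the lemma.
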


We prove this lemma at the end of Appendix \ref{sec:clade_stats}.

\begin{proof}[Proof of Proposition \ref{prop:type-1:transn}]
 By construction, $\life(\bff)$ is independent of $\bN$. Thus, by Proposition \ref{prop:nice_level}, level $y$ is a.s.\ nice for $\olX$; henceforth we restrict to that event. By Proposition \ref{prop:bi-clade_PRM} and the aforementioned independence, the point process $\olF^y = \bF^{y-\life(\bff)}$ is a \PRM[\Leb\otimes\mClade]. Let $\widehat S^y := \oll^y(\ol T^0)$. If $\widehat\bN$ survives past level $y$ then $\widehat S^y$ is the level $y$ local time at which some excursion of $\olX$ about level $y$ first reaches down to level zero:
 \begin{equation*}
  \widehat S^y = \cf\big\{\life^+\big(\whN\big) > y\big\}\inf\left\{s > 0\colon \olF^y\big([0,s]\times\{N\in\Hxc{\pm}\colon \life^-(N) \geq y\}\big) > 0 \right\}.
 \end{equation*}
 Conditionally given the event $\{\life^+(\widehat\bN) > y\}$ of survival beyond level $y$, it follows from the Poisson property of $\olF^y$ and the description of $\mClade\{\life^-\in\cdot\,\}$ in Proposition \ref{prop:clade:stats} \ref{item:CS:max} that $\widehat S^y \sim \ExpDist[(2y)^{-1/2}]$, which is the distribution of $S^y$. In light of this, up to null events,
 \begin{equation}\label{eq:type-1:no_revival}
  \left\{\life^+\left(\widehat\bN\right) \leq y\right\} = \left\{\wh S^y = 0\right\} = \left\{\widehat\bF^{\geq y} = 0\right\} = \left\{\widehat\alpha^y = \emptyset\right\}.
 \end{equation}
 Recall from Proposition \ref{prop:clade_splitting} that $\whN\sim\mClade\{\,\cdot\;|\;m^0=a\}$. Thus, \eqref{eq:transn:lifetime} follows from the formula for $\mClade^+\{\life^+ > z\;|\;m^0=a\}$ stated in Proposition \ref{prop:clade:stats} \ref{item:CS:max:mass}.
 
 Assuming $\life^+(\widehat\bN) > y$, time $\ol T^0$ occurs during an anti-clade of $\olN$ below level $y$ at local time $\widehat S^y$. In particular, the subsequent level $y$ clade, also at local time $\wh S^y$, is cut entirely from $\whN$. Thus, $\widehat\bF^{\geq y} = \restrict{\olF^{\geq y}}{[0,\widehat S^y)}$. That is, $\whF^{\geq y}$ is obtained from $\olF^{\geq y}$ by Poisson thinning. By Proposition \ref{prop:clade:stats} assertions \ref{item:CS:mass}, \ref{item:CS:max}, and \ref{item:CS:mass:max},
 \begin{equation*}
 \begin{split}
  \mClade\{ m^0\in db;\ \zeta^- < y \} &= \mClade\{ m^0\in db \} - \mClade\{m^0\in db\;|\;\zeta^- > y \} \mClade\{ \zeta^- > y \}\\
  	&= \frac{1}{2\sqrt{\pi}}b^{-3/2}db - \frac{1}{\sqrt{2}}y^{-1/2}\frac{y^{1/2}}{\sqrt{2\pi}}(1-e^{-b/2y})b^{-3/2}db\\
  	&= \frac{1}{2\sqrt{\pi}}b^{-3/2}e^{-b/2y}db = e^{-b/2y}\mClade\{ m^0\in db\}.
 \end{split}
 \end{equation*}
 Thus, it follows from \eqref{eq:agg_mass_from_clades} that the conditional law of $\big(M^y_{\whN}\circ\tau^y_{\whN}(s)-M^y_{\whN}\circ\tau^y_{\whN}(0),\ s\in [0,\wh S^y]\big)$ given $\{\life^+(\widehat\bN) > y\}$ equals the law of $\restrict{R^y}{[0,S^y]}$. Thus, appealing to \eqref{eq:type-1:no_revival}, 
 the conditional distribution of $\wh\alpha^y$ minus its leftmost block given $\{\wh\alpha^y\neq\emptyset\}$ is as described in \eqref{eq:transn:given_life}.
 
 The mass $m^y(\whN)$ of the leftmost block is a function of $\restrict{\whN}{[0,\wh T^y)}$, whereas $\wh\alpha^y$ minus its leftmost block is a function of $\shiftrestrict{\whN}{[\wh T^y,\infty)}$. 
 These are independent by the strong Markov property of $\whN$. We conclude by Lemma \ref{lem:LMB}.
\end{proof}


We now extend the Markov-like property of Proposition \ref{prop:PRM:Fy-_Fy+} to the present setting.

\begin{proposition}\label{prop:clade:Fy-_Fy+}
 $\widehat\bF^{\geq y}_0$ is conditionally independent of $\ol\cF^y$ given $\widehat\alpha^y$ with regular conditional distribution $\Pr^1_{\widehat\alpha^y}\big\{ F^{\geq 0}_0\in \cdot\,\big\}$, where this law is as in Definition \ref{constr:type-1_2}.
\end{proposition}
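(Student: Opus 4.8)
The plan is to reduce Proposition \ref{prop:clade:Fy-_Fy+} to the Markov-like property already established in Proposition \ref{prop:PRM:Fy-_Fy+}, by exhibiting the clade $\whN = \restrict{\olN}{[0,\ol T^0)}$ as a point process of the form $\restrict{\bN}{[0,T)}$ stopped at a suitable $(\ol\cF_t)$-stopping time $T$ satisfying conditions (a) and (b) of that proposition. Concretely, I would take $\bN$ to be the underlying \PRM[\Leb\otimes\mBxc], set $\olN := \Dirac{0,\bff} + \bN$ with $\bff$ an independent \BESQ[-1] from $a$, and let $T := \ol T^0 = T^{-\life(\bff)}$ be the hitting time of $0$ by $\olX = \bX + \life(\bff)$. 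Adding the single leftmost spindle $\Dirac{0,\bff}$ only shifts the scaffolding up by the constant $\life(\bff)$ and adds one jump at time $0$; since $\life(\bff)$ is independent of $\bN$, after conditioning on $\life(\bff)$ we are exactly in the setting of a \PRM\ scaffolding $\bX$ run until it hits level $-\life(\bff)$. The first thing to verify is that this $T$, viewed appropriately, satisfies the two hypotheses of Proposition \ref{prop:PRM:Fy-_Fy+}: that $\ol S^0 := \oll^0(\ol T^0)$ is $\ol\cF^0$-measurable, and that $\olX < 0$ on $(\ol\tau^0(\ol S^0-),\ol T^0)$. The latter holds because $\ol T^0$ is the first hitting time of $0$ by $\olX$, so after the last visit of $\olX$ to level $0$ before $\ol T^0$ the path stays strictly negative (using Proposition \ref{prop:nice_level} and Proposition \ref{prop:exc_intervals} to rule out degeneracies at level $0$, a.s.); the former follows since $\ol S^0$ is the total level-$0$ local time accumulated, which is $\ol\cF^0$-measurable by the structure of the level filtration, together with the fact that the final incomplete anti-clade below level $0$ is $\ol\cF^0$-measurable.

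Once $T$ is shown to satisfy (a) and (b), Proposition \ref{prop:PRM:Fy-_Fy+} applied to $\olN$ in place of $\bN$ gives directly that for each $y \geq 0$, $\ol F^{\geq y}_0(\restrict{\olN}{[0,\ol T^0)}) = \whF^{\geq y}_0$ is conditionally independent of $\ol\cF^y$ given $\wh\alpha^y = \skewer(y, \whN)$, with regular conditional distribution $\Pr^1_{\wh\alpha^y}\{F^{\geq 0}_0 \in \cdot\,\}$. The only subtlety is that Proposition \ref{prop:PRM:Fy-_Fy+} is stated for a \PRM\ and its associated completed level filtration, whereas here the relevant filtration $(\ol\cF^y)$ is the one generated by $\olN = \Dirac{0,\bff}+\bN$, augmented to contain $\life(\bff)$ (which is $\ol\cF^0 \cap \ol\cF_0$-measurable, being a function of the leftmost spindle alone). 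I would address this by noting that Proposition \ref{prop:PRM:Fy-_Fy+} explicitly permits augmentation of the level filtration by an independent $\ol\cF^0\cap\ol\cF_0$-measurable random variable $S$; taking $S = \life(\bff)$ (equivalently the leftmost spindle $\bff$ itself, which determines the level-$0$ shift) places us exactly in that framework, since conditionally on $\bff$ the point process $\bN$ is still a \PRM[\Leb\otimes\mBxc] and $\olX$ is a \Stable[\frac32] path started at $\life(\bff)$.

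The remaining routine check is that the diversity-indexed point process $\whF^{\geq y}_0$ defined here via Definition \ref{def:bi-clade_PP} (using local times $\oll^y$) coincides with the object appearing in the conclusion of Proposition \ref{prop:PRM:Fy-_Fy+}; this is immediate from the definitions and from the a.s.\ equality of the level-$y$ local time of $\whN$ with the diversity of $\skewer(y,\whN)$, established in Theorem \ref{thm:LT_property_all_levels} and used to define $F^{\geq 0}_0(\bN_\beta)$ in Definition \ref{constr:type-1_2}. I expect the main obstacle to be bookkeeping: carefully matching the incomplete first/last (anti-)clades in the present clade setting with those in the general Proposition \ref{prop:PRM:Fy-_Fy+} setting, and confirming that cutting $\olN$ at $\ol T^0$ excludes exactly the clade component following local time $\ol S^y := \oll^y(\ol T^0)$ — i.e.\ that $\whF^{\geq y}_0 = \restrict{\ol F^{\geq y}_0}{[0,\ol S^y)}$, so that the regular conditional distribution depends on $\wh\alpha^y$ alone and not on further information about where the cut lands. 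This is precisely the content of Step 3 of the proof of Proposition \ref{prop:PRM:Fy-_Fy+}, and the present proposition is essentially a corollary of that step applied to $\olN$; I would phrase the proof as such, spelling out only the verification of hypotheses (a) and (b) and the augmentation point, and citing Proposition \ref{prop:PRM:Fy-_Fy+} for the rest.
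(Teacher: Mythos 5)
There is a genuine gap in the reduction. Proposition \ref{prop:PRM:Fy-_Fy+} applies to $\tdN=\restrict{\bN}{[0,T)}$ where $\bN$ is a \PRM[\Leb\otimes\mBxc]; it cannot simply be ``applied to $\olN$ in place of $\bN$'', because $\olN=\Dirac{0,\bff}+\bN$ is not a \PRM: the leftmost spindle is an extra atom at time $0$ whose law is that of a \BESQ[-1] started from $a$ rather than a point of intensity $\mBxc$, and it shifts the whole level structure by $\life(\bff)$. The augmentation clause in Section \ref{sec:type-1:PRM} only allows an independent random variable to be adjoined to the filtrations; it does not license modifying the point process itself. Your proposed fix — condition on $\bff$ (take $S=\life(\bff)$) so that $\bN$ is again a \PRM\ — does not deliver the stated conclusion. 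Level $y$ of $\olX$ is level $y-\life(\bff)$ of $\bX$, so in the regime $y<\life(\bff)$ (initial spindle still alive at level $y$) you would need Proposition \ref{prop:PRM:Fy-_Fy+} at a negative level of $\bX$, outside its scope; more importantly, $\life(\bff)$ is not $\ol\cF^y$-measurable in that regime, and conditioning on it changes the regular conditional distribution: given $\life(\bff)$, the upper part $\hat\bff^y$ of the leftmost spindle is a \BESQ[-1] conditioned on when it dies, not an unconditioned \BESQ[-1] started from $\bff(y)$, so the first clade of $\whF^{\geq y}_0$ would not have law $\mClade^+(\,\cdot\;|\;m^0=\bff(y))$ as required by $\Pr^1_{\wh\alpha^y}\big\{F^{\geq 0}_0\in\cdot\,\big\}$. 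A conditional independence proved with respect to the enlarged filtration neither implies the stated one nor produces the right kernel. (A smaller slip: your verification of hypothesis (b) refers to $\olX$, but $\olX$ is strictly positive, not negative, on $[0,\ol T^0)$; hypotheses (a)--(b) concern $\bX$ and $T=T^{-\life(\bff)}$.)

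The missing ingredient is exactly what the paper supplies: an extension of the mid-spindle Markov property (Lemma \ref{lem:mid_spindle_Markov}) from $\bN$ to $\olN$, which covers the case $\life(\bff)>y$ (where it reduces to the Markov property of $\bff$ at level $y$) and shows that, given $m^y\big(\olN^{\leq y}_{\textnormal{first}}\big)$, the straddling clade $\olN^{\geq y}_{\textnormal{first}}$ has law $\mClade^+\big(\,\cdot\;\big|\;m^0=m^y\big(\olN^{\leq y}_{\textnormal{first}}\big)\big)$ independently of the past. Proposition \ref{prop:PRM:Fy-_Fy+} is then applied not at level $y$ to $\olN$ but at level $0$ to the time-shifted process $\tdN=\restrict{\olN^y_*}{[0,T^{-y}(\olN^y_*))}$, where $\olN^y_*=\shiftrestrict{\olN}{[\ol T^y,\infty)}$ is a fresh \PRM\ by the strong Markov property; this handles all clades after the first. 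One then reassembles, using that level $y$ is a.s.\ nice and Lemma \ref{lem:cutoff_vs_PP} to see that $\big(\olN^{\leq y}_{\textnormal{first}},\olF^{\leq y}\big)$ generates $\ol\cF^y$ up to null sets. If you restructure your argument along these lines — a time shift to level $0$ of a fresh \PRM, plus a separate mid-spindle argument for the first clade — the reduction to Proposition \ref{prop:PRM:Fy-_Fy+} does go through.
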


\begin{proof}
 By \eqref{eq:type-1:no_revival}, the claimed regular conditional distribution holds trivially on the event $\big\{\life^+\big(\widehat\bN\big) \leq y\big\}$. Likewise, the result is trivial for $y=0$.
 
 The mid-spindle Markov property for $T^{\geq y}$, Lemma \ref{lem:mid_spindle_Markov}, may be extended from $\bN$ to apply to $\olN$. Indeed, if $\ol T^{\geq y} > 0$ then the same proof goes through; otherwise, if $\ol T^{\geq y} = 0$, i.e.\ if $\life(\bff) > y$, then the lemma reduces to the Markov property of $\bff$ at $y$. We use this extension to split $\olN$ into three segments.
 
 Let $T := \ol T^{\geq y}$. Let $(T,f_T)$ denote the spindle of $\ol\bN$ at this time, which equals $(0,\bff)$ if $\life(\bff) > y$. Let $\hat f^y_T$ and $\check f^y_T$ denote the broken spindles of \eqref{eq:spindle_split}. Extending the notation of Section \ref{sec:clade_filtration}, set
 \begin{equation*}
  \olN^{\leq y}_{\textnormal{first}} := \Restrict{\olN}{[0,T)} + \Dirac{T,\check f_T^y},
  \qquad \olN^{\geq y}_{\textnormal{first}} := \ShiftRestrict{\olN}{\left(T,\ol T^y\right)} + \Dirac{0,\hat f_T^y},
  \qquad \olN^y_* := \ShiftRestrict{\olN}{[\ol T^y,\infty)}.
 \end{equation*}
 Let $H_1$ and $H_2$ be non-negative measurable functions on $\Hfin$, and likewise for $H_3$ on $\H$. Recall \eqref{eq:LMB_def} defining $m^y$. In the present setting $m^y(\olN) = f_T\big(y-\olX(T-)\big)$. By the preceding extension of the mid-spindle Markov property and the disintegration of $\mClade^+$ in Proposition \ref{prop:clade_splitting},
 \begin{equation*}
  \EV \left[ H_1\big(\olN^{\leq y}_{\textnormal{first}}\big)H_2\big(\olN^{\geq y}_{\textnormal{first}}\big) \right] = \EV \left[ H_1\big(\olN^{\leq y}_{\textnormal{first}}\big) \mClade^+\left[H_2\;\middle|\;m^0 = m^y\big(\olN^{\leq y}_{\textnormal{first}}\big)\right] \right],
 \end{equation*}
 Moreover, by the strong Markov property of $\olN$ applied at $\ol T^y$, $(\olN^{\leq y}_{\textnormal{first}},\olN^{\geq y}_{\textnormal{first}})$ is independent of $\olN^y_*$ and the latter is distributed like $\bN$. Thus,
 \begin{equation*}
  \EV \left[ H_1\big(\olN^{\leq y}_{\textnormal{first}}\big)H_2\big(\olN^{\geq y}_{\textnormal{first}}\big)H_3\big(\olN^y_*\big) \right] = \EV \left[ H_1\big(\olN^{\leq y}_{\textnormal{first}}\big) \mClade^+\left[H_2\;\middle|\;m^0 = m^y\big(\olN^{\leq y}_{\textnormal{first}}\big)\right] \right] \EV\big[ H_3(\bN) \big].
 \end{equation*}
 The event $\{\life^+(\widehat\bN) > y\}$ equals the event that the process $\xi\big(\olN^{\leq y}_{\textnormal{first}}\big)$ is non-negative. 
 In particular, this belongs to the $\sigma$-algebra $\sigma\big(\olN^{\leq y}_{\textnormal{first}}\big)$. Thus, the above formula also holds for the conditional expectation given this event. On this event, $\ol T^{\geq y} = \widehat T^{\geq y} < \infty$.
 
 Let $\tdN := \restrict{\olN^y_*}{[0,T^{-y}(\olN^y_*))}$. The stopping time $T^{-y}(\bN)$ satisfies the hypotheses of Proposition \ref{prop:PRM:Fy-_Fy+}. Thus, that proposition applies to the stopped \PRM\ $\tdN$. On the event $\{\life^+(\widehat\bN) > y\}$, which is independent of $\tdN$, we have $\tdN = \shiftrestrict{\whN}{[\wh T^y,\wh T^0)}$ and so $\whF^{y} = F^{0}_0(\tdN)$. Conditionally given this event, by Proposition \ref{prop:PRM:Fy-_Fy+}, the clade point process $\whF^{\geq y}$ is conditionally independent of $\olF^{\leq y}$ given $\skewer\big(0,\tdN\big) =: \beta$, with regular conditional distribution $\Pr^1_{\beta}\{F^{\geq 0}_0\in\cdot\,\}$. It follows from Proposition \ref{prop:nice_level} that level $y$ is a.s.\ nice for $\ol\bN$. Thus, by Lemma \ref{lem:cutoff_vs_PP}, $\big(\olN^{\leq y}_{\textnormal{first}},\olF^{\leq y}\big)$ generates $\ol\cF^y$ up to $\Pr$-null sets. Putting all of this together, $\whF^{\geq y}_0 = \whF^{\geq y} + \cf\big\{\life^+\big(\widehat\bN\big) > y\big\}\DiracBig{0,\olN^{\geq y}_{\textnormal{first}}}$ a.s., and this has the desired conditional independence and regular conditional distribution.
\end{proof}

\section{Type-1 and type-0 evolutions as Hunt processes in $(\IPspace,\dI)$}
\label{sec:type-1_gen}
\def\cFI{\cF_{\IPspace}}

\subsection{Type-1: total mass, path continuity, simple Markov property}
\label{sec:type-1_gen:cts}

Throughout this section we follow the notation of Definition \ref{constr:type-1} for $\beta\in\IPspace$, $(\bN_U,\,U\in\beta)$, $\bN_{\beta}$, and $(\alpha^y,\,y\geq 0)$. We treat these objects as maps on a probability space $(\Omega,\cA,\Pr)$. We additionally define
\begin{equation}\label{eq:concat_skewers:type-1}
 \alpha^y_U := \skewer(y,\bN_U) \quad \text{for }y\ge0,\ U\in\beta, \qquad \text{so} \qquad \alpha^y = \ConcatIL_{U\in\beta}\alpha^y_U.
\end{equation}
For each of the filtrations $(\cF_t)$, $(\cF_{t-})$, $(\cF^y)$, and $(\cF^{y-})$ on $\H$ introduced in Definition \ref{def:filtrations}, we accent with a bar, as in $(\ol\cF_t,\ t\geq 0)$, to denote the completion of the filtration under the family of measures $(\Pr^1_{\beta},\ \beta\in\IPspace)$.

We begin this section by showing that $(\alpha^y,\,y\geq 0)$ is a.s.\ an $\IPspace$-valued process, and we derive its transition kernel. Then we prove a simple Markov property of $(\alpha^y,\,y\geq 0)$ as a random element of the product space $\IPspace^{[0,\infty)}$. Next, we prove the type-1 assertion of Theorem \ref{thm:BESQ_total_mass}, which describes $(\IPmag{\alpha^y},\,y\ge0)$. Finally, we prove the existence of a continuous version of $(\alpha^y,\,y\geq 0)$, in the sense of Definition \ref{def:version}, as well as a simple Markov property for this continuous process.

\begin{lemma}\label{lem:finite_survivors}
 For $J\subseteq\beta$ and $y>0$,
 \begin{equation*}
 \begin{split}
  \EV [\#\{U\in J\colon \alpha_U^y \not= \emptyset\} ] &\leq \frac{1}{2y}\sum_{U\in J}\Leb(U) \quad \text{and} \quad \Pr \{\forall U\in J,\,\alpha_U^y = \emptyset \} \geq 1 - \frac{1}{2y}\sum_{U\in J}\Leb(U).
 \end{split}
 \end{equation*}
 In particular, a.s.\ only finitely many of the $\big(\alpha_U^z,\,z\geq 0\big)$ survive to level $y$.
\end{lemma}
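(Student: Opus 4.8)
The plan is to reduce everything to the entrance law for a single clade, Proposition~\ref{prop:type-1:transn}. Recall from the discussion preceding Definition~\ref{constr:type-1_2} that each $\bN_U$ has law $\mClade^+(\,\cdot\;|\;m^0=\Leb(U))$, and that $\alpha^y_U=\skewer(y,\bN_U)$ is exactly $\skewerP(\bN_U)$ evaluated at level $y$. Proposition~\ref{prop:type-1:transn} then supplies the precise survival probability
\[
 \Pr\{\alpha^y_U\neq\emptyset\}=1-e^{-\Leb(U)/2y},\qquad U\in\beta,\ y>0.
\]
Everything else is bookkeeping with the elementary inequalities $1-e^{-x}\le x$ and $e^{-x}\ge 1-x$, valid for $x\ge0$.

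For the first bound I would observe that $\beta$, being a set of pairwise disjoint open intervals, is countable, so $\#\{U\in J\colon\alpha^y_U\neq\emptyset\}=\sum_{U\in J}\cf\{\alpha^y_U\neq\emptyset\}$ is a countable sum of indicators and in particular a well-defined $[0,\infty]$-valued random variable. By Tonelli's theorem (equivalently, monotone convergence) its expectation equals $\sum_{U\in J}\Pr\{\alpha^y_U\neq\emptyset\}=\sum_{U\in J}\bigl(1-e^{-\Leb(U)/2y}\bigr)\le(2y)^{-1}\sum_{U\in J}\Leb(U)$, which is the claimed inequality; it is of course trivially true (both sides $=+\infty$) if $\sum_{U\in J}\Leb(U)=\infty$, though this never occurs since $\sum_{U\in\beta}\Leb(U)=\IPmag{\beta}<\infty$. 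For the second bound I would use that the clades $(\bN_U)_{U\in\beta}$ are mutually independent by construction (Definition~\ref{constr:type-1}), hence so are the events $\{\alpha^y_U=\emptyset\}$, $U\in J$; since $J$ is countable,
\[
 \Pr\{\forall U\in J,\ \alpha^y_U=\emptyset\}=\prod_{U\in J}\Pr\{\alpha^y_U=\emptyset\}=\prod_{U\in J}e^{-\Leb(U)/2y}=\exp\!\Bigl(-\tfrac{1}{2y}\sum_{U\in J}\Leb(U)\Bigr)\ge 1-\tfrac{1}{2y}\sum_{U\in J}\Leb(U).
\]
Finally, specializing the first bound to $J=\beta$ gives $\EV[\#\{U\in\beta\colon\alpha^y_U\neq\emptyset\}]\le\IPmag{\beta}/2y<\infty$, so this count is a.s.\ finite, which is the last assertion.

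There is essentially no substantive obstacle: all of the analytic content has already been packaged into Proposition~\ref{prop:type-1:transn}. The only points that deserve a word of care are the countability of $\beta$ (so that the sum and product manipulations, and the identification $\Pr(\bigcap_{U\in J}\{\alpha^y_U=\emptyset\})=\prod_{U\in J}\Pr\{\alpha^y_U=\emptyset\}$, are legitimate) and the explicit independence of the $\bN_U$ across $U\in\beta$ furnished by Definition~\ref{constr:type-1}.
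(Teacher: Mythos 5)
Your proof is correct and follows essentially the same route as the paper: the survival probability $\Pr\{\alpha^y_U\neq\emptyset\}=1-e^{-\Leb(U)/2y}$ from \eqref{eq:transn:lifetime} in Proposition \ref{prop:type-1:transn}, the independence of the clades $(\bN_U)_{U\in\beta}$ from Definition \ref{constr:type-1}, and the elementary inequality $e^{-x}\geq 1-x$. Your version just spells out the countability/Tonelli bookkeeping that the paper leaves implicit.
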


\begin{proof}
 The variables $\cf\{\alpha^y_U = \emptyset\}$ are independent Bernoulli trials with respective parameters $e^{-\Leb(U)/2y}$, by \eqref{eq:transn:lifetime}. Thus, both inequalities follow from $e^{-x} \geq 1-x$.
\end{proof}

We can extend Theorem \ref{thm:LT_property_all_levels} to the present setting.

\begin{proposition}\label{prop:type-1:LT_diversity}
 It is a.s.\ the case that $\IPLT_{\alpha^y}\big(M^y_{\bN_{\beta}}(t)\big) = \ell^y_{\bN_{\beta}}(t)$ for all $t\geq 0$, $y>0$.
\end{proposition}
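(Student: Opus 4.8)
The plan is to reduce the assertion to the single-clade statement carried by Corollary~\ref{cor:clade:cts_skewer} via the concatenation $\bN_\beta=\ConcatIL_{U\in\beta}\bN_U$, exploiting that at a level $y>0$ the clade scaffoldings $\xi(\bN_U)$ do not interact. Write $\sigma(U-):=\sum_{V\prec U}\len(\bN_V)$ and $\sigma(U):=\sigma(U-)+\len(\bN_U)$; these are finite for all $U$ a.s.\ by Proposition~\ref{prop:clade_lengths_summable}\ref{item:CLS:CLS}, the segments $[\sigma(U-),\sigma(U))$ cover $[0,\len(\bN_\beta))$ up to a countable set, and by Proposition~\ref{prop:clade_lengths_summable}\ref{item:CLS:scaffold} the scaffolding $\xi(\bN_\beta)$ on segment $U$ agrees, after a shift by $\sigma(U-)$, with the excursion $\xi(\bN_U)\ge0$, which vanishes at both ends of its support. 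Since $\beta$ is countable, we may fix an almost sure event on which, simultaneously for every $U\in\beta$: \textnormal{(a)} $M^y_{\bN_U}(t)<\infty$ for all $y\in\BR$ and $t\ge0$, the local time $\ell^y_{\bN_U}$ exists and is continuous in $t$, and $\IPLT_{\alpha^y_U}\big(M^y_{\bN_U}(t)\big)=\ell^y_{\bN_U}(t)$ for all $t\ge0$ and $y\in(0,\life^+(\bN_U))$ (Corollary~\ref{cor:clade:cts_skewer} and its proof, which reduce these to Theorem~\ref{thm:LT_property_all_levels} and Corollary~\ref{cor:PRM:type-1_wd} for the underlying stopped \PRM); \textnormal{(b)} for every $y>0$ only finitely many of the clades survive to level $y$ (Lemma~\ref{lem:finite_survivors} and monotonicity in $y$); and \textnormal{(c)} $\alpha^0=\beta\in\IPspace$.

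On this event I would then prove the following three identities for $y>0$ and $t\in[\sigma(U-),\sigma(U)]$. First, from Definition~\ref{def:skewer_2} and the agreement of $\xi(\bN_\beta)$ with $\xi(\bN_U)$ on segments,
\[
 M^y_{\bN_\beta}(t)=\sum_{V\prec U}M^y_{\bN_V}\big(\len(\bN_V)\big)+M^y_{\bN_U}\big(t-\sigma(U-)\big),
\]
a sum with finitely many nonzero terms by \textnormal{(b)}; in particular $M^y_{\bN_\beta}(t)<\infty$. Second, splitting $\int_0^t\cf\{y-h<\xi_{\bN_\beta}(s)<y\}\,ds$ over the segments and noting that for fixed $y>0$ only finitely many segments contribute once $h$ is small, the limit~\eqref{eq:LT_def} exists at level $y$ for $\bN_\beta$, yielding
\[
 \ell^y_{\bN_\beta}(t)=\sum_{V\prec U}\ell^y_{\bN_V}\big(\len(\bN_V)\big)+\ell^y_{\bN_U}\big(t-\sigma(U-)\big),
\]
which is continuous in $t$ since each summand is and $\ell^y_{\bN_U}(0)=0$; this function, extended by $0$ to $y\le0$, satisfies~\eqref{eq:LT_int_ident} for $\xi(\bN_\beta)$. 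Third, by \textnormal{(b)} and Lemma~\ref{lem:IP:concat}, $\alpha^y=\ConcatIL_{V\in\beta}\alpha^y_V\in\IPspace$ with additive diversity, so that, via~\eqref{eq:concat_skewers:type-1},
\[
 \IPLT_{\alpha^y}\big(M^y_{\bN_\beta}(t)\big)=\sum_{V\prec U}\IPLT_{\alpha^y_V}(\infty)+\IPLT_{\alpha^y_U}\big(M^y_{\bN_U}(t-\sigma(U-))\big).
\]

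Comparing the three displays term by term — using the identity from \textnormal{(a)}, that $M^y_{\bN_V}(\len(\bN_V))=\IPmag{\alpha^y_V}$, and that $\IPLT_{\alpha^y_V}(\IPmag{\alpha^y_V})=\IPLT_{\alpha^y_V}(\infty)$ by Definition~\ref{def:diversity_property} — gives $\IPLT_{\alpha^y}(M^y_{\bN_\beta}(t))=\ell^y_{\bN_\beta}(t)$ for all such $t$. A clade $V$ (possibly $V=U$) with $\life^+(\bN_V)\le y$ contributes $0$ to each of the three sums: its level-$y$ skewer is empty, its level-$y$ aggregate mass vanishes, and a \Stable[\frac32]-like excursion carries no local time at its maximum; so it suffices to invoke \textnormal{(a)} only at levels strictly below $\life^+(\bN_V)$. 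The exceptional times $t$ with $\xi_{\bN_\beta}(t)=0$ — the segment junctions and their accumulation points — are covered by reading the same three formulas at the right endpoint $t=\sigma(U)$ of the preceding segment. Since \textnormal{(a)}--\textnormal{(c)}, and hence the three displays and the comparison, hold on a single almost sure event for all $y>0$ and all $t\ge0$ at once, this proves the proposition.

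The step I expect to be the main obstacle is the second paragraph: showing that $\ell^y_{\bN_\beta}$ exists, is continuous, and decomposes additively over clades at each level $y>0$, despite the general failure of $\xi$ to commute with concatenation noted at the end of Section~\ref{sec:clade_filtration}. That failure, however, concerns only behaviour at the junction level $0$, whereas here we work strictly above it, so each clade's excursion contributes to level-$y$ occupation in isolation; the remaining bookkeeping — finiteness of the contributing set of clades, continuity at the junctions, and the edge level $y=\life^+(\bN_U)$ — is then routine.
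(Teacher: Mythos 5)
Your argument is correct and matches the paper's proof in essence: both restrict to the a.s.\ event supplied by Corollary \ref{cor:clade:cts_skewer} and Lemma \ref{lem:finite_survivors}, and then identify diversity, aggregate mass and level-$y$ local time additively across the clade segments, the identity within each surviving clade being exactly the $\Hfins$-property. The only difference is organisational: the paper runs an induction along the finitely many clades surviving past level $y$ (with the flat stretches between them handled as "no local time accrues"), while you write the three additive decompositions directly and compare them term by term.
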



\begin{proof}
 Appealing to Corollary \ref{cor:clade:cts_skewer} and Lemma \ref{lem:finite_survivors}, we may restrict to an a.s.\ event on which:
 \begin{equation}\label{eq:good_clades_event}
  \forall U\in\beta,\ \bN_U\in\Hfins, \qquad \text{and} \qquad \forall n\in\BN,\ \#\left\{V\in\beta\colon \life^+\left(\bN_V\right)>1/n\right\} < \infty.
 \end{equation}
 Let $y>0$ and consider the left-to-right ordered sequence $U_1,\ldots,U_K$ of intervals $U\in\beta$ for which $\life^+(\bN_{U})>y$. For $U\in\beta$, define $S(U-):=\sum_{V\in\beta\colon V<U}\len(\bN_V)$ and $S(U):=S(U-) + \len(\bN_U)$.
 
 Since no clade prior to time $S(U_1-)$ survives to level $y$, $\IPLT_{\alpha^y}\big(M^y_{\bN_{\beta}}(t)\big) = \ell^y_{\bN_{\beta}}(t) = 0$ for $t\leq S(U_1-)$. We assume for induction that the same holds up to time $S(U_j-)$. Then
 \begin{equation*}
 \begin{split}
  \IPLT_{\alpha^y}\big(M^y_{\bN_{\beta}}(t)\big) &= \IPLT_{\alpha^y}\big(M^y_{\bN_{\beta}}(S(U_j-))\big) + \IPLT_{\alpha^y_{U_j}}\big(M^y_{\bN_{U_j}}(t-S(U_j-))\big)\\
  		&= \ell^y_{\bN_{\beta}}(S(U_j-)) + \ell^y_{\bN_{U_j}}(t-S(U_j-)) = \ell^y_{\bN_{\beta}}(t) \qquad \text{for }t\in [S(U_j-),S(U_j)],
 \end{split}
 \end{equation*}
 where the middle equality follows from our assumption $\bN_{U_j}\in\Hfins$ and the inductive hypothesis. For $t\in [S(U_j),S(U_{j+1}-)]$ or, if $j=K$, for all $t \geq S(U_j)$, no additional local time accrues and at most one skewer block arrives at level $y$ during this interval. Thus, on this interval,
 $$\IPLT_{\alpha^y}\big(M^y_{\bN_{\beta}}(t)\big) = \IPLT_{\alpha^y}\big(M^y_{\bN_{\beta}}(S(U_j))\big) = \ell^y_{\bN_{\beta}}(S(U_j)) = \ell^y_{\bN_{\beta}}(t).$$
 By induction, this proves that the identity holds at all $t\geq 0$ at level $y$, for all $y>0$.
\end{proof}

\begin{lemma}\label{lem:type-1:wd}
 It is a.s.\ the case that for every $y>0$, the collection of interval partitions $(\alpha_U^y,\,U\in \beta)$ is strongly summable in the sense of Definition \ref{def:IP:concat}, and $\alpha^y$ lies in $\IPspace$.
\end{lemma}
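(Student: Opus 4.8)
The statement asserts that a.s., for every $y>0$, the family $(\alpha_U^y,\,U\in\beta)$ is strongly summable and $\alpha^y\in\IPspace$. I would work on the a.s.\ event from Proposition~\ref{prop:type-1:LT_diversity} and \eqref{eq:good_clades_event}, on which each $\bN_U\in\Hfins$ (so each $\alpha_U^y\in\IPspace$ by Corollary~\ref{cor:clade:cts_skewer}) and, for each $y>0$, only finitely many of the clades survive past level $y$ (Lemma~\ref{lem:finite_survivors}). Fix such a $y$. By Lemma~\ref{lem:IP:concat}(i), a totally ordered collection of interval partitions in $\IPspace$ in which only finitely many are non-empty is strongly summable; since $\alpha_U^y=\emptyset$ for all but finitely many $U\in\beta$ (those with $\life^+(\bN_U)>y$), the collection $(\alpha_U^y,\,U\in\beta)$ is strongly summable, and hence the concatenation $\alpha^y=\ConcatIL_{U\in\beta}\alpha^y_U$ of \eqref{eq:concat_skewers:type-1} is a well-defined element of $\HIPspace$.

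It remains to check that $\alpha^y$ has the $\tfrac12$-diversity property, i.e.\ lies in $\IPspace$. The cheap route is to invoke Proposition~\ref{prop:type-1:LT_diversity}: on our a.s.\ event, $\IPLT_{\alpha^y}\bigl(M^y_{\bN_\beta}(t)\bigr)=\ell^y_{\bN_\beta}(t)$ holds for all $t\geq0$, $y>0$; since $t\mapsto M^y_{\bN_\beta}(t)$ is non-decreasing and, by Proposition~\ref{prop:PRM:cts_skewer} applied clade-by-clade, $M^y_{\bN_\beta}(\len(\bN_\beta))<\infty$, the limit defining $\IPLT_{\alpha^y}(m)$ in \eqref{eq:IPLT} exists for every $m\in[0,\IPmag{\alpha^y}]$, which is exactly membership in $\IPspace$. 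Alternatively, and more self-containedly, one argues directly: writing $U_1<\cdots<U_K$ for the surviving blocks, $\alpha^y$ is the concatenation of the $K$ interval partitions $\alpha^y_{U_1},\dots,\alpha^y_{U_K}\in\IPspace$, and a finite concatenation of interval partitions with diversity again has diversity, with $\IPLT_{\alpha^y}$ obtained by adding the diversities of the pieces in order (this is the finite case of strong summability from Lemma~\ref{lem:IP:concat}(i)). Either way, $\alpha^y\in\IPspace$.

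Finally, to get the statement simultaneously for \emph{all} $y>0$ on a single a.s.\ event, I note that the a.s.\ event of \eqref{eq:good_clades_event} together with Proposition~\ref{prop:type-1:LT_diversity} is already a single event not depending on $y$, and the finiteness $\#\{U\in\beta\colon\life^+(\bN_U)>y\}<\infty$ holds for every $y>0$ on this event (taking $n$ with $1/n<y$); hence the conclusion holds for all $y>0$ at once. The only mild subtlety — and the step I would be most careful about — is the passage from ``finitely many non-empty pieces, each in $\IPspace$'' to ``$\alpha^y\in\IPspace$'': one must make sure that the finitely many empty pieces interspersed among the non-empty ones (i.e.\ clades $\bN_U$ with $\alpha^y_U=\emptyset$ but $U$ lying between surviving blocks) do not disrupt the concatenation, which is immediate since empty interval partitions contribute neither mass nor blocks nor diversity to $\ConcatIL_{U\in\beta}\alpha_U^y$. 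No genuine obstacle arises; the content is entirely in the already-established Proposition~\ref{prop:type-1:LT_diversity}, Corollary~\ref{cor:clade:cts_skewer}, and Lemma~\ref{lem:finite_survivors}.
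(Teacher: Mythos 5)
Your proposal is correct and matches the paper's argument, which simply restricts to the a.s.\ event \eqref{eq:good_clades_event} and notes that finite sequences in $\IPspace$ are strongly summable (Lemma \ref{lem:IP:concat}(i)), i.e.\ exactly your "self-contained" route via Corollary \ref{cor:clade:cts_skewer} and Lemma \ref{lem:finite_survivors}. The extra detour through Proposition \ref{prop:type-1:LT_diversity} is harmless but unnecessary, since the finite-concatenation argument already yields the diversity property.
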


\begin{proof}
 This holds on the event in \eqref{eq:good_clades_event}, as 
 finite sequences in $\IPspace$ are strongly summable.
\end{proof}

\begin{proposition}[Transition kernel for type-1 evolutions]\label{prop:type-1:gen_transn}
 Fix $y>0$. Let $(\beta^y_U,U\!\in\!\gamma)$ denote an independent family of partitions, with each $\beta^y_U$ distributed like $\wh\alpha^y$ in Proposition \ref{prop:type-1:transn} with $a = \Leb(U)$. Then $\skewer(y,\bN_{\beta})\stackrel{d}{=}\ConcatIL_{U\in\beta}\beta^y_U$, and this law is supported on $\IPspace$.
%
\end{proposition}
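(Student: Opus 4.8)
The plan is to assemble the result from facts already established about individual clades. By \eqref{eq:concat_skewers:type-1} we have $\skewer(y,\bN_\beta) = \ConcatIL_{U\in\beta}\alpha^y_U$ with $\alpha^y_U := \skewer(y,\bN_U)$; this rests on Proposition \ref{prop:clade_lengths_summable}\ref{item:CLS:scaffold}, which gives $\xi(\bN_\beta)=\ConcatIL_{U\in\beta}\xi(\bN_U)$ — a point where one must invoke the a.s.\ summability of the leftmost-spindle lifetimes, since $\xi$ does not commute with concatenation in general — together with the fact that the skewer map then commutes with concatenation, as each segment of $\bN_\beta$ corresponding to a $\bN_U$ sees a scaffolding that is merely a shift of $\xi(\bN_U)$. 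So it suffices to identify the joint law of $(\alpha^y_U)_{U\in\beta}$ and then transport it through $\ConcatIL$.

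By Proposition \ref{prop:clade_splitting}, each $\bN_U$ has law $\mClade^+(\,\cdot\;|\;m^0=\Leb(U))$, which is exactly the law of the clade $\widehat\bN$ of Section \ref{sec:type-1:clade} with $a=\Leb(U)$; the $\bN_U$, $U\in\beta$, are moreover independent by their construction. By Corollary \ref{cor:clade:cts_skewer} each $\bN_U$ lies in $\Hfins$ a.s., so by Proposition \ref{prop:skewer_measurable} the partition $\alpha^y_U=\skewer(y,\bN_U)$ is a measurable function of $\bN_U$; hence the $\alpha^y_U$ are independent, and by Corollary \ref{cor:clade:cts_skewer} together with Proposition \ref{prop:type-1:transn} each $\alpha^y_U$ has the same law as $\widehat\alpha^y$ with $a=\Leb(U)$, i.e.\ as $\beta^y_U$. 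Since both families are independent with matching marginals, $(\alpha^y_U)_{U\in\beta}$ and $(\beta^y_U)_{U\in\beta}$ have the same law on $\IPspace^\beta$ with the product $\sigma$-algebra.

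It remains to pass this through the (partially defined) concatenation map. By Lemma \ref{lem:type-1:wd}, $(\alpha^y_U)_{U\in\beta}$ is a.s.\ strongly summable; on the other side, \eqref{eq:transn:lifetime} gives $\Pr(\beta^y_U\neq\emptyset)=1-e^{-\Leb(U)/2y}$, and since $\sum_{U\in\beta}(1-e^{-\Leb(U)/2y})\le \tfrac{1}{2y}\IPmag{\beta}<\infty$, Borel--Cantelli shows that a.s.\ only finitely many $\beta^y_U$ are non-empty, so $(\beta^y_U)_{U\in\beta}$ is a.s.\ strongly summable with $\ConcatIL_{U\in\beta}\beta^y_U\in\IPspace$ — which is the claimed support statement. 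On the set of configurations $(\gamma_U)_{U\in\beta}$ with only finitely many non-empty coordinates — a countable union over finite $F\subseteq\beta$ of the measurable sets $E_F=\{\gamma_U=\emptyset\ \forall U\in\beta\setminus F\}$, on each of which $\ConcatIL_{U\in\beta}\gamma_U=\ConcatIL_{U\in F}\gamma_U$ is a continuous function of finitely many coordinates by the bound \eqref{eq:IP:concat_dist} — the map $(\gamma_U)_{U\in\beta}\mapsto\ConcatIL_{U\in\beta}\gamma_U$ is measurable into $(\IPspace,\dI)$. Both families a.s.\ lie in this set, so applying this common measurable map yields $\skewer(y,\bN_\beta)=\ConcatIL_{U\in\beta}\alpha^y_U\stackrel{d}{=}\ConcatIL_{U\in\beta}\beta^y_U$. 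The one step requiring genuine care is this last one — making precise that $\ConcatIL$ is measurable on the relevant a.s.\ domain so that the equality in distribution transfers cleanly; the rest is assembly of Proposition \ref{prop:clade_splitting}, Corollary \ref{cor:clade:cts_skewer}, Proposition \ref{prop:type-1:transn}, and Lemmas \ref{lem:type-1:wd} and \ref{lem:IP:concat}.
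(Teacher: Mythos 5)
Your proof is correct and follows essentially the same route as the paper, which simply cites Definition \ref{constr:type-1}, the fact that the skewer map commutes with concatenation of clades (each $\bN_U$ having law $\mClade^+(\,\cdot\;|\;m^0=\Leb(U))$), and Lemma \ref{lem:type-1:wd} for the support claim. The additional measurability and Borel--Cantelli details you supply are just an explicit spelling-out of steps the paper leaves implicit (cf.\ Lemma \ref{lem:finite_survivors} and \eqref{eq:concat_skewers:type-1}).
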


\begin{proof}
 This follows from Definition \ref{constr:type-1} of $\Pr^1_{\beta}$ via the observation that the skewer map commutes with concatenation of clades. 
 By Lemma \ref{lem:type-1:wd}, the resulting law is supported on $\IPspace$.
\end{proof}

\begin{lemma}\label{lem:type-1:nice_lvl}
 For $y>0$, it is a.s.\ the case that level $y$ is nice for $\xi(\bN_{\beta})$ in the sense of Proposition \ref{prop:nice_level} and $\alpha^y$ is nice in the sense of Lemma \ref{lem:cutoff_skewer} \ref{item:CPS:clades_skewer_0}.
\end{lemma}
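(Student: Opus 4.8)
The plan is to deduce this from results already established for $\bN$ together with the concatenation structure of $\bN_\beta$. Recall from Lemma~\ref{lem:type-1:nice_lvl}'s hypothesis that $\bN_\beta = \ConcatIL_{U\in\beta}\bN_U$, where each $\bN_U$ has law $\mClade^+(\,\cdot\;|\;m^0 = \Leb(U))$, and from Proposition~\ref{prop:clade_lengths_summable}\ref{item:CLS:scaffold} that $\xi(\bN_\beta) = \ConcatIL_{U\in\beta}\xi(\bN_U)$. The two assertions to prove are: (1) level $y$ is nice for $\xi(\bN_\beta)$ in the sense of Proposition~\ref{prop:nice_level}, and (2) $\alpha^y$ is nice in the sense of Lemma~\ref{lem:cutoff_skewer}\ref{item:CPS:clades_skewer_0}, i.e.\ distinct blocks of $\alpha^y$ have distinct diversities.

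First I would handle (1). Using the construction of $\bN_\beta$ via an ambient \PRM\ $\bN$ given in the proof of Proposition~\ref{prop:clade_lengths_summable}\ref{item:CLS:scaffold}: there $\bN_\beta' := \Restrict{\bN}{[0,T]} + \sum_{U\in\beta}\DiracBig{S(U),\bff_U}$ with law $\Pr^1_\beta$, where $T = T^{-L}$ is the hitting time of $-L$ by $\xi(\bN)$ and $L = \sup_{U\in\beta}S(U)$ with $S(U) = S(U-)+\Leb(U)$ formed from summable \BESQ[-1] lifetimes $\life(\bff_U)$. Since $y>0$ and adding the spindles $\DiracBig{S(U),\bff_U}$ only modifies $\xi(\bN)$ by inserting upward jumps whose bases all lie at negative scaffolding values $-S(U-) \le 0 < y$, the excursion structure of $\xi(\bN_\beta')$ about any \emph{positive} level $y$ is identical to that of $\xi(\bN)$ restricted to $[0,T]$, up to the deterministic time-shifts introduced by these extra jumps. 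More precisely, the part of $\xi(\bN_\beta')$ above level $y$ is obtained from the part of $\xi(\bN)$ above level $y$ by these insertions, which change neither the jump heights nor the creeping behaviour near level $y$. Hence by Proposition~\ref{prop:nice_level}, applied to $\bN$ at level $y$ (a.s.\ for each fixed $y>0$), together with the a.s.\ finiteness of $T$, level $y$ is a.s.\ nice for $\xi(\bN_\beta)$: there are no degenerate excursions about $y$, local times $(\ell^y(t))$ exist and separate the excursion intervals, and (since $\xi(\bN)$ a.s.\ does not creep up to the fixed level $y$) we have $T^y > T^{\geq y}$.

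Next, assertion (2). By Proposition~\ref{prop:type-1:gen_transn} together with Proposition~\ref{prop:agg_mass_subord}, $\alpha^y = \skewer(y,\bN_\beta)$ decomposes, over the finitely many clades $\bN_{U}$ surviving to level $y$ (Lemma~\ref{lem:finite_survivors}), as a concatenation in which the diversities are accrued by the local time process $\ell^y_{\bN_\beta}$; within each surviving clade $\bN_U$ the block masses of $\skewer(y,\bN_U)$, apart from a possible leftmost block, arise as jumps of the \Stable[\frac12] subordinator $M^y_{\bN_U}\circ\tau^y_{\bN_U}$ over the local time interval $[0,\ell^y_{\bN_U}(\infty)]$, and Proposition~\ref{prop:type-1:LT_diversity} identifies the diversity of each such block with its local time value. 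So the diversities of all blocks of $\alpha^y$ are the jump times (in local time) of a \Stable[\frac12] subordinator built by concatenating the finitely many subordinators $M^y_{\bN_U}\circ\tau^y_{\bN_U}$; a.s.\ this subordinator is strictly increasing with no two jumps at the same time, and the finitely many leftmost-block diversities (one per surviving clade) coincide a.s.\ with distinct boundary local time values $\ell^y_{\bN_\beta}(S(U_j-))$, which are themselves distinct by the strict monotonicity just noted. Therefore, a.s., distinct blocks of $\alpha^y$ have distinct diversities, which is exactly niceness of $\alpha^y$ in the sense of Lemma~\ref{lem:cutoff_skewer}\ref{item:CPS:clades_skewer_0}.

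The main obstacle I anticipate is bookkeeping rather than anything deep: one must be careful that the time-shifts induced by inserting the leftmost spindles $\bff_U$ do not create coincidences of local times or degeneracies at level $y$, and that the leftmost blocks of the surviving clades (whose diversities are the ``boundary'' values) are correctly accounted for alongside the subordinator jumps. Both points reduce to the a.s.\ strict monotonicity of $s\mapsto M^y_{\bN}\circ\tau^y_{\bN}(s)$ and the a.s.\ absence of creeping for the \Stable[\frac32] scaffolding, which are already in hand via Proposition~\ref{prop:agg_mass_subord} and Proposition~\ref{prop:nice_level}; so the proof is short once these are invoked in the right order.
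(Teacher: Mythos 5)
Your treatment of the second assertion (niceness of $\alpha^y$) is essentially the paper's argument: identify block diversities with level-$y$ local times via Propositions \ref{prop:agg_mass_subord} and \ref{prop:type-1:LT_diversity} and conclude from distinctness of the local times at which the (finitely many surviving) clades' excursions occur. The problem is in your first step.

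The claim that inserting the spindles $\bff_U$ into the ambient \PRM\ leaves ``the excursion structure of $\xi(\bN_\beta')$ about any positive level $y$ \ldots identical to that of $\xi(\bN)$ restricted to $[0,T]$, up to deterministic time-shifts'' is false, and it is the load-bearing step of your proof of the first assertion. Adding the point $\DiracBig{H_U,\bff_U}$ does not shift time at all; it adds a jump of height $\life(\bff_U)$ to the scaffolding, so on the stretch corresponding to the clade of $U$ one has $\xi_{\bN_\beta'}(t)=\xi_{\bN}(t)+\sum_{V\preceq U}\life(\bff_V)$. Hence level $y$ for $\xi(\bN_\beta')$ corresponds, on successive clade windows, to the \emph{random, window-dependent} levels $y-\sum_{V\preceq U}\life(\bff_V)$ for $\xi(\bN)$ — not to level $y$ itself — and the tops of the inserted jumps can lie above $y$, so the part of $\xi(\bN_\beta')$ above $y$ is not obtained from the part of $\xi(\bN)$ above $y$. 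Proposition \ref{prop:nice_level} only asserts niceness of a \emph{fixed} level; the paper explicitly points out that there are (uncountably many) random levels that are not nice, so you cannot transfer ``no creeping/no degeneracy at level $y$'' without an extra argument. The gap is repairable: since the lifetimes $\life(\bff_U)$ are independent of $\bN$, you can condition on them and apply Proposition \ref{prop:nice_level} at each of the countably many shifted levels (a Fubini argument), and then use Lemma \ref{lem:finite_survivors} to handle the concatenation; this is, in substance, exactly the paper's route, which applies Proposition \ref{prop:nice_level} clade by clade (niceness of level $y$ for each $\xi(\bN_U)$, i.e.\ of the independent shifted level for the ensuing \PRM) and then invokes finiteness of the survivors. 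A secondary point: your claim that the leftmost-block diversities of the surviving clades are distinct ``by the strict monotonicity'' of the concatenated subordinator does not quite follow from that monotonicity alone; what you need is that strictly positive level-$y$ local time accrues within each surviving clade, which again comes from the per-clade excursion theory rather than from the subordinator statement.
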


\begin{proof}
 Proposition \ref{prop:nice_level} implies that for each $U$, level $y$ is a.s.\ nice for $\xi(\bN_U)$. It follows from this and Lemma \ref{lem:finite_survivors} that $y$ is a.s.\ nice for $\xi(\bN_{\beta})$. In particular, no two level $y$ excursion intervals arise at the same local time. Proposition \ref{prop:agg_mass_subord} characterizes a correspondence between level $y$ excursion intervals of $\xi(\bN_{\beta})$, including the incomplete first excursion interval, and blocks in $\alpha^y$ whereby, via Proposition \ref{prop:type-1:LT_diversity}, the diversity up to each block $U\in\alpha^y$ equals the level $y$ local time up to the corresponding excursion interval. Thus, $\alpha^y$ is a.s.\ nice as well.
\end{proof}

We now extend the Markov-like property of Propositions \ref{prop:PRM:Fy-_Fy+} and \ref{prop:clade:Fy-_Fy+} to the present setting.

\begin{proposition}\label{prop:type-1:Fy-_Fy+}
 For $y>0$, the point process $F^{\geq y}_0(\bN_\beta)$ is conditionally independent of $\ol\cF^y$ given $\alpha^y$, with regular conditional distribution $\Pr^1_{\alpha^y}(F^{\geq 0}_0\in\cdot\,)$.
\end{proposition}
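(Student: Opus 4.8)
The plan is to reduce the claim to the single-block statement of Proposition \ref{prop:clade:Fy-_Fy+} by decomposing $\bN_{\beta}$ as the concatenation $\ConcatIL_{U\in\beta}\bN_U$ and exploiting the independence of the $(\bN_U,\,U\in\beta)$. First I would fix $y>0$ and restrict to the a.s.\ event of Lemma \ref{lem:type-1:nice_lvl} on which level $y$ is nice for $\xi(\bN_{\beta})$ and $\alpha^y$ is nice in the sense of Lemma \ref{lem:cutoff_skewer} \itemref{item:CPS:clades_skewer_0}, intersected with the event \eqref{eq:good_clades_event} from Proposition \ref{prop:type-1:LT_diversity} on which each $\bN_U\in\Hfins$ and only finitely many clades survive past level $y$. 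By Corollary \ref{cor:clade:cts_skewer}, each $\big(\alpha^z_U,\,z\ge0\big)$ is a type-1 evolution from $\{(0,\Leb(U))\}$, hence by Proposition \ref{prop:clade:Fy-_Fy+} the clade point process $F^{\geq y}_0(\bN_U)$ (formed using diversities in place of local times within $\bN_U$, as in Definition \ref{constr:type-1_2}) is conditionally independent of the level filtration $\ol\cF^y$ generated by $\bN_U$ given $\alpha^y_U$, with r.c.d.\ $\Pr^1_{\alpha^y_U}\{F^{\geq 0}_0\in\cdot\,\}$.

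Next I would assemble these per-block statements. Since the $\bN_U$ are independent, the pairs $\big(\cutoffL{y}{\bN_U},\,F^{\geq y}_0(\bN_U)\big)$ are independent across $U$, and by the general conditional-independence principle used at the end of the proof of Proposition \ref{prop:PRM:Fy-_Fy+} (``from $\cF_i\indep_{\cH_i}\cG_i$ and $(\cF_i,\cG_i,\cH_i)$ mutually independent, deduce joint conditional independence'', \cite[Propositions 6.6--6.8]{Kallenberg}), the family $\big(F^{\geq y}_0(\bN_U)\big)_{U\in\beta}$ is conditionally independent of $\big(\cutoffL{y}{\bN_U}\big)_{U\in\beta}$ given $(\alpha^y_U)_{U\in\beta}$, with r.c.d.\ the product of the $\Pr^1_{\alpha^y_U}\{F^{\geq 0}_0\in\cdot\,\}$. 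The key bookkeeping step is then to identify $F^{\geq y}_0(\bN_{\beta})$ with the concatenation of the $F^{\geq y}_0(\bN_U)$ over $U\in\beta$, ordered by diversity: because level $y$ is nice, the excursion intervals of $\xi(\bN_{\beta})$ about level $y$ are the concatenation, in order, of those of the individual $\xi(\bN_U)$, local times add across the concatenation (Proposition \ref{prop:type-1:LT_diversity} gives $\ell^y_{\bN_\beta}(S(U-)+t)=\ell^y_{\bN_\beta}(S(U-))+\ell^y_{\bN_U}(t)$, and only finitely many terms are nonzero), and crucially no clade of $\bN_{\beta}$ straddles two consecutive blocks, since clades about a level $y>0$ that survive below level $y$ would require the scaffolding to reach $0$ inside $\bN_U$, which happens only at $\len(\bN_U)$. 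Likewise $\alpha^y=\ConcatIL_{U\in\beta}\alpha^y_U$ by \eqref{eq:concat_skewers:type-1}, and a product of kernels $\Pr^1_{\alpha^y_U}\{F^{\geq 0}_0\in\cdot\,\}$ concatenated over blocks is exactly $\Pr^1_{\alpha^y}\{F^{\geq 0}_0\in\cdot\,\}$ by the consistency built into Definition \ref{constr:type-1_2} and the extension to projective sequences used in Step 2 of the proof of Proposition \ref{prop:PRM:Fy-_Fy+}.

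Finally I would upgrade the conditioning $\sigma$-algebra from $\big(\cutoffL{y}{\bN_U}\big)_{U\in\beta}$ to the full completed level filtration $\ol\cF^y$ on $\H$ pulled back via $\bN_{\beta}$. By Lemma \ref{lem:cutoff_vs_PP}, on the niceness event $\cF^y$ is generated, up to null sets, by $F^{\leq y}_0(\bN_{\beta})$, and this in turn decomposes blockwise as the concatenation of the $F^{\leq y}_0(\bN_U)$ together with the incomplete leftmost anti-clades; all of these are functions of $\big(\cutoffL{y}{\bN_U}\big)_{U\in\beta}$, so no information beyond that family is added, and $\alpha^y$ is itself $\ol\cF^y$-measurable. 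Hence conditional independence of $F^{\geq y}_0(\bN_\beta)$ from $\big(\cutoffL{y}{\bN_U}\big)_{U\in\beta}$ given $\alpha^y$ promotes to conditional independence from $\ol\cF^y$ given $\alpha^y$, with the same r.c.d.\ $\Pr^1_{\alpha^y}\{F^{\geq 0}_0\in\cdot\,\}$, completing the proof.

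I expect the main obstacle to be the measure-theoretic care in the assembly step: handling the (a.s.\ finite but random) index set of surviving clades when invoking the Kallenberg conditional-independence principle, and verifying rigorously that the concatenation of the block-level r.c.d.'s really is the kernel $\Pr^1_{\alpha^y}\{F^{\geq 0}_0\in\cdot\,\}$ of Definition \ref{constr:type-1_2} rather than merely agreeing with it on finite truncations — this requires the projective-consistency argument from Step 2 of Proposition \ref{prop:PRM:Fy-_Fy+}, now applied with the block decomposition of $\beta$ in place of the inverse-local-time decomposition.
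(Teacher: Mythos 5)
Your proposal is correct and follows essentially the same route as the paper's proof: restrict to the a.s.\ event that level $y$ is nice, apply Proposition \ref{prop:clade:Fy-_Fy+} to each clade $\bN_U$ together with the independence of the family $(\bN_U,\,U\in\beta)$, use Lemma \ref{lem:finite_survivors} to identify $F^{\geq y}_0(\bN_\beta)$ with the concatenation $\ConcatIL_{U\in\beta}F^{\geq y}_0(\bN_U)$, and conclude from Definition \ref{constr:type-1_2} of the kernel; your extra care with the $\sigma$-algebra upgrade via Lemma \ref{lem:cutoff_vs_PP} just spells out what the paper compresses. The final worry about projective consistency is unnecessary here: since only finitely many clades survive to level $y$, the concatenated block kernels agree with $\Pr^1_{\alpha^y}\{F^{\geq 0}_0\in\cdot\,\}$ directly, with no passage to projective limits needed.
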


\begin{proof}
 By Lemma \ref{lem:type-1:nice_lvl}, we may restrict to the a.s.\ event that level $y$ is nice. For $U\in\beta$ let $\bF^{\geq y}_{0,U} := F^{\geq y}_0(\bN_U)$. By Proposition \ref{prop:clade:Fy-_Fy+} and the independence of the family $(\bN_U,\,U\in\beta)$, the process $\bF^{\geq y}_{0,U}$ is conditionally independent of $\ol\cF^y$ given $\alpha^y_U$, with regular conditional distribution $\Pr^1_{\alpha^y_U}\{F^{\geq 0}_0\in\cdot\,\}$, for each $U\in\beta$. By Lemma \ref{lem:finite_survivors}, only finitely many of the $\bF^{\geq y}_{0,U}$ are non-zero, so $F^{\geq 0}_0(\bN_{\beta}) = \ConcatIL_{U\in\beta}\bF^{\geq y}_{0,U}$. In light of this, the claimed conditional independence and regular conditional distribution follow from Definition \ref{constr:type-1_2} of the kernel $\gamma\mapsto\Pr^1_{\gamma}\{F^{\geq 0}_0\in\cdot\,\}$.
\end{proof}

\begin{corollary}[Simple Markov property for the skewer process under $\Pr^1_{\mu}$]\label{cor:type-1:simple_Markov_1}
 Let $\mu$ be a probability distribution on $\IPspace$. Take $z>0$ and $0\leq y_1<\cdots<y_n$. Let $\eta\colon \Hfin \to [0,\infty)$ be $\ol\cF^{z}$-measurable. Let $f\colon\IPspace^n\to [0,\infty)$ be measurable. Then
  \begin{equation*}
   \Pr^1_\mu\left[\eta\, f\left(\skewer(z+y_j,\cdot\,),j\in[n]\right) \right]
     = \int\!\eta(N)\Pr^1_{\skewer(z,N)}\left[f\left(\skewer(y_j,\cdot\,),j\in[n]\right)\right]d\Pr^1_{\mu}(N).
  \end{equation*}
\end{corollary}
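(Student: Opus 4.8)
The plan is to reduce the statement to an application of the Markov-like property already established in Proposition~\ref{prop:type-1:Fy-_Fy+}, together with Lemma~\ref{lem:cutoff_skewer}. First I would observe that it suffices to prove the identity for $\Pr^1_\beta$ with $\beta\in\IPspace$ fixed, since integrating against $\mu(d\beta)$ then yields the general case; this also lets me suppress the mixture and work with a single $\bN_\beta$. Next, by Lemma~\ref{lem:type-1:nice_lvl}, level $z$ is a.s.\ nice for $\xi(\bN_\beta)$ and $\alpha^z=\skewer(z,\bN_\beta)$ is a.s.\ nice in the sense of Lemma~\ref{lem:cutoff_skewer}~\ref{item:CPS:clades_skewer_0}, so I may restrict to that full-measure event throughout.

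The core step is to rewrite the future skewers $\skewer(z+y_j,\bN_\beta)$, $j\in[n]$, as a measurable functional of $F^{\geq z}_0(\bN_\beta)$ alone. By Lemma~\ref{lem:cutoff_skewer}~\ref{item:CPS:cutoff_skewer_2} (with $z>0$, which is exactly the case covered there for $\bN_\beta$), $\skewer(z+y_j,\bN_\beta)=\skewer(y_j,\cutoffG{z}{\bN_\beta})$; and by Lemma~\ref{lem:cutoff_vs_PP} together with Lemma~\ref{lem:cutoff_skewer}~\ref{item:CPS:clades_skewer}, on the nice event $\cutoffG{z}{\bN_\beta}$ — and hence each $\skewer(z+y_j,\bN_\beta)$ — is a measurable function of $F^{\geq z}_0(\bN_\beta)$. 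Call this function $\Psi$, so that $\big(\skewer(z+y_j,\bN_\beta),j\in[n]\big)=\Psi\big(F^{\geq z}_0(\bN_\beta)\big)$ a.s. On the other hand, $\eta(\bN_\beta)$ is by hypothesis $\ol\cF^z$-measurable. Now I would invoke Proposition~\ref{prop:type-1:Fy-_Fy+}: $F^{\geq z}_0(\bN_\beta)$ is conditionally independent of $\ol\cF^z$ given $\alpha^z$, with regular conditional distribution $\Pr^1_{\alpha^z}\{F^{\geq 0}_0\in\cdot\,\}$. Therefore
\begin{align*}
 \Pr^1_\beta\!\left[\eta\, f\!\left(\skewer(z+y_j,\cdot),j\in[n]\right)\right]
   &= \Pr^1_\beta\!\left[\eta\,(f\circ\Psi)\big(F^{\geq z}_0\big)\right]\\
   &= \Pr^1_\beta\!\left[\eta\,\Pr^1_{\alpha^z}\!\left[(f\circ\Psi)\big(F^{\geq 0}_0\big)\right]\right].
\end{align*}
Finally, under $\Pr^1_{\gamma}$ for any $\gamma\in\IPspace$, Lemma~\ref{lem:cutoff_skewer}~\ref{item:CPS:clades_skewer_0} (with $z=0$) together with the a.s.\ niceness of $\gamma$ and $M^y_{\bN_\gamma}(t)<\infty$ for $t<\len(\bN_\gamma)$ (which holds by Proposition~\ref{prop:PRM:cts_skewer}-type finiteness, i.e.\ Corollary~\ref{cor:clade:cts_skewer} applied clade-by-clade) gives $\big(\skewer(y_j,\bN_\gamma),j\in[n]\big)=\Psi\big(F^{\geq 0}_0(\bN_\gamma)\big)$ a.s., so $\Pr^1_{\gamma}\big[(f\circ\Psi)(F^{\geq 0}_0)\big]=\Pr^1_{\gamma}\big[f(\skewer(y_j,\cdot),j\in[n])\big]$. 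Substituting $\gamma=\alpha^z=\skewer(z,\bN_\beta)$ and then integrating over $\Pr^1_\mu$ yields the claim.

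The main obstacle I anticipate is bookkeeping around the niceness hypotheses and the boundary level: the identities in Lemma~\ref{lem:cutoff_skewer} fail in general at $y=z$ when a spindle is born or dies there, but since $\bN_\beta$ has no spindles born or dying discontinuously at any level $y>0$ (only the leftmost spindles of clades are born, at level $0$), and $z>0$, the weak inequalities in part~\ref{item:CPS:cutoff_skewer_2} do apply — this needs to be spelled out carefully. A second delicate point is verifying that the functional $\Psi$ is genuinely the \emph{same} measurable map in both applications (future of $\bN_\beta$ above level $z$, and $\bN_\gamma$ above level $0$): this is where the translation-invariance built into $F^{\geq z}_0$ versus $F^{\geq 0}_0$ and the concatenation-commutes-with-skewer observation (used in Proposition~\ref{prop:type-1:gen_transn}) must be combined. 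Everything else is a routine monotone-class / regular-conditional-distribution manipulation, already licensed by the Borel-space property of $(\IPspace,\dI)$ from Theorem~\ref{thm:Lusin}.
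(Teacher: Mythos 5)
Your proposal is correct and follows essentially the same route as the paper: express the level-$(z+y_j)$ skewers as a fixed measurable functional of $F^{\geq z}_0$ (via Lemma \ref{lem:cutoff_skewer} and the niceness of level $z$ and of $\alpha^z$ from Lemma \ref{lem:type-1:nice_lvl}), note the same functional applied to $F^{\geq 0}_0$ recovers the skewers under $\Pr^1_{\gamma}$, and then invoke the conditional independence and regular conditional distribution of Proposition \ref{prop:type-1:Fy-_Fy+}. Your detour through the cutoff processes and the initial reduction to fixed $\beta$ are harmless variants of what the paper does directly with the function $h$ of Lemma \ref{lem:cutoff_skewer} \itemref{item:CPS:clades_skewer}--\itemref{item:CPS:clades_skewer_0}.
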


\begin{proof}
 By Proposition \ref{prop:type-1:Fy-_Fy+}, for $\eta$ as above and $g\colon \cNRHf \to [0,\infty)$ measurable,
 \begin{equation}\label{eq:Fy-_Fy+:Markov}
  \Pr^1_\mu \left[ \eta\, g\big(F^{\geq z}_0\big)\right] = \Pr^1_\mu \left[ \eta\, \Pr^1_{\skewer(z,\cdot\,)}\big[g\big(F^{\geq 0}_0\big)\big]\right].
 \end{equation}
 By Lemma \ref{lem:cutoff_skewer} \ref{item:CPS:clades_skewer}, there is a measurable function $h$ for which $\left(\skewer(z+y_j,N),\,j\in[n]\right) = h(F^{\geq z}_0(N))$ identically on the event that level $z$ is nice for $N\in\Hfin$. Moreover, if $\beta\in\IPspace$ is nice in the sense 
 of Lemma \ref{lem:cutoff_skewer} \ref{item:CPS:clades_skewer_0}, 
 then that result gives $\left(\skewer(y_j,\bN_{\beta}),\,j\in[n]\right) = h(F^{\geq 0}_0(\bN_{\beta}))$. By Lemma \ref{lem:type-1:nice_lvl}, for $\bN_{\mu}\sim\Pr^1_{\mu}$, level $z$ is a.s.\ nice for $\bN_{\mu}$ and $\skewer(y_1,\bN_{\mu})$ is a.s.\ a nice interval partition. Thus, setting $g := f\circ h$ in \eqref{eq:Fy-_Fy+:Markov} gives the claimed result.
\end{proof}

\begin{proposition}\label{prop:BESQ_mass_temp}
 The process $(\IPmag{\alpha^y},\,y\ge0)$ admits a continuous version, which is a \BESQ[0].
\end{proposition}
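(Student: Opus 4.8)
The plan is to realize $(\IPmag{\alpha^y},\,y\ge0)$ as a sum of countably many independent squared Bessel excursions of dimension $-1$, anchored at the diversities of the blocks of $\beta$, and to identify this sum as a $\BESQ[0]$ via its Laplace transform together with the branching property. First I would decompose, using \eqref{eq:concat_skewers:type-1},
\begin{equation*}
 \IPmag{\alpha^y} = \sum_{U\in\beta} \IPmag{\alpha^y_U}, \qquad \text{where } \alpha^y_U = \skewer(y,\bN_U),
\end{equation*}
and the $(\IPmag{\alpha^y_U},\,y\ge0)$ are independent over $U\in\beta$ since the $(\bN_U,\,U\in\beta)$ are independent. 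By Lemma \ref{lem:finite_survivors}, for each fixed $y>0$ the sum is a.s.\ finite, so the right-hand side is well defined; moreover each summand $(\IPmag{\alpha^y_U},\,y\ge0)$ admits a continuous version by Corollary \ref{cor:clade:cts_skewer} (it is the total mass of a type-1 evolution started from $\{(0,\Leb(U))\}$, hence H\"older-$\theta$). The first substantive step is to identify the law of a single summand: I claim $(\IPmag{\alpha^y_U},\,y\ge0)$ is a $\BESQ[-1]$ excursion started from $\Leb(U)$. This should follow from Proposition \ref{prop:type-1:transn}: that proposition gives $\Pr\{\IPmag{\alpha^y_U}>0\} = \Pr\{\life^+(\bN_U)>y\} = 1-e^{-\Leb(U)/2y}$, which is exactly $\Pr\{\zeta(Z)>y\}$ for a $\BESQ[-1]$ started from $\Leb(U)$ by Lemma \ref{lem:BESQ:length}, and the explicit entrance law \eqref{eq:transn:given_life}--\eqref{eq:transn:Levy_meas} can be matched against the known one-dimensional marginals of a $\BESQ[-1]$ via the Laplace transform $\EV[e^{-\lambda \IPmag{\alpha^y_U}}\cf\{\IPmag{\alpha^y_U}>0\}]$: the leftmost block $L^y$ contributes a factor computed from \eqref{eq:transn:LMB}, and the subordinator part $R^y$ run for an independent $\ExpDist[(2y)^{-1/2}]$ time $S^y$ contributes $\exp(-S^y \Phi^y(\lambda))$-type terms, which after integrating out $S^y$ give the rational-in-$\lambda$ form characteristic of $\BESQ$. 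Alternatively, and more cleanly, I would invoke the known fact (e.g.\ via the additivity/excursion theory of $\BESQ$, or directly from Pitman--Yor \cite{PitmYor82}) that the total mass of a clade under $\mClade^+(\,\cdot\;|\;m^0=a)$ is a $\BESQ[-1]$ excursion from $a$; this is really the content of Theorem \ref{thm:BESQ_total_mass} at the level of a single block, and the entrance law in Proposition \ref{prop:type-1:transn} is consistent with it.

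The second step is to assemble the summands. Write $a_U := \Leb(U)$ and $d_U := \IPLT_\beta(U)$. Then $\IPmag{\beta} = \sum_U a_U < \infty$, and I want to show $\sum_U Z^U$, where $Z^U$ is an independent $\BESQ[-1]$ excursion from $a_U$ started ``at diversity $d_U$'' in the appropriate anchoring sense, is a $\BESQ[0]$ from $\IPmag{\beta}$. The natural tool is the Laplace-transform characterization of $\BESQ$: for a continuous-state branching process with immigration, $\EV[\exp(-\lambda X^x_y)] = \exp(-x u_y(\lambda) - \int_0^y \text{(immigration term)})$ where $u_y$ solves the relevant Riccati ODE. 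For $\BESQ[-1]$ with absorption at $0$ there is no immigration but there is ``emigration at unit rate'', reflected in the $-1$ dimension. The key computation is that the product over $U$ of the single-excursion Laplace functionals telescopes: each $Z^U$ contributes $\EV[e^{-\lambda Z^U_y}] $, a function that by the $\BESQ$ flow identity $u_{y}\circ u_{y'} $ composes correctly, and the finiteness $\sum_U a_U<\infty$ plus a Taylor estimate (as in the proof of Proposition \ref{prop:clade_lengths_summable}\ref{item:CLS:CLS}) controls the product. One then recognizes the resulting Laplace transform, $\EV[e^{-\lambda\IPmag{\alpha^y}}] = \prod_U \EV[e^{-\lambda Z^U_y}]$, as that of a $\BESQ[0]$ started from $\IPmag{\beta} = \sum_U a_U$, using the branching property of $\BESQ[0]$ (a $\BESQ[0]$ from $\sum_U a_U$ is the independent sum of $\BESQ[0]$'s from each $a_U$) combined with the fact that a $\BESQ[-1]$ excursion is the ``$\delta=0$ with the $+1$ immigration removed'' object. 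Actually the cleanest route: a $\BESQ[0]$ from $a$ equals an independent sum over the Poisson point process of its excursions away from $0$, and our construction of $\bN_U$ is precisely a clade whose scaffolding-plus-spindles encode such a decomposition; this is the ``extension of additivity to negative dimension'' remarked upon after Theorem \ref{thm:BESQ_total_mass} in the introduction.

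The third step is the continuous version. Having established equality of finite-dimensional distributions between $(\IPmag{\alpha^y},\,y\ge0)$ and a $\BESQ[0]$, and knowing $\BESQ[0]$ has a continuous version, I would produce the continuous version of $(\IPmag{\alpha^y})$ directly on our probability space: restrict to the a.s.\ event of Lemma \ref{lem:finite_survivors} and \eqref{eq:good_clades_event} that only finitely many clades survive past any level $y>0$ and that each $\bN_U\in\Hfins$; on this event, for $y$ ranging over a compact interval $[\epsilon,M]$, only a fixed finite set of blocks contribute, and a finite sum of the continuous $(\IPmag{\alpha^y_U})$ (continuous by Corollary \ref{cor:clade:cts_skewer}) is continuous; continuity at $0$ follows since $\IPmag{\alpha^0} = \IPmag{\beta}$ and $\IPmag{\alpha^y}\to\IPmag{\beta}$ as $y\downto 0$ can be read from Proposition \ref{prop:PRM:cts_skewer}, which gives that $(\alpha^y,y\ge0)$ is H\"older-$\theta$ in $(\IPspace,\dI)$ hence $\IPmag{\cdot}$-continuous, once we know the total-mass map is Lipschitz-type under $\dI$ (it is not, in general, but $\IPmag{\beta}-\sum\Leb(U_j)$ appears in the distortion, so $\IPmag{\alpha^y}$ is a $1$-Lipschitz-ish function of $\alpha^y$; more carefully one uses that the aggregate-mass sum $M^y_{\bN_\beta}(\len)$ is $\IPmag{\alpha^y}$ and the H\"older estimate \eqref{eq:agg_mass_sum_piles} applies).

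\textbf{Main obstacle.} I expect the crux to be the second step: rigorously multiplying the countably many single-excursion Laplace functionals and recognizing the product as the $\BESQ[0]$ Laplace transform, i.e.\ making precise the ``additivity of $\BESQ$ extended to negative dimensions'' statement. The subtlety is that $\BESQ[-1]$ additivity fails in the naive sense (the additivity of \cite{PitmYor82,ShigaWata73} is for nonnegative dimensions only), so what actually makes the sum a $\BESQ[0]$ rather than, say, a $\BESQ$ of some other dimension is the precise way the excursions are anchored in level via the diversities $d_U$ — equivalently, the fact that the scaffolding $\xi(\bN_\beta) = \ConcatIL_U \xi(\bN_U)$ (Proposition \ref{prop:clade_lengths_summable}\ref{item:CLS:scaffold}) glues the clades so that their total-mass excursions accumulate against the $0$-local-time clock of a single $\BESQ[0]$. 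Handling this correctly — either by a direct Laplace/ODE computation with careful control of the infinite product, or by a clean appeal to the excursion-theory characterization of $\BESQ[0]$ and uniqueness of the entrance law — is where the real work lies; everything else is bookkeeping on the a.s.\ finiteness and continuity already supplied by the earlier lemmas.
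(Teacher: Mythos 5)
Your overall architecture (decompose over blocks via \eqref{eq:concat_skewers:type-1}, identify the law of a single summand, assemble by independence, then get continuity from Lemma \ref{lem:finite_survivors} and Corollary \ref{cor:clade:cts_skewer}) is the paper's architecture, but the central identification in your first step is wrong, and this error propagates into your second step. The total mass $(\IPmag{\alpha^y_U},\,y\ge0)$ of a single clade is \emph{not} a \BESQ[-1] excursion from $\Leb(U)$; it is a \BESQ[0] started from $\Leb(U)$. Your own check already refutes the claim: by Proposition \ref{prop:type-1:transn} the clade lifetime satisfies $\Pr\{\life^+(\bN_U)>y\}=1-e^{-\Leb(U)/2y}$, i.e.\ the lifetime is \InvGammaDist[1,\Leb(U)/2], whereas Lemma \ref{lem:BESQ:length} gives \InvGammaDist[3/2,\Leb(U)/2] for the absorption time of a \BESQ[-1]; these do not match, but the first does match the extinction probability $e^{-a/2y}$ of a \BESQ[0] from $a$. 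Only the leftmost spindle of the clade is a \BESQ[-1] (cf.\ Corollary \ref{cor:remaining_BESQ}, where the leftmost block is \BESQ[-1] and the remaining mass is \BESQ[1]); the clade's aggregate mass includes all descendant spindles and is \BESQ[0]. The paper's proof establishes exactly this by the Laplace-transform computation you sketch (matching \eqref{eq:transn:LMB}, the $\GammaDist[\frac12,\frac1{2y}]$ law of $R^y(S^y)$ and the survival probability against $\exp(-\lambda a/(2y\lambda+1))$, via the Gamma duplication formula), so that part of your plan is salvageable, but with \BESQ[0] as the answer.

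Because of this, your second step is misconceived: there is nothing to anchor at the diversities $d_U$ and no ``additivity for negative dimensions'' to prove. All clades start at level $0$; their total masses are independent \BESQ[0] processes with summable initial values $\sum_U\Leb(U)=\IPmag{\beta}<\infty$, so the ordinary nonnegative-dimension additivity (\cite[Theorem 4.1(iv)]{PitmYor82}) immediately gives that the sum is a \BESQ[0] from $\IPmag{\beta}$ and, taking continuous versions of the summands, that the sum is a.s.\ continuous — which is how the paper concludes. (The remark after Theorem \ref{thm:BESQ_total_mass} about extending additivity to negative dimensions refers to the spindles anchored at scaffolding-determined levels, an interpretation of the result, not a step of the proof.) Finally, you also skip the passage from one-dimensional marginals to finite-dimensional distributions: your step 2 only controls $\EV[e^{-\lambda\IPmag{\alpha^y}}]$ at a fixed $y$, yet step 3 begins ``having established equality of finite-dimensional distributions.'' The paper closes this gap by an induction using the simple Markov property (Corollary \ref{cor:type-1:simple_Markov_1}) together with the Markov property of \BESQ[0]; some such argument is needed before one can speak of the process being a \BESQ[0].
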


Once we have shown that $(\alpha^y,\,y\ge0)$ admits a continuous version, 
this proposition will entail the type-1 assertion of Theorem \ref{thm:BESQ_total_mass}.

\begin{proof}
 Let $(\wh\alpha^y,\,y\geq 0)$ be as in Section \ref{sec:type-1:clade}. We proceed by establishing: (i) the desired 1-dimensional marginals; (ii) finite-dimensional marginals; and (iii) the existence of a continuous version. For each of these, we show the property first for $(\IPmag{\wh\alpha^y})$, then for $(\IPmag{\alpha^y})$.
 
 (i) By \cite[p.\ 441]{RevuzYor}, the Laplace transform of the marginal distribution at time $y > 0$ of a \BESQ[0] process $(B(z),\,z\geq 0)$ starting from $a$ is
 \begin{equation*}
  \EV\left[e^{-\lambda B(y)}\right] = \exp\left(-\frac{\lambda a}{2y\lambda+1}\right).
 \end{equation*}
 
 We wish to compare this to the Laplace transform of $\IPmag{\wh\alpha^y}$. In the notation of Proposition \ref{prop:type-1:transn}, given that it is not zero, $\IPmag{\alpha^y} \stackrel{d}{=} R^y(S^y)+L^y$. As noted in \eqref{eq:transn:PDIP}, $R^y(S^y) \sim \GammaDist[1/2,1/2y]$, which has Laplace transform $(2y\lambda+1)^{-1/2}$. 
 As for $L^y$, we note that
 \begin{align*}
  1-\cosh(u)+u\sinh(u) = \sum_{n=1}^\infty\left(\frac{1}{(2n-1)!}-\frac{1}{(2n)!}\right)u^{2n}=\sum_{n=1}^\infty\frac{2n-1}{(2n)!}u^{2n}.
 \end{align*}
 Plugging this into the probability density function for $L^y$ in \eqref{eq:transn:LMB},
 \begin{align}
  \EV\left[e^{-\lambda L^y}\right]
  	&= \frac{1}{\sqrt{2\pi}}\frac{\sqrt{y}}{e^{a/2y}-1}\sum_{n=1}^\infty\frac{2n-1}{(2n)!}\left(\frac{a}{y^2}\right)^n\int_0^\infty b^{n-3/2}e^{-(\lambda+1/2y)b}db\nonumber\\
  	&= \frac{\sqrt{2y\lambda+1}}{e^{a/2y}-1}\sum_{n=1}^\infty\frac{\Gamma(n+1/2)}{\sqrt{\pi}(2n)!}\left(\frac{2a}{y(2y\lambda+1)}\right)^n.\label{eq:clade:LMB_Laplace_1}
 \end{align}
 From \eqref{eq:transn:lifetime}, $\Pr\{\wh\alpha^y=\emptyset\} = e^{-a/2y}$. Now, to prove $\EV \left[\exp\left(-\lambda\IPmag{\wh\alpha}\right)\right] = \EV\left[\exp\left(-\lambda B(y)\right)\right]$ it suffices to show
  \begin{align}
   \EV\left[e^{-\lambda B(y)}\right] &= e^{-a/2y}+(1-e^{-a/2y})\EV\left[e^{-\lambda R^y(S^y)}\right]\EV\left[e^{-\lambda L^y}\right];\notag\\
   \text{i.e.} \quad \EV\left[e^{-\lambda L^y}\right] &= \frac{\EV[e^{-\lambda B(y)}]-e^{-a/2y}}{(1-e^{-a/2y})\EV[e^{-\lambda R^y(S^y)}]} = \frac{\sqrt{2y\lambda+1}}{1-e^{-a/2y}}\left(e^{-\lambda a/(2y\lambda+1)}-e^{-a/2y}\right)\label{eq:clade:LMB_Laplace_2}\\
   	&= \frac{\sqrt{2y\lambda+1}}{e^{a/2y}-1}\sum_{n=1}^\infty\frac{1}{n!}\frac{1}{2^{2n}}\left(\frac{2a}{y(2y\lambda+1)}\right)^n.\notag
  \end{align}
  By the Gamma duplication formula $\Gamma(2z)=\pi^{-1/2}2^{2z-1}\Gamma(z)\Gamma(z+\frac12)$ for $z = n+\frac12$, this equals the expression in \eqref{eq:clade:LMB_Laplace_1}. Hence, $\IPmag{\wh\alpha^y}$ is distributed like $B(y)$ for fixed $y$. This result extends to general initial states $\beta\in\IPspace$ by way of the independence of the clades $(\bN_U,\,U\in\beta)$ and \cite[Theorem 4.1 (iv)]{PitmYor82}, which states that an arbitrary sum of independent \BESQ[0] processes with summable initial values is a \BESQ[0].
 
 (ii) We now prove equality of finite-dimensional marginal distributions by an induction based on Corollary \ref{cor:type-1:simple_Markov_1} and the Markov property of \BESQ[0]. For $1$-dimensional marginals, we have proved the result. We now assume the result holds for all $n$-dimensional marginal distributions starting from any initial distribution. We write $\bQ_a$ to denote the law of a \BESQ[0] process $(B(y))$ starting from $a>0$. For all $0\leq y_1<\cdots<y_n<y_{n+1}$ and $\lambda_j\in[0,\infty)$, $j\in[n+1]$, we have
 \begin{align*}
  &\Pr^1_{\{(0,a)\}}\!\left[\exp\left(-\sum_{j=1}^{n+1}\lambda_j\IPmag{\skewer(y_j,\cdot\,)}\right)\right]\\
    &= \Pr^1_{\{(0,a)\}}\!\left[e^{-\lambda_1\IPmag{\skewer(y_1,\cdot\,)}}\Pr^1_{\skewer(y_1,\cdot\,)}\!\left[\exp\left(-\sum_{k=1}^{n}\lambda_{k+1}\IPmag{\skewer(y_{k+1}-y_1,\cdot\,)}\right)\right]\right]\\
    &= \Pr^1_{\{(0,a)\}}\!\left[e^{-\lambda_1\IPmag{\skewer(y_1,\cdot\,)}}\bQ_{\IPmag{\skewer(y_1,\cdot\,)}}\!\left[\exp\left(-\sum_{k=1}^{n}\lambda_{k+1}B(y_{k+1}-y_1)\right)\right]\right]\\
    &= \bQ_a\!\left[e^{-\lambda_1B(y_1)}\bQ_{B(y_1)}\!\left[\exp\left(-\sum_{k=1}^{n}\lambda_{k+1}B(y_{k+1}-y_1)\right)\right]\right]
    = \bQ_a\!\left[\exp\left(-\sum_{j=1}^{n+1}\lambda_jB(y_j)\right)\right]\!.
 \end{align*}
 Again, this extends to general initial distributions by \cite[Theorem 4.1 (iv)]{PitmYor82} and independence of clades. This completes the induction step and establishes equality of finite-dimensional distributions, hence equality of distributions of the processes.
 
 (iii) By Corollary \ref{cor:clade:cts_skewer}, there is an a.s.\ event on which all of the $(\alpha^y_U,\,y\ge0)$, $U\in\beta$, are continuous in $y$. If we take continuous versions of each, then the $\big(\IPmag{\alpha^y_U},\,y\ge0\big)$ are independent \BESQ[0] processes with summable initial states. Thus, by \cite[Theorem 4.1 (iv)]{PitmYor82}, their sum $\big(\IPmag{\alpha^y},\,y\ge0\big)$ is a.s.\ continuous and is a \BESQ[0].
\end{proof}

We proceed towards proving continuity of $(\alpha^y,\,y\ge0)$. We require the following.

\begin{lemma}\label{lem:IP:domination}
 Fix $\beta\in\IPspace$ and $\delta>0$, and let $\gamma$ denote a \Stable[\frac12] interval partition with total diversity $\IPLT_{\gamma}(\infty) = \IPLT_{\beta}(\infty)+\delta$, as in Proposition \ref{prop:IP:Stable}. Then with positive probability, there exists a matching between their blocks such that every block of $\beta$ is matched with a larger block in $\gamma$. (This is not a correspondence as in Definition \ref{def:IP:metric}, as it need not respect left-right order.) In this event, we say $\gamma$ \emph{dominates} $\beta$. If, on the other hand, $\gamma$ is a \Stable[\frac12] interval partition with total diversity $\IPLT_{\gamma}(\infty) = \IPLT_{\beta}(\infty)-\delta$ then with positive probability it is dominated by $\beta$.
\end{lemma}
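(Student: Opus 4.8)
The plan is to reduce the statement to a concrete question about \Stable[\frac12] subordinators and then invoke a Campbell-type / second-moment argument. Recall that by Proposition~\ref{prop:IP:Stable} a \Stable[\frac12] interval partition $\gamma$ with $\IPLT_\gamma(\infty) = m$ is exactly the range-partition of a \Stable[\frac12] subordinator $Y$ run on $[0,m)$, so its blocks are the jumps $\{\Delta Y(s) : s\in[0,m)\}$, which form a \PRM on $[0,m)\times(0,\infty)$ with intensity $\Leb\otimes\Pi$, where $\Pi(dx) = c\,x^{-3/2}\,dx$ for an explicit constant $c$ (the L\'evy measure of the \Stable[\frac12] subordinator, cf.\ the computations around Proposition~\ref{prop:stable_JCCP}). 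Write $\beta = \{U_1,U_2,\ldots\}$ with masses $a_i := \Leb(U_i)$, so $\sum_i a_i = \IPmag{\beta} < \infty$ and the diversity $\IPLT_\beta(\infty)$ is finite. I want to show: with positive probability, $\gamma$ (total diversity $\IPLT_\beta(\infty)+\delta$) admits a matching sending each $U_i$ to a strictly larger block.

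First I would set up a greedy/exhaustive matching scheme. Order the blocks of $\beta$ in decreasing order of mass, $a_{(1)}\ge a_{(2)}\ge\cdots$. For the largest block I ask that $\gamma$ have at least one block of mass $> a_{(1)}$; for the second largest, at least two blocks of mass $> a_{(2)}$ (one of which can be reserved), and in general at least $k$ blocks of mass $> a_{(k)}$. By Hall's marriage theorem (in the countable setting, which applies since each $U_i$ has only finitely many admissible partners below a fixed threshold — indeed $\gamma$ has only finitely many blocks of mass exceeding any $h>0$, a.s., because $\int_h^\infty \Pi(dx)<\infty$), a perfect matching exists on the event $E := \bigcap_{k\ge1}\{\#\{V\in\gamma : \Leb(V) > a_{(k)}\} \ge k\}$. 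So it suffices to show $\Pr(E) > 0$. Actually a cleaner route: it suffices to show that $\Pr(E') > 0$ where $E'$ is the event that for \emph{every} $h>0$, $\#\{V\in\gamma : \Leb(V) > h\} \ge \#\{U\in\beta : \Leb(U) > h\}$ — this stochastic-domination-of-counting-functions event also implies the matching exists, and it is the natural thing to estimate since $N^\gamma(h) := \#\{V\in\gamma:\Leb(V)>h\}$ is, for fixed $h$, a Poisson random variable with mean $\lambda(h) := (\IPLT_\beta(\infty)+\delta)\,\Pi(h,\infty) = 2(\IPLT_\beta(\infty)+\delta) c' h^{-1/2}$ for the appropriate constant, while $N^\beta(h) := \#\{U\in\beta:\Leb(U)>h\}$ is a deterministic nondecreasing step function with $\sqrt{\pi}\sqrt{h}\,N^\beta(h)\to \IPLT_\beta(\infty)$ as $h\downarrow 0$ by the diversity property~\eqref{eq:IPLT}, and $N^\beta(h) = 0$ for $h\ge \IPmag{\beta}$.

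The core estimate is then a comparison of the random counting function $h\mapsto N^\gamma(h)$ with the deterministic envelope $h\mapsto N^\beta(h)$. Because $N^\gamma(h)$ is monotone in $h$ and has Poisson marginals with mean asymptotically $\tfrac{\IPLT_\beta(\infty)+\delta}{\sqrt\pi}\,h^{-1/2}(1+o(1))$ while $N^\beta(h) = \tfrac{\IPLT_\beta(\infty)}{\sqrt\pi}h^{-1/2}(1+o(1))$, a law-of-large-numbers / concentration argument along a sequence $h_n\downarrow 0$ (say $h_n = 2^{-n}$, using Borel--Cantelli with the Poisson tail bound $\Pr(\mathrm{Poisson}(\mu)\le \mu(1-\varepsilon)) \le e^{-\mu\varepsilon^2/2}$ and the fact that $\mu = \lambda(h_n)\to\infty$) shows that a.s.\ $N^\gamma(h_n) \ge (1-\varepsilon)\lambda(h_n)$ for all large $n$; choosing $\varepsilon$ small enough that $(1-\varepsilon)(\IPLT_\beta(\infty)+\delta) > \IPLT_\beta(\infty)$ and interpolating via monotonicity between consecutive $h_n$ gives $N^\gamma(h) \ge N^\beta(h)$ for all $h$ below some (random) threshold $h_0$, a.s. For $h\in[h_0,\IPmag\beta]$ there are only finitely many blocks of $\beta$ to accommodate, and on a further positive-probability event $\gamma$ has enough large blocks to cover these too (e.g.\ ask $\gamma$ to have a block bigger than $\IPmag\beta$ among its first few jumps — positive probability by the \PRM structure — plus sufficiently many blocks above $h_0$; a.s.\ behavior near $0$ combined with this positive-probability large-block event intersect in a positive-probability event). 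Intersecting, $\Pr(E')>0$, proving the first assertion.

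For the second assertion — if $\IPLT_\gamma(\infty) = \IPLT_\beta(\infty)-\delta$ then with positive probability $\beta$ dominates $\gamma$ — I would run the \emph{same} argument with the roles reversed: now $\gamma$'s counting function has mean $\sim \tfrac{\IPLT_\beta(\infty)-\delta}{\sqrt\pi}h^{-1/2}$, strictly below $N^\beta(h)$'s leading term, so the Poisson \emph{upper} tail bound $\Pr(\mathrm{Poisson}(\mu)\ge\mu(1+\varepsilon))\le e^{-c\mu\varepsilon^2}$ plus Borel--Cantelli gives $N^\gamma(h) \le N^\beta(h)$ for all small $h$ a.s., and the finitely many large blocks of $\gamma$ are handled on a positive-probability event (here one can even use that $\gamma$ has total mass that is finite a.s.\ and with positive probability has all blocks smaller than the largest block of $\beta$, etc.). The Hall-type matching argument then produces the matching $V\mapsto$ larger block of $\beta$. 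The main obstacle I anticipate is bookkeeping the finitely-many-large-blocks regime rigorously — making sure the positive-probability event controlling blocks of $\gamma$ (or $\beta$) above the random threshold $h_0$ is genuinely compatible (nonempty intersection with positive probability) with the a.s.\ small-$h$ event, rather than the domination itself; the small-$h$ concentration is routine, but one must be careful that $h_0$ is not chosen adversarially, which is handled by noting $h_0$ can be taken deterministic after conditioning appropriately, or by a direct union bound showing $\Pr(N^\gamma(h)\ge N^\beta(h)\ \forall h) > 0$ outright without the intermediate threshold.
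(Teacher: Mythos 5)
Your overall route is the same as the paper's: compare the counting functions $N^\beta(h)=\#\{U\in\beta\colon\Leb(U)>h\}$ and $N^\gamma(h)=\#\{V\in\gamma\colon\Leb(V)>h\}$, obtain domination for all small $h$ almost surely from the $h^{-1/2}$ asymptotics, handle the finitely many large blocks on a positive-probability event, and conclude by matching in ranked order (your Hall's-theorem formulation is equivalent to the ranked matching). Two points in your sketch need repair. First, the small-$h$ step as written does not quite work: with $h_n=2^{-n}$, interpolating by monotonicity between consecutive $h_n$ costs a factor $\sqrt{2}$ in the comparison of the counting functions, so you would need $(1-\varepsilon)(\IPLT_\beta(\infty)+\delta)>\sqrt{2}\,\IPLT_\beta(\infty)$, which fails for small $\delta$. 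You must take the mesh ratio close to $1$ (depending on $\delta$), or better, simply quote Proposition \ref{prop:IP:Stable}: $\gamma$ a.s.\ has the diversity property with total diversity $\IPLT_\beta(\infty)+\delta$, i.e.\ $\sqrt{\pi h}\,N^\gamma(h)\to\IPLT_\beta(\infty)+\delta$ a.s., which gives the small-$h$ domination directly and makes the Poisson concentration unnecessary; this is how the paper argues.

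Second, the large-block regime, which you flag but do not close, and for which your concrete suggestion is insufficient: one $\gamma$-block exceeding $\IPmag{\beta}$ together with ``sufficiently many blocks above $h_0$'' does not yield $N^\gamma(h)\ge N^\beta(h)$ for intermediate $h$ (many $\gamma$-blocks just above $h_0$ cannot dominate several moderate-sized blocks of $\beta$). What is needed, and what the paper does, is: choose a deterministic $a>0$ so small that $A_a:=\{N^\gamma(h)\ge N^\beta(h)\ \forall h<a\}$ has positive probability (possible since the a.s.\ small-$h$ event is the increasing union of the $A_a$ as $a\downarrow 0$); note that for $h<a$ the count $N^\gamma(h)$ depends on the jumps of size $>a$ only through their number, while given that number their sizes are i.i.d.\ from the normalized restriction of the L\'evy measure to $(a,\infty)$, independent of the smaller jumps; hence conditionally on $A_a$ there is positive probability that every $\gamma$-block of mass $>a$ exceeds the largest block of $\beta$, and on $A_a$ there are at least as many such blocks as $\beta$ has blocks of mass $\ge a$. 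On this intersection the ranked matching succeeds, which is exactly the paper's proof; the reverse direction is symmetric, with the conditional event ``no $\gamma$-block has mass $>a$'' (your ``all blocks smaller than the largest block of $\beta$'' is not quite the right event). So the gap is real but local: of your two proposed fixes, the ``deterministic threshold after conditioning'' one is precisely how the paper completes the argument, but it is the crux rather than a routine bookkeeping step, and it relies on the independence structure of the subordinator's jumps above and below the fixed level $a$.
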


\begin{proof}
 We begin with the case $\IPLT_{\gamma}(\infty) = \IPLT_{\beta}(\infty)+\delta$. We will abbreviate $\IPLT := \IPLT_{\beta}(\infty)$. By the diversity properties of these two partitions,
 \begin{equation*}
  \lim_{h\downto 0}\sqrt{h}\#\{U\in \beta\colon \Leb(U)>h\} = \frac{1}{\sqrt{\pi}}\IPLT \quad \text{and} \quad
  \lim_{h\downto 0}\sqrt{h}\#\{V\in \gamma\colon \Leb(V)>h\} = \frac{1}{\sqrt{\pi}}(\IPLT+\delta).
 \end{equation*}
 Thus, there is a.s.\ some $H > 0$ sufficiently small so that
 \begin{equation}\label{eq:IP:domination}
  \#\{U\in \beta\colon \Leb(U)>h\} < \#\{V\in \gamma\colon \Leb(V)>h\} \qquad \text{for all }h<H.
 \end{equation}
 Take $a>0$ sufficiently small that this holds for $H=a$ with positive probability. It follows from the definition of the \Stable[\frac12] interval partition that, conditionally given that \eqref{eq:IP:domination} holds for $H = a$, there is positive probability that all of the blocks in $\gamma$ with mass greater than $a$ also have mass greater than that of the largest block of $\beta$. In particular, there is positive probability that $\gamma$ dominates $\beta$ by matching, for each $n\geq 1$, the $n^{\text{th}}$ largest block of $\beta$ with that of $\gamma$.
 
 If we instead take $\IPLT_{\gamma}(\infty) = \IPLT_{\beta}(\infty)-\delta$ then there is a.s.\ some $H>0$ such that \eqref{eq:IP:domination} holds in reverse. Let $a$ be as before. Conditionally given that the reverse of \eqref{eq:IP:domination} holds for $H = a$, there is positive probability that no blocks in $\gamma$ have mass greater than $a$. In this event, $\beta$ dominates $\gamma$ by matching blocks in ranked order, as in the previous case.
\end{proof}

\begin{proposition}\label{prop:cts_lt_at_0}
 The diversity process $(\IPLT_{\alpha^y}(\infty),\,y\geq 0)$ is a.s.\ continuous at $y = 0$.
\end{proposition}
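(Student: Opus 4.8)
The plan is to show that $\IPLT_{\alpha^y}(\infty)$, which is a monotone function of $y$ wherever it is defined by the diversity property, converges to $\IPLT_{\beta}(\infty) = \IPLT_{\alpha^0}(\infty)$ as $y \downarrow 0$. Recall from Proposition \ref{prop:type-1:LT_diversity} that a.s.\ $\IPLT_{\alpha^y}(\infty) = \ell^y_{\bN_\beta}(\len(\bN_\beta))$ for all $y > 0$, so the statement is equivalent to the a.s.\ right-continuity at $0$ of $y \mapsto \ell^y_{\bN_\beta}(\len(\bN_\beta))$, i.e.\ of the total local time of the scaffolding $\xi(\bN_\beta)$. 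First I would reduce to the single-block case: by \eqref{eq:concat_skewers:type-1}, $\alpha^y = \ConcatIL_{U\in\beta}\alpha^y_U$, and by Lemma \ref{lem:finite_survivors} only finitely many $\alpha^z_U$ survive past any fixed level, hence for small $y$ we have $\IPLT_{\alpha^y}(\infty) = \sum_{U\in\beta}\IPLT_{\alpha^y_U}(\infty)$ with all but finitely many terms zero; but the finitely-many-exception set depends on $y$, so this needs care. A cleaner route is to sandwich: by monotonicity of $y\mapsto\ell^y$ (valid on the a.s.\ event that local times are jointly continuous, Theorem \ref{thm:Boylan}, applied within each clade), the limit $\lim_{y\downto 0}\IPLT_{\alpha^y}(\infty)$ exists a.s.\ and is $\le \IPLT_{\beta}(\infty)$; only the reverse inequality requires work.

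For the lower bound I would use a stochastic domination argument built on Lemma \ref{lem:IP:domination} together with the entrance law of Proposition \ref{prop:type-1:transn}. The idea: fix $\delta > 0$; I claim that for all sufficiently small $y$, $\alpha^y$ dominates a \Stable[\frac12] interval partition of total diversity $\IPLT_{\beta}(\infty) - \delta$, which forces $\IPLT_{\alpha^y}(\infty) \ge \IPLT_{\beta}(\infty) - \delta$ by the diversity asymptotics \eqref{eq:IPLT}. To produce such a dominated partition one couples: each block $U\in\beta$ of mass $\Leb(U)$ launches a clade $\bN_U$ whose skewer $\alpha^y_U$ at small level $y$ is, by Proposition \ref{prop:type-1:transn}, (conditionally on survival) of the form $\{(0,L^y)\}\concat B^y\scaleI\ol\gamma_U$ with $B^y\sim\GammaDist[\frac12,1/2y]$ and $\ol\gamma_U\sim\PDIP[\frac12,\frac12]$, and the survival probability is $1-e^{-\Leb(U)/2y}\to 1$. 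Since $B^y$ has diversity scaling like $\sqrt{B^y}\asymp \sqrt{\Leb(U)}/\sqrt{y}\cdot$const and $\IPLT_{\PDIP[1/2,1/2]}(\infty)$ is a.s.\ finite and positive, each surviving $\alpha^y_U$ has diversity $\IPLT_{\alpha^y_U}(\infty)$ blowing up like $c_U/\sqrt y$; summing over the (with high probability, all) blocks one recovers at least $\IPLT_{\beta}(\infty)-\delta$. The quantitative step is: $\EV[\IPLT_{\alpha^y_U}(\infty)] $ can be computed from \eqref{eq:transn:lifetime}--\eqref{eq:transn:given_life}, and one checks $\sum_U \EV[\IPLT_{\alpha^y_U}(\infty)] \to \IPLT_{\beta}(\infty)$ appropriately, or more robustly, that $\IPLT_{\alpha^y}(\infty)\to\IPLT_{\beta}(\infty)$ in probability, which combined with a.s.\ monotone convergence upgrades to a.s.\ convergence.

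Concretely I would carry out the steps in this order: (1) restrict to the a.s.\ event of \eqref{eq:good_clades_event} and joint continuity of all clade local times, on which $y\mapsto\IPLT_{\alpha^y}(\infty)$ is finite for $y>0$, nonincreasing, hence has an a.s.\ limit $\Lambda_0 \le \IPLT_\beta(\infty)$; (2) express $\IPLT_{\alpha^y}(\infty)=\sum_{U\in\beta}\ell^y_{\bN_U}(\len(\bN_U))$ using Proposition \ref{prop:type-1:LT_diversity} applied clade-by-clade and the fact that diversity of a concatenation is the sum of diversities; (3) for a fixed block $U$, identify $\ell^y_{\bN_U}(\len(\bN_U)) = \IPLT_{\alpha^y_U}(\infty)$ with the total diversity of the interval partition in \eqref{eq:transn:given_life}, namely $L^y$ contributing a bounded amount plus the \Stable[\frac12]-subordinator part, which by Proposition \ref{prop:IP:Stable} has total diversity exactly $S^y\sim\ExpDist[(2y)^{-1/2}]$ on survival, so $\IPLT_{\alpha^y_U}(\infty) = S^y_U\cdot\cf\{\text{survive}\}$ where $\EV[S^y_U\cf\{\text{survive}\}]$ I compute directly; (4) compute $\lim_{y\downto 0}\sum_U\EV[\IPLT_{\alpha^y_U}(\infty)]$ and, after checking it equals $\IPLT_\beta(\infty)$ and controlling the variance (clades independent), deduce $\IPLT_{\alpha^y}(\infty)\to\IPLT_\beta(\infty)$ in $L^1$; (5) combine the $L^1$ (hence in-probability) convergence with the monotone a.s.\ limit $\Lambda_0$ to conclude $\Lambda_0 = \IPLT_\beta(\infty)$ a.s.

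The main obstacle I anticipate is step (4): evaluating $\lim_{y\downto 0}\EV\big[\IPLT_{\alpha^y_U}(\infty)\big]$ and verifying it is exactly $\Leb(U)^{1/2}$ times the right universal constant so that the sum telescopes to $\IPLT_\beta(\infty)=\sqrt\pi\lim_{h\downto 0}\sqrt h\,\#\{U\colon\Leb(U)>h\}$ — i.e.\ matching the normalization in \eqref{eq:IPLT}. This requires a careful moment computation using the explicit entrance law (the distribution of $S^y$ weighted by the survival indicator whose probability is $1-e^{-\Leb(U)/2y}$), and then a dominated/monotone convergence argument to interchange the limit with the infinite sum over $U\in\beta$, justified because $\sum_U\Leb(U) = \IPmag\beta < \infty$ and the per-block contributions are controlled uniformly for small $y$. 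An alternative that sidesteps the exact-constant bookkeeping is to invoke Lemma \ref{lem:IP:domination} purely qualitatively: show that with probability bounded below uniformly in small $y$, $\alpha^y$ dominates a fixed \Stable[\frac12] partition of diversity $\IPLT_\beta(\infty)-\delta$ (coupling the order statistics of $\alpha^y$ against those of the target using that the total-mass process is \BESQ[0] by Proposition \ref{prop:BESQ_mass_temp}, staying near $\IPmag\beta$ for small $y$, while diversity blows up), then push $\delta\downto 0$ along a countable sequence; this trades the precise computation for a domination/zero-one-type argument and is likely the cleaner write-up.
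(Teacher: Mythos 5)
Your plan has a fatal flaw at the outset: the claim that $y\mapsto\IPLT_{\alpha^y}(\infty)$ is a.s.\ monotone (nonincreasing as $y\downto 0$, so that the limit exists and is $\le\IPLT_{\beta}(\infty)$) is false. Take $\beta=\{(0,a)\}$, a single block. Then $\IPLT_{\beta}(\infty)=0$, but by Proposition \ref{prop:type-1:transn} the diversity at level $y>0$ is, on the survival event, an $\ExpDist[(2y)^{-1/2}]$ variable and is a.s.\ strictly positive; it then falls back to $0$ once the clade dies. So $y\mapsto\IPLT_{\alpha^y}(\infty)$ rises from $0$, reaches a positive maximum, and returns to $0$ --- clearly non-monotone. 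Local time $\ell^y(t)$ is monotone in $t$, not in $y$, and Theorem \ref{thm:Boylan} gives joint continuity, not monotonicity in the spatial variable. Because the whole remainder of your argument hinges on this --- you plan to establish $L^1$ (hence in-probability) convergence and then ``combine with the monotone a.s.\ limit'' to upgrade to a.s.\ convergence --- removing the monotonicity leaves a genuine gap: $L^2$ convergence of $\IPLT_{\alpha^{y}}(\infty)$ to $\IPLT_\beta(\infty)$ along a fixed sequence $y_n\downto 0$ gives a.s.\ convergence only along subsequences, and continuity of the diversity on $(0,\infty)$ (which does hold, by Lemma \ref{lem:finite_survivors}) does not, by itself, control the oscillations between $y_{n+1}$ and $y_n$. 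You would need a maximal inequality or other uniform control, which you do not supply. (Your mean/variance computation itself is correct and does show $\EV[\IPLT_{\alpha^y}(\infty)]\to\IPLT_\beta(\infty)$ and $\mathrm{Var}\to 0$; it is the upgrade to a.s.\ that is missing.)

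Your ``alternative'' via Lemma \ref{lem:IP:domination} is in the right spirit and is in fact the approach the paper takes, but as sketched it is incomplete in two ways. First, because monotonicity is unavailable, you need domination arguments for \emph{both} bounds, $\limsup_{y\downto 0}\IPLT_{\alpha^y}(\infty)\le\IPLT_{\beta}(\infty)+\delta$ and $\liminf_{y\downto 0}\IPLT_{\alpha^y}(\infty)\ge\IPLT_{\beta}(\infty)-\delta$; you only describe the lower one. Second, you want to show that, for small $y$, $\alpha^y$ stochastically dominates a target \Stable[\frac12] partition of diversity $\IPLT_\beta(\infty)-\delta$. That is a statement about the marginal at level $y$; turning it into an a.s.\ limit as $y\downto 0$ would again require control over uncountably many levels simultaneously. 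The paper circumvents this entirely with a \emph{pathwise} coupling: start a type-1 evolution $(\td\alpha^y)$ from a Stable(1/2) partition $\td\alpha^0$ of diversity $\IPLT_\beta(\infty)+\delta$ that dominates $\beta$ (via Lemma \ref{lem:IP:domination}), and then, for each block $U_i\le V_i$ matched to a dominating block, manufacture the clade $\bN_{U_i}$ from $\tdN_{V_i}$ by scaling down the leftmost spindle and cutting out the initial first-passage segment. Because the scaffolding of $\bN_{U_i}$ is then literally a terminal segment of that of $\tdN_{V_i}$, one gets $\IPLT_{\alpha^y_{U_i}}(\infty)\le\IPLT_{\td\alpha^y_{V_i}}(\infty)$ \emph{for all $y$ simultaneously}, and the upper bound follows from the already-established continuity of $(\td\alpha^y)$ (Proposition \ref{prop:PRM:cts_skewer}). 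The lower bound is symmetric with $\IPLT_\beta(\infty)-\delta$. This pathwise surgery is the key idea your proposal is missing; without it, neither route you describe closes the gap.
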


\begin{proof}
 Fix $\theta\in (0,\frac14)$, take $\delta > 0$, and abbreviate $\IPLT := \IPLT_{\beta}(\infty)$. Following the notation and situation of Corollary \ref{cor:type-1:cts_from_Stable} \ref{item:c_f_S:determinstic}, let $\tdN$ denote an $\Hfins$-version of a \PRM[\Leb\times\mBxc] stopped at an inverse local time $\tau^0(\IPLT+\delta)$ and let $(\td\alpha^y,\,y\ge0) := \skewerP(\tdN)$. 
 Then, as in Corollary \ref{cor:type-1:cts_from_Stable} \ref{item:c_f_S:determinstic}, $\td\alpha^0$ is a \Stable[\frac12] interval partition with total diversity $\IPLT+\delta$. By Lemma \ref{lem:IP:domination}, $\td\alpha^0$ dominates $\beta$ with positive probability. Since $\beta$ is deterministic, this domination event is independent of $(\alpha^y,\,y\ge0)$. We condition on this event.
 
 We now define an alternative construction of $(\alpha^y)$, coupled with $(\td\alpha^y)$. Let $(U_i)_{i\geq 1}$ and $(V_i)_{i\geq 1}$ denote the blocks of $\beta$ and $\td\alpha^0$ respectively, each ordered by non-increasing Lebesgue measure, with ties broken by left-to-right order. For each $i$ let $\tdN_{V_i}$ denote the clade of $\tdN$ corresponding to that block. By Proposition \ref{prop:PRM:Fy-_Fy+} 
 the $(\tdN_{V_i})_{i\ge1}$ are conditionally independent given $\td\alpha^0$, with conditional laws $\mClade^+(\cdot\ |\ m^0 = \Leb(V_i))$. Then
 \begin{equation*}
  \td\alpha^y = \Concat_{V\in \td\alpha^0} \td\alpha_V^y \qquad \text{where} \qquad \left(\td\alpha_V^y,\,y\ge0\right) = \skewerP\big(\tdN_V\big).
 \end{equation*}
 
 Let $(0,g_i)$ denote the left-most point in $\tdN_{V_i}$. This is the spindle associated with the block $V_i$. Conditionally given $V_i$, the process $g_i$ is a \BESQ[-1] starting from $\Leb(V_i)$. We define
 \begin{equation*}
  f_i := \frac{\Leb(U_i)}{\Leb(V_i)}\scaleB g_i,\quad
  T_i := \inf\left\{t\geq 0\colon \xi_{\tdN_{V_i}}(t) \leq \life(f_i)\right\},\quad
  \bN_{U_i} := \Dirac{0,f_i} + \ShiftRestrict{\tdN_{V_i}}{(T_i,\infty)}.
 \end{equation*}
 To clarify, $\bN_{U_i}$ is obtained from $\tdN_{V_i}$ by scaling down its leftmost spindle $g_i$ to get $f_i$ and cutting out the segment of $\tdN_{V_i}$ corresponding to the first passage of $\xi(\tdN_{V_i})$ down to level $\life(f_i)$. 
 From \BESQ\ scaling and the Poisson description of the laws $\mClade^+(\,\cdot\;|\;m^0)$ in Proposition \ref{prop:clade_splitting}, it follows that the $(\bN_{U_i})_{i\ge1}$ are jointly independent and have respective distributions $\bN_{U_i} \sim \mClade^+(\cdot\ |\ m^0 = \Leb(U_i))$. As in \eqref{eq:concat_skewers:type-1} we define
 \begin{equation*}
  \alpha^y := \Concat_{U\in\beta} \alpha_U^y \qquad \text{where} \qquad \left(\alpha_{U}^y,\,y\ge0\right) = \skewerP(\bN_U) \quad \text{for }U\in \beta.
 \end{equation*}
 The resulting $(\alpha^y,\,y\geq 0)\sim\Pr^1_{\beta}\{\skewerP\in\cdot\,\}$. 
 By virtue of this coupling, having conditioned on $\td\alpha^0$ dominating $\beta$, it is a.s.\ the case that $\IPLT_{\alpha_{U_i}^y}(\infty) \leq \IPLT_{\td\alpha_{V_i}^y}(\infty)$ for $i\geq 1$, $y\geq 0$. Thus, by the continuity in Proposition \ref{prop:PRM:cts_skewer},
 $$\limsup_{y\downto 0}\IPLT_{\alpha^y}(\infty) \leq \limsup_{y\downto 0}\IPLT_{\td\alpha^y}(\infty) = \IPLT + \delta\quad \text{a.s.}.$$
 Since this holds for all $\delta>0$, the left hand side expression is a.s.\ bounded above by $\IPLT$.
 
 If we repeat this argument but $\tdN$ stopped at $\tau^0(\IPLT-\delta)$ then we can condition on $\beta$ dominating $\td\alpha^0$ and reverse roles in the above coupling to show that
 \begin{equation*}
  \liminf_{y\downto 0}\IPLT_{\alpha^y}(\infty) \geq \liminf_{y\downto 0}\IPLT_{\td\alpha^y}(\infty) = \IPLT - \delta
 \end{equation*}
 almost surely for any positive $\delta$. The desired result follows.
\end{proof}

\begin{proposition}[Existence of type-1 evolutions]\label{prop:type-1:cts}
 For $\beta\in\IPspace$, $\bN_{\beta}$ belongs to $\Hfins$ almost surely. 
 In particular, $\skewerP(\bN_{\beta})$ is a.s.\ path-continuous in $(\IPspace,\dI)$. Moreover, this process is a.s.\ H\"older-$\theta$ for every $\theta\in\big(0,\frac14\big)$, except possibly at time zero.
\end{proposition}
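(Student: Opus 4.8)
The plan is to decompose $\bN_{\beta}$ into the clades $(\bN_U,\,U\in\beta)$ as in Definition \ref{constr:type-1} and \eqref{eq:concat_skewers:type-1}, verify the three defining conditions of $\Hfins$ in Definition \ref{def:domain_for_skewer} for the concatenation, and then read off the H\"older continuity. First I would restrict to the a.s.\ event in \eqref{eq:good_clades_event}: by Corollary \ref{cor:clade:cts_skewer} each $\bN_U\in\Hfins$, and by Lemma \ref{lem:finite_survivors} only finitely many clades $(\alpha^y_U,\,y\ge0)$ survive past any level $y>0$. On this event, condition \ref{item:d_f_s:fin} of Definition \ref{def:domain_for_skewer} (finiteness of $M^y_{\bN_{\beta}}(t)$ for all $y,t$) follows since $M^y_{\bN_{\beta}}$ is a finite sum of the aggregate masses $M^y_{\bN_U}$ over the $U\in\beta$ whose segment contains time $t$; condition \ref{item:d_f_s:div} (continuity of local times and the diversity--local-time identity \eqref{eq:div_LT_condition} for $y>0$) is exactly the content of Proposition \ref{prop:type-1:LT_diversity} together with the fact that the scaffolding $\xi(\bN_{\beta}) = \ConcatIL_{U\in\beta}\xi(\bN_U)$ from Proposition \ref{prop:clade_lengths_summable}\ref{item:CLS:scaffold}.

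Next I would handle condition \ref{item:d_f_s:cts}, path-continuity of $\skewerP(\restrict{\bN_{\beta}}{[0,t]})$ in $(\IPspace,\dI)$. For a fixed level $y_0>0$, only finitely many clades survive past $y_0$, so on $[y_0,\infty)$ the process $\alpha^y = \ConcatIL_{U\in\beta}\alpha^y_U$ is a finite concatenation of processes that are continuous (indeed H\"older-$\theta$ by Corollary \ref{cor:clade:cts_skewer}) by Lemma \ref{lem:IP:concat}\itemref{item:IP:concat} on continuity/H\"older bounds for concatenations; hence $(\alpha^y,\,y\ge y_0)$ is H\"older-$\theta$ for every $\theta\in(0,\tfrac14)$. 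Since $y_0>0$ is arbitrary, $(\alpha^y,\,y>0)$ is locally H\"older-$\theta$ away from $0$, which gives continuity on $(0,\infty)$. Continuity at $y=0$ of $\alpha^y\to\beta$ in $\dI$ then requires two ingredients: (a) continuity of the masses, $\IPmag{\alpha^y}\to\IPmag{\beta}$, which follows from Proposition \ref{prop:BESQ_mass_temp} since a \BESQ[0] started at $\IPmag{\beta}$ is continuous at $0$; and (b) continuity of the total diversity, $\IPLT_{\alpha^y}(\infty)\to\IPLT_{\beta}(\infty)$, which is precisely Proposition \ref{prop:cts_lt_at_0}. Combined with the fact that each individual surviving block $\alpha^y_U$ converges to $U$ (from $\bN_U\in\Hfins$ and the construction with leftmost spindle $\bff_U$ starting at $\Leb(U)$), and that the blocks not surviving carry vanishingly little mass and diversity, one assembles a correspondence between $\alpha^y$ and $\beta$ whose distortion (Definition \ref{def:IP:metric}) tends to $0$; this gives $\dI(\alpha^y,\beta)\to0$.

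The main obstacle I anticipate is not conditions \ref{item:d_f_s:fin} and \ref{item:d_f_s:div}, which are essentially bookkeeping over the clade decomposition, but rather the uniformity needed for \emph{path}-continuity at every time $t$ simultaneously and the behaviour near $y=0$. Concretely: the H\"older argument from Proposition \ref{prop:PRM:cts_skewer} applies clade-by-clade, but to conclude H\"older continuity of the full concatenation $\alpha^y$ one needs the H\"older constants of the surviving clades to be controlled, and on any compact level-interval $[y_0, y_1]$ only finitely many clades are relevant, so Lemma \ref{lem:IP:concat}\itemref{item:IP:concat} applies with a finite (hence summable) bound. The genuinely delicate point is that the number of surviving clades, and which clades they are, depends on the level, so the H\"older constant is only \emph{locally} bounded in $y>0$ and can blow up as $y\downto0$; this is why the statement excludes time zero from the H\"older claim. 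At $y=0$ we fall back on the softer continuity statement, invoking Propositions \ref{prop:BESQ_mass_temp} and \ref{prop:cts_lt_at_0} for the mass and diversity coordinates and a direct correspondence estimate for the block masses, using that $\sum_{U\in\beta,\,U\text{ not surviving past }y}\Leb(U)\to0$ and that the diversity up to any surviving block converges. Finally, once $\bN_{\beta}\in\Hfins$ a.s., path-continuity of $\skewerP(\bN_{\beta})$ in $(\IPspace,\dI)$ is immediate from membership in $\Hfins$, and the stated H\"older exponent range $\theta\in(0,\tfrac14)$ is inherited from Theorem \ref{thm:Boylan} via Proposition \ref{prop:PRM:cts_skewer} and Corollary \ref{cor:clade:cts_skewer}.
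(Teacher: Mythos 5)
Your proposal is correct and follows essentially the same route as the paper's proof: conditions (i)–(ii) of Definition \ref{def:domain_for_skewer} via Lemma \ref{lem:type-1:wd} and Proposition \ref{prop:type-1:LT_diversity}, H\"older continuity on $[z,\infty)$ for each $z>0$ from Lemma \ref{lem:finite_survivors}, Corollary \ref{cor:clade:cts_skewer} and \eqref{eq:IP:concat_dist}, and continuity at $y=0$ by pairing finitely many large blocks of $\beta$ with the leftmost blocks of their clades and controlling the distortion via Proposition \ref{prop:cts_lt_at_0}, the \BESQ[-1] evolution of the leftmost spindles, and Proposition \ref{prop:BESQ_mass_temp}. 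The only difference is that the paper spells out the four distortion bounds explicitly, which your sketch leaves implicit but correctly identifies.
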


\begin{proof}
 We have already checked properties (i) and (ii) of Definition \ref{def:domain_for_skewer} of $\Hfins$, in Lemma \ref{lem:type-1:wd} and Proposition \ref{prop:type-1:LT_diversity} respectively. It remains only to confirm the claimed path-continuity.
 
 By Lemma \ref{lem:finite_survivors}, for $z > 0$ the process $(\alpha^y,\,y\geq z)$ equals the concatenation of an a.s.\ finite subset of the processes $(\alpha_U^y,\,y\geq z)$ of \eqref{eq:concat_skewers:type-1}. By Corollary \ref{cor:clade:cts_skewer}, each of the $(\alpha_U^y,\,y\ge0)$ is a.s.\ H\"older-$\theta$ for $\theta\in(0,\frac14)$. This proves the a.s.\ H\"older continuity of $(\alpha^y,\,y\ge z)$, by way of \eqref{eq:IP:concat_dist}. Since this holds for every $z$, it remains only to establish a.s.\ continuity at $y=0$.
 
 Fix $\epsilon > 0$. Take a subset $\{U_1,\ldots,U_k\}\subseteq\beta$ 
 of sufficiently many large blocks so that $\IPmag{\beta} - \sum_{i=1}^k\Leb(U_i) < \epsilon/4$. We define a correspondence by pairing each $U_i$ with the leftmost block in $\alpha^y_{U_i}$. 
 Then there is a.s.\ some sufficiently small $\delta>0$ so that for $y<\delta$:
 \begin{enumerate}[label=(\roman*), ref=(\roman*)]
  \item for $i\in[k]$, $\left|\IPLT_{\alpha^0}(U_i) - \sum_{V\in\beta\colon V<U_i}\IPLT_{\alpha^y_V}(\infty)\right| < \epsilon$;
  \item $\big|\IPLT_{\alpha^0}(\infty) - \IPLT_{\alpha^y}(\infty)\big| < \epsilon$; and
  \item for $i\in[k]$, $\big|\Leb(U_i) - m^y(\bN_{U_i})\big| < \epsilon/4k$, where $m^y$ is as in \eqref{eq:LMB_def};
  \item $\big|\IPmag{\alpha^0} - \IPmag{\alpha^y}\big| < \epsilon/4$.
 \end{enumerate}
 The first and second of these can be controlled via Proposition \ref{prop:cts_lt_at_0}. The third can be controlled since each block $U_i$ is associated with the initial leftmost spindle of $\bN_{U_i}$, and said spindle evolves continuously as a \BESQ[-1]. Finally, the fourth comes from Proposition \ref{prop:BESQ_mass_temp}. Hence, $(\alpha^y,\,y\ge0)$ is a.s.\ continuous at $y=0$.
\end{proof}

\begin{definition}[$\BPr^1_{\beta}$, $\BPr^1_{\mu}$, $(\cFI^y)$]\label{def:IP_process_space_1}
 For $\beta\in\IPspace$, let $\BPr^1_{\beta}$ denote the distribution on $\cCRI$ of a continuous version of $\skewerP(\bN_{\beta})$. As in Definition \ref{constr:type-1}, for probability measures $\mu$ on $\IPspace$, let $\BPr^1_{\mu}$ denote the $\mu$-mixture of the laws $(\BPr^1_\beta)$. We write $(\cFI^y,\,y\geq 0)$ to denote the right-continuous filtration generated by the canonical process on $\cC([0,\infty),\IPspace)$. In integrals under the aforementioned laws, we will denote the canonical process by $(\alpha^y,\,y\geq 0)$.
\end{definition}

In this setting, Corollary \ref{cor:type-1:simple_Markov_1} extends via a monotone class theorem to the following.


\begin{corollary}[Simple Markov property for type-1 evolutions]\label{cor:type-1:simple_Markov}
 Let $\mu$ be a probability distribution on $\IPspace$. Fix $y>0$. Take $\eta,f\colon\cCRI\to [0,\infty)$ measurable, with $\eta$ measurable with respect to $\cFI^y$. Let $\theta_y$ denote the shift operator. Then 
  $\BPr^1_\mu\big[\eta\, f\circ\theta_y \big] = \BPr^1_{\mu}\left[\eta\, \BPr^1_{\alpha^y}[f]\right].$
%
\end{corollary}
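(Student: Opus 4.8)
The plan is to deduce the simple Markov property for the continuous-version laws $(\BPr^1_\mu)$ from the finite-dimensional statement in Corollary \ref{cor:type-1:simple_Markov_1} together with the path-continuity of type-1 evolutions established in Proposition \ref{prop:type-1:cts}. First I would record the key fact underlying the monotone-class argument: by Proposition \ref{prop:skewer_measurable} and the separability of $(\IPspace,\dI)$ from Theorem \ref{thm:Lusin}, the Borel $\sigma$-algebra on $\cCRI$ generated by uniform convergence coincides with the one generated by the evaluation maps $\alpha\mapsto\alpha^z$, $z\ge0$; hence $\cFI^y$ is generated by the finite collections $(\alpha^{z_1},\dots,\alpha^{z_m})$ with $z_1<\cdots<z_m\le y$, and an arbitrary bounded measurable $f\colon\cCRI\to[0,\infty)$ is a monotone (bounded-pointwise) limit of bounded continuous functions of finitely many coordinates $\big(\alpha^{z},\,z\in\{y_1,\dots,y_n\}\big)$.

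Next I would reduce to a single ``rectangle'' identity. It suffices, by a monotone class / functional monotone class theorem (e.g.\ \cite[Theorem A1.2, Theorem 6.3]{Kallenberg} or the usual $\pi$-$\lambda$ argument), to prove
\begin{equation*}
 \BPr^1_\mu\!\left[\eta\;\prod_{j=1}^n g_j\big(\alpha^{y+y_j}\big)\right] = \BPr^1_\mu\!\left[\eta\;\BPr^1_{\alpha^y}\!\left[\prod_{j=1}^n g_j\big(\alpha^{y_j}\big)\right]\right]
\end{equation*}
for every $n\ge1$, every $0\le y_1<\cdots<y_n$, and bounded measurable $g_1,\dots,g_n\colon\IPspace\to[0,\infty)$, where $\eta$ is bounded and $\cFI^y$-measurable; and for $\eta$ it is in turn enough to take $\eta = h\big(\alpha^{z_1},\dots,\alpha^{z_m}\big)$ with $z_1<\cdots<z_m\le y$ and $h$ bounded measurable. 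Under the law $\BPr^1_\mu$ the canonical process is (a version of) $\skewerP(\bN^*_\mu)$ for an $\Hfins$-version $\bN^*_\mu$ of $\bN_\mu\sim\Pr^1_\mu$, so both sides equal the corresponding expectations under $\Pr^1_\mu$ with $\alpha^z$ replaced by $\skewer(z,\bN^*_\mu)$. Setting $f(N):=h\big(\skewer(z_1,N),\dots,\skewer(z_m,N)\big)\prod_{j=1}^n g_j\big(\skewer(y_j,N)\big)$ — which, after absorbing the $z_i\le y$ factors into the pre-$y$ functional, is exactly of the form handled by Corollary \ref{cor:type-1:simple_Markov_1} with $z=y$ — gives the claimed identity, using that $\Pr^1_{\skewer(y,N)}$-integrals of $\skewerP$-functionals coincide with $\BPr^1_{\skewer(y,N)}$-integrals of the canonical process (both are defined via $\Hfins$-versions and are measurable in the initial partition by Proposition \ref{prop:clade_lengths_summable}\itemref{item:CLS:kernel} and Proposition \ref{prop:skewer_measurable}).

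The remaining work is bookkeeping: one must check that the map $\beta\mapsto\BPr^1_\beta[f]$ is measurable (so that $\BPr^1_{\alpha^y}[f]$ makes sense as an integrand), which follows from the kernel property in Proposition \ref{prop:clade_lengths_summable}\itemref{item:CLS:kernel} together with the measurability of $\skewerP$ on $\Hfins$ in Proposition \ref{prop:skewer_measurable}; and one must verify that passing to continuous versions does not disturb the finite-dimensional identity, which is immediate since continuous versions agree with $\skewerP(\bN_\beta)$ at every fixed level almost surely. I expect the main obstacle to be purely measure-theoretic rather than probabilistic: namely making the monotone-class step clean despite the fact that $\Hs$ and $\Hfins$ need not be measurable subsets of $\H$, so that all statements must be phrased in terms of $\Hfins$-versions and outer measure as in Definition \ref{def:version} and Lemma \ref{lem:version}, and one must be careful that the conditioning in Corollary \ref{cor:type-1:simple_Markov_1} (which is stated for $\eta$ measurable in $\ol\cF^z$ on $\Hfin$) transfers correctly to $\cFI^y$-measurable functionals of the continuous canonical process via $\cF^{y,*}$-adaptedness of $\skewerP$, as noted after Lemma \ref{lem:cutoff_skewer}.
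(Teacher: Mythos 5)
Your proof fills in the monotone-class argument that the paper itself dispatches with a single sentence ("In this setting, Corollary \ref{cor:type-1:simple_Markov_1} extends via a monotone class theorem"). The route is identical: reduce to finite-dimensional cylinder functionals via separability of $(\IPspace,\dI)$ and Proposition \ref{prop:skewer_measurable}, invoke Corollary \ref{cor:type-1:simple_Markov_1} for the point-process picture, and transport the identity through $\Hfins$-versions to the path-space laws $\BPr^1_\mu$, with the kernel and adaptedness facts you cite handling measurability. One small imprecision shared with the paper: $\cFI^y$ is the right-continuous filtration, so it is strictly larger than $\sigma(\alpha^w,\,w\le y)$ in general; your reduction to cylinders $h(\alpha^{z_1},\dots,\alpha^{z_m})$ with $z_m\le y$ directly covers only the raw filtration, and the extension to $\cFI^y=\bigcap_{z>y}\sigma(\alpha^w,w\le z)$ needs either a passage $z\downto y$ using path- and semigroup-continuity or the standard right-process augmentation arguments. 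Since the paper glosses over this same point, your proposal is faithful to the intended argument.
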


\def\cvN{\cev\bN}
\def\cvX{\cev\bX}

\subsection{Type-0: construction, path continuity, total mass, simple Markov property}
\label{sec:type-0}

The type-0 evolution may be viewed as a type-1 evolution with immigration from the left. Let $\bN$ denote an $\Hs$-version of a \PRM[\Leb\otimes\mBxc] on $[0,\infty)\times\Exc$, the existence of which is guaranteed by Proposition \ref{prop:PRM:cts_skewer}. For $y\in\BR$, let $T^y$ denote the first hitting time of $y$ by $\bX := \xi(\bN)$. We define
\begin{equation*}
 \cev\alpha^y_j := \skewer\left(y,\restrict{\bN}{[0,T^{-j})},j+\restrict{\bX}{[0,T^{-j})}\right) \qquad \text{for }j\in\BN,\ y\in [0,j].
\end{equation*}
Note that for $k>j$ we have
\begin{equation}\label{eq:type-0:consistency}
\begin{split}
 \left(\restrict{\bN}{[T^{j-k},T^{-k})},k+\restrict{\bX}{[T^{j-k},T^{-k})}\right) &\stackrel{d}{=} \left(\restrict{\bN}{[0,T^{-j})},j+\restrict{\bX}{[0,T^{-j})}\right),\\
 \text{and thus} \quad (\cev\alpha^y_k,\ y\in [0,j]) &\stackrel{d}{=} (\cev\alpha^y_j,\ y\in [0,j]).
\end{split}
\end{equation}
Thus, by Kolmogorov's extension theorem and \cite[Lemma II.35.1]{RogersWilliams}, there exists a continuous process $(\cev\alpha^y,\ y\geq 0)$ such that for every $j\in\BN$ we have $(\cev\alpha^y,\ y\in [0,j]) \stackrel{d}{=} (\cev\alpha^y_j,\ y\in [0,j])$.

\begin{definition}[Type-0 evolution, $\BPr^0_{\beta}$, $\BPr^0_{\mu}$]
\label{constr:type_0}
 Let $\beta\in\IPspace$. Let $(\cev\alpha^y,\ y\geq 0)$ be as above and $(\vecc\alpha^y,\ y\geq 0)$ an independent type-1 evolution starting from $\beta$. Consider $(\alpha^y,\,y\geq 0) = (\cev\alpha^y\concat\vecc\alpha^y,\ y\geq 0)$. Let $\BPr^0_{\beta}$ its law on $\cCRI$. A \emph{type-0 evolution} starting from $\beta$ is a process with this law. 
 For probability measures $\mu$ on $\IPspace$ we write $\BPr^0_{\mu}$ to denote the $\mu$-mixture of the laws $\BPr^0_{\beta}$.
\end{definition}

Taking up the CMJ perspective of Section \ref{sec:intro:CRP} and the language of immigration, in the above definition, the type-1 component $(\vecc\alpha^y)$ may be viewed as all descendants of the population at time zero, whereas $(\cev\alpha^y)$ may be viewed as descendants of subsequent immigrants. 
The existence of continuous type-0 evolutions follows from our results for type-1 evolutions.

\begin{remark}
 It is possible to construct the type-0 evolution as the skewer of a point process of spindles, rather than via consistency and the extension theorem as we have done above. This would involve setting up a point process of spindles $\cev\bN$ on $(-\infty,0)\times\Exc$ such that, for a suitable extension of Definition \ref{def:JCCP} of $\xi$, the process $\xi(\cev\bN)$ could be understood as a \Stable[\frac32] first-passage descent from $\infty$ down to 0. Related processes have been studied in the literature. For example, Bertoin \cite[Section VII.2]{BertoinLevy} constructs spectrally negative L\'evy processes that are conditioned to stay positive. Transforming such a process via sign change and an increment reversal akin to $\reverseincr$, as in \eqref{eq:stable:reversal_def}, results in a spectrally positive process coming down from $\infty$ to $0$. We find this approach to be notationally friendlier.
\end{remark}

Note that $(\cev\alpha^y,\, y\geq 0)$ is itself a type-0 evolution with initial state $\emptyset$. We will see that $\emptyset$ is a reflecting boundary for type-0 evolutions, whereas it is absorbing for type-1.

\begin{proposition}[Transition kernel for type-0 evolutions]\label{prop:type-0:transn}
 Take $\gamma\in\IPspace$ and $y>0$. Let $(\gamma^y_U,\,U\in\gamma)$ denote an independent family of partitions, with each $\gamma^y_U$ distributed as $\wh\alpha^y$ in Proposition \ref{prop:type-1:transn} with $a = \Leb(U)$. Let $(R^y,S^y)$ be as in Proposition \ref{prop:type-1:transn}, independent of $(\gamma^y_U,\ U\in\gamma)$. Then under $\BPr^0_{\gamma}$, the partition $\alpha^y$ has the same distribution as
 $$\{(R^y(s-),R^y(s))\colon s\in [0,S^y], R^y(s-)<R^y(s)\} \concat \Concat_{U\in\gamma}\gamma^y_U.$$
\end{proposition}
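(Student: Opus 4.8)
\textbf{Proof plan for Proposition \ref{prop:type-0:transn}.}
The plan is to read off the transition kernel from the two-component decomposition in Definition \ref{constr:type_0}, namely $\alpha^y = \cev\alpha^y \concat \vecc\alpha^y$ where the two pieces are independent, $(\vecc\alpha^y,\,y\ge0)$ is a type-1 evolution from $\gamma$, and $(\cev\alpha^y,\,y\ge0)$ is the ``immigration'' type-0 evolution from $\emptyset$. For the type-1 component, Proposition \ref{prop:type-1:gen_transn} already gives $\vecc\alpha^y \stackrel{d}{=} \ConcatIL_{U\in\gamma}\gamma^y_U$ with the $\gamma^y_U$ as in the statement, so it remains only to identify the one-dimensional marginal of $\cev\alpha^y$, the type-0 evolution started from $\emptyset$, and to check it equals $\{(R^y(s-),R^y(s))\colon s\in[0,S^y],\,R^y(s-)<R^y(s)\}$ with $(R^y,S^y)$ as in Proposition \ref{prop:type-1:transn}. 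Concatenating independent pieces then concatenates their (independent) marginal laws, via Lemma \ref{lem:IP:concat}, giving the claimed formula.

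To identify the marginal of $\cev\alpha^y$, I would work with the approximating partitions $\cev\alpha^y_j = \skewer(y,\restrict{\bN}{[0,T^{-j})},\,j+\restrict{\bX}{[0,T^{-j})})$ for $j>y$, whose distribution is that of $\skewer(y,\cdot)$ applied to a \PRM[\Leb\otimes\mBxc] killed when $\xi$ first drops by $j$, with the scaffolding shifted so that the relevant window sits at levels $[0,j]$. By the consistency noted in \eqref{eq:type-0:consistency} this law does not depend on $j$, and $\cev\alpha^y$ has this common law. Now apply Proposition \ref{prop:agg_mass_subord}: at the fixed level $y$, which is a.s.\ nice for $\bX$ by Proposition \ref{prop:nice_level}, the aggregate-mass process read along level-$y$ inverse local time is a \Stable[\frac12] subordinator, i.e.\ the masses of the blocks of $\cev\alpha^y$ (ordered by diversity) are the jumps of such a subordinator run for a random local-time length. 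The key point is that this local-time length is exactly the level-$y$ local time accrued by $j+\bX$ before it first reaches $0$ (equivalently before $\bX$ reaches $-j$); since $\bX$ passes through $0$ from above exactly once before hitting $-j$ when $j>y$... more precisely, the excursions of $j+\bX$ about level $y$ that matter are those complete before the path descends below $0$, and by the description of $\mClade^-\{\zeta^-\in\cdot\}$ (Proposition \ref{prop:clade:stats}\ref{item:CS:max}, with `+' replaced by `-') the number of such excursions that reach down to $0$ has the Poisson structure whose first arrival is $\ExpDist[(2y)^{-1/2}]$. This identifies the local-time length as $S^y\sim\ExpDist[(2y)^{-1/2}]$ and the subordinator as the inverse-Gaussian $R^y$ of \eqref{eq:transn:Levy_meas}, exactly matching the non-leftmost part of the type-1 entrance law in Proposition \ref{prop:type-1:transn}; and here there is no leftmost $L^y$ block because the immigration process $\cev\alpha^y$ has no distinguished initial spindle.

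Concretely, the cleanest route is to invoke Proposition \ref{prop:PRM:Fy-_Fy+} and Corollary \ref{cor:type-1:cts_from_Stable}\ref{item:c_f_S:mass}-style reasoning: the immigration evolution $\cev\alpha^y$ is, by construction via $\restrict{\bN}{[0,T^{-j})}$ with shifted scaffolding, the skewer at level $y$ of a stopped \PRM\ whose stopping time $T^{-j}$ satisfies conditions (a),(b) of Proposition \ref{prop:PRM:Fy-_Fy+} for the level $0$ cut relative to the shifted path; but it is more direct to note that, in the decomposition of $j+\bX$ into clades about level $y$, the clade straddling $T^{-j}$ contributes no level-$y$ mass (the path is already below $y$ in its anti-clade part by the time $0$ is hit), so $\cev\alpha^y$ is precisely the concatenation of the skewers at level $y$ of the earlier complete clades, whose masses are the jumps of $R^y$ up to an independent $\ExpDist[(2y)^{-1/2}]$ time. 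I expect the main obstacle to be the bookkeeping showing that the relevant local-time horizon is genuinely $\ExpDist[(2y)^{-1/2}]$ and independent of the block structure below it — i.e.\ that the ``first excursion of $j+\bX$ about level $y$ to reach $0$'' argument matches the thinning computation for $\mClade\{m^0\in db;\,\zeta^-<y\}$ carried out in the proof of Proposition \ref{prop:type-1:transn}; once that is in place the rest is assembling independent concatenations using Lemma \ref{lem:IP:concat} and the already-established type-1 transition kernel.
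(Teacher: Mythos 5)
Your proposal is correct, but it takes a more computational route than the paper. Both reduce via Definition \ref{constr:type_0} to the independent decomposition $\alpha^y = \cev\alpha^y \concat \vecc\alpha^y$ and use Proposition \ref{prop:type-1:gen_transn} for $\vecc\alpha^y$; the difference is in how the law of $\cev\alpha^y$ is identified. You re-run the Poisson and thinning analysis directly on the stopped, shifted PRM defining $\cev\alpha^y_j$ (thinned L\'evy measure $e^{-b/2y}\mClade\{m^0\in db\}$, local-time horizon $\ExpDist[(2y)^{-1/2}]$, empty first and last incomplete bi-clades). This works, but it duplicates the bookkeeping already carried out in the proof of Proposition \ref{prop:type-1:transn}. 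The paper instead avoids any new thinning computation: it takes $\whN\sim\mClade^+(\,\cdot\;|\;m^0=1,\ \life^+>y)$, applies the strong Markov property at the first hitting time $\wh T^y$ of level $y$ together with Proposition \ref{prop:clade_splitting}, and deduces that $\shiftrestrict{\whN}{[\wh T^y,\infty)}$ with its shifted scaffolding (a first-passage descent from $y$ to $0$) has the same law as the stopped, shifted PRM defining $\cev\alpha^y_j$ for $j>y$. Since $\skewer(y,\whN)$ equals $\{(0,m^y(\whN))\}$ concatenated with the skewer of that piece, and Proposition \ref{prop:type-1:transn} already gives $\skewer(y,\whN)\stackrel{d}{=}\{(0,L^y)\}\concat\{(R^y(s-),R^y(s))\colon s\in[0,S^y]\}$ with the leftmost block independent of the remainder, the identity $\cev\alpha^y \stackrel{d}{=}\{(R^y(s-),R^y(s))\colon s\in[0,S^y],\,R^y(s-)<R^y(s)\}$ falls out immediately. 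Your route is self-contained at the level of the immigration process; the paper's route is shorter because it offloads all the work to the already-proved type-1 entrance law, paying only one extra distributional identity.
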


\begin{proof}
 Let $(\cev\alpha^z)$ and $(\vecc\alpha^z)$ be as in Definition \ref{constr:type_0} with $\beta = \gamma$. By Proposition \ref{prop:type-1:gen_transn}, $\vecc\alpha^y \stackrel{d}{=} \ConcatIL_{U\in\gamma}\gamma^y_U$. By construction, this is independent of $\cev\alpha^y$. It remains only to show that $\cev\alpha^y$ is distributed like the interval partition arising from the range of $R^y$, up to time $S^y$.
 
 Let $\whN$ have law $\mClade^+\{\,\cdot\;|\;m^0 = 1,\ \life^+>y\}$. Let $\wh T^y$ denote the first hitting time of $y$ in $\whX = \xi(\whN)$. It follows from the description of $\mClade^+\{\,\cdot\;|\;m^0 = 1\}$ in Proposition \ref{prop:clade_splitting} and the strong Markov property applied at time $\wh T^{y}$ that
 \begin{gather}
  \left(\shiftrestrict{\whN}{[\wh T^y,\infty)},\shiftrestrict{\whX}{[\wh T^y,\infty)}\right) \stackrel{d}{=} \left(\restrict{\bN}{[T^{y-j},T^{-j})}, j+\restrict{\bX}{[T^{y-j},T^{-j})}\right) \qquad \text{for }j>y,\notag\\
  \text{so} \quad
  \skewer\left(y,\shiftrestrict{\whN}{[\wh T^y,\infty)},\shiftrestrict{\whX}{[\wh T^y,\infty)}\right) \stackrel{d}{=} \cev\alpha^y. \label{eq:type-0:restricted_skewer}
 \end{gather}
 Note that $\skewer(y,\whN)$ equals a single leftmost block, corresponding to the first jump of $\whX$ across level $y$, concatenated with the skewer on the left in \eqref{eq:type-0:restricted_skewer}. By Proposition \ref{prop:type-1:transn}, that term in \eqref{eq:type-0:restricted_skewer} 
 has the desired distribution.
\end{proof}

\begin{proposition}[Simple Markov property for type-0 evolutions]\label{prop:type-0:simple_Markov}
 Let $\mu$ be a probability distribution on $\IPspace$. Fix $y>0$. Take $\eta,f\colon\cCRI\to [0,\infty)$ measurable, with $\eta$ measurable in $\cFI^y$. Let $\theta_y$ denote the shift operator. Then 
  $\BPr^0_\mu\big[\eta\, f\circ\theta_y \big] = \BPr^0_{\mu}\left[\eta\, \BPr^0_{\alpha^y}[f]\right].$
\end{proposition}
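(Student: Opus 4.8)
The plan is to reduce the simple Markov property for type-0 evolutions to the one already established for type-1 evolutions in Corollary \ref{cor:type-1:simple_Markov}, together with a self-similarity/regeneration property of the "immigration component" $(\cev\alpha^z,\,z\ge0)$. Recall from Definition \ref{constr:type_0} that under $\BPr^0_\beta$ we have $\alpha^z = \cev\alpha^z\concat\vecc\alpha^z$, where $(\vecc\alpha^z)$ is a type-1 evolution from $\beta$ independent of $(\cev\alpha^z)$, and $(\cev\alpha^z)$ is itself a type-0 evolution from $\emptyset$. So the first step is to prove the claim in the special case $\mu = \delta_\emptyset$, i.e.\ to establish a simple Markov property for $(\cev\alpha^z,\,z\ge0)$ alone. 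For this I would go back to the \PRM\ construction of $(\cev\alpha^y)$: with $\bN$ an $\Hs$-version of a \PRM[\Leb\otimes\mBxc] and $\bX=\xi(\bN)$, the process $(\cev\alpha^y_j,\ y\in[0,j])$ is $\skewerP$ of $(\restrict{\bN}{[0,T^{-j})},\ j+\restrict{\bX}{[0,T^{-j})})$. Fix $y>0$. At the a.s.\ finite stopping level $T^{-j+y}$ (the first hitting time of $-j+y$ by $\bX$, equivalently the first passage of the shifted scaffolding $j+\bX$ down to level $y$), the strong Markov property of $\bN$ in time splits $\bN$ into a pre-$T^{-j+y}$ part and a post-$T^{-j+y}$ part, the latter being again a \PRM[\Leb\otimes\mBxc]; and crucially, level $y$ is a.s.\ nice for $j+\bX$ by Proposition \ref{prop:nice_level}, so by Lemma \ref{lem:cutoff_skewer} the skewer above level $y$ is a measurable function of the cutoff process $\cutoffG{y}{\,\cdot\,}$, which is built from the post-$T^{\ge y}$ bi-clades. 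Combining this with Proposition \ref{prop:PRM:Fy-_Fy+} (whose hypotheses (a),(b) are met at $T^{-j+y}$, since that time falls in the midst of the bi-clade straddling level $y$, no later than the crossing jump) gives that $\cev\bF^{\geq y}_0$ is conditionally independent of $\ol\cF^y$ given $\cev\alpha^y$, with r.c.d.\ $\Pr^1_{\cev\alpha^y}\{F^{\geq0}_0\in\cdot\,\}$.

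That last statement is not quite what we want, because $\Pr^1_{\cev\alpha^y}$ is a \emph{type-1} law, not a type-0 law: cutting the scaffolding $j+\bX$ at level $y$ and looking above it destroys the incoming immigration from still-lower values of the scaffolding. So the second step is to recover the immigration. Here I would use the decomposition that already appears in the proof of Proposition \ref{prop:type-0:transn}: the portion of $j+\bX$ between level $y$ and level $0$ contains (i) a "spinal" piece accounting for the first-passage descent from $y$ to $0$, which produces the new leftmost immigrant block process distributed as the range of $R^y$ up to $S^y$ as in Proposition \ref{prop:type-1:transn}/\eqref{eq:type-0:restricted_skewer}, i.e.\ exactly the law of $\cev\alpha^y$ restricted suitably, and (ii) the clades hanging off that descent that survive above level $0$, which are exactly the bi-clades contributing to $\skewer$ between levels $0$ and $y$. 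Unwinding this, $(\cev\alpha^{y+z},\,z\ge0)$ is obtained from the post-$T^{-j+y}$ data by the same recipe that produces a type-0 evolution from a type-1 evolution started at $\cev\alpha^y$: a fresh independent immigration strand concatenated on the left with the type-1 continuation of $\cev\alpha^y$. Concretely, $(\cev\alpha^{y+z},\,z\ge0)$ given $\ol\cF^y$ is a type-0 evolution from $\cev\alpha^y$; by Corollary \ref{cor:type-1:simple_Markov} applied to the $\vecc{}$-strand inside this description, and by the construction of $\BPr^0$ as an independent concatenation, this yields $\BPr^0_{\delta_\emptyset}[\eta\,f\circ\theta_y] = \BPr^0_{\delta_\emptyset}[\eta\,\BPr^0_{\cev\alpha^y}[f]]$ for $\eta$ measurable in $\cFI^y$.

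The third step is to assemble the general case from the special case plus the type-1 Markov property. Under $\BPr^0_\beta$ write $\alpha^z=\cev\alpha^z\concat\vecc\alpha^z$. Because concatenation commutes with the skewer and with restriction to a level (Lemma \ref{lem:cutoff_skewer}, and the fact that $\cutoffG{y}{\,\cdot\,}$ of a concatenation is the concatenation of the $\cutoffG{y}{\,\cdot\,}$'s of the pieces once level $y$ is nice for both), the pair $(\cev\alpha^{y+\cdot},\vecc\alpha^{y+\cdot})$ given $\cFI^y$ is a pair of independent processes: a type-0 evolution from $\cev\alpha^y$ (by Step 2) and a type-1 evolution from $\vecc\alpha^y$ (by Corollary \ref{cor:type-1:simple_Markov}), whose left-to-right concatenation is, by Definition \ref{constr:type_0} read in reverse, a type-0 evolution from $\cev\alpha^y\concat\vecc\alpha^y=\alpha^y$. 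Hence $\BPr^0_\beta[\eta\,f\circ\theta_y]=\BPr^0_\beta[\eta\,\BPr^0_{\alpha^y}[f]]$, and integrating over $\beta\sim\mu$ gives the statement. A standard monotone-class / functional-form argument (as in the passage from Corollary \ref{cor:type-1:simple_Markov_1} to Corollary \ref{cor:type-1:simple_Markov}) upgrades from product test functions of finitely many coordinates to arbitrary measurable $\eta$ and $f$ on $\cCRI$, and from $\cFI^y$ to its right-continuous version.

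The main obstacle I anticipate is Step 2: making rigorous that cutting the scaffolding coming down from $+\infty$ at a level $y$ and looking above it reproduces, law-wise and with the correct conditional (regular-conditional) dependence structure given $\ol\cF^y$, a type-0 evolution started from $\cev\alpha^y$ — in particular checking that the "spinal descent from $y$ to $0$" decouples from the surviving clades in exactly the way encoded by the $\cev{}\concat\vecc{}$ construction, and that no leftmost-block or boundary degeneracy at level $y$ spoils the matching. The technical content here is essentially the level-$y$ analogue of Proposition \ref{prop:type-0:transn}'s proof combined with Proposition \ref{prop:PRM:Fy-_Fy+} and Proposition \ref{prop:clade:Fy-_Fy+}, but care is needed with the regular conditional distributions and with the nice-level exceptional sets (handled via Proposition \ref{prop:nice_level} and Lemma \ref{lem:type-1:nice_lvl}).
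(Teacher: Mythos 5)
Your overall strategy — decompose the defining point process across the level of interest, apply the level-Markov result for stopped \PRM s (Proposition \ref{prop:PRM:Fy-_Fy+}) and its type-1 analogue (Proposition \ref{prop:type-1:Fy-_Fy+}), then reassemble — is the paper's strategy. Step 1 and Step 3 are essentially right (Step 1 matches how the paper handles the piece $\cev\bN^z_y := \restrict{\cev\bN_{z+y}}{[T^{-z},T^{-z-y})}$ by re-centering and applying Proposition \ref{prop:PRM:Fy-_Fy+} at level $0$ with $T = T^{-y}$, where conditions (a),(b) hold for exactly the reason you give; Step 3 is the paper's handling of the $\bN_\beta$ piece via Proposition \ref{prop:type-1:Fy-_Fy+}). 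But your Step 2 has a real gap.

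You correctly observe that Step 1 only produces a type-1 conditional law $\Pr^1_{\cev\alpha^y}\{F^{\geq 0}_0\in\cdot\}$, and that the immigration strand of the type-0 future must be accounted for separately. However, you then look for it in the wrong place. The ``portion of $j+\bX$ between level $y$ and level $0$'' that you analyse in (i)--(ii) is entirely at levels $\le y$: it is exactly the data generating $\cev\alpha^y$ and is part of $\cFI^y$, i.e.\ part of what you are conditioning \emph{on}, not part of the conditional future. Likewise, ``the incoming immigration from still-lower values of the scaffolding'' is a misdiagnosis: the immigration relevant to $(\cev\alpha^{y+z},\,z\ge0)$ lives at scaffolding levels \emph{above} $y$, and nothing of it is ``destroyed'' by cutting at level $y$. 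Finally, the claim that ``$(\cev\alpha^{y+z},\,z\ge0)$ is obtained from the post-$T^{-j+y}$ data'' is false: the future depends on both the pre- and post-$T^{-j+y}$ pieces of $\bN$.

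The missing identification is that the ``fresh independent immigration strand'' you invoke is precisely the pre-$T^{-j+y}$ piece $\cev\bN_{j-y} = \restrict{\bN}{[0,T^{-(j-y)})}$. Its scaffolding never drops below $-j+y$, so it contributes to the cutoff above level $y$ in full and contributes nothing to $\cFI^y$; it is independent of the post-$T^{-j+y}$ piece by the strong Markov property of $\bN$ in \emph{time} (not in level); and by the consistency \eqref{eq:type-0:consistency} it has exactly the law of the immigration component of a type-0 evolution truncated at depth $j-y$. Once you write down the decomposition
\[
\cutoffG{-(j-y)}{\cev\bN_j} \;=\; \cev\bN_{j-y}\ \concat\ \cutoffGB{0}{\cev\bN^{\,j-y}_{y}},
\]
with the first piece independent and the second conditionally $\Pr^1_{\cev\alpha^y}$-distributed given $\cFI^y$ (your Step 1), the conditional law of the cutoff above level $y$ is the $\Pr^0_{j-y,\cev\alpha^y}$ of the paper, which is the type-0 law truncated at depth $j-y$, and the rest of your argument goes through. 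Note also that if one literally reads $\cev\bF^{\geq y}_0$ in your Step 1 as $F^{\geq y}_0$ of the full $\cev\bN_j$ (shifted), that object already includes $\cev\bN_{j-y}$ as its incomplete first clade $N^{\ge y}_{\textnormal{first}}$, so the asserted r.c.d.\ $\Pr^1_{\cev\alpha^y}\{F^{\geq 0}_0\in\cdot\}$ is then simply wrong; the Step 1 conclusion only holds for the complete bi-clades, i.e.\ for $F^{\geq 0}_0(\cev\bN^{\,j-y}_{y})$, and the incomplete first clade has to be treated as the separate, independent immigration term described above.
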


\begin{proof}
 Take $\beta\in\IPspace$, $0\leq u_1 < \cdots < u_n \leq y$, and $0\leq v_1 < \cdots < v_m$. Suppose $\eta(\alpha^z,\,z\ge0) = \eta'(\alpha^{u_j},\,j\in[n])$ and $f(\alpha^z,\,z\ge0) = f'(\alpha^{v_j},\,j\in[m])$. We will show that in this case,
 \begin{equation}\label{eq:type-0:simple_Markov_1}
  \BPr^0_{\beta}\big[\eta'(\alpha^{u_j},\,j\in[n])f'(\alpha^{y+v_j},\,j\in[m])\big] = \BPr^0_{\beta}\big[\eta'(\alpha^{u_j},\,j\in[n])\BPr^0_{\alpha^y}\big[f'(\alpha^{v_j},\,j\in[m])\big]\big].
 \end{equation}
 Indeed, this will suffice to prove the proposition: we can extend to general $\eta$ and $f$ by a monotone class theorem, and we generalize the equation from $\BPr^0_{\beta}$ to $\BPr^0_{\mu}$ by mixing.
 
 For $x>0$, set $\cev\bN_x := \restrict{\bN}{[0,T^{-x})}$, similar to the point processes discussed in \eqref{eq:type-0:consistency}. Let $\bN_{\beta}$ be an $\Hfins$-version of a $\Pr^1_{\beta}$-distributed process, independent of $\cev\bN_x$. For the purpose of this argument, we define $\Pr^0_{x,\beta}$ to be the distribution of $\bN_{x,\beta} := \cev\bN_x\concat\bN_{\beta}$ on $\Hfins$. We work towards a type-0 version of Proposition \ref{prop:type-1:Fy-_Fy+}.
 
 Take $z>u_n$. Set $\vecc\alpha^y := \skewer(y,\bN_\beta)$, $\cev\alpha^y_{z+y} := \skewer\big(-z,\cev\bN_{z+y}\big)$, and $\alpha^y_{z+y} := \cev\alpha^y_{z+y}\concat\vecc\alpha^y$. Let $\cev\bN_{z} := \restrict{\cev\bN_{z+y}}{[0,T^{-z})}$ and $\cev\bN^z_y := \restrict{\cev\bN_{z+y}}{[T^{-z},T^{-z-y})}$, where $T^{-z}$ is the hitting time of $-z$ in $\xi(\cev\bN_{z+y})$. By the strong Markov property, these components are independent. Recall the cutoff processes of \eqref{eq:cutoff_def}. In our setting,
 \begin{equation}\label{eq:type-0:Fy-+_decomp}
 \begin{array}{r@{\ =\ }r@{\;\concat\;}l}
  \cutoffG{-z}{\bN_{z+y,\beta}} &\displaystyle\cev\bN_{z} \concat \cutoffGB{0}{\cev\bN^z_y} & \cutoffG{y}{\bN_{\beta}},\\[.2cm]
  \text{and} \quad
  \cutoffL{-z}{\bN_{z+y,\beta}} &\displaystyle					  \cutoffLB{0}{\cev\bN^z_y} & \cutoffL{y}{\bN_{\beta}}.
 \end{array}
 \end{equation}
 By Proposition \ref{prop:PRM:Fy-_Fy+}, $F^{\geq 0}_0\big(\cev\bN^z_y\big)$ is conditionally independent of $\cutoffLB{0}{\cev\bN^z_y}$ given $\cev\alpha^y_{z+y}$, with r.c.d.\ $\Pr^1_{\cev\alpha^y_{z+y}}\{F^{\geq 0}_0\in\cdot\,\}$. By Proposition \ref{prop:nice_level}, level 0 is a.s.\ nice for $\cev\bN^z_y$. Thus, by Lemma \ref{lem:cutoff_vs_PP}, $\cutoffGB{0}{\cev\bN^z_y}$ has this same conditional independence given $\cev\alpha^y_{z+y}$; and by Definitions \ref{constr:type-1} and \ref{constr:type-1_2}, it has r.c.d.\ $\Pr^1_{\cev\alpha^y_{z+y}}$. Analogously, substituting Proposition \ref{prop:type-1:Fy-_Fy+} and Lemma \ref{lem:type-1:nice_lvl} for Propositions \ref{prop:PRM:Fy-_Fy+} and \ref{prop:nice_level}, we see that $\cutoffG{y}{\bN_{\beta}}$ is conditionally independent of $\cutoffL{y}{\bN_{\beta}}$ given $\vecc\alpha^y$, with r.c.d.\ $\Pr^1_{\vecc\alpha^y}$. Thus, since $\big(\cev\bN_{z},\cev\bN^z_y,\bN_{\beta}\big)$ is an independent triple, $\cutoffG{-z}{\bN_{z+y,\beta}}$ is therefore conditionally independent of $\cutoffL{-z}{\bN_{{z+y},\beta}}$ given $\alpha^y_{z+y}$, with r.c.d.\ $\Pr^0_{z,\alpha^y_{z+y}}$. Now, \eqref{eq:type-0:simple_Markov_1} follows by Lemma \ref{lem:cutoff_skewer} \ref{item:CPS:cutoff_skewer_2}.
\end{proof}

We now show that for any $\beta\in\IPspace$, under $\BPr^0_{\beta}$ we have $(\IPmag{\alpha^y},y\geq0)\sim\BESQ[1]$.

\begin{proof}[Proof of type-0 assertion of Theorem \ref{thm:BESQ_total_mass}]
 By definition, type-0 evolutions are continuous, so it suffices to show that the total mass process is a Markov process with the same transition kernel as \BESQ[1]. First assume $\beta = \varnothing$. The marginal distribution of \BESQ[1] is given in \cite[(50)]{GoinYor03} as
 $$q_y^1(0,b)db=\frac{1}{\sqrt{2y}}\frac{1}{\sqrt{\pi}}b^{-1/2}e^{-b/2y}db,$$
 which is the \GammaDist[1/2,1/2y] distribution. Note that there is no point mass at $b=0$, as 0 is reflecting for \BESQ[1]. As noted in \eqref{eq:transn:PDIP}, $R^y(S^y)\sim\GammaDist[1/2,1/2y]$ as well. The extension to finite-dimensional marginals follows as in the proof of Proposition \ref{prop:BESQ_mass_temp}. This completes the proof when $\beta = \varnothing$. Now, by Definition \ref{constr:type_0}, the total mass process of a type-0 evolution from a general initial state is a \BESQ[1] added to the total mass process of an independent type-1 evolution, which by Proposition \ref{prop:BESQ_mass_temp} is a \BESQ[0]. Thus, the theorem follows from the well-known additivity property of \BESQ-processes; see e.g.\ \cite[Theorem XI.(1.2)]{RevuzYor}.
\end{proof}

We note one additional connection between type-0 and type-1 evolutions.

\begin{proposition}\label{prop:remaining_type-0}
 Fix $\beta\in\IPspace$, $a>0$, and let $\gamma := \{(0,a)\}\concat\beta$. Consider an independent pair $((\alpha^y,\,y\ge0),\,\bff)$, with $(\alpha^y)\sim\BPr^0_{\beta}$ and $\bff$ a \BESQ[-1] starting from $a$ and absorbed at 0. Let $(\td\alpha^y,\,y\ge0)\sim\BPr^1_{\gamma}$ and let $Y$ denote the lifetime of the original leftmost block in $(\td\alpha^y)$. Then $Y$ is an $(\cFI^y)$-stopping time and $(\td\alpha^y,\ y\in [0,Y))  \stackrel{d}{=} \big(\big\{(0,\bff(y))\big\} \concat \alpha^y,\ y\in[0,\life(\bff)) \big).$
\end{proposition}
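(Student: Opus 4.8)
The plan is to realize both sides of the claimed identity as skewers of explicitly coupled point processes of spindles, so that the equality in distribution becomes an almost-sure identity of skewer processes under a suitable coupling, truncated at the lifetime of the leftmost block. First I would recall the construction of $(\td\alpha^y)\sim\BPr^1_\gamma$ from Definition \ref{constr:type-1}: since $\gamma = \{(0,a)\}\concat\beta$, the associated point process is $\bN_\gamma = \bN_{(0,a)}\concat\bN_\beta$, where $\bN_{(0,a)}$ is an independent clade with law $\mClade^+(\,\cdot\;|\;m^0 = a)$ and $\bN_\beta$ has law $\Pr^1_\beta$. By Proposition \ref{prop:clade_splitting}\ref{prop:c_s:clade_law}, $\bN_{(0,a)}$ has the form $\Dirac{0,\bff} + \restrict{\bN}{[0,\widehat T^0)}$ where $\bff$ is a \BESQ[-1] from $a$ absorbed at $0$, $\bN$ is an independent \PRM[\Leb\otimes\mBxc], and $\widehat T^0 = \inf\{t>0\colon\xi_{\bN}(t) = -\zeta(\bff)\}$. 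The key observation is that the leftmost block of $\td\alpha^y$ is exactly the level-$y$ mass $\bff(y)$ of this initial spindle $\bff$, so $Y = \zeta(\bff)$ is its \BESQ[-1] absorption time. I would note that $Y$ is an $(\cFI^y)$-stopping time since $\{Y \le y\}$ is determined by whether $\td\alpha^y$ still has its original leftmost block, which is a continuous functional readable from the path up to level $y$; more carefully, one uses that the leftmost-block mass process is adapted and the hitting time of $0$ by a continuous adapted process is a stopping time in the right-continuous filtration.

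Next I would analyze the structure of $\td\alpha^y$ for $y < Y = \zeta(\bff)$. Writing $\bN_\gamma = \Dirac{0,\bff} + \restrict{\bN}{[0,\widehat T^0)} \concat \bN_\beta$, the scaffolding $\xi(\bN_\gamma)$ starts at height $\zeta(\bff)$ (the jump of the initial spindle), descends, and for $y < \zeta(\bff)$ the initial spindle $\bff$ is still alive at level $y$, contributing the leftmost block $\bff(y)$. Everything to the right of that leftmost spindle, skewered at level $y$, is $\skewer(y, \restrict{\bN}{[0,\widehat T^0)}\concat\bN_\beta)$. I would then want to recognize $\restrict{\bN}{[0,\widehat T^0)}\concat\bN_\beta$, skewered and observed only up to level $\zeta(\bff)$, as a type-0 evolution started from $\beta$. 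Here the natural move is to invoke Proposition \ref{prop:remaining_type-0}'s own ingredients in reverse: the point process $\restrict{\bN}{[0,\widehat T^0)}$ is a \Stable[\frac32] first-passage descent from $\zeta(\bff)$ down to $0$ with marked jumps, which below any level $z < \zeta(\bff)$ looks (by the strong Markov property of $\bN$ and time/level reversal, exactly as in \eqref{eq:type-0:consistency} and the construction of $(\cev\alpha^y)$ before Definition \ref{constr:type_0}) like the immigration part of a type-0 evolution. Concatenating the contribution of $\bN_\beta$ gives the type-1 part. So $\big(\skewer(y,\restrict{\bN}{[0,\widehat T^0)}\concat\bN_\beta),\ y \in [0,\zeta(\bff))\big) \stackrel{d}{=} (\alpha^y,\ y\in[0,\zeta(\bff)))$ with $(\alpha^y)\sim\BPr^0_\beta$, jointly with and independent of $\bff$ in the appropriate sense, since $\bff$ is independent of $(\bN,\bN_\beta)$.

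Assembling these pieces: for $y < Y = \zeta(\bff)$ we have $\td\alpha^y = \{(0,\bff(y))\}\concat\skewer(y,\restrict{\bN}{[0,\widehat T^0)}\concat\bN_\beta)$, and the pair $\big(\bff,\ (\skewer(y,\restrict{\bN}{[0,\widehat T^0)}\concat\bN_\beta),\,y\ge0)\big)$ has the same law as $(\bff,(\alpha^y))$ with $(\alpha^y)\sim\BPr^0_\beta$ independent of $\bff$ and $\life(\bff) = \zeta(\bff) \stackrel{d}{=} Y$. This yields $(\td\alpha^y,\ y\in[0,Y)) \stackrel{d}{=} \big(\{(0,\bff(y))\}\concat\alpha^y,\ y\in[0,\life(\bff))\big)$, which is the claim. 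The main obstacle I anticipate is the identification of $\restrict{\bN}{[0,\widehat T^0)}\concat\bN_\beta$ (skewered, restricted to levels below $\zeta(\bff)$) as genuinely a type-0 evolution from $\beta$: this requires care because the type-0 evolution was constructed abstractly via Kolmogorov extension rather than directly as a skewer, so one must match the immigration-part construction using the consistency relation \eqref{eq:type-0:consistency} and the strong Markov property of $\bN$ at the hitting times $T^{-j}$, and one must also check that the joint law with $\bff$ (not just the marginal) is the independent product, which follows from the independence of $\bff$ from $(\bN,\bN_\beta)$ and from the fact that the cut point $\widehat T^0$ depends on $\bff$ only through $\zeta(\bff)$, under which conditioning the descent below level $\zeta(\bff)$ has the claimed law. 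Continuity of all processes involved (Proposition \ref{prop:type-1:cts}, Corollary \ref{cor:clade:cts_skewer}) ensures the truncation at the stopping time $Y$ is well behaved.
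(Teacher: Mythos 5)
Your proposal is correct and follows essentially the same route as the paper's proof: use Proposition~\ref{prop:clade_splitting} to realize the $\{(0,a)\}$-clade as $\Dirac{0,\bff}+\restrict{\bN}{[0,\widehat T^0)}$, identify the descent of that clade below level $\zeta(\bff)$ with the immigration part $(\cev\alpha^y)$ of the type-0 construction via the consistency relation \eqref{eq:type-0:consistency}, and concatenate with the independent type-1 evolution from $\beta$. The only differences are cosmetic — the paper treats $\beta=\emptyset$ first and then concatenates, and it doesn't spell out the stopping-time check or the conditioning on $\zeta(\bff)$ that you flag — and your notation $\skewer(y,\restrict{\bN}{[0,\widehat T^0)}\concat\bN_\beta)$ should more carefully be a three-argument skewer with the scaffolding shifted up by $\zeta(\bff)$, as in the $\cev\alpha^y_j$ construction, but the underlying argument is the one the paper gives.
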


\begin{proof}
 We begin with $\beta = \emptyset$. Let $\bN_{\gamma}\sim \Pr^1_{\{(0,a)\}} = \mClade^+(\,\cdot\;|\;m^0=a)$. By Proposition \ref{prop:clade_splitting}, $\bN_{\gamma}$ is distributed like $\Dirac{0,\bff}+\restrict{\bN}{[0,T^{-\life(\bff)})}$, where $\bff$ is a \BESQ[-1] starting from $a$, independent of $\bN$. Comparing this to the construction of the type-0 evolution $(\cev\alpha^y)$ around \eqref{eq:type-0:consistency} proves the claimed identity in this case. For other values of $\beta$, the type-1 and type-0 evolutions with respective laws $\BPr^1_{\gamma}$ and $\BPr^0_{\beta}$ may be constructed by concatenating each of the evolutions in the previous case with an independent type-1 evolution with law $\BPr^1_{\beta}$. 
\end{proof}

We define $L\colon \HIPspace\to [0,\infty)$ to map an interval partition to the mass of its leftmost block, or 0 if none exists. Let $R\colon\HIPspace\to [0,\infty)$ denote the remaining mass, $R(\alpha) = \IPmag{\alpha}-L(\alpha)$. It is not hard to see that these maps are measurable.

\begin{corollary}\label{cor:remaining_BESQ}
 Let $(\alpha^y,\,y\ge0)$ be a type-1 evolution. Let $Y := \inf\{y > 0\colon L(\alpha^y-) = 0\}$. Then $(L(\alpha^y),\,y\in [0,Y))$ and $(R(\alpha^y),\,y\in [0,Y))$ are jointly distributed as an independent \BESQ[-1] and \BESQ[1], stopped when the \BESQ[-1] hits zero.
\end{corollary}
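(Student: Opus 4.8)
The plan is to reduce this corollary to Proposition \ref{prop:remaining_type-0} together with Theorem \ref{thm:BESQ_total_mass}. First I would observe that the statement only concerns the type-1 evolution up to the lifetime $Y$ of its \emph{original} leftmost block, and that this leftmost block is precisely the block whose mass evolves as the initial \BESQ[-1] spindle. Concretely, start a type-1 evolution $(\alpha^y,\,y\ge0)$ from some $\gamma=\{(0,a)\}\concat\beta$; since Corollary \ref{cor:remaining_BESQ} is stated for a general type-1 evolution, I would first argue that it suffices to prove it for such $\gamma$, because for any $\gamma\in\IPspace$ with $L(\gamma)=a>0$ we may write $\gamma = \{(0,a)\}\concat\beta$ with $\beta$ the partition of the remaining blocks, and if $L(\gamma)=0$ then $Y=0$ and there is nothing to prove.

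Next I would invoke Proposition \ref{prop:remaining_type-0}: with $\bff$ a \BESQ[-1] from $a$ absorbed at $0$ and $(\alpha^y_0,\,y\ge0)\sim\BPr^0_\beta$ an independent type-0 evolution, we have the identity in law
\[
 \big(\td\alpha^y,\ y\in[0,Y)\big) \stackrel{d}{=} \big(\{(0,\bff(y))\}\concat\alpha^y_0,\ y\in[0,\life(\bff))\big),
\]
and $Y$ is an $(\cFI^y)$-stopping time (this also identifies $Y$ with $\life(\bff)$ under the coupling, so the interval $[0,Y)$ on the left matches $[0,\life(\bff))$ on the right). Under this coupling, the leftmost-block mass process $L(\td\alpha^y)$ corresponds exactly to $\bff(y)$, which is a \BESQ[-1] from $a$ stopped at its hitting time of $0$; and the remaining-mass process $R(\td\alpha^y)=\IPmag{\td\alpha^y}-L(\td\alpha^y)$ corresponds to $\IPmag{\alpha^y_0}$, the total mass of the type-0 evolution $(\alpha^y_0)$. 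By the type-0 case of Theorem \ref{thm:BESQ_total_mass}, $(\IPmag{\alpha^y_0},\,y\ge0)$ is a \BESQ[1] started from $\IPmag{\beta}$; and it is independent of $\bff$ by construction in Proposition \ref{prop:remaining_type-0}. Since the \BESQ[1] in the coupling is run on its own clock, unstopped, while we only observe it on $[0,\life(\bff))$, the joint law of $(L,R)$ restricted to $[0,Y)$ is that of an independent \BESQ[-1]/\BESQ[1] pair observed until the \BESQ[-1] hits $0$, which is exactly the claim.

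The only genuinely delicate point is matching the measurable maps $L$ and $R$ on $\IPspace$ with the components $\bff(y)$ and $\IPmag{\alpha^y_0}$ under the identity in law of Proposition \ref{prop:remaining_type-0}, i.e.\ confirming that on $[0,Y)$ the original leftmost block never gets overtaken in position by another block (so that $L(\td\alpha^y)$ really is the mass of that distinguished block, not merely of \emph{some} leftmost block), and that $R$ reads off the total mass of the concatenated tail. This is essentially built into the concatenation structure $\{(0,\bff(y))\}\concat\alpha^y_0$ — the distinguished block sits at the far left by construction, with positive mass on $[0,\life(\bff))$ — but I would spell out that $L$ and $R$ are measurable (as noted just before the corollary) and commute appropriately with $\concat$, so that applying them to both sides of the displayed identity in law yields the desired joint identity in law. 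Everything else is a direct citation of the two quoted results plus the elementary observation that independence and the stopping are preserved under the coupling. I do not expect any serious obstacle beyond this bookkeeping.
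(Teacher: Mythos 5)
Your proposal is correct and matches the paper's own argument, which is the one-line deduction from Proposition \ref{prop:remaining_type-0} and Theorem \ref{thm:BESQ_total_mass}; you have simply spelled out the bookkeeping (identifying $Y$ with $\life(\bff)$ under the coupling and reading off $L$ and $R$ from the concatenation) that the paper leaves implicit.
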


\begin{proof} This follows from Proposition \ref{prop:remaining_type-0} and Theorem \ref{thm:BESQ_total_mass}.\end{proof}

\subsection{Continuity in initial state and strong Markov properties}
\label{sec:Markov}

\begin{proposition}[Continuity in the initial state]\label{prop:type-1:cts_in_init_state}
 For $f\colon\IPspace\to [0,\infty)$ bounded and continuous and $z>0$, the maps $\beta\mapsto\BPr^1_{\beta}[f(\alpha^z)]$ and $\beta\mapsto\BPr^0_{\beta}[f(\alpha^z)]$ are continuous on $(\IPspace,\dI)$.
\end{proposition}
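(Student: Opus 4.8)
\textbf{Proof plan for Proposition \ref{prop:type-1:cts_in_init_state}.}

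The plan is to exploit the clade decomposition $\bN_\beta = \ConcatIL_{U\in\beta}\bN_U$ together with the fact that $\skewer$ commutes with concatenation (as used in Proposition \ref{prop:type-1:gen_transn}), so that $\alpha^z = \ConcatIL_{U\in\beta}\alpha^z_U$ where each $\alpha^z_U\sim\wh\alpha^z$ with $a=\Leb(U)$ is independent across $U$. The explicit transition kernel of Proposition \ref{prop:type-1:transn} then gives a completely concrete description: under $\BPr^1_\beta$, the law of $\alpha^z$ is the distribution of a concatenation of a $\beta$-indexed family of independent partitions, each surviving with probability $1-e^{-\Leb(U)/2z}$ and, given survival, having the form \eqref{eq:transn:given_life} driven by the triple $(L^z, S^z, R^z)$ whose laws depend measurably and — crucially — \emph{continuously} on $a = \Leb(U)$ through \eqref{eq:transn:LMB}--\eqref{eq:transn:Levy_meas}. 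The strategy is to show that if $\beta_n\to\beta$ in $(\IPspace,\dI)$, then one can couple the corresponding families $(\alpha^z_{U})$ so that $\dI(\alpha^z(\beta_n),\alpha^z(\beta))\to0$ in probability; since $f$ is bounded and continuous, dominated convergence then yields $\BPr^1_{\beta_n}[f(\alpha^z)]\to\BPr^1_\beta[f(\alpha^z)]$.

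First I would set up the coupling. Using $\dI$-convergence $\beta_n\to\beta$ and Lemma \ref{lem:finite_survivors}, fix $\epsilon>0$, choose finitely many large blocks $U_1,\dots,U_k$ of $\beta$ carrying all but $\epsilon$ of the mass and such that these blocks survive to level $z$ with controlled probability, and use the correspondence structure in Definition \ref{def:IP:metric} to match each $U_i$ with a block $U_i^{(n)}$ of $\beta_n$ with $\Leb(U_i^{(n)})\to\Leb(U_i)$ and $\IPLT_{\beta_n}(U_i^{(n)})\to\IPLT_\beta(U_i)$. For each matched pair I would build $\alpha^z_{U_i}$ and $\alpha^z_{U_i^{(n)}}$ on a common probability space from the same randomness, scaling the driving \BESQ[-1] leftmost spindles as in the coupling in the proof of Proposition \ref{prop:cts_lt_at_0} and using a shared inverse-Gaussian subordinator $R^z$ and shared exponential clock $S^z$, exploiting that the Lévy measure $\Pi^z$ and Laplace exponent $\Phi^z$ in \eqref{eq:transn:Levy_meas} do not depend on $a$ at all, while $L^z$ and the survival indicator depend continuously on $a$. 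Then $\dI(\alpha^z_{U_i^{(n)}},\alpha^z_{U_i})\to0$ a.s.\ for each $i$, because the leftmost-block mass converges by \BESQ\ scaling continuity, the survival events eventually agree, the remaining-partition component $\{(R^z(s-),R^z(s))\}$ is literally identical, and the diversities converge by Proposition \ref{prop:type-1:LT_diversity}. The contribution to $\dI$ from all the unmatched small blocks (of both $\beta$ and $\beta_n$) is controlled by Lemma \ref{lem:finite_survivors}: their total surviving mass has expectation bounded by $\tfrac{1}{2z}\sum_{U\text{ small}}\Leb(U)$, which is $O(\epsilon)$ uniformly in $n$ for $n$ large, and by Proposition \ref{prop:BESQ_mass_temp} (the total mass is a \BESQ[0], converging in distribution since the initial masses $\IPmag{\beta_n}\to\IPmag{\beta}$) one also controls the mass-discrepancy terms \ref{item:IP_m:mass_1}--\ref{item:IP_m:mass_2} in the distortion. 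Assembling via Lemma \ref{lem:IP:concat}(ii), $\dI(\alpha^z(\beta_n),\alpha^z(\beta))$ is bounded by $\sum_i\dI(\alpha^z_{U_i^{(n)}},\alpha^z_{U_i}) + O(\epsilon)$, giving convergence in probability.

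For the type-0 case, by Definition \ref{constr:type_0} we have $\alpha^z = \cev\alpha^z\concat\vecc\alpha^z$ with $\cev\alpha^z$ a type-0 evolution from $\emptyset$ that does not depend on the initial state $\beta$ at all, and $\vecc\alpha^z$ an independent type-1 evolution from $\beta$; so $\BPr^0_\beta[f(\alpha^z)] = \EV[f(\cev\alpha^z\concat\vecc\alpha^z)]$ where the dependence on $\beta$ is entirely through $\vecc\alpha^z$, and continuity follows from the type-1 case together with the fact that $\gamma\mapsto\cev\alpha^z\concat\gamma$ is $\dI$-Lipschitz by Lemma \ref{lem:IP:concat}(ii) and bounded continuous $f$. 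The main obstacle I anticipate is the bookkeeping in the coupling: making the matching of blocks between $\beta_n$ and $\beta$ genuinely compatible with the product-of-clades structure, and verifying that the leftover terms in the four quantities of Definition \ref{def:IP:metric} — particularly the two mass-deficit terms \ref{item:IP_m:mass_1} and \ref{item:IP_m:mass_2}, which involve $\IPmag{\alpha^z(\beta_n)}$ versus the partial sums over matched blocks — are simultaneously controlled; this is where one needs both Lemma \ref{lem:finite_survivors} (to bound surviving small-block mass) and the \BESQ[0] total-mass identity of Theorem \ref{thm:BESQ_total_mass} (to control the total). Everything else is a routine diagonal/$\epsilon$ argument plus dominated convergence.
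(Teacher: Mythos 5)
Your overall strategy---decompose into clades, couple matched large blocks, control the surviving mass of the unmatched small blocks via Lemma \ref{lem:finite_survivors}, assemble with Lemma \ref{lem:IP:concat}, and finish by dominated convergence---is exactly the architecture of the paper's proof. The difference is in the coupling mechanism, and that is where your plan is both genuinely different and, as described, has real gaps.

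The paper couples at the level of the \emph{entire clade}: for matched blocks $(U_j,V_j)$ with masses $a_j,b_j$, it sets $\bN_{V_j} := (b_j/a_j)\scaleH\bN_{U_j}$. By Lemma \ref{lem:clade:mass_ker} this exactly has law $\mClade^+(\,\cdot\,|\,m^0=b_j)$, and the resulting skewer satisfies $\beta^y_{V_j} = (b_j/a_j)\scaleI\alpha^{y\,a_j/b_j}_{U_j}$ identically. The error at level $z$ therefore splits cleanly into a scaling error (bounded by \eqref{eq:IP:scaling_dist_1}, controlled because $\IPLT_{\alpha^z}(\infty)$ and $\IPmag{\alpha^z}$ are bounded on $E_1$) and a level-shift error ($\dI(\beta^{z b_j/a_j}_{V_j},\beta^z_{V_j})$, controlled by the H\"older continuity of the skewer path via the event $E_3^j$, together with \eqref{eq:IP:scaling_dist_2}). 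Crucially, under this coupling the survival events are \emph{deterministically} linked: $\bN_{V_j}$ survives past level $z$ iff $\bN_{U_j}$ survives past level $z\,a_j/b_j$, so there is no separate Bernoulli coupling to manage.

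Your coupling instead tries to share $(S^z,R^z)$ across matched clades and couple $L^z$ and survival separately. In principle this is a valid coupling of the level-$z$ marginals of Proposition \ref{prop:type-1:transn}, and it would avoid needing path continuity (the event $E_3^j$). But several steps as described do not hold up. First, you mix two incompatible couplings: the spindle-scaling-and-cut coupling from Proposition \ref{prop:cts_lt_at_0} (which produces a pathwise coupling of the whole $\bN_U$'s and does \emph{not} yield a shared $R^z$ at level $z$) and the shared-$(S^z,R^z)$ marginal coupling; you cannot have both simultaneously. Second, $L^z$ is not the time-$z$ value of a $\BESQ[-1]$: it is the mass of whichever spindle is leftmost at level $z$ conditionally on clade survival, with density \eqref{eq:transn:LMB}, and the original spindle may have died before level $z$ even when the clade survives. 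So ``$\BESQ$ scaling continuity'' does not give the $L^z$ coupling; you would need e.g.\ an inverse-CDF coupling exploiting continuity of \eqref{eq:transn:LMB} in $a$, which is doable but not what you cite. Third, ``the survival events eventually agree'' is too loose: for each fixed $n$ they disagree with probability $\asymp |e^{-a_i^{(n)}/2z}-e^{-a_i/2z}|$, and you need to sum this over $i$ and also bound the $\dI$-cost of a disagreement (a clade surviving in one and not the other contributes its entire level-$z$ mass and diversity to the distortion), which requires an explicit argument, not ``eventually.'' Finally, Proposition \ref{prop:type-1:LT_diversity} is about the identity $\IPLT_{\alpha^y}\circ M^y = \ell^y$ for a single $\bN_\beta$; it does not address convergence of diversities across a coupling. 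The type-0 reduction via concatenation with the $\beta$-independent $\cev\alpha^z$ is correct and matches the paper.

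In short: the high-level strategy is the same, but the paper's coupling by exact scaling of the clade is substantially cleaner and automatically handles survival; your proposed coupling is workable in principle but is described inconsistently, and the four issues above (incompatible couplings, the $L^z$ coupling, the survival disagreement bound, and the misattributed diversity argument) would all have to be repaired before it becomes a proof.
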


\begin{proof}
 Fix $z>0$. We begin with the type-1 assertion. We will show that for every $\epsilon>0$ and $\alpha\in\IPspace$ there is some $\delta>0$ such that for $\beta\in\IPspace$, $\dI(\alpha,\beta)<\delta$ implies the existence of a pair of type-1 evolutions $(\alpha^y,\,y\geq 0)$ and $(\beta^y,\,y\geq 0)$ starting from these two initial states, with
 \begin{equation}
  \Pr\{\dI(\alpha^{z},\beta^{z}) \geq 3\epsilon\} < 6\epsilon.\label{eq:level_z_IP_cnvgc_claim}
 \end{equation}
 
 Fix $0 < \epsilon < z$ and $\alpha\in\IPspace$. Let $U_1,U_2,\ldots$ denote the blocks of $\alpha$, listed in non-increasing order by mass. Let $(\bN_{U_j})_{j\geq 1}$ be as in Definition \ref{constr:type-1}, let $(\alpha_{U_j}^y,\,y\geq 0) := \skewerP(\bN_{U_j})$, and set $a_j := \Leb(U_j)$. We take suitable a.s.\ versions so that the process $(\alpha^y,\,y\geq 0)$ formed by concatenating the $(\alpha_{U_j}^y)$ according to the interval partition order of the $U_j$ in $\alpha$, as in \eqref{eq:concat_skewers:type-1}, is a type-1 evolution starting from $\alpha$.
 
 We take $L$, $M$, and $K$ sufficiently large and $\delta > 0$ sufficiently small so that setting
 \begin{gather*}
  E_1 := \big\{\IPLT_{\alpha^{z}}(\infty) < L;\ \IPmag{\alpha^{z}} < M\big\}, \qquad E_2 := \big\{ \forall j > K,\,\zeta^+(\bN_{U_j}) < z\big\},\\
  \text{and}\quad E^j_3 := \left\{ \sup\nolimits_{y \in \left[\left(1-(\delta/a_K)\right)z,\left(1+(\delta/a_K)\right)z\right]}\dI\left(\alpha^y_{U_j},\alpha^z_{U_j}\right) < \frac{\epsilon}{K} \right\} \quad \text{for }j\in [K],
 \end{gather*}
 we have $\Pr(E_1)\geq 1-\epsilon$, $\Pr(E_2)\geq 1-\epsilon$, and $\Pr(E^j_3)\geq 1-(\epsilon/K)$ for each $j$. By Lemma \ref{lem:finite_survivors}, it suffices that we take the smallest $K$ large enough that $\sum_{j>K}a_j < 2z\epsilon$. The existence of such a $\delta$ is then guaranteed by the continuity of the type-1 evolution. We further require
 \begin{equation}\label{eq:partn_SM_1_delta_constraints}
  \delta < \min\left\{a_K,z\epsilon,\frac{\epsilon a_K}{K L}, \frac{\epsilon a_K}{K M}\right\}.
 \end{equation}
 
 Now take $\beta\in\IPspace$ with $\dI(\alpha,\beta) < \delta$. By definition of $\dI$, there exists a correspondence $(\td U_j,\td V_j)_{j\in [\td K]}$ from $\alpha$ to $\beta$ with distortion less than $\delta$. Since $\delta < a_K$, we get $\td K\geq K$ and
 $\{U_j\}_{j\in [K]} \subseteq \big\{\td U_j\big\}_{j\in [\td K]}$.
 Let $(V_j)_{j\in [K]}$ denote the terms paired with the respective $U_j$ in the correspondence; i.e.\ for each $j\in [K]$, the pair $(U_j,V_j)$ equals $(\td U_i,\td V_i)$ for some $i\in [\td K]$. For $j\in [K]$, let $b_j := \Leb(V_j)$.
 
 We assume w.l.o.g.\ that our probability space is sufficiently large for the following construction 
 of a type-1 evolution $(\beta^y,\,y\geq 0)$ starting from $\beta$, coupled with $(\alpha^y,\,y\ge0)$. For $j\in [K]$, set $\bN_{V_j} := (b_j/a_j)\scaleH\bN_{U_j}$.  We take $(\bN_V,\,V\in\beta\setminus\{V_j\colon j\in [K]\})$ to be an independent family, independent of $(\bN_U,\,U\in\alpha)$, with distributions as in Definition \ref{constr:type-1}. We write $(\beta_V^y,\,y\geq 0) := \skewerP(\bN_V)$ for each $V\in\beta$. From Lemma \ref{lem:clade:mass_ker} and the definition of $\scaleH$ in \eqref{eq:clade:xform_def}, we deduce that for $j\in [K]$ and $y\geq 0$,
 \begin{equation*}
  \bN_{V_j} \sim \mClade^+\left(\,\cdot\;\middle|\;m^0 = \Leb(V_j)\right) \qquad \text{and} \qquad
  \beta_{V_j}^y = \scaleI[\frac{b_j}{a_j}][\alpha_{U_j}^{y(a_j/b_j)}].
 \end{equation*}
 Then $(\beta^y,\,y\geq 0) := \left(\ConcatIL_{V\in \beta}\beta_V^y,\,y\ge0\right)$ is a type-1 evolution from $\beta$.

 By Definition \ref{def:IP:metric} of $\dI$ and our choices of $K$ and $\delta$,
 \begin{equation*}
  \IPmag{\beta} - \sum_{j=1}^K b_j \leq \dI(\alpha,\beta) + \IPmag{\alpha} - \sum_{j=1}^K a_j < \delta + 2z\epsilon < 3z\epsilon.
 \end{equation*}
 Thus, by Lemma \ref{lem:finite_survivors}, the event $E_4 := \big\{ \zeta^+(\bN_V) < z\text{ for every }V\in\beta\setminus \{V_j\colon j\in [K]\}\big\}$ 
 has probability at least $1-3\epsilon$. 
 On $E_2\cap E_4$, the partition $\alpha^z$ is formed by concatenating, in interval partition order, the $\alpha^z_{U_j}$, and correspondingly for $\beta^z$. 
 
 Inequality \eqref{eq:IP:scaling_dist_1} and the last two constraints on $\delta$ in \eqref{eq:partn_SM_1_delta_constraints} imply that on $E_1$,
 \begin{equation*}
  \dI\left(\alpha_{U_j}^z,\beta_{V_j}^{z(b_j/a_j)}\right) \leq \max\left\{\left|\sqrt{\frac{b_j}{a_j}} - 1\right|L, \left|\frac{b_j}{a_j}-1\right|M\right\} < \frac{\epsilon}{K}.
 \end{equation*}
 Moreover, \eqref{eq:IP:scaling_dist_2} implies that for each $j$, on $E^3_j\cap E_1$,
 \begin{equation*}
  \dI\left(\beta^{z(b_j/a_j)}_{V_j},\beta^{z}_{V_j}\right) < \max\left\{\frac{b_j}{a_j},\sqrt{\frac{b_j}{a_j}}\right\}\frac{\epsilon}{K} < \frac{2\epsilon}{K}, \qquad \text{so} \qquad
  \dI\left(\alpha^{z}_{U_j},\beta^{z}_{V_j}\right) < \frac{3\epsilon}{K}.
 \end{equation*}
 Finally, by Lemma \ref{lem:IP:concat}, 
 $\dI(\alpha^{z},\beta^{z}) < 3\epsilon$ on $E_1\cap E_2\cap E_4\cap\bigcap_{j=1}^K E_3^j$, 
 and this intersection has probability at least $1-6\epsilon$, as claimed in \eqref{eq:level_z_IP_cnvgc_claim}.
 
 For the type-0 result, recall Definition \ref{constr:type_0} of a type-0 evolution as $(\cev\alpha^y,\,y\geq0)$ concatenated with an independent type-1 evolution. Suppose $(\alpha^y)$ and $(\beta^y)$ are the coupled type-1 evolutions described above and $(\cev\alpha^y)$ is independent of both. Set $\td\alpha^y := \cev\alpha^y\concat\alpha^y$ and $\td\beta^y := \cev\alpha^y\concat\beta^y$ for $y\geq 0$. Then $\dI(\td\alpha^y,\td\beta^y) = \dI(\alpha^y,\beta^y)$, so our earlier argument applies to the type-0 evolutions $(\td\alpha^y,\,y\geq0)$ and $(\td\beta^y,\,y\geq 0)$.
\end{proof}

\begin{corollary}\label{cor:type-1:cts_init_2}
 Take $m\in\BN$, let $f_1,\ldots,f_m\colon\IPspace\rightarrow[0,\infty)$ be bounded and continuous, and take $0\le y_1<\cdots<y_m$. Then $\beta \mapsto \BPr^1_{\beta}\left[\prod_{i=1}^m f_i(\alpha^{y_i})\right]$ and $\beta \mapsto \BPr^0_{\beta}\left[\prod_{i=1}^m f_i(\alpha^{y_i})\right]$ are continuous.
 %
\end{corollary}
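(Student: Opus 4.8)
The plan is to deduce Corollary \ref{cor:type-1:cts_init_2} from Proposition \ref{prop:type-1:cts_in_init_state} together with the simple Markov properties (Corollary \ref{cor:type-1:simple_Markov} in the type-1 case, Proposition \ref{prop:type-0:simple_Markov} in the type-0 case), by an induction on $m$. The base case $m=1$ is precisely Proposition \ref{prop:type-1:cts_in_init_state} (respectively its type-0 half), after noting that a shift is harmless if $y_1 = 0$, since then $f_1(\alpha^{0}) = f_1(\beta)$ is continuous in $\beta$ and the remaining factors are handled by applying the case $m-1$ at the shifted times. So the real content is the inductive step.

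For the inductive step, fix $0 \le y_1 < \cdots < y_m$ and write, using Corollary \ref{cor:type-1:simple_Markov} with $\eta = f_1(\alpha^{y_1})$ (which is $\cFI^{y_1}$-measurable) and the shift $\theta_{y_1}$,
\begin{equation*}
 \BPr^1_{\beta}\left[\prod_{i=1}^m f_i(\alpha^{y_i})\right] = \BPr^1_{\beta}\left[ f_1(\alpha^{y_1})\, \BPr^1_{\alpha^{y_1}}\left[\prod_{i=2}^m f_i(\alpha^{y_i - y_1})\right]\right].
\end{equation*}
By the inductive hypothesis applied to the $m-1$ functions $f_2,\ldots,f_m$ at times $0 \le y_2 - y_1 < \cdots < y_m - y_1$, the map $g\colon \gamma \mapsto \BPr^1_{\gamma}\big[\prod_{i=2}^m f_i(\alpha^{y_i-y_1})\big]$ is continuous on $(\IPspace,\dI)$; it is also bounded, since the $f_i$ are bounded. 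Thus the right-hand side is $\BPr^1_{\beta}\big[ f_1(\alpha^{y_1})\, g(\alpha^{y_1}) \big] = \BPr^1_{\beta}\big[ (f_1 g)(\alpha^{y_1}) \big]$, and $f_1 g$ is bounded and continuous, so by Proposition \ref{prop:type-1:cts_in_init_state} (in the case $y_1 > 0$) this is continuous in $\beta$. When $y_1 = 0$ one instead writes the quantity as $f_1(\beta)\,\BPr^1_{\beta}\big[\prod_{i=2}^m f_i(\alpha^{y_i})\big]$ and applies continuity of $f_1$ together with the inductive hypothesis at times $0 \le y_2 < \cdots < y_m$. The type-0 case is identical, replacing $\BPr^1$ by $\BPr^0$ and invoking Proposition \ref{prop:type-0:simple_Markov} and the type-0 half of Proposition \ref{prop:type-1:cts_in_init_state}.

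The one technical point to be careful about is measurability of $g$ as a function on $\IPspace$: the inductive hypothesis only gives continuity, but continuity on the metric space $(\IPspace,\dI)$ implies Borel measurability with respect to the Borel $\sigma$-algebra, which by Proposition \ref{prop:Hausdorff}\ref{item:Haus:sig} is the one under which all the laws $\BPr^1_{\gamma}$ and the kernel $\gamma\mapsto\BPr^1_{\gamma}$ are set up, so composing with the canonical process and integrating is legitimate. I expect this measurability bookkeeping, rather than any genuine analytic difficulty, to be the main (minor) obstacle; the substance of the statement is entirely carried by Proposition \ref{prop:type-1:cts_in_init_state} and the Markov property, and the induction merely propagates a single-time continuity statement to finitely many times.
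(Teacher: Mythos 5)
Your proof is correct and matches the paper's argument essentially step for step: both induct on $m$, use the simple Markov property to peel off the first time point, observe that the resulting one-point functional $f_1\cdot g$ is bounded and continuous by the inductive hypothesis, and then invoke Proposition~\ref{prop:type-1:cts_in_init_state} at $z = y_1 > 0$. The paper glosses over the edge case $y_1 = 0$ (where Proposition~\ref{prop:type-1:cts_in_init_state} does not apply because it requires $z>0$, but $\BPr^1_{\beta}[h(\alpha^0)] = h(\beta)$ is continuous directly), which you handle explicitly — a small improvement in precision, not a different approach.
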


\begin{proof}
 The same proof applies to both type-0 and type-1, requiring only Proposition \ref{prop:type-1:cts_in_init_state} and the simple Markov property. We state our proof in terms of the type-1 case. The case $m=1$ is covered by Proposition \ref{prop:type-1:cts_in_init_state}. Assume for induction that for some $m\geq 1$, the assertion holds for all $m$-tuples $(f_1,\ldots,f_m)$ and $y_1 < \cdots < y_m$ as above. Now, fix $0\le y_1<\cdots<y_m<y_{m+1}$ and suppose $f_1,\ldots,f_m,f_{m+1}\colon\IPspace\rightarrow[0,\infty)$ are bounded and continuous. Then by the inductive hypothesis and the continuity of $f_1$, the function
 $$h(\beta) = f_1(\beta)\BPr^1_{\beta}\left[\prod_{i=1}^{m}f_{i+1}(\alpha^{y_{i+1}-y_1})\right]$$
 is bounded and continuous. The simple Markov property, noted in Corollary \ref{cor:type-1:simple_Markov}, and Proposition \ref{prop:type-1:cts_in_init_state} applied to $h$ yield that for all sequences $\beta_j\rightarrow\beta$,
 $$
  \BPr^1_{\beta_j}\left[\prod_{i=1}^{m+1}f_i(\alpha^{y_i})\right] = \BPr^1_{\beta_j}[h(\alpha^{y_1})] \rightarrow
  \BPr^1_{\beta}[h(\alpha^{y_1})] = \BPr^1_{\beta}\left[\prod_{i=1}^{m+1}f_i(\alpha^{y_i})\right].
 $$
 This proves the continuity of 
 $\beta \mapsto \BPr^1_{\beta}\left[\prod_{i=1}^{m+1}f_i(\alpha^{y_i})\right],$ 
 thereby completing the induction.
\end{proof}

\begin{proposition}[Strong Markov properties for type-1 and type-0]\label{prop:strong_Markov}
 Let $\mu$ be a probability distribution on $\IPspace$. Let $Y$ be an a.s.\ finite stopping time in $(\cFI^y,\,y\ge 0)$. Take $\eta,f\colon\cCRI\to [0,\infty)$ measurable, with $\eta$ measurable with respect to $\cFI^y$. Let $\theta_y$ denote the shift operator. Then 
  $\BPr^1_\mu\big[\eta\, f\circ\theta_Y \big] = \BPr^1_{\mu}\left[\eta\, \BPr^1_{\alpha^Y}[f]\right]$ and $\BPr^0_\mu\big[\eta\, f\circ\theta_Y \big] = \BPr^0_{\mu}\left[\eta\, \BPr^0_{\alpha^Y}[f]\right]$.
\end{proposition}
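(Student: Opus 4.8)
The strategy is the standard one for deducing the strong Markov property from the simple Markov property and continuity in the initial state, following e.g.\ \cite[Theorem III.9.4]{RogersWilliams} or \cite[Theorem 17.17]{Kallenberg}. Since the type-1 and type-0 arguments are identical—both rely only on the simple Markov property (Corollaries \ref{cor:type-1:simple_Markov}, Proposition \ref{prop:type-0:simple_Markov}) and continuity in the initial state (Corollary \ref{cor:type-1:cts_init_2})—I will present the type-1 case and remark that the type-0 case follows verbatim. First I would reduce to the case where $f$ is of the product form $f(\alpha^z,\,z\ge0) = \prod_{i=1}^m f_i(\alpha^{v_i})$ with $f_1,\ldots,f_m\colon\IPspace\to[0,\infty)$ bounded and continuous and $0\le v_1<\cdots<v_m$; a monotone class argument then extends the identity to all bounded measurable $f$, and mixing over $\mu$ reduces to the case $\mu = \delta_\beta$. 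By the same reasoning, and because $\eta$ is $\cFI^Y$-measurable and nonnegative, it suffices to prove $\BPr^1_\beta[\eta\, f\circ\theta_Y] = \BPr^1_\beta[\eta\,\BPr^1_{\alpha^Y}[f]]$ for such product $f$.

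The main step is a dyadic discretization of the stopping time. For $n\ge1$ set $Y_n := 2^{-n}\lceil 2^n Y\rceil$, so $Y_n\downarrow Y$ as $n\to\infty$ and $Y_n$ takes values in the countable set $\{k2^{-n}\colon k\ge1\}$ (using $Y\ge0$ a.s.; if $Y=0$ on a positive-probability event the identity there is trivial, so assume $Y>0$). On $\{Y_n = k2^{-n}\}\in\cFI^{k2^{-n}}$, apply the simple Markov property of Corollary \ref{cor:type-1:simple_Markov} at the fixed level $k2^{-n}$: since $\eta\,\cf\{Y_n = k2^{-n}\}$ is $\cFI^{k2^{-n}}$-measurable,
\[
 \BPr^1_\beta\big[\eta\,\cf\{Y_n = k2^{-n}\}\, f\circ\theta_{k2^{-n}}\big] = \BPr^1_\beta\big[\eta\,\cf\{Y_n = k2^{-n}\}\, \BPr^1_{\alpha^{k2^{-n}}}[f]\big] = \BPr^1_\beta\big[\eta\,\cf\{Y_n = k2^{-n}\}\, g(\alpha^{Y_n})\big],
\]
where $g(\gamma) := \BPr^1_\gamma[f] = \BPr^1_\gamma\big[\prod_{i=1}^m f_i(\alpha^{v_i})\big]$. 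Summing over $k\ge1$ gives $\BPr^1_\beta[\eta\, f\circ\theta_{Y_n}] = \BPr^1_\beta[\eta\, g(\alpha^{Y_n})]$ for every $n$.

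It then remains to pass to the limit $n\to\infty$ on both sides. On the left, $f\circ\theta_{Y_n} = \prod_{i=1}^m f_i(\alpha^{Y_n+v_i}) \to \prod_{i=1}^m f_i(\alpha^{Y+v_i}) = f\circ\theta_Y$ pointwise by the a.s.\ path-continuity of the type-1 evolution (Proposition \ref{prop:type-1:cts}) and continuity of the $f_i$; since $\eta$ and the $f_i$ are bounded, dominated convergence gives $\BPr^1_\beta[\eta\, f\circ\theta_{Y_n}]\to\BPr^1_\beta[\eta\, f\circ\theta_Y]$. On the right, $\alpha^{Y_n}\to\alpha^Y$ in $(\IPspace,\dI)$ a.s.\ by path-continuity, so it suffices to know that $g$ is bounded and continuous on $\IPspace$; boundedness is clear since $g\le\prod\sup f_i$, and continuity is precisely Corollary \ref{cor:type-1:cts_init_2}. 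Hence $\BPr^1_\beta[\eta\, g(\alpha^{Y_n})]\to\BPr^1_\beta[\eta\, g(\alpha^Y)] = \BPr^1_\beta[\eta\,\BPr^1_{\alpha^Y}[f]]$ by dominated convergence, completing the proof for product $f$. A monotone class argument upgrades this to arbitrary bounded measurable $f$, and then to the full claim $\BPr^1_\mu[\eta\, f\circ\theta_Y] = \BPr^1_\mu[\eta\,\BPr^1_{\alpha^Y}[f]]$ by mixing; replacing the superscripts $1$ by $0$ throughout and invoking Proposition \ref{prop:type-0:simple_Markov} and the type-0 half of Corollary \ref{cor:type-1:cts_init_2} gives the type-0 statement. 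The only genuinely delicate point is ensuring that the approximation $\alpha^{Y_n}\to\alpha^Y$ feeds correctly into a \emph{continuous} functional of the initial state—i.e.\ that we never need continuity of $\beta\mapsto\BPr^1_\beta$ in total variation, only the weak continuity supplied by Corollary \ref{cor:type-1:cts_init_2}—which is exactly why the reduction to bounded continuous $f_i$ at the outset is essential.
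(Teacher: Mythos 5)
Your proof is correct and follows exactly the approach the paper takes, which is a standard discrete (dyadic) approximation of the stopping time from above, invoking the simple Markov property at each dyadic level and passing to the limit via path-continuity of the process together with continuity of $\beta\mapsto\BPr^i_\beta\big[\prod_j f_j(\alpha^{v_j})\big]$ (Corollary \ref{cor:type-1:cts_init_2}) in place of the Feller property; the paper compresses this into a citation of Kallenberg's Theorem 19.17 while you write it out in full. One small wrinkle: the claim that the case $Y=0$ is ``trivial'' is a mild overstatement---on $\{Y=0\}$ the identity amounts to a Blumenthal-type zero--one statement for $\cFI^0$ under $\BPr^1_\beta$, which is routine but not literally trivial. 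The cleanest fix is to avoid the issue entirely by taking $Y_n := 2^{-n}\big(\lfloor 2^n Y\rfloor + 1\big)$ rather than $2^{-n}\lceil 2^n Y\rceil$, so that $Y_n\ge 2^{-n}>0$ always; one checks $\{Y_n=k2^{-n}\}\in\cFI^{k2^{-n}}$ and, for $A\in\cFI^Y$, that $A\cap\{Y_n=k2^{-n}\}\in\cFI^{k2^{-n}}$, after which the simple Markov property of Corollary \ref{cor:type-1:simple_Markov} applies at every level in the sum, including when $Y=0$, and the rest of your limiting argument goes through verbatim.
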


\begin{proof}
 If $Y$ only takes finitely many values, this is implied by the simple Markov property. In general, this follows via a standard discrete approximation of $Y$, as in the proof of \cite[Theorem 19.17]{Kallenberg}, in which we replace the Feller property by Corollary \ref{cor:type-1:cts_init_2}.
\end{proof}

We now prove our first main theorem, characterizing type-1 and type-0 evolutions as Hunt processes.

\begin{proof}[Proof of Theorems \ref{thm:diffusion_0} and \ref{thm:diffusion}]\label{page:pf_of_diffusions}
 Theorem \ref{thm:diffusion_0} follows from Corollary \ref{cor:type-1:cts_from_Stable} \ref{item:c_f_S:determinstic} and Proposition \ref{prop:strong_Markov}. For Theorem \ref{thm:diffusion}, 
 referring to Sharpe's definition of Borel-right Markov processes and Hunt processes, e.g.\ \cite[Definition A.18]{Li11}, we must check four properties.
 
 (i) The state space $(\IPspace,\dI)$ must be a Radon space. In fact it is Lusin, by Theorem \ref{thm:Lusin}.
 
 (ii) The semi-groups must be Borel measurable in the initial state. From Proposition \ref{prop:type-1:cts_in_init_state}, they are continuous.
 
 (iii) Sample paths must be right-continuous and quasi-left-continuous. In fact they are continuous, by Proposition \ref{prop:type-1:cts} and the construction around \eqref{eq:type-0:consistency}.
 
 (iv) The processes must be strong Markov under a right-continuous filtration. We have this from Proposition \ref{prop:strong_Markov}.
\end{proof}

\subsection{Interval partition evolutions started without diversity}
\label{sec:Hausdorff}

The construction in Definition \ref{constr:type-1} of $\bN_{\beta} = \ConcatIL_{U\in\beta}\bN_U$, for $\beta\in\IPspace$, can be carried out for $\beta\in\HIPspace$ as well. Extending the notation of that definition, let $\Pr^1_{\beta}$ denote the law of the resulting point process. The proof of Proposition \ref{prop:clade_lengths_summable} \ref{item:CLS:CLS} and \ref{item:CLS:kernel} that $\len(\bN_{\beta}) < \infty$ a.s.\ and $\beta\mapsto\Pr^1_{\beta}$ is a kernel still holds, without modification, in this generality. The same is true of the proofs of results in Section \ref{sec:type-1_gen:cts}, from Lemma \ref{lem:finite_survivors} up through Proposition \ref{prop:BESQ_mass_temp}. Several of these involve $\IPLT_{\alpha^y}(t)$ for $y>0$, but none take $y=0$. 
In particular, we note the extensions of Lemmas \ref{lem:finite_survivors} and \ref{lem:type-1:wd} to this setting.

\begin{lemma}\label{lem:Hausdorff_entrance_I}
 For $\beta\in\HIPspace$ and $y>0$, a.s.\ only finitely many of the $(\bN_U,\,U\in\beta)$ survive to level $y$. 
 Moreover, it is a.s.\ the case that for every $y>0$ we have $\skewer(y,\bN_{\beta})\in\IPspace$.
\end{lemma}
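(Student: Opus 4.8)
The plan is to reduce the statement about general $\beta \in \HIPspace$ to the already-established theory for $\beta \in \IPspace$ by using finitely many large blocks of $\beta$ as a ``core'' and controlling the rest via the survival estimate. First I would observe that for $\beta \in \HIPspace$, the construction $\bN_\beta = \ConcatIL_{U\in\beta}\bN_U$ makes sense exactly as in Definition \ref{constr:type-1}, and that $\len(\bN_\beta) < \infty$ a.s.\ by the extension of Proposition \ref{prop:clade_lengths_summable}\itemref{item:CLS:CLS} noted just before this lemma. Write $\alpha^y_U := \skewer(y,\bN_U)$ and $\alpha^y := \skewer(y,\bN_\beta) = \ConcatIL_{U\in\beta}\alpha^y_U$, recalling that the skewer map commutes with concatenation.

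The first assertion, that a.s.\ only finitely many $(\bN_U,\,U\in\beta)$ survive to level $y$, is proved exactly as Lemma \ref{lem:finite_survivors}: the events $\{\alpha^y_U \neq \emptyset\} = \{\life^+(\bN_U) > y\}$ are independent across $U$, with $\Pr\{\alpha^y_U \neq \emptyset\} = 1 - e^{-\Leb(U)/2y}$ by Proposition \ref{prop:type-1:transn} (recall $\bN_U \sim \mClade^+(\,\cdot\;|\;m^0 = \Leb(U))$). Since $\sum_{U\in\beta}\Leb(U) = \IPmag{\beta} < \infty$ and $1 - e^{-x} \le x$, the expected number of survivors $\sum_{U\in\beta}(1 - e^{-\Leb(U)/2y}) \le \IPmag{\beta}/2y < \infty$, so by Borel--Cantelli (or simply finiteness of expectation of a nonnegative integer-valued random variable) the number of survivors to level $y$ is a.s.\ finite. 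Taking a countable intersection over rational $y > 0$ and using that survival to level $y$ implies survival to all smaller levels, we get a single a.s.\ event on which, for every $y > 0$, only finitely many clades survive to $y$.

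For the second assertion, fix $y > 0$ and work on the a.s.\ event above together with the a.s.\ event that each $\bN_U \in \Hfins$ (Corollary \ref{cor:clade:cts_skewer} gives $\Hfins$-versions of the $\bN_U$, and we take those versions; the union over the countably many $U \in \beta$ is still a.s.). On this event, $\alpha^y$ is the concatenation of the finitely many nonempty partitions $\alpha^y_U$ as $U$ ranges over the (finite) set of blocks surviving to level $y$; each such $\alpha^y_U$ lies in $\IPspace$ because $\bN_U \in \Hfins$ and $\skewer(y,\bN_U) = \alpha^y_U$. A finite concatenation of partitions in $\IPspace$ is strongly summable and lies in $\IPspace$ by Lemma \ref{lem:IP:concat}\itemref{item:bcPPm:Lusin}, so $\alpha^y = \skewer(y,\bN_\beta) \in \IPspace$. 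Since this holds for every $y > 0$ on a single a.s.\ event (again intersect over rational $y$ and use monotonicity of the survivor set, noting that for $y$ between two rationals the finite set of surviving clades is sandwiched, hence finite, and continuity of each $(\alpha^z_U)$ in $\Hfins$ keeps each piece in $\IPspace$), the claim follows.

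The only mild subtlety --- and the step I would be most careful with --- is passing from ``for each fixed $y$, a.s.'' to ``a.s., for all $y$'': one must ensure the finiteness of the surviving-clade set is uniform on intervals, not just at rational points. This is handled by the monotonicity observation that the set $\{U \in \beta : \life^+(\bN_U) > y\}$ is nonincreasing in $y$, so its cardinality at any $y \in [q_1, q_2]$ is bounded by its cardinality at $q_1$ for rationals $q_1 < q_2$; hence finiteness at all rationals already forces finiteness at all reals. Combined with the $\Hfins$-continuity of the individual clade skewers from Corollary \ref{cor:clade:cts_skewer}, this gives the result at every level $y > 0$ simultaneously. Everything else is a direct transcription of the arguments of Lemmas \ref{lem:finite_survivors} and \ref{lem:type-1:wd}, which, as the paper notes, go through verbatim since they never invoke the level $y = 0$.
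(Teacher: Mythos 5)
Your proof is correct and follows exactly the argument the paper has in mind: the paper does not give a separate proof of this lemma, but instead notes (just before the statement) that the proofs of Lemmas \ref{lem:finite_survivors} and \ref{lem:type-1:wd} carry over verbatim to $\beta\in\HIPspace$ since they never invoke level $y=0$, and your write-out of those two arguments together with the monotonicity-in-$y$ observation (which is precisely the role of the second half of the event in \eqref{eq:good_clades_event}) is faithful to that. One small slip: where you invoke the finite-concatenation fact you wrote \itemref{item:bcPPm:Lusin}, which is a label from Lemma \ref{lem:bi-clade_PP_meas} rather than from Lemma \ref{lem:IP:concat}; both happen to render as item (i), so the text reads as intended, but the correct label is the first item of Lemma \ref{lem:IP:concat}.
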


Proposition \ref{prop:cts_lt_at_0} deals specifically with the diversity of the interval partition at level $0$. It is used in the proof of Proposition \ref{prop:type-1:cts} to check continuous diversities as the type-1 evolution enters from its initial state. However, without this observation, what remains of the proof of Proposition \ref{prop:type-1:cts} includes a proof of the following.

\begin{proposition}\label{prop:cts_from_Hausdorff}
 For $\beta\in\HIPspace$, it is a.s.\ the case that $\skewerP(\bN_{\beta})$ is continuous in $(\HIPspace,\dH)$. Moreover, this process is H\"older-$\theta$ in $(\IPspace,\dI)$ on the time interval $(0,\infty)$, for every $\theta\in (0,\frac14)$.
\end{proposition}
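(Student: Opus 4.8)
The statement has two parts: (a) $\skewerP(\bN_\beta)$ is continuous in $(\HIPspace,\dH)$ for arbitrary $\beta\in\HIPspace$, and (b) on $(0,\infty)$ it is H\"older-$\theta$ in $(\IPspace,\dI)$. The plan is to reduce both parts to results already established for $\beta\in\IPspace$ by exploiting the Markov-like structure: for any $z>0$, the partition $\skewer(z,\bN_\beta)$ lies in $\IPspace$ (Lemma \ref{lem:Hausdorff_entrance_I}), so from level $z$ onwards we are genuinely inside the type-1 theory. Concretely, for part (b): fix $z>0$ and $\theta\in(0,\tfrac14)$. By Lemma \ref{lem:Hausdorff_entrance_I} only finitely many clades $\bN_U$, $U\in\beta$, survive to level $z$, say those indexed by $U\in\beta_z\subseteq\beta$ with $\life^+(\bN_U)>z$. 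For each such $U$, the process $(\alpha^y_U,\,y\ge0):=\skewerP(\bN_U)$ with $\bN_U\sim\mClade^+(\,\cdot\;|\;m^0=\Leb(U))$ is a type-1 evolution from $\{(0,\Leb(U))\}$, hence by Corollary \ref{cor:clade:cts_skewer} is a.s.\ H\"older-$\theta$ in $(\IPspace,\dI)$ on all of $[0,\infty)$. Since $\skewer(y,\bN_\beta)=\ConcatIL_{U\in\beta}\alpha^y_U$ and on the interval $[z,\infty)$ only the finitely many $U\in\beta_z$ contribute, Lemma \ref{lem:IP:concat}\,(ii) (subadditivity of $\dI$ under concatenation, as in \eqref{eq:IP:concat_dist}) gives that $(\skewer(y,\bN_\beta),\,y\ge z)$ is H\"older-$\theta$ with constant bounded by $\sum_{U\in\beta_z}(\text{H\"older constant of }\alpha^\cdot_U)$, a.s.\ finite. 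As $z>0$ was arbitrary, this yields H\"older-$\theta$ continuity on $(0,\infty)$. This is exactly the content of the latter portion of the proof of Proposition \ref{prop:type-1:cts}, which, as the excerpt notes, goes through here verbatim once the entrance-at-$0$ argument (Proposition \ref{prop:cts_lt_at_0}) is excised.

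For part (a), continuity in the weaker metric $\dH$, the remaining issue is continuity at $y=0$, where the diversity need not be continuous (indeed $\beta$ may have no diversity at all). Here the plan is to use that $\dH$ only sees the mass/interval structure, not diversities: by Proposition \ref{prop:Hausdorff}\,(iii), $\dH$ is equivalent to $\dH'$, and by \eqref{eq:IP:Haus_scale}--\eqref{eq:IP:concat_dist} the Hausdorff-type distance $\dH'$ is controlled by block masses and their concatenation in the same subadditive way as $\dI$, but \emph{without} the diversity terms (i) and (ii) of Definition \ref{def:IP:metric}. So to show $\dH(\skewer(y,\bN_\beta),\beta)\to0$ as $y\downto0$ it suffices to control: the convergence $m^y(\bN_U)\to\Leb(U)$ of each surviving block's leftmost-spindle mass (each such block is carried by a \BESQ[-1] spindle started at $\Leb(U)$, hence a.s.\ continuous near $0$ — cf.\ the argument for condition (iii) in the proof of Proposition \ref{prop:type-1:cts}), together with the total-mass convergence $\IPmag{\skewer(y,\bN_\beta)}\to\IPmag\beta$ (from the \BESQ[0] total mass, Proposition \ref{prop:BESQ_mass_temp}, which is valid for $\beta\in\HIPspace$), which controls the residual Hausdorff distortion from small and short-lived blocks via Lemma \ref{lem:finite_survivors}. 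Assembling: choose finitely many large blocks $U_1,\dots,U_k\in\beta$ with $\IPmag\beta-\sum_i\Leb(U_i)<\varepsilon$, pair each with the leftmost block of $\alpha^y_{U_i}$, and bound $\dH'$ by $\sum_i|m^y(\bN_{U_i})-\Leb(U_i)|+\varepsilon+|\IPmag{\skewer(y,\bN_\beta)}-\IPmag\beta|$, which is $<3\varepsilon$ for $y$ small. This is precisely the argument at the end of the proof of Proposition \ref{prop:type-1:cts}, with conditions (i) and (ii) there (the diversity conditions) simply dropped because $\dH'$ does not require them.

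\textbf{Main obstacle.} The only subtlety is making sure the entrance-at-$0$ argument for $\dH'$ genuinely needs \emph{none} of the diversity machinery: one must check that in $(\HIPspace,\dH)$ (equivalently $\dH'$) the relevant estimates \eqref{eq:IP:Haus_scale}, Lemma \ref{lem:IP:concat}\,(ii) for $\dH'$, and measurability of the skewer into $(\HIPspace,\dH)$ (Proposition \ref{prop:skewer_measurable}'s proof, which factors through $\dH$ before invoking $\dI$) all hold without a diversity hypothesis on $\beta$ — which they do, since $\dH,\dH'$ are defined on all of $\HIPspace$. Everything else is a transcription of the $\beta\in\IPspace$ proofs cited above, using at each step that for $y\ge z>0$ we are back in $\IPspace$ by Lemma \ref{lem:Hausdorff_entrance_I}.
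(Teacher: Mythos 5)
Your proof is correct and matches the paper's own argument almost exactly: the paper likewise reuses the first half of the proof of Proposition \ref{prop:type-1:cts} (finite survivors plus subadditivity of $\dI$ under concatenation) for the H\"older part on $[z,\infty)$, and then reuses the second half with only the mass bounds (iii) and (iv) retained — dropping the diversity bounds (i), (ii) — to get $\dH'$-continuity at $y=0$, concluding $\dH$-continuity via Proposition \ref{prop:Hausdorff}\itemref{item:Haus:equiv}.
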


\begin{proof}
 The first part of the proof of Proposition \ref{prop:type-1:cts} applies to show that for $z>0$ the process $(\skewer(y+z,\bN_{\beta}),\,y\ge0)$ has the claimed H\"older continuity on $(\IPspace,\dI)$. By Proposition \ref{prop:Hausdorff}, this implies continuity in $(\HIPspace,\dH)$. Then the latter part of the proof, and particularly the bounds on quantities (iii) and (iv), show that $(\skewer(y,\bN_{\beta}),\,y\ge0)$ enters $\dH'$-continuously from $y=0$, where $\dH'$ is as in Definition \ref{def:Hausdorff}. By Proposition \ref{prop:Hausdorff} \ref{item:Haus:equiv}, this yields $\dH$-continuity.
\end{proof}

We extend the notation of Definition \ref{def:IP_process_space_1} to define $\BPr^1_{\beta}$ for $\beta\in\HIPspace\setminus\IPspace$, to denote the law of a version of $\skewerP(\bN_{\beta})$ that enters $\dH$-continuously and is subsequently $\dI$-continuous. We call this continuous version a \emph{Hausdorff type-1 evolution}. Then, we can do the same for Definition \ref{constr:type_0} of $\Pr^0_{\beta}$, concatenating $(\cev\alpha^y,\,y\ge0)$ with a Hausdorff type-1 evolution to get a \emph{Hausdorff type-0 evolution}. Again all proofs of results in Section \ref{sec:type-0} apply without modification to this variant of the process. In Section \ref{sec:Markov}, 
the same coupling argument used to prove Proposition \ref{prop:type-1:cts_in_init_state} also proves the following variant.

\begin{proposition}\label{prop:type-1:cts_in_init_state_H}
 Let $\beta\in\HIPspace$. For $f\colon\HIPspace\to [0,\infty)$ bounded and continuous and $z>0$, the maps $\beta\mapsto\BPr^1_{\beta}[f(\alpha^z)]$ and $\beta\mapsto\BPr^0_{\beta}[f(\alpha^z)]$ are continuous on $(\HIPspace,\dH)$.
\end{proposition}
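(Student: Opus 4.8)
The plan is to adapt the coupling argument from the proof of Proposition~\ref{prop:type-1:cts_in_init_state} verbatim, checking only that every step survives the weaker topology. The key point is that $\dH$-continuity of $\beta\mapsto\BPr^1_\beta[f(\alpha^z)]$ is actually \emph{easier} to establish than $\dI$-continuity, because the coupling we built there produces even stronger control on block masses than is needed here. Concretely, fix $z>0$, $\alpha\in\HIPspace$, and $0<\epsilon<z$. List the blocks $U_1,U_2,\ldots$ of $\alpha$ in non-increasing mass order, with $a_j:=\Leb(U_j)$, and build $(\alpha^y,\,y\ge0)$ as $\ConcatIL_j\skewerP(\bN_{U_j})$ with $\bN_{U_j}$ as in Definition~\ref{constr:type-1}; this is a Hausdorff type-1 evolution from $\alpha$ by Proposition~\ref{prop:cts_from_Hausdorff}. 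First I would choose $K$ large with $\sum_{j>K}a_j<2z\epsilon$, so that by Lemma~\ref{lem:Hausdorff_entrance_I} (the $\HIPspace$-analogue of Lemma~\ref{lem:finite_survivors}) the clades $\bN_{U_j}$, $j>K$, a.s.\ do not survive to level $z$ with probability at least $1-\epsilon$; and then, using that $\skewer(z,\cdot)\in\IPspace$ a.s.\ for $z>0$ by Lemma~\ref{lem:Hausdorff_entrance_I} and the path-continuity in $\IPspace$ on $(0,\infty)$ from Proposition~\ref{prop:cts_from_Hausdorff}, pick $\delta>0$ small enough that the events $E_1$ (bounding $\IPLT_{\alpha^z}(\infty)$ and $\IPmag{\alpha^z}$), $E_2$, and the $E_3^j$ of that earlier proof all have the required probabilities.

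Next I would take $\beta\in\HIPspace$ with $\dH(\alpha,\beta)<\delta$; here is the only place where the argument differs from the $\dI$ version. Since $\dH$ is weaker than $\dI$ (and than $\dH'$) by Proposition~\ref{prop:Hausdorff}, a $\dH$-ball does not directly give a correspondence with small distortion. However, by Proposition~\ref{prop:Hausdorff}\itemref{item:Haus:equiv} the metrics $\dH$ and $\dH'$ generate the same topology, so I may instead fix $\delta'$ so that $\dH(\alpha,\beta)<\delta$ implies $\dH'(\alpha,\beta)<\delta'$, and $\dH'$ \emph{does} come with a finite correspondence $(U_j,V_j)_{j\in[K']}$ realizing small Hausdorff distortion, i.e.\ controlling quantities \itemref{item:IP_m:mass_1} and \itemref{item:IP_m:mass_2} of Definition~\ref{def:IP:metric}. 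Shrinking $\delta$ further so that $\delta'<a_K$ forces $K'\ge K$ and $\{U_j\}_{j\in[K]}\subseteq\{U_j\}_{j\in[K']}$, and then the coupling $\bN_{V_j}:=(b_j/a_j)\scaleH\bN_{U_j}$ for $j\in[K]$ (with independent fresh clades for the rest of $\beta$) goes through exactly as before, where $b_j:=\Leb(V_j)$. The mass-closeness $|b_j-a_j|$ and the leftover-mass bound $\IPmag{\beta}-\sum_{j\le K}b_j<3z\epsilon$ follow from Hausdorff distortion alone, so $\BESQ$-scaling (via Lemma~\ref{lem:clade:mass_ker} and the scaling identities \eqref{eq:IP:scaling_dist_1}--\eqref{eq:IP:scaling_dist_2}) gives $\dI(\alpha^z_{U_j},\beta^z_{V_j})<3\epsilon/K$ on the relevant events; concatenating via Lemma~\ref{lem:IP:concat} yields $\dI(\alpha^z,\beta^z)<3\epsilon$, hence $\dH(\alpha^z,\beta^z)<3\epsilon$, on an event of probability $\ge 1-6\epsilon$. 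Since $f$ is bounded and $\dH$-continuous, $|\BPr^1_\alpha[f(\alpha^z)]-\BPr^1_\beta[f(\alpha^z)]|$ is then small, proving continuity of $\beta\mapsto\BPr^1_\beta[f(\alpha^z)]$ on $(\HIPspace,\dH)$.

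For the type-0 statement I would argue exactly as in Proposition~\ref{prop:type-1:cts_in_init_state}: writing a Hausdorff type-0 evolution as $(\cev\alpha^y)\concat(\text{Hausdorff type-1 from }\beta)$ per the extension of Definition~\ref{constr:type_0}, couple the two type-1 parts as above and share the same independent $(\cev\alpha^y)$. Then $\dI(\cev\alpha^z\concat\alpha^z,\ \cev\alpha^z\concat\beta^z)=\dI(\alpha^z,\beta^z)$ and likewise in $\dH$ (concatenation on the left is an isometry for both $\dI$ and $\dH'$, cf.\ Lemma~\ref{lem:IP:concat}), so the type-1 estimate transfers directly.

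\textbf{Main obstacle.} The only genuine subtlety is the one flagged above: a $\dH$-ball does not come equipped with a correspondence, whereas the whole coupling is built around one. The fix is to pass through $\dH'$ using the topological equivalence in Proposition~\ref{prop:Hausdorff}\itemref{item:Haus:equiv}, and to note that Hausdorff distortion \itemref{item:IP_m:mass_1}--\itemref{item:IP_m:mass_2} is exactly the part of the distortion the old proof used when bounding block masses and leftover mass; the diversity-based quantities \itemref{lem:IP:concat}(i),(ii) of the distortion are never needed to control $\dH$-closeness of the output, so their absence in the hypothesis is harmless. Everything else is a line-by-line re-run of the existing argument with $\dI$ replaced by $\dH$ in the conclusion and $\IPspace$ by $\HIPspace$ in the state space.
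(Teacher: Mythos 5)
Your proposal is correct and follows the same overall strategy as the paper: pass from $\dH$ to $\dH'$ via the topological equivalence in Proposition~\ref{prop:Hausdorff}\itemref{item:Haus:equiv}, and then re-run the coupling from the proof of Proposition~\ref{prop:type-1:cts_in_init_state}. The one place where you and the paper diverge is in how the coupled evolutions $\alpha^z_{U_j}$ and $\beta^z_{V_j}$ are compared at the end: you keep the diversity bound $L$ in $E_1$, estimate $\dI(\alpha^z,\beta^z)$ using \eqref{eq:IP:scaling_dist_1}--\eqref{eq:IP:scaling_dist_2}, and then convert to $\dH$, whereas the paper drops $L$ entirely (so $E_1$ becomes just $\{\IPmag{\alpha^z}\le M\}$), estimates $\dH'(\alpha^z_{U_j},\beta^z_{V_j})$ directly using the $\dH'$-scaling identity \eqref{eq:IP:Haus_scale}, and concludes. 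The paper's version is slightly cleaner because, as your own ``Main obstacle'' paragraph correctly observes, the diversity-comparing quantities of the distortion never enter the final $\dH$-conclusion -- you simply did not act on that observation in the body, so you carry an extra term through. Both work: your route gives $\dI(\alpha^z,\beta^z)<3\epsilon$, hence $\dH<9\epsilon$ (not $3\epsilon$ -- you drop a factor of $3$ from $\dH\le 3\dH'$, though this is harmless for a continuity conclusion). One more small tidying remark: the choices of $\delta$ and $\delta'$ should be made in the right order -- first decide the required $\dH'$-radius $\delta'$ from the analogue of \eqref{eq:partn_SM_1_delta_constraints} (and use $\delta'$, not $\delta$, in the definition of $E_3^j$), then invoke topological equivalence to pick the $\dH$-radius $\delta$ so that $\dH(\alpha,\beta)<\delta$ forces $\dH'(\alpha,\beta)<\delta'$. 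As written you fix $\delta$ first, which momentarily leaves $\delta'$ uncontrolled; your later ``shrinking $\delta$ further'' salvages this, but the order of quantifiers is worth stating cleanly.
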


\begin{proof}
 By Proposition \ref{prop:Hausdorff} \ref{item:Haus:equiv}, it suffices to prove continuity under $d_H'$. 
 We follow the same argument, but omit the definition of $L$ and resulting bound on $\delta$ in \eqref{eq:partn_SM_1_delta_constraints}. So $E_1$ becomes $\{\IPmag{\alpha^z}\leq M\}$. Then we make the same coupling to define $(\beta^y)$ based on $(\alpha^y)$. In this setting, applying \eqref{eq:IP:Haus_scale}, the final two displays in the proof become:  for each $j$, on $E^3_j\cap E_1$,
 \begin{equation*}
  \dH'\left(\alpha_{U_j}^z,\beta_{V_j}^{z(b_j/a_j)}\right) \leq \left|\frac{b_j}{a_j}-1\right|M < \frac{\epsilon}{K}, \qquad \dH'\left(\beta^{z(b_j/a_j)}_{V_j},\beta^{z}_{V_j}\right) < \frac{b_j}{a_j}\frac{\epsilon}{K} < \frac{2\epsilon}{K},
 \end{equation*}
 and so $\dH'\big(\alpha^{z}_{U_j},\beta^{z}_{V_j}\big) < 3\epsilon/K$. Otherwise, the proof is as before.
\end{proof}

This result extends to a Hausdorff variant of Corollary \ref{cor:type-1:cts_init_2}, in the same manner as before, via the simple Markov property. Then the statement of the strong Markov property, Proposition \ref{prop:strong_Markov}, holds for initial distributions $\mu$ on $\HIPspace$, via the same standard argument.

\section{De-Poissonization and stationary interval-partition diffusions}\label{sec:dePoissonization}

\subsection{Pseudo-stationarity of type-1 and type-0 evolutions}\label{sec:pseudostat}

Neither the \BESQ[0] total mass law of the type-1 evolution nor the \BESQ[1] total mass law of the type-0 admit stationary distributions, so neither do the type-1 or type-0 evolutions themselves. However, we do have the following ``pseudo-stationarity'' results. These will help us prove Theorem \ref{thm:stationary}, which describes stationary variants of the interval partition evolutions. Recall the Poisson-Dirichlet interval partitions of Proposition \ref{prop:PDIP}.

\begin{theorem}[Pseudo-stationarity]\label{thm:pseudostat}
 Consider an independent pair $(B(\,\cdot\,),\ol\beta)$, where $\ol\beta\sim\PDIP[\frac12,0]$ (respectively \PDIP[\frac12,\frac12]) and $B$ is a \BESQ[0] (resp.\ \BESQ[1]) with an arbitrary initial distribution. Let $(\alpha^y,\,y\geq 0)$ be a type-1 (resp.\ type-0) evolution with $\alpha^0\stackrel{d}{=}B(0)\scaleI\ol\beta$. Then for each fixed $y\ge 0$ we have $\alpha^y \stackrel{d}{=} B(y)\scaleI\ol\beta$.
\end{theorem}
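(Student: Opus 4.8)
\textbf{Proof proposal for Theorem \ref{thm:pseudostat}.}

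The plan is to reduce the statement to the type-1 entrance law of Proposition \ref{prop:type-1:transn} together with the total-mass description of Theorem \ref{thm:BESQ_total_mass}, exploiting the self-similarity of the type-1 and type-0 evolutions and the known Poissonian/Gamma-mixture descriptions of $\PDIP[\frac12,0]$ and $\PDIP[\frac12,\frac12]$ from Proposition \ref{prop:PDIP}\ref{item:PDIP:Stable}. First I would observe that it suffices to treat a deterministic initial total mass $B(0)=m$: for a general initial law we condition on $B(0)$ and integrate, and by scaling (Lemma \ref{lem:IP:scale}, equation \eqref{eq:IP:scaling_dist_2}) it in fact suffices to handle $m=1$, i.e.\ $\alpha^0\sim\PDIP[\frac12,0]$ (resp.\ $\PDIP[\frac12,\frac12]$) and $B(0)=1$. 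The claim then becomes: the one-dimensional marginal at level $y$ of a type-1 (resp.\ type-0) evolution started from $\PDIP[\frac12,0]$ (resp.\ $\PDIP[\frac12,\frac12]$) is $B(y)\scaleI\ol\beta$ with $\ol\beta$ an independent copy of the same $\PDIP$ and $B(y)\sim\BESQ[0]_1(y)$ (resp.\ $\BESQ[1]_1(y)$), i.e.\ $B(y)\sim\GammaDist[\tfrac12,\tfrac1{2y}]$ up to the atom at $0$ in the type-1 case.

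The type-0 case is the cleaner one, so I would do it first. Recall from Proposition \ref{prop:PDIP}\ref{item:PDIP:Stable} that a $\PDIP[\frac12,0]$ may be realized as $\IPmag{\beta'}^{-1}\scaleI\beta'$ with $\beta' = \{(0,S-Y(T-))\}\concat\beta$, $\beta$ coming from a \Stable[\frac12] subordinator $Y$ run until it crosses an independent \ExpDist[\lambda] level $S$; equivalently (Remark \ref{rmk:transn:PDIP}, with $\alpha=0$ rather than $\tfrac12$, or directly from Corollary \ref{cor:type-1:cts_from_Stable}\ref{item:c_f_S:mass} and the reversal) a $\GammaDist[\frac12,\rho]$-scaled $\PDIP[\frac12,0]$ is exactly the level-$0$ partition $\alpha^{0,*}$ obtained by skewering a \PRM[\Leb\otimes\mBxc] stopped at $T=\inf\{t\colon M^0_{\bN}(t)>S\}$ with $S\sim\ExpDist[\rho]$. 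I would then compute the law of $\alpha^{y,*}$ at a fixed level $y$ using the clade decomposition: by Lemma \ref{lem:cutoff_skewer}\ref{item:CPS:clades_skewer} and Proposition \ref{prop:bi-clade_PRM}/Corollary \ref{cor:clade_PRM}, $\alpha^{y,*}$ is the concatenation over points $(s,N^+_s)$ of a \PRM[\Leb\otimes\mClade^+] of the partitions $\skewer(y,N^+_s)$, restricted to those $N^+_s$ with $m^0 = $ (block mass at level $0$) drawn from the size-biased-type reweighting dictated by $S\sim\ExpDist[\rho]$. Using the conditional-clade transition law of Proposition \ref{prop:type-1:transn} (the triple $(L^y,S^y,R^y)$, or rather its scaled versions) together with the additivity of the inverse-Gaussian/\Stable[\frac12] building blocks, the resulting superposition over the clade point process should be recognizable, after one explicit Laplace-transform computation in the spirit of the proof of Proposition \ref{prop:BESQ_mass_temp}, as the range of a single \Stable[\frac12] subordinator run to an independent exponential level — i.e.\ as a $\GammaDist[\frac12,\rho']$-scaled $\PDIP[\frac12,\frac12]$ — with the scaling factor equal in law to $B(y)$ by Theorem \ref{thm:BESQ_total_mass}. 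Independence of the scaling factor from the normalized partition is what must be extracted from the computation; this is the analogue of the fact that the total mass of the \Stable[\frac12] subordinator at the exponential crossing time is a Gamma variable independent of the normalized interval partition (Proposition \ref{prop:PDIP}\ref{item:PDIP:Stable}).

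For the type-1 case I would argue similarly but starting from $\alpha^0\sim\PDIP[\frac12,0]$ directly (total mass $1$): realize $\alpha^0$ as $\{(0,\Gamma_1)\}\concat\gamma_0$ where $\Gamma_1,\Gamma_2$ are the two independent $\GammaDist[\frac12]$ pieces from a \Stable[\frac12] subordinator crossing and $\gamma_0$ (appropriately understood) carries the rest, then apply the clade transition kernel (Proposition \ref{prop:type-1:gen_transn}) blockwise, using Proposition \ref{prop:type-1:transn} for each $\wh\alpha^y$ and the mass-additivity of \BESQ[0]; at level $y$ the leftmost-block evolution and the remaining blocks recombine, via the same Laplace-transform/subordinator-additivity calculation, into $B(y)\scaleI\ol\beta$ with $\ol\beta\sim\PDIP[\frac12,0]$, $B(y)\sim\BESQ[0]_1(y)$. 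The main obstacle, and where I expect the real work to be, is the bookkeeping of this superposition: one must show that the concatenation over the clade \PRM\ of the conditional transition partitions of Proposition \ref{prop:type-1:transn}, with the block masses at level $0$ distributed according to the appropriate $\mClade^+\{m^0\in\cdot\}$-weighting induced by the $\PDIP$ structure, reassembles into a genuine \Stable[\frac12]-subordinator-range interval partition, and — crucially — that the overall multiplicative scale decouples from the shape. I would handle the ``shape'' part by identifying the Lévy measure of the reassembled subordinator (a computation with the inverse-Gaussian Lévy measure $\Pi^y$ of \eqref{eq:transn:Levy_meas} mixed against $\mClade\{m^0\in\cdot\}$), and the ``scale independence'' part either by a direct Laplace-functional factorization or by citing the characterization of $\PDIP$ via the Markov property of the underlying subordinator as in Proposition \ref{prop:PDIP} and Remark \ref{rmk:transn:PDIP}. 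Finally, since everything here is a statement about fixed-$y$ marginals, no continuity or Markov-property input is needed beyond what is already established; the pseudo-stationarity at all levels then follows because the argument applies verbatim at each fixed $y$.
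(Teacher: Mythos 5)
Your plan is in the same spirit as the paper's — decompose the $\PDIP$-scaled initial state block by block, use the clade entrance law of Proposition \ref{prop:type-1:transn} together with Laplace-transform bookkeeping, and reassemble — but the architecture of your reduction leaves a genuine gap. You assert at the outset that ``it suffices to handle $m=1$,'' i.e.\ a fixed deterministic total mass, and then immediately pivot to computing with an $\ExpDist[\rho]$- or $\GammaDist[\frac12,\rho]$-randomized total mass (the stopped-$\PRM$/subordinator-crossing picture). Those are not the same problem, and the reduction to $m=1$, while logically valid, is in the wrong direction: the Poisson structure you want to exploit is only available for the Gamma/exponential mixture, not for the fixed-mass case. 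The paper's actual architecture is the reverse of yours: Proposition \ref{prop:pseudostat:exp} proves the claim for $B(0)\sim\ExpDist[\rho]$ (resp.\ $\GammaDist[\frac12,\rho]$), precisely because the initial partition then has an explicit tilted-$\PRM$ description with competing exponential clocks; Proposition \ref{prop:pseudostat:fixed} then deduces the fixed-$a$ case by a genuine Laplace-transform inversion, equating $\int_0^\infty e^{-\rho a}\EV[f(\alpha^y_a)]\,da$ with $\int_0^\infty e^{-\rho a}\EV[f(B(y)\scaleI\ol\beta)]\,da$ via the explicit $\BESQ[0]$ transition density and Bessel-function integrals, and upgrading from a.e.\ to pointwise agreement using continuity in the initial state (Proposition \ref{prop:type-1:cts_in_init_state}); the general theorem then follows by mixing. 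Your proposal offers no substitute for this de-randomization step — ``independence of the scaling factor from the normalized partition'' in the Gamma-mixed case does not by itself yield the fixed-mass marginal — and your remark that ``no continuity \ldots\ input is needed'' is incorrect, since continuity in the initial state is exactly what closes the inversion.

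Two smaller points. In your type-0 paragraph you recall $\PDIP[\frac12,0]$ and its realization, but the type-0 pseudo-stationary law is $\PDIP[\frac12,\frac12]$; Corollary \ref{cor:type-1:cts_from_Stable}\ref{item:c_f_S:mass} gives a $\GammaDist[\frac12,\rho]$-scaled $\PDIP[\frac12,\frac12]$, which is the object you need (your conclusion eventually lands there, but the setup is misquoted). The heavy lifting you gesture at — mixing the inverse-Gaussian L\'evy measure $\Pi^y$ against $\mClade^+\{m^0\in\cdot\,\}$ and re-recognizing a \Stable[\frac12] range with an independent Gamma scale — is indeed exactly Steps 1--3 of Proposition \ref{prop:pseudostat:exp}, so your sense of where the computation lives is right; what is missing is the transfer from Gamma-mixed mass to fixed mass.
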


We prove this theorem in stages over the course of this section by considering different cases for the law of $B(0)$. Later, we demonstrate a stronger form of this statement in Theorem \ref{thm:pseudostat_strong}.

\begin{proposition}\label{prop:pseudostat:exp}
 Suppose that in the setting of the type-1 (respectively type-0) assertion of Theorem \ref{thm:pseudostat} we have $B(0)\sim \ExpDist[\rho]$ (resp.\ \GammaDist[\frac12,\rho]) for some $\rho\in(0,\infty)$. Then the conditional law of $\alpha^y$, given $\{\alpha^y\neq\emptyset\}$, equals the unconditional law of $(2y\rho+1)B(0)\scaleI\ol\beta$.
\end{proposition}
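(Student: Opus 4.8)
The plan is to exploit the Poissonian construction together with the two explicit transition kernels already in hand --- Corollary~\ref{cor:type-1:cts_from_Stable}~\ref{item:c_f_S:mass} for type-1 and Proposition~\ref{prop:type-0:transn} for type-0 --- and to identify distributions by computing joint Laplace transforms in the block masses against mixing over the total mass. I will treat the type-1 case first; the type-0 case follows by the same computation with the leftmost ``immigration'' block handled via Proposition~\ref{prop:type-0:transn} and Remark~\ref{rmk:transn:PDIP}.

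For type-1: by hypothesis $\alpha^0\stackrel{d}{=}B(0)\scaleI\ol\beta$ with $B(0)\sim\ExpDist[\rho]$ and $\ol\beta\sim\PDIP[\frac12,\frac12]$ independent. By Proposition~\ref{prop:PDIP}~\ref{item:PDIP:Stable}, an $\ExpDist[\rho]$-scaled $\PDIP[\frac12,\frac12]$ is exactly the law of the interval partition $\{(Y(s-),Y(s))\colon s\in[0,T),\ Y(s-)<Y(s)\}$ obtained from a $\Stable[\frac12]$ subordinator $Y$ run until an independent $\ExpDist[\rho]$ level is first exceeded by $Y$. Now recall from Proposition~\ref{prop:agg_mass_subord} that for a $\PRM[\Leb\otimes\mBxc]$, the process $\big(M^0_{\bN}\circ\tau^0(s),\ s\ge0\big)$ is precisely a $\Stable[\frac12]$ subordinator with Laplace exponent $\Phi(\lambda)=\sqrt\lambda$. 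So I will construct the type-1 evolution starting from $\alpha^0$ by running $\bN$, stopping at $T:=\inf\{t>0\colon M^0_{\bN}(t)>S\}$ with $S\sim\ExpDist[\rho]$, exactly as in Corollary~\ref{cor:type-1:cts_from_Stable}~\ref{item:c_f_S:mass}: that corollary already tells us $\alpha^{0,*}$ has the desired law $B(0)\scaleI\ol\beta$. The point of the proposition is the level-$y$ statement, and here I use Proposition~\ref{prop:PRM:Fy-_Fy+} (together with Lemma~\ref{lem:cutoff_skewer}~\ref{item:CPS:clades_skewer_0} and niceness of level $0$, Proposition~\ref{prop:nice_level}): applied at level $y$ it shows that $\tdF^{\geq y}_0$ has r.c.d.\ $\Pr^1_{\td\alpha^y}\{F^{\geq0}_0\in\cdot\,\}$ given $\td\alpha^y$, so it suffices to identify the \emph{law} of $\td\alpha^y=\skewer(y,\tdN)$. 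For this I combine two facts: (a) $\tdF^{\geq y}=\restrict{\bF^{\geq y}}{[0,\wh S^y)}$ where $\wh S^y=\tdl^y(T)$ is a stopping level, and by Proposition~\ref{prop:clade:stats}~\ref{item:CS:mass:max} the anti-clade masses below level $y$ get exponentially tilted; (b) by the Poisson description of $\bF^y$ in Proposition~\ref{prop:bi-clade_PRM} and the mass-subordinator statement of Proposition~\ref{prop:agg_mass_subord}, the aggregate mass $M^y_{\tdN}\circ\tau^y_{\tdN}$ up to $\wh S^y$ is a $\Stable[\frac12]$ subordinator run until it exceeds a suitably tilted exponential level. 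Carrying out the tilting bookkeeping as in the proof of Proposition~\ref{prop:type-1:transn} --- where the identical computation shows that conditioning on survival turns the $\ExpDist[\rho]$ barrier into an $\ExpDist[\rho/(2y\rho+1)]$ barrier, equivalently multiplies the resulting $\GammaDist$ scaling factor by $(2y\rho+1)$ --- gives that, conditioned on $\{\alpha^y\neq\emptyset\}$, $\td\alpha^y$ (minus its leftmost block) has the law of a $\Stable[\frac12]$ interval partition stopped at an $\ExpDist[\rho/(2y\rho+1)]$ level; and by Remark~\ref{rmk:transn:PDIP} together with the $L^y$-mass identity \eqref{eq:clade:LMB_Laplace_2} already verified in the proof of Proposition~\ref{prop:BESQ_mass_temp}, re-attaching the leftmost block reconstitutes $(2y\rho+1)B(0)\scaleI\ol\beta$. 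Since $(2y\rho+1)\ExpDist[\rho]\stackrel{d}{=}\ExpDist[\rho/(2y\rho+1)]$ and scaling commutes with the decomposition, this is exactly the claimed conditional law.

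For type-0: the construction of Definition~\ref{constr:type_0} writes $\alpha^y=\cev\alpha^y\concat\vecc\alpha^y$. Proposition~\ref{prop:type-0:transn} gives the transition kernel: $\alpha^y$ equals (in law) a $\PDIP[\frac12,\frac12]$-type partition $B^y\scaleI\ol\gamma$ from the range of $(R^y(s),\,s\le S^y)$, concatenated with $\ConcatIL_{U\in\gamma}\gamma^y_U$ where $\gamma^y_U$ are the type-1 transition pieces. Starting from $\alpha^0\stackrel{d}{=}B(0)\scaleI\ol\beta$ with $B(0)\sim\GammaDist[\frac12,\rho]$ and $\ol\beta\sim\PDIP[\frac12,\frac12]$, I use Proposition~\ref{prop:PDIP}~\ref{item:PDIP:Stable} again to realize $\alpha^0$ as a $\Stable[\frac12]$ partition with an independent leftmost $\GammaDist[\frac12,\rho]$ block glued on; then the type-0 construction ``comes down from $\infty$'' so the immigration part $\cev\alpha^y$ absorbs the role of that leftmost block, and the same exponential/Gamma tilting computation as above --- now the $\GammaDist[\frac12,\rho]$ barrier becomes $\GammaDist[\frac12,\rho/(2y\rho+1)]$, equivalently the scaling factor picks up $(2y\rho+1)$ --- yields the conditional law $(2y\rho+1)B(0)\scaleI\ol\beta$ on $\{\alpha^y\neq\emptyset\}$.

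The main obstacle I anticipate is the bookkeeping at level $y$ that separates the leftmost block from the rest: the left block of $\td\alpha^y$ (mass $m^y$, governed by \eqref{eq:transn:LMB}) and the remainder (governed by the tilted subordinator $R^y$) must recombine into a \emph{single} $\ExpDist$- (resp.\ $\GammaDist$-) scaled $\PDIP[\frac12,\frac12]$, and the cleanest route is to verify this at the level of Laplace transforms in the total mass while invoking the $\PDIP$-characterization of Proposition~\ref{prop:PDIP}~\ref{item:PDIP:Stable} and Remark~\ref{rmk:transn:PDIP}; the needed identities are precisely those already established in the proofs of Propositions~\ref{prop:type-1:transn} and \ref{prop:BESQ_mass_temp}, so the work is in assembling them correctly rather than in new computation. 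A secondary point requiring care is measurability/niceness of level $y$ for $\tdN$, which is supplied by Proposition~\ref{prop:nice_level} and Lemma~\ref{lem:type-1:nice_lvl}.
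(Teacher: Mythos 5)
Your overall strategy (Poissonian construction, exponential tilting coming from the \ExpDist[\rho] mass barrier, identification by Laplace transforms) is in the same spirit as the paper's, but as written the argument has a genuine gap rooted in a misidentification of the two initial laws. For the type-1 assertion, Theorem \ref{thm:pseudostat} takes $\ol\beta\sim\PDIP[\frac12,0]$, not $\PDIP[\frac12,\frac12]$, and an \ExpDist[\rho]-scaled \PDIP[\frac12,0] decomposes, via Proposition \ref{prop:PDIP} \ref{item:PDIP:Stable}, as an independent $\GammaDist[\frac12,\rho]$ leftmost block concatenated with an independent $\GammaDist[\frac12,\rho]$-scaled \PDIP[\frac12,\frac12]. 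The law you realize via Corollary \ref{cor:type-1:cts_from_Stable} \ref{item:c_f_S:mass} --- the skewer at level $0$ of a \PRM\ stopped when $M^0_{\bN}$ exceeds an independent \ExpDist[\rho] level --- is the $\GammaDist[\frac12,\rho]$-scaled \PDIP[\frac12,\frac12] (the stopped subordinator's range has total mass $Y(T-)\sim\GammaDist[\frac12,\rho]$, not \ExpDist[\rho]), so it is \emph{not} the proposition's type-1 initial state: your construction omits exactly the clade generated by the distinguished leftmost block $U_0$ of the \PDIP[\frac12,0]. That clade is essential, since its level-$y$ contribution (the $m^y(\bN_{U_0})$ term) is one of the two sources of the leftmost block of $\alpha^y$; without it you would be computing the conditional law of a type-1 evolution started from the type-0 initial state, which is neither assertion of the proposition. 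Symmetrically, in the type-0 paragraph you glue a leftmost $\GammaDist[\frac12,\rho]$ block onto the stable partition --- that describes the type-1 initial law, not $\GammaDist[\frac12,\rho]\scaleI\PDIP[\frac12,\frac12]$ --- and the statement that the immigration part $\cev\alpha^y$ ``absorbs the role of that leftmost block'' is not an argument.

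Even with the correct setup, the key quantitative step is deferred rather than supplied. Conditioned on survival, the blocks of $\alpha^y$ beyond the far leftmost one come from a geometric number of surviving clades, each contributing its own leftmost block at level $y$ (law \eqref{eq:transn:LMB} under the tilted clade measure) together with an independent stopped inverse-Gaussian subordinator; the heart of the paper's proof is to combine the ``remaining mass'' subordinators with the subordinator of subsequent leftmost blocks and verify that the total Laplace exponent is $\sqrt{\lambda+\rho/(2y\rho+1)}-\sqrt{\rho/(2y\rho+1)}$, i.e.\ the tilted \Stable[\frac12] exponent, so that everything reassembles into a single $\ExpDist[\rho/(2y\rho+1)]$-scaled \PDIP[\frac12,0]. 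Proposition \ref{prop:type-1:transn}, which you invoke for the ``tilting bookkeeping,'' treats a single clade and does not give this aggregation, and the proof of Proposition \ref{prop:BESQ_mass_temp} only records the Laplace transform of $L^y$; the assembly you flag as ``the main obstacle'' is in fact the proof, and with the swapped initial laws it would in any case be assembling the wrong pieces. The paper's route is: decompose the type-1 initial state into $U_0$ plus a $\GammaDist[\frac12,\rho]$-scaled \PDIP[\frac12,\frac12]; view $F^{\geq 0}_0(\bN_{\gamma})$ as a \PRM\ with intensity $e^{-\rho m^0}d\mClade^+$ stopped at an independent $\ExpDist[\sqrt{\rho}]$ time; and carry out the three-step Laplace computation, whose final step (the structure of the blocks beyond the leftmost) is also what yields the type-0 case through the transition kernel of Proposition \ref{prop:type-0:transn}.
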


\begin{proof}
 We begin with the type-1 case. 
 We prove this by separately comparing the Laplace transforms of the leftmost blocks of the two interval partitions, comparing Laplace transforms of the subordinators of remaining block masses, and confirming that in each partition the leftmost block is independent of the remaining blocks. This is done in three steps.
 
 \emph{Step 1}. Following Proposition \ref{prop:PDIP} \ref{item:PDIP:Stable}, we may represent $\ol\beta$ as $\{(0,L)\}\concat \big((1-L)\scaleI \ol\gamma\big)$, where $L\sim\BetaDist[\frac12,\frac12]$ is independent of $\ol\gamma\sim\PDIP[\frac12,\frac12]$. Let $\beta := B(0)\scaleI\ol\beta$ and $\gamma := B(0)(1-L)\scaleI\ol\gamma$. We denote the leftmost block of $\beta$ by $U_0 := (0,B(0)L)$. Since $B(0) \sim \ExpDist[\rho]$, the masses $\Leb(U_0)$ and $\IPmag{\gamma}$ are i.i.d.\ \GammaDist[\frac12,\rho] variables. Thus, we can take $(\alpha^y,\,y\geq 0) := \skewerP(\bN_{U_0} \concat \bN_{\gamma})$, where $\bN_{\gamma} \sim \Pr^1_{\gamma}$ is independent of $\bN_{U_0} \sim \mClade^+(\,\cdot\;|\;m^0=B(0)L)$.
 
 By Proposition \ref{prop:PDIP} \ref{item:PDIP:Stable}, the partition $\gamma$ corresponds to the range of a \Stable[\frac12] subordinator stopped prior to crossing an independent random level $S\sim \ExpDist[\rho]$. This stopping corresponds to thinning the Poisson point process of jumps of the subordinator, tilting the L\'evy measure by a factor of $e^{-\rho x}$. In turn, appealing to Proposition \ref{prop:agg_mass_subord}, the point process $F^{\geq 0}_0(\bN_{\gamma})$ of Definition \ref{constr:type-1_2} may be viewed as a \PRM[\Leb\otimes\mu], where $\mu$ is given by $d\mu(N) = e^{-\rho m^0(N)}d\mClade^+(N)$, stopped at an independent random exponential time whose rate can be calculated from Proposition \ref{prop:clade:stats} \ref{item:CS:mass} to be
 \begin{equation*}
  \int_0^\infty (1-e^{-\rho a})\mClade^+\{m^0\in da\} = \int_0^\infty (1-e^{-\rho a})\frac{1}{2\sqrt{\pi}}a^{-3/2}da = \sqrt{\rho}.
 \end{equation*}
 
 Via Proposition \ref{prop:clade:stats} \ref{item:CS:max:mass} and the density function of $\Leb(U_0)\sim\GammaDist[\frac12,\rho]$,
 \begin{equation}\label{eq:p_s_E:LMB0_survival}
  \Pr\{\life^+(\bN_{U_0}) \leq y\} = \int_0^\infty e^{-a/2y} \frac{1}{\Gamma\left(\frac12\right)}\sqrt{\frac{\rho}{a}}e^{-\rho a}da = \sqrt{\frac{2y\rho}{2y\rho+1}}.
 \end{equation}
 To get the rate at which clades exceeding level $y$ arise in $F^{\geq 0}_0(\bN_{\gamma})$, we apply Proposition \ref{prop:clade:stats} \ref{item:CS:mass} and \ref{item:CS:max:mass}:
 \begin{equation*}
 \begin{split}
  \mu\{\life^+ > y\} &= \int_0^\infty(1-e^{-a/2y})\frac{1}{2\sqrt{\pi}}e^{-\rho a}a^{-3/2}da
  	= \sqrt{\rho+\frac{1}{2y}}-\sqrt{\rho} = \left(\sqrt{\frac{2y\rho+1}{2y\rho}}-1\right)\sqrt{\rho}.
 \end{split}
 \end{equation*}
 By competing exponential clocks, the probability of seeing no such clade before the independent $\ExpDist\big(\sqrt{\rho}\big)$ time is
 \begin{equation}\label{eq:p_s_E:competing_clocks}
  \Pr\{\life^+(\bN_{\gamma}) \leq y\} = \frac{\sqrt{\rho}}{\left(\sqrt{(2y\rho+1)/(2y\rho)}-1\right)\sqrt{\rho}+\sqrt{\rho}} = \sqrt{\frac{2y\rho}{2y\rho+1}}.
 \end{equation}
 Thus, by the independence of $\bN_{U_0}$ and $\bN_{\gamma}$,
 \begin{equation}\label{eq:p_s_E:survival}
  \Pr\{\alpha^y\neq\emptyset\} = \EV\left[\Pr_{\beta}\left\{\life^+>y\right\}\right] = 1-\left(\sqrt{\frac{2y\rho}{2y\rho+1}}\right)^2 = \frac{1}{2y\rho+1}.
 \end{equation}
 
 \emph{Step 2}. First, we compute the Laplace transform of the leftmost block mass in the event that it arises from $\bN_{U_0}$; then, we compute it in the event that the leftmost block arises from one of the clades that make up $\bN_{\gamma}$. 
 The Laplace transform $\mClade^+[e^{-\lambda m^y}\;|\;m^0=a]$, where $m^y$ is as in \eqref{eq:LMB_def}, may be read from \eqref{eq:clade:LMB_Laplace_2}. We multiply this by 
 the survival probability of Proposition \ref{prop:clade:stats} \ref{item:CS:max:mass} and integrate against the \GammaDist[\frac12,\rho] law of $\Leb(U_0)$:
 \begin{equation*}
 \begin{split}
  &\EV\left[ e^{-\lambda m^y(\bN_{U_0})}\cf\{\life^+(\bN_{U_0})>y\}\right] = \EV\left[\mClade^+\big[ e^{-\lambda m^y}\cf\{\life^+>y\}\;\big|\; m^0 = \Leb(U_0) \big] \right]\\
  &= \int_0^\infty\frac{\sqrt{2y\lambda+1}}{1-e^{-a/2y}}\left(e^{-\lambda a/(2y\lambda+1)}-e^{-a/2y}\right)\left(1-e^{-a/2y}\right)\frac{1}{\sqrt{\pi}}\sqrt{\frac{\rho}{a}}e^{-\rho a}da\\ 
  &= \sqrt{2y\lambda+1}\left(\sqrt{\frac{\rho(2y\lambda+1)}{\rho(2y\lambda+1)+\lambda}}-\sqrt{\frac{2y\rho}{2y\rho+1}}\right).
 \end{split}
 \end{equation*}
 The clades in $F^{\geq 0}_0(\bN_{\gamma})$ that exceed lifetime $y$ form an i.i.d.\ sequence with law $\mu(\,\cdot\;|\;\life^+ > y)$. Then the contribution to $\EV[\exp(-\lambda m^y(\bN_{U_0}\concat\bN_{\gamma}))]$ follows similarly:
 \begin{align}
  &\EV\left[ e^{-\lambda m^y(\bN_{\gamma})}\cf\{\life^+(\bN_{\gamma}) > y \geq \life^+(\bN_{U_0})\} \right] = \Pr\{\life^+(\bN_{\gamma}) > y \geq \life^+(\bN_{U_0})\} \mu\left[ e^{-\lambda m^y}\;\middle|\;\life^+>y\right]\notag\\
  &= \frac{\Pr\{\life^+(\bN_{\gamma}) > y \geq \life^+(\bN_{U_0})\}}{\mu\{ \life^+ > y\}} \int_0^\infty \mClade^+\big[ e^{-\lambda m^y}\cf\{\life^+ > y\}\;\big|\;m^0=a \big] \mu\{ m^0\in da \}\label{eq:p_s_E:LMB_Laplace_2}\\
  &= \frac{\sqrt{\frac{2y\rho}{2y\rho+1}}\left(1-\sqrt{\frac{2y\rho}{2y\rho+1}}\right)}{\left(\sqrt{\frac{2y\rho+1}{2y\rho}}-1\right)\sqrt{\rho}}\int_0^\infty\!\!\sqrt{2y\lambda+1}\left(e^{-\lambda a/(2y\lambda+1)}-e^{-a/2y}\right)\frac{1}{2\sqrt{\pi}}a^{-3/2}e^{-\rho a}da\notag\\
  &= \sqrt{2y\lambda+1}\left(\sqrt{\frac{2y\rho}{2y\rho+1}}-\left(\frac{2y\rho}{2y\rho+1}\right)\sqrt{\frac{\lambda + \rho(2y\lambda+1)}{\rho(2y\lambda+1)}}\right).\notag
 \end{align}
 Adding these terms and dividing by the formula for $\Pr\{\alpha^y\neq\emptyset\}$ in \eqref{eq:p_s_E:survival}, we get
 \begin{equation*}
 \begin{split}
  \EV\left[\Pr_{\beta}\big[e^{-\lambda m^y}\;\big|\;\life^+>y\big]\right] &= \sqrt{2y\lambda+1} \left(  (2y\rho+1)\sqrt{\frac{\rho(2y\lambda+1)}{\rho(2y\lambda+1)+\lambda}} - 2y\rho\sqrt{\frac{\rho(2y\lambda+1)+\lambda}{\rho(2y\lambda+1)}}  \right)\\
  &= \sqrt{\frac{\rho}{\rho(2y\lambda+1)+\lambda}} = \sqrt{\frac{\rho/(2y\rho+1)}{(\rho/(2y\rho+1))+\lambda}}\,.
 \end{split}
 \end{equation*}
 This equals the Laplace transform of $(2y\rho+1)\Leb(U_0)\sim\GammaDist[1/2,\rho/(2y\rho+1)]$.
 
 \emph{Step 3}. By Lemma \ref{lem:cutoff_skewer} \ref{item:CPS:clades_skewer_0}, $\alpha^y$ equals $\skewer(y,\bN_{U_0})$ concatenated with the skewers of the clades in $F^{\geq 0}_0(\bN_{\gamma})$ that survive to level $y$. By Lemma \ref{lem:finite_survivors}, only finitely many clades survive. Recall from Step 1 that we may view $F^{\geq 0}_0(\bN_{\gamma})$ as a \PRM[\Leb\otimes\mu] stopped at an independent $\ExpDist\big(\sqrt{\rho}\big)$ time. If we condition on $\{\alpha^y\neq\emptyset\}$ then, following the competing exponential clocks argument around \eqref{eq:p_s_E:competing_clocks}, we may view the surviving clades beyond the leftmost surviving clade as coming from an infinite sequence of independent clades with distribution $\mu(\,\cdot\;|\;\life^+>y)$ stopped 
 after an independent $\GeomDist\big(\sqrt{2y\rho/(2y\rho+1)}\big)$ number $G$ of clades, where $G$ can equal 0.
 
 By Propositions \ref{prop:type-1:transn} and \ref{prop:type-1:gen_transn}, for all $i\ge 1$, the $i^{\text{th}}$ clade with distribution $\mu(\,\cdot\;|\;\life^+>y)$ 
contributes its own leftmost block $(0,L^y_i)$ at level $y$, followed by masses from an independent subordinator $R^y_i$ stopped at an independent \ExpDist[(2y)^{-1/2}] time $S^y_i$. The leftmost surviving clade contributes a special leftmost block studied in Step 2, and independent masses from $(R_0^y,S_0^y)$, as for $i\ge 1$. We call the masses from $(R^y_i,S^y_i)$, $i\ge 0$, the ``remaining masses.'' 
So, we may view the masses in $\alpha^y$ beyond the far leftmost as arising from an alternating sequence of remaining masses of clades $i=0,\ldots,G$ and the leftmost blocks of clades $i=1,\ldots,G$.
 
The stopped $R_i^y$ from all clades $i\ge 0$ can be combined to capture all remaining masses in a single unstopped subordinator $R_{\rm rem}^y$ with Laplace exponent
$\Phi^y_{\rm rem} = \Phi^y$ of \eqref{eq:transn:Levy_meas}, independent of $(S_i^y,i\ge 0)$ and $G$, and hence of $\widetilde{S}^y:=S_0^y+\cdots+S_G^y$, which is exponential with parameter 
$$(2y)^{-1/2}\sqrt{2y\rho/(2y\rho+1)} = \sqrt{\rho/(2y\rho + 1)}.$$
This is the time that corresponds to stopping $R_{\rm rem}^y$ after the $G^{\text{th}}$ surviving clade. This independence also yields the independence of $R_{\rm rem}^y$ from 
the subordinator that has jumps of sizes $L_i^y$ at times $S_0+\cdots+S_{i-1}^y$, $i\ge 1$, with Laplace exponent 
$(2y)^{-1/2}\mu[ 1-e^{-\lambda m^y}\;|\;\life^+>y]$. Note
 \begin{equation*}
 \begin{split}
  \mu\left[ 1-e^{-\lambda m^y}\;\middle|\;\life^+>y\right] &= 1 - \int_0^\infty\mClade^+\big[ e^{-\lambda m^y}\cf\{\life^+ > y\}\;\big|\;m^0=a \big] \frac{\mu\{ m^0\in da \}}{\mu\{\life^+ > y\}}\\
  	&= 1-\sqrt{2y\lambda+1}\frac{\sqrt{\frac{2y\rho+1}{2y\rho}}-\sqrt{\frac{\rho+(2y\rho+1)\lambda}{\rho(1+2\lambda y)}}}{\sqrt{\frac{2y\rho+1}{2y\rho}}-1},
 \end{split}
 \end{equation*}
where the integral is a multiple of that in \eqref{eq:p_s_E:LMB_Laplace_2}. By an elementary thinning argument, this subordinator stopped after the $G^{\text{th}}$ jump can be viewed as a subordinator with Laplace exponent
 \begin{equation*}
 \begin{split}
  \Phi^y_{\textnormal{LMB}}(\lambda) &= \frac{1}{\sqrt{2y}}\left(1-\sqrt{\frac{2y\rho}{2y\rho+1}}\right)\mu\left[ 1-e^{-\lambda m^y}\;\middle|\;\life^+>y\right]\\
    &= \frac{1}{\sqrt{2y}}\left(1 - \sqrt{\frac{2y\rho}{2y\rho+1}} - \sqrt{2y\lambda + 1}\left(1 - \sqrt{\frac{2y\rho}{2y\rho + 1}}\sqrt{\frac{\rho + (2y\rho + 1)\lambda}{\rho(2y\lambda + 1)}}\right)\right).
 \end{split}
 \end{equation*}
stopped at the independent time $\widetilde{S}^y\sim\ExpDist\big(\sqrt{\rho/(2y\rho + 1)}\big)$.

 Putting these pieces together, $\Phi^y_{\textnormal{Rem}}(\lambda)+\Phi^y_{\textnormal{LMB}}(\lambda)$ is given by
 \begin{equation*}
 \begin{split}
  &\frac{1}{\sqrt{2y}}\left(\sqrt{2\lambda y+1}-1+1-\sqrt{\frac{2y\rho}{2y\rho+1}}-\sqrt{2y\lambda+1}+\sqrt{\frac{2y}{2y\rho+1}}\sqrt{\rho+(2y\rho+1)\lambda}\right)\\
  &\qquad= \sqrt{\lambda+\frac{\rho}{2y\rho+1}}-\sqrt{\frac{\rho}{2y\rho+1}} = \int_0^\infty(1-e^{-\lambda x})\frac{1}{2\sqrt{\pi}}e^{-x\rho/(2y\rho+1)}x^{-3/2}dx.
 \end{split}
 \end{equation*}
 The last expression above is the Laplace transform of a subordinator that, when stopped at an independent $\ExpDist\big(\sqrt{\rho/(2y\rho + 1)}\big)$ time, corresponds as in \eqref{eq:transn:PDIP} 
  to a \PDIP[\frac12,\frac12] scaled by an independent \GammaDist[1/2,\rho/(2y\rho+1)] variable. Putting this together with the result of Step 2 and the independence, in both $\alpha^y$ and $\beta$, of the leftmost block from the rest, we conclude that $\alpha^y$ is distributed like a \PDIP[\frac12,0] scaled by an independent \ExpDist[\rho/(2y\rho+1)] variable, as desired.
 
 Looking at the transition description in Proposition \ref{prop:type-0:transn}, Step 3 above also proves the claim for the type-0 evolution.
\end{proof}

\begin{lemma}[Scaling invariance of type-1 and type-0 evolutions]\label{lem:type-1:scaling}
 Fix $c>0$. If $(\alpha^y,\,y\geq 0)$ is a type-1 (respectively type-0) evolution then so is $(c\scaleI \alpha^{y/c},\,y\geq 0)$.
\end{lemma}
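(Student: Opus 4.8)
The plan is to verify the scaling invariance at the level of the Poissonian construction, using the scaling map $\scaleH$ on point processes of spindles (Definition in \eqref{eq:clade:xform_def}) together with the scaling relations collected in Table \ref{tbl:clade_scaling}, and then to transport this through the skewer map. The key identity to establish is that, for any $N\in\Hfins$ and $c>0$,
\begin{equation*}
 \skewer\big(y,\scaleH[c][N]\big) = c\scaleI\skewer\big(y/c,N\big) \qquad \text{for all }y\geq 0.
\end{equation*}
This is a pathwise statement. First I would check it: a spindle $(t,f)$ of $N$, born at level $\xi_N(t-)$, becomes a spindle $(c^{3/2}t,c\scaleB f)$ of $\scaleH[c][N]$ born at level $c\,\xi_N(t-)$ (since $\xi(\scaleH[c][N]) = c\scaleS\xi(N)$, which follows from the definition of $\scaleS$ in \eqref{eq:stable:scaling_def} together with the fact that scaling all jump heights by $c$ scales the compensated limit \eqref{eq:JCCP_def} by $c$ when time is slowed by $c^{3/2}$). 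Its mass at level $y$ in $\scaleH[c][N]$ is $(c\scaleB f)(y - c\,\xi_N(t-)) = c\,f\big((y/c) - \xi_N(t-)\big)$, which is $c$ times the mass of the original spindle $(t,f)$ at level $y/c$ in $N$. Since the aggregate mass process $M^y$ of Definition \ref{def:skewer_2} just integrates these masses in the time-order of the spindles — an order preserved by $\scaleH$ — we get $M^y_{\scaleH[c][N]}(c^{3/2}t) = c\,M^{y/c}_N(t)$, and taking left limits and ranges yields the displayed skewer identity.

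Next I would transfer this to the laws $\BPr^1$ and $\BPr^0$. For the type-1 case, recall from Definition \ref{constr:type-1} and Proposition \ref{prop:clade_splitting} that $\bN_\beta = \ConcatIL_{U\in\beta}\bN_U$ with $\bN_U\sim\mClade^+(\,\cdot\;|\;m^0=\Leb(U))$. By Lemma \ref{lem:clade:mass_ker} (the scaling property \eqref{eq:clade_mass_ker} of the $m^0$-disintegration), $\scaleH[c][\bN_U]\sim\mClade^+(\,\cdot\;|\;m^0 = c\,\Leb(U))$, which is exactly the clade distribution associated with the block $cU$ of $c\scaleI\beta$. Since $\scaleH[c][\cdot]$ commutes with concatenation of point processes of spindles (it scales lengths by $c^{3/2}$ uniformly, so partial sums $S(a-)$ transform consistently), $\scaleH[c][\bN_\beta]$ has law $\Pr^1_{c\scaleI\beta}$. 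If $(\alpha^y,\,y\ge0)$ is a type-1 evolution from $\beta$, realised as a continuous version of $\skewerP(\bN_\beta^*)$ with $\bN_\beta^*$ an $\Hfins$-version, then $\scaleH[c][\bN_\beta^*]$ is an $\Hfins$-version of a $\Pr^1_{c\scaleI\beta}$-process (membership in $\Hfins$ is preserved by $\scaleH$, since the defining conditions in Definition \ref{def:domain_for_skewer} transform covariantly under the scalings in Table \ref{tbl:clade_scaling}), and by the displayed skewer identity its skewer process is $\big(c\scaleI\alpha^{y/c},\,y\ge0\big)$. Hence the latter is a type-1 evolution, now from the initial state $c\scaleI\beta$. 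To obtain the statement as phrased — same initial law as $(\alpha^y)$ is arbitrary — one simply notes the argument applies verbatim for every deterministic $\beta$ and then mixes over the law of $\alpha^0$; equivalently, the statement is understood as: the scaled process is again a type-1 evolution (from the correspondingly scaled initial state). For type-0, use Definition \ref{constr:type_0}: a type-0 evolution is $\cev\alpha^y\concat\vecc\alpha^y$ with $(\vecc\alpha^y)$ an independent type-1 evolution from $\beta$ and $(\cev\alpha^y)$ built from $\restrict{\bN}{[0,T^{-j})}$ shifted up by $j$; applying $\scaleH[c][\cdot]$ to the underlying \PRM\ $\bN$ gives again a \PRM\ (Lemma \ref{lem:clade:invariance}), scales the hitting times $T^{-j}$ to $T^{-cj}$, and scales the upward shift $j$ to $cj$, so $c\scaleI\cev\alpha^{y/c}$ is again distributed as the immigration part of a type-0 evolution; combined with the type-1 case and the fact that $\scaleI$ distributes over $\concat$, this gives the type-0 claim.

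The main obstacle I anticipate is the bookkeeping around the cut-off / stopping times in the type-0 construction and, to a lesser extent, checking carefully that $\scaleH[c][\cdot]$ preserves $\Hfins$ and that the $\Hfins$-version used to define the continuous process can be chosen compatibly under scaling — i.e., that one does not merely get \emph{a} version with the right finite-dimensional laws but literally the scaled continuous path. This is handled by observing that all the a.s.\ properties defining $\Hfins$ (finiteness of aggregate mass, the diversity-equals-local-time identity \eqref{eq:div_LT_condition}, and path-continuity of the skewer) are stated levelwise and transform under the explicit scalings of Table \ref{tbl:clade_scaling}: local times scale as $\ell^y_{c\scaleH N}(t) = c^{1/2}\ell^{y/c}_N(tc^{-3/2})$, diversities of $c\scaleI\alpha$ scale as $\sqrt c$ times those of $\alpha$ by \eqref{eq:IP:LT_scaling}, and $\dI(c\scaleI\alpha, c\scaleI\gamma)$ is comparable to $\dI(\alpha,\gamma)$ by \eqref{eq:IP:scaling_dist_2}, so continuity is preserved. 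Everything else is routine once the pathwise skewer scaling identity is in hand.
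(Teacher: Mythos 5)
Your proof is correct and takes essentially the same approach as the paper's: both establish that $\scaleH[c][\bN_\beta]\sim\Pr^1_{c\scaleI\beta}$ via the scaling invariance of $\mClade$ (and the scaling of $m^0$), and for type-0 use $\scaleH[c][\bN]\stackrel{d}{=}\bN$. Your write-up spells out the pathwise skewer scaling identity and the preservation of $\Hfins$ more explicitly than the paper's two-line proof, but these are precisely the details the paper's proof treats as implicit, so the substance is the same.
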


\begin{proof}
 Recall the construction $\bN_{\gamma} = \ConcatIL_{U\in\gamma}\bN_U$ of Definition \ref{constr:type-1}. By Lemma \ref{lem:clade:invariance} and the scaling of $m^0$ noted in Table \ref{tbl:clade_scaling}, $c\scaleH\bN_{\gamma} \sim \Pr_{c\scaleI\gamma}$. This proves the result for type-1. For type-0, recall the construction prior to Definition \ref{constr:type_0}, and note that
 $$c\scaleH\bN \stackrel{d}{=}\bN,\qquad \text{so} \qquad  (\cev\alpha^y,\,y\geq 0)\stackrel{d}{=}(c\scaleI\cev\alpha^{y/c},\,y\geq 0).$$
 This proves the result for type-0 evolutions.
\end{proof}

We can now invert Laplace transforms to deduce the following.

\begin{proposition}\label{prop:pseudostat:fixed}
 Both assertions of Theorem \ref{thm:pseudostat} hold if $B(0) = a \geq 0$ is fixed.
\end{proposition}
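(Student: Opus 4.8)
The plan is to deduce Proposition~\ref{prop:pseudostat:fixed} from Proposition~\ref{prop:pseudostat:exp} by a Laplace-transform (de-randomization) argument, together with the scaling invariance of Lemma~\ref{lem:type-1:scaling} and the \BESQ\ total mass description of Theorem~\ref{thm:BESQ_total_mass}. The key observation is that Proposition~\ref{prop:pseudostat:exp} determines, for \emph{every} $\rho>0$, the law of $\alpha^y$ when $\alpha^0\stackrel{d}{=}B(0)\scaleI\ol\beta$ with $B(0)\sim\ExpDist[\rho]$ (type-1) or $\GammaDist[\frac12,\rho]$ (type-0); mixing over $\rho$ against a fixed initial mass is exactly an integral transform that we can invert.

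Here is the order I would carry things out. First, fix $a>0$ and let $(\alpha^y,\,y\ge0)$ be a type-1 evolution with $\alpha^0\stackrel{d}{=}a\scaleI\ol\beta$, $\ol\beta\sim\PDIP[\frac12,0]$. For a test function of the form $\beta\mapsto\phi(\IPmag{\beta})\,\psi(\IPmag{\beta}^{-1}\scaleI\beta)$ with $\phi,\psi$ bounded continuous, I want to show $\EV[\phi(\IPmag{\alpha^y})\psi(\IPmag{\alpha^y}^{-1}\scaleI\alpha^y)] = \EV[\phi(B(y))]\,\EV[\psi(\ol\beta)]$ where $B$ is \BESQ[0] from $a$; since the total mass $\IPmag{\alpha^y}$ is a \BESQ[0] from $a$ by Theorem~\ref{thm:BESQ_total_mass}, this reduces to showing that, \emph{conditionally on} $\IPmag{\alpha^y}=m$ (with $m>0$; note $\Pr\{\alpha^y=\emptyset\}>0$ is handled by $\psi(\emptyset)$ being a separate constant), the normalized partition $\IPmag{\alpha^y}^{-1}\scaleI\alpha^y$ is $\PDIP[\frac12,0]$ and independent of $\IPmag{\alpha^y}$. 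Second, I would run the same computation for the mixed initial law: let $\rho>0$, let $(\alpha^y_\rho)$ be a type-1 evolution from $B_\rho(0)\scaleI\ol\beta$ with $B_\rho(0)\sim\ExpDist[\rho]$. By conditioning on $B_\rho(0)=a$ and using the scaling relation $\scaleI$ together with Lemma~\ref{lem:type-1:scaling} (so that the evolution from $a\scaleI\ol\beta$ is the $a$-scaling of the evolution from $\ol\beta$), one writes the law of $\alpha^y_\rho$ as a $\GammaDist[\frac12,\rho]$-mixture — more precisely the density of $B_\rho(0)=a$ against Lebesgue is $\frac{1}{\sqrt{\pi a}}\sqrt{\rho}\,e^{-\rho a}$ type thing — no wait, $\ExpDist[\rho]$ has density $\rho e^{-\rho a}$; in any case it is $e^{-\rho a}$ times a fixed ($\rho$-independent) function — of the fixed-$a$ laws we are after. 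Proposition~\ref{prop:pseudostat:exp} gives that this mixture equals the law of $(2y\rho+1)B_\rho(0)\scaleI\ol\beta$ on $\{\alpha^y_\rho\ne\emptyset\}$, which is itself an explicit $\rho$-dependent mixture (an $\ExpDist[\rho/(2y\rho+1)]$ scaling of $\PDIP[\frac12,0]$) of the \emph{same} family of fixed-$a$ target laws. Third, equating the two mixture representations for all $\rho>0$, I would extract a Laplace-transform-in-$a$ identity: both sides are $\int_0^\infty e^{-\rho a}(\cdots)\,da$, and uniqueness of Laplace transforms on $(0,\infty)$ forces the integrands to agree for Lebesgue-a.e.\ $a$, hence (by continuity in the initial state, Proposition~\ref{prop:type-1:cts_in_init_state}, applied to $\beta\mapsto\BPr^1_{a\scaleI\ol\beta}[\cdots]$, or rather to $a\mapsto$ the relevant expectation, which is continuous in $a$ by that proposition) for \emph{all} $a>0$. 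The $a=0$ case is either vacuous or follows by letting $a\downto0$ using the same continuity. The type-0 case is identical, replacing \BESQ[0] by \BESQ[1], $\ExpDist$ by $\GammaDist[\frac12,\cdot]$, and $\PDIP[\frac12,0]$ by $\PDIP[\frac12,\frac12]$, and using Proposition~\ref{prop:type-0:transn} where needed.

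A cleaner way to package the inversion, which I would probably adopt to avoid fussing over the exact densities: test against $\phi(\IPmag{\alpha^y})=e^{-s\IPmag{\alpha^y}}$ for $s\ge0$ and an arbitrary bounded continuous $\psi$ on $\IPspace_1$ (extended to $\IPspace$ by $\beta\mapsto\psi(\IPmag{\beta}^{-1}\scaleI\beta)$, and set $\psi(\emptyset)$ aside). Write $g_a(s):=\BPr^1_{a\scaleI\ol\beta}[e^{-s\IPmag{\alpha^y}}\psi(\IPmag{\alpha^y}^{-1}\scaleI\alpha^y)\cf\{\alpha^y\ne\emptyset\}]$. Mixing over $a\sim\ExpDist[\rho]$ and using Proposition~\ref{prop:pseudostat:exp} gives a functional equation relating $\int_0^\infty \rho e^{-\rho a} g_a(s)\,da$ to the analogous quantity for the pushforward $(2y\rho+1)B(0)\scaleI\ol\beta$; since the latter has total mass $\GammaDist[\frac12,\rho/(2y\rho+1)]$ and normalized part exactly $\PDIP[\frac12,0]$ by construction, the right-hand side is $\bigl(\tfrac{\rho/(2y\rho+1)}{\rho/(2y\rho+1)+s}\bigr)^{1/2}\cdot(1-\tfrac{1}{2y\rho+1})^{-1}\cdot$(something) times $\EV[\psi(\ol\beta)]$ — the point being it is a known function of $(s,\rho)$ times $\EV[\psi(\ol\beta)]$, with no residual dependence on $\psi$ beyond that one number. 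Matching against the target $g^{\rm tgt}_a(s)=\EV[e^{-sB(y)}\cf\{B(y)>0\}]\EV[\psi(\ol\beta)]$ (with $B\sim\BESQ[0]$ from $a$, whose Laplace transform is computed in the proof of Proposition~\ref{prop:BESQ_mass_temp}) reduces everything to the scalar identity already verified inside Proposition~\ref{prop:pseudostat:exp}; injectivity of $\rho\mapsto\int_0^\infty\rho e^{-\rho a}h(a)\,da$ on, say, continuous bounded $h$, then yields $g_a=g^{\rm tgt}_a$ for all $a$.

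The main obstacle I anticipate is bookkeeping rather than conceptual: carrying the atom $\{\alpha^y=\emptyset\}$ cleanly through the mixing and inversion (type-1 has positive extinction probability $e^{-a/2y}$), and making sure the ``extract the integrand from a Laplace transform'' step is justified — one needs the map $a\mapsto g_a(s)$ to be, e.g., continuous and bounded on $(0,\infty)$ so that uniqueness of Laplace transforms applies and gives equality everywhere, not just a.e.; both continuity (Proposition~\ref{prop:type-1:cts_in_init_state} via $\beta=a\scaleI\ol\beta$, plus dominated convergence over the $\PDIP$ mixing) and boundedness (by $\|\psi\|_\infty$) are available, so this is a technicality. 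A secondary point to be careful about: Proposition~\ref{prop:pseudostat:exp} is stated as an identity of \emph{conditional} laws given non-extinction, so the translation into an identity of (sub-)probability measures on $\IPspace$ carries the factor $\Pr\{\alpha^y_\rho\ne\emptyset\}=(2y\rho+1)^{-1}$ from \eqref{eq:p_s_E:survival}, which must be tracked consistently on both sides. Once those are handled, the scaling invariance (Lemma~\ref{lem:type-1:scaling}) does the rest of the work, since it is precisely what lets us reduce the two-parameter family of initial conditions $a\scaleI\ol\beta$ to a single de-Poissonization in the mass variable.
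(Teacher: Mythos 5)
Your proposal is correct and follows essentially the same route as the paper: starting from Proposition~\ref{prop:pseudostat:exp}, use the scaling invariance of Lemma~\ref{lem:type-1:scaling} to write $\alpha^y_a:=a\scaleI\alpha^{y/a}_1$, recognize both $\int_0^\infty e^{-\rho a}\EV[f(\alpha^y_a)]\,da$ and the target $\int_0^\infty e^{-\rho a}\EV[f(B(y)\scaleI\ol\beta)]\,da$ as Laplace transforms in $a$ that agree for all $\rho>0$, invert, and upgrade from a.e.\ $a$ to all $a$ via continuity in the initial state (Proposition~\ref{prop:type-1:cts_in_init_state}), with the Lusin property of $(\IPspace,\dI)$ closing the identification of laws. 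The only cosmetic difference is that you propose testing against product functions $e^{-s\IPmag{\cdot}}\psi(\IPmag{\cdot}^{-1}\scaleI\cdot)$ to sidestep explicit Bessel integrals, while the paper computes directly against a general bounded continuous $f$ using the $\BESQ[0]$ transition density; the scalar identity matching the $\ExpDist[\rho/(2y\rho+1)]$ mass law to the $\BESQ[0]$-from-$\ExpDist[\rho]$ marginal still needs a short verification either way, but both routes are sound.
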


\begin{proof}
 \emph{Type-1 case}. The case $a=0$ is trivial. The transition density of \BESQ[0] can be read from \cite[equation (51)]{GoinYor03}. For $B(0) = a > 0$ we get $\Pr\{B(y)=0\} = e^{-a/2y}$ and
 \begin{equation}\label{eq:BESQ_0_transn}
  \Pr\{B(y)\in db\} = \frac{1}{2y}\sqrt{\frac{a}{b}}\exp\left(-\frac{a+b}{2y}\right)I_1\left(\frac{\sqrt{ab}}{y}\right)db\qquad \text{for }b\in (0,\infty),
 \end{equation}
 where $I_1$ is the Bessel function. Let $(\alpha^y_1,\,y\geq 0)$ denote a type-1 evolution with initial state $\ol\beta\sim\PDIP[\frac12,0]$. For $b>0$ and $y\geq 0$ let $\alpha^y_{b} := b \scaleI \alpha^{y/b}_1$; by Lemma \ref{lem:type-1:scaling} this is a type-1 evolution. For $\rho>0$ let $Z_{\rho}\sim \ExpDist[\rho]$ be independent of $(\alpha^y_1)$. By Proposition \ref{prop:pseudostat:exp}, for all $\rho>0$ and all bounded continuous $f\colon\IPspace\rightarrow[0,\infty)$ with $f(\emptyset)=0$ we have
 \begin{align}
  \int_0^\infty e^{-\rho a}\EV[f(\alpha^y_a)]da &= \frac{1}{\rho}\EV\left[f\left(\alpha^y_{Z_\rho}\right)\right] = \frac{1}{2y\rho+1}\int_0^\infty e^{-\rho a}\EV[f((2y\rho+1)a\scaleI\ol\beta)]da\notag\\
     &= \frac{1}{(2y\rho+1)^2}\int_0^\infty e^{-\rho b/(2y\rho+1)}\EV(f(b\scaleI\ol\beta))db.\label{lt1}
 \end{align}
 We want to identify this Laplace transform as the claimed
 \begin{align}
 &\int_0^\infty e^{-\rho a}\int_0^\infty\frac{1}{2y}\sqrt{\frac{a}{b}}\exp\left(-\frac{a+b}{2y}\right)I_1\left(\frac{\sqrt{ab}}{y}\right)\EV(f(b\scaleI\ol\beta))dbda\nonumber\\
    &= \int_0^\infty\frac{1}{2y}\frac{1}{\sqrt{b}}e^{-b/2y}\EV(f(b\scaleI\ol\beta))\int_0^\infty \sqrt{a}e^{-(\rho+(1/2y))a}I_1\left(\frac{\sqrt{ab}}{y}\right)dadb\nonumber\\
    &= \int_0^\infty\frac{1}{2y}\frac{1}{\sqrt{b}}e^{-b/2y}\EV(f(b\scaleI\ol\beta))\sqrt{\frac{b}{y^2}}\frac{1}{2(\rho+(1/2y))^2}\exp\left(\frac{b}{4y^2(\rho+(1/2y))}\right)db,\label{lt2}
 \end{align}
 where we use well-known formulas for integrals involving the Bessel function $I_1$: specifically, the normalization of \eqref{eq:BESQ_0_transn} and differentiation $d/dx$ under the integral sign give rise to
 $$\int_0^\infty \frac{1}{\sqrt{u}}e^{-xu}I_1(\sqrt{uv})du = \frac{2}{\sqrt{v}}\left(e^{v/4x}-1\right)\quad \text{and}\quad
 \int_0^\infty \sqrt{u}e^{-xu}I_1(\sqrt{uv})du = \frac{\sqrt{v}}{2x^2}e^{v/4x}$$
 for all $x,v\in(0,\infty)$. As desired, \eqref{lt1} and \eqref{lt2} can easily be seen to be equal. By Proposition \ref{prop:type-1:cts_in_init_state}, the map $a\mapsto\EV(f(\alpha^y_a))$ is continuous, so for all $a,y\in(0,\infty)$,
 $$\EV\left[f(\alpha^y_a)\right] = \int_0^\infty\frac{1}{2y}\sqrt{\frac{a}{b}}e^{-(a+b)/2y}I_1\left(\frac{\sqrt{ab}}{y}\right)\EV(f(b\scaleI\ol\beta))db = \EV\left[f(B(y)\scaleI\ol\beta)\right].$$
 Equality in distribution follows since, as noted in Theorem \ref{thm:Lusin}, $(\IPspace,\dI)$ is Lusin, so bounded continuous functions separate points in $\IPspace$.
 
 \emph{Type-0 case}. We begin with a similar argument, making the obvious adjustments of letting $(\alpha^y_a,\,y\geq 0)$ denote a type-0 evolution for $a>0$, taking $\ol\beta\sim\PDIP[\frac12,\frac12]$, and setting $Z_{\rho}\sim\GammaDist[\frac12,\rho]$. Then Proposition \ref{prop:pseudostat:exp} gives us
  \begin{equation*}
  \begin{split}
  \int_0^\infty\frac{1}{\sqrt{\pi}}\sqrt{\frac{\rho}{a}}e^{-\rho a}\EV[f(\alpha^y_a)]da &= \EV\left[f(\alpha^{y}_{Z_\rho})\right]
    = \int_0^\infty\frac{1}{\sqrt{\pi}}\sqrt{\frac{\rho}{a}}e^{-\rho a}\EV[f((2\rho y+1)a\scaleI\ol\beta)]da\\
    &= \int_0^\infty\frac{1}{\sqrt{\pi}}\frac{1}{\sqrt{b}}\sqrt{\frac{\rho}{2\rho y+1}}e^{-\rho b/(2\rho y+1)}\EV[f(b\scaleI\ol\beta)]db.
  \end{split}
  \end{equation*}
  Since the total mass evolution is \BESQ[1], 
  considering $f$ of the form $g\left(\IPmag{\,\cdot\,}\right)$ gives
  $$\frac{1}{\sqrt{\pi}}\frac{1}{\sqrt{b}}\sqrt{\frac{\rho}{2\rho y+1}}e^{-\rho b/(2\rho y+1)} = \int_0^\infty\frac{1}{\sqrt{\pi}}\sqrt{\frac{\rho}{a}}e^{-\rho a}q_y^1(a,b)da,$$
  where $q^1_y$ is the time-$y$ transition density of \BESQ[1]. Hence, after the cancellation of $\sqrt{\rho/\pi}$,
  $$\int_0^\infty a^{-1/2}e^{-\rho a}\EV[f(\alpha^y_a)]da = \int_0^\infty a^{-1/2}e^{-\rho a}\int_0^\infty q_y^1(a,b)\EV[f(b\scaleI\ol\beta)]dbda.$$
  Since this holds for all $\rho>0$, we conclude by uniqueness of Laplace transforms that
  $$a^{-1/2}\EV[f(\alpha^y_a)] = a^{-1/2}\int_0^\infty q_y^1(a,b)\EV[f(b\scaleI\ol\beta)]db,$$
  first for Lebesgue-a.e.\ $a>0$, then for every $a>0$ by continuity. Again, this gives equality in distribution, since $(\IPspace,\dI)$ is Lusin.
\end{proof}

\begin{proof}[Proof of Theorem \ref{thm:pseudostat}]
 The arguments for type-0 and type-1 are identical. We showed in the proof of Proposition \ref{prop:pseudostat:fixed} that $\alpha^y_a$ has the same distribution as $B(y)\scaleI\ol\beta$ for all $B(0) = a\ge 0$. Now consider any random $B(0)$ independent of $(\alpha^y_1,\,y\ge 0)$.
 \begin{equation*}
 \begin{split}
  \EV\left[f\left(\alpha^y_{B(0)}\right)\right] &= \int_0^\infty\EV[f(\alpha^y_a)]\Pr\{B(0)\in da\}\\
  	&= \int_0^{\infty}\EV[f(B(y)\scaleI\ol\beta)\;|\;B(0)=a]\Pr\{B(0)\in da\} = \EV[f(B(y)\scaleI\ol\beta)].
 \end{split}
 \end{equation*}\vspace{-20pt}
 
\end{proof}

\subsection{Two diffusions on $(\IPspace,\dI)$, stationary with \PDIP $\left(\frac12,0\right)$ and \PDIP $\left(\frac12,\frac12\right)$ laws}

\def\bsa{\boldsymbol{\alpha}}
\def\Absa{\rho_{\bsa}}
\def\dPF{\ol\cF_{\IPspace}}

Throughout this section we write $\IPspace_1 := \{\gamma\in\IPspace\colon\IPmag{\gamma}=1\}$. Recall the de-Poissonization transformation of Theorem \ref{thm:stationary}. In this section, we prove that theorem. We slightly update our earlier notation.

\begin{definition}[De-Poissonization]\label{def:dePois}
 For $\bsa = (\alpha^y,\,y\geq 0) \in \cC([0,\infty),\IPspace)$ with $\alpha^0\neq\emptyset$, we set
 $$\ol{\alpha}^u := \IPmag{\alpha^{\Absa(u)}}^{-1}\scaleI\alpha^{\Absa(u)}, \quad
  \text{where} \quad \Absa(u) = \inf\left\{  y\ge 0\colon \int_0^y \IPmag{\alpha^z}^{-1} dz > u  \right\}, \quad u\ge 0.$$
 We call the map $D$ sending $(\alpha^y,\,y\geq 0) \mapsto (\ol\alpha^u,\,u\geq 0)$ the \emph{de-Poissonization map} and we call $(\ol\alpha^u,\,u\geq 0)$ the de-Poissonized process.
\end{definition}

\begin{proposition}\label{prop:dePois:time_change}
 For $\bsa = (\alpha^y,\,y\geq 0)$ a type-0 or type-1 evolution with initial state $\beta\neq\emptyset$, the time-change $\Absa$ is continuous and strictly increasing, and $\lim_{u\upto\infty}\Absa(u) = \min\{y > 0\colon \alpha^y = \emptyset\}$.
\end{proposition}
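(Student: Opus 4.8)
The plan is to read off all three claims from properties of the total mass process $(\IPmag{\alpha^y},\,y\ge 0)$, which by Theorem \ref{thm:BESQ_total_mass} is a \BESQ[0] (type-1) or \BESQ[1] (type-0) diffusion started from $\IPmag{\beta} > 0$. Write $m(y) := \IPmag{\alpha^y}$ and $\zeta := \min\{y>0\colon \alpha^y = \emptyset\}$. Since $\alpha^y = \emptyset$ if and only if $m(y) = 0$, and since both \BESQ[0] and \BESQ[1] are absorbed at $0$ in the type-1 case and reflected at $0$ in the type-0 case, we should first record the elementary fact that $\zeta = \inf\{y>0\colon m(y)=0\}$ is a.s.\ in $(0,\infty]$: in the type-1 case $\zeta<\infty$ a.s.\ (\BESQ[0] hits $0$), while in the type-0 case $\zeta = \infty$ a.s.\ (\BESQ[1] never returns to $0$ after leaving it, being reflecting with no absorption), so $m$ is a.s.\ strictly positive on $(0,\zeta)$ in both cases.

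The core of the argument is then a deterministic statement about the function $u\mapsto \Absa(u)$ given a continuous path $m\colon[0,\infty)\to[0,\infty)$ with $m(0)>0$ and $m>0$ on $[0,\zeta)$. First I would observe that $y\mapsto \int_0^y m(z)^{-1}\,dz$ is continuous and strictly increasing on $[0,\zeta)$, because the integrand $m(z)^{-1}$ is continuous and strictly positive there; hence its right-continuous inverse $\Absa$ is continuous and strictly increasing on its domain, with $\Absa(0)=0$. This gives the first two assertions. For the limit, the key computation is $\lim_{u\upto\infty}\Absa(u) = \sup\{y<\zeta\colon \int_0^y m(z)^{-1}\,dz < \infty\}$, so it remains to show that this supremum equals $\zeta$, i.e.\ that $\int_0^y m(z)^{-1}\,dz < \infty$ for every $y<\zeta$ and $\int_0^\zeta m(z)^{-1}\,dz = \infty$ when $\zeta<\infty$. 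The first part is immediate: on $[0,y]\subset[0,\zeta)$, $m$ is continuous and bounded away from $0$, so $m^{-1}$ is bounded and the integral is finite. The second part — the blow-up of $\int_0^\zeta m(z)^{-1}\,dz$ as $y\upto\zeta$ in the type-1 case — is the one genuine probabilistic input and the main obstacle.

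For that blow-up I would argue as follows. Near its absorption time $\zeta$, a \BESQ[0] process $m$ behaves, after a deterministic time reversal / scaling, like a squared Bessel process issued from $0$; more concretely, for $y$ slightly below $\zeta$ one has $m(y) \le C(\zeta - y)$ eventually would be too strong, so instead I would use the correct local behaviour: $\BESQ[0]$ approaches $0$ like a time-reversed squared Bessel process of dimension $4$ (by the Ray--Knight / time-reversal description of \BESQ near its last exit from $0$, cf.\ \cite[Ch.\ XI]{RevuzYor}), and in particular $\limsup_{y\upto\zeta} (\zeta-y)^{-1}\log\log\big(1/(\zeta-y)\big)^{-1} m(y)$-type law-of-the-iterated-logarithm bounds force $m(y) \le (\zeta - y)\cdot h(\zeta-y)$ for a slowly varying $h$, which is more than enough to make $\int^\zeta m(z)^{-1}\,dz$ diverge: since $m(z) \to 0$ and $m$ is continuous, for any $\varepsilon>0$ there is $y_0<\zeta$ with $m(z) < \varepsilon$ on $[y_0,\zeta)$, whence $\int_{y_0}^{\zeta} m(z)^{-1}\,dz \ge \varepsilon^{-1}(\zeta - y_0)$ — but this crude bound only gives a finite answer, so I genuinely need the sharper statement that $\int_{y_0}^{\zeta} m(z)^{-1}\,dz = \infty$, which follows because $m$ is a nonnegative continuous function vanishing at $\zeta$ that is also a supermartingale-like diffusion with $\int_0^\cdot m(z)^{-1}\,dz$ the natural (explosive) time-change; indeed, the \BESQ[0] $m$ run under the time-change $d u = m(z)^{-1} dz$ is, by standard diffusion time-change theory, a Feller-type diffusion that reaches its boundary only in infinite $u$-time, which is exactly the assertion $\Absa(u)\upto\zeta$. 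Citing \cite[Theorem 4.1]{PitmYor82} or the explicit transition density of \BESQ[0] to get the a.s.\ finiteness/infiniteness dichotomy for $\int_0^\zeta m^{-1}$ is the cleanest route.

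Finally, I would assemble the pieces: continuity and strict monotonicity of $\Absa$ on $[0,\sup_u\Absa(u))$ from the deterministic inverse-function argument; $\sup_u \Absa(u) = \zeta$ from the integrability dichotomy above; and in the type-0 case $\zeta = \infty$, so $\Absa$ is an increasing homeomorphism of $[0,\infty)$ onto itself, while in the type-1 case $\Absa$ increases continuously and strictly from $0$ up to the finite absorption level $\zeta$. The only step requiring care beyond bookkeeping is the divergence of $\int_0^\zeta \IPmag{\alpha^z}^{-1}\,dz$ in the type-1 case, which I expect to handle via the known near-extinction behaviour of \BESQ[0] as recalled above; everything else is routine.
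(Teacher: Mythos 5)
Your reduction to the total mass process $m(y) := \IPmag{\alpha^y}$ and to $A(y) := \int_0^y m(z)^{-1}\,dz$ is the right shape, and it matches the spirit of the paper's own (one-line) proof, which simply delegates the relevant facts about inverse-\BESQ\ integrals to \cite[p.~314--5]{GoinYor03}. But there is a genuine error in your type-0 branch. You assert that $\zeta = \infty$ a.s.\ because ``\BESQ[1] never returns to $0$ after leaving it.'' This is false: a \BESQ[\delta] process with $0<\delta<2$ hits zero at an a.s.\ finite time; $\delta=1$ being reflecting means $0$ is not absorbing, not that $0$ is inaccessible. Consequently $\zeta := \min\{y>0\colon \alpha^y=\emptyset\}$ is a.s.\ finite in both cases, your final sentence about $\Absa$ being a homeomorphism of $[0,\infty)$ onto itself is wrong (it maps $[0,\infty)$ onto $[0,\zeta)$ in both cases), and the divergence $A(\zeta-)=\infty$ must be established for type-0 too --- your argument tacitly omits that case.

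For that divergence, the type-1 argument also needs tightening, as you partly acknowledge. A law-of-the-iterated-logarithm statement does not give the pointwise upper bound $m(y)\le(\zeta-y)h(\zeta-y)$ you invoke, and the appeal to ``standard diffusion time-change theory'' is circular as written, since the conclusion you want is precisely that the time-changed process reaches its boundary only in infinite time. The clean fix is to identify the time-changed process. Write $dm = 2\sqrt{m}\,dW + \delta\,dy$ with $\delta\in\{0,1\}$ and set $X(u) := m(\Absa(u))$; since $d\Absa(u) = m(\Absa(u))\,du = X(u)\,du$, the usual SDE time-change gives $dX = 2X\,d\widetilde{W} + \delta X\,du$, so $X$ is a geometric Brownian motion with drift, started at $m(0)>0$, and hence a.s.\ never reaches $0$. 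Therefore $\Absa(u)<\zeta$ for all $u$, i.e.\ $A(\zeta-)=\infty$, which handles both cases uniformly. Finally, observe that without $A(\zeta-)=\infty$ your deterministic inverse-function argument would still give continuity and $\Absa(u)\to\zeta$, but $\Absa$ would be constant equal to $\zeta$ on $[A(\zeta-),\infty)$; so the divergence is what delivers strict monotonicity, not merely the limit.
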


This is really an assertion about integrals of inverses of the \BESQ\ total mass processes of Theorem \ref{thm:BESQ_total_mass}, and in that setting it is common knowledge. It can be read, for example, from \cite[p.\ 314-5]{GoinYor03}. 
The a.s.\ path-continuity claimed in Theorem \ref{thm:stationary} follows from Proposition \ref{prop:dePois:time_change} and the path-continuity of the type-1 and type-0 evolutions. It remains to prove the claimed Markov property and stationary distributions.



Take $\bsa = (\alpha^y,\,y\geq 0)\in\cCRI$ with $\alpha^0\neq\emptyset$. By changes of variables we see that
$$D(\bsa) = D(c\scaleI\alpha^{y/c},\,y\ge 0)\qquad\mbox{for all }c>0.$$
Consequently, a type-1 (respectively, type-0) evolution starting from $c\scaleI\beta$ has the same de-Poissonized process as a type-1 (resp.\ type-0) evolution starting from $\beta$. Thus, for laws $\mu$ on $\IPspace\setminus\{\emptyset\}$ we can denote by $\ol\BPr^1_{\ol{\mu}}$ (resp.\ $\ol\BPr^0_{\ol{\mu}}$) the distribution of a de-Poissonized type-1 (resp.\ type-0) evolution starting from the initial distribution $\ol{\mu}$ of $\IPmag{\beta}^{-1}\scaleI\beta$, where $\beta\sim\mu$.

Recall the filtration $(\cFI^y,\,y\ge0)$ of Definition \ref{def:IP_process_space_1}. Since $(\Absa(u),\,u\ge 0)$ is an increasing family of $(\cFI^y)$-stopping times, we can introduce the time-changed filtration $\dPF^u = \cFI^{\Absa(u)}$, $u\ge 0$. Note that, whereas in Section \ref{sec:type-1_gen}, $(\ol\cF^y)$ denotes the completion of the level filtration $(\cF^y)$, here we use the bar to indicate the relationship with the de-Poissonized $(\ol\alpha^u)$. 

\begin{proposition}[Strong Markov property of de-Poissonized evolutions]\label{prop:dePois:Markov}
 Let $\ol\mu$ be a probability distribution on $\IPspace_1$. Let $U$ be an a.s.\ finite $(\cFI^u)$-stopping time. Let $\ol\eta$ and $\ol f$ be non-negative, measurable functions on $\cCRIi$, with $\ol\eta$ being $\dPF^U$-measurable. Then
 \begin{equation*}
  \ol\BPr^1_{\ol\mu}\big[\ol\eta\,\ol f\circ\theta_U\big] = \ol\BPr^1_{\ol\mu}\left[\ol\eta\, \ol\BPr^1_{\ol\alpha^U}\left[\ol f\,\right]\right]\!,
  \qquad \text{and likewise} \qquad \ol\BPr^0_{\ol\mu}\big[\ol\eta\,\ol f\circ\theta_U\big] = \ol\BPr^0_{\ol\mu}\left[\ol\eta\, \ol\BPr^0_{\ol\alpha^U}\left[\ol f\,\right]\right]\!.
 \end{equation*}
\end{proposition}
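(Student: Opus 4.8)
The plan is to reduce the strong Markov property of the de-Poissonized process to the already-established strong Markov property of the un-de-Poissonized type-1 and type-0 evolutions, Proposition \ref{prop:strong_Markov}, via the time-change. The key point is that the time-change $\Absa$ is a measurable functional of the path through its total-mass process, and that de-Poissonization commutes with the scaling that identifies the laws $\BPr^1_{c\scaleI\beta}$ with $\BPr^1_\beta$ after de-Poissonization. Since the two cases are handled identically, I will write everything for type-1 and simply remark that type-0 follows by the same argument with $\BPr^1$ replaced by $\BPr^0$ and Proposition \ref{prop:strong_Markov} invoked in its type-0 form.

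First I would observe that for any $\bsa=(\alpha^y,\,y\ge0)\in\cCRI$ with $\alpha^0\neq\emptyset$, the function $u\mapsto\Absa(u)$ depends on $\bsa$ only through $(\IPmag{\alpha^y},\,y\ge0)$, and is continuous and strictly increasing up to the (possibly infinite) extinction time $\zeta_{\bsa}:=\min\{y>0\colon\alpha^y=\emptyset\}$ by Proposition \ref{prop:dePois:time_change}; its right-continuous inverse is $y\mapsto\int_0^y\IPmag{\alpha^z}^{-1}dz$. For fixed $u\ge0$, the random variable $\Absa(u)$ is therefore an $(\cFI^y)$-stopping time, and by definition $\dPF^u=\cFI^{\Absa(u)}$. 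Next, given an a.s.\ finite $(\cFI^u)$-stopping time $U$ for the de-Poissonized filtration, I would check that $Y:=\Absa(U)$ is an a.s.\ finite $(\cFI^y)$-stopping time: this is the standard fact that composing an increasing, right-continuous family of stopping times with a stopping time of the time-changed filtration yields a stopping time, together with the a.s.\ finiteness of $\zeta_{\bsa}$ under $\BPr^1_\mu$ (which follows from Theorem \ref{thm:BESQ_total_mass}, since a \BESQ[0] or \BESQ[1] started at a positive value hits zero in finite time a.s.) and Proposition \ref{prop:dePois:time_change}, which gives $\Absa(u)\to\zeta_{\bsa}$ as $u\to\infty$, so $U<\infty$ forces $Y<\zeta_{\bsa}<\infty$. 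Moreover $\dPF^U=\cFI^Y$, so the $\dPF^U$-measurable functional $\ol\eta$ can be written as $\ol\eta=\eta$ for some $\cFI^Y$-measurable $\eta$ on $\cCRI$, upon identifying $\ol\eta(\ol\alpha^{\cdot})$ with a functional of $(\alpha^y,\,y\le Y)$ through the deterministic, measurable de-Poissonization-up-to-a-stopping-time map.

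Then I would apply the strong Markov property of the type-1 evolution, Proposition \ref{prop:strong_Markov}, at the stopping time $Y$: writing the de-Poissonized post-$U$ process $\theta_U\circ D(\bsa)$ as a measurable function $\Psi$ of the shifted path $\theta_Y\bsa=(\alpha^{Y+y},\,y\ge0)$ — here using that de-Poissonizing the shifted path and then starting the ``clock'' at $u=0$ reproduces $\theta_U$ of the de-Poissonized path, because the additive functional $\int_0^{\cdot}\IPmag{\alpha^z}^{-1}dz$ has stationary increments under the shift in the relevant sense — we get
\begin{equation*}
 \ol\BPr^1_{\ol\mu}\big[\ol\eta\,\ol f\circ\theta_U\big] = \BPr^1_{\mu}\big[\eta\,(\ol f\circ\Psi)\circ\theta_Y\big] = \BPr^1_{\mu}\Big[\eta\,\BPr^1_{\alpha^Y}\big[\ol f\circ\Psi\big]\Big].
\end{equation*}
Finally, I would identify $\BPr^1_{\alpha^Y}[\ol f\circ\Psi]$ with $\ol\BPr^1_{\ol\alpha^U}[\ol f\,]$: by the scaling invariance noted after Definition \ref{def:dePois} (de-Poissonization is insensitive to the scaling $c\scaleI\alpha^{y/c}$), a type-1 evolution started from $\alpha^Y$ has the same de-Poissonized law as one started from $\IPmag{\alpha^Y}^{-1}\scaleI\alpha^Y=\ol\alpha^U$, and $\ol f\circ\Psi$ applied to the type-1 evolution is exactly $\ol f$ applied to its de-Poissonization. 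This yields $\ol\BPr^1_{\ol\mu}[\ol\eta\,\ol f\circ\theta_U]=\BPr^1_{\mu}[\eta\,\ol\BPr^1_{\ol\alpha^U}[\ol f\,]]=\ol\BPr^1_{\ol\mu}[\ol\eta\,\ol\BPr^1_{\ol\alpha^U}[\ol f\,]]$, as claimed; the type-0 statement is proved verbatim.

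The main obstacle I anticipate is the bookkeeping in the second and third steps: making precise the measurable map $\Psi$ and verifying the ``cocycle'' identity that de-Poissonization intertwines with the shift, i.e.\ that $\theta_U\circ D=D\circ\theta_{\Absa(U)}$ up to the time-change reparametrization on the post-$Y$ path, together with checking that $U$ being an $(\cFI^u)$-stopping time really does make $\Absa(U)$ an $(\cFI^y)$-stopping time with $\cFI^{\Absa(U)}=\dPF^U$. These are standard facts about time-changes of filtrations (cf.\ the treatment of time-changes in \cite{RevuzYor} or \cite{Kallenberg}), but they must be stated carefully because the total-mass process is only a functional of the path, not an independent clock; once they are in place, the reduction to Proposition \ref{prop:strong_Markov} is immediate. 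A minor additional point is the a.s.\ finiteness of $\zeta_{\bsa}$, which is needed so that $\Absa$ is defined as a genuine increasing bijection onto $[0,\zeta_{\bsa})$ and $U<\infty\Rightarrow\Absa(U)<\infty$; this is supplied by Theorem \ref{thm:BESQ_total_mass}.
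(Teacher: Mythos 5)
Your proposal is correct and takes essentially the same route as the paper: identify $Y=\Absa(U)$ as a $(\cFI^y)$-stopping time with $\cFI^Y=\dPF^U$, establish the intertwining $\theta_U\circ D = D\circ\theta_Y$, invoke Proposition \ref{prop:strong_Markov} at $Y$, and finally use the scaling invariance of $D$ (noted after Definition \ref{def:dePois}) to replace $\BPr^1_{\alpha^Y}[\ol f\circ D]$ by $\ol\BPr^1_{\ol\alpha^U}[\ol f\,]$. Your map $\Psi$ is just $D$ itself; the paper also relies on \cite[Proposition 7.9]{Kallenberg} for the time-change of filtrations, exactly as you anticipate.
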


\begin{proof}
 We begin by proving the type-1 assertion. In fact, we prove a stronger statement. Consider the canonical process $\bsa = (\alpha^y,\,y\ge0)$ under $\BPr^1_{\ol\mu}$, so $D(\bsa)$ is a de-Poissonized type-1 evolution with law $\ol\BPr^1_{\ol\mu}$. We show the strong Markov property of $D(\bsa)$ with respect to $(\dPF^u,u\!\ge\!0)$.
 
 Let $V$ be an a.s.\ finite $(\dPF^u)$-stopping time. Consider $\eta\colon\cCRI\to [0,\infty)$ measurable in $\dPF^{V}$ and set $f := \ol f\circ D$, where $\ol f$ is as in the statement above. 
 Let $Y := \Absa(V)$. Since $\Absa$ is $(\dPF^u)$-adapted, continuous and strictly increasing, \cite[Proposition 7.9]{Kallenberg} yields that $Y$ is an $(\cFI^y)$-stopping time and $\cFI^Y = \dPF^{V}$. Now, let $\theta$ denote the shift operator. For $u\geq 0$,
 $$\ol{\alpha}^{V+u} = \IPmag{\alpha^{\Absa(V+u)}}^{-1}\scaleI\alpha^{\Absa(V+u)}
    = \IPmag{\alpha^{Y+h(u)}}^{-1}\scaleI\alpha^{Y+h(u)},$$
    where $h(u) := \rho_{\theta_Y\bsa}(u)$. 
 Thus, $\theta_{V}\circ D = D\circ\theta_Y$. 
 Then
 \begin{equation*}
  \BPr^1_{\ol\mu}\left[\eta\,\ol{f}\circ\theta_V\circ D\right]
  = \BPr^1_{\ol\mu}\left[\eta\,f\circ\theta_Y\right]
  = \BPr^1_{\ol\mu}\left[\eta\,\BPr^1_{\alpha^Y}[f]\right]
  = \BPr^1_{\ol\mu}\left[\eta\,\BPr^1_{\alpha^Y}\left[\ol{f}\circ D\right]\right]
  = \BPr^1_{\ol\mu}\left[\eta\,\ol\BPr^1_{\ol{\bsa}^V}\left[\ol f\,\right]\right],
 \end{equation*}
 by the strong Markov property of the type-1 evolution, Proposition \ref{prop:strong_Markov}. The same argument works for the de-Poissonized type-0 evolution and the laws $\BPr^0_{\ol\mu}$.
\end{proof}

\begin{proof}[Proof of the Hunt assertion of Theorem \ref{thm:stationary}]
 We check the same four points as in the proof of Theorem \ref{thm:diffusion} at the end of Section \ref{sec:Markov}.
 
 (i) By Theorem \ref{thm:Lusin}, $(\IPspace,\dI)$ is Lusin. Since the mass map $\IPmag{\,\cdot\,}$ is continuous, the set $\IPspace_1$ is a Borel subset of this space, and is thus Lusin as well.
  
 (ii) From Proposition \ref{prop:type-1:cts_in_init_state}, the semi-group for the type-1 (respectively type-0) evolution is continuous in the initial state. Helland \cite[Theorem 2.6]{Helland78} shows that time-change operations of the sort considered here are a continuous maps from Skorokhod space to itself. Thus, the semi-group for the de-Poissonized type-1 (resp.\ type-0) is also continuous.
 
 (iii) Sample paths are continuous, as noted after the statement of Proposition \ref{prop:dePois:time_change}.
 
 (iv) Proposition \ref{prop:dePois:Markov} gives the required strong Markov property.
\end{proof}

To prove stationarity, we progressively strengthen the pseudo-stationarity results of Theorem \ref{thm:pseudostat}. Denote by $(\cF_{\rm mass}^y,y\ge 0)$ the right-continuous filtration on $\cCRI$ generated by $(\IPmag{\alpha^y},\,y\ge 0)$.


\begin{lemma}\label{strongstat}
 Let $\mu$ denote the law of $B\scaleI\ol\beta$, where $B$ is some non-negative random variable independent of $\ol\beta\sim\PDIP[\frac12,0]$. Then for all $y\ge 0$, all $\cF_{\rm mass}^y$-measurable $\eta\colon \cCRI\to [0,\infty)$, and all measurable $h\colon \IPspace_1\to [0,\infty)$, we have
 \begin{equation*}
  \BPr^1_{\mu}\left[\eta\cf\{\alpha^y\neq\emptyset\} h\left(\IPmag{\alpha^y}^{-1}\scaleI\alpha^y\right)\right] = \BPr^1_{\mu}\left[\eta \cf\{\alpha^y\neq\emptyset\}\right]\EV\left[h\left(\ol\beta\right)\right].
 \end{equation*}
 The same assertion holds if we replace superscript `0's with `1's and take $\ol\beta\sim\PDIP[\frac12,\frac12]$.
\end{lemma}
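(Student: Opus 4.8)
The plan is to bootstrap from the pseudo-stationarity already established (Theorem~\ref{thm:pseudostat}) to the conditional, filtration-adapted statement. Theorem~\ref{thm:pseudostat} tells us that $\alpha^y$ has the unconditional law $B(y)\scaleI\ol\beta$ where $B$ is a \BESQ[0] started from $B(0)$; what is new here is (a) conditioning on $\{\alpha^y\neq\emptyset\}$, (b) multiplying by an $\cF^y_{\rm mass}$-measurable functional $\eta$, and (c) asserting that after conditioning and rescaling, the shape $\IPmag{\alpha^y}^{-1}\scaleI\alpha^y$ is \emph{exactly} $\PDIP[\frac12,0]$-distributed and \emph{independent} of the whole mass path up to level $y$. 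The key structural input is that, conditionally on the entire mass process $(\IPmag{\alpha^z},\,z\le y)$, the rescaled partition $\IPmag{\alpha^y}^{-1}\scaleI\alpha^y$ is $\PDIP[\frac12,0]$ and independent of that mass path. Granting this, the lemma is immediate: condition on $\cF^y_{\rm mass}$, note $\eta\cf\{\alpha^y\neq\emptyset\}$ is $\cF^y_{\rm mass}$-measurable (the event $\{\alpha^y\neq\emptyset\}=\{\IPmag{\alpha^y}>0\}$ is mass-measurable), pull it out, and use the conditional law of the shape.

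So the real work is proving the boxed conditional claim. First I would reduce to a fixed initial mass: by Theorem~\ref{thm:pseudostat} and Proposition~\ref{prop:pseudostat:fixed}, and by conditioning on $B(0)=a$ and mixing at the end, it suffices to treat $\alpha^0 = a\scaleI\ol\beta$ with $a>0$ fixed. Next I would exploit the clade decomposition: write $\ol\beta = \{(0,L)\}\concat((1-L)\scaleI\ol\gamma)$ with $L\sim\BetaDist[\frac12,\frac12]$ independent of $\ol\gamma\sim\PDIP[\frac12,\frac12]$ (Proposition~\ref{prop:PDIP}\itemref{item:PDIP:Stable}), so $\alpha^0 = \{(0,aL)\}\concat(a(1-L)\scaleI\ol\gamma)$, and $\bN_{\alpha^0} = \bN_{U_0}\concat\bN_{\gamma}$ with $U_0=(0,aL)$ and $\gamma = a(1-L)\scaleI\ol\gamma$. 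Since $aL$ and $a(1-L)$ — hence $\Leb(U_0)$ and $\IPmag{\gamma}$ — are here \emph{not} independent (they are a fixed-sum Beta split), I would instead run the argument conditionally on $a$ and then \emph{reweight} to the $\ExpDist$ case where the two masses are independent Gammas, precisely as in Step~1 of the proof of Proposition~\ref{prop:pseudostat:exp}. The point of Proposition~\ref{prop:pseudostat:exp} is that, for $B(0)\sim\ExpDist[\rho]$, the conditional law of $\alpha^y$ given survival is $(2y\rho+1)B(0)\scaleI\ol\beta$, i.e.\ again a scaled $\PDIP[\frac12,0]$; the proof there already exhibits, via the subordinator calculations of Step~3, that the block-mass structure at level $y$ (leftmost block plus the subordinator of remaining masses) reassembles into a $\PDIP[\frac12,0]$ scaled by an independent $\ExpDist$ factor. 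What I need to extract from that proof is slightly more: that the rescaled \emph{shape} at level $y$ is independent not merely of its own total mass but of the entire mass path $(\IPmag{\alpha^z},\,z\le y)$.

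To get that stronger independence I would argue as follows. Fix a level $y>0$. By Lemma~\ref{lem:cutoff_skewer}\itemref{item:CPS:clades_skewer_0} and Lemma~\ref{lem:finite_survivors}, $\alpha^y$ is the concatenation of the skewers at level $y$ of the finitely many clades $\bN_U$ ($U\in\alpha^0$) that survive to level $y$; each such clade contributes, by Propositions~\ref{prop:type-1:transn} and \ref{prop:type-1:gen_transn}, a leftmost block and an independent \Stable[\frac12]-subordinator range, and the concatenation of these ranges is itself governed by an inverse-Gaussian subordinator $R^y_{\rm rem}$ stopped at an independent exponential time, as assembled in Step~3 of Proposition~\ref{prop:pseudostat:exp}. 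Crucially, the total mass $\IPmag{\alpha^y}$ is $R^y_{\rm rem}$ evaluated at that exponential stopping time (plus the special leftmost block mass), and the normalized range of a \Stable[\frac12] subordinator run up to an independent exponential time is, by the standard Poisson--Dirichlet paintbox identity (Proposition~\ref{prop:PDIP}\itemref{item:PDIP:Stable} and \cite[Proposition~21]{PitmYorPDAT}), a $\PDIP[\frac12,\cdot]$ that is \emph{independent of the stopping time}, hence of the total mass. Moreover the mass path $(\IPmag{\alpha^z},\,z\le y)$ is a functional of the \BESQ[0] evolution, which by Theorem~\ref{thm:BESQ_total_mass} is the sum of the per-clade \BESQ[0] total masses; the conditional law of the level-$y$ configuration given this entire path is, by the Markov-like disintegration in Proposition~\ref{prop:PRM:Fy-_Fy+} / Proposition~\ref{prop:type-1:Fy-_Fy+}, a function of $\alpha^y$ only through its mass, and within that, a paintbox. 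I would make this precise by conditioning on the \emph{sequence} of per-clade surviving masses at level $y$ and invoking the paintbox representation clade-by-clade, then observing that concatenating independent scaled $\PDIP$'s in \Stable[\frac12]-subordinator fashion yields a single scaled $\PDIP[\frac12,0]$ (the leftmost clade carries the size-biased ``$\theta=0$'' leftmost block; the rest are $\theta=\frac12$ pieces) — this is exactly the content of Steps~2--3 of Proposition~\ref{prop:pseudostat:exp}, which I would cite rather than redo.

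The main obstacle, I expect, is item (c): upgrading the \emph{marginal} pseudo-stationarity of Theorem~\ref{thm:pseudostat} to \emph{conditional independence of the normalized shape from the full mass filtration $\cF^y_{\rm mass}$}, rather than just from $\IPmag{\alpha^y}$ at the single level $y$. The marginal statement is already proved; the conditional one requires knowing that the randomness producing the $\PDIP$ shape (the ``angular part'' of the subordinator, the Beta-split of the leftmost block, the relative ordering of surviving clades) is genuinely independent of the ``radial'' data that drives the total mass at all levels $z\le y$. I believe this follows by conditioning on the entire family of per-clade \BESQ[0] paths $(\IPmag{\alpha^z_U},\,z\le y)_{U\in\alpha^0}$ — which determines $\cF^y_{\rm mass}$ — and checking that, given these, the level-$y$ skewer of each surviving clade is a paintbox whose shape is independent of the path, using Proposition~\ref{prop:clade:Fy-_Fy+} together with the explicit entrance law of Proposition~\ref{prop:type-1:transn} (where $L^y$, $R^y$, $S^y$ are manifestly independent of the clade's total mass evolution). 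Once this clade-level conditional independence is in hand, independence at the level of the concatenation, and hence of $\IPmag{\alpha^y}^{-1}\scaleI\alpha^y$ from $\cF^y_{\rm mass}$, follows by the same bookkeeping as in Proposition~\ref{prop:pseudostat:exp}, Step~3. The type-0 case is identical after replacing the leftmost-clade/$\PDIP[\frac12,0]$ structure by the immigration subordinator and $\PDIP[\frac12,\frac12]$, using Proposition~\ref{prop:type-0:transn} in place of Proposition~\ref{prop:type-1:gen_transn}.
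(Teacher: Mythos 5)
Your proposal attacks the problem head-on: condition on the per-clade $\BESQ[0]$ mass paths and try to show that the level-$y$ skewer of each surviving clade is a paintbox whose shape is independent of that path. This is a genuinely different route from the paper, which simply bootstraps: Theorem \ref{thm:pseudostat} (with fixed $\IPmag{\alpha^0}$, via Proposition \ref{prop:pseudostat:fixed}) gives the claim when $\eta$ depends only on $\IPmag{\alpha^0}$ and $\IPmag{\alpha^y}$; the simple Markov property then extends by induction to functionals of $(\IPmag{\alpha^{y_0}},\ldots,\IPmag{\alpha^{y_n}})$, because conditionally on $\IPmag{\alpha^0}$ the law of $\alpha^{y_1}$ is again of the form ``independent $B \scaleI \PDIP[\frac12,0]$,'' so the mixture hypothesis propagates; a monotone class argument finishes. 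That route never needs to know anything about the conditional law of the shape given the full mass path beyond what pseudo-stationarity already supplies.

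Your route has a genuine gap. You assert that ``$L^y$, $R^y$, $S^y$ are manifestly independent of the clade's total mass evolution,'' but they are not even independent of the clade's mass at level $y$ (which is $L^y + R^y(S^y)$), and Proposition \ref{prop:type-1:transn} says nothing about the conditional law given the mass path on $[0,y]$ --- it only gives the marginal at level $y$ given $\{\wh\alpha^y\neq\emptyset\}$. More seriously, for a single clade started from a fixed block $\{(0,a)\}$, the shape at level $y$ is \emph{not} $\PDIP[\frac12,0]$ and is \emph{not} independent of the level-$y$ mass: $L^y$ has the density \eqref{eq:transn:LMB} with its $\cosh/\sinh$ factor, not $\GammaDist[\frac12,1/2y]$, so the normalized leftmost block $L^y/(L^y+R^y(S^y))$ is not $\BetaDist[\frac12,\frac12]$ and not independent of the sum. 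The $\PDIP[\frac12,0]$ shape and its independence of mass emerge only after mixing the initial block masses against the correct prior (this is precisely what the calculations in Proposition \ref{prop:pseudostat:exp} accomplish), and that mixing cannot be ``propagated clade-by-clade'' as you propose, because each clade's level-$y$ shape retains information about that clade's starting mass. So the key step you defer to ``the same bookkeeping as in Proposition \ref{prop:pseudostat:exp}, Step 3'' is not actually supplied by that step, and I do not see how to supply it within your framework without effectively re-deriving the paper's inductive argument.
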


\begin{proof}
 We begin with the type-1 assertion. Let $(\gamma^y,\,y\ge0)$ denote a type-1 evolution with $\gamma^0 = \ol\beta\sim\PDIP[\frac12,0]$, and suppose this is independent of $B$, with both defined on $(\Omega,\cA,\Pr)$. Then $(B\scaleI\gamma^{y/B},\,y\ge0)$ has law $\BPr^1_{\mu}$. By Theorem \ref{thm:pseudostat}, for $f_0$, $f_1\colon [0,\infty)\to [0,\infty)$ measurable,
  \begin{equation*}
  \begin{split}
   &\BPr^1_{\mu}\left[f_0(\IPmag{\alpha^0})f_1\left(\IPmag{\alpha^y}\right)\cf\{\alpha^y\neq\emptyset\}h\left(\IPmag{\alpha^y}^{-1}\scaleI\alpha^y\right)\right]\\
   &\quad = \EV\left[f_0(B)f_1\left(B\IPmag{\gamma^{y/B}}\right)\cf\{\gamma^{y/B}\neq\emptyset\}h\left(\IPmag{\gamma^{y/B}}^{-1}\scaleI\gamma^{y/B}\right)\right]\\
   &\quad = \int_0^{\infty}f_0(m)\EV\left[f_1\left(m\IPmag{\gamma^{y/m}}\right)\cf\{\gamma^{y/m}\neq\emptyset\}h\left(\IPmag{\gamma^{y/m}}^{-1}\scaleI\gamma^{y/m})\right)\right]\Pr\{B\in dm\}\\
   &\quad = \int_0^{\infty}f_0(m)\EV\left[f_1\left(m\IPmag{\gamma^{y/m}}\right)\cf\{\gamma^{y/m}\neq\emptyset\}\right]\EV[h(\ol\beta)]\Pr\{B\in dm\}\\
   &\quad = \BPr^1_{\mu}\left[f_0\left(\IPmag{\alpha^0}\right)f_1\left(\IPmag{\alpha^y}\right)\cf\{\alpha^y\neq\emptyset\}\right]\EV[h(\ol\beta)].
  \end{split}
  \end{equation*} 
  An inductive argument based on the Markov property of the type-1 evolution then says that 
  for $0<y_1<\cdots<y_{n}$ and $f_0,\ldots,f_{n}\colon [0,\infty) \to [0,\infty)$ measurable, 
  \begin{equation*}
  \begin{split}
   &\BPr^1_\mu\Bigg[\prod_{j=0}^{n}f_j\left(\IPmag{\alpha^{y_j}}\right)\cf\{\alpha^{y_{n}}\neq\emptyset\}h\left(\IPmag{\alpha^{y_{n}}}^{-1}\scaleI\alpha^{y_n}\right)\Bigg]\\
    &\quad=\BPr^1_\mu\Bigg[f_0\left(\IPmag{\alpha^0}\right)\BPr^1_{\alpha^{y_1}}\Bigg[\prod_{j=1}^{n}f_{j}\left(\IPmag{\alpha^{y_{j}-y_1}}\right)\cf\{\alpha^{y_{n}-y_1}\neq\emptyset\}h\left(\IPmag{\alpha^{y_{n}-y_1}}^{-1}\scaleI\alpha^{y_{n}-y_1}\right)\Bigg]\Bigg]\\
    &\quad=\BPr^1_\mu\Bigg[f_0\left(\IPmag{\alpha^0}\right)\BPr^1_{\alpha^{y_1}}\Bigg[\prod_{j=1}^{n}f_{j}\left(\IPmag{\alpha^{y_{j}-y_1}}\right)\cf\{\alpha^{y_{n}-y_1}\neq\emptyset\}\Bigg]\EV[h(\ol\beta)]\Bigg]\\
    &\quad= \BPr^1_\mu\Bigg[\prod_{j=0}^{n}f_j\left(\IPmag{\alpha^{y_j}}\right)\cf\{\alpha^{y_{n}}\neq\emptyset\}\Bigg]\EV[h(\ol\beta)].
  \end{split}
  \end{equation*}
  A monotone class theorem completes the proof. The same argument works for type-0.
\end{proof}

To do de-Poissonization, we will replace $y$ by a stopping time in the filtration $(\cF_{\rm mass}^y)$, specifically the time-change stopping times $Y = \Absa(u)$.

\begin{theorem}[Strong pseudo-stationarity]\label{thm:pseudostat_strong}
 Let $\mu$ denote the law of $B\scaleI\ol\beta$, where $B$ is some non-negative random variable independent of $\ol\beta\sim\PDIP[\frac12,0]$. 
 Let $Y$ be an $(\cF_{\rm mass}^y,\,y\ge0)$-stopping time. Then for all $\cF_{\rm mass}^Y$-measurable $\eta\colon \cCRI\to [0,\infty)$ and all measurable $h\colon \IPspace_1\to [0,\infty)$, we have
 \begin{equation*}
  \BPr^1_{\mu}\left[\eta\cf\{\alpha^Y\neq\emptyset\} h\left(\IPmag{\alpha^Y}^{-1}\scaleI\alpha^Y\right)\right] = \BPr^1_{\mu}\left[\eta \cf\{\alpha^Y\neq\emptyset\}\right]\EV\left[h\left(\ol\beta\right)\right].
 \end{equation*}
 The same assertion holds if we replace superscript `0's with `1's and take $\ol\beta\sim\PDIP[\frac12,\frac12]$.
\end{theorem}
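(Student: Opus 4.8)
\textbf{Proof plan for Theorem \ref{thm:pseudostat_strong}.}
The strategy is to upgrade the fixed-time statement of Lemma \ref{strongstat} to a stopping-time statement by the standard discrete-approximation argument for stopping times, but carried out carefully because the conditioning event $\{\alpha^Y\neq\emptyset\}$ and the renormalization $\IPmag{\alpha^Y}^{-1}\scaleI\alpha^Y$ both depend on the random level $Y$. First I would reduce to the case where $Y$ is a.s.\ finite: on the event $\{Y=\infty\}$ the total mass process (a \BESQ[0] in the type-1 case) may or may not have been absorbed, but since we only ever assert something on $\{\alpha^Y\neq\emptyset\}$, and $\alpha^\infty$ should be interpreted as $\emptyset$ whenever the mass has died, the contribution of $\{Y=\infty\}$ to both sides is handled by monotone convergence once the finite case is known; alternatively one truncates $Y\wedge n$ and lets $n\to\infty$. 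So assume $Y<\infty$ a.s.

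Next, for each $n\ge1$ set $Y_n := 2^{-n}\lceil 2^n Y\rceil$, the usual dyadic approximation from above, which is again an $(\cF_{\rm mass}^y)$-stopping time taking values in the countable set $\{k2^{-n}\colon k\ge 1\}$. Fix a bounded $\cF_{\rm mass}^{Y}$-measurable $\eta$; since $\cF_{\rm mass}^{Y}\subseteq\cF_{\rm mass}^{Y_n}$, $\eta$ is also $\cF_{\rm mass}^{Y_n}$-measurable. Decomposing over the value of $Y_n$ and applying Lemma \ref{strongstat} at each fixed level $y=k2^{-n}$ with the $\cF_{\rm mass}^{k2^{-n}}$-measurable random variable $\eta\cf\{Y_n=k2^{-n}\}$ in the role of ``$\eta$'', I get
\begin{equation*}
 \BPr^1_{\mu}\!\left[\eta\cf\{\alpha^{Y_n}\neq\emptyset\}h\!\left(\IPmag{\alpha^{Y_n}}^{-1}\scaleI\alpha^{Y_n}\right)\right]
 = \BPr^1_{\mu}\!\left[\eta\cf\{\alpha^{Y_n}\neq\emptyset\}\right]\EV\!\left[h\!\left(\ol\beta\right)\right]
\end{equation*}
for every $n$ (here $h$ is taken bounded and continuous first, the general measurable case following by a monotone class argument using that $(\IPspace_1,\dI)$ is Lusin by Theorem \ref{thm:Lusin}). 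It remains to pass to the limit $n\to\infty$. Since $Y_n\downto Y$ and the type-1 evolution has a.s.\ continuous paths in $(\IPspace,\dI)$ (Proposition \ref{prop:type-1:cts}), we have $\alpha^{Y_n}\to\alpha^Y$ a.s.; consequently $\IPmag{\alpha^{Y_n}}\to\IPmag{\alpha^Y}$ and, on $\{\alpha^Y\neq\emptyset\}$, $\IPmag{\alpha^{Y_n}}^{-1}\scaleI\alpha^{Y_n}\to\IPmag{\alpha^Y}^{-1}\scaleI\alpha^Y$, using continuity of $\scaleI$ (Lemma \ref{lem:IP:scale}). The delicate point is the indicator $\cf\{\alpha^{Y_n}\neq\emptyset\}$: as $n\to\infty$ this converges to $\cf\{\alpha^{Y}\neq\emptyset\}$ provided $Y$ is a.s.\ not an isolated zero of the path $y\mapsto\IPmag{\alpha^y}$ approached from the right, which holds because the \BESQ total mass (\BESQ[0] for type-1, \BESQ[1] for type-0, by Theorem \ref{thm:BESQ_total_mass}) a.s.\ has no such one-sided isolated zeros — zero is absorbing for \BESQ[0] and instantaneously reflecting for \BESQ[1], so in both cases $\{y\colon\IPmag{\alpha^y}=0\}$ has empty interior relative to the right and $\cf\{\IPmag{\alpha^{Y_n}}>0\}\to\cf\{\IPmag{\alpha^{Y}}>0\}$ a.s. Dominated convergence then yields the identity for $Y$ in place of $Y_n$, and undoing the reductions completes the proof. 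The type-0 case is identical, invoking the type-0 halves of Theorem \ref{thm:pseudostat}, Lemma \ref{strongstat}, and the path-continuity from the construction around \eqref{eq:type-0:consistency}.

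\textbf{Main obstacle.} The one genuine subtlety, as indicated above, is the a.s.\ convergence of the survival indicators $\cf\{\alpha^{Y_n}\neq\emptyset\}$, i.e.\ controlling the behaviour of the path at the (random, stopping-time) level $Y$ from the right; everything else is a routine dyadic-approximation-plus-continuity argument. This is resolved by the structure of the total mass process: for \BESQ[0] once the path hits $0$ it stays at $0$, so $\{\alpha^y=\emptyset\}$ is an interval $[\zeta,\infty)$ and the indicator is right-continuous; for \BESQ[1] the process returns to $0$ at a set of times of zero Lebesgue measure with no right-isolated points, so again $Y_n\downto Y$ forces $\cf\{\IPmag{\alpha^{Y_n}}>0\}\to\cf\{\IPmag{\alpha^{Y}}>0\}$ almost surely. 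One should also note that $\eta$ being $\cF_{\rm mass}^Y$-measurable (rather than $\cFI^Y$-measurable) is exactly what makes the fixed-level Lemma \ref{strongstat} applicable after the dyadic decomposition, since that lemma requires the pre-$Y$ randomness to be a function of the mass process alone.
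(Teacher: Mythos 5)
Your approach is the same as the paper's: dyadic approximation of $Y$ from above, application of the fixed-level Lemma \ref{strongstat} with $\eta\cf\{Y_n = k2^{-n}\}$ in the role of ``$\eta$'', then summation and a passage to the limit $n\to\infty$. Your treatment of the type-1 case (absorption of \BESQ[0] at zero makes the survival set $\{y\colon \alpha^y\neq\emptyset\}$ an interval $[0,\zeta)$, so $Y_n\downto Y$ gives monotone convergence of the indicators) is fine and matches what the paper needs.

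Your justification of the type-0 limit, however, is backwards. You write that ``for \BESQ[1] the process returns to $0$ at a set of times of zero Lebesgue measure with no right-isolated points, so again $Y_n\downto Y$ forces $\cf\{\IPmag{\alpha^{Y_n}}>0\}\to\cf\{\IPmag{\alpha^{Y}}>0\}$.'' That inference does not follow: the fact that the zero set of the \BESQ[1] mass process accumulates from the right says nothing about the specific sequence $Y_n$ being in the zero set; since the zero set has Lebesgue measure zero, the dyadic levels $Y_n$ are \emph{generically not} zeros, and on the event $\{\alpha^Y=\emptyset\}$ one expects $\IPmag{\alpha^{Y_n}}>0$ for every $n$, so the indicators do \emph{not} converge to $0$. (Take $Y$ the first hitting time of zero by the mass process to see this concretely.) Having ``no right-isolated points'' actually works against the claim, not for it.

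The repair is easy, and also closes what is left implicit in the paper's terse ``the type-0 argument is identical.'' Since the mass process is continuous and $(\cF_{\rm mass}^y)$-adapted, $\IPmag{\alpha^Y}$ is $\cF_{\rm mass}^Y$-measurable, hence $\cf\{\alpha^Y\neq\emptyset\}\in\cF_{\rm mass}^Y\subseteq\cF_{\rm mass}^{Y_n}$. So one may apply Lemma \ref{strongstat} at level $k2^{-n}$ with the $\cF_{\rm mass}^{k2^{-n}}$-measurable random variable $\eta\cf\{Y_n=k2^{-n}\}\cf\{\alpha^Y\neq\emptyset\}$, sum, and obtain
\[
 \BPr^0_{\mu}\!\left[\eta\cf\{\alpha^Y\neq\emptyset\}\cf\{\alpha^{Y_n}\neq\emptyset\}\,h\!\left(\IPmag{\alpha^{Y_n}}^{-1}\scaleI\alpha^{Y_n}\right)\right]
 = \BPr^0_{\mu}\!\left[\eta\cf\{\alpha^Y\neq\emptyset\}\cf\{\alpha^{Y_n}\neq\emptyset\}\right]\EV\!\left[h(\ol\beta)\right].
\]
On $\{\alpha^Y\neq\emptyset\}$ path-continuity gives $\alpha^{Y_n}\neq\emptyset$ eventually and the normalized partition converges in $(\IPspace_1,\dI)$; so the integrands converge a.s.\ to the desired limits (and are bounded), while on $\{\alpha^Y=\emptyset\}$ the inserted factor makes both sides identically zero, as they are in the theorem's statement. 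Dominated convergence then completes the type-0 case. No appeal to the topological structure of the \BESQ[1] zero set is needed.
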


\begin{proof}
 We begin with the type-1 assertion. We use the standard dyadic approximation of $Y$ by $Y_n=  2^{-n}\lfloor 2^nY+1\rfloor \wedge 2^n$ which eventually tends to $Y$ from above. Since $Y$ and $Y_n$ are $(\cF_{\rm mass}^y)$-stopping times, the random variable $\eta_k = \eta\cf\{Y_n = k2^{-n}\}$ is $\cF_{\rm mass}^{k2^{-n}}$-measurable for $k\in[2^{2n}-1]$. By Lemma \ref{strongstat},
 \begin{equation*}
 \begin{split}
  &\BPr^1_{\mu}\left[\eta\cf\left\{\alpha^{Y_n}\neq\emptyset;\,Y_n=k2^{-n}\right\} h\left(\IPmag{\alpha^{Y_n}}^{-1}\scaleI\alpha^{Y_n}\right)\right]\\
    &\quad= \BPr^1_{\mu}\left[\eta_k\cf\left\{\alpha^{k2^{-n}}\neq\emptyset\right\} h\left(\IPmag{\alpha^{k2^{-n}}}^{-1}\scaleI\alpha^{k2^{-n}}\right)\right]
    = \BPr^1_{\mu}\left[\eta_k\cf\left\{\alpha^{k2^{-n}}\neq\emptyset\right\}\right]\EV[h(\ol\beta)]\\
    &\quad= \BPr^1_{\mu}\left[\eta \cf\left\{\alpha^{Y_n}\neq\emptyset;\,Y_n=k2^{-n}\right\}\right]\EV[h(\ol\beta)].
 \end{split}
 \end{equation*}
 Summing over $k\in[2^{2n}-1]$ and letting $n\rightarrow\infty$, the continuity of $(\alpha^y)$ and the observation that
 $\bigcup_{k\in [2^{2n}-1]}\left\{\alpha^{Y_n}\neq\emptyset;\,Y_n=k2^{-n}\right\}$ increases to $\left\{\alpha^{Y_n}\neq\emptyset\right\}$ complete the proof for type-1. The type-0 argument is identical.
\end{proof}

\begin{proof}[Proof of the stationarity assertions of Theorem \ref{thm:stationary}]
 We apply Theorem \ref{thm:pseudostat_strong} to $\eta=1$ and the stopping times $Y=\Absa(u)$, which satisfy $\alpha^Y\neq\emptyset$ a.s.. In the notation of Proposition \ref{prop:dePois:Markov},
 $$\ol\BPr^1_{\ol\mu}[h(\ol{\alpha}^u)] = \BPr^1_{\mu}\left[\cf\left\{\alpha^{\Absa(u)}\neq\emptyset\right\} h\left(\IPmag{\alpha^{\Absa(u)}}^{-1}\scaleI\alpha^{\Absa(u)}\right)\right] = \EV[h(\ol\beta)]$$
 for each $u>0$, as required. The same argument applies to type-0.
\end{proof}

\newcommand{\eq}{\begin{equation}}
\newcommand{\en}{\end{equation}}
\newcommand{\HIPspaceone}{\mathcal{I}_{H,1}}

\subsection{Connection with Petrov's Poisson-Dirichlet diffusions}\label{sec:Petrov}
Recall the Kingman simplex:
\begin{equation}
 \infsimp := \left\{ x=\left(x_1, x_2, \ldots   \right)\colon x_1 \ge x_2 \ge \cdots \ge 0,\; \sum_{i=1}^\infty x_i = 1     \right\}.
\end{equation}
This is a metric space under $\ell^{\infty}$. Its closure under $\ell^{\infty}$, denoted by $\infsimpcl$, is the set of non-increasing sequences in $[0,1]$ with sum at most 1. 
Petrov \cite{Petrov09} introduced a two-parameter family of diffusions on $\infsimpcl$ extending a previously existing one-parameter family introduced by Ethier and Kurtz \cite{EthiKurt81}. These processes can be described via their generators. 
Denote by $\ualgebra$ the commutative unital algebra of functions on $\infsimpcl$ generated by $q_k(x)=\sum_{i\ge 1}x_i^{k+1}$, $k\ge 1$, and the constant function $q_0(x):=1$. On this algebra, and for $0\le \alpha < 1$ and $\theta > -\alpha$, Petrov considers the linear operator
\begin{equation}\label{eq:genpetrov}
\petrovgen=\sum_{i\ge 1}x_i\frac{\partial^2}{\partial x_i^2}-\sum_{i,j\ge 1}x_ix_j\frac{\partial^2}{\partial x_i\partial x_j}-\sum_{i\ge 1}(\theta x_i+\alpha)\frac{\partial}{\partial x_i},
\end{equation}
and shows that there is a Markov process on $\infsimpcl$ that is symmetric with respect to the law $\PoiDir[\alpha, \theta]$ whose generator restricted to $\ualgebra$ is given by $\petrovgen$. Moreover, it has been shown in \cite{Eth14} that the process, starting at $x\in\infsimp$, never leaves $\infsimp$. We are going to denote the laws of this two-parameter family of diffusions on $\infsimp$ by $\EKP[\alpha, \theta]$. 

In this section we prove Theorem \ref{thm:petroviden}. The proofs for type-1 and type-0 are similar. We will mostly focus on type-1 and ask the reader to make the appropriate changes for type-0. We start by proving the Markov property of the process $W$. The same method allows us to prove that this is a Hunt process, which also follows from our identification with $\EKP$ diffusion. 

\begin{lemma}\label{lem:ordereddiff}
For type-1 evolutions the process $\left( W(y),\; y \ge 0   \right)$ is a path-continuous, $\infsimp$-valued Markov process that is stationary with respect to the $\PoiDir[\frac{1}{2},0]$ law. For type-0 evolutions a similar statement holds with $\PoiDir[\frac{1}{2},\frac{1}{2}]$ stationary law.
\end{lemma}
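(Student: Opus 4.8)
\textbf{Proof plan for Lemma \ref{lem:ordereddiff}.}

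The plan is to obtain all three properties of $\left(W(y),\,y\ge0\right)$ by transferring the corresponding properties of the de-Poissonized type-1 (respectively type-0) evolution $\left(\ol\alpha^u,\,u\ge0\right)$ through the order-statistics map. First I would make precise the ``ranked block masses'' map: let $\mathrm{rank}\colon\IPspace_1\to\infsimp$ send an interval partition $\gamma$ with $\IPmag{\gamma}=1$ to the non-increasing rearrangement of $\left(\Leb(U)\colon U\in\gamma\right)$, padded with zeros. (A careful reading of the statement: in the lemma $W(y)$ refers to the de-Poissonized process, since this is where the \PoiDir\ stationary law lives; I would align notation accordingly, writing $W(u)=\mathrm{rank}(\ol\alpha^u)$.) The key observation is that $\mathrm{rank}$ is Lipschitz from $(\IPspace_1,\dI)$ to $(\infsimp,\ell^\infty)$: any correspondence realizing $\dI(\gamma,\gamma')$ within $\e$ matches blocks of nearly equal mass and leaves only a total mass $<\e$ unmatched, so the sorted sequences differ by at most $O(\e)$ in $\ell^\infty$ (this is a routine rearrangement-inequality argument of the kind already used in the proof of Proposition \ref{prop:IP_metric}). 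Path-continuity of $W$ is then immediate from the a.s.\ path-continuity of $\left(\ol\alpha^u\right)$ established in Theorem \ref{thm:stationary}.

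For the Markov property, the subtlety is that $\mathrm{rank}$ is many-to-one, so $W$ is a function of the Markov process $\left(\ol\alpha^u\right)$ and need not be Markov in general. I would establish it by showing the transition law of $\left(\ol\alpha^u\right)$ depends on the initial state only through its ranked masses, i.e.\ that $\ol\BPr^1_{\gamma}\{\mathrm{rank}(\ol\alpha^u)\in\cdot\,\}$ is the same for all $\gamma\in\IPspace_1$ with $\mathrm{rank}(\gamma)$ fixed. This follows from the transition kernel in Proposition \ref{prop:type-1:gen_transn} (via Proposition \ref{prop:type-1:transn}) together with the de-Poissonization time change of Definition \ref{def:dePois}: the kernel describes $\skewer(y,\bN_\beta)$ as a concatenation $\ConcatIL_{U\in\beta}\beta^y_U$ of independent pieces indexed by the blocks of $\beta$, where the law of each $\beta^y_U$ depends only on $\Leb(U)$, and the de-Poissonization clock $\rho$ depends on $\left(\alpha^z\right)$ only through the mass process $\left(\IPmag{\alpha^z}\right)$, which by Theorem \ref{thm:BESQ_total_mass} is a \BESQ\ started at $\IPmag{\beta}=1$ and hence has a law not depending on the ordering of blocks either. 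Consequently the joint law of the multiset of block masses of $\ol\alpha^u$ — a fortiori its ranked version — is a measurable function of $\mathrm{rank}(\gamma)$ alone. Combining this ``symmetry of the kernel'' with the strong Markov property of $\left(\ol\alpha^u\right)$ from Proposition \ref{prop:dePois:Markov} and the standard lemma on functions of Markov processes (e.g.\ \cite[Theorem 10.13 / Lemma]{Kallenberg}-style) yields that $W$ is Markov with the induced transition semigroup $\ol P_u(x,\cdot):=\ol\BPr^1_{\gamma}\{\mathrm{rank}(\ol\alpha^u)\in\cdot\,\}$ for any $\gamma$ with $\mathrm{rank}(\gamma)=x$; well-definedness of this semigroup is exactly the symmetry just proved.

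Finally, for stationarity: by Theorem \ref{thm:stationary} the law \PDIP[\tfrac12,0] is stationary for $\left(\ol\alpha^u\right)$ (respectively \PDIP[\tfrac12,\tfrac12] for type-0), and by Proposition \ref{prop:PDIP} the pushforward of \PDIP[\tfrac12,0] under $\mathrm{rank}$ is precisely \PoiDir[\tfrac12,0] (respectively \PoiDir[\tfrac12,\tfrac12]). Pushing a stationary measure through the equivariant map $\mathrm{rank}$ (equivariance being the content of the symmetry-of-the-kernel step) gives a stationary measure for $W$, so \PoiDir[\tfrac12,0] is stationary for the type-1 case and \PoiDir[\tfrac12,\tfrac12] for the type-0 case. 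That $W$ takes values in $\infsimp$ rather than merely $\infsimpcl$ follows since $\IPmag{\ol\alpha^u}=1$ for all $u$ by construction of the de-Poissonization, so the block masses always sum to exactly $1$. I expect the main obstacle to be the symmetry-of-the-transition-kernel step — carefully checking that both the concatenation structure of Proposition \ref{prop:type-1:gen_transn} and the de-Poissonizing time change genuinely depend on the initial partition only through its unordered block masses, and packaging this into the precise ``function of a Markov process is Markov'' statement — whereas the Lipschitz/continuity and the pushforward-of-stationary-measure steps are routine.
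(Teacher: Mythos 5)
Your proposal is correct and follows essentially the same route as the paper: continuity of the ranked-masses map, Dynkin's criterion for a function of a Markov process supported by the observation that the transition kernel depends on the initial state only through its ranked block masses, and stationarity pulled through from Theorem \ref{thm:stationary}. The paper makes your ``symmetry of the kernel'' step crisp via an explicit coupling — two evolutions built from literally the same clades $\bN_U=\bN'_{U'}$ matched by rank — rather than the more verbal compositional argument you sketch, but the underlying content is the same.
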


\begin{proof} Let $\HIPspaceone$ denote the set of elements in $\beta \in \HIPspace$ such that $\IPmag{\beta}=1$. 
For $\beta \in\HIPspaceone$, let $\textsc{ranked}(\beta)$ denote the decreasing order statistics of Lebesgue measures of intervals in $\beta$.  It is easy to see that $\textsc{ranked}\colon(\HIPspaceone,d_{\IPspace}) \to  \infsimp$ is continuous and consequently $(W(y)=\textsc{ranked}(\ol{\alpha}^y),y\geq 0)$ is continuous.  

For $x\in\infsimp$, consider any pair $\beta, \beta' \in \HIPspaceone$ such that $\textsc{ranked}(\beta)=\textsc{ranked}(\beta')=x$. Then there is a correspondence between intervals of $\beta$ and $\beta'$ given by the coordinates of $x$. For an interval $U\in \beta$ we will denote its corresponding interval in $\beta'$ by $U'$. Hence $\mathrm{Leb}(U)=\mathrm{Leb}(U')$.

Consider a coupled pair of type-1 evolutions $\left(\bsa_\beta, \bsa_{\beta'}  \right)$ starting with $\alpha^0_\beta=\beta$ and $\alpha^0_{\beta'}=\beta'$ constructed by 
\[
\bsa_\beta = \skewerP\left(\Concat_{U\in\beta }\bN_U\right), \qquad \bsa_{\beta'} = \skewerP\left(\Concat_{U'\in\beta' }\bN'_{U'}\right),
\]
where $\bN_U=\bN'_{U'}$, $U\in\beta$, are independent with $\bN_U \sim\mClade^+(\,\cdot\;|\;m^0 = \mathrm{Leb}(U))$.

Let $F\colon \infsimp \to [0,\infty)$ be bounded and measurable. By our coupling, we have $\BPr^1_\beta\left(F(W(y)) \right)= \BPr^1_{\beta'}\left(F(W(y)) \right)$. In particular, the conditional expectation of $F\left( W(y) \right)$ is a function of $W(0)=x\in \infsimp$. By \cite[Lemma I.14.1]{RogersWilliams}, $W$ is a Markov process.

The claims about stationary laws follow from Theorem \ref{thm:stationary}. 
\end{proof}

We will frequently employ the natural inclusion map $\iota\colon \infsimp\rightarrow \HIPspace$:
\[
\iota(x) = \left\{ \left( \sum_{i=1}^j x_i, \sum_{i=1}^{j+1} x_i   \right), \quad j=0,1,2,\ldots   \right\}, 
\]
where for $j=0$ the interval starts at $0$. Thus, when we say consider a type-1 (or type-0) evolution starting with $W(0)=x$, we mean that $\alpha^0$ or $\ol{\alpha}^0$ is given by $\iota(x)$. By an abuse of notation, we will denote the law $\BPr^1_{\iota(x)}$ on $\cC\left( [0,\infty), \HIPspace  \right)$ by $\BPr^1_{x}$.

We will also follow the convention of including finite-dimensional unit simplices in $\infsimp$ by appending zeros.

\begin{proposition}\label{lem:genidentify} Fix $x\in \infsimp$ and consider a type-1 evolution $\bsa$ with $\alpha^0=\iota(x)$. Let 
\[
W(y)=\textsc{ranked}\left(  {\IPmag{\alpha^{\Absa(y)}}}^{-1} \scaleI\alpha^{\Absa(y)}\right), \quad y\ge 0.
\]
For every $q \in \ualgebra$ we have 
\begin{equation}\label{eq:genvisa}
\lim_{y\rightarrow 0+} \frac{\BPr^1_x\left[ q\left( W(y) \right) \right] - q(x) }{y} =2 \petrovgen\left( q\right)(x), \quad \text{for every $x \in \infsimp$},
\end{equation}
where $\petrovgen$ is the generator for $\EKP[\frac{1}{2},0]$. The above convergence also holds in $\ltwo$ with respect to the law of $\PoiDir[\frac{1}{2},0]$. A similar statement holds for type-0 evolutions for the parameters  $\left(\frac{1}{2}, \frac{1}{2}\right)$.
\end{proposition}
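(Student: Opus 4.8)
The plan is to compute the short-time generator of $W$ directly from the known entrance law of the type-1 evolution, Proposition~\ref{prop:type-1:transn}, combined with the de-Poissonization time change. The key simplification is that $q\in\ualgebra$ is a polynomial in the power sums $q_k(x)=\sum_i x_i^{k+1}$, and because $q_k$ is symmetric it does not see the ranking, so $q_k(W(y))$ can be written directly in terms of the (un-ranked) block masses of $\alpha^{\Absa(y)}$, namely $q_k(W(y))=\IPmag{\alpha^{\Absa(y)}}^{-(k+1)}\sum_{U\in\alpha^{\Absa(y)}}\mathrm{Leb}(U)^{k+1}$. So I would first reduce \eqref{eq:genvisa} to computing, for each monomial $q=q_{k_1}\cdots q_{k_m}$, the limit of $y^{-1}(\BPr^1_x[q(W(y))]-q(x))$, and I expect it suffices to treat the generating-function statistic $\prod_j \sum_{U}\mathrm{Leb}(U)^{k_j+1}$ and then divide by the appropriate power of total mass, handling the total-mass factor via Theorem~\ref{thm:BESQ_total_mass} (it is a \BESQ[0], hence explicit).

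Second, I would set up the one-step expansion. Writing $\alpha^y=\ConcatIL_{U\in x}\beta^y_U$ with the $\beta^y_U$ independent, each distributed as $\wh\alpha^y$ started from a block of mass $x_i$ (Proposition~\ref{prop:type-1:gen_transn}), I would expand to first order in $y$ the probability that a given block survives (which is $1-e^{-x_i/2y}$, exponentially small, so no contribution at order $y$), the probability that a given pair of blocks both survive (order $y^2$ per pair — negligible), and the law of a surviving block's mass. The dominant contributions at order $y$ come from: (i) the diffusive fluctuation of a single block mass, contributing $x_i\,\partial^2/\partial x_i^2$-type terms; (ii) the downward drift $-\tfrac12$ in each surviving block mass, contributing $-\alpha\,\partial/\partial x_i$ with $\alpha=\tfrac12$; (iii) the new mass immigrating from the left via the subordinator $R^y$ and the special leftmost block $L^y$ of Proposition~\ref{prop:type-1:transn}, which for type-1 starting from $\iota(x)$ appears only inside each clade (no global leftmost block since $\theta=0$), giving a net small contribution; and (iv) the $-\sum_{i,j}x_ix_j\partial^2/\partial x_i\partial x_j$ term, which arises from the normalization by $\IPmag{\alpha^y}$ together with the correlation structure. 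Concretely I would compute $\BPr^1_x[q_k(\alpha^y)]$ using the moment formulas extractable from \eqref{eq:transn:LMB}–\eqref{eq:transn:Levy_meas} and \eqref{eq:transn:lifetime}: the expected $(k+1)$-st moment of the block of mass $a$ under $\wh\alpha^y$, to first order in $y$, should be $a^{k+1}+y\big((k+1)k\,a^k-(k+1)\,a^k + \text{immigration terms}\big)+o(y)$, summed over blocks, then the normalization handled separately.

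Third, I would convert from the Poissonized clock $y$ to the de-Poissonized clock. Since $\Absa(u)=\IPmag{\alpha^0}\,u+o(u)=u+o(u)$ at $u=0$ when $\IPmag{\alpha^0}=1$ (as $x\in\infsimp$), the de-Poissonization only rescales time by $\IPmag{\alpha^0}^{-1}=1$ at the origin but, crucially, removes the total-mass fluctuations: after the time change the normalized partition has total mass identically $1$, so all the $\IPmag{\,\cdot\,}^{-(k+1)}$ factors become $1$ and their contribution to the drift is exactly the $-\sum_{i,j}x_ix_j\partial^2/\partial x_i\partial x_j$ correction (this is the standard Wright–Fisher-type projection of a measure-valued/ray process onto the simplex). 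I would make this rigorous by applying Itô's formula / the known \BESQ[0] semigroup to $f(\alpha^{\Absa(y)})$ with $f$ a ratio of polynomials, or more cleanly by the identity $\BPr^1_x[q(W(y))] = \BPr^1_x\big[\IPmag{\alpha^{\Absa(y)}}^{-N}\tilde q(\alpha^{\Absa(y)})\big]$ and expanding both factors, using that $\IPmag{\alpha^{\Absa(y)}}\to1$. The factor of $2$ in \eqref{eq:genvisa} is exactly the $2$ appearing in the \BESQ $\leftrightarrow$ Wright–Fisher speed change (cf.\ \cite{Pal11,WarrYor98}) and in Theorem~\ref{thm:petroviden}'s $W(u/2)$; I would track it through the computation of the diffusion coefficient of a single block mass, which is $4x_i$ rather than $2x_i$ because \BESQ[-1] has infinitesimal variance $4z$.

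The main obstacle will be controlling the interchange of limit and expectation / sum to justify that only the enumerated order-$y$ contributions survive: I need uniform (in the number of blocks, which may be infinite) integrability bounds so that $\BPr^1_x[q(W(y))]$ is genuinely differentiable at $y=0$ with the claimed derivative, and so that the tail of small blocks (of which there are infinitely many, with total mass under control but with possibly delicate higher moments) contributes $o(y)$. Here Lemma~\ref{lem:finite_survivors} is the key tool — only finitely many blocks survive to level $y$, and the survival probability of the $i$-th block is $1-e^{-x_i/2y}$ — which I would combine with the explicit moment formulas and the $\ell^\infty$-continuity of $\textsc{ranked}$ from Lemma~\ref{lem:ordereddiff} to get the required domination, and with the $\ltwo(\PoiDir[\tfrac12,0])$ bound obtained from stationarity (Lemma~\ref{strongstat}) to upgrade pointwise convergence to $\ltwo$-convergence. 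The remaining bookkeeping — matching the immigration terms from $L^y$ and $R^y$ against the $\alpha$-drift in $\petrovgen$, and verifying the cross-term — is routine but lengthy, and I would organize it as a single computation for $q_k$ followed by the product rule for $q_{k_1}\cdots q_{k_m}$ using that $\petrovgen$ is a second-order operator.
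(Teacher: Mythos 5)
Your plan differs from the paper's proof in an important way: you propose a direct $y$-asymptotic expansion of the transition moments, while the paper reduces the problem to an evaluation of a known generator via a Wright--Fisher identification. Specifically, the paper (Lemmas~\ref{thm:identifygen} and \ref{thm:genalgebra}) shows that the de-Poissonized ratio vector $\big(L_i^{\Absa(y)}/\IPmag{\alpha^{\Absa(y)}},\,R_i^{\Absa(y)}/\IPmag{\alpha^{\Absa(y)}}\big)_i$ is a Wright--Fisher diffusion with parameters $(-\tfrac12,\ldots,-\tfrac12,\tfrac12,\ldots,\tfrac12)$ run at $4$ times the usual speed (drawing on Corollary~\ref{cor:remaining_BESQ} and the de-Poissonization identity of \cite{Pal13}), so the short-time generator of $q(W(y))$ is obtained by applying the explicit WF generator $\mathcal{L}$ of \eqref{eq:gennwf} to a lifted polynomial $g_m$. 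This sidesteps all by-hand expansions of the $L^y$-density \eqref{eq:transn:LMB}, and it is where the factor $2$ actually comes from; your appeal to Theorem~\ref{thm:petroviden}'s $u/2$ is circular here since that theorem rests on the present proposition.

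Two pieces are missing from your plan, and they matter. First, to turn $q_k(\alpha^y)$ into a polynomial in $(L_i^y,R_i^y)$ you need that the normalized jump sizes $w^y_{ij}=x^y_{ij}/R^y_i$ of the inverse-Gaussian subordinator are i.i.d.\ $\PoiDir[\tfrac12,\tfrac12]$ and jointly independent of $(L^y_i,R^y_i)_i$; this is Lemma~\ref{lem:pdhalf} (from \cite[Proposition 21]{PitmYorPDAT}), and without it $\EV\big[\sum_j (x^y_{ij})^{k+1}\big]$ does not factor into $C^k_{1/2}\,\EV\big[(R^y_i)^{k+1}\big]$, so the expansion you sketch cannot be carried out. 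Second, your proposed use of Lemma~\ref{lem:finite_survivors} does not give the required uniform control for the infinite tail of small blocks: the number of surviving blocks diverges as $y\downarrow0$, so a finite-survivors statement at each fixed $y>0$ is not enough. What is needed, and what the paper's Lemma~\ref{lem:bnddeltak} supplies, is a per-block bound of the form $\sup_{y>0} y^{-1}\big|\EV[(\xi_i^y)^{m+1}]-x_i^{m+1}\big|\le C\,x_i$, obtained from It\^o's rule applied to the Jacobi (de-Poissonized WF) diffusions $\xi_i^y, \eta_i^y$ and the optional sampling theorem; the $\ell^1$-summability of $(x_i)$ then controls the tail. Without such a uniform-in-$y$ estimate, the interchange of $\lim_{y\to0}$ with the infinite sum over blocks is not justified and the argument fails at exactly the step you flagged as the main obstacle.
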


Proposition \ref{lem:genidentify} is proved in two steps: first when $q=q_m$ for some $m$, and then the rest. The first case is shown in Lemma \ref{thm:identifygen}, and the latter is shown in Lemma \ref{thm:genalgebra}. 
%
%
%
%
%
%
First we need the following lemma, which is well-known under other conditions. See, for example, \cite{Volkonski58}.

\begin{lemma}\label{lem:gentimechange}
For any bounded measurable $f\colon\HIPspace \rightarrow \BR$, we have 
\begin{equation}\label{eq:changeclockgen}
 \lim_{y\rightarrow 0+} \frac{\BPr^1_{\beta}\left[ f\left( \alpha^{\Absa(y)} \right)\right] - f\left( \beta \right) }{y}
 = \IPmag{\beta}\lim_{y\rightarrow 0+} \frac{\BPr^1_{\beta}\left[ f\left( \alpha^{y} \right) \right] - f\left( \beta \right) }{y}, \qquad \beta\in\HIPspace\setminus\{\emptyset\},
\end{equation}
in the sense that if the limit on one side exists and coincides with a bounded, continuous function $g:\HIPspace\backslash\{\emptyset\}\rightarrow \BR$, then the limits on the other side also exist and coincide with $g$ on $\HIPspace\backslash\{\emptyset\}$.
\end{lemma}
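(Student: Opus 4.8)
The statement is the standard fact that a time-change of a Markov process by the additive functional $\int_0^y \IPmag{\alpha^z}^{-1}\,dz$ multiplies the generator by $\IPmag{\cdot}$, applied at the level of pointwise (infinitesimal) generators at a single fixed point $\beta$. The plan is to compare the two difference quotients directly, exploiting that $\Absa(y)$ and $y$ differ only to second order as $y\to 0$ because $z\mapsto\IPmag{\alpha^z}$ is path-continuous with $\IPmag{\alpha^0}=\IPmag{\beta}$, which is strictly positive since $\beta\neq\emptyset$.

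First I would record the elementary deterministic estimate: for a continuous path $z\mapsto\IPmag{\alpha^z}$ with $\IPmag{\alpha^0}=\IPmag{\beta}>0$, the time-change $\Absa$ of Definition \ref{def:dePois} satisfies $\Absa(y)=\IPmag{\beta}\,y + o(y)$ as $y\downarrow 0$, and more usefully, given $\varepsilon\in(0,\IPmag{\beta})$ there is a random (but a.s.\ positive) $\delta>0$ such that for $y<\delta$ one has $(\IPmag{\beta}-\varepsilon)y \le \Absa(y) \le (\IPmag{\beta}+\varepsilon)y$. This is immediate from $\Absa(y)=\inf\{t:\int_0^t\IPmag{\alpha^z}^{-1}dz>y\}$ together with continuity of total mass near $0$; Proposition \ref{prop:dePois:time_change} already guarantees $\Absa$ is continuous and strictly increasing, so the only new input is the first-order rate, which follows from $\Absa'(0+)=\IPmag{\alpha^0}$.

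Next I would set $g$ to be the bounded continuous function equal to the limit on, say, the $y$ side, i.e.\ assume $h(\beta'):=\lim_{y\to 0+}\bigl(\BPr^1_{\beta'}[f(\alpha^y)]-f(\beta')\bigr)/y = g(\beta')$ for all $\beta'\in\HIPspace\setminus\{\emptyset\}$. Write $u(y,\beta'):=\BPr^1_{\beta'}[f(\alpha^y)]$ for the semigroup. The aim is to show $\bigl(\BPr^1_{\beta}[f(\alpha^{\Absa(y)})]-f(\beta)\bigr)/y \to \IPmag{\beta}\,g(\beta)$. I would use the simple Markov property (Corollary \ref{cor:type-1:simple_Markov}, and its Hausdorff extension in Section \ref{sec:Hausdorff}) to condition at a small fixed level: for $0<r<\Absa(y)$,
\[
 \BPr^1_{\beta}\bigl[f(\alpha^{\Absa(y)})\bigr] - f(\beta) = \bigl(u(r,\beta)-f(\beta)\bigr) + \BPr^1_{\beta}\Bigl[\, u\bigl(\Absa(y)-r,\,\alpha^{r}\bigr) - f(\alpha^{r})\,\Bigr] + \BPr^1_{\beta}\bigl[f(\alpha^{r})-f(\beta)\bigr] - \bigl(u(r,\beta)-f(\beta)\bigr),
\]
which on cancellation is simply $u(\Absa(y),\beta)-f(\beta)$; the point of writing it this way is that $\Absa(y)-r$ can be taken comparable to $y$. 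More cleanly: fix $\varepsilon$, use the sandwich $(\IPmag{\beta}-\varepsilon)y\le\Absa(y)\le(\IPmag{\beta}+\varepsilon)y$ and monotonicity/continuity of $s\mapsto u(s,\beta)$ near $s=0$. Since $\bigl(u(s,\beta)-f(\beta)\bigr)/s\to g(\beta)$, we get
\[
 \limsup_{y\to0+}\frac{u(\Absa(y),\beta)-f(\beta)}{y} \le (\IPmag{\beta}+\varepsilon)\,\limsup_{s\to0+}\frac{u(s,\beta)-f(\beta)}{s} \quad\text{(when }g(\beta)\ge0\text{),}
\]
with the analogous lower bound, and the sign issue is handled by treating $g(\beta)\ge 0$ and $g(\beta)<0$ separately (or by splitting $f$). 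Letting $\varepsilon\downarrow 0$ yields the claimed identity at $\beta$. The reverse implication — starting from the existence and value $g$ of the $\Absa$-side limit and recovering the $y$-side limit — is symmetric, using that $s\mapsto\Absa^{-1}(s)$ is the time-change by the reciprocal functional $\int_0^\cdot\IPmag{\alpha^z}\,dz$, which near $0$ is $\sim s/\IPmag{\beta}$; one repeats the sandwich argument with $\IPmag{\beta}$ replaced by $1/\IPmag{\beta}$.

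\textbf{Main obstacle.} The only genuinely delicate point is that the sandwich $(\IPmag{\beta}\mp\varepsilon)y\le\Absa(y)\le(\IPmag{\beta}\pm\varepsilon)y$ holds only for $y<\delta(\omega)$ with a \emph{random} threshold, whereas the difference quotient whose limit we want is a deterministic quantity (an expectation over $\omega$). So one cannot simply plug the pathwise bound into the expectation. The fix is to work with the \emph{deterministic} function $s\mapsto u(s,\beta)$, not with the random time $\Absa(y)$ inside the expectation: the assumed limit gives that $\bigl(u(s,\beta)-f(\beta)\bigr)/s$ is bounded for $s\in(0,s_0]$ and converges, so $u(\cdot,\beta)$ is Lipschitz-type near $0$; then for the composition $u(\Absa(y),\beta)$ one needs only that $\Absa(y)/y\to\IPmag{\beta}$ \emph{in probability} (which follows from a.s.\ convergence plus boundedness of $\Absa(y)/y$, the latter from continuity of total mass on a compact level interval, or from dominated convergence since $\Absa(y)/y$ is monotone-ish and bounded on $[0,1]$). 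Combining pointwise control of $u$ with the convergence $\Absa(y)/y\to\IPmag{\beta}$ and a uniform-integrability / dominated-convergence argument for $\bigl(u(\Absa(y),\beta)-f(\beta)\bigr)/y$ completes the proof. This is essentially the classical argument (cf.\ Volkonski{\u\i} \cite{Volkonski58}); the role of $\beta\neq\emptyset$ is exactly to ensure $\IPmag{\beta}>0$ so that the time-change is non-degenerate at $\beta$ and all reciprocals are finite.
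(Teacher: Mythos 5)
Your proposal has a genuine gap at its central step. You pass from the target quantity $\BPr^1_\beta\!\bigl[f\bigl(\alpha^{\Absa(y)}\bigr)\bigr]$ to the composition $u\bigl(\Absa(y),\beta\bigr)$ of the semigroup $u(s,\beta)=\BPr^1_\beta[f(\alpha^s)]$ with the random time, and then argue via a pathwise sandwich $(\IPmag{\beta}\mp\varepsilon)y\le\Absa(y)\le(\IPmag{\beta}\pm\varepsilon)y$ and dominated convergence. But these two quantities are not equal: $\Absa(y)$ is a $(\cFI^y)$-stopping time built from the total-mass trajectory and hence is correlated with the path $(\alpha^z)_{z\ge0}$, so evaluating $f$ along the actual path at the random level $\Absa(y)$ is not the same as averaging the deterministic function $s\mapsto u(s,\beta)$ against the law of $\Absa(y)$. (They would coincide if $\Absa(y)$ were an \emph{independent} random time, which it is not.) The same issue breaks your earlier decomposition at a fixed small $r$: $\Absa(y)-r$ is not $\cFI^r$-measurable — it depends on the entire future of the total-mass path — so the simple Markov property at level $r$ does not produce a term of the form $u\bigl(\Absa(y)-r,\alpha^r\bigr)$. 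Your diagnosis of the ``main obstacle'' (that the sandwich has a random threshold) is real, but the proposed fix via the deterministic semigroup and convergence in probability of $\Absa(y)/y$ does not repair the conflation; it inherits it.

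The standard repair — and the one the paper uses — is to linearize via the Dynkin martingale: under the hypothesis that the $y$-side limit exists and equals the bounded continuous $g$, the process $M(y)=f(\alpha^y)-f(\alpha^0)-\int_0^y g(\alpha^s)\,ds$ is a $\BPr^1_\beta$-martingale, and Optional Sampling at the integrable stopping time $\Absa(y)$ gives the exact identity
\begin{equation*}
 \frac{\BPr^1_\beta\bigl[f(\alpha^{\Absa(y)})\bigr]-f(\beta)}{y}
 =\BPr^1_\beta\!\left[\frac{1}{y}\int_0^{\Absa(y)} g(\alpha^s)\,ds\right],
\end{equation*}
whose integrand converges a.s.\ to $g(\beta)\,\Absa'(0+)=g(\beta)\,\IPmag{\beta}$ by continuity of the path and of $g$; dominated convergence (boundedness of $g$) then finishes. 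Your pathwise observation $\Absa(y)/y\to\IPmag{\beta}$ is exactly what makes this work, but it must be applied inside the expectation to the path functional $\frac1y\int_0^{\Absa(y)}g(\alpha^s)\,ds$, not to a deterministic function of time. You should also note that the reverse implication is not symmetric as you suggest: the inverse time change $C(y)=\int_0^y\IPmag{\alpha^z}^{-1}dz$ is not integrable (the $\BESQ[0]$ total mass can hit zero), so Optional Sampling cannot be applied directly; the paper truncates at $C(y)\wedge1$ and uses a martingale concentration bound on the $\BESQ[0]$ to show the truncation error is $o(y)$.
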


\begin{proof} We will consider the process $\bsa$ to be killed the first time $\IPmag{\alpha^y}=0$. Therefore, the empty set $\emptyset$ will act as the cemetery state for our Markov process. Therefore, in the following proof, all real functions on this state will take value zero. 

Fix a bounded measurable $f$ and suppose that the limit on the right exists and equals $g\left( \beta \right)$ for a bounded, continuous function $g\colon \HIPspace\backslash\{\emptyset\} \rightarrow \BR$. Then, by \cite[Proposition 4.1.7]{EthKurtzBook}, we get that $M(y) = f\left( \alpha^y  \right) - f\left( \alpha^0 \right) - \int_0^y g\left(  \alpha^s \right) ds$
is a martingale. Since $g$ is bounded and $\Absa(y)$ has finite expectation for every $y>0$ (\cite[Section 4]{Yor92}), we can apply the Optional Sampling Theorem at the stopping time $\Absa(y)$ to find
\eq\label{eq:prelimiteqclock}
\frac{\BPr^1_{\beta} \left[f\left(  \alpha^{\Absa(y)} \right)\right] - f\left( \alpha^0 \right)}{y}= \BPr^1_{\alpha^0}\left[ \frac{1}{y}\int_0^{\Absa(y)} g\left( \alpha^s  \right) ds\right].  
\en
Since $g$ is bounded, continuous and $(\alpha^y,\,y\ge0)$ is continuous at $y=0$ it follows that
\[
\lim_{y\rightarrow 0+} \frac{1}{y} \int_0^{\Absa(y)} g\left( \alpha^s \right)ds = g\left(\alpha^0 \right) \Absa'\left( 0+ \right)= g\left( \alpha^0 \right)\IPmag{\alpha^0}\qquad \text{a.s..}
\]
An application of the Dominated Convergence Theorem to \eqref{eq:prelimiteqclock} settles the claim. 

The reverse implication is slightly more complex. Let $C(t):=\int_0^t \IPmag{\alpha^z}^{-1}dz$, $t\geq0$. This is the inverse process of $\Absa$, thus $C(t)$ is a $(\dPF^u)$-stopping time for each $t$. Now, consider the time-changed strong Markov process $\beta^y = \alpha^{\Absa(y)}$, $y \ge 0$, and repeat the previous argument reversing the roles of $\Absa$ and $C$. Note that $\beta^{C(y)}=\alpha^y$, for all $y\ge 0$ until $\bsa$ is killed. However, we cannot directly apply the Optional Sampling Theorem since $C(y)$ is not integrable for any $y$. To fix this, we apply the Optional Sampling Theorem to the bounded stopping time $C(y)\wedge 1$ to obtain
\eq\label{eq:stoppedequalitygen}
\frac{\BPr^1_{\alpha^0}\left[ f\left( \beta^{C(y)\wedge 1}\right) \right] - f(\alpha^0)}{y} = \BPr^1_{\alpha^0}\left[  \frac{1}{y}\int_0^{C(y)\wedge 1} g\left(  \alpha^{\Absa(s)} \right)ds   \right].
\en  
As $y\rightarrow 0+$, the right side converges to $g\left( \alpha^0 \right)\IPmag{\alpha^0}$ as before. For the left side, let $K$ denote a bound on $\abs{f}$. Then
\eq\label{eq:estimatetailhit}
\abs{\frac{\BPr^1_{\alpha^0} \left[ f\left( \beta^{C(y)\wedge 1}  \right) \right] - \BPr^1_{\alpha^0}\left[ f\left( \beta^{C(y)} \right)\right]}{y} }\le 2K \frac{\BPr^1_{\alpha^0}\left( C(y) > 1 \right)}{y}.
\en
We will now show that the limit of the right side above is zero as $y\rightarrow 0+$. We complete the argument in the case $\IPmag{\alpha^0} = 1$, from which the general case follows immediately.

By Theorem \ref{thm:BESQ_total_mass}, the total mass process $\IPmag{\alpha}$ is distributed as a $\BESQ[0]$ process starting from $1$. Let $X$ be a $\BESQ[0]$ process starting from $1$ on some probability space. Then
\[
\begin{split}
\BPr^1_{\alpha^0}(C(y) > 1) = \Pr\left( \int_0^y \frac{ds}{X(s)} > 1 \right) \le \Pr\left( \inf_{0\le s\le y} X(s) < y   \right) 
\le \Pr\left( \sup_{0\le s\le y} \abs{X(s)-1} >  1-y   \right).
\end{split}
\]
Let $\sigma_y$ denote the exit time of $X$ from the interval $[y, 2-y]$. The quadratic variation (\cite[Definition 3]{GoinYor03}) of the stopped martingale $X\left( \sigma_y \wedge t  \right)$ satisfies the inequality 
\[
\left\langle X \right\rangle\left( \sigma_y \wedge t  \right) \le 4 \int_0^{\sigma_y \wedge t} X(s)ds \le4(2-y) t.
\]
Therefore, by the martingale concentration inequality, we get 
\[
\Pr\left( \sup_{0\le s\le y} \abs{X(s)-1} \ge  1-y   \right)= \Pr\left( \sup_{0\le s\le y} \abs{X\left(s \wedge \sigma_y \right)-1} \ge  1-y   \right) \le 2 \exp\left[  - \frac{(1-y)^2}{8(2-y)y}   \right]. 
\] 
In particular, $\lim_{y\rightarrow 0+} \BPr^1_{\alpha^0}\left(  C(y)>1 \right)/y =0$. Plugging in this estimate in \eqref{eq:estimatetailhit} gives us that $\lim_{y\rightarrow 0+}$ of both sides is zero showing finally that 
\[
\lim_{y\rightarrow 0+} \frac{\BPr^1_{\alpha^0}\left[ f\left( \alpha^y \right) \right] - f(\alpha^0)}{y} =\lim_{y\rightarrow 0+} \frac{\BPr^1_{\alpha^0}\left[ f\left( \beta^{C(y)}\right) \right] - f(\alpha^0)}{y} = \lim_{y\rightarrow 0+} \frac{\BPr^1_{\alpha^0}\left[ f\left( \beta^{C(y)\wedge 1}\right) \right] - f(\alpha^0)}{y}.
\] 
The last expression is given by $g\left( \alpha^0\right)$ from \eqref{eq:stoppedequalitygen}. This completes the proof of the lemma. 
\end{proof}

We will also need a generalization of the previous lemma to a countable collection of independent type-1 evolution. Consider the space: 
\[
\mathcal{H}:=\left\{ \left( \alpha_i,\; i\ge 1  \right)\in \HIPspace^{\BN},\; \sum_{i\ge 1} \IPmag{\alpha_i} < \infty  \right\}
\]  
Suppose we have a countable independent collection of type-1 evolutions $\left(  \bsa_i,\; i \in \BN \right)$ such that $\left( \alpha^0_i,\; i\ge 1 \right)\in \mathcal{H}$. By Theorem \ref{thm:BESQ_total_mass} and the additivity of $\BESQ[0]$ processes, with probability one, $\left( \alpha^y_i,\; i\ge 1 \right)\in \mathcal{H}$ for all $y > 0$. Then, $\left( \bsa_i,\; i \in \BN  \right)$ is a strong Markov process in $\mathcal{H}$. Let  $\bsa^y=\ConcatIL_{i\ge1} \bsa^y_i$ denote the resulting type-1 evolution obtained by concatenating the individual ones. Let $\Absa$ refer to the de-Poissonization time-change for $\bsa$.

\begin{lemma}\label{lem:gentimechangeinf}
Suppose $F\colon\mathcal{H} \rightarrow \BR$ is a bounded function. Then \eqref{eq:changeclockgen} holds for the function $F$ applied to $\left(  \bsa_i,\; i \in \BN \right)$ by replacing the measure $\BPr^1_\beta$ by the product measure $\bigotimes_{i\ge 1} \BPr^1_{\bsa_i^0}$.
\end{lemma}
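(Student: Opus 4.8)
\textbf{Proof strategy for Lemma \ref{lem:gentimechangeinf}.}

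The plan is to mimic the proof of Lemma \ref{lem:gentimechange}, keeping track of the countable product structure, and to isolate the single point where infinitely many coordinates could cause trouble. First I would observe that $\bigl(\bsa_i,\,i\in\BN\bigr)$ is a strong Markov process in $\mathcal{H}$ by independence, and that its total mass $\IPmag{\bsa^y}=\sum_{i\ge1}\IPmag{\bsa^y_i}$ is a $\BESQ[0]$ started from $\IPmag{\bsa^0}=\sum_i\IPmag{\alpha^0_i}<\infty$, by Theorem \ref{thm:BESQ_total_mass} and the additivity of $\BESQ$ processes cited from \cite{PitmYor82}; in particular $\IPmag{\bsa^y}$ is a.s.\ continuous at $y=0$ and $\Absa'(0+)=\IPmag{\bsa^0}$. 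So the time-change $\Absa$ for the concatenated evolution is governed entirely by this $\BESQ[0]$ total mass, exactly as in the single-evolution case.

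Next, for the forward direction, I would assume the right-hand limit in \eqref{eq:changeclockgen} (with $\BPr^1_\beta$ replaced by $\bigotimes_i\BPr^1_{\alpha^0_i}$ and $f$ by $F$) exists and equals a bounded continuous $g\colon\mathcal{H}\to\BR$. By \cite[Proposition 4.1.7]{EthKurtzBook} applied to the $\mathcal{H}$-valued Markov process $\bigl(\bsa_i,\,i\in\BN\bigr)$, the process $M(y)=F(\bsa^y_i,\,i\in\BN)-F(\bsa^0_i,\,i\in\BN)-\int_0^y g(\bsa^s_i,\,i\in\BN)\,ds$ is a martingale; here I interpret $\bsa^y_i$ in the obvious componentwise sense and write $F(\bsa^y)$ as shorthand. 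Since $g$ is bounded and $\Absa(y)$ is integrable (it is the clock of a $\BESQ[0]$, cf.\ \cite[Section 4]{Yor92} as in the proof of Lemma \ref{lem:gentimechange}), the Optional Sampling Theorem at $\Absa(y)$ gives the prelimit identity
\[
 \frac{\bigotimes_i\BPr^1_{\alpha^0_i}\bigl[F(\bsa^{\Absa(y)})\bigr]-F(\bsa^0)}{y}=\bigotimes_i\BPr^1_{\alpha^0_i}\left[\frac1y\int_0^{\Absa(y)}g(\bsa^s)\,ds\right],
\]
and continuity of $s\mapsto\bsa^s$ at $0$ in $\mathcal{H}$ together with $\Absa'(0+)=\IPmag{\bsa^0}$ and dominated convergence yields the claimed limit $g(\bsa^0)\IPmag{\bsa^0}$. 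The reverse direction proceeds as in Lemma \ref{lem:gentimechange}: introduce $C(t)=\int_0^t\IPmag{\bsa^z}^{-1}dz$, apply Optional Sampling at the bounded time $C(y)\wedge1$, and control the error term $2K\,\bigotimes_i\BPr^1_{\alpha^0_i}(C(y)>1)/y$. The key estimate is that $\bigotimes_i\BPr^1_{\alpha^0_i}(C(y)>1)/y\to0$; this depends only on the total mass process, which is a $\BESQ[0]$ from $\IPmag{\bsa^0}$, so after rescaling by $\IPmag{\bsa^0}$ the very same martingale-concentration argument (bounding the quadratic variation $\langle X\rangle(\sigma_y\wedge t)\le 4(2-y)t$ on the exit time $\sigma_y$ of the normalized $\BESQ[0]$ from $[y,2-y]$) applies verbatim.

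The main obstacle, and the only genuinely new point beyond Lemma \ref{lem:gentimechange}, is making sure that the generator machinery of \cite[Proposition 4.1.7]{EthKurtzBook} legitimately applies on the infinite product state space $\mathcal{H}$ — i.e.\ that $\bigl(\bsa_i,\,i\in\BN\bigr)$ is a well-defined $\mathcal{H}$-valued Markov process with cemetery $(\emptyset,\emptyset,\dots)$ reached when the total mass hits $0$, and that bounded $g$ suffices for the optional-sampling step. This is handled by equipping $\mathcal{H}$ with the product topology refined by the total-mass $\ell^1$ norm (so that $\sum_i\IPmag{\alpha^y_i}$ is continuous), noting that $\Absa$ and $C$ are functionals of this total mass alone, and then verifying integrability of $\Absa(y)$ and the tail bound on $C(y)$ purely through the $\BESQ[0]$ comparison; none of these steps sees more than finitely much of the configuration at once, so the extension is routine once the setup is in place. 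I would present the write-up by citing the proof of Lemma \ref{lem:gentimechange} and indicating only these modifications.
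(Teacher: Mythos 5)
Your proposal is correct and follows essentially the same route as the paper: the paper's proof is the single sentence that the argument of Lemma~\ref{lem:gentimechange} carries over once one substitutes the strong Markov process $(\bsa_i,\,i\in\BN)$ on $\mathcal{H}$ for $\bsa$, and your write-up simply makes explicit the points that make this substitution work — that the total mass is a $\BESQ[0]$ by additivity so $\Absa$ and $C$ are functionals of total mass alone, that the martingale/optional-sampling machinery applies unchanged, and that the concentration bound on $\Pr(C(y)>1)/y$ reduces to the same $\BESQ[0]$ estimate after rescaling. This is a faithful elaboration rather than a different proof.
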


\begin{proof}
The proof remains exactly the same after replacing the strong Markov process $\bsa$ in \eqref{eq:changeclockgen} by the strong Markov process $\left(  \bsa_i,\; i \in \BN \right)$.
\end{proof}

To prove Proposition \ref{lem:genidentify} we will employ the transition description for the type-1 evolution, stated in Proposition \ref{prop:type-1:gen_transn}. On $\HIPspace$, consider the function
\[
f_m(\beta)= \sum_{U\in\beta}\left({\rm Leb}(U)/ \IPmag{\beta}\right)^{m+1},\quad m\ge 1, \quad \beta \in \HIPspace
\backslash\{\emptyset\},\quad f_m(\emptyset)=0.
\]
Then $f_m$ is a bounded function on $\HIPspace$ that is continuous on $\HIPspace\backslash\{\emptyset\}$.

Consider a type-1 evolution $\bsa=\left( \alpha^y,\; y \ge 0 \right)$ starting from $\iota(x)$. Then, clearly $\IPmag{\alpha^0}=1$. Then,  it follows by symmetry that $q_m\left( W(y)  \right)= f_m\left( \alpha^{\Absa(y)}  \right)$. There is a natural one-to-one correspondence between elements of the algebra formed by $\left( q_m,\; m\ge 0  \right)$ and $\left(  f_m, \; m \ge 0\right)$, with $f_0 \equiv 1$. If $(q,f)$ are corresponding functions from these algebras, we still have $q\left( W(y)  \right)= f\left( \alpha^{\Absa(y)}  \right)$. In particular, for that pair of functions $(q,f)$ and initial conditions $(x, \alpha^0)$ we have 
\eq\label{eq:changingvariables}
\lim_{y\rightarrow 0+} \frac{\BPr^1_x\left[ q\left( W(y) \right)\right] - q(x) }{y} = \lim_{y\rightarrow 0+} \frac{\BPr_x^1\left[ f\left( \alpha^{\Absa(y)} \right)\right] - f\left( \alpha^0 \right) }{y}. 
\en
Therefore, in order to show \eqref{eq:genvisa} we will perform our calculations on the algebra generated by $\left(  f_m, \; m \ge 0\right)$. Since the case of $m=0$ is trivial, we will restrict ourselves to $\left( f_m,\; m \ge 1  \right)$.

\begin{lemma}\label{thm:identifygen} Fix $x\in \infsimp$ and consider a type-1 evolution $\bsa$ starting with $W(0)=x$. 
\begin{enumerate}[label=(\roman*), ref=(\roman*)]
\item For $f=f_m$, the limit on the right of \eqref{eq:changeclockgen} is given by the bounded continuous function
\begin{equation}\label{eq:genfm}
\gen\left(f_m \right)(\alpha^0):=2(m+1)m q_{m-1}(x) - 2(m+1)m q_m(x) - (m+1) q_{m-1}(x).
\end{equation}
As a consequence, for any $x\in \infsimp$ and for any $m \ge 1$, 
\[
\lim_{y\rightarrow 0+} \frac{\BPr^1_{x}\left[ q_m\left( W(y)  \right)\right] - q_m(x)}{y}= 2 \petrovgen\left( q_m\right)(x),
\]
where $\petrovgen$ is the generator of the process $\EKP[\frac{1}{2},0]$.
\item The above convergence also holds in $\ltwo$ with respect to the law of $\PoiDir[\frac{1}{2},0]$.
\end{enumerate}
\end{lemma}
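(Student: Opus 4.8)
\textbf{Proof proposal for Lemma \ref{thm:identifygen}.}

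The plan is to compute the right-hand derivative in \eqref{eq:changeclockgen} for $f = f_m$ directly from the transition description of the type-1 evolution in Proposition \ref{prop:type-1:gen_transn}, using the entrance law of Proposition \ref{prop:type-1:transn} to expand each surviving clade to first order in $y$. Fix $x\in\infsimp$ and recall that $\alpha^0 = \iota(x)$ has unit total mass, with blocks $U_i$ of mass $x_i$. By Proposition \ref{prop:type-1:gen_transn}, $\alpha^y \stackrel{d}{=} \ConcatIL_{i} \beta^y_{U_i}$, where the $\beta^y_{U_i}$ are independent, each distributed as $\wh\alpha^y$ started from $\{(0,x_i)\}$. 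For each $i$, Proposition \ref{prop:type-1:transn} gives: the block $U_i$ survives with probability $1-e^{-x_i/2y} = x_i/2y + O(y^{-2})$; given survival, $\wh\alpha^y$ decomposes as a leftmost block of mass $L^y$ (law \eqref{eq:transn:LMB}) followed by the range of the inverse-Gaussian subordinator $R^y$ run for an $\ExpDist[(2y)^{-1/2}]$ time $S^y$. The first idea is that as $y\downto 0$, the dominant contribution to $\EV[f_m(\alpha^y)] - f_m(\alpha^0)$ comes from exactly two competing effects at order $y$: (a) a single surviving block $U_i$ splitting or shrinking (all other blocks $U_j$ with $j\neq i$ having died, contributing nothing), and, more significantly, (b) the diffusive fluctuation of the masses of the blocks themselves. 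I would organize the computation by conditioning on which blocks survive to level $y$ and Taylor-expanding; the total-mass normalization is not needed here because $f_m$ is defined with the $\IPmag{\cdot}$ in the denominator and we are taking $f$, not $q$, via the correspondence \eqref{eq:changingvariables}, but in fact it is cleaner to exploit that $\IPmag{\alpha^y}$ is a \BESQ[0] (Theorem \ref{thm:BESQ_total_mass}) and compute moments of $\sum_U \Leb(U)^{m+1}$ directly.

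Concretely, the main computation is to evaluate $\frac{d}{dy}\big|_{0+}\EV\big[\sum_{U\in\alpha^y}\Leb(U)^{m+1}\big]$ and $\frac{d}{dy}\big|_{0+}\EV[\IPmag{\alpha^y}^{m+1}]$, then combine via the quotient rule since $f_m(\alpha^y) = (\sum_U \Leb(U)^{m+1})/\IPmag{\alpha^y}^{m+1}$. For the numerator: by independence of clades and Proposition \ref{prop:type-1:transn}, $\EV[\sum_{U\in\alpha^y}\Leb(U)^{m+1}] = \sum_i \EV[\phi_m(\wh\alpha^y_{(x_i)})]$ where $\phi_m(\beta) := \sum_{U\in\beta}\Leb(U)^{m+1}$; then $\EV[\phi_m(\wh\alpha^y_{(a)})] = \Pr(\text{survive})\big(\EV[(L^y)^{m+1}] + \EV[\text{subordinator block powers}]\big)$. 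The moments of $L^y$ can be read off \eqref{eq:transn:LMB} (and the series expansion of $1-\cosh+u\sinh$ already appearing in the proof of Proposition \ref{prop:BESQ_mass_temp}), and the $(m+1)$-th moment sum for the inverse-Gaussian subordinator range is a standard Lévy-measure integral against $\Pi^y$ of \eqref{eq:transn:Levy_meas}. Expanding all of these to order $y$ and summing over $i$ should produce a linear combination of $q_{m-1}(x)$ and $q_m(x)$; matching constants gives \eqref{eq:genfm}. One then checks algebraically that this equals $2\petrovgen(q_m)(x)$ for $\petrovgen$ the $\EKP[\frac12,0]$ generator, i.e.\ with $(\alpha,\theta) = (\frac12,0)$ in \eqref{eq:genpetrov}: $\petrovgen q_m = (m+1)m\,q_{m-1} - (m+1)m\,q_m - \frac12(m+1)q_{m-1}$, whence $2\petrovgen q_m$ matches $\gen(f_m)$. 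Finally Lemma \ref{lem:gentimechange} converts the level-derivative into the time-changed derivative, which by \eqref{eq:changingvariables} is the claimed limit for $q_m(W(y))$; the type-0 case is identical with $(\frac12,\frac12)$ and the transition law of Proposition \ref{prop:type-0:transn}.

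For part (ii), the $\ltwo(\PoiDir[\frac12,0])$ convergence, the plan is to upgrade the pointwise statement using boundedness and a dominated-convergence argument. Since $0 \le q_m \le 1$ on $\infsimp$, the difference quotients $\big(\BPr^1_x[q_m(W(y))] - q_m(x)\big)/y$ are, a priori, not uniformly bounded in $x$ as $y\downto 0$; so the cleanest route is to first establish a uniform bound of the form $\big|\BPr^1_x[q_m(W(y))] - q_m(x)\big| \le Cy$ for all $x\in\infsimp$ and small $y$ (which should fall out of the same expansion, since all the error terms are controlled uniformly in $x$ because $\sum_i x_i = 1$ and the per-block contributions are $O(x_i) + O(x_i^2/y)$ with the sum telescoping), and then invoke the dominated convergence theorem against $\PoiDir[\frac12,0]$. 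The main obstacle I anticipate is bookkeeping in the Taylor expansion of $\EV[\phi_m(\wh\alpha^y_{(a)})]$ as $a$ ranges over the (infinitely many, possibly tiny) block masses $x_i$: one must show that blocks with $x_i \ll y$ contribute negligibly and that the sum $\sum_i$ of the order-$y$ terms converges to the stated expression, uniformly enough for part (ii). This requires carefully separating the regime $x_i \gtrsim y$ (where a block typically survives and fluctuates) from $x_i \ll y$ (where survival probability is $O(x_i/y)$ but the surviving mass is correspondingly larger), and checking the contributions balance; the series manipulations in the proof of Proposition \ref{prop:BESQ_mass_temp} provide the needed template, and the Gamma-duplication identity used there will reappear when identifying the $L^y$ moments.
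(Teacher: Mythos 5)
Your overall strategy — compute the level-derivative of $\EV[f_m(\alpha^y)]$ directly from the entrance law in Proposition~\ref{prop:type-1:transn}, then convert to the de-Poissonized generator via Lemma~\ref{lem:gentimechange} — is a legitimate alternative to the paper's route (which de-Poissonizes first and identifies the ratio processes as a Wright--Fisher diffusion). Your verification that $2\petrovgen q_m = 2m(m+1)q_{m-1} - 2m(m+1)q_m - (m+1)q_{m-1}$ for $(\alpha,\theta)=(\frac12,0)$ is also correct. However, there is a genuine gap at the heart of the computation.

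You propose to compute $\frac{d}{dy}\big|_{0+}\EV\big[\sum_{U\in\alpha^y}\Leb(U)^{m+1}\big]$ and $\frac{d}{dy}\big|_{0+}\EV[\IPmag{\alpha^y}^{m+1}]$ separately and ``combine via the quotient rule.'' This is incorrect: $\EV[N(y)/D(y)]$ is not $\EV[N(y)]/\EV[D(y)]$, and the discrepancy is already order $y$. Writing $D-1 = \IPmag{\alpha^y}^{m+1}-1$, the difference is governed at first order by $\mathrm{Cov}(N(y),D(y))$ and by $\EV[N(y)(D(y)-1)^2]$, both of which are $O(y)$ because $N-\EV[N]$ and $D-\EV[D]$ each have $O(\sqrt{y})$ fluctuations. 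The covariance does not vanish: within each clade the quantities $\phi_m(\beta^y_U)$ and $\IPmag{\beta^y_U}$ are correlated, and $\IPmag{\alpha^y}$ is the sum of the latter. In generator language, these cross-terms are exactly the $-2v_iv_j\frac{\partial^2}{\partial v_i\partial v_j}$ piece of the Wright--Fisher generator \eqref{eq:gennwf}, which supplies the $-2m(m+1)q_m(x)$ term in \eqref{eq:genfm}; a naive quotient of expectations would miscompute this coefficient. The paper avoids the issue entirely by de-Poissonizing before computing anything: once one passes to $L_i^{\Absa(y)}/\IPmag{\alpha^{\Absa(y)}}$ and $R_i^{\Absa(y)}/\IPmag{\alpha^{\Absa(y)}}$, these are coordinates of a (time-sped) Wright--Fisher diffusion by \cite{Pal13}, whose generator encodes the correlation for free. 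If you want to carry out your non-de-Poissonized expansion honestly, you must compute the joint moments of $\big(\sum_U\Leb(U)^{m+1},\IPmag{\alpha^y}\big)$, not the marginals.

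A secondary, lesser gap: your control of the tail over small blocks (``separating $x_i\gtrsim y$ from $x_i\ll y$'') is asserted but not supplied with a mechanism that would give a summable bound $y^{-1}|h_y(i)|\lesssim x_i$ uniformly in $y$, which is exactly what is needed to interchange the sum over $i$ with $\lim_{y\to 0+}$ and to get the uniform bound you invoke for part (ii). The paper gets this cleanly by recognizing each de-Poissonized ratio as a Jacobi diffusion and applying It\^o's formula together with the (super)martingale property (Lemma~\ref{lem:bnddeltak}); in your framework the analogous bound would have to come from the \BESQ\ description and does not obviously fall out of a heuristic crossover argument.
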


\begin{proof} Fix $x=\left( x_i,\; i\ge 1   \right)\in \infsimp$ and consider a type-1 evolution $\bsa$ with $W(0)=x$. The coordinate labeling of $x$ allows us to consider the countably many intervals in $\alpha^0$ as a sequence $\left( U_i,\; i \ge 1  \right)$ such that $x_i= \mathrm{Leb}\left(  U_i \right)$, $i\ge 1$. Hence, $\IPmag{\alpha^0}=\sum_{i \ge 1} x_i=1$.
This labeling allows us to identify the type-1 evolution of the $i$th interval in $\alpha^0$ for each $i\ge 1$.
Fix $y > 0$ and let us introduce the following notation following the description in Proposition \ref{prop:type-1:transn}. 
\begin{enumerate}[label=(\roman*), ref=(\roman*)]
\item Let $Z_i^y$ denote the total mass of the clade that starts with the $i$th interval in $\alpha^0$. In particular, $\IPmag{\alpha^y} = \sum_{i \ge 1} Z_i^y$. 
\item If $Z_i^y > 0$, the $i$th clade has two parts. The mass of the leftmost spindle will be denoted by $L_i^y$. The rest of the mass $Z^y_i- L^y_i$ will be denoted by $R^y_i$. Otherwise, we will define both $L^y_i$ and $R^y_i$ to be zero. 
\item When $R^y_i>0$, it is the sum of the jumps of an inverse Gaussian subordinator stopped at an independent exponential time. Let us arrange the resulting jump sizes of this subordinator in a decreasing order by $\big(  x^y_{ij}, \; j \ge 1  \big)$. Also, let 
\[
w_{ij}^y := \frac{x^y_{ij}}{R^{y}_i}, \; j \ge 1. \quad \text{Thus,}\quad w^y_{i\cdot}:=\big(w^y_{ij},\; j\ge 1\big) \in \infsimp, \quad \text{for each $i\ge 1$}.
\]
\item When $R_i^y=0$, by convention, we define $w^y_{ij}$ to be sampled from a $\PoiDir[\frac12,\frac12]$ distribution, independently for all $(i,j)$ and independently of all $\left( L_i^y, R_i^y,\; i \ge 1  \right)$.
\end{enumerate}

\begin{lemma}\label{lem:pdhalf}
 For every $y >0$, the family of sequences $\big(  w^y_{ij},\, j \ge 1 \big)$, $i \ge 1$, is i.i.d., with each $\big(  w^y_{ij},\, j \ge 1 \big)\sim\PoiDir[\frac12,\frac12]$. Moreover, this is jointly independent of $\big(L_i^y,\; R_i^y,\; i \ge 1 \big)$ and $\beta^0$. 
\end{lemma}

\begin{proof} This follows from \cite[Proposition 21]{PitmYorPDAT} for $\alpha=\theta=\frac{1}{2}$. The factor $1/2y$ in the jump measure of the inverse Gaussian subordinator is absorbed by scaling. That proposition shows that the sequence $w^y_{i \cdot}$ is independent of $R^y_i$ and distributed as $\PoiDir[\frac12,\frac12]$. By Proposition \ref{prop:type-1:transn}, we know that $L^y_i$ and $x^y_{i\cdot}$ are independent and they are independent across $i \ge 1$. 
\end{proof}

Let $Q^\delta_x$ denote the law of the squared Bessel process of dimension $\delta \in \mathbb{R}$ and starting from $x \ge 0$. We are also going to need the family of Wright-Fisher (\WF) diffusions on the unit simplex. Fix $k \ge 2$ and fix real parameters $\delta_1, \delta_2, \ldots, \delta_k$ and let $\delta_0=\sum_{i\in [k]} \delta_i$. Let $I^+$ and $I^-$ refer to the respective subsets of $[k]$ such that $\delta_i \ge 0$ for all $i \in I^+$ and $\delta_i < 0$ for all $i \in I^-$. Consider a diffusion on the following subset of the closed unit simplex in $\BR^k$:
\[
\left\{  \left( x_1, x_2, \ldots, x_k   \right):\; x_i \ge 0,\; i \in I^+,\; \text{and} \; x_i > 0,\; i \in I^-, \quad \sum_{i=1}^k x_i=1      \right\}.
\]
Consider a diffusion on the above state space with the generator:
\eq\label{eq:genwfgen}
\frac{1}{2} \sum_{i,j =1}^n x_i\left( \cf\{i=j\} - x_j\right) \frac{\partial^2 f}{\partial x_i \partial x_j} - \frac{1}{2} \sum_{i=1}^n \left( \delta_i - \delta_0 x_i  \right)\frac{\partial f}{\partial x_i}.
\en
Since the coefficients are smooth and bounded, starting from the open unit simplex, the process is well-defined and unique in law until the first time any coordinate $X_i$, $i \in I^-$, hits zero, at which time we kill the process. We will refer to this law as the \WF\ process with parameters $\left( \delta_1, \ldots, \delta_k  \right)$, or just \WF[\delta_1, \ldots, \delta_k]. This is a generalization of the usual \WF\ models with all nonnegative parameters and the extension in \cite{Pal13} to all negative parameters.

Fix an $m \ge 1$. By our definition of $f_m$ the following is true:
\eq\label{eq:definefn}
f_m\left( \alpha^y \right) =  \frac{1}{\IPmag{\alpha^y}^{m+1}}\left( \sum_{i \ge 1} \left( L^y_i  \right)^{m+1} + \sum_{i \ge 1} \left(R_i^y\right)^{m+1} \sum_{j \ge 1} \left( w_{ij}^y \right)^{m+1}\right)\cf\{\IPmag{\alpha^y} >0\}.
\en
Let $C^m_{1/2}$ denote a constant that represents the expectation of $\sum_{i \ge 1} w_i^{m+1}$, where $\left(  w_i,\; i \ge 1 \right)$ is sampled from $\PoiDir[\frac12,\frac12]$. Then, by Lemma \ref{lem:pdhalf}, we get that for all $y>0$ we have
\begin{equation}\label{eq:fmLR}
\BPr^1_x\left[ f_m\left( \alpha^y \right)  \right]= \sum_{i \ge 1} \BPr^1_x\left[ \left(  \frac{L_i^y}{\IPmag{\alpha^y}}  \right)^{m+1}\cf\{\IPmag{\alpha^y} >0\}\right] + C^m_{1/2} \sum_{i\ge 1} \BPr^1_x\left[ \left(  \frac{R_i^y}{\IPmag{\alpha^y}}  \right)^{m+1}\cf\{\IPmag{\alpha^y} >0\}\right]. 
\end{equation}  
Now, we need to compute the following limit:
\eq\label{eq:thirdtimechange}
\lim_{y\rightarrow 0+} \frac{1}{y}\left( \BPr^1_x\left[ f_m\left( \alpha^y \right)  \right]  - q_m(x) \right).
\en

Consider the countable collection of type-1 evolutions $\left( \bsa_i,\; i\ge 1  \right)$, where $\bsa_i$ is the evolution starting with the interval partition $\{ (0, x_i) \}$. Then, the concatenation $\left(\ConcatIL_{i\ge 1} \bsa_i^y,\; y\ge 0 \right)$ has law $\BPr^1_{x}$. For the rest of the proof we will work on a probability space $\left( \Omega, \mathcal{F}, \PV  \right)$ rich enough to support this countable collection. We will denote expectation with respect to $\PV$ by $\EV$. The dependence on $x$ will be suppressed from this notation.  

Consider the functions $L,R$ defined in Corollary \ref{cor:remaining_BESQ}. Consider the set-up of Lemma \ref{lem:gentimechangeinf}. For $\left(\beta_i,\; i\ge 1\right) \in \mathcal{H}$, consider the pairs $\left(L(\beta_i), R(\beta_i) \right)$. Define $F\colon \mathcal{H} \rightarrow [0,1]$ by 
\[
F\left(\left( \beta_i,\;i\ge 1   \right)\right)=\sum_{i\ge 1}\left(\frac{L(\beta_i)}{\sum_{i\ge 1} \IPmag{\beta_i}}\right)^{m+1} + C^m_{1/2}\sum_{i\ge 1}\left(\frac{R(\beta_i)}{\sum_{i\ge 1} \IPmag{\beta_i}}\right)^{m+1}, \; \left( \beta_i,\; i\ge 1  \right) \in \mathcal{H}\backslash\{ \emptyset\}.
\]
We take $F(\emptyset)=0$.
Then $F$ is a bounded function on $\mathcal{H}$. Then, by an application of Lemma \ref{lem:gentimechangeinf}, the limit in \eqref{eq:thirdtimechange} is equal to the following limit:
\[
\lim_{y\rightarrow 0+} \frac{1}{y}\left(  F\left( \bsa_i^{\Absa(y)}  \right) \cf\left\{ \IPmag{\alpha^{\Absa(y)}} > 0 \right\}- q_m(x)    \right),
\]
once the assumptions are satisfied.

However, note that with probability one $\IPmag{\alpha^{\Absa(y)}} > 0$ for all $y >0$. Hence, we can drop the indicator above and prove the following claim. 
\eq\label{eq:fourthtimechange}
\lim_{y\rightarrow 0+} \frac{1}{y}\left(  \sum_{i \ge 1} \EV\left[ \left(  \frac{L_i^{\Absa(y)}}{\IPmag{\alpha^{\Absa(y)}}}  \right)^{m+1} - x_i^{m+1} \right] + C^m_{1/2} \sum_{i\ge 1} \EV \left[ \left(  \frac{R_i^{\Absa(y)}}{\IPmag{\alpha^{\Absa(y)}}}  \right)^{m+1}\right]     \right)= 2\petrovgen\left( q_m \right)(x),
\en
where $\left(L_i^y=L\left( \alpha_i^y \right),\; y\ge 0 \right)$ and $\left(R^y_i=R\left( \alpha^y_i \right),\;y\ge 0\right)$, for $i\ge 1$. By our correspondence between symmetric functions on $\infsimp$ and those on $\HIPspace$, the limit $2\petrovgen\left( q_m \right)(x)$ can be extended to a bounded continuous function on $\HIPspace\backslash\{\emptyset\}$. This justifies the sufficiency of proving \eqref{eq:fourthtimechange}.

Now, either all $x_i>0$ or there exists a first $k$ such that $x_{k+i}=0$ for all $i \ge 1$. We start by assuming the latter. Then, there are initially only $k$ intervals in $\alpha^0$. The joint law of $\left( L^y_i, R^y_i,\; i\in [k] \right)$ is known from Corollary \ref{cor:remaining_BESQ}. Let $\varsigma_k$ be the first time $y$ such that $\min_{i\in [k]} L^{y-}_i=0$. Then, the process $\left(  L_i^y, R_i^y,\; i \in [k] \right)$, $0\le y < \varsigma_k$, has the same joint distribution as a vector of independent $\BESQ$ processes killed at $\varsigma_k$. Each $L_i$ is distributed as $Q^{-1}_{x_i}$ and each $R_i$ as $Q^1_0$.

We now claim that the time-changed vector of ratios
\[
 \left(\left( \frac{L^{\Absa(y)}_i}{\IPmag{\alpha^{\Absa(y)}}}, \frac{R^{\Absa(y)}_i}{\IPmag{\alpha^{\Absa(y)}}},\; i \in [k] \right), \; y \ge 0\right)
\]
is distributed as a \WF\ diffusion with parameters $-1/2$ for each $L_i$ and $1/2$ for each $R_i$, running at $4$ times the speed. This claim follows from an argument very similar to \cite[Theorem 4]{Pal13}. Although this cited result only has nonpositive parameters for \WF, the argument is valid in the case where we have some positive parameters. The factor of $4$ comes from the difference between $\Absa$ and the time-change used in \cite{Pal13}.

This \WF\ diffusion is a diffusion on the unit simplex in $\BR^{2k}$. Let us rearrange the coordinates of $\left( v_1, \ldots, v_{2k}  \right)\in \BR^{2k}$ such that the ones corresponding to $\left(L_1, \ldots, L_k\right)$ are given by $(v_1, \ldots, v_k)$ and those corresponding to $\left(R_1, \ldots, R_k \right)$ by $(v_{k+1}, \ldots, v_{2k})$. Then the generator of the diffusion that is the \WF\ diffusion, running $4$ times faster, can be identified from \cite[equation (13)]{Pal13} as an example of \eqref{eq:genwfgen} with $\delta_i=-1/2$ for $i\in [k]$, $\delta_i=1/2$ for $i=k+j$, $j \in [k]$, and $\delta_0=0$:
\begin{equation}\label{eq:gennwf}
\mathcal{L}= 2 \sum_{i,j=1}^{2k} v_i\left( \cf\{i=j\} - v_j \right) \frac{\partial^2}{\partial v_i \partial v_j} - \sum_{i=1}^k \frac{\partial}{\partial v_i} + \sum_{i=k+1}^{2k}  \frac{\partial}{\partial v_i}. 
\end{equation}

However, since $L_i^0=x_i$ and $R_i^0=0$, for each $i\in [k]$, the limit on the left of \eqref{eq:fourthtimechange} is given by the application of the generator $\mathcal{L}$ in \eqref{eq:gennwf} to the smooth function $g_m(v)= \sum_{i=1}^k v_i^{m+1} + C^m_{1/2} \sum_{i=k+1}^{2k} v_i^{m+1}$, at any point $v$ such that for all $i\in [k]$ we have $v_i=x_i$ and $v_{k+i}= 0$. 
However, at any such point we get 
\eq\label{eq:genwfonpoly}
\mathcal{L}g_m(v) = 2(m+1)m \sum_{i=1}^k x_i(1-x_i) x_i^{m-1} - (m+1) \sum_{i=1}^k x_i^{m}. 
\en
We get the right side of \eqref{eq:fourthtimechange} by a straightforward simplification.

\medskip

Next we will drop the assumption of finitely many initial intervals. For every $k \ge 1$, consider the infinite sequence $\left( L_i^y, R_i^y,\; i \ge 1   \right)$ and the finite sequence comprised of the first $k$ elements $\left( L_i^y, R_i^y,\; i \in [k]   \right)$. Recall that these correspond to the largest $k$ intervals in $\alpha^0$. 
%
%
%
Set
\begin{equation}
\Gamma^y_k:=\sum_{i=1}^k   \EV\left[ \left(\frac{L_i^{\Absa(y)}}{\IPmag{\alpha^{\Absa(y)}}} \right)^{m+1} - x_i^{m+1}\right] + C^m_{1/2} \sum_{i=1}^k \EV\left[ \left( \frac{R_i^{\Absa(y)}}{\IPmag{\alpha^{\Absa(y)}}}  \right)^{m+1}\right].
\end{equation}
Let $\mathbf{x}^k \in \infsimpcl$ be given by $\mathbf{x}^k_i=x_i$ for $i \in [k]$ and $\mathbf{x}^k_{k+i}=0$ for all $i \ge 1$. 
Finally, define
\[
 \Delta^{y}_k := \sum_{i=k+1}^\infty \EV\left[ \left(\frac{L_i^{\Absa(y)}}{\IPmag{\alpha^{\Absa(y)}}} \right)^{m+1} - x_i^{m+1}\right] + C^m_{1/2} \sum_{i=k+1}^\infty \EV\left[ \left( \frac{R_i^{\Absa(y)}}{\IPmag{\alpha^{\Absa(y)}}}  \right)^{m+1}\right].
\]

\begin{lemma}\label{lem:bnddeltak}
We have $\lim_{y\rightarrow 0+} \Gamma^y_k/y = 2\petrovgen\left( q_m \right)\left( \mathbf{x}^k \right)$.
Moreover,
\[
\sup_{y > 0} \left( y^{-1}\Delta^y_k\right) \le 2m(m+1)\sum_{i=k+1}^\infty x_i, \qquad \inf_{y > 0} \left(y^{-1}\Delta^{y}_k \right)  \ge - (m+1) \sum_{i=k+1}^\infty x_i.
\]
\end{lemma}

Before proving the lemma, let us show how Lemma \ref{thm:identifygen} (i) follows. Since the sequence $(x_1, x_2, \ldots)$ is summable, given any $\epsilon >0$, we can find a $k\ge 1$ such that $\sup_{y \ge 0} \abs{\Delta_k^y/y} \le \epsilon$. Letting $k \rightarrow \infty$ and $\epsilon \rightarrow 0$ and appealing to the continuity of $\petrovgen (q_m)(\cdot)$ shows \eqref{eq:fourthtimechange}. 

\begin{proof}[Proof of Lemma \ref{lem:bnddeltak}] The first claim follows in the same way as in the case of finitely many initial intervals by considering the vector of processes
\[
\left( L_i^y, R_i^y,\; i \in [k], \IPmag{\alpha^y} - \sum_{i=1}^k (L_i^y + R_i^y)   \right),
\]
which, until the first time the left limit of any $L_i$ hits zero, is an independent vector of squared Bessel processes killed when the corresponding coordinate hits zero. In particular, the last coordinate is distributed as $Q^0_{1-\sum_{i=1}^k x_i}$. We move on to the second claim.

For every $y$ and $i > k$ consider the function 
\[
h_y(i)=\EV \left[ \left( \frac{L_i^{\Absa(y)}}{\IPmag{\alpha^{\Absa(y)}}}  \right)^{m+1} - x_i^{m+1}\right] + C^m_{1/2} \EV\left[ \left( \frac{R^{\Absa(y)}_i} {\IPmag{\alpha^{\Absa(y)}}}  \right)^{m+1}\right].
\]
Then, $y^{-1}\Delta_k^{y}  = \sum_{i > k} y^{-1} h_y(i)$.

Recall $Z_i=L_i+R_i$. Since the constant $C^m_{1/2}$ must be less than one, it follows that 
\[
h_y(i) \le   \EV\left[ \left( \frac{L_i^{\Absa(y)} + R_i^{\Absa(y)}}{\IPmag{\alpha^{\Absa(y)}}}  \right)^{m+1} - x_i^{m+1}\right] = \EV\left[ \left( \frac{Z_i^{\Absa(y)}}{\IPmag{\alpha^{\Absa(y)}}}  \right)^{m+1} - x_i^{m+1}\right]. 
\]

We will now estimate the above expectation for every fixed $i \ge k+1$. By Theorem \ref{thm:BESQ_total_mass}, for every $i \ge k+1$, the joint distribution of the vector $\left( Z^y_i, T^y_i:=\IPmag{\alpha^y} - Z_i^y  \right)$ is that of an independent pair of diffusions where the first coordinate is distributed as $Q^{0}_{x_i}$ and the second is $Q^{0}_{1-x_i}$. By \cite[Proposition 8]{WarrYor98}, the de-Poissonized ratio $\xi_i^y:=Z^{\Absa(y)}_i\left/\IPmag{\alpha^{\Absa(y)}}\right.$, $y \ge 0$, is a Jacobi diffusion on $[0,1]$ with the generator $2u(1-u)\frac{d^2}{du^2}$ for $u\in (0,1)$ and starting from $x_i$. In particular, it is a martingale and $\EV\left( \xi_i^s \right)=x_i$ for all $s\ge 0$. Note that this is also the first coordinate of the \WF\ diffusion with parameters $(0,0)$, running at $4$ times the usual speed.

By It\^o's rule applied to the bounded function $u\mapsto u^{m+1}$ on $[0,1]$, we get 
\[
\begin{split}
y^{-1}h_y(i)=\frac{\EV\left[\left( \xi_i^y \right)^{m+1}\right] - x_i^{m+1}}{y} = \frac{2m(m+1)}{y}  \int_0^y \EV\left[ \left(\xi_i^s\right)^{m}\left(1-\xi_i^s \right) \right] ds. 
\end{split}
\]
In particular, $y^{-1} h_y(i) \le 2m(m+1)y^{-1}\int_0^y \EV\left[ \xi_i^s  \right] ds = 2m(m+1)x_i$.
Adding up these bounds over $i \ge k+1$ gives us the upper bound of the lemma.

A similar argument holds for the lower bound. Start by noting that 
\[
h_y(i) \ge \EV\left[\left( \frac{L_i^{\Absa(y)}}{\IPmag{\alpha^{\Absa(y)}}}  \right)^{m+1} - x_i^{m+1}\right]. 
\]
Define the stopping time $\chi_i=\inf\{y\ge 0\colon L_i^{y-}=0 \}$ to be the first time such that the mass of the initial spindle hits zero. Then 
\eq\label{eq:ilbnd}
h_y(i) \ge \EV\left[ \left(\frac{L_i^{\Absa(y)}}{\IPmag{\alpha^{\Absa(y)}}}\right)^{m+1} \cf\{ \chi_i > \Absa(y) \} - x^{m+1}_i \right].
\en

By de-Poissonization, the ratio $\left( \eta_i^y:=L_i^{\Absa(y)}/\IPmag{\alpha^{\Absa(y)}}   \right)$, $0 < y < \chi_i$, is a Jacobi diffusion with parameters $(-1/2, 1/2)$, i.e.\  a diffusion on $(0,1]$ with generator $2u(1-u) \frac{d^2}{du^2} - \frac{d}{du}$ and starting from $x_i>0$. Let $\tilde{\chi}_i=\Absa\left( \chi_i \right)$ be the hitting time of zero by $\eta_i$. Then, the stopped process $\eta^{\tilde{\chi}_i\wedge \cdot}_i$ it is a supermartingale. Moreover, 
\[
h_y(i) \ge \EV\left[ \left( \eta_i^y \right)^{m+1} \cf\{ \tilde{\chi}_i > y \} - x_i^{m+1} \right] = \EV\left[ \left(\eta_i^{y\wedge \tilde{\chi}_i}\right)^{m+1} - x_i^{m+1}   \right].
\]
By It\^o's rule and the Optional Sampling Theorem, we get
\[
\begin{split}
&\frac{\EV  \left[\left(\eta_i^{y\wedge \tilde{\chi}_i}\right)^{m+1}  - x_i^{m+1}\right]}{y} = \frac{1}{y}\EV\left[  \int_0^{y\wedge \tilde{\chi}_i} \left(  2m(m+1) \left(\eta_i^s\right)^{m}\left( 1- \eta_i^s  \right) - (m+1) \left(\eta_i^s\right)^{m} \right) ds\right]\\
&\ge - \frac{m+1}{y} \int_0^y \EV\left[\left( \eta_i^{s\wedge \tilde{\chi}_i}  \right)^{m}\right] ds \ge - \frac{m+1}{y} \int_0^y \EV\left[ \eta_i^{s\wedge \tilde{\chi}_i}  \right] ds \ge - (m+1)x_i.
\end{split}
\]
Summing up over $i > k$, gives us the lower bound in the statement of the lemma. 
\end{proof}

We now move on to prove Lemma \ref{thm:identifygen} (ii). We first argue that the limit in \eqref{eq:fourthtimechange} holds in $\ltwo:=\ltwo\left[\frac 12,0\right]$. Consider Lemma \ref{lem:bnddeltak} for $k=0$. Since $\sum_{i=1}^\infty x_i=1$, we get that for all $y >0$,
\[
\abs{\frac{1}{y}\left[  \sum_{i \ge 1} \EV\left[ \left(  \frac{L_i^{\Absa(y)}}{\IPmag{\alpha^{\Absa(y)}}}  \right)^{m+1} - x_i^{m+1} \right] + C^m_{1/2} \sum_{i\ge 1} \EV\left[ \left(  \frac{R_i^{\Absa(y)}}{\IPmag{\alpha^{\Absa(y)}}}  \right)^{m+1}  \right]   \right] } \le 2m(m+1), 
\]
Since $2\petrovgen\left( q_m \right)$ is also a bounded function on $\infsimp$, the difference 
\[
\frac{1}{y}\left[  \sum_{i \ge 1} \EV\left[ \left(  \frac{L_i^{\Absa(y)}}{\IPmag{\alpha^{\Absa(y)}}}  \right)^{m+1} - x_i^{m+1} \right] + C^m_{1/2} \sum_{i\ge 1} \EV\left[ \left(  \frac{R_i^{\Absa(y)}}{\IPmag{\alpha^{\Absa(y)}}}  \right)^{m+1}  \right]    \right]  - 2\petrovgen\left( q_m \right)(x),
\]
denoted by $y^{-1}\overline{\Delta}_m^y(x)$, is a family of bounded functions that converge to zero pointwise as $y \rightarrow 0+$. Therefore, by the Dominated Convergence Theorem, the convergence also holds in $\ltwo$. 

Now, consider the difference 
\[
\difftwo_m^y(x) :=\EV\left[ q_m\left( W(y) \right)  \right] - \left[  \sum_{i \ge 1} \EV\!\left[ \left(  \frac{L_i^{\Absa(y)}}{\IPmag{\alpha^{\Absa(y)}}}  \right)^{\!m+1}\!\! - x_i^{m+1} \right]\! + C^m_{1/2} \sum_{i\ge 1} \EV\!\left[ \left(  \frac{R_i^{\Absa(y)}}{\IPmag{\alpha^{\Absa(y)}}}  \right)^{\!m+1}  \right]     \right]\!. 
\]
Since $y^{-1}\left[\EV q_m\left( W(y) \right) - q_m\left( W(0) \right)\right] - 2\petrovgen\left(q_m \right)(x)= y^{-1} \overline{\Delta}_m^y(x) + y^{-1}\difftwo_m^y(x)$, once we show that $y^{-1} \difftwo_m^y$ converges to zero in $\ltwo$, we are done.

Towards this goal, note that the difference $\difftwo_m^y(x)$ can be written as
\[
\sum_{i \ge 1} \EV\left[  \left(  \frac{R_i^{\Absa(y)}}{\IPmag{\alpha^{\Absa(Y)}}} \right)^{m+1} \sum_{j\ge 1} \left(  w^{\Absa(y)}_{ij} \right)^{m+1}  \right] - C_m^{1/2}\sum_{i\ge 1} 
\EV\left[ \left(  \frac{R_i^{\Absa(y)}}{\IPmag{\alpha^{\Absa(y)}}}  \right)^{m+1}  \right]. 
\]
Since $\sum_{j\ge 1} \left(  w^{\Absa(y)}_{ij} \right)^{m+1}$ is always bounded above by one, we get 
\[
\abs{y^{-1}\difftwo_m^y}(x) \le \left(  1 + C_m^{1/2} \right) \frac{1}{y}\sum_{i\ge 1} \EV\left[  \left(  \frac{R_i^{\Absa(y)}}{\IPmag{\alpha^{\Absa(y)}}}  \right)^{m+1}  \right].
\] 
Now, using a similar logic as in the case of $\overline{\Delta}_m^y$, the right side above converges to zero in $\ltwo$ as $y \rightarrow 0+$. Hence, so does $y^{-1} \difftwo_m^y$ and we are done.  
\end{proof}

We will now extend Lemma \ref{thm:identifygen} to finite multiples of $\left( q_k,\; k \ge 1 \right)$. Consider the family of functions $\left( f_m,\; m \ge 1  \right)$ as before. Fix $l \ge 1$ and $\{m_1, m_2, \ldots, m_l\} \subset \BN$ and consider the product function from $\HIPspace$ to $\BR$: $f_{m_1, m_2, \ldots, m_l} = \prod_{j\in [l]} f_{m_j}$ and the corresponding function $q_{m_1,m_2, \ldots, m_l}=\prod_{j \in [l]} q_j$ from $\infsimp$ to $\BR$.

\begin{lemma}\label{thm:genalgebra} Fix $x\in \infsimp$ and consider a type-1 evolution $\bsa$ starting with $W(0)=x$. For every $m_1, \ldots, m_l \ge 1$, the limit on the right of  \eqref{eq:changeclockgen} exists for $f=f_{m_1, m_2, \ldots, m_l}$ and is given by $\gen\left(  f_{m_1,\ldots, m_l } \right)\left( \alpha^0 \right):= 2\petrovgen\left( q_{m_1, \ldots, m_l}  \right)(x)$, where $\petrovgen$ is the generator of $\EKP[\frac{1}{2},0]$. 
\begin{enumerate}[label=(\roman*), ref=(\roman*)]
\item In particular, for any $x\in \infsimp$ and for any $q\in \ualgebra$, 
\[
\lim_{y\rightarrow 0+} \frac{\BPr^1_x\left[ q\left( W(y)  \right)\right] - q(x)}{y}= 2 \petrovgen\left( q\right)(x).
\]
\item The above convergence also holds in $\ltwo$ with respect to the law of $\PoiDir[\frac{1}{2},0]$.
\end{enumerate}
\end{lemma}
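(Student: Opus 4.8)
The plan is to bootstrap from the single-factor result, Lemma \ref{thm:identifygen}, to arbitrary products $f_{m_1,\ldots,m_l}$ by a product-rule (Leibniz) argument. First I would fix $x\in\infsimp$ and a type-1 evolution $\bsa$ with $\alpha^0=\iota(x)$, and recall from \eqref{eq:definefn}--\eqref{eq:fmLR} the decomposition of each $f_{m_j}(\alpha^y)$ into a ``leftmost spindle'' contribution $\sum_i (L_i^y/\IPmag{\alpha^y})^{m_j+1}$ plus a ``remainder'' contribution involving the independent $\PoiDir[\frac12,\frac12]$-distributed sequences $w_{ij}^y$ of Lemma \ref{lem:pdhalf}. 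The key quantitative input is that, by the computations in the proof of Lemma \ref{thm:identifygen} (in particular Lemma \ref{lem:bnddeltak} with $k=0$), for each $m\ge1$ the difference quotient $y^{-1}(\BPr^1_x[f_m(\alpha^{\Absa(y)})]-f_m(\alpha^0))$ is \emph{uniformly bounded in $y$} by a constant depending only on $m$, converges pointwise to $2\petrovgen(q_m)(x)$, and in fact the pre-limit quantities $f_m(\alpha^{\Absa(y)})$ themselves converge to $f_m(\alpha^0)=q_m(x)$ in $\ltwo$ and boundedly. This boundedness is exactly what makes the product argument work.

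The main step is then the following algebraic identity for the difference quotient of a product. Writing $g_j^y:=f_{m_j}(\alpha^{\Absa(y)})$ and $g_j^0:=q_{m_j}(x)$, one has the telescoping expansion
\begin{equation*}
 \prod_{j=1}^l g_j^y - \prod_{j=1}^l g_j^0 = \sum_{j=1}^l \Big(\prod_{i<j} g_i^0\Big)\big(g_j^y-g_j^0\big)\Big(\prod_{i>j} g_i^y\Big).
\end{equation*}
Dividing by $y$ and taking $\BPr^1_x$-expectations, I would handle each of the $l$ terms separately. For the $j$-th term, the factor $\prod_{i<j}g_i^0$ is the deterministic constant $\prod_{i<j}q_{m_i}(x)$; the factor $y^{-1}(g_j^y-g_j^0)$ is bounded uniformly in $y$ and tends a.s.\ (along the relevant almost sure event) and in $\ltwo$ to the appropriate limit; and the remaining factor $\prod_{i>j}g_i^y$ is bounded by $1$ (each $f_{m_i}\le1$ on $\HIPspace$) and tends a.s.\ to $\prod_{i>j}q_{m_i}(x)$. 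Since all the $f_{m_i}(\alpha^{\Absa(y)})$ live in $[0,1]$ and the difference quotients are uniformly bounded, the Dominated Convergence Theorem applies to each term, yielding
\begin{equation*}
 \lim_{y\to0+}\frac1y\Big(\BPr^1_x\big[f_{m_1,\ldots,m_l}(\alpha^{\Absa(y)})\big] - q_{m_1,\ldots,m_l}(x)\Big) = \sum_{j=1}^l \Big(\prod_{i\ne j}q_{m_i}(x)\Big)\,\EV\Big[\tfrac1y\big(g_j^y-g_j^0\big)\Big]\Big|_{\lim},
\end{equation*}
and the right-hand side equals $\sum_j \big(\prod_{i\ne j}q_{m_i}(x)\big)\cdot 2\petrovgen(q_{m_j})(x)$. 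It then remains to check that this coincides with $2\petrovgen(q_{m_1,\ldots,m_l})(x)$, which is a direct computation: $\petrovgen$ is a second-order differential operator, so applied to a product $\prod_j q_{m_j}$ it produces a Leibniz sum of first-derivative terms (matching the telescoping sum above) plus ``cross terms'' $\sum_{j\ne j'}(\prod_{i\ne j,j'}q_{m_i})\sum_{a,b}(\ldots)$. One must verify that these cross terms vanish in the limit; this is precisely the role of the $\ltwo$/boundedness control, together with the observation that two distinct factors $g_j^y-g_j^0$ and $g_{j'}^y-g_{j'}^0$ are each $O(y)$, so their product contributes $O(y^2)$ and disappears after dividing by $y$. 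Here I would want to be careful: rather than expanding $\petrovgen$ of a product by hand, the cleanest route is to note that the telescoping identity applied to $f_{m_1,\ldots,m_l}(\alpha^y)$ (before time-change) combined with the ``pointwise generator'' property and the continuity at $y=0$ already forces the limit to be the Leibniz sum, and then to \emph{separately} verify via \eqref{eq:genpetrov} that $\petrovgen$ acting on the algebra $\ualgebra$ satisfies the same Leibniz identity on monomials $q_{m_1}\cdots q_{m_l}$ modulo the first-order part — which is a finite symbolic computation.

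The main obstacle I anticipate is controlling the cross terms with enough uniformity to justify interchanging limit and the (countably infinite) sum over blocks $i$ hidden inside each $f_{m_j}$, simultaneously with the product over $j$. Concretely, each $f_{m_j}(\alpha^{\Absa(y)})$ is itself an infinite sum over the clades $\bN_{U_i}$, and the difference quotients were only shown to be well-behaved after that infinite sum was already taken; so one cannot naively expand the product of infinite sums. The fix is to keep the factors $f_{m_j}(\alpha^{\Absa(y)})$ intact (not expanded into their defining series) and use only (a) the bound $0\le f_{m_j}(\alpha^{\Absa(y)})\le1$ and (b) the scalar difference-quotient convergence from Lemma \ref{thm:identifygen}; this reduces everything to a finite ($l$-term) dominated-convergence argument at the level of real-valued random variables, sidestepping any rearrangement of infinite sums. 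Finally, assertion (i) follows because $\ualgebra$ is the linear span of the $q_{m_1,\ldots,m_l}$ (together with $q_0=1$), \eqref{eq:changingvariables} transfers the statement between the $(f_m)$ and $(q_m)$ algebras, and linearity of the generator does the rest; assertion (ii), $\ltwo$-convergence under $\PoiDir[\frac12,0]$, follows from the same uniform-boundedness of the difference quotients plus the Dominated Convergence Theorem, exactly as in the proof of Lemma \ref{thm:identifygen}(ii). The type-0 case is identical with $\PoiDir[\frac12,\frac12]$ and the parameter pair $(\frac12,\frac12)$ throughout.
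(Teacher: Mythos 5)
Your proposal has a genuine gap: the telescoping/Leibniz argument, as you set it up, cannot see the second-order ``carré du champ'' contribution to $\petrovgen(q_{m_1,\ldots,m_l})$, and that contribution is nonzero. Concretely, writing $\Gamma$ for the bilinear form associated with the second-order part of $\petrovgen$, a direct computation with $\partial_i q_m = (m+1)x_i^m$ gives
\[
 \petrovgen(q_m q_n) = q_m\,\petrovgen(q_n) + q_n\,\petrovgen(q_m) + \Gamma(q_m,q_n),\qquad
 \Gamma(q_m,q_n) = 2(m+1)(n+1)\big(q_{m+n} - q_m q_n\big),
\]
and $q_{m+n}\neq q_m q_n$ on $\infsimp$ in general (try $x=(2/3,1/3,0,\ldots)$ with $m=n=1$). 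Your plan would deliver only the Leibniz sum $\sum_j\big(\prod_{i\ne j}q_{m_i}\big)\,2\petrovgen(q_{m_j})$, which is therefore not $2\petrovgen(q_{m_1,\ldots,m_l})$.

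The reason your dominated-convergence / order-of-magnitude argument fails is an It\^o-calculus confusion: you treat $g_j^y-g_j^0$ as $O(y)$ pathwise. It is not. For diffusion-like paths, increments over time $y$ are $O(\sqrt{y})$; only the drift $\EV[g_j^y-g_j^0]$ is $O(y)$. Consequently $\EV\big[(g_j^y-g_j^0)(g_{j'}^y-g_{j'}^0)\big]$ is $O(y)$, not $O(y^2)$ — this is precisely the quadratic covariation that feeds the carré du champ — and it survives division by $y$. Equivalently, in your $j$-th telescoping term the random factor $\prod_{i>j}g_i^y$ cannot simply be replaced by $\prod_{i>j}g_i^0$ inside the expectation, because the correction terms are covariances of order $y$. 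Passing to the limit factor by factor via bounded convergence is therefore invalid at exactly the step where it matters.

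This is why the paper does not attempt a Leibniz argument at the level of the scalar functions $f_{m_j}(\alpha^{\Absa(y)})$. Instead it opens up the product $f_m f_n$ using the $(L_i,R_i,w_{ij})$-decomposition of each clade (the analogue of \eqref{eq:genfmby}), reduces to the full Wright-Fisher generator $\mathcal{L}$ of \eqref{eq:gennwf} on the finite-dimensional simplex, and then uses the explicit cancellation Lemma \ref{lem:genindivterm} (the terms carrying an $R$-coordinate vanish because $R_i^0=0$) to show that $\mathcal{L}$ restricted to the $L$-coordinates is exactly $2\petrovgen$. The tail over blocks $i>k$ is then controlled separately via Lemma \ref{lem:bnddeltagen}. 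The Wright-Fisher generator is quadratic in the same variables, so it automatically carries the correct cross terms; nothing analogous to $\Gamma$ is dropped. To repair your proof you would need either (a) a martingale decomposition of each $g_j^y-g_j^0$ into drift plus martingale and an explicit computation of the quadratic covariations, or (b) to follow the paper's route through the finite-dimensional diffusion, where the second-order structure is visible and computable.
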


\begin{proof} The proof is very similar to that of Lemma \ref{thm:identifygen}, only computationally more intensive. Hence, the strategy remains the same and we only highlight the differences. 

Start by noting that we can assume without loss of generality that each $m_j \ge 1$. The special case of $l=1$ and $m_1=0$ is trivial. Let us start with the case $l=2$. 

Fix $m,n > 1$ and $y > 0$. According to the notation in \eqref{eq:definefn} and taking expectations, and using the independence of $\left( w^y_{\cdot} \right)$ from $\left( L^y, R^y \right)$, we get
\begin{align}
&\EV\left[  f_m\left( \alpha^y  \right) f_n\left( \alpha^y  \right)  \right] = \EV\left[  \sum_{i \ge 1} \left( \frac{L_i^y}{\IPmag{\alpha}^y} \right)^{m+1}\sum_{j \ge 1} \left(\frac{L_j^y}{\IPmag{\alpha^y}}\right)^{n+1} \right] \nonumber\\
&+ C^n_{1/2} \sum_{i\ge 1} \sum_{j \ge 1} \EV\left[   \left( \frac{L_i^y}{\IPmag{\alpha^y}} \right)^{m+1}  \left( \frac{R_j^y}{\IPmag{\alpha^y}} \right)^{n+1}\right] + C^m_{1/2} \sum_{i\ge 1} \sum_{j \ge 1} \EV\left[   \left( \frac{L_i^y}{\IPmag{\alpha^y}} \right)^{n+1}  \left( \frac{R_j^y}{\IPmag{\alpha^y}} \right)^{m+1}\right]\nonumber\\
&+ \sum_{i\ge 1} \sum_{j \ge 1} C^{m,n}_{i,j} \EV\left[   \left( \frac{R_i^y}{\IPmag{\alpha^y}} \right)^{n+1}   \left( \frac{R_j^y}{\IPmag{\alpha^y}} \right)^{m+1} \right].\label{eq:genfmby}
\end{align}
Here, the constants $C^{m}_{1/2}, C^{n}_{1/2}$ are as before and $C^{m,n}_{i,j}$ are suitably defined.

Now, we change time in \eqref{eq:genfmby} and define $h_y(i,j)$ for $i,j\ge 1$ by
\begin{align}
&\EV\left[ \left( \frac{L_i^{\Absa(y)}}{\IPmag{\alpha^{\Absa(y)}}} \right)^{m+1} \left(\frac{L_j^{\Absa(y)}}{\IPmag{\alpha^{\Absa(y)}}}\right)^{n+1} - x_i^{m+1} x_j^{n+1} + C^n_{1/2}\left( \frac{L_i^{\Absa(y)}}{\IPmag{\alpha^{\Absa(y)}}} \right)^{m+1} \left(\frac{R_j^{\Absa(y)}}{\IPmag{\alpha^{\Absa(y)}}}\right)^{n+1}\right.\nonumber\\
&\quad\ \left. +\; C^m_{1/2}\left( \frac{L_i^{\Absa(y)}}{\IPmag{\alpha^{\Absa(y)}}} \right)^{\!n+1}\! \left(\frac{R_j^{\Absa(y)}}{\IPmag{\alpha^{\Absa(y)}}}\right)^{\!m+1}+ C^{m,n}_{i,j} \left( \frac{R_i^{\Absa(y)}}{\IPmag{\alpha^{\Absa(y)}}} \right)^{\!n+1}\! \left(\frac{R_j^{\Absa(y)}}{\IPmag{\alpha^{\Absa(y)}}}\right)^{\!m+1}\right]\!. \label{eq:whatishyij}
\end{align}

Consider $\mathfrak{G}_y(x)= \sum_{i\ge 1} \sum_{j\ge 1} \left(h_y(i,j) + x_i^{m+1} x_j^{n+1}\right)=\sum_{i\ge 1} \sum_{j\ge 1} h_y(i,j) + q_{m,n}(x)$. Then, it suffices to show that the limit $\lim_{y\rightarrow 0+} y^{-1}\left[\mathfrak{G}_y(x) - q_{m,n}(x)\right]$ is given by $2 \petrovgen\left(q_{m,n}\right)(x)$.

Now, as before we are going to start by assuming that there are only $k$ intervals in $\alpha^0$ and consider the following polynomial on the unit simplex in $\BR^{2k}$:
\begin{equation}\label{eq:derivedpoly}
\begin{split}
g_{m,n}(v) =& \sum_{i=1}^k v_i^{m+1} \sum_{i=1}^k v_i^{n+1} + C^{n}_{1/2} \sum_{i=1}^k v_i^{m+1} \sum_{i=k+1}^{2k} v_i^{n+1}  \\
&+ C^m_{1/2} \sum_{i=1}^k v_i^{n+1} \sum_{i=k+1}^{2k} v_i^{m+1} + \sum_{i=k+1}^{2k} \sum_{j=k+1}^{2k} C^{m,n}_{i,j} v_i^{n+1} v_j^{m+1}. 
\end{split}
\end{equation}
Then, as before, the limit $\gen\left( f_m f_n  \right)(\alpha^0)$, with $W(0)=x$, is equal to $\mathcal{L}g_{m,n}(v)$ where $\mathcal{L}$ is the generator of the \WF\ diffusion, running at $4$ times the usual speed, and $v$ is any point such that $v_i=x_i$ for $i \in [k]$ and $v_{k+i}\equiv 0$ for all $i \in [k]$.

\begin{lemma}\label{lem:genindivterm}
Let $g_{i,j}^{m,n}(v):= v_i^{m+1} v_j^{n+1}$ for $i,j \in [2k]$. If either $i$ or $j$ belongs to the set $\{ k+1, \ldots, 2k  \}$, then $\mathcal{L} g_{i,j}^{m,n}(v)=0$ for all $v$ such that $v_{k+i}\equiv 0$ for all $i \in [k]$.
\end{lemma}

\begin{proof} This follows from the polynomial form of the generator $\mathcal{L}$. Let us show this explicitly for the special case of $i=1, j=k+1$ and leave all other cases for the reader. 
\[
\begin{split}
\mathcal{L} g^{m,n}_{1,k+1}(v) =& 2 m(m+1) (1-v_1) v_1^{m} v_{k+1}^{n+1} + 2n(n+1) (1- v_{k+1}) v_1^{m+1} v_{k+1}^{n} \\
&- 2(m+1)(n+1) v_1^{m+1} v_{k+1}^{n+1} - m v_1^{m} v_{k+1}^{n+1} + n v_1^{m+1} v_{k+1}^{n}.
\end{split}
\]
Since $n \ge 1$ and $v_{k+1}=0$, every term above is zero proving that $\mathcal{L} g^{m,n}_{1,k+1}(v)=0$.
\end{proof}

By the above lemma and \eqref{eq:derivedpoly} it is clear that for all $v$ such that $v_{k+i}\equiv 0$ for all $i \in [k]$, we have 
\[
\mathcal{L} g_{m,n}(v) = \mathcal{L}\left[ \sum_{i=1}^k v_i^{m+1} \sum_{j=1}^k v_j^{n+1}  \right].
\]
However, restricted to functions of the first $k$ coordinates, the generator $\mathcal{L}$ is exactly $2\petrovgen$. 
\medskip

To drop the assumption of finitely many initial intervals, proceed as in the proof of Lemma \ref{thm:identifygen}. As in there, fix a $k \ge 1$ and consider $i \ge k+1$. Recall that $\mathbf{x}^k \in \infsimpcl$ denotes the vector such that $\mathbf{x}^k_i=x_i$ for $i \in [k]$ and $\mathbf{x}^k_{k+i}=0$ for all $i \ge 1$. Consider the quantity
\[
\overline{\Gamma}_{mn}^y(x) = \sum_{i=1}^k \sum_{j=1}^k \left[ h_y(i,j) + x_i^{m+1} x_j^{n+1} \right] = \sum_{i=1}^k \sum_{j=1}^k h_y(i,j) + q_{m,n}\left( \mathbf{x}^k \right),
\]
that depends only on the first $k$ coordinates, and the difference $\overline{\Delta}^y_{mn}(x)=\mathfrak{G}_y(x)-\overline{\Gamma}_{mn}^y(x)$.

\begin{lemma}\label{lem:bnddeltagen}
There is a constant $C_1 >0$ that does not depend on $x$ such that 
\[
\sup_{y > 0} y^{-1} \abs{\overline{\Delta}^y_{mn}(x)} \le C_1 \sum_{i=k+1}^\infty x_i. 
\]
\end{lemma}

\begin{proof} The difference $\overline{\Delta}^y_{mn}(x)=\sum_{i\ge k+1, j \ge 1} h_y(i,j) + \sum_{i \ge 1, j \ge k+1} h_y(i,j)$. We now give upper and lower bounds on each $h_y(i,j)$ as before. 

For the upper bound, if $Z_i=L_i+R_i$ denote the total mass of the clade, then 
\[
h_y(i,j) \le \EV\left[ \left( \frac{Z_i^{\Absa(y)}}{\IPmag{\alpha^{\Absa(y)}}} \right)^{m+1} \left(\frac{Z_j^{\Absa(y)}}{\IPmag{\alpha^{\Absa(y)}}}\right)^{n+1}  - x_i^{m+1} x_j^{n+1} \right].
\]
When $i\neq j$, the processes $\left( \frac{Z_i^{\Absa(y)}}{\IPmag{\alpha^{\Absa(y)}}}, \frac{Z_j^{\Absa(y)}}{\IPmag{\alpha^{\Absa(y)}}}   \right)$ are the first two coordinates of a \WF[0,0,0] diffusion, running at $4$ times the usual speed. Therefore, the product of the two is a martingale. Then, by It\^o's rule, $\sup_{y > 0} y^{-1}h_y(i,j) \le x_i x_j$.  When $i=j$, we are back to the proof of Lemma \ref{lem:bnddeltak}, and thus, $\sup_{y > 0} y^{-1} h_y(i,j) \le x_i$. Hence, 
\[
\overline{\Delta}^y_{mn}(x) \le \sum_{i\ge k+1, j\ge 1, i\neq j} x_i x_j + \sum_{i \ge 1, j\ge k+1, i\neq j} x_i x_j + \sum_{i\ge k+1} x_i \le 3 \sum_{i\ge k+1} x_i. 
\]

The lower bound also follows similarly to that in Lemma \ref{lem:bnddeltak}. Following the notation there,
\[
h_y(i,j) \ge \EV\left[ \left( \eta_i^{y\wedge \tilde{\chi}_i}\right)^{m+1} \left(\eta_j^{y\wedge \tilde{\chi}_j}\right)^{n+1}  - x_i^{m+1} x_j^{n+1} \right].
\]
The rest again follows by It\^o's rule. 
\end{proof}

For a general $l$, the product $\EV\big[ \prod_{j \in [l]}  f_{m_j}\left(\alpha^y\right) \big]$ can be expressed in terms of a polynomial of $\left(  L^y_i, R^y_i,\; i \ge 1  \right)$ whose individual terms will be of the type $\prod_{j \in [l]} v^{m_j}_{i_j}$ for different choices of $\{ i_1, \ldots, i_l  \} \subseteq [k]^l$. An extension of Lemma \ref{lem:genindivterm} applied to each such term will give us zero, unless every $i_j \in [k]$. For this remaining case, we already have exactly twice the generator of $\EKP[\frac{1}{2},0]$. The $\ltwo$ converges follows similarly. The extension to all $q\in \ualgebra$ follows by linearity. 
\end{proof}

We now argue that our process $W(\cdot)$ is the one that is described in \cite{FengSun10}. Let $\ltwo\left[\frac 12,0\right]$ refer to the Hilbert space of square integrable functions on $\infsimp$ with respect to the measure $\PoiDir[\frac{1}{2},0]$. Also, for this section, the corresponding norm will be denoted by $\norm{\cdot}_{1/2,0}$. 

Let $\Prm_x$ denote the probability measure on $\mathcal{C}\left([0,\infty),\infsimp \right)$ which is the law of $W(\cdot)$ under $\BPr^1_x$. We will continue to use the notation $W$ for this canonical random process.

\begin{lemma}\label{lem:strcontsemigp}
Let $\left(T_y,\; y \ge 0  \right)$ denote the transition semigroup of the process $W$. Then, for every $y>0$, $T_y$ is an operator on $\ltwo\left[\frac 12,0\right]$ and the semigroup is strongly continuous as a semigroup. 
\end{lemma}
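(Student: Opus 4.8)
\textbf{Proof plan for Lemma \ref{lem:strcontsemigp}.} The plan is to exploit stationarity and the Markov property to show each $T_y$ is a contraction on $\ltwo[\frac12,0]$, and then to derive strong continuity from the path-continuity of $W$ established in Lemma \ref{lem:ordereddiff}. First I would recall from Lemma \ref{lem:ordereddiff} that $W$ is a path-continuous $\infsimp$-valued Markov process which is stationary with respect to $\PoiDir[\frac12,0]$; write $\pi := \PoiDir[\frac12,0]$ for this measure. Stationarity means that if $F\colon\infsimp\to\BR$ is bounded measurable and $W(0)\sim\pi$, then $W(y)\sim\pi$ for all $y$, so that $\int |T_yF|^2\,d\pi \le \int T_y(F^2)\,d\pi = \int F^2\,d\pi$ by Jensen's inequality applied to the conditional expectation $T_yF(x) = \Prm_x[F(W(y))]$ together with stationarity. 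Hence $\|T_yF\|_{1/2,0}\le\|F\|_{1/2,0}$ on the dense subspace of bounded measurable functions, so $T_y$ extends uniquely to a contraction on $\ltwo[\frac12,0]$; the semigroup property $T_{y+z}=T_yT_z$ on $\ltwo$ follows from the Chapman--Kolmogorov equations on bounded functions and density.

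Next I would establish strong continuity, i.e.\ $\|T_yF - F\|_{1/2,0}\to 0$ as $y\downto 0$ for every $F\in\ltwo[\frac12,0]$. By the contraction property and a standard $3\epsilon$-argument, it suffices to prove this for $F$ in a dense subclass, and I would take $F$ bounded and continuous on $\infsimp$ (bounded continuous functions are dense in $\ltwo[\frac12,0]$ since $\infsimp$ is a separable metric space and $\pi$ is a Borel probability measure; alternatively one can take $F\in\ualgebra$, invoking Proposition \ref{lem:genidentify}). For such $F$, path-continuity of $W$ gives $W(y)\to W(0)$ a.s.\ under $\Prm_x$ for each $x$, hence $F(W(y))\to F(x)$ a.s., and by bounded convergence $T_yF(x)\to F(x)$ pointwise. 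Since $\|T_yF\|_\infty\le\|F\|_\infty$ uniformly in $y$ and $\pi$ is finite, the dominated convergence theorem yields $\|T_yF - F\|_{1/2,0}\to 0$. Combining with the contraction bound extends this to all of $\ltwo[\frac12,0]$, proving strong continuity. Finally, I would note that right-continuity at $y=0$ together with the semigroup property and contractivity gives strong continuity at every $y>0$ by the usual argument ($\|T_{y+h}F - T_yF\|_{1/2,0} = \|T_y(T_hF - F)\|_{1/2,0}\le\|T_hF - F\|_{1/2,0}$).

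The main obstacle I anticipate is purely bookkeeping: one must be careful that $T_y$ is genuinely well-defined \emph{as an operator on the Hilbert space}, i.e.\ that it maps $\pi$-equivalence classes to $\pi$-equivalence classes. This requires exactly the stationarity of $\pi$ (so that $\pi$-null sets are not ``spread out'' by the transition kernel in the $\ltwo$ sense), which is why the argument above runs through the inequality $\int|T_yF|^2\,d\pi\le\int F^2\,d\pi$ rather than any pathwise statement; a version of this is classical (see e.g.\ the treatment of symmetric Markov semigroups, or the reversibility results for $\EKP$ diffusions in Petrov \cite{Petrov09}), and in fact once we identify $W$ with $\EKP[\frac12,0]$ via Theorem \ref{thm:petroviden} the strong continuity is also a consequence of the Feller property of that diffusion on $\infsimp$; but I would prefer to give the self-contained argument above since it only uses facts already proved in this section. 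Everything else---the contraction estimate, the semigroup identity, and the dominated-convergence step---is routine.
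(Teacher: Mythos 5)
Your proposal is correct, and the first half (well-definedness of $T_y$ on $\ltwo[\frac12,0]$ via Jensen's inequality and stationarity) matches the paper's proof essentially verbatim. The strong continuity step, however, is a genuinely different route. The paper deduces $\|T_y q - q\|_{1/2,0}\to 0$ for $q\in\ualgebra$ directly from Lemma \ref{thm:genalgebra}(ii), which gives $L^2$-convergence of $(T_y q - q)/y$ to $2\petrovgen q$; strong continuity on all of $\ltwo$ then follows by density of $\ualgebra$ and uniform boundedness of $(T_y - I)$. You instead bypass the generator calculation entirely and argue from first principles: take $F$ bounded continuous, use path-continuity of $W$ (from Lemma \ref{lem:ordereddiff}) and bounded convergence to get $T_yF(x)\to F(x)$ pointwise, then dominate by $(2\|F\|_\infty)^2$ and use dominated convergence against the probability measure $\PoiDir[\frac12,0]$ to get convergence in $\ltwo$, finishing with the same density-plus-contraction argument. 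Both are sound. Your route is more elementary and decoupled from the analytic machinery of Section \ref{sec:Petrov}; the paper's has the minor economy that Lemma \ref{thm:genalgebra}(ii) must be proved anyway for the core argument and the identification with Petrov's diffusion, so there is nothing extra to pay for reusing it here. One small point worth making explicit if you write this up: density of bounded continuous functions in $L^2(\pi)$ relies on $\pi$ being a finite Borel measure on a metric space, which gives the usual regularity; it is also worth noting, as you do, that you only need $W(0)=x$ $\Prm_x$-a.s., which is built into the construction of $\BPr^1_x$ via $\iota$.
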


\begin{proof} 
By definition, $T_y f(x) = \Prm_x[ f(W(y))] = \int_{\infsimp} f(v)p_y(x,dv)$, where $p_y(x,dv)$ is the transition operator of $W$.  We first show that $T_y$ is an operator on $\ltwo\left[\frac 12,0\right]$ in the sense that 
\begin{enumerate}[label=(\roman*), ref=(\roman*)]
\item If $f$ is square integrable with respect to $\PoiDir[\frac{1}{2},0]$ then so is $T_yf$,
\item If $f =0 $ $\PoiDir[\frac{1}{2},0]$-a.e. then so is $T_y f$.
\end{enumerate}
The second condition shows that the $\PoiDir[\frac{1}{2},0]$-equivalence class of $T_y f$ is determined by the $\PoiDir[\frac{1}{2},0]$-equivalence class of $f$, so that we may consider $T_y\colon\ltwo\left[\frac 12,0\right] \to\ltwo\left[\frac 12,0\right]$.  From Jensen's inequality we see that
\[ \int_{\infsimp} (T_yf(x))^2 \PoiDir[\frac{1}{2},0](dx) \leq \int_{\infsimp} T_yf^2(x)  \PoiDir[\frac{1}{2},0](dx) =  \int_{\infsimp} f^2(v) \PoiDir[\frac{1}{2},0](dv)\]
since $\PoiDir[\frac{1}{2},0]$ is the stationary distribution of $W$.  Both claims follow immediately.

It is easy to see that every element in the unital algebra $\ualgebra$ is in $\ltwo\left[\frac 12,0\right]$. As a corollary of Lemma \ref{thm:genalgebra} (ii), for any $q \in \ualgebra$, we have $\lim_{y\rightarrow 0+} \norm{T_yq- q}_{1/2,0} = 0$. Consider any $f \in \ltwo\left[\frac 12,0\right]$. Then, there exists a sequence $\{q_n \} \subseteq \ualgebra$ such that $\lim_{n\rightarrow\infty}q_n=f$ in $\ltwo\left[\frac 12,0\right]$. By the triangle inequality, $\norm{(T_y-I)f}_{1/2,0} \le \norm{(T_y - I)q_n}_{1/2,0} + \norm{(T_y-I)\left( q_n-f \right)}_{1/2,0}$.
Since $\left(T_y-I,\; y \ge 0\right)$ is a uniformly bounded family of operators, we get $\lim_{y\rightarrow0+}T_yf = f$ in $\ltwo\left[\frac 12,0\right]$. This proves strong continuity of the semigroup. 
\end{proof}

Hence, by \cite[Corollary 1.1.6]{EthKurtzBook}, the $\ltwo\left[\frac 12,0\right]$ generator $\procvgen$ of $\left( T_y,\; y\ge 0 \right)$ is closed and has a dense domain in $\ltwo\left[\frac 12,0\right]$. Moreover, by \cite[Proposition 2.1]{EthKurtzBook}, for any $\lambda >0$, the resolvent $\left( \lambda - \procvgen  \right)^{-1}$ exists as a bounded operator on $\ltwo\left[\frac 12,0\right]$ and is one-to-one and has dense range.  

\begin{lemma}\label{lem:resolventinverse} For any $\lambda >0$, we have $\left( \lambda - \procvgen  \right)\ualgebra = \ualgebra$.
\end{lemma}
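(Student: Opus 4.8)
The goal is to show $(\lambda-\procvgen)\ualgebra=\ualgebra$ for every $\lambda>0$, where $\procvgen$ is the $\ltwo[\frac12,0]$-generator of the de-Poissonized ranked process $W$. The plan is to combine the pointwise generator computation already established in Lemma \ref{thm:genalgebra} with the abstract structure of $\ualgebra$.

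First I would record the inclusion $(\lambda-\procvgen)\ualgebra\subseteq\ualgebra$. By Lemma \ref{thm:genalgebra}, for every $q\in\ualgebra$ the limit $y^{-1}(T_yq-q)\to 2\petrovgen(q)$ holds both pointwise and in $\ltwo[\frac12,0]$; since $2\petrovgen(q)\in\ualgebra$ (the operator $\petrovgen$ of \eqref{eq:genpetrov} maps the algebra generated by the $q_k$ into itself, as Petrov shows), the $\ltwo$-convergence shows $q$ lies in the domain of $\procvgen$ with $\procvgen q=2\petrovgen q$. Hence $(\lambda-\procvgen)q=\lambda q-2\petrovgen q\in\ualgebra$, giving $(\lambda-\procvgen)\ualgebra\subseteq\ualgebra$.

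For the reverse inclusion it suffices to show $(\lambda-\procvgen)$ maps $\ualgebra$ \emph{onto} $\ualgebra$. Here I would use the grading of $\ualgebra$: filter $\ualgebra$ by total degree, letting $\ualgebra_{\le d}$ denote the (finite-dimensional) span of products $q_{m_1}\cdots q_{m_l}$ with $\sum(m_j+1)\le d$. The key algebraic fact, again from Petrov \cite{Petrov09} or a direct inspection of \eqref{eq:genpetrov}, is that $\petrovgen$ is \emph{degree-lowering-or-preserving}: $\petrovgen(\ualgebra_{\le d})\subseteq\ualgebra_{\le d}$, and moreover, modulo lower-degree terms, $\petrovgen$ acts on each homogeneous component by multiplication by a strictly negative scalar (coming from the $-\sum x_ix_j\partial_i\partial_j$ and $-\sum(\theta x_i+\alpha)\partial_i$ terms; in degree-$d$ part this eigenvalue is a negative number depending on $d$). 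Consequently, on the finite-dimensional space $\ualgebra_{\le d}$ the operator $2\petrovgen$ restricted there is triangular with respect to the degree filtration and has all eigenvalues strictly negative, so $\lambda-2\petrovgen$ is invertible on $\ualgebra_{\le d}$ for every $\lambda>0$. Since $\procvgen$ agrees with $2\petrovgen$ on $\ualgebra$, this gives, for each $d$, $(\lambda-\procvgen)\ualgebra_{\le d}=\ualgebra_{\le d}$; taking the union over $d$ yields $(\lambda-\procvgen)\ualgebra=\ualgebra$, as desired.

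The main obstacle I anticipate is nailing down the precise statement that $\petrovgen$ preserves the degree filtration on $\ualgebra$ and is invertible (after subtracting $\lambda$) on each graded piece — i.e.\ computing that the diagonal action of $\petrovgen$ on the degree-$d$ homogeneous component has a strictly negative eigenvalue. This is a finite combinatorial computation applying \eqref{eq:genpetrov} to a monomial $q_{m_1}\cdots q_{m_l}$ and tracking which terms raise, preserve, or lower degree; the term $\sum_i x_i\partial_i^2$ can raise the ``number of factors'' but not the total degree $\sum(m_j+1)$, while $\sum_{i,j}x_ix_j\partial_i\partial_j$ and the drift strictly lower it or preserve it with a negative coefficient. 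One must check the bookkeeping carefully (in particular that nothing strictly \emph{raises} $\sum(m_j+1)$), but this is exactly the computation Petrov carries out to define his diffusion, so it may be cited rather than redone. An alternative route avoiding the explicit eigenvalue computation is to invoke that $(\procvgen,\ualgebra)$ is known to generate the $\EKP[\frac12,0]$ semigroup (which, combined with Lemma \ref{lem:strcontsemigp} and the $\ltwo$ density of $\ualgebra$, identifies $W$ with Petrov's process) and use Petrov's own result that $\ualgebra$ is a core on which $(\lambda-\petrovgen)$ is surjective; I would present the filtration argument as the primary proof since it is self-contained.
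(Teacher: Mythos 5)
Your proposal is correct and follows essentially the same strategy as the paper: show that $\lambda - \procvgen$ acts as a triangular operator with nonvanishing diagonal on an increasing family of finite-dimensional invariant subspaces of $\ualgebra$. Where you differ is in the choice of filtration. The paper builds its finite-dimensional subspaces rather explicitly — first the span of $\{q_0,\ldots,q_m\}$, then the span of $\{q_iq_j:i,j\ge 1,\,i+j\le m\}\cup\{q_i:i\le m+1\}\cup\{q_0\}$, and so on — and shows each is invariant and that the matrix is lower bidiagonal with positive diagonal. You instead filter by total degree $\ualgebra_{\le d}$, which is cleaner, but this buys its economy by appealing to a nontrivial algebraic fact: that $\petrovgen$ preserves total degree and acts on each graded piece $\ualgebra_{\le d}/\ualgebra_{\le d-1}$ by a \emph{single} degree-dependent scalar. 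This is true — for a monomial $q_{m_1}\cdots q_{m_l}$ of degree $d=\sum(m_j+1)$, expanding \eqref{eq:genpetrov} by Leibniz one finds that the degree-$d$ merged-factor terms coming from $\sum_i x_i^2\partial_i^2$ and from $\sum_{i\ne j}x_ix_j\partial_i\partial_j$ cancel, leaving only $-d(d-1+\theta)$ times the same monomial plus strictly lower-degree terms — but it does need to be verified, and you rightly flag this as the main obstacle. The paper avoids stating the scalar eigenvalue and works only with enough structure to establish triangularity.

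Two small remarks. First, the claim that $2\petrovgen$ has ``all eigenvalues strictly negative'' on $\ualgebra_{\le d}$ is slightly off: the eigenvalue on the degree-$d'$ piece is $-2d'(d'-1+\theta)$, which equals $0$ for $d'=0$ (the constants). The conclusion you need — that $\lambda>0$ is not an eigenvalue, hence $\lambda-2\petrovgen$ is invertible on each $\ualgebra_{\le d}$ — is unaffected, since all eigenvalues are $\le 0<\lambda$. Second, for what it's worth, the paper's statement that $(\lambda-\procvgen)q_iq_j$ involves $q_{i+j+1}$ appears to be a typo for $q_{i+j}$; the one-factor term that survives the Leibniz expansion comes from $\sum_i x_i\,\partial_iq_{m_j}\,\partial_iq_{m_k}$, giving $q_{m_j+m_k}$, of total degree $m_j+m_k+1<d$, which is consistent with your observation that degree cannot rise.
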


\begin{proof} Since $\procvgen=2\petrovgen$ on $\ualgebra$, it is clear that $\left( \lambda - \procvgen  \right)\ualgebra \subseteq \ualgebra$. So, the nontrivial part is the converse.

For any $m \ge 1$, consider the subspace spanned by $\left\{ q_0, q_1, \ldots, q_m  \right\}$. Notice that $\left( \lambda - \procvgen  \right) q_i$ is a polynomial of degree at most $i$. In fact, 
\[
\left( \lambda - \procvgen  \right) q_i = \left( \lambda + 2i(i+1)  \right) q_i - (2i-1)(i+1) q_{i-1}.
\] 
Therefore, the operator $\left(\lambda - \procvgen \right)$ restricted to this finite-dimensional vector space is given by a lower bidiagonal matrix with positive elements on the diagonal. This matrix clearly has full rank and is invertible. By taking $m \rightarrow \infty$ we have shown that the vector space spanned by $\left\{ q_0, q_1, q_2, \ldots  \right\}$ is invariant under $\left( \lambda - \procvgen  \right)^{-1}$.

Now, fix any $m \ge 1$ and consider the finite-dimensional subspace of $\ualgebra$ spanned by the basis
\[
\left\{ q_iq_j,\; i\ge 1, j\ge 1, i+j\le m\right\} \cup \left\{  q_i,\; 1\le i \le m+1  \right\} \cup \{q_0\}. 
\]
By a calculation similar to the one in the proof of Lemma \ref{lem:genindivterm}, we get that, $\left(\lambda - \procvgen\right) q_iq_j(x)$ is given by a linear combination of $q_{i}q_j$, $q_{i-1}q_j$, $q_iq_{j-1}$, and $q_{i+j+1}$, with positive coefficient for $q_{i}q_j$, and therefore belongs to the above subspace when $i,j\ge 1$ and $i+j\le m$. The same is true for $\left(\lambda - \procvgen\right)q_i$, as we have verified in the previous paragraph. Hence, we can label the elements in the basis such that $\procvgen$ restricted to this subspace is a again a lower diagonal matrix with positive coefficients on the diagonal. Hence, the subspace spanned by $\{ q_i q_j,\; i\ge 0, j\ge 0 \}$ is invariant under the operator $\left( \lambda - \procvgen  \right)^{-1}$. 

The general pattern is now clear. For any $l,m\ge 1$, consider the subspace spanned by 
\[
\{q_0\}\cup \bigcup_{j\in [l]} \left\{  \prod_{k\in [j]} q_{i_k},\; i_k\ge 1,\; k\in [j],\; \sum_{k\in [j]} i_k \le m + (l-j) \right\}. 
 \]
 A similar argument verifies that this subspace, and that spanned by $\big\{\!\prod_{k\in [l]} q_{i_k},\; i_1, \ldots, i_l \ge 0  \big\}$, are invariant under $(\lambda- \procvgen)^{-1}$. Taking a union over $l \ge 1$ completes the proof. 
\end{proof}

The previous lemma allows us to avoid Dirichlet form techniques while identifying our process $W$ to be the one described in \cite{FengSun10}. For example, the symmetry of the resolvent $\left( \lambda - \procvgen  \right)^{-1}$ follows from the symmetry of $2\petrovgen$ on $\ualgebra$ (see the calculation in \cite[equation (2.5)]{FengSun10}) and Lemma \ref{lem:resolventinverse}. This shows that our process $W$ is reversible with respect to $\PoiDir[\frac{1}{2},0]$. We skip the proof.

\begin{proof}[Proof of Theorem \ref{thm:petroviden}] 
Lemmas \ref{lem:strcontsemigp} and \ref{lem:resolventinverse} together with \cite[Proposition 1.3.1]{EthKurtzBook} imply that $\ualgebra$ is a core for $\procvgen$.  Letting $(\widetilde{T}_y,\; y\geq 0)$ be the $\ltwo\left[\frac 12,0\right]$ semigroup considered by Feng and Sun \cite{FengSun10}, this shows that $(T_y,\; y\geq 0)$ and $(\widetilde{T}_{2y},\; y\geq 0)$ have the same generator (given by the closure of $(\procvgen,\ualgebra)$) and thus $T_y= \widetilde{T}_{2y}$.  Let $(\widetilde{W}(y), {y\geq 0})$ be the diffusion associated with $(\widetilde{T}_{y},\; y\geq 0)$ constructed in \cite{Petrov09} (which is the Feller version of the diffusion constructed in \cite{FengSun10}). Let $\widetilde{\mathrm{P}}_x$ denote the law of $\widetilde{W}$, when starting from $x$. Then we find that for every $f\in \ltwo\left[\frac 12,0\right]$ we have 
\[ 
\Prm_x\left[f(W(y))\right] = T_yf(x) = \widetilde{T}_{2y} f(x) = \widetilde{\mathrm{P}}_x[f(\widetilde{W}(2y))] \qquad \PoiDir[\tfrac{1}{2},0]\textrm{-a.e.}.
\] 
If $f$ is bounded and continuous then $x\mapsto \Prm_x [f(W(y))]$ is continuous by Proposition \ref{prop:type-1:cts_in_init_state_H} and $x\mapsto  \widetilde{\mathrm{P}}_x[f(\widetilde{W}(2y))]$ is continuous by \cite[Proposition 4.3]{Petrov09}.  Since any set of full $\PoiDir[\frac{1}{2},0]$-measure is dense in $\infsimp$, we get $\Prm_x [f(W(y))]=\widetilde{\mathrm{P}}_x[f(\widetilde{W}(2y))]$ for every bounded, continuous $f$ and every $x\in\infsimp$. Together with path-continuity and the Markov property, this identifies the laws of the processes. The argument for the type-0 evolution is similar.
\end{proof}

\begin{proof}[Proof of Corollary \ref{cor:Petrov_diversity}]
 By Theorem \ref{thm:stationary} and Definition \ref{def:IP:metric} of $\dI$, the de-Poissonized type-1 and type-0 evolutions have continuously evolving diversity. By Theorem \ref{thm:petroviden} and the compatibility between definitions of diversity in \eqref{eq:intro:diversity} and \eqref{eq:IPLT}, this extends to the \EKP[\frac12,0] and \EKP[\frac12,\frac12] processes.
\end{proof}

\appendix

\section{Topology of $(\IPspace,\dI)$}
\label{sec:IP_space}

It will be useful to separate the diversity of a partition from most of its mass in the following sense. 
For $\alpha\in\IPspace$ and $\epsilon>0$, let
\begin{equation}\label{eq:IP:mass_cutoff_for_sep}
 \delta(\alpha,\epsilon) := \sup\left\{m>0\colon\ \sum_{U\in\alpha}\cf\big\{\Leb(U) < m\big\}\Leb(U) < \epsilon \right\}
\end{equation}
For the purpose of the following, let $A := \{U\in\alpha\colon\ \Leb(U) \geq \delta(\alpha,\epsilon)\big\}$,
$$S_A(x) := \sum_{(a,b)\in A}(b-a)\cf\{b \leq x\}, \quad \text{and} \quad S_{\alpha\setminus A}(x) := \sum_{(a,b)\in\alpha\setminus A}(b-a)\cf\{b \leq x\}\quad \text{for }x\ge0.$$
We define
 \begin{equation}\label{eq:IP:separate_LT_mass}
 \begin{split}
   \alpha^{\IPLT}_{\epsilon} &:= \big\{ (a-S_A(a),b-S_A(a))\colon (a,b)\in\alpha\setminus A\big\}\\
   \text{and}\quad
   \alpha^{M}_{\epsilon} &:= \big\{ (a-S_{\alpha\setminus A}(a),b-S_{\alpha\setminus A}(a))\colon (a,b)\in A\big\}.
 \end{split}
 \end{equation}
Effectively, we form $\alpha^M_{\epsilon}$ by taking the large blocks of $\alpha$ and sliding them down to sit next to each other, and correspondingly for $\alpha^{\IPLT}_{\epsilon}$ with the small blocks. These partitions have the properties
$$
 \IPLT_{\alpha^{\IPLT}_{\epsilon}}(\infty) = \IPLT_{\alpha}(\infty),\qquad \IPmag{\alpha^{\IPLT}_{\epsilon}} \leq \epsilon,\qquad
 \IPLT_{\alpha^{M}_{\epsilon}}(\infty) = 0,\qquad \IPmag{\alpha^{M}_{\epsilon}} \geq \IPmag{\alpha} - \epsilon.
$$

\begin{proof}[Proof of Proposition \ref{prop:Hausdorff} (i)-(iii)]
 \ref{item:Haus:neq} Fix $\epsilon > 0$. Consider an arbitrary $\alpha\in\IPspace$ with $\IPLT_{\alpha}(\infty) > 1/\epsilon$. The pair $\left(\alpha,\alpha^M_{2\epsilon}\right)$ defined in \eqref{eq:IP:separate_LT_mass} has the desired property.
 
 \ref{item:Haus:leq} This is immediate from Definition \ref{def:Hausdorff} of $d_H'$.
 
 \ref{item:Haus:equiv} First, we show $\dH(\beta,\gamma)\leq 3d_H'(\beta,\gamma)$ for every $\beta,\gamma\in\HIPspace$. Suppose $d_H'(\beta,\gamma) < x$ for some $x>0$. Then there is some correspondence $(U_i,V_i)_{i\in [n]}$ from $\beta$ to $\gamma$ with Hausdorff distortion less that $x$. Recall from Definition \ref{def:IP:metric} that, in a correspondence, the $(U_i)$ and $(V_i)$ are each listed in left-to-right order. Let
 $$\beta' := \Concat_{i\in[n]} \{(0,\Leb(U_i))\}, \qquad \gamma' := \Concat_{i\in[n]} \{(0,\Leb(V_i))\}.$$
 By Definition \ref{def:Hausdorff} of Hausdorff distortion, $\IPmag{\beta} - \IPmag{\beta'} < x$, and likewise for $\gamma$ and $\gamma'$. Thus, for each $j\in [n-1]$, the right endpoint of $U_j$ and the left endpoint of $U_{j+1}$ are within distance $x$ of the corresponding point in $\beta'$, and similarly for the left endpoint of $U_1$ and the right endpoint of $U_n$. Thus, $d_H(\beta,\beta') < x$ and correspondingly for $\gamma$. Moreover, by definition of distortion, we also find $d_H(\beta',\gamma') < x$. By the triangle inequality, $\dH(\beta,\gamma)< 3x$, as desired.
 
 Now, consider $\beta\in\HIPspace$ and $\epsilon>0$. Take $\delta_0>0$ small enough that $\sum_{U\in\beta\colon \Leb(U)\leq 2\delta_0}\Leb(U) < \epsilon/3$. Let $K$ denote the number of blocks in $\beta$ with mass at least $2\delta_0$. Take $\delta := \min\{\delta_0,\epsilon/(6K+3)\}$. It suffices to show that for $\gamma\in\HIPspace$, if $d_H(\beta,\gamma)<\delta$ then $d_H'(\beta,\gamma)<\epsilon$.
 
 Suppose $d_H(\beta,\gamma)<\delta$ for some $\gamma\in\HIPspace$. Then for each $U\in\beta$ with $\Leb(U) > 2\delta_0 \geq 2\delta$, the midpoint of $U$ must lie within some block $V$ of $\gamma$. Consider the correspondence from $\beta$ to $\gamma$ that matches each such $(U,V)$. Then, by the bound on $d_H(\beta,\gamma)$, for each such pair, $|\Leb(U)-\Leb(V)|<2\delta\leq \epsilon/3K$. Moreover, by our choice of $\delta_0$, the total mass in $\beta$ excluded from the blocks in the correspondence is at most $\epsilon/3$. Similarly, the reader may confirm that the mass in $\gamma$ excluded from the correspondence is at most $(\epsilon/3)+2K\delta+\delta \leq 2\epsilon/3$. Thus, by Definition \ref{def:Hausdorff} of $d_H'$, we have $d_H'(\beta,\gamma)<\epsilon$, as desired.
\end{proof}
 


We prove Proposition \ref{prop:Hausdorff} \ref{item:Haus:sig} at the end of this appendix. 
We now proceed towards proving Theorem \ref{thm:Lusin}, that $(\IPspace,\dI)$ is Lusin.

\begin{proposition}\label{prop:Lusin2}
 The metric space $(\IPspace,\dI)$ is isometric to a path-connected Borel subset of a complete separable metric space $(\cJ,d_\cJ)$. This space $(\cJ,d_\cJ)$ cannot be chosen locally compact. 
\end{proposition}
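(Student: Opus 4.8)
The plan is to embed $(\IPspace,\dI)$ as a Borel subset of an explicitly constructed complete separable metric space, and then to carry out two short arguments: one for path-connectedness, one for the failure of local compactness. I would structure the proof around the "summary" data that a correspondence compares: a finite interval partition (the large blocks), plus a total diversity and a tail mass, plus a decreasing sequence of small block masses.

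First I would define the ambient space $(\cJ,d_\cJ)$. A natural choice is to take $\cJ$ to be the set of triples $(\beta_0, D, (m_i)_{i\ge1})$ where $\beta_0\in\HIPspace$ is any finite interval partition (equivalently, a finite word of positive reals, i.e.\ $\bigcup_{n\ge0}(0,\infty)^n$ with the obvious Polish topology), $D\in[0,\infty]$ is a "diversity slot", and $(m_i)$ is a non-increasing summable sequence of non-negative reals, recording the tail of small blocks; equip $\cJ$ with a metric $d_\cJ$ built to mirror the four quantities in Definition \ref{def:IP:metric} of $\dis$ (an $\ell^1$-type term on block masses, a $\sup$-type term on diversities of the large blocks, the $|D-D'|$ term, and an $\ell^1$ term on the $(m_i)$). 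The point is that $\cJ$ is a countable product/union of Polish spaces under a metric of this type, hence Polish; completeness and separability are routine once the metric is written down. I would then define the embedding $\iota\colon\IPspace\to\cJ$ using the cutoff construction already in the excerpt: using $\delta(\alpha,\epsilon)$ and $\alpha^M_\epsilon$, $\alpha^{\IPLT}_\epsilon$ from \eqref{eq:IP:mass_cutoff_for_sep}--\eqref{eq:IP:separate_LT_mass}, send $\alpha$ to the triple consisting of (the finite partition of its blocks of mass $\geq \delta(\alpha,1/n)$ for a fixed canonical cutoff, say, or more cleanly the full ranked data), its total diversity $\IPLT_\alpha(\infty)$, and the ranked sequence of all block masses. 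One checks $\iota$ is injective (the ranked masses plus the interval-order diversities recover $\alpha$) and that $\dI$ pulls back from $d_\cJ$ — this compatibility is essentially the content of the definition of $\dis$, so $\iota$ is an isometry onto its image. I expect this is where the bulk of the routine work sits: choosing $d_\cJ$ so that $\iota$ is genuinely isometric (not merely bi-Lipschitz) requires care in matching the four distortion terms exactly, but no real difficulty.

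Next, measurability: I would show $\iota(\IPspace)$ is Borel in $\cJ$. The image is the set of triples for which the diversity slot $D$ equals the genuine $\frac12$-diversity determined by the ranked masses in the sense of \eqref{eq:intro:diversity}, i.e.\ $D=\lim_{h\downarrow0}\Gamma(1/2)\sqrt h\,\#\{i\colon m_i>h\}$ and this limit exists. Existence of a limit of a fixed sequence of measurable functions is a Borel condition (it is a countable combination of $\limsup=\liminf$ statements over rationals), and equality of $D$ with that limit is then Borel; one also needs the compatibility between the ranked tail $(m_i)$ and the finite-partition component, again a Borel (indeed closed) condition. Hence $\iota(\IPspace)\in\Sigma(\cJ)$, and $(\IPspace,\dI)$ is Lusin, giving Theorem \ref{thm:Lusin}.

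For path-connectedness of $\iota(\IPspace)$ (equivalently of $(\IPspace,\dI)$), I would exhibit, for any $\alpha\in\IPspace$, a continuous path from $\alpha$ to $\emptyset$: scaling, $t\mapsto \scaleI[t][\alpha]$ for $t\in(0,1]$, is $\dI$-continuous by the bounds \eqref{eq:IP:scaling_dist_1}--\eqref{eq:IP:scaling_dist_2} of Lemma \ref{lem:IP:scale}, and as $t\downarrow0$ both $\IPmag{\scaleI[t][\alpha]}$ and $\IPLT_{\scaleI[t][\alpha]}(\infty)=\sqrt t\,\IPLT_\alpha(\infty)$ tend to $0$, so $\scaleI[t][\alpha]\to\emptyset$; thus every point connects to $\emptyset$ by a path, and $\IPspace$ is path-connected. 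Finally, for the failure of local compactness: I would show no neighbourhood of any point is totally bounded. Fix $\alpha$ and $r>0$; using the block-insertion trick as in the sequence $\{((j-1)2^{-n},j2^{-n})\colon j\in[2^n]\}$ already used in the excerpt to show $\HIPspace$ is not $d_H$-closed, I can build infinitely many interval partitions within $\dI$-distance $r$ of $\alpha$ that are pairwise $\dI$-separated by a fixed positive amount — the key is that adding many tiny blocks of total mass $<r$ barely changes the mass/diversity-distortion terms (cf.\ part \ref{item:Haus:neq} of Proposition \ref{prop:Hausdorff}) yet forces large $\dI$-distance because it changes the diversity profile substantially. This contradicts total boundedness of any ball, so $(\cJ,d_\cJ)$ restricted near $\iota(\alpha)$ is not compact, and in particular $(\cJ,d_\cJ)$ cannot be chosen locally compact. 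I expect this last point — pinning down a clean fixed lower bound for the mutual $\dI$-distances of the perturbed partitions while keeping them all within $r$ of $\alpha$ — to be the main obstacle, since it requires balancing the competing distortion terms, though the diversity term makes it work: many small blocks can be arranged to have wildly different diversities.
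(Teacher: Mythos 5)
Your general strategy (construct an explicit Polish completion, embed isometrically, show the image is Borel, then handle path-connectedness by scaling and non-local-compactness by a separated family in a ball) matches the paper, and the scaling argument for path-connectedness is exactly right. However, there are two genuine gaps.

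The first is in the choice of ambient space $\cJ$. Your proposed triple $(\beta_0, D, (m_i))$ — a finite partition of large blocks, a scalar total diversity, and a ranked summable tail — discards the two pieces of data that the metric $\dI$ actually compares: the left-to-right position of the small blocks among the large ones, and the diversity $\IPLT_\alpha(U)$ \emph{up to each block} (quantity (i) in Definition~\ref{def:IP:metric}). A scalar diversity slot $D$ and a ranked tail cannot encode a diversity profile, so no metric on such triples can pull back to $\dI$; the map would at best be a quotient, not an isometry. You flag the issue ("choosing $d_\cJ$ so that $\iota$ is genuinely isometric \ldots\ requires care \ldots\ but no real difficulty"), but this is in fact where the construction breaks. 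The paper sidesteps it by taking $\cJ$ to be pairs $(\alpha,f)$ with $\alpha\in\HIPspace$ arbitrary and $f$ a right-continuous increasing function constant on each block, metrized by literally substituting $f,g$ for $\IPLT_\alpha,\IPLT_\beta$ in Definition~\ref{def:IP:metric}; the diversity profile is then carried explicitly, and the embedding $\iota(\alpha)=(\alpha,\IPLT_\alpha(\cdot+))$ is an isometry by construction. The Borel criterion then becomes ``$\IPLT_\alpha$ exists and its right-continuous version is $f$'', which you correctly anticipate is a countable-limit condition, though the actual verification (handling right-continuity at jump times of $f$, via Lemma~\ref{lem:dJ_meas}) is more delicate than ``$\limsup=\liminf$ over rationals''.

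The second gap is in the non-local-compactness argument, which as written is self-contradictory. You propose to add tiny blocks of total mass $<r$ to a fixed $\alpha$ so that the perturbations are within $\dI$-distance $r$ of $\alpha$ yet ``force large $\dI$-distance because [they change] the diversity profile substantially.'' But changing the diversity profile substantially is exactly what makes quantities (i)--(ii) of the distortion large, which would push $\dI(\alpha,\cdot)$ up, not keep it below $r$; and conversely, if the perturbations stay $\dI$-close to $\alpha$ then by the triangle inequality they are pairwise $\dI$-close as well. The escape is to work near $\emptyset$ with partitions of \emph{zero} diversity and to let the mass-distortion terms (iii)--(iv) do the separating. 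The paper takes $\alpha_n=\{((k-1)2^{-n},k2^{-n}):k\in[2^n]\}$: each is finite, hence has diversity $0$, so the diversity terms vanish; no correspondence between $\alpha_m$ and $\alpha_n$ can help because every matched pair contributes $|2^{-m}-2^{-n}|$ to the mass terms, giving $\dI(\alpha_m,\alpha_n)=1$; yet $\dI(\emptyset,\epsilon\scaleI\alpha_n)=\epsilon$ for all $n$. This yields an infinite $\epsilon$-separated set in every ball around $\emptyset$, hence no compact neighbourhood of $\emptyset$ in any completion.
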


We prove this proposition by identifying an explicit completion $(\cJ,d_\cJ)$ of $(\IPspace,\dI)$, which allows to identify $\IPspace$ with a Borel subset. 

\begin{lemma}\label{lem:IP:sep}
 $(\IPspace,\dI)$ is path-connected and separable.
\end{lemma}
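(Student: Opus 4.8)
\textbf{Plan for Lemma~\ref{lem:IP:sep}.} The goal is to show $(\IPspace,\dI)$ is path-connected and separable. Both properties will follow from exhibiting, for each $\alpha\in\IPspace$, a continuous path in $(\IPspace,\dI)$ from $\alpha$ to the empty partition $\emptyset$, together with a countable dense subset.

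For path-connectedness, the plan is to scale: for $\alpha\in\IPspace$ and $c\in(0,1]$, set $\varphi_\alpha(c):=\scaleI[c][\alpha]$ and $\varphi_\alpha(0):=\emptyset$. By Lemma~\ref{lem:IP:scale}, specifically the bound \eqref{eq:IP:scaling_dist_1}, we have $\dI(\scaleI[c][\alpha],\scaleI[c'][\alpha])\le\max\{c,\sqrt c\}\dI(\alpha,\scaleI[c'/c][\alpha])$ when $c'\le c$ (combining \eqref{eq:IP:scaling_dist_1} and \eqref{eq:IP:scaling_dist_2}), and $\dI(\alpha,\scaleI[r][\alpha])\le\max\{|\sqrt r-1|\IPLT_\alpha(\infty),|r-1|\IPmag\alpha\}\to0$ as $r\to1$; this makes $c\mapsto\scaleI[c][\alpha]$ continuous on $(0,1]$. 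Continuity at $c=0$ is immediate since $\dI(\scaleI[c][\alpha],\emptyset)\le\max\{\sqrt c\,\IPLT_\alpha(\infty),c\IPmag\alpha\}\to0$ (taking the empty correspondence $n=0$ in Definition~\ref{def:IP:metric}, whose distortion is $\max\{\IPLT_\alpha(\infty)\sqrt c\vee\cdots\}$ — here one just checks quantities (ii),(iii),(iv) against $\scaleI[c][\alpha]$). Hence every $\alpha$ is joined to $\emptyset$ by a path, and concatenating two such paths (reversing one) connects any $\alpha,\beta\in\IPspace$.

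For separability, the plan is to use the \Stable[\frac12] interval partitions of Proposition~\ref{prop:IP:Stable} — or more simply finite rational interval partitions — to approximate. Given $\alpha\in\IPspace$ and $\epsilon>0$, use the decomposition $\alpha^M_\epsilon$, $\alpha^{\IPLT}_\epsilon$ of \eqref{eq:IP:separate_LT_mass}: the large-block part $\alpha^M_\epsilon$ has only finitely many blocks and carries all but $\epsilon$ of the mass and none of the diversity, while $\alpha^{\IPLT}_\epsilon$ carries all the diversity in total mass $\le\epsilon$. One approximates $\alpha^M_\epsilon$ by a finite interval partition with rational block lengths (countably many such), and one approximates $\alpha^{\IPLT}_\epsilon$ — which is close in $\dI$ to the diversity being ``spread out'' — by a scaled \Stable[\frac12] partition with rational total diversity and rational total mass $\le\epsilon$; the family of \Stable[\frac12] partitions is parametrized by the single real $T$, and a.s.\ a fixed one has a deterministic diversity, so a countable dense set of representatives suffices. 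Concatenating (Definition~\ref{def:IP:concat}) these two approximations and invoking Lemma~\ref{lem:IP:concat}(ii), $\dI\big(\Concat\beta_a,\Concat\gamma_a\big)\le\sum\dI(\beta_a,\gamma_a)$, gives a countable dense subset of $\IPspace$.

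The main obstacle I anticipate is the separability half, specifically producing a genuinely \emph{countable} dense family that handles the diversity component: a finite interval partition has zero diversity, so any dense family must include infinite partitions, and one must argue that a countable collection of (representatives of) \Stable[\frac12]-type partitions — scaled to have arbitrary rational total diversity and arbitrarily small rational total mass — is $\dI$-dense among all ``small-mass, fixed-diversity'' partitions. This requires checking that two interval partitions of small total mass with equal total diversity and matching partial diversity profiles are $\dI$-close, which amounts to building an explicit correspondence using blocks above a small threshold (as in the proof of Proposition~\ref{prop:Hausdorff}) and bounding the four distortion quantities of Definition~\ref{def:IP:metric}; the diversity terms (i),(ii) are controlled by the diversity property \eqref{eq:IPLT} and monotonicity, while the mass terms (iii),(iv) are controlled by the smallness of the total mass.
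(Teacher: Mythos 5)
Your path-connectedness argument is the same as the paper's and is correct: the scaling path $c\mapsto\scaleI[c][\alpha]$, made continuous via Lemma~\ref{lem:IP:scale}, works.

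The separability half has a genuine gap. You propose to approximate $\alpha$ by concatenating a finite rational approximation of $\alpha^M_\epsilon$ with a scaled \Stable[\frac12] approximation of $\alpha^{\IPLT}_\epsilon$, and then invoke Lemma~\ref{lem:IP:concat}(ii). But the concatenation $\alpha^M_\epsilon\concat\alpha^{\IPLT}_\epsilon$ is not $\dI$-close to $\alpha$. The decomposition \eqref{eq:IP:separate_LT_mass} slides all large blocks together and all small blocks together, so in $\alpha^M_\epsilon\concat\alpha^{\IPLT}_\epsilon$ every large block has diversity $0$ to its left, whereas in $\alpha$ the large block $U$ has diversity $\IPLT_\alpha(U)$ to its left, which can be anywhere in $[0,\IPLT_\alpha(\infty)]$. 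Any correspondence that pairs the large blocks of $\alpha$ with those of the concatenation (which it must, to keep the mass quantities (iii),(iv) small) then incurs distortion quantity (i) of order $\IPLT_\alpha(\infty)$, not $\epsilon$. Lemma~\ref{lem:IP:concat}(ii) only bounds the distance between two \emph{concatenations of the same shape}; it says nothing about the distance from $\alpha$ to the concatenation of its own reshuffled pieces.

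The missing mechanism is exactly what the paper's $\oplus_s m$ operation provides: it inserts a block of mass $m$ into the small-block partition \emph{at diversity position $s$}, so that each large block of the approximant has the same diversity-to-the-left as the corresponding block of $\alpha$. Your closing paragraph about "matching partial diversity profiles" shows you sense that the diversity positions must be respected, but the construction as written (concatenate large-block part, then small-block part) does not respect them, and the argument does not go through without some such insertion device.
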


\begin{proof}
 For path-connectedness, just note that $c\mapsto\scaleI[c][\alpha]$, $c\in[0,1]$, is a path from $\emptyset\in\IPspace$ to $\alpha\in\IPspace$. Specifically, continuity holds since Lemma \ref{lem:IP:scale} yields for $0<a<b\leq 1$
 $$
  \dI(\scaleI[a][\alpha],\scaleI[b][\alpha]) = \dI(\scaleI[a][\alpha], \scaleI[a][b/a\scaleI \alpha])
  	\leq \sqrt{a}\max\left\{\left|\frac{\sqrt{b}}{\sqrt{a}}-1\right|\IPLT_\alpha(\infty),\left|\frac{b}{a}-1\right|\IPmag{\alpha}\right\}
 $$
and $\dI(\emptyset,\scaleI[b][\alpha])=\max\{\sqrt{b}\IPLT_\alpha(\infty),b\IPmag{\alpha}\}$. 
 
 For separability, we fix a partition $\alpha\in\IPspace$ with $\IPLT_{\alpha}(\infty) > 0$ and such that $t\mapsto\IPLT_\alpha(t)$ is continuous on $[0,\IPmag{\alpha}]$.
 For the purpose of this proof we abbreviate our scaling notation from $\scaleI[c][\alpha]$ to $c\alpha$. We will construct a countable $S\subset\IPspace$ in which each element is formed by taking $(c\alpha)^{\IPLT}_{\epsilon}$, as in \eqref{eq:IP:separate_LT_mass}, for some $c\geq 0$ and $\epsilon > 0$, and inserting finitely many large blocks into the middle, via the following operation.
%
 For $s\in \left[0,\IPLT_{\alpha}(\infty)\right]$ and $m>0$, we define
 \begin{equation*}
  \alpha\oplus_s m := \big(\{U\in\alpha\colon \IPLT_{\alpha}(U) \leq s\} \concat \{(0,m)\}\big) \cup \{(a+m,b+m)\colon (a,b)\in\alpha,\ \IPLT_{\alpha}(a)>s\}.
 \end{equation*}
 This operation inserts a new interval $V$ of length $m$ into the middle of $\alpha$ in such a way that $\IPLT_{\alpha\oplus_s m}(V) = s$. Let
 $$
  S := \left\{ (c\alpha)^{\IPLT}_{\varepsilon} \oplus_{s_1} m_1 \cdots \oplus_{s_r} m_r\ \middle|
   		\begin{array}{l}
   			r\in\BN,\ s_1,\ldots,s_r \in [0,\IPLT_{c\alpha}(\infty))\cap\BQ,\\
   			c,\epsilon,m_1,\ldots,m_r\in (0,\infty)\cap\BQ
   		\end{array}\right\}.
 $$
 By Lemma \ref{lem:IP:scale}, $\IPLT_{c\alpha}(\infty) = \sqrt{c}\IPLT_\alpha(\infty)$ for $c \geq 0$. Thus, any $\beta\in\IPspace$ can be approximated in $S$ by the partitions constructed from the following rational sequences. First, take rational
 $$
  c_n \to \left(\frac{\IPLT_\beta(\infty)}{\IPLT_\alpha(\infty)}\right)^2,\qquad \epsilon_n = \frac1n \downto 0,\qquad\text{ and}\qquad r_n = \# \beta^M_{\epsilon_n}.
 $$
 Then let $\left\{U\in\beta\colon\ \Leb(U) > \delta\left(\beta,\epsilon_n\right)\right\} = \left\{\left(a_j^{(n)},a_j^{(n)}\!+\!k_j^{(n)}\right),\ j\in [r_n]\right\}$ with $a_1^{(n)}\le\cdots\le a_{r_n}^{(n)}$,
 where $\delta$ is as in \eqref{eq:IP:mass_cutoff_for_sep}. This is the sequence of blocks of $\beta$ that comprise $\beta^M_{\epsilon_n}$. Finally, we take rational sequences $\left(\left(s_j^{(n)},m_j^{(n)}\right),\ j\in [r_n]\right)$ so that 
$$
  \sup\nolimits_{j\in[r_n]}\left|s_j^{(n)}-\IPLT_\beta\left(a_j^{(n)}\right)\right| \le \epsilon_n \quad \text{and} \quad \sum\nolimits_{j\in[r_n]}\left|k_j^{(n)}-m_j^{(n)}\right| \le \epsilon_n.
 $$\vspace{-24pt}
 
\end{proof}

\begin{corollary}\label{cor3}
 There is a metric on $\IPspace$ that generates the same topology as $\dI$, for which $\IPspace$ is isometric to a subset of a compact metric space.
\end{corollary}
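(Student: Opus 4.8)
The plan is to build on Lemma \ref{lem:IP:sep}, which gives separability of $(\IPspace,\dI)$, together with a standard fact: every separable metric space embeds homeomorphically into the Hilbert cube $[0,1]^{\BN}$, which is compact. So my first step would be to invoke separability from Lemma \ref{lem:IP:sep} and fix a countable dense set $\{\beta_n\colon n\in\BN\}\subset\IPspace$. Then I would define a map $\Psi\colon\IPspace\to[0,1]^{\BN}$ by $\Psi(\alpha)_n := \phi\big(\dI(\alpha,\beta_n)\big)$, where $\phi\colon[0,\infty)\to[0,1)$ is a fixed homeomorphism such as $\phi(t)=t/(1+t)$; equivalently one can rescale each coordinate by $2^{-n}$ to keep things in $[0,1]$. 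The point of density is exactly that $\Psi$ separates points: if $\alpha\neq\gamma$ then $\dI(\alpha,\gamma)>0$, and picking $\beta_n$ close to $\alpha$ forces $\dI(\alpha,\beta_n)<\dI(\gamma,\beta_n)$, so $\Psi(\alpha)\neq\Psi(\gamma)$. Continuity of $\Psi$ is immediate since each coordinate $\alpha\mapsto\dI(\alpha,\beta_n)$ is $1$-Lipschitz and $\phi$ is continuous; continuity of $\Psi^{-1}$ on the image follows because $\dI(\alpha,\gamma)$ is controlled by, say, the coordinate $\beta_n$ closest to $\alpha$ via the triangle inequality, so convergence of all coordinates of $\Psi(\alpha_k)$ to those of $\Psi(\alpha)$ forces $\dI(\alpha_k,\alpha)\to 0$. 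Hence $\Psi$ is a homeomorphism onto its image.

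Next I would transport the metric. Define $d'_{\IPspace}(\alpha,\gamma) := d_{[0,1]^{\BN}}\big(\Psi(\alpha),\Psi(\gamma)\big)$, where $d_{[0,1]^{\BN}}$ is the standard product metric $\sum_n 2^{-n}|x_n-y_n|$ on the Hilbert cube. By construction $\Psi$ is then an isometry from $(\IPspace,d'_{\IPspace})$ onto a subset of the compact metric space $\big([0,1]^{\BN},d_{[0,1]^{\BN}}\big)$, and since $\Psi$ is a homeomorphism with respect to $\dI$, the metric $d'_{\IPspace}$ generates the same topology on $\IPspace$ as $\dI$. That is exactly the statement of the corollary.

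The only genuine content here is the verification that $\Psi^{-1}$ is continuous, i.e.\ that the weak topology induced by the countable family of distance functions $\{\dI(\cdot,\beta_n)\}$ coincides with the metric topology of $\dI$; this is the place where density of $\{\beta_n\}$ is used in an essential way, via the estimate $\dI(\alpha,\gamma)\le \dI(\alpha,\beta_n)+\dI(\beta_n,\gamma)$ combined with a choice of $n$ making $\dI(\alpha,\beta_n)$ small. Everything else is routine. One remark: this construction makes no claim that $\IPspace$ is itself closed or locally compact inside the Hilbert cube — indeed Proposition \ref{prop:Lusin2} says it cannot be — so the corollary is the best one can hope for in this direction, and it is all that is needed to conclude, via Proposition \ref{prop:Lusin2} and the subsequent identification of $\IPspace$ as a Borel subset, that $(\IPspace,\dI)$ is Lusin (Theorem \ref{thm:Lusin}).
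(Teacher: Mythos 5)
Your argument is correct, and it is essentially the paper's argument: the paper proves the corollary in one line by citing Dudley's Theorem 2.8.2 (separable metric space $\Longleftrightarrow$ homeomorphic to a subset of the Hilbert cube), whereas you have reconstructed the standard proof of that cited theorem — the Kuratowski-type embedding $\alpha\mapsto\big(\phi(\dI(\alpha,\beta_n))\big)_n$ into $[0,1]^{\BN}$ followed by a pullback of the product metric. Since the underlying construction and the use of separability from Lemma~\ref{lem:IP:sep} are identical, this is the same approach, just unpacked rather than cited.
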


\begin{proof}
 Since $(\IPspace,\dI)$ is a separable metric space, Dudley's \cite[Theorem 2.8.2]{Dudley02} applies.
\end{proof}

Unfortunately, this argument is unsuitable to show that the subset can be chosen as a Borel subset. Indeed, the argument can be applied to non-Borel subsets of a compact metric space.

\begin{lemma}\label{lem:dJ_metric}
 Let $\cJ$ be the set of pairs $(\alpha,f)$, where $\alpha$ is an interval partition of $[0,\IPmag{\alpha}]$ with $\Leb([0,\IPmag{\alpha}]\setminus\bigcup_{U\in\alpha}U)=0$, and where $f\colon[0,\infty]\rightarrow[0,\infty)$ is a right-continuous increasing function that is constant on every interval 
$U\in\alpha$ and on $[\IPmag{\alpha},\infty]$. We replace $\IPLT_\alpha$ and $\IPLT_\beta$ in Definition \ref{def:IP:metric}, the definition of $\dI(\alpha,\beta)$, by $f$ and $g$, to define $d_\cJ((\alpha,f),(\beta,g))$. Then $d_\cJ$ is a metric on $\cJ$.  
\end{lemma}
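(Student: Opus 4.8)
The plan is to check the three metric axioms for $\dJ$, borrowing almost everything from the proof of Proposition~\ref{prop:IP_metric}. Symmetry is immediate: interchanging $(\alpha,f)$ and $(\beta,g)$ in a correspondence merely swaps quantities \ref{item:IP_m:mass_1} and \ref{item:IP_m:mass_2} of Definition~\ref{def:IP:metric} and fixes quantities (i)--(ii), so the set of attainable distortions, and hence its infimum, is unchanged.

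For the triangle inequality I would observe that the computation in the proof of Proposition~\ref{prop:IP_metric} uses nothing about the maps $\IPLT_\alpha,\IPLT_\beta,\IPLT_\gamma$ beyond the fact that they are real-valued functions that are constant on the blocks of their respective partitions. Indeed, quantity (ii) is controlled there by the scalar triangle inequality $|f(\infty)-h(\infty)|\le|f(\infty)-g(\infty)|+|g(\infty)-h(\infty)|$, quantity (i) by $|f(\hat U_j)-h(\hat X_j)|\le|f(\hat U_j)-g(\hat V_j)|+|g(\hat W_j)-h(\hat X_j)|$, and the estimates for quantities \ref{item:IP_m:mass_1} and \ref{item:IP_m:mass_2} are purely combinatorial statements about the Lebesgue measures of matched blocks. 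Thus the same splitting of two $\epsilon$-optimal correspondences into a shared sub-correspondence and two leftover pieces, followed by the same arithmetic, yields $\dJ((\alpha,f),(\gamma,h))\le \dJ((\alpha,f),(\beta,g))+\dJ((\beta,g),(\gamma,h))+2\epsilon$ for every $\epsilon>0$, and hence the triangle inequality.

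The substantive point is positive-definiteness. Clearly $\dJ((\alpha,f),(\alpha,f))=0$, using the ``full'' correspondence that lists all blocks of $\alpha$ in order. Conversely, suppose $\dJ((\alpha,f),(\beta,g))=0$ and pick correspondences $(U^k_j,V^k_j)_{j\in[n_k]}$ with distortion $\epsilon_k\downto 0$. Quantities \ref{item:IP_m:mass_1} and \ref{item:IP_m:mass_2} force $\sum_j|\Leb(U^k_j)-\Leb(V^k_j)|\to 0$ together with $|\IPmag\alpha-\sum_j\Leb(U^k_j)|\le\epsilon_k$ and $|\IPmag\beta-\sum_j\Leb(V^k_j)|\le\epsilon_k$; hence $\IPmag\alpha=\IPmag\beta$ and the total mass of unmatched blocks of $\alpha$ (and of $\beta$) tends to $0$. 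So every block of $\alpha$ of mass exceeding $\epsilon_k$ is matched, to a block of $\beta$ whose mass differs by at most $\epsilon_k$, and the interval order is respected; letting $k\to\infty$ one argues, as in the exercise left open in Proposition~\ref{prop:IP_metric}, that the blocks of $\alpha$ and $\beta$ have the same masses and left endpoints, i.e.\ $\alpha=\beta$. Then quantity (i) gives $f=g$ on every block of $\alpha$ and quantity (ii) gives $f(\infty)=f(\IPmag\alpha)=g(\IPmag\alpha)$; since $G(\alpha)=[0,\IPmag\alpha]\setminus\bigcup_{U\in\alpha}U$ is closed and Lebesgue-null, $\bigcup_{U\in\alpha}U$ meets every interval $(t,t+\delta)\subseteq[0,\IPmag\alpha]$, so right-continuity of $f$ and $g$ on $[0,\IPmag\alpha)$ and their constancy on $[\IPmag\alpha,\infty]$ upgrade agreement on this set to $f=g$ on all of $[0,\infty]$, i.e.\ $(\alpha,f)=(\beta,g)$.

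The main obstacle is the bookkeeping inside the positive-definiteness step: going from ``matched blocks carry almost all the mass with almost-equal sizes, in order'' to $\alpha=\beta$ is delicate because the correspondences need not be ranked matchings and the lengths $n_k$ may vary. I expect to handle this by fixing an arbitrary block $U\in\alpha$, then choosing $k$ large enough that $\epsilon_k$ is smaller than $\Leb(U)$ and than the (finitely many) gaps between diversities/positions of the blocks of $\alpha$ of mass at least $\Leb(U)/2$, and tracking the $\beta$-block paired with $U$: the uniform control on $\sum_j|\Leb(U^k_j)-\Leb(V^k_j)|$ and on the unmatched mass then pins down its mass and left endpoint in the limit. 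The remaining verifications (that $\dJ$ takes values in $[0,\infty)$, which is clear since the empty correspondence $n=0$ always gives finite distortion $\max\{|f(\infty)-g(\infty)|,\IPmag\alpha,\IPmag\beta\}$) are routine.
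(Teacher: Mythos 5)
Your proof is correct and follows the same route as the paper: symmetry and the triangle inequality transfer verbatim from Proposition~\ref{prop:IP_metric}, and the new content for positive-definiteness is exactly that right-continuity plus constancy on blocks and on $[\IPmag{\alpha},\infty]$ lets one recover $f$ from its values on the Lebesgue-dense union of blocks. The paper's proof is terser — it leans on the positive-definiteness for $\dI$ that was ``left as an exercise'' in Proposition~\ref{prop:IP_metric} and only notes the extra determination of $f$ — but your fuller sketch, including the passage from near-matched correspondences to $\alpha=\beta$ via eventual forced matching of any fixed block $U$ to itself, is sound.
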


\begin{proof}
 Given the proof of Proposition \ref{prop:IP_metric}, the only change needed for this lemma is in proving positive-definiteness, since now $f$ is not determined by $\alpha$. However, this follows easily since we assume that $f$ is right-continuous and constant on each $U\in\alpha$ and on $[\IPmag{\alpha},\infty]$, and $f$ is therefore determined by the values it takes on these sets.
\end{proof}

For $n\geq1$ and $\beta\in\HIPspace$, let $\beta_n$ denote the interval partition formed by deleting all but the $n$ largest blocks from $\beta$ (breaking ties via left-to-right order) and sliding these large blocks together, as in the construction of $\alpha^M_{\epsilon}$ in \eqref{eq:IP:separate_LT_mass}. For $t\ge0$, let 
\begin{equation}\label{eq:div_approx}
 D_{\beta,n}(t) := \sqrt{\pi x_n}\#\{(a,b)\in\beta_n\colon b\leq t\}, \qquad \text{where} \qquad x_n = \min\{\Leb(U)\colon U\in\beta_n\}.
\end{equation}
If the following two limits are equal, then we adapt Definition \ref{def:diversity_property} to additionally define
\begin{equation}\label{eq:div_rt_cts}
\begin{split}
 \IPLT_{\beta}^+(t) :=&\ \lim_{u\downto t}\limsup_{h\downto 0} \sqrt{\pi h}\#\{(a,b)\in\beta\colon (b-a)>h,\, b\leq u\}\\
 	=&\ \lim_{u\downto t}\liminf_{h\downto 0} \sqrt{\pi h}\#\{(a,b)\in\beta\colon (b-a)>h,\, b\leq u\}.
\end{split}
\end{equation}

\begin{lemma}\label{lem:dJ_meas}
 \begin{enumerate}[label=(\roman*), ref=(\roman*)]
  \item The maps $\beta\mapsto\beta_n$ and $\beta_n\mapsto D_{\beta,n}$ are Borel under $\dH'$.\label{item:dJm:order_stats}
  \item The set $\{(\beta,t)\in \HIPspace\times [0,\infty)\colon \IPLT_{\beta}(t)\text{ exists}\}$ is Borel under $\dH'$ in the first coordinate plus the Euclidean metric in the second. The map $(\beta,t)\mapsto\IPLT_{\beta}(t)$ is measurable on this set, under the same $\sigma$-algebra. The same assertions hold with $\IPLT_{\beta}(t)$ replaced by $\IPLT_{\beta}^+(t)$.
  %
  \label{item:dJm:diversity}
  \item For $\beta\in\IPspace$, the pairs $(\beta_n,D_{\beta,n})$ converge to $(\beta,\IPLT_{\beta}(\,\cdot\,+))$ under $d_{\cJ}$.\label{item:dJm:cnvgc}
 \end{enumerate}
\end{lemma}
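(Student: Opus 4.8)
The plan is to deduce parts (i) and (ii) from the measurability of the \emph{block-counting functionals}
\[
 c_{h,t}(\beta) := \#\{(a,b)\in\beta\colon b-a>h,\ b\le t\}, \qquad h>0,\ t\in[0,\infty],
\]
and to prove (iii) by directly constructing correspondences, exploiting that $\dJ$ is an infimum over correspondences. By Proposition~\ref{prop:Hausdorff}~\ref{item:Haus:equiv} it suffices throughout to argue measurability with respect to $\dH$ rather than $\dH'$.

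\emph{Measurability and parts (i), (ii).} First I would show that for each $h>0$ and each rational $t$, the map $\beta\mapsto\#\{(a,b)\in\beta\colon b-a>h,\ b<t\}$ is lower semicontinuous on $(\HIPspace,\dH)$: if $\beta$ has $k$ such blocks $(a_i,b_i)$, choose $\epsilon>0$ with $b_i-a_i>h+2\epsilon$ and $b_i<t-\epsilon$ for all $i$; any $\gamma$ with $\dH(\beta,\gamma)<\epsilon$ has $G(\gamma)$ disjoint from each $(a_i+\epsilon,b_i-\epsilon)$, so $\gamma$ contains $k$ distinct blocks, each of width $>h$ and with right endpoint $<t$. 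Taking countable infima over rational $t'\downto t$ recovers $c_{h,t}$, giving joint measurability of $(\beta,t)\mapsto c_{h,t}(\beta)$; with $t=\infty$ this already shows $\beta\mapsto x^\downarrow_n(\beta):=\sup\{h\colon c_{h,\infty}(\beta)\ge n\}$ is Borel, and, tracking right endpoints as well, that $\beta\mapsto\beta_n$ and $\beta_n\mapsto D_{\beta,n}$ are Borel, which is (i). For (ii): $\IPLT_\beta(t)=\sqrt{\pi}\lim_{h\downto0}\sqrt{h}\,c_{h,t}(\beta)$ whenever the limit exists; since no two blocks share a right endpoint, $h\mapsto\sqrt{h}\,c_{h,t}(\beta)$ is right-continuous in $h$, so $\limsup_{h\downto0}$ and $\liminf_{h\downto0}$ can be written as measurable countable expressions over rational $h$, whence $\{(\beta,t)\colon\IPLT_\beta(t)\text{ exists}\}$ is Borel and $(\beta,t)\mapsto\IPLT_\beta(t)$ is measurable on it; for $\IPLT^+_\beta$ one composes with the monotone limit $\lim_{u\downto t}=\inf_{u\in\BQ,\,u>t}$.

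\emph{The convergence (iii).} Fix $\beta\in\IPspace$ and write $x_n:=x^\downarrow_n(\beta)$. Two elementary observations drive the proof. First, $\IPmag{\beta}-\IPmag{\beta_n}=\sum_{U\in\beta,\,\Leb(U)<x_n}\Leb(U)\to0$, the tail of a summable series. Second, since every block of $\beta$ of width $>x_n$ lies in $\beta_n$ and every block of $\beta_n$ has width $\ge x_n$, the value of $D_{\beta,n}$ on a block $U\in\beta_n$ with copy $\widetilde U\in\beta$ satisfies, with $s:=\inf\widetilde U$,
\[
 \sqrt{\pi x_n}\,c_{x_n,s}(\beta)\ \le\ D_{\beta,n}(U)\ \le\ \sqrt{x_n/h}\,\bigl(\sqrt{\pi h}\,c_{h,s}(\beta)\bigr)\qquad\text{for }0<h<x_n;
\]
taking $h\upto x_n$ as $n\to\infty$ and invoking the diversity property of $\beta$, together with the fact that $\IPLT^+_\beta$ is constant on $\widetilde U$ with value $\IPLT_\beta(s)$, gives $D_{\beta,n}(U)\to\IPLT^+_\beta(\widetilde U)$, and likewise $D_{\beta,n}(\infty)\to\IPLT_\beta(\infty)$. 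To pass from these pointwise statements to a $\dJ$-bound, fix $\epsilon>0$ and choose a finite set of blocks of $\beta$: enough of the largest ones to cover all but mass $<\epsilon$, and, in each of the finitely many clusters of small blocks responsible for a jump of $\IPLT^+_\beta$ of size $>\epsilon$, only finitely many blocks of smallest index (so that the excluded tail still has mass $<\epsilon$ while the included ones are matched at positions where $\IPLT^+_\beta$ is still near its pre-jump value). Matching these blocks with their copies in $\beta_n$ for $n$ large, quantities \ref{item:IP_m:mass_1} and \ref{item:IP_m:mass_2} of Definition~\ref{def:IP:metric} are $\IPmag{\beta_n}-(\text{matched mass})$ and $\IPmag{\beta}-(\text{matched mass})$, both $<2\epsilon$; quantity (ii) is $|D_{\beta,n}(\infty)-\IPLT_\beta(\infty)|<\epsilon$; and quantity (i) is $<C\epsilon$ by the sandwich above and the choice of included blocks. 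Hence $\dJ\bigl((\beta_n,D_{\beta,n}),(\beta,\IPLT_\beta(\,\cdot\,+))\bigr)\to0$.

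\emph{Main obstacle.} The delicate part is (iii). Since $\IPLT_\beta$ need not be continuous even when it exists everywhere -- a partition whose blocks have widths $\sim k^{-2}$ clustered at a point acquires a genuine jump of $\IPLT_\beta$ there -- one cannot upgrade the pointwise convergence $D_{\beta,n}(U)\to\IPLT^+_\beta(\widetilde U)$ to a uniform statement, and the matched-block discrepancies in quantity (i) must be controlled by the judicious choice of correspondence above. The structural fact making this possible is that at such a jump the \emph{accumulated} diversity jumps while the value of $\IPLT^+_\beta$ on each individual cluster block stays near its pre-jump level, so that including only the lowest-index cluster blocks (which carry small $D_{\beta,n}$-values) resolves the tension between the mass and the diversity requirements. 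The measurability assertions (i)--(ii), by contrast, are routine once the lower semicontinuity of the block counts is in hand.
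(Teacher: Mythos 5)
Your proposal is correct, but it takes a noticeably different route from the paper's, most interestingly in part (iii). For (i)--(ii) the paper works with the ranked-block and restriction maps and, for (ii), with an auxiliary time-change that collapses $[0,\IPmag{\beta}]$ onto $[0,\IPmag{\beta_n}]$ and reads off $\IPLT_\beta(t)$ as a limit of $D_{\beta,n}$ along the time-changed argument; you instead prove measurability of the block-counting functionals $c_{h,t}$ (lower semicontinuity under $\dH$, then countable limits over rational $h$ and $t$) and express the defining limit \eqref{eq:IPLT} and the right limits \eqref{eq:div_rt_cts} through countably many of these. Both are routine and equally valid; your (i) is terse about recovering the left-to-right positions of the $n$ largest blocks, but no more so than the paper. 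For (iii) the paper pairs \emph{all} $n$ largest blocks of $\beta$ with their copies in $\beta_n$, whereas you use a truncated correspondence built from a \emph{fixed finite} family of blocks capturing all but mass $\epsilon$. Your instinct to truncate is sound and is the genuinely different ingredient: when $\IPLT_\beta$ has a jump (e.g.\ blocks of widths $k^{-2}$ packed against a point, which is allowed in $\IPspace$), the smallest matched block in the all-$n$-blocks correspondence carries a $D_{\beta,n}$-value near the post-jump diversity while $\IPLT_\beta(\,\cdot\,+)$ on that block still sits at the pre-jump value, so quantity (i) of Definition~\ref{def:IP:metric} is not small along that correspondence; restricting to a fixed finite set of matched blocks is exactly what makes the distortion bound clean, at the modest price of having to choose the matched set before letting $n\to\infty$.

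That said, your construction can be streamlined, because the ``cluster-selection'' rule is unnecessary and the worry about upgrading pointwise to uniform convergence is moot for your own correspondence. Once the matched family is a fixed finite set $S$ (chosen once $\epsilon$ is fixed, independently of $n$, with $\IPmag{\beta}-\sum_{U\in S}\Leb(U)<\epsilon$), your sandwich gives $D_{\beta,n}(\text{copy of }U)\to\IPLT_{\beta}^+(U)$ for each of the finitely many $U\in S$, so quantity (i) tends to $0$ automatically; quantities (iii)--(iv) are at most the unmatched mass, and quantity (ii) tends to $0$ by your estimate for $D_{\beta,n}(\infty)$. No special treatment of blocks near jumps of $\IPLT_{\beta}^+$ is needed, and attributing the bound on quantity (i) to that choice obscures the actual mechanism (finiteness of $S$). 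Two minor points: the appeal to ``no two blocks share a right endpoint'' is beside the point -- right-continuity of $h\mapsto c_{h,t}(\beta)$ holds simply because only finitely many blocks exceed any positive threshold; and, like the paper, your argument for (iii) implicitly assumes $\beta$ has infinitely many blocks, so that $x_n\downarrow 0$ (the finite case should be dispatched separately, e.g.\ by convention on $x_n$).
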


\begin{proof}
 (i) The map $\ranked$ that sends $\beta\in\HIPspace$ to the vector of its order statistics is continuous under $\dH'$. The restriction map $(\beta,t)\mapsto \restrict{\beta}{[0,t]} := \{U\cap (0,t)\colon U\in\beta,\,U\cap (0,t)\neq\emptyset\}$ is continuous from $\dH'$ plus the Euclidean metric to $\dH'$. If $\ranked(\beta) = (x_1,x_2,\ldots)$, then we determine whether the block of mass $x_1$ is to the right of the block of mass $x_2$ by finding the least $t_1,t_2\in x_2\BN$ for which $\restrict{\beta}{[0,t_1]}$ has $x_1$ as its first order statistic and $\restrict{\beta}{[0,t_2]}$ has $(x_1,x_2)$ as its first two order statistics. If $t_1<t_2$ then $\beta_2 = \{(0,x_1),(x_1,x_1+x_2)\}$; otherwise, $\beta_2 = \{(0,x_2),(x_2,x_2+x_1)\}$. This method extends to give the desired measurability of $\beta\mapsto\beta_n$. The measurability of $\beta_n\mapsto D_{\beta,n}$ follows similarly from the measurability of $\ranked$ and restrictions.
 
 (ii) Consider $\beta\in\HIPspace$. For $n\ge 1$, let $U_1,\ldots,U_n$ denote the $n$ largest blocks of $\beta$, in left-to-right order. Let $\theta_{\beta,n}\colon [0,\IPmag{\beta}]\to [0,\IPmag{\beta_n}]$ denote the continuous time-change starting from $\theta_{\beta,n}(0) = 0$, increasing with slope 1 on $\bigcup_{i\in [n]}U_i$, and having slope 0 on $[0,\IPmag{\beta}]\setminus \bigcup_{i\in [n]}\ol U_i$, where $\ol U$ denotes closure. 
 Note that $\{\theta_{\beta,n}(U_1),\ldots,\theta_{\beta,n}(U_n)\} = \beta_n$. 
 It follows from similar arguments to those in the proof of (i) that $\beta\mapsto\theta_{\beta,n}$ is measurable from $(\HIPspace,\dH')$ to $\cC([0,\infty),[0,\infty))$.
 
 By comparing \eqref{eq:div_approx} to Definition \ref{def:diversity_property} of $\IPLT_{\beta}$, for every $t\ge0$ we see that $\lim_{n\upto\infty}D_{\beta,n}(\theta_{\beta,n}(t)) = \IPLT_{\beta}(t)$, with each limit existing if and only if the other exists. By (i), this proves the two claims for $\IPLT_{\beta}(t)$. 
 By monotonicity of the limiting terms in \eqref{eq:div_rt_cts}, $\IPLT_{\beta}^+(t)$ exists if and only if
 $$\lim_{m\upto\infty}\limsup_{n\upto\infty}D_{\beta,n}\left(\theta_{\beta,n}\left(t+m^{-1}\right)\right) = \lim_{m\upto\infty}\liminf_{n\upto\infty}D_{\beta,n}\left(\theta_{\beta,n}\left(t+m^{-1}\right)\right).$$
 If these limits are equal, then they equal $\IPLT_{\beta}^+(t)$. This proves the two claims for $\IPLT_{\beta}^+(t)$.

 
 (iii) This follows from the previous argument by taking the correspondences from $\beta$ to $\beta_n$ that pair $U_i$ with $\theta_{\beta,n}(U_i)$, for each $i\in [n]$.
 %
\end{proof}


\begin{lemma}\label{lmcompl}
 Consider the map $\iota\colon \IPspace\rightarrow\cJ$ given by $\iota(\alpha)=(\alpha,\IPLT_\alpha(\,\cdot\,+))$.
 \begin{enumerate}[label=(\roman*), ref=(\roman*)]
  \item Both $\iota(\IPspace)$ and $\cJ\setminus\iota(\IPspace)$ are dense in $(\cJ,d_{\cJ})$.
  \item Both $\iota(\IPspace)$ and $\cJ\setminus\iota(\IPspace)$ are Borel subsets of $\cJ$.
  \item The space $(\cJ,d_\cJ)$ is a completion of $(\IPspace,\dI)$, with respect to the isometric embedding $\iota$.
 \end{enumerate}
\end{lemma}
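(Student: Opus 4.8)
The plan is to prove the three assertions in the order: $\iota$ is an isometric embedding; $\iota(\IPspace)$ and its complement are dense in $\cJ$; $(\cJ,\dJ)$ is complete; and finally $\iota(\IPspace)$ is Borel. For the isometry one first checks that $\IPLT_\alpha(\,\cdot\,+)\in\cJ$: it is increasing with well-defined right limits, and by the remark following Definition \ref{def:diversity_property} the function $\IPLT_\alpha$ is already constant on each $U\in\alpha$ and on $[\IPmag\alpha,\infty]$, so its right-continuous modification agrees with it there, taking the value $\IPLT_\alpha(U)$ on the interior of $U$ and $\IPLT_\alpha(\infty)$ on $[\IPmag\alpha,\infty]$. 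Substituting $f=\IPLT_\alpha(\,\cdot\,+)$ and $g=\IPLT_\beta(\,\cdot\,+)$ into the definition of $\dJ$ in Lemma \ref{lem:dJ_metric} then reproduces verbatim the four distortion quantities of Definition \ref{def:IP:metric}, so $\dJ(\iota(\alpha),\iota(\beta))=\dI(\alpha,\beta)$.

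For density of $\iota(\IPspace)$: given $(\alpha,f)\in\cJ$ and $\varepsilon>0$, take the blocks $U_1,\dots,U_k$ of $\alpha$ of largest mass, in left-to-right order, carrying all but $\varepsilon/4$ of the total mass; since $f$ is increasing, $f(U_1)\le\cdots\le f(U_k)\le f(\infty)<\infty$. For any $d\ge0$ and $\eta>0$ there is a deterministic $\gamma\in\IPspace$ with $\IPLT_\gamma(\infty)=d$ and $\IPmag\gamma<\eta$, obtained as a realisation of the \Stable[\frac12] interval partition of Proposition \ref{prop:IP:Stable} run up to diversity $d$, whose total mass is a subordinator value with a density on $(0,\infty)$ and hence below $\eta$ with positive probability. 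Now build $\beta\in\IPspace$ by concatenating, in order, a filler of diversity $f(U_1)$, then $\{(0,\Leb(U_1))\}$, then a filler of diversity $f(U_2)-f(U_1)$, then $\{(0,\Leb(U_2))\}$, and so on up to a final filler of diversity $f(\infty)-f(U_k)$, all fillers having mass below $\varepsilon/(4(k+1))$; then $\beta\in\IPspace$ because finitely many strongly summable pieces concatenate to an element of $\IPspace$ (Lemma \ref{lem:IP:concat}), and matching each $U_i$ with its copy in $\beta$ gives a correspondence of $\dJ$-distortion $<\varepsilon$. Density of $\cJ\setminus\iota(\IPspace)$ is immediate: if $\alpha\notin\IPspace$ then $(\alpha,f)$ already lies in the complement, and if $\alpha\in\IPspace$ with $f=\IPLT_\alpha(\,\cdot\,+)$, then for small $c>0$ the pair $(\alpha,f+c)$ lies in $\cJ\setminus\iota(\IPspace)$ at $\dJ$-distance $c$ via the full correspondence. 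Together these prove (i).

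For completeness, let $((\alpha_n,f_n))$ be $\dJ$-Cauchy; passing to a subsequence assume $\dJ((\alpha_n,f_n),(\alpha_{n+1},f_{n+1}))<2^{-n}$ and fix a correspondence $\mathcal C_n$ from $\alpha_n$ to $\alpha_{n+1}$ of distortion $<2^{-n}$. The total masses $\IPmag{\alpha_n}$ and total diversities $f_n(\infty)$ are Cauchy in $\BR$, with limits $M$ and $L$. Following a block $W$ of $\alpha_1$ forward through the $\mathcal C_n$'s: its image $W_m\in\alpha_m$ either appears in $\mathcal C_m$, so $W_{m+1}$ is its partner and $|\Leb(W_{m+1})-\Leb(W_m)|<2^{-m}$, or it does not, in which case $\Leb(W_m)$ is at most the unmatched mass of $\alpha_m$, hence $<2^{-m}$. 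Thus every $W$ with $\Leb(W)>0$ eventually never drops out; its widths converge, its $f_n$-values converge (consecutive ones differ by $<2^{-m}$), and its left endpoint converges because the correspondences preserve left-to-right order and the total unmatched ``gap'' mass between matched blocks at stage $m$ is $<2^{1-m}$, summable. The resulting disjoint open intervals, with their limiting $f$-values, cover $[0,M]$ up to a Lebesgue-null set (the surviving widths sum to $M$); extending $f$ by right limits on the remaining null set gives $(\alpha,f)\in\cJ$ — and crucially membership in $\cJ$ imposes no diversity requirement on $\alpha$, so nothing further needs checking — and matching the surviving large blocks shows $\dJ((\alpha_n,f_n),(\alpha,f))\to0$. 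Completeness together with the isometry and density of $\iota(\IPspace)$ give (iii); since $(\IPspace,\dI)$ is separable (Lemma \ref{lem:IP:sep}), so is $(\cJ,\dJ)$.

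For (ii), the projection $(\alpha,f)\mapsto\alpha$ is $1$-Lipschitz from $(\cJ,\dJ)$ to $(\HIPspace,\dH')$, hence continuous into $(\HIPspace,\dH)$ by Proposition \ref{prop:Hausdorff}\ref{item:Haus:equiv}, so by Lemma \ref{lem:dJ_meas}\ref{item:dJm:order_stats} the maps $\Psi_n\colon(\alpha,f)\mapsto(\alpha_n,D_{\alpha,n})$ are Borel from $\cJ$ to $\cJ$. I claim $\iota(\IPspace)$ equals the Borel set $W:=\{(\alpha,f)\in\cJ\colon \dJ(\Psi_n(\alpha,f),(\alpha,f))\to0\}$: the inclusion $\iota(\IPspace)\subseteq W$ is Lemma \ref{lem:dJ_meas}\ref{item:dJm:cnvgc}, and conversely, if $(\alpha_n,D_{\alpha,n})\to(\alpha,f)$, then comparing consecutive $\alpha_n$'s through the canonical self-matching correspondences (whose distortion is controlled by the supremum over common blocks of $|D_{\alpha,m}-D_{\alpha,n}|$, by $|D_{\alpha,m}(\infty)-D_{\alpha,n}(\infty)|$, and by $\IPmag{\alpha_m}-\IPmag{\alpha_n}$) and squeezing along the scales $x_n=\min\{\Leb(U)\colon U\in\alpha_n\}\downarrow0$, exploiting that $x_{n+1}n/(x_nn)\to1$ and the monotonicity in $t$ of the diversity-defining counts, forces $\IPLT_\alpha(t)$ to exist for every $t$ and to equal $f$, so $\alpha\in\IPspace$ and $(\alpha,f)=\iota(\alpha)$; then $\cJ\setminus\iota(\IPspace)$ is Borel as well. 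The main obstacle is the completeness step: the distortion controls widths, diversities and order but not left endpoints directly, so the limit partition must be assembled by tracking blocks through the telescoping correspondences and showing the accumulated gap masses vanish; a secondary point is the measurability bookkeeping in (ii), reducing the uncountable family ``$\IPLT_\alpha(t)$ exists for all $t$'' to the rational $t$ together with the countably many, measurably parametrised skeleton-jump points so that Lemma \ref{lem:dJ_meas}\ref{item:dJm:diversity} applies.
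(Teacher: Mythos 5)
Your treatment of the isometry, of part (i), and of part (iii) is essentially sound and close in spirit to the paper's: your density construction (big blocks of $(\alpha,f)$ interleaved with low-mass fillers of prescribed diversity) is the paper's insertion of blocks into $\alpha^{\IPLT}_{1/n}$ in different clothing, your $f+c$ trick for density of the complement is a nice simplification, and your completeness argument is the same block-tracking/ranked-mass scheme the paper runs (both using the key remark that $\cJ$ imposes no diversity requirement on the limit), though you gloss why the surviving widths exhaust the limiting total mass and how blocks that drop out of an early correspondence are handled (track blocks of mass $\geq\delta$ from late stages on, as the paper does via the $\ell^1$-limit of ranked masses).

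The genuine gap is in (ii): the claimed identity $\iota(\IPspace)=W:=\{(\alpha,f)\in\cJ\colon \dJ((\alpha_n,D_{\alpha,n}),(\alpha,f))\to0\}$ is false, so your argument does not establish that $\iota(\IPspace)$ is Borel. Convergence of the skeleton pairs from \eqref{eq:div_approx} only records the counting function of $\alpha$ along its blocks and at $\IPmag{\alpha}$; it cannot detect failure of the limit \eqref{eq:IPLT} at a point of $G(\alpha)$ sitting at a jump of $f$. Concretely, let $u(h)=\tfrac{29}{20}+\tfrac{9}{20}\sin(\log\log(1/h))$ and choose two summable multisets of masses $A$ and $C$ whose counting functions satisfy $\sqrt{h}\,N_A(h)=u(h)+o(1)$ and $\sqrt{h}\,N_C(h)=2-u(h)+o(1)$ (possible since $u(h)h^{-1/2}$ and $(2-u(h))h^{-1/2}$ are eventually decreasing in $h$). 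Place the $A$-blocks on $[0,m_A)$ in decreasing order of mass, the $C$-blocks on $(m_A,m_A+m_C]$ in decreasing order of mass from right to left, so they accumulate exactly at $m_A$, and set $f=0$ on $[0,m_A)$ and $f\equiv 2\sqrt{\pi}$ on $[m_A,\infty]$; then $(\alpha,f)\in\cJ$. Any point inside an $A$-block has only boundedly many blocks to its left, any point inside a $C$-block has all but finitely many blocks to its left, and the total count is $N_A+N_C=2h^{-1/2}+o(h^{-1/2})$; hence the correspondence matching the $m(n)$ largest blocks with their copies in $\alpha_n$, with $m(n)\to\infty$ and $m(n)\sqrt{x_n}\to0$, has distortion tending to $0$, so $(\alpha,f)\in W$. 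But $\sqrt{h}\,\#\{(a,b)\in\alpha\colon b-a>h,\ b\le m_A\}=\sqrt{h}\,N_A(h)$ oscillates between $1$ and $\tfrac{19}{10}$, so $\IPLT_\alpha(m_A)$ does not exist, $\alpha\notin\IPspace$, and $(\alpha,f)\notin\iota(\IPspace)$. (A smaller slip: your "$x_{n+1}n/(x_n n)\to1$" is false in general — think of geometric block sizes — although the sandwich it is meant to support can be salvaged; the real obstruction is the one above.)

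The paper's proof of (ii) is built precisely to handle this: it first restricts to the Borel set $A$ of pairs for which $\IPLT^+_\alpha$ exists and equals $f$ at all rational times (which, by monotonicity and right-continuity, forces $f=\IPLT^+_\alpha$ everywhere), and then additionally demands existence of $\IPLT_\alpha$ at the at most countably many jump times of $f$, which are enumerated measurably via the Skorokhod-space jump point process. It is exactly this extra condition at the jump times that excludes configurations like the one above; some condition of this kind is unavoidable, so part (ii) of your proposal needs to be redone along these lines rather than patched.
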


\begin{proof}
 (i) By the definitions of $\dI$ and $d_\cJ$, the map $\iota$ is an isometry. Take $(\beta,g)\in\cJ\setminus\iota(\IPspace)$ and $\alpha\in\IPspace$ with $\IPLT_\alpha(\infty) = g(\infty)\ge 0$ and such that $t\mapsto\IPLT_\alpha(t)$ is continuous on $[0,\IPmag{\alpha}]$. Using the notation of the proof of Lemma \ref{lem:IP:sep}, we consider
  $$
   \beta^{(n)} := \alpha_{1/n}^{\IPLT}\oplus_{g\left(a_1^{(n)}\right)}k_1^{(n)}\cdots\oplus_{g\left(a_{r_n}^{(n)}\right)}k_{r_n}^{(n)}.
  $$
  Then $d_\cJ\left(\left(\beta^{(n)},\IPLT_{\beta^{(n)}}\right),(\beta,g)\right)\rightarrow 0$, i.e. $\beta$ is in the closure of $\iota(\IPspace)$. The same argument, with roles of $(\alpha,\IPLT_\alpha(\,\cdot\,+))$ and $(\beta,g)$ swapped (now $\alpha\in\IPspace$ general and $(\beta,g)\in\cJ\setminus\iota(\IPspace)$ and such that $g$ is continuous), shows that $(\alpha,\IPLT_\alpha)$ is in the closure of $\cJ\setminus\iota(\IPspace)$. 
 
 
 (ii) Recall that for $\alpha\in\IPspace$ we have $\IPLT_{\alpha}(\,\cdot\,+)=\IPLT_{\alpha}^+$ identically. Thus,
 $$\iota(\IPspace)=\left\{(\alpha,f)\in\mathcal{J}\colon\mbox{for all }t\in[0,\IPmag{\alpha}],\; \IPLT_\alpha(t)\mbox{ exists and }\IPLT^+_\alpha(t)=f(t)\right\}.$$
 By Lemma \ref{lem:dJ_meas} \ref{item:dJm:diversity} and Proposition \ref{prop:Hausdorff} \ref{item:Haus:leq}, the following set is Borel under $\dJ$:
 $$A := \left\{(\alpha,f)\in\cJ\colon \mbox{for all }t\in[0,\IPmag{\alpha}]\cap\BQ,\; \IPLT^+_{\alpha}(t)\text{ exists and equals }f(t)\right\}.$$
 For $(\alpha,f)\in A$, 
 by the right-continuity and monotonicity of $f$ and $\IPLT_{\alpha}^+$ 
 we have $f = \IPLT_{\alpha}^+$ identically. 
 By comparing Definition \ref{def:diversity_property} of $\IPLT_{\alpha}$ with \eqref{eq:div_rt_cts}, we see that if $\IPLT_{\alpha}^+$ is continuous at some $t\in [0,\IPmag{\alpha}]$ then $\IPLT_{\alpha}(t)$ exists and equals $\IPLT_{\alpha}^+(t)$, by a sandwiching argument. Thus, $\iota(\IPspace)$ is the set of $(\alpha,f)\in A$ for which $\IPLT_{\alpha}(t)$ exists at each time $t$ at which $f$ jumps.
 
 By Lemma \ref{lem:dJ_meas} \ref{item:dJm:diversity} and Proposition \ref{prop:Hausdorff} \ref{item:Haus:leq}, $(\alpha,f)\mapsto f(s) = \IPLT_{\alpha}^+(s)$ is Borel measurable on $(A,\dJ)$. By \cite[Theorem 14.5]{Billingsley}, the Borel $\sigma$-algebra on Skorokhod space is generated by the evaluation maps, so $(\alpha,f)\mapsto f$ is measurable from $(A,\dJ)$ to Skorokhod space. 
 By \cite[Proposition II.(1.16)]{JacodShiryaev}, the map from $f$ to the point process of its jumps is measurable; and by \cite[Proposition 9.1.XII]{DaleyVereJones2}, we can measurably map the latter to a sequence $(t_1,\Delta_1),(t_2,\Delta_2),\ldots$ listing times and sizes of all jumps of $f$, though these may not be listed in chronological order. We write $\tau_i(\alpha,f) := t_i$, or $\tau_i(\alpha,f) := -1$ if $f$ has less than $i$ jumps. Then
 \begin{equation*}
  \iota(\IPspace) = \left\{(\alpha,f)\in A\colon \text{for all }i\in\BN,\;\tau_i(\alpha,f) = -1\text{ or }\IPLT_\alpha(\tau_i(\alpha,f))\text{ exists}\right\}.
 \end{equation*}
 By Lemma \ref{lem:dJ_meas} \ref{item:dJm:diversity}, this set is measurable.
 
 (iii) It is clear from the definition of $\dJ$, based on that of $\dI$, that $\iota$ is an isometry. Now consider any Cauchy sequence $((\alpha_n,f_n),\ n\geq 1)$ in $(\cJ,\d_{\cJ})$. Then $(f_n(\infty),n\ge 1)$ is a Cauchy sequence in $[0,\infty)$; let us denote the limit by $f(\infty)$. Consider $(\beta^{(0)},f^{(0)})=(\emptyset,f(\infty))\in\cJ$, i.e. the empty partition with the increasing function that is constant $f(\infty)$. Let $\fs_n=(s_n^{(i)},i\ge 1)=(\Leb (U),U\in\alpha_n)^\downarrow\in S^\downarrow$ be the decreasing rearrangement of interval sizes. Then for all correspondences $(U_j,V_j)_{j\in[k]}$,
 $$
  \ell^1(\fs_n,\fs_m)=\sum_{i\ge 1}^\infty\left|s_n^{(i)}-s_m^{(i)}\right|
                      \le\sum_{j\in[k]}\left|\Leb (U_j)-\Leb (V_j)\right|+\IPmag{\alpha_n}+\IPmag{\alpha_m} - \sum_{j\in[k]}\Leb(U_j)+\Leb(V_j),
 $$
 Let $\epsilon>0$. By the Cauchy property of $((\alpha_n,f_n),\ n\geq 1)$, there is some $N_1\ge 1$ so that $d_\cJ\big((\alpha_n,f_n),(\alpha_m,f_m)\big)<\epsilon/2$ for all $m,n\ge N_1$. Taking the infimum over all correspondences on the RHS of the display, this yields $\ell^1(\fs_n,\fs_m) < \epsilon$ for all $m,n\ge N_1$. By completeness of $(S^\downarrow,\ell^1)$, we have convergence $\fs_n \rightarrow \fs = (s^{(i)},i\ge 1)\in S^\downarrow$.
 
 Now consider any $r\ge 1$ such that $s^{(r)} > s^{(r+1)}$. Consider $\epsilon>0$ with $3\epsilon < s^{(r)}-s^{(r+1)}$. Then there is $N_2\ge 1$ such that for all $n\ge N_2$, there are precisely $r$ intervals $(a_1^{(n)},a_1^{(n)}+k_1^{(n)}),$ $\ldots,(a_r^{(n)},a_r^{(n)}+k_r^{(n)})\in\alpha_n$ of length greater than $s^{(r)}-\epsilon$. We define
 $
  \beta_n^{(r)} := \Concat_{j\in[r]} \left\{\left(0,k_j^{(n)}\right)\right\}
 $
 and associate to these intervals the $f_n$-values of the corresponding intervals in $\alpha_n$:
 $$
  f_n^{(r)}\left( k_1^{(n)} + \cdots + k_{j}^{(n)} + x \right)=\left\{\begin{array}{ll}f_n\left(a_j^{(n)}\right)& \text{for }x\in [0,k_j^{(n)}),\ j\in [0,r-1],\\
  f_n(\infty)&\text{for }x\geq k_1^{(n)} + \cdots + k_{r}^{(n)}.
  \end{array}\right.
 $$
 Then $d_\cJ((\beta_n^{(r)},f_n^{(r)}),(\beta_m^{(r)},f_m^{(r)}))\le d_\cJ((\alpha_n,f_n),(\alpha_m,f_m))$, so $(\beta_n^{(r)},f_n^{(r)})$, $n\ge 1$, is a Cauchy sequence in $(\cJ,d_\cJ)$; and since for $n\ge N_2$
 $$
  d_\cJ(\beta_n^{(r)},\beta_m^{(r)}) = \max\left\{ \sup\nolimits_{j\in[r]}\left|f_n(a_j^{(n)})-f_m(a_j^{(m)})\right| ,\ \sum\nolimits_{j\in[r]}\left|k_j^{(n)}-k_j^{(m)}\right| \right\},
 $$
 the vector $\left(\left(f_n\left(a_j^{(n)}\right),k_j^{(n)}\right),\ 1\le j\le r\right)$ is a Cauchy sequence in the metric space $(\BR^{2r},\|\cdot\|_\infty)$. By completeness of $(\BR^{2r},\|\cdot\|_\infty)$, we have convergence to a limit $((f_j,k_j),1\le j\le r)$, which gives rise to a $d_\cJ$-limit $(\beta^{(r)}, f^{(r)})\in\cJ$ of $((\beta_n^{(r)},f_n^{(r)}),n\ge 1)$. By construction, $(\beta^{(r)},f^{(r)})$ is consistent as $r$ varies, in the sense that they are related by insertions of intervals of sizes from $\fs$, and natural correspondences demonstrate that convergence $(\beta^{(r)},f^{(r)})\rightarrow(\beta,f)$ holds in $\cJ$ for a limiting $(\beta,f)\in\cJ$ that incorporates intervals of all sizes $s^{(i)}$, $i\ge 1$.
 
 Finally, let $\epsilon>0$ and $N_1\ge 1$ be as above. Then there is $r$ large enough so that, following the notation of \eqref{eq:IP:separate_LT_mass},
 $$\IPmag{\beta^{\IPLT}_{s^{(r)}}} = \sum_{j\ge r+1}s_j < \epsilon/4.$$
 Since $\fs_n\rightarrow\fs$, there is $N_3\ge N_1$ such that for all $n\ge N_3$, we have $\ell^1(\fs_n,\fs)<\epsilon/4$. Finally, there is $N_4\ge N_3$ so that for all $n\ge N_4$ we have $d_\cJ\left(\left(\beta^{(r)}_n,f_n^{(r)}\right),\ \left(\beta^{(r)},f^{(r)}\right)\right) < \epsilon/4.$ Then for all $n\ge N_4$, we have
 \begin{align*}
  d_\cJ((\alpha_n,\!f_n),(\beta,\!f)) &\leq d_\cJ((\alpha_n,\!f_n),(\beta_n^{(r)}\!,\!f_n^{(r)}))+d_\cJ((\beta^{(r)}_n\!,\!f_n^{(r)}),(\beta^{(r)}\!,\!f^{(r)}))+d_\cJ((\beta^{(r)}\!,\!f^{(r)}),(\beta,\!f))\\
  &< \sum_{j=r+1}^\infty s_j+\ell^1(\fs_n,\fs)+\frac{\epsilon}{4}+\frac{\epsilon}{4}<\epsilon.
 \end{align*}
 Hence, $((\alpha_n,f_n),\ n\ge 1)$ converges to $(\beta,f)$ in $(\cJ,d_\cJ)$. Therefore, $(\cJ,d_\cJ)$ is complete.
\end{proof}

\begin{corollary}\label{corcomplsep}
 $(\cJ,d_\cJ)$ is a complete and separable metric space.
\end{corollary}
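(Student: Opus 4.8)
The statement is essentially a bookkeeping consequence of the lemmas already proved in this appendix, so the plan is simply to assemble them. For completeness, I would observe that the proof of Lemma \ref{lmcompl}(iii) already does the work: there an arbitrary Cauchy sequence $((\alpha_n,f_n),\,n\ge1)$ in $(\cJ,d_\cJ)$ is taken and shown to converge in $(\cJ,d_\cJ)$ to an explicitly constructed pair $(\beta,f)\in\cJ$ (built by passing to the decreasing rearrangement of block sizes via completeness of $(S^\downarrow,\ell^1)$, extracting limits of the finitely-many large blocks and their $f$-values via completeness of $(\BR^{2r},\|\cdot\|_\infty)$, and taking a consistent limit over $r$). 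Thus no new argument is needed for completeness; I would just cite Lemma \ref{lmcompl}(iii).

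For separability, the plan is to transport a countable dense set across the isometry $\iota\colon(\IPspace,\dI)\to(\cJ,d_\cJ)$. By Lemma \ref{lem:IP:sep}, $(\IPspace,\dI)$ is separable, so it has a countable dense subset $S$. Since $\iota$ is an isometric embedding (noted in the proof of Lemma \ref{lmcompl}), $\iota(S)$ is dense in $\iota(\IPspace)$ with respect to $d_\cJ$. By Lemma \ref{lmcompl}(i), $\iota(\IPspace)$ is in turn dense in $(\cJ,d_\cJ)$. Density is transitive, so $\iota(S)$ is a countable dense subset of $(\cJ,d_\cJ)$, which gives separability.

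There is no real obstacle here: both ingredients are already in place, and the only care required is to note that $\iota$ is an isometry (hence preserves density of $S$ in its image) and to invoke transitivity of density. If one wanted to be maximally self-contained, one could instead exhibit a countable dense set in $\cJ$ directly — e.g. pairs $(\beta,g)$ where $\beta$ is a finite interval partition with rational block lengths and $g$ is a right-continuous step function taking rational values with rational jump locations among the block-endpoints — and check density by the same $\alpha^{\IPLT}_\epsilon$/$\alpha^M_\epsilon$ splitting and $\oplus_s m$ insertion construction used in the proof of Lemma \ref{lem:IP:sep} and Lemma \ref{lmcompl}(i); but routing through $\iota$ and the already-established density statements is shorter and I would present that version.

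\begin{proof}
 Completeness of $(\cJ,d_\cJ)$ was established in the course of proving Lemma \ref{lmcompl}. For separability, by Lemma \ref{lem:IP:sep} the space $(\IPspace,\dI)$ has a countable dense subset $S$. The map $\iota\colon(\IPspace,\dI)\to(\cJ,d_\cJ)$ of Lemma \ref{lmcompl} is an isometry, so $\iota(S)$ is dense in $\iota(\IPspace)$ with respect to $d_\cJ$; and $\iota(\IPspace)$ is dense in $(\cJ,d_\cJ)$ by Lemma \ref{lmcompl}(i). Hence the countable set $\iota(S)$ is dense in $(\cJ,d_\cJ)$.
\end{proof}
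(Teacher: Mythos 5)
Your proof is correct and follows the paper's own route: completeness is read off from Lemma \ref{lmcompl}(iii), and separability comes from separability of $(\IPspace,\dI)$ (Lemma \ref{lem:IP:sep}) combined with the fact that $\cJ$ is a completion of $\IPspace$. The paper states the second step more tersely as "the completion of a separable metric space is also separable," while you unpack exactly the same mechanism (isometry preserves density of $\iota(S)$ in $\iota(\IPspace)$, and $\iota(\IPspace)$ is dense in $\cJ$ by Lemma \ref{lmcompl}(i)); the content is identical.
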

\begin{proof} We have shown in the lemmas that $(\cJ,d_\cJ)$ is a complete metric space, and since the completion of a separable metric space is also separable, $(\cJ,d_\cJ)$ is also separable.
\end{proof}

\begin{lemma}
 $(\cJ,d_\cJ)$ is not locally compact.
\end{lemma}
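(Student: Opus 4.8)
The plan is to show $(\cJ,d_\cJ)$ fails to be locally compact at the point $(\emptyset,0)$ by exhibiting, inside an arbitrarily small ball around it, a sequence with no convergent subsequence.

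\smallskip

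First I would introduce the test sequence. For each $n\ge1$ let $\beta_n$ be the interval partition of $[0,r/2]$ into $n$ blocks of equal length $r/(2n)$, and consider $(\beta_n,0)=\iota(\beta_n)\in\cJ$. Indeed $\IPLT_{\beta_n}\equiv 0$: for $h<r/(2n)$ the counting term in \eqref{eq:IPLT} is at most $n$, so $\sqrt h\cdot n\to0$; hence the increasing function attached to $\beta_n$ is the zero function and $(\beta_n,0)$ is a bona fide element of $\cJ$. Using the empty correspondence ($n=0$ in Definition \ref{def:IP:metric}), the distortion between $(\emptyset,0)$ and $(\beta_n,0)$ equals $\max\{0,\,|0-0|,\,0,\,\IPmag{\beta_n}-0\}=r/2<r$, so every $(\beta_n,0)$ lies in the open ball $B\big((\emptyset,0),r\big)$.

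\smallskip

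Next I would argue that no subsequence of $\big((\beta_n,0)\big)_{n\ge1}$ converges in $(\cJ,d_\cJ)$. Suppose for contradiction that $(\beta_{n_k},0)\to(\alpha,f)$ for some $(\alpha,f)\in\cJ$. Comparing the defining infima term by term, and noting that the two mass quantities \ref{item:IP_m:mass_1} and \ref{item:IP_m:mass_2} of a correspondence are among the four whose maximum is the distortion used to define $d_\cJ$ (Lemma \ref{lem:dJ_metric}), one gets $d_H'(\beta_{n_k},\alpha)\le d_\cJ\big((\beta_{n_k},0),(\alpha,f)\big)\to0$. By Proposition \ref{prop:Hausdorff} \ref{item:Haus:equiv}, $\beta_{n_k}\to\alpha$ in $d_H$, i.e.\ $G(\beta_{n_k})\to G(\alpha)$ in the Hausdorff metric on compact subsets of $[0,\infty)$. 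On the other hand, $G(\beta_{n_k})=\{jr/(2n_k)\colon 0\le j\le n_k\}$ is contained in $[0,r/2]$ and is $\big(r/(4n_k)\big)$-dense in it, so $G(\beta_{n_k})\to[0,r/2]$ in the same metric. By uniqueness of limits, $G(\alpha)=[0,r/2]$; but $G(\alpha)$ is Lebesgue-null for every $\alpha\in\HIPspace$, while $\Leb([0,r/2])=r/2>0$ — a contradiction.

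\smallskip

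Thus $B\big((\emptyset,0),r\big)$ contains a sequence with no convergent subsequence. Since any neighbourhood of $(\emptyset,0)$ in $\cJ$ contains such a ball for some $r>0$, and a compact neighbourhood in a metric space is sequentially compact, $(\emptyset,0)$ has no compact neighbourhood, so $(\cJ,d_\cJ)$ is not locally compact. The argument involves no hard estimates; the only point requiring care is the bookkeeping showing that $d_\cJ$-convergence forces $d_H$-convergence of the first coordinates — so that one may pass to the compact sets $G(\cdot)$ — together with the elementary observation that the $G$-images of these uniform partitions fill up a positive-measure interval, which is incompatible with any limit $G(\alpha)$ being Lebesgue-null.
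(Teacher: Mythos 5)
Your proof is correct, and it reaches the conclusion by a genuinely different route from the paper's. Both arguments place a sequence of zero-diversity, increasingly fine uniform partitions of a small fixed interval inside an arbitrarily small $d_\cJ$-ball around $(\emptyset,0)$; the difference is how non-compactness is then extracted. The paper computes directly that any two terms $(\alpha_n,0)$, $(\alpha_m,0)$ with $m<n$ are at $d_\cJ$-distance exactly $1$ (the trivial correspondence is optimal because matching any block attracts a term $2^{-m}-2^{-n}$), scales the whole sequence into the ball of radius $2\epsilon$, and then observes that an $\epsilon$-separated infinite set admits no finite subcover by $\epsilon/2$-balls. You instead appeal to sequential compactness: using $d_H'\le d_\cJ$ (the Hausdorff distortion is the max of the two mass terms, a subset of the four terms defining the $d_\cJ$-distortion), a convergent subsequence would force $G(\beta_{n_k})\to G(\alpha)$ in the Hausdorff metric; but the grid-complements converge to the full interval $[0,r/2]$, which has positive Lebesgue measure, contradicting the requirement in Lemma~\ref{lem:dJ_metric} that the first coordinate of an element of $\cJ$ be an interval partition whose complement is Lebesgue-null. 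Your route avoids any pairwise distance computation at the cost of passing through $d_H$ and a measure argument; the paper's route is a shade more self-contained but needs the explicit distance calculation. Both are sound and of comparable length.
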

\begin{proof}
 Consider the interval partitions $\alpha_n=\{((k-1)2^{-n},k2^{-n}),1\le k\le 2^n\}$ and $f_n\equiv 0$. For $m<n$, any correspondence for $\alpha_m$ and $\alpha_n$ that matches up any intervals of $\alpha_m$ and $\alpha_n$ attracts a term $2^{-m}-2^{-n}\ge 2^{-n}$, so it is best to use the trivial correspondence which gives $d_\cJ((\alpha_n,f_n),(\alpha_m,f_m))=1$. Now assume that $(\emptyset,0)\in\cJ$ has a compact neighbourhood $K$. Then $K$ contains an open ball of some radius $2\epsilon>0$, which contains $(\scaleI[\epsilon][\alpha_n],0)$ for all $n\ge 1$. Covering $K$ with open balls of radius $\epsilon/2$, the open balls around $(\scaleI[\epsilon][\alpha_n],0)$ are disjoint, so there cannot be a finite subcover. This contradicts the compactness of $K$. Hence $(\emptyset,0)$ does not have a compact neighbourhood, and $(\cJ,d_\cJ)$ is not locally compact. 
\end{proof}

Even though $(\cJ,d_\cJ)$ is not locally compact, we can now deduce that $(\IPspace,d_\IPspace)$ is Lusin:

\begin{proof}[Proof of Proposition \ref{prop:Lusin2} and Theorem \ref{thm:Lusin}.] Lemma \ref{lmcompl} and Corollary \ref{corcomplsep} yield that 
$(\IPspace,d_\IPspace)$ is isometric to a Borel subset of the Polish space $(\cJ,d_\cJ)$, completing the proof of Proposition \ref{prop:Lusin2}. By \cite[Theorem II.82.5]{RogersWilliams}, this implies $(\IPspace,d_\IPspace)$ is Lusin, which proves Theorem \ref{thm:Lusin}.
\end{proof}

We now proceed towards proving Proposition \ref{prop:Hausdorff} \ref{item:Haus:sig}. Recall the Skorokhod metric of \cite[equations (14.12), (14.13)]{Billingsley}; we denote this by $d_{\cD}$. For $n\ge1$, let $\cJ_n\subseteq\cJ$ denote the set of $(\beta,f)\in\cJ$ for which $\beta$ has exactly $n$ blocks.

\begin{lemma}\label{lem:Haus_Skor_fin}
 For $n\geq1$, the metric $\dJ$ on $\cJ_n$ is topologically equivalent to the maximum of $\dH'$ in the first coordinate and the $d_{\cD}$ in the second.
\end{lemma}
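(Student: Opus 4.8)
The plan is to identify both metrics on $\cJ_n$ with the Euclidean product topology on a finite-dimensional parameter space, by showing all three have the same convergent sequences (metric topologies being determined by their convergent sequences). Since every $(\beta,f)\in\cJ_n$ has $\beta$ consisting of exactly $n$ blocks, each of strictly positive width, we may write $\beta=\{(a_{i-1},a_i)\colon i\in[n]\}$ with $0=a_0<a_1<\cdots<a_n=\IPmag\beta$, and encode $\beta$ by $(m_1,\dots,m_n)\in(0,\infty)^n$, $m_i:=a_i-a_{i-1}$; then $f$ is the right-continuous step function equal to $v_i$ on $[a_{i-1},a_i)$ and to $v_{n+1}:=f(\infty)$ on $[a_n,\infty]$, with $v_1\le\cdots\le v_{n+1}$. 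This yields a bijection between $\cJ_n$ and $P_n:=(0,\infty)^n\times\{v\in[0,\infty)^{n+1}\colon v_1\le\cdots\le v_{n+1}\}$, and it suffices to prove: a sequence in $\cJ_n$ converges in $\dJ$ (resp.\ in $\rho:=\max\{\dH',d_{\cD}\}$) iff its parameters converge in $P_n$.

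First I would handle the implications from parameter convergence. If $m_i^{(k)}\to m_i$ and $v_i^{(k)}\to v_i$, the natural correspondence pairing the $i$-th block of $\beta^{(k)}$ with the $i$-th block of $\beta$ has, by Lemma \ref{lem:dJ_metric} and the definition of $\dJ$, distortion $\max\bigl\{\max_{i\le n}|v_i^{(k)}-v_i|,\ |v_{n+1}^{(k)}-v_{n+1}|,\ \sum_{i\le n}|m_i^{(k)}-m_i|\bigr\}\to0$ (the leftover-mass terms $\IPmag{\beta^{(k)}}-\sum_i m_i^{(k)}$ vanish identically), so $\dJ\to0$; discarding the $v$-terms gives $\dH'(\beta^{(k)},\beta)\to0$. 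Simultaneously $d_{\cD}(f^{(k)},f)\to0$: composing $f^{(k)}$ with the piecewise-linear time change $\lambda_k$ carrying each $a_i$ to $a_i^{(k)}$ (which tends uniformly to the identity) produces a step function with breakpoints exactly at the $a_i$ and values $v_i^{(k)}$, so $\|f^{(k)}\circ\lambda_k-f\|_\infty\le\max_i|v_i^{(k)}-v_i|\to0$. Hence $\rho\to0$.

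For the converses, suppose $\dJ((\beta^{(k)},f^{(k)}),(\beta,f))\to0$. As the Hausdorff distortion of any correspondence is dominated by its $\dJ$-distortion, $\dH'(\beta^{(k)},\beta)\to0$, so by Proposition \ref{prop:Hausdorff} the complements $G(\beta^{(k)})=\{0,a_1^{(k)},\dots,a_n^{(k)}\}$ converge in Hausdorff distance to the $(n+1)$-point set $G(\beta)=\{0,a_1,\dots,a_n\}$; since the latter's points are distinct, a pigeonhole argument forces $a_i^{(k)}\to a_i$, hence $m_i^{(k)}\to m_i$. Now fix $\eta<\tfrac14\min_i m_i$; for $k$ large, any correspondence of $\dJ$-distortion $<\eta$ leaves unmatched mass of $\beta^{(k)}$ below $\eta<\min_i m_i^{(k)}$, so it can leave no block unmatched and is therefore the natural one; its quantities (i)–(ii) are $\max_{i\le n}|v_i^{(k)}-v_i|$ and $|v_{n+1}^{(k)}-v_{n+1}|$, both $<\eta$, giving $v_i^{(k)}\to v_i$. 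If instead $\rho\to0$, then $\dH'\to0$ again yields $a_i^{(k)}\to a_i$ and $m_i^{(k)}\to m_i$, while for each $i\le n$ the midpoint $c_i\in(a_{i-1},a_i)$ is a continuity point of $f$ with $f(c_i)=v_i$ and, eventually, $c_i\in(a_{i-1}^{(k)},a_i^{(k)})$ with $f^{(k)}(c_i)=v_i^{(k)}$, so $d_{\cD}\to0$ together with continuity of $f$ at $c_i$ forces $v_i^{(k)}\to v_i$; evaluating past $a_n$ gives $v_{n+1}^{(k)}\to v_{n+1}$.

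Therefore $\dJ$-, $\rho$-, and parameter-convergence coincide on $\cJ_n$, so $\dJ$ and $\max\{\dH',d_{\cD}\}$ are topologically equivalent there. I expect the main care to lie in the Skorokhod-metric steps — in particular dealing cleanly with degenerate parameters where consecutive $v_i$ coincide, so $f$ has fewer than $n$ genuine jumps — and in making precise the Hausdorff-to-coordinatewise convergence of the finite sets $G(\beta^{(k)})$, which is exactly the place where the hypothesis of \emph{exactly} $n$ blocks (forcing all $m_i>0$, hence the $a_i$ distinct) is essential; on $\HIPspace$ without this restriction the equivalence fails, as Proposition \ref{prop:Hausdorff}\ref{item:Haus:neq} shows.
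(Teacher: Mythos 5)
Your proof is correct, and it takes a genuinely different route from the paper's. The paper works directly with the metrics: it fixes $(\beta,f)$ and a radius $r$ smaller than every block mass of $\beta$, and shows that for $(\gamma,g)\in\cJ_n$ the inequality $\dJ((\beta,f),(\gamma,g))<r$ holds if and only if both $\dH'(\beta,\gamma)<r$ and $d_\cD(f,g)<r$; the crux in one direction is that a Skorokhod time-change within $r$ of the identity must carry a point of each $U_i$ into the corresponding $V_i$, and in the other direction one builds the piecewise-linear time-change matching block endpoints. Your argument instead parametrizes $\cJ_n$ by $(m_1,\dots,m_n,v_1,\dots,v_{n+1})\in P_n\subset\BR^{2n+1}$ and shows that $\dJ$-convergence, $\max\{\dH',d_\cD\}$-convergence, and coordinatewise parameter convergence all coincide. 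Both approaches hinge on the same two observations — that the "exactly $n$ blocks, all of positive mass" hypothesis forces the only small-distortion correspondence to be the natural one, and that a small Skorokhod time-change is compatible only with matching the $i$-th blocks — but your reduction to Euclidean convergence makes the logic more mechanical (three implications from/to a common finite-dimensional picture) at the cost of a bit more scaffolding, while the paper's version is terser and yields a slightly sharper local metric comparison rather than mere topological equivalence. Your closing remarks are accurate: the degenerate case where adjacent $v_i$ coincide is harmless because the midpoint argument only uses that $f$ is locally constant, not that it jumps; and the pigeonhole step converting Hausdorff convergence of $(n+1)$-point sets into convergence of the ordered endpoints $a_i$ is precisely where the hypothesis $\min_i m_i>0$ enters, consistent with Proposition \ref{prop:Hausdorff}\ref{item:Haus:neq} showing the equivalence fails without such a bound.
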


\begin{proof}
 Fix $(\beta,f)\in\cJ_n$. We denote the blocks of $\beta$ by $U_1,\ldots,U_n$, in left-to-right order. Take $r\in \big(0,\min_{j\in [n]}\Leb(U_j)\big)$. We will show that, for $(\gamma,g)\in\cJ_n$, we get $\dJ((\beta,f),(\gamma,g))<r$ if and only if both $\dH'(\beta,\gamma) < r$ and $d_{\cD}(f,g) < r$.
 
 Consider $(\gamma,g)\in\cJ_n$ with $\dH'(\beta,\gamma) < r$ and $d_{\cD}(f,g) < r$. Since we have required $r$ to be smaller than all block masses in $\beta$, the only correspondence from $\beta$ to $\gamma$ that can have Hausdorff distortion less than $r$ is $(U_i,V_i)_{i\in[n]}$, where $V_1,\ldots,V_n$ denote the blocks of $\gamma$ in left-to-right order. In particular, $\sum_{i\in[n]}|\Leb(V_i)-\Leb(U_i)| < r$. Thus, in order for a continuous time-change $\lambda\colon [0,\IPmag{\beta}]\to [0,\IPmag{\gamma}]$ to never deviate from the identity by $r$, it must map some time in each $U_i$ to a time in the corresponding $V_i$. Therefore, by our bound on $d_{\cD}$, we have $\max_{i\in[n]}|g(V_i)-f(U_i)| < r$. We conclude that $\dJ((\beta,f),(\gamma,g))<r$.
 
 Now, consider $(\gamma,g)\in\cJ_n$ with $\dJ((\beta,f),(\gamma,g)) < r$. Following our earlier notation, the only correspondence that can give distortion less than $r$ is $(U_i,V_i)_{i\in[n]}$. As in Proposition \ref{prop:Hausdorff} \ref{item:Haus:leq}, it follows immediately from the characterization of $\dJ$ in Lemma \ref{lem:dJ_metric} and Definition \ref{def:Hausdorff} of $\dH'$ that $\dH'(\beta,\gamma)\leq \dJ((\beta,f),(\gamma,g)) < r$. We define $\lambda\colon [0,\IPmag{\beta}]\to [0,\IPmag{\gamma}]$ by mapping the left and right endpoints of each $U_j$ to the corresponding left and right endpoints of $V_j$ and interpolating linearly. Since $\sum_{i\in[n]}|\Leb(V_i)-\Leb(U_i)| < r$, it follows that $|\lambda(t)-t| < r$ for $t\in [0,\IPmag{\beta}]$ as well. By definition of $\dJ$, we have $|g(V_i)-f(U_i)| < r$ for each $i\in [n]$. Thus, $|g(\lambda(t)) - f(t)| < r$ for $t\in [0,\IPmag{\beta}]$. This gives $d_{\cD}(f,g) < r$.
 %
 %
 %
\end{proof}

\begin{proof}[Proof of Proposition \ref{prop:Hausdorff} (iv)]
 In light of Proposition \ref{prop:Hausdorff} (ii)-(iii), we need only check that all $\dI$-balls are Borel sets with respect to $d_H'$. Recall the $\beta_n$ and $D_{\beta,n}$ of Lemma \ref{lem:dJ_meas}. By Lemma \ref{lem:dJ_meas} \ref{item:dJm:cnvgc}, the $\dI$-ball of radius $r>0$ about $\beta$ equals
 $$\bigcup_{m\ge1}\bigcup_{N\ge1}\bigcap_{n>N}\left\{\gamma\in\IPspace\colon d_{\cJ}\big((\beta_n,D_{\beta,n}),(\gamma_n,D_{\gamma,n})\big) < r-m^{-1}\right\}.$$
 The claimed measurability now follows by Lemmas \ref{lem:dJ_meas} \ref{item:dJm:order_stats} and \ref{lem:Haus_Skor_fin}.
\end{proof}

\section{Statistics of clades and \Stable$\left(\frac32\right)$ excursions}
\label{sec:clade_stats}

In this section we prove Proposition \ref{prop:clade:stats}, as well as some related results on the \Stable[\frac32] process $\bX$ of Proposition \ref{prop:stable_JCCP}. More results in the vein of Proposition \ref{prop:clade:stats2} may be derived from these in a similar manner. Several of the following may be construed as descriptions of the It\^o measure $\mSxc$ associated with $\bX$.

\begin{proposition}\label{prop:clade:stats2}
 \begin{enumerate}[label=(\roman*), ref=(\roman*)]
  \item $\displaystyle \mClade\big\{\len > x\big\} = \frac{3}{2^{2/3}\pi^{1/3}\Gamma\left(2/3\right)}x^{-1/3}.$\label{item:CS2:len}\vspace{-.1cm}
  \item $\displaystyle \mClade\big\{ J > y\big\} = \frac{3}{\pi\sqrt{2}}y^{-1/2}.$\label{item:CS2:crossing}
  \item $\displaystyle \mClade\big\{ J^+ \in dy\;\big|\;m^0 = a\big\} = \frac{a^{3/2}}{\sqrt{2\pi}y^{5/2}}e^{-a/2y}dy.$\label{item:CS2:over:mass}
  \item $\displaystyle \mClade\big\{ m^0 > a \big\} = \frac{1}{\sqrt{\pi}}a^{-1/2}.$\label{item:CS2:mass}
  \item $\displaystyle \mClade\big\{ J^+ > y \big\} = \frac{\sqrt{2}}{\pi}y^{-1/2}.$\label{item:CS2:over}
  \item $\displaystyle \mClade\{ m^0 \leq a\;|\;J^+ = y\} = 1 - e^{-a/2y}.$\label{item:CS2:mass:over}
  \item $\displaystyle \mClade\big\{ \life^+ \leq z\;\big|\;J^+ = y\big\} = \cf\{z\geq y\}\sqrt{\frac{z-y}{z}}.$\label{item:CS2:max:over}
  \item $\displaystyle \mClade\big\{ \life^+ \leq z\;\big|\;m^0 = a\big\} = e^{-a/2z}.$\label{item:CS2:max:mass}
  \item $\displaystyle \mClade\big\{ \life^+ > z \big\} = \frac{1}{\sqrt{2}}z^{-1/2}.$\label{item:CS2:max}\vspace{-.1cm}
  \item $\displaystyle \mClade\{ m^0 \in da\;|\;\life^+ \geq z\} = \frac{z^{1/2}}{\sqrt{2\pi}a^{3/2}}(1-e^{-a/2z})da.$\label{item:CS2:mass:max}
  %
\end{enumerate}
 Each of these identities also holds if we replace all superscript `+'s with `-'s.
\end{proposition}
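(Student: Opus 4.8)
The plan is to prove Proposition \ref{prop:clade:stats2} (equivalently Proposition \ref{prop:clade:stats}, which is a sub-collection of the same identities) by combining the clade-decomposition of Proposition \ref{prop:clade_splitting} with classical fluctuation theory for the spectrally positive \Stable[\frac32] process $\bX$ of Proposition \ref{prop:stable_JCCP}. The backbone of the argument is that under $\mClade^+$, by Proposition \ref{prop:clade_splitting} a clade $N^+$ has the form $\Dirac{0,\bff}+\restrict{\bN}{[0,T^{-\life(\bff)})}$ where $\bff$ is a \BESQ[-1] started from $m^0$ and, given $\bff$, $\bN$ is an independent \PRM[\Leb\otimes\mBxc]. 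This lets us express $J^+ = \life(\bff) = \zeta(\bff)$, $\life^-(N^+) = 0$, $\life^+(N^+) = \life(\bff)+\sup\bX$ on $[0,T^{-\life(\bff)}]$, and $\len(N^+) = T^{-\life(\bff)}$, each in terms of well-understood objects. Symmetry of $\mClade$ under $\reverseH$ (Lemma \ref{lem:clade:invariance}), together with the identities $(\reverseH(N))^- = \reverseH(N^+)$ and $m^0(\reverseH(N)) = m^0(N)$, reduces the `-' versions to the `+' versions, and the identities for $\mClade$ itself follow since $m^0(N) = m^0(N^+)$ and $\life^+(N) = \life^+(N^+)$, etc.

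The key inputs I would assemble first are: (a) Lemma \ref{lem:BESQ:length}, giving $\zeta(\bff)\sim\InvGammaDist[3/2,m^0/2]$, which immediately yields \ref{item:CS2:over:mass} (i.e.\ $\mClade\{J^+\in dy\mid m^0=a\}$) and, upon integrating against $\mClade\{m^0\in da\} = \frac{1}{2\sqrt\pi}a^{-3/2}da$ from \ref{item:CS2:mass}, yields \ref{item:CS2:over} and \ref{item:CS2:crossing}; (b) the first-passage Laplace transform $\EV[e^{-\theta T^{-y}}] = \exp(-y\psi^{-1}(\theta)) = \exp(-y(\pi/2)^{1/3}\theta^{2/3})$ for the spectrally positive \Stable[\frac32] process — this is exactly the computation already carried out in the proof of Proposition \ref{prop:clade_lengths_summable} \ref{item:CLS:CLS}, and inverting the stable-subordinator tail gives \ref{item:CS2:len}; (c) the classical formula for the supremum of $\bX$ before hitting $-y$: since $-\bX$ is spectrally negative \Stable[\frac32] and $\sup_{[0,T^{-y}]}\bX$ is the all-time maximum of the process killed at $T^{-y}$, scaling and the reflection/ladder structure give $\mClade\{\life^+ \le z \mid J^+ = y\} = \mathbf{P}(y + \sup_{[0,T^{-y}]}\bX \le z)$, and the relevant distribution for spectrally one-sided stable processes is $\cf\{z\ge y\}\sqrt{(z-y)/z}$ — this is \ref{item:CS2:max:over}. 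For $m^0$ I would establish \ref{item:CS2:mass} directly from Lemma \ref{lem:heavy_clades_discrete} and the scaling $m^0(\scaleH[c][N]) = cm^0(N)$ via Lemma \ref{lem:scl_ker}: the tail must be $c\, a^{-1/2}$ for some constant $c$, and the constant is pinned down by matching against \ref{item:CS2:over:mass} integrated, or against the \BESQ[-1] excursion amplitude computation of Lemma \ref{lem:BESQ:exc_length}.

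From these, the remaining identities are bookkeeping: \ref{item:CS2:mass:over} follows from \ref{item:CS2:over:mass} and \ref{item:CS2:mass} by Bayes' rule ($\mClade\{m^0\le a\mid J^+=y\} = \int_0^a \mClade\{J^+\in dy\mid m^0=a'\}\mClade\{m^0\in da'\}/\mClade\{J^+\in dy\}$), and one recognizes the resulting integral as $1-e^{-a/2y}$; \ref{item:CS2:max:mass} follows by mixing \ref{item:CS2:max:over} over $J^+$ using \ref{item:CS2:mass:over} (or equivalently by noting $\life^+ \le z$ iff the \BESQ[-1] never exceeds $z$ while alive plus the scaffolding stays below $z-J^+$, and combining); \ref{item:CS2:max} is \ref{item:CS2:max:mass} integrated against $\mClade\{m^0\in da\}$; and \ref{item:CS2:mass:max} is another Bayes inversion of \ref{item:CS2:max:mass} against \ref{item:CS2:mass}. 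For Lemma \ref{lem:LMB} (the conditional law of $m^y$) I would use the mid-spindle Markov property, Lemma \ref{lem:mid_spindle_Markov}, at the first crossing time $T^{\geq y}$: conditionally on surviving past level $y$, $m^y$ is the value at level $y$ of the leftmost spindle, which is a \BESQ[-1] bridge-like object, and its density is obtained by a Doob $h$-transform / first-passage conditioning of \BESQ[-1], producing the stated combination of $\cosh$ and $\sinh$; Lemma \ref{lem:transn:LMB}'s Corollary \ref{cor:clade_law_given_over} then follows by mixing.

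The main obstacle I anticipate is pinning down the precise distribution in \ref{item:CS2:max:over}, i.e.\ the law of the overall supremum of the \Stable[\frac32] path before its first passage below $-y$. This requires care with the spectrally one-sided fluctuation identities — specifically, the joint law of the maximum and the first passage time, or equivalently the scale functions of $-\bX$ — and one must check that creeping does not contribute (which is handled by Millar's result, already cited in the proof of Proposition \ref{prop:nice_level}, that spectrally positive \Stable[\frac32] does not creep upward, so that $\sup_{[0,T^{-y}]}\bX$ is achieved by a jump and the naive continuous formula is correct). A secondary, more computational nuisance will be carrying the normalizing constants consistently through the chain of Bayes inversions, since the excursion measure $\mBxc$ and the induced $\mClade$ carry the specific constants fixed in Proposition \ref{prop:stable_JCCP} and the definition of $\mBxc$; I would fix all constants once, at \ref{item:CS2:mass} and \ref{item:CS2:over:mass}, and then only track ratios thereafter.
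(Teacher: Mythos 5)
Your overall strategy matches the paper's: use the clade decomposition of Proposition~\ref{prop:clade_splitting} to reduce $J^+\mid m^0$ to a \BESQ[-1] lifetime (Lemma~\ref{lem:BESQ:length}), use exit/scale-function identities for the spectrally positive \Stable[\frac32] process to get $\life^+\mid J^+$, and chain the remaining identities by Bayes inversion and integration, with time-reversal (Lemma~\ref{lem:clade:invariance}) handling the `$-$' variants. Your treatment of \itemref{item:CS2:over:mass}, \itemref{item:CS2:max:over}, and the downstream bookkeeping is sound.

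However, there is a genuine gap in how you propose to pin down the normalization of $\mClade$. You correctly note that scaling invariance determines the shape $\mClade\{m^0>a\}=c\,a^{-1/2}$, but your two proposed routes to the constant $c$ both fail. Matching against ``\itemref{item:CS2:over:mass} integrated'' is circular: integrating $\mClade\{J^+\in dy\mid m^0=a\}$ against $\mClade\{m^0\in da\}=c\,a^{-3/2}\,da$ produces $\mClade\{J^+\in dy\}$ as a multiple of $c$, so you have one equation relating two unknowns with no anchor. The alternative you mention, ``matching against the \BESQ[-1] excursion amplitude computation of Lemma~\ref{lem:BESQ:exc_length},'' does not directly work either: $m^0(N)$ is the value of the \emph{middle} spindle at the random level where the scaffolding crosses zero, which is not an amplitude drawn from $\mBxc$, and relating the rate of such crossings in local time to the rate of amplitude-$\geq m$ spindles in time reintroduces exactly the unknown local-time normalization.

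What the paper uses as the anchor is Proposition~\ref{prop:inv_LT:subord}: the Laplace exponent of the inverse local time $\tau^0(\cdot)$ is computed exactly as $\Phi(\theta)=3(\theta/4\pi)^{1/3}$, and this is proved from the identity $\Pr\{\bX(t)\le 0\}=2/3$ together with the first-passage Laplace exponent $\psi^{-1}(\theta)=(\pi/2)^{1/3}\theta^{2/3}$. From there, $\mClade\{\len\in\cdot\}$ is the L\'evy measure of $\tau^0$, which fixes \itemref{item:CS2:len}; combining with $\len\mid J=y \stackrel{d}{=}T^{-y}$ fixes the constant in \itemref{item:CS2:crossing}; and only then does integrating the i.i.d.\ over/undershoot laws from \itemref{item:CS2:over:mass} against $\mClade\{m^0\in da\}=c\,a^{-3/2}\,da$ and matching to \itemref{item:CS2:crossing} determine $c=1/(2\sqrt\pi)$. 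You need this (or an equivalent) independent determination of the local-time normalization; without it the constants in \itemref{item:CS2:len}, \itemref{item:CS2:crossing}, \itemref{item:CS2:mass}, \itemref{item:CS2:over}, \itemref{item:CS2:max}, and \itemref{item:CS2:mass:max} are all undetermined up to a common factor. (The conditional laws \itemref{item:CS2:over:mass}, \itemref{item:CS2:mass:over}, \itemref{item:CS2:max:over}, \itemref{item:CS2:max:mass} are unaffected since normalization cancels there.)

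One secondary remark: for Lemma~\ref{lem:LMB} you propose a Doob $h$-transform / first-passage conditioning of \BESQ[-1]; the paper instead uses a coupling argument (Lemma~\ref{lem:LMB_reversal}) relating $m^y$ under $\mClade^+(\,\cdot\;|\;J^+=z,\life^+>y)$ to $m^0$ under $\mClade^+(\,\cdot\;|\;\life^+\in(y-z,y))$ by time reversal, then mixes. Your route is plausible but would need care because the conditioning is two-sided (\BESQ[-1] started at $a$, conditioned to be alive at level $y$ but also to hit $0$ by level $y$ when combined with the overshoot description), and you would still need to recombine with the scaffolding contribution; the paper's coupling sidesteps this.
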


The equivalence when replacing `+'s with `-'s follows from the time-reversal invariance of Lemma \ref{lem:clade:invariance}. Before proving these identities we note a pair of relevant properties of $\bX$. Recall that $(T^y,\,y\in\BR)$ denotes the first hitting times for $\bX$.

\begin{proposition}[Theorem VII.1 of \cite{BertoinLevy}]\label{prop:hitting_time:subord}
 The hitting time process $(T^{-y},\,y\geq 0)$ is \Stable[\frac23] subordinator, and its Laplace exponent is the inverse $\psi^{-1}$ of the Laplace exponent of $\bX$:
 \begin{equation}
  \EV\left[ e^{-\theta T^{-y}} \right] = e^{-y\psi^{-1}(\theta)}, \quad \text{where} \quad \psi^{-1}(\theta) = \left(\frac{\pi}{2}\right)^{1/3}\theta^{2/3}.\label{eq:hitting_time:Laplace}
 \end{equation}
\end{proposition}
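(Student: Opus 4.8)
Proof proposal for Proposition \ref{prop:hitting_time:subord} (hitting times form a \Stable$(\tfrac23)$ subordinator)

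The plan is to invoke the standard theory of first-passage times for spectrally positive Lévy processes directly, which is exactly the content of \cite[Theorem VII.1]{BertoinLevy}. First I would recall the setup: since $\bX$ is spectrally positive (equivalently, $-\bX$ is spectrally negative), it creeps downward, so the first passage below a level is achieved continuously. Consequently, for each $y\geq 0$ the hitting time $T^{-y}=\inf\{t\geq 0\colon \bX(t)=-y\}=\inf\{t\geq 0\colon \bX(t)\leq -y\}$ is a.s.\ finite (as $\bX$ drifts to $-\infty$, or more precisely since the Laplace exponent $\psi(\lambda)=\sqrt{2/\pi}\,\lambda^{3/2}$ has $\psi'(0+)=0$ but the process nonetheless oscillates and hits every negative level, because it has unbounded variation and no negative jumps). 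By the strong Markov property applied at $T^{-y}$ and the absence of negative jumps, the map $y\mapsto T^{-y}$ has stationary independent increments and is non-decreasing and right-continuous, hence is a subordinator.

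Next I would identify its Laplace exponent. The general fact (Bertoin, Theorem VII.1, or the Lévy--Khintchine/Wiener--Hopf analysis for spectrally one-sided processes) is that the Laplace exponent of $(T^{-y},\,y\geq 0)$ is the inverse function of $\psi$, the Laplace exponent of $\bX$ normalized so that $\EV[e^{-\lambda \bX(t)}]=e^{t\psi(\lambda)}$. Concretely, one checks that $M_t:=\exp(-\lambda \bX(t)-t\psi(\lambda))$ is a martingale, applies optional stopping at $T^{-y}\wedge t$, uses the continuity of $\bX$ at $T^{-y}$ (no overshoot, by spectral positivity) together with dominated convergence to pass to the limit, obtaining $\EV[e^{-\psi(\lambda)T^{-y}}]=e^{-\lambda y}$; substituting $\theta=\psi(\lambda)$, i.e.\ $\lambda=\psi^{-1}(\theta)$, gives $\EV[e^{-\theta T^{-y}}]=e^{-y\psi^{-1}(\theta)}$. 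Since $\psi(\lambda)=\sqrt{2/\pi}\,\lambda^{3/2}$, we solve $\theta=\sqrt{2/\pi}\,\lambda^{3/2}$ to get $\lambda=\psi^{-1}(\theta)=(\pi/2)^{1/3}\theta^{2/3}$, which is the Laplace exponent of a \Stable$(\tfrac23)$ subordinator. This establishes both displayed formulas in \eqref{eq:hitting_time:Laplace} and the claim that $(T^{-y},\,y\geq 0)$ is \Stable$(\tfrac23)$.

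The only point requiring a little care is the justification of optional stopping / the finiteness of $T^{-y}$; the cleanest route is simply to cite \cite[Theorem VII.1]{BertoinLevy}, which gives the identity $\EV[e^{-\theta T^{-y}}]=e^{-y\psi^{-1}(\theta)}$ for spectrally positive Lévy processes with $\psi$ as above, with $T^{-y}<\infty$ a.s.\ precisely because $\psi$ is regularly varying with index $3/2>1$ at infinity (so $\bX$ is not a subordinator and hits every level). The main obstacle is thus not a genuine difficulty but merely bookkeeping of sign conventions, since Bertoin states the result for the supremum process of a spectrally negative process; applying it to $-\bX$ and translating the convention for $\psi$ (our $\psi$ versus Bertoin's $\Phi$) yields the stated form. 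Given the scope of the paper, I would keep the proof to a single sentence citing \cite[Theorem VII.1]{BertoinLevy} and the explicit computation of $\psi^{-1}$.
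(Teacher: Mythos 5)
The paper gives no proof here—the proposition is stated as a direct citation of \cite[Theorem VII.1]{BertoinLevy}—and your proposal correctly identifies that a one-line citation plus the algebraic inversion of $\psi(\lambda)=\sqrt{2/\pi}\,\lambda^{3/2}$ is all that is required. The martingale argument you sketch (exponential martingale $e^{-\lambda\bX(t)-t\psi(\lambda)}$, optional stopping at $T^{-y}$, using absence of negative jumps to get $\bX(T^{-y})=-y$ exactly) is the standard proof behind Bertoin's theorem, and your sign-convention bookkeeping for passing between spectrally positive $\bX$ and Bertoin's spectrally negative setup is exactly the right caveat; so the approach matches the paper's.
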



\begin{proposition}\label{prop:inv_LT:subord}
 For each $y\in\BR$, the shifted inverse local time process $(\tau^y(s)-\tau^y(0),\,s\ge 0)$ is a \Stable[\frac13] subordinator with Laplace exponent $\Phi(\theta) = 3(\theta/4\pi)^{1/3}$.
\end{proposition}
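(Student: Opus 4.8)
The plan is to derive the law of the inverse local time $\tau^y$ from the Poisson structure of the excursions of $\bX$ about level $y$, exactly as in classical It\^o excursion theory for L\'evy processes. By translation invariance of $\bX$ it suffices to treat $y=0$, and since $\tau^0(s) - \tau^0(0)$ is the length accumulated by the complete excursions arising at local times in $[0,s]$, Theorem \ref{thm:excursion_PRM}(ii) tells us that $\bG^0$ is a \PRM[\Leb\otimes\mSxc] on $[0,\infty)\times\DSxc$, and hence
\[
 \big(\tau^0(s)-\tau^0(0),\ s\ge 0\big) = \bigg( \int_{[0,s]\times\DSxc} \len(g)\, d\bG^0(r,g),\ s\ge 0\bigg)
\]
is a subordinator whose L\'evy measure is the pushforward of $\mSxc$ under $g\mapsto\len(g)$. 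So the Laplace exponent is $\Phi(\theta) = \int_{\DSxc}\big(1-e^{-\theta\len(g)}\big)\,d\mSxc(g) = \int_0^\infty (1-e^{-\theta x})\,\nu(dx)$ where $\nu(dx) := \mSxc(\len\in dx)$.

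The key remaining step is therefore to identify $\nu$. I will use Proposition \ref{prop:clade:stats2}\ref{item:CS2:len}, which states $\mClade\{\len > x\} = \frac{3}{2^{2/3}\pi^{1/3}\Gamma(2/3)}x^{-1/3}$; since $\mClade$ is, by Proposition \ref{prop:bi-clade_PRM}(ii), a $\xi$-disintegration of the clade measure and $\len(N) = \len(\xi(N))$, and since $\mClade(\xi\in B) = \mSxc(B)$, the length statistics of $\mClade$ and $\mSxc$ agree: $\mSxc\{\len > x\} = \frac{3}{2^{2/3}\pi^{1/3}\Gamma(2/3)}x^{-1/3}$. Thus $\nu$ is a power-law measure $\nu(dx) = c\, x^{-4/3}\,dx$ with tail constant determined above, making $\tau^0 - \tau^0(0)$ a stable subordinator of index $1/3$. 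Then I would compute the Laplace exponent by the standard integral $\int_0^\infty (1-e^{-\theta x}) x^{-4/3}dx = \theta^{1/3}\Gamma(-1/3+1)\cdot(\text{const})$; more precisely, using $\int_0^\infty(1-e^{-\theta x})x^{-\alpha-1}dx = \frac{\Gamma(1-\alpha)}{\alpha}\theta^\alpha$ for $\alpha\in(0,1)$ with $\alpha = 1/3$, together with the reflection/duplication identities for the Gamma function needed to collapse the constants into the clean form $3(\theta/4\pi)^{1/3}$. As a sanity check, one can alternatively invoke Proposition \ref{prop:hitting_time:subord}: the inverse local time and the hitting-time subordinator are related by the general fluctuation-theory identity, and the exponents $\psi^{-1}(\theta) = (\pi/2)^{1/3}\theta^{2/3}$ and $3(\theta/4\pi)^{1/3}$ should be mutually consistent with the known scaling of $\bX$.

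I expect the only real obstacle to be bookkeeping of multiplicative constants: matching $\frac{3}{2^{2/3}\pi^{1/3}\Gamma(2/3)}$ against $\frac{\Gamma(2/3)}{1/3}\cdot c$ and simplifying via $\Gamma(1/3)\Gamma(2/3) = \pi/\sin(\pi/3) = 2\pi/\sqrt3$ to arrive at $\Phi(\theta) = 3(\theta/4\pi)^{1/3}$. Everything else — that the integral of a \PRM\ intensity gives a subordinator, that the L\'evy measure is the pushforward of $\mSxc$, and that $\tau^0(0)$ is a.s.\ finite (Theorem \ref{thm:excursion_PRM}(ii)) — is routine. I would also remark that this proof is essentially a special case of Proposition \ref{prop:agg_mass_subord}'s technique, and indeed $\tau^y$ being a stable-$1/3$ subordinator is the ``level'' analogue of the aggregate mass process being a stable-$1/2$ subordinator.
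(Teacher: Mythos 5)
There is a genuine gap in the proposal: circularity. You propose to read the constant in the L\'evy measure from Proposition~\ref{prop:clade:stats2}\ref{item:CS2:len}, i.e.\ $\mSxc\{\len > x\} = \frac{3}{2^{2/3}\pi^{1/3}\Gamma(2/3)}x^{-1/3}$, and then integrate to get the Laplace exponent. But the paper proves Proposition~\ref{prop:clade:stats2}\ref{item:CS2:len} \emph{from} Proposition~\ref{prop:inv_LT:subord}: the scaling invariance of $\mClade$ only pins down $\mClade\{\len > x\} = Cx^{-1/3}$ up to an unknown $C$, and the paper then determines $C$ precisely by matching $\Phi(\theta) = 3(\theta/4\pi)^{1/3}$, taken as already known. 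So what you are doing is equivalent to assuming the Laplace exponent in order to derive it. The excursion-theoretic framework in your proposal is sound — $\bG^0$ is a \PRM{} by Theorem~\ref{thm:excursion_PRM}, so $\tau^0(\cdot)-\tau^0(0)$ is a subordinator with L\'evy measure $\mSxc(\len\in\cdot)$, and scaling forces a $\Stable[\frac13]$ index — but the multiplicative constant needs an input that is logically independent of the conclusion.

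The paper's proof supplies exactly such an input. After establishing the $\Stable[\frac13]$ self-similarity of $\tau^0$ (giving $\EV[e^{-\theta\tau^0(s)}]=e^{-sb\theta^{1/3}}$ for some unknown $b$), it introduces an independent exponential time $S_\theta$ and $K_\theta := \EV[\ell^0(S_\theta)] = b^{-1}\theta^{-1/3}$, then uses the hitting-time Laplace exponent from Proposition~\ref{prop:hitting_time:subord} (proved directly from \cite{BertoinLevy}, not from this proposition) to get $\EV[\ell^y(S_\theta)] = e^{y\psi^{-1}(\theta)}K_\theta$ for $y\le0$, and finally solves for $b$ from the known value $\Pr(\bX(t)\le0)=2/3$ via the occupation-density identity and Fubini. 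Your ``sanity check'' paragraph gestures at exactly this route — the relation between $\tau^0$ and the first-passage subordinator $T^{-y}$ — but only as a consistency check, not as the main argument. If you promote that remark to the central calculation (and make the occupation-measure computation explicit, including the $2/3$ spatial-positivity constant for a spectrally positive $\Stable[\frac32]$ process), the circularity disappears and you recover the paper's proof.
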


\begin{proof}
 It is straightforward to check that this is a \Stable[\frac13] subordinator. For $f\colon \BR\to\BR$ bounded and measurable,
 \begin{equation*}
 \begin{split}
  \int_{-\infty}^\infty f(y)\ell^y(t)dy &= \int_0^t f(\bX(s))ds \stackrel{d}{=} \int_0^t f\left( c^{2/3}\bX\left(\frac{s}{c}\right) \right)ds = \int_0^{t/c} f\left(c^{2/3}\bX (r) \right)cdr\\
  	&= \int_{-\infty}^\infty cf\left(c^{2/3}y\right)\ell^y\left(\frac{t}{c}\right)dy = \int_{-\infty}^\infty f(z)c^{1/3}\ell^{c^{-2/3}z}\left(\frac{t}{c}\right)dz.
 \end{split}
 \end{equation*}
 Hence $(c^{1/3}\ell^{c^{-2/3}y}(t/c);\ t\ge 0,y\in\BR) \stackrel{d}{=}(\ell^y(t);\ t\ge 0,y\in\BR)$ and so
 \begin{equation*}
  \tau^0(s) \stackrel{d}{=} \inf\left\{t\ge 0\ :\ c^{1/3}\ell^0(t/c) > s\right\} = c\tau^0(s/c^{1/3})
 \end{equation*}
 satisfies \Stable[\frac13] self-similarity. Thus, $\EV(e^{-\theta\tau^0(s)}) = e^{-sb\theta^{1/3}}$ for some $b\in(0,\infty)$. To identify $b$, we use the property that $\Pr(\bX(t)\le 0) = 2/3$ for all $t>0$. This follows from an identity in \cite[p.\ 218]{BertoinLevy}. 
 Specifically, let $S_{\theta}$ be an $\ExpDist[\theta]$ random variable independent of $\bX$ and define $K_{\theta} := \EV[\ell^0(S_{\theta})]$. Then on the one hand,
 \begin{equation*}
  K_{\theta} = \int_0^\infty \Pr\{\ell^0(S_{\theta}) > s\}ds   =   \int_0^\infty \Pr\{S_{\theta} > \tau^{0}(s)\}ds   =   \int_0^\infty \EV \left[e^{-\theta \tau^0(s)}\right]ds = \frac{1}{b}\theta^{-1/3}.
 \end{equation*}
 On the other hand, by the Strong Markov Property of $\bX$ at the hitting time $T^y$, spatial homogeneity, and Proposition \ref{prop:hitting_time:subord},
 \begin{equation*}
  \EV[\ell^y(S_{\theta})] = \Pr\{T^y < S_{\theta}\}\EV[\ell^0(S_{\theta})] =e^{y\psi^{-1}(\theta)} K_{\theta} \quad \text{for }y \leq 0.
 \end{equation*}
 By Fubini's Theorem and the local time identity \eqref{eq:LT_int_ident},
 \begin{equation*}
 \begin{split}
  \frac{2}{3} &= \Pr\{\bX(S_{\theta}) \leq 0\} = \EV\left[ \int_0^\infty\theta e^{-\theta s}\cf\{\bX(s)\le 0\} ds \right]
  	= \EV\left[ \int_0^\infty\theta^2e^{-\theta t}\int_0^t\cf\{\bX(s)\le 0\}dsdt \right]\\
 	&= \EV\left[ \int_0^\infty\theta^2e^{-\theta t}\int_{-\infty}^0\ell^y(t)dydt \right]
 	= \theta\int_{-\infty}^0\EV \left[ \ell^y(S_{\theta})\right]dy\\
    &= \theta K_\theta\int_{-\infty}^0 e^{y\psi^{-1}(\theta)}dy = \theta K_\theta\left(\frac{\pi}{2}\right)^{-1/3}\theta^{-2/3}.
  \end{split}
  \end{equation*}
  Substituting in for $K_{\theta}$, we get $2/3 = (1/b)\left(\pi/2\right)^{-1/3}$; isolating $b$ gives the desired value.
\end{proof}

Note that $\Pr(\bX(t)\le 0) = 2/3$ while, by Lemma \ref{lem:stable:invariance}, the It\^o measure $\mSxc$ is invariant under increment reversal ($180^\circ$ rotation around the unique jump across 0). This means that typically, the process has spent half its time positive up to the last zero but is likely to be found in the first half of a much longer excursion. 
%
%
We now derive the results in Proposition \ref{prop:clade:stats2}.

\begin{proof}[Proof of Proposition \ref{prop:clade:stats2}.]
 \ref{item:CS2:len}. It follows from the scaling property of $\mClade$, noted in Lemma \ref{lem:clade:invariance}, that $\mClade(\len > x) = Cx^{-1/3}$, for some constant $C$. As noted in Proposition \ref{prop:inv_LT:subord}, the inverse local time process $(\tau^0(s))$ is a subordinator. Its L\'evy measure $\Pi$ equals $\mClade\{\len\in\cdot\,\}$. Then, recalling the identity $\Phi(\theta) = \int_0^\infty (1-e^{-\theta x})d\Pi(x)$, which may be read from \cite[Chapter 3]{BertoinLevy}, we obtain \ref{item:CS2:len} by solving for $C$ in $3(\theta/4\pi)^{1/3} = \int_0^\infty (1-e^{\theta x})(C/3)x^{-4/3}dx$.
 
 
 \ref{item:CS2:crossing}. The length of a bi-clade $N$ equals the time until the first crossing of zero, plus the subsequent time until the $\xi(N)$ hits zero. Suppose $N\sim\mClade(\,\cdot\;|\;J^+ = b)$. Then $\xi(N^+)$ is a \Stable[\frac32] first-passage path from $b$ down to zero independent of $\xi(N^-)$, by the strong Markov property under $\mSxc$ at the crossing time $T_0^+$. Thus, by Lemma \ref{lem:clade:invariance}, if $N\sim\mClade(\,\cdot\;|\;J^- = a)$ then $\xi(N^-)$ is the increment reversal of a \Stable[\frac32] first-passage path from $a$ down to zero, again independent of $\xi(N^+)$. 
Appealing to the subordinator property noted in Proposition \ref{prop:inv_LT:subord}, under $\mClade(\,\cdot\;|\;(J^-,J^+) = (a,b))$, the length $\len$ is distributed as the hitting time $T^{-a-b}$. Thus, $\mClade\{\len\in\cdot\;|\;J = y\}$ equals the law of $T^{-y}$. 
 
 
 It follows from the right-continuity of $\bX$ that $\mClade\{ J > y\}$ is finite for all $y>0$. By the scaling property \eqref{eq:stable:scaling_inv}, this equals $Cy^{-1/2}$ for some constant $C$. It remains to determine the value of $C$. By Proposition \ref{prop:inv_LT:subord}, our argument for \ref{item:CS2:len} above, and Proposition \ref{prop:hitting_time:subord},
 \begin{equation*}
  \frac{3}{2^{2/3}\pi^{1/3}}\theta^{1/3} = \mClade\left[1-e^{-\theta\len(N)}\right] = \int_0^\infty\left(1-\EV\left[e^{-\theta T^{-y}}\right]\right)\frac12Cy^{-3/2}dy = \frac{\pi^{2/3}}{2^{1/6}}C\theta^{1/3}.
 \end{equation*}
 Solving for $C$ gives the desired result.
 
 \ref{item:CS2:over:mass}. Let $N$ have law $\mClade(\,\cdot\;|\;m^0 = a)$. Let $\hat f$ denote the leftmost spindle in $N^+$, i.e.\ the top part of the middle spindle of $N$. By Proposition \ref{prop:clade_splitting}, $\hat f$ is a \BESQ[-1] started from $y$ and killed at zero. Then $J^+(N) = \life(\hat f)$; the law of the latter is specified in Lemma \ref{lem:BESQ:length}, which quotes \cite{GoinYor03}. In particular, this has distribution \InvGammaDist[3/2,a/2].
 
 \ref{item:CS2:mass}. We know this formula up to a constant from Lemma \ref{lem:clade:invariance} and the $m^0$ entry in Table \ref{tbl:clade_scaling} on page \pageref{tbl:clade_scaling}. To obtain the constant, we appeal to \ref{item:CS2:crossing} and \ref{item:CS2:over:mass}. In particular, it follows from Proposition \ref{prop:clade_splitting} that for $N$ with law $\mClade(\,\cdot\;|\;m^0=a)$, the over- and undershoot are i.i.d.\ with law \InvGammaDist[\frac32,\frac{a}{2}], as in \ref{item:CS2:over:mass} above. This gives
 \begin{equation*}
  \mClade\{ J\in dy\;|\;m^0 = a\} = \int_0^y \frac{a^3}{2\pi}\frac{1}{(zy-z^2)^{5/2}}\exp\left( -\frac{a}{2z}-\frac{a}{2(y-z)} \right)dz.
 \end{equation*}
 Integrating this against the law $\mClade\{m^0\in da\} = Ca^{-3/2}da$, we get
 \begin{equation*}
 \begin{split}
  \mClade\{ J\in dy\} &= dy\int_0^\infty Ca^{-3/2}\int_0^y \frac{a^3}{2\pi}(zy-z^2)^{-5/2}\exp\left( -\frac{a}{2z}-\frac{a}{2(y-z)} \right)dzda\\
  	&= \frac{Cdy}{2\pi} \int_0^y (zy-z^2)^{-5/2} \Gamma\left(\frac52\right)\left(\frac{y}{2(zy-z^2)}\right)^{-5/2}dz = \frac{3C}{\sqrt{2\pi}}y^{-3/2}dy.
 \end{split}
 \end{equation*}
 Setting this equal to \ref{item:CS2:crossing} gives $C = 1/(2\sqrt{\pi})$, as desired.
 
 \ref{item:CS2:over} and \ref{item:CS2:mass:over}. The former arises from integrating the product of formula \ref{item:CS2:over:mass} with the derivative of \ref{item:CS2:mass}. The latter is then computed by Bayes' rule.

 
 \ref{item:CS2:max:over}. 
 By the strong Markov property under $\mSxc$ at the crossing time $T_0^+$, this equals the probability that a \Stable[\frac32] process started from $y$ exits the interval $[0,z]$ out of the lower boundary first. 
 This is a standard calculation via scale functions \cite[Theorem VII.8]{BertoinLevy}, carried out for a spectrally negative \Stable[\alpha] process in 
\cite{Bertoin96}, from which the claimed result can be gotten by a sign change.
 
 \ref{item:CS2:max:mass}. This is computed by integrating the product of formulas \ref{item:CS2:over:mass} and \ref{item:CS2:max:over}, which can be reduced to a Gamma integral.
 
 \ref{item:CS2:max} and \ref{item:CS2:mass:max}. The former is computed by integrating the product of the derivative of formula \ref{item:CS2:mass} with \ref{item:CS2:max:mass}. The latter follows via Bayes' Rule.
\end{proof}

The remaining results in this section go towards proving Lemma \ref{lem:LMB} and thereby completing the proof of Proposition \ref{prop:type-1:transn}.
\begin{figure}
 \centering
 \input{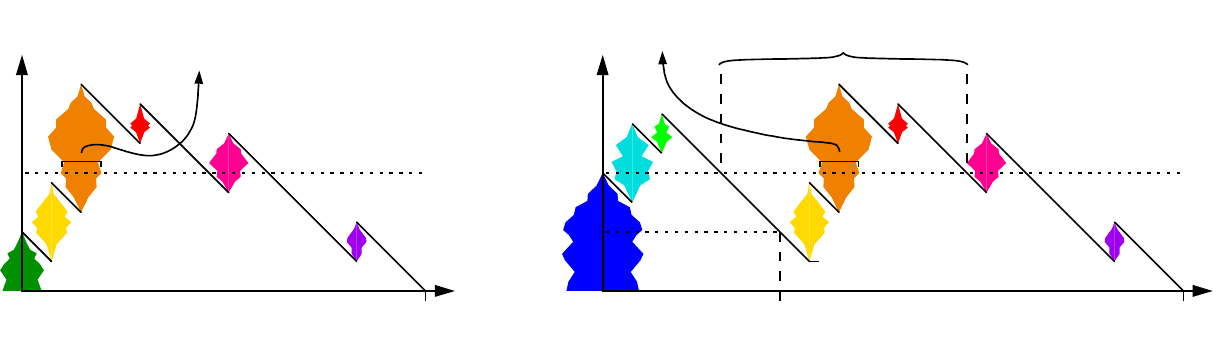_t}
 \caption{Illustration of the coupling in the proof of Lemma \ref{lem:LMB_reversal}.\label{fig:LMB_reversal}}
\end{figure}
\begin{lemma}\label{lem:LMB_reversal}
 Recall \eqref{eq:LMB_def} defining $m^y(N)$ as the mass of the leftmost spindle at level $y$.
 \begin{equation}
  \mClade^+\{m^y\in db\;|\;J^+ = z,\,\life^+ > y\} = \frac{b^{-3/2}}{\sqrt{2\pi}} \left(\frac{e^{-b/2y} - e^{-b/2(y-z)}}{(y-z)^{-1/2} - y^{-1/2}}\right) db \qquad \text{for }0<z<y.\label{eq:LMB_reversal}
 \end{equation}
\end{lemma}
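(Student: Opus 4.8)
The plan is to evaluate the left‑hand side of \eqref{eq:LMB_reversal} using Corollary \ref{cor:clade_law_given_over} together with the level‑$0$ excursion theory of the \Stable[\frac32] scaffolding, after first putting the right‑hand side into closed form in terms of the bi‑clade It\^o measure $\mClade$.

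\textbf{Algebraic reduction of the right‑hand side.} By Proposition \ref{prop:clade:stats}\ref{item:CS:mass} we have $\mClade\{m^0\in db\}=\tfrac{1}{2\sqrt\pi}b^{-3/2}db$, and by \ref{item:CS:max:mass}, $\mClade\{\life^+>s\mid m^0=b\}=1-e^{-b/2s}$, so with $w:=y-z$ one gets, using \ref{item:CS:max} for the normalisation,
\begin{align*}
 \mClade\{m^0\in db,\ w<\life^+\le y\} &= \tfrac{1}{2\sqrt\pi}\,b^{-3/2}\big(e^{-b/2y}-e^{-b/2w}\big)\,db,\\
 \mClade\{w<\life^+\le y\} &= \tfrac{1}{\sqrt2}\big(w^{-1/2}-y^{-1/2}\big).
\end{align*}
Dividing, the right‑hand side of \eqref{eq:LMB_reversal} is exactly $\mClade\{m^0\in db\mid y-z<\life^+\le y\}$, which by the $\reverseH$‑invariance of $\mClade$ (Lemma \ref{lem:clade:invariance}) equals $\mClade\{m^0\in db\mid y-z<\life^-\le y\}$. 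Hence the lemma is equivalent to $\mClade^+\{m^y\in db\mid J^+=z,\life^+>y\}=\mClade\{m^0\in db\mid y-z<\life^-\le y\}$.

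\textbf{Unravelling the left‑hand side.} By Corollary \ref{cor:clade_law_given_over}, a $\mClade^+(\cdot\mid J^+=z)$‑distributed clade can be realised as $\whN:=\Dirac{0,\bff}+\restrict{\bN}{[0,T^{-z}]}$, where $\bff$ is a \BESQ[-1] first‑passage bridge from $A\sim\ExpDist[1/(2z)]$ to $0$ in time $z$, independent of a \PRM[\Leb\otimes\mBxc] $\bN$ with scaffolding $\bX:=\xi(\bN)$, and $T^{-z}$ is the first hitting time of $-z$ by $\bX$. Since $\bff$ contributes a jump of height $z$ at time $0$, we have $\xi(\whN)=z+\bX$ on $(0,\len(\bN)]$; hence $\life^+(\whN)>y$ iff $\bX$ reaches level $w$ before hitting $-z$, and on that event $m^y(\whN)$ equals the mass, at level $w$, of the first spindle of $\bN$ to cross level $w$ (call it $m^w$), because $\bff$ has already died at level $z<y$. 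Thus the left‑hand side equals the conditional law of $m^w$ given $\{\bX\text{ reaches }w\text{ before }-z\}$. Now decompose $\bX$ into its excursions about level $0$, which form a $\PRM[\Leb\otimes\mClade]$ in level‑$0$ local time (Theorem \ref{thm:excursion_PRM}, Proposition \ref{prop:bi-clade_PRM}). Because in a typical \Stable[\frac32]‑excursion the infimum is attained strictly before the jump up across $0$ (Definition \ref{def:typical_exc}, Proposition \ref{prop:nice_level}), the event $\{\bX\text{ reaches }w\text{ before }-z\}$ coincides with the event that every excursion of $\bX$ about $0$ occurring at or before the local time of the first excursion $M$ with $\life^+>w$ has $\life^-<z$; moreover $m^w$ is the mass at level $w$ of the spindle of $M$ that crosses level $w$, and $M$ is independent of the preceding excursions. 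So, conditionally on this event, $M\sim\mClade(\cdot\mid\life^+>w,\ \life^-<z)$, and it remains to show
\[
 \mClade\{m^w\in db\mid\life^+>w,\ \life^-<z\} \;=\; \mClade\{m^0\in db\mid y-z<\life^-\le y\}.
\]

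\textbf{The coupling — the main obstacle.} This last identity is the crux, and is the content of the coupling in Figure \ref{fig:LMB_reversal}. I would prove it by applying the mid‑spindle Markov property (Lemma \ref{lem:mid_spindle_Markov}) at the crossing time of level $w$ inside $M$, thereby recovering $m^w$ as the central‑spindle mass of the incomplete bi‑clade of $M$ about level $w$, and then reversing the lower piece of that decomposition and invoking the $\reverseH$‑invariance of $\mClade$ (Lemma \ref{lem:clade:invariance}) together with the independent splitting of bi‑clades given $m^0$ (Proposition \ref{prop:clade_splitting}). The effect of this cut‑and‑reverse should be to convert the pair of one‑sided conditions $\{\life^+>w\}$ (reaching up from level $0$) and $\{\life^-<z\}$ (bounded depth) into the two‑sided depth‑window condition $\{w<\life^-\le w+z=y\}$ on the reversed object, while the central‑spindle mass is preserved, which matched against Step~1 closes the argument. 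The delicate points, which I expect to be where the real work lies, are: (i) transferring the mid‑spindle Markov property from a \PRM\ to a $\mClade$‑distributed bi‑clade (via the disintegration $\mClade(\cdot\mid\xi)$ of Proposition \ref{prop:bi-clade_PRM} and the strong Markov property of the \Stable[\frac32] excursion under $\mSxc$); (ii) handling the incomplete bi‑clades and the a.s.\ niceness (no creeping, distinct local times) of the levels involved via Proposition \ref{prop:nice_level}; and (iii) verifying that the auxiliary \PRM\ and \BESQ[-1] pieces produced by the splits integrate out, exactly as in the proofs of Propositions \ref{prop:clade_splitting} and \ref{prop:type-1:transn}. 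An alternative to this coupling would be a direct computation of $\mClade\{m^w\in db\mid\life^+>w,\life^-<z\}$ using Corollary \ref{cor:clade_law_given_over} again at level $w$, the \Stable[\frac32] two‑sided exit (scale‑function) formulas, and the \BESQ[-1] length law of Lemma \ref{lem:BESQ:length}; but the reversal coupling is cleaner and, importantly, avoids any circular dependence on Lemma \ref{lem:LMB}, which is proved afterwards by integrating \eqref{eq:LMB_reversal} against $\mClade^+\{J^+\in dz\mid m^0=a\}$.
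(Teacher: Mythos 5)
Your Step 1 (rewriting the right-hand side of \eqref{eq:LMB_reversal} as $\mClade\{m^0\in db\mid \life^+\in(y-z,y)\}$ and, by the $\reverseH$-invariance of Lemma \ref{lem:clade:invariance}, as $\mClade\{m^0\in db\mid\life^-\in(y-z,y)\}$) is correct and agrees with the opening step of the paper's proof. Your Step 2 is also sound: realising the clade via Corollary \ref{cor:clade_law_given_over} and decomposing its driving \PRM\ about level $0$, the survival event $\{\life^+>y\}$ becomes the event that all level-$0$ excursions up to and including the first one $M$ with $\life^+>w$ (where $w=y-z$) have $\life^-<z$, and on that event $m^y$ is the mass at level $w$ of the first spindle of $M$ crossing level $w$; the first-point property of the Poisson random measure of level-$0$ bi-clades then correctly reduces the lemma to the identity $\mClade\{m^w\in db\mid\life^+>w,\,\life^-<z\}=\mClade\{m^0\in db\mid\life^-\in(w,y)\}$.

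That identity, however, carries the entire content of the lemma, and you do not prove it; as you yourself note, this is ``where the real work lies,'' and the cut-and-reverse sketch you offer does not close it. The $\reverseH$-invariance of Lemma \ref{lem:clade:invariance} is a statement about the unconditioned measure $\mClade$ on whole bi-clades; once you cut $M$ at its first up-crossing of level $w$ via the mid-spindle Markov property, the conditional law of the lower (pre-crossing) piece given the crossing mass is exactly the unknown that determines $\mClade\{m^w\in db,\life^-<z\mid\life^+>w\}$, and reversal invariance says nothing about it. Worse, reversing maps the first up-crossing of $w$ to the \emph{last} up-crossing of $-w$ of the reversed bi-clade, a last-exit quantity whose law is supplied by no result in the paper; nothing in your sketch actually converts the one-sided conditions $\{\life^+>w\}$ and $\{\life^-<z\}$ into the window $\{\life^-\in(w,y)\}$ -- you only assert that it ``should.'' The paper's proof supplies precisely this missing device, and it is different from both of your routes: it introduces an auxiliary clade $\whN_2\sim\mClade^+(\,\cdot\;|\;J^+=y)$ and notes that the segment of $\whN_2$ between its first passage to level $z$ and its first passage to level $0$ is distributed like your clade with its initial spindle removed. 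In the ambient \PRM\ underlying $\whN_2$, level $y$ of the original clade is level $0$, so the leftmost level-$y$ spindle mass becomes $m^0(N^*)$ for $N^*$ the first bi-clade about level $0$ with $\life^-\geq y-z$, and survival to level $y$ becomes $\{\life^-(N^*)<y\}$; the Poisson property then gives the law $\mClade\{m^0\in\cdot\mid\life^-\in(y-z,y)\}$ at once, with no decomposition inside a conditioned excursion and no computation of a pre-crossing law. To complete your argument you would either have to import this embedding (making your Step 2 redundant) or carry out the direct two-sided-exit computation you mention in passing, which requires the interior marginal of a spindle under $\mBxc(\,\cdot\;|\;\zeta=\ell)$ and is not available off the shelf in the paper.
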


\begin{proof}
 We prove this by showing that $\mClade^+\{m^y\in db\;|\;J^+ = z,\,\life^+ > y\}$ equals
 \begin{equation*}
 \begin{split}
  &\mClade^+(m^0\in db\;|\;\life^+ \in (y-z,y))\\
  &=\frac{\mClade^+\{m^0\in db\;|\;\life^+ > y-z\}\mClade^+\{\life^+>y-z\} - \mClade^+\{m^0\in db\;|\;\life^+ > y\}\mClade^+\{\life^+>y\}}{\mClade^+\{\life^+>y-z\}-\mClade^+\{\life^+>y\}}.
 \end{split}
 \end{equation*}
 The latter equals the right hand side of \eqref{eq:LMB_reversal} by Proposition \ref{prop:clade:stats2} \ref{item:CS2:max} and \ref{item:CS2:mass:max}. We prove this by a coupling construction, illustrated in Figure \ref{fig:LMB_reversal}.
 
 Fix $y>z>0$. Let $\whN_1\sim\mClade^+(\,\cdot\;|\;J^+ = z)$. As in Corollary \ref{cor:clade_law_given_over}, this may be expressed as $\whN_1 = \Dirac{0,\bff_1} + \Restrict{\bN_1}{[0,T^{-z}_1]}$, where $\zeta(\bff_1) = z$, $\bN_1$ is a \PRM[\Leb\otimes\mBxc], and $T^{-z}_1 = \wh T^0_1$ is the hitting time of $-z$ by $\bX_1 := \xi(\bN_1)$, or that of 0 by $\whX_1 := \xi(\whN_1)$. Correspondingly, let $\whN_2 = \Dirac{0,\bff_2} + \Restrict{\bN_2}{[0,T^{-y}_2]}$ have distribution $\mClade^+(\,\cdot\;|\;J^+ = y)$. Let $\wh T^z_2$ denote the time at which $\whX_2 := \xi(\whN_2)$ first hits $z$ and $T^{-y}_2 = \wh T^0_2$ the time at which it hits zero. Then
 \begin{equation}\label{eq:LMB_reversal_dist_eq}
  \ShiftRestrict{\whN_2}{(\wh T^z_2,\wh T^0_2)} \stackrel{d}{=} \Restrict{\whN_1}{(0,\wh T^0_1)} \qquad \text{and} \qquad \ShiftRestrict{\whX_2}{[\wh T^z_2,\wh T^0_2]} \stackrel{d}{=} \whX_1,
 \end{equation}
 which is a \Stable[\frac32] first passage from $z$ down to zero.
 
 The time $\wh T^z_2 = T^{z-y}_2$ occurs during the first bi-clade $N^*$ of $\bN_2$ about level $0$ that has $\life^-(N^*) \geq y-z$. Now, consider the event $A_2$ that $\whX_2$ returns up to level $y$ during the time interval $[\wh T^z_2,\wh T^0_2]$. Then $A_2 = \{\life^-(N^*) < y\}$. Thus, conditionally given $A_2$, the mass $m^0(N^*)$ has law $\mClade^+\{m^0\in \cdot\;|\;\life^- \in (y-z,y)\}$. 
 This is equal, via the time-reversal invariance noted in Lemma \ref{lem:clade:invariance}, to $\mClade^+\{m^0\in \cdot\;|\;\life^+ \in (y-z,y)\}$.
 
 The quantity $m^0(N^*)$ and the event $A_2$ correspond, via \eqref{eq:LMB_reversal_dist_eq}, to the quantity $m^y(\whN_1)$ and the event $A_1$ that $\whX_1$ reaches level $y$ before reaching zero. Conditionally given $A_1$, the mass $m^y(\whN_1)$ has law $\mClade^+\{m^y\in \cdot\;|\;J^+ = z,\,\life^+ > y\}$. Thus, the two laws are equal, as desired.
\end{proof}

\begin{proof}[Proof of Lemma \ref{lem:LMB}]
 Fix $y>0$. We decompose the event $\{\life^+ > y\}$ into two components, based on whether $J^+ > y$:
 \begin{equation}
  \mClade^+\{m^y \in db\;|\;m^0 = a,\,\life^+ > y\}
  	= \dfrac{ \left[ \begin{array}{l}
  			\mClade\{m^y \in db,\,y \in [J^+,\life^+)\;|\;m^0 = a\}\\
  			\ +\ \mClade\{m^y \in db,\,y < J^+\;|\;m^0 = a\}
  		\end{array} \right] }
  		{ \mClade\{\life^+ > y\;|\;m^0 = a\} }.\label{eq:LMB_2_cases}
 \end{equation}
 The second summand in the above numerator describes the case in which the initial leftmost spindle of the clade survives to level $y$. Thus, this summand equals the density of the time $y$ distribution of a \BESQ[-1] started from $a$. We denote this by $q^{-1}_y(a,b)db$. From \cite[Proposition 3; Equation (49)]{GoinYor03},
 \begin{equation*}
  q^{-1}_y(a,b) = q^5_y(b,a) = \frac{1}{2y}\left(\frac ab\right)^{3/4}e^{-(a+b)/2y}I_{3/2}\left(\frac{\sqrt{ab}}{y}\right) \qquad \text{for }b>0.
 \end{equation*}
 Substituting in for the modified Bessel function $I_{3/2}(x)$ from \cite[equation 10.2.13]{AbraSteg},
 \begin{equation}\label{eq:BESQ_transn_2}
  \mClade(m^y \in db,\,y < J^+\;|\;m^0 = a) = \frac{\sqrt{y}}{\sqrt{2\pi}}\frac{1}{b^{3/2}}e^{-(a+b)/2y}(w\cosh(w) - \sinh(w))db,
 \end{equation}
 where $w:= \sqrt{ab}/y$.
 
 It remains to evaluate the first summand in the numerator in \eqref{eq:LMB_2_cases}. Via Corollary \ref{cor:clade_law_given_over}, under the law $\mClade^+(\,\cdot\;|\;J^+=z)$, the variables $\life^+$ and $m^y$ for $y>z$ are independent of $m^0$. Thus,
 \begin{equation*}
 \begin{split}
  &\mClade^+(m^y \in db,\,y \in [J^+,\life^+)\;|\;m^0 = a)\\
  	&\quad = \int_{z=0}^y \mClade(m^y\in db\;|\;J^+ = z,\,\life^+ > y) \mClade(\life^+ > y\;|\;J^+ = z) \mClade(J^+\in dz\;|\;m^0 = a).\notag
 \end{split}
 \end{equation*}
 We have formulas for these three conditional laws in Lemma \eqref{lem:LMB_reversal} and Proposition \ref{prop:clade:stats2} \ref{item:CS2:over:mass} and  \ref{item:CS2:max:over}. Plugging in, the above expression equals
 $$\left[\int_{z=0}^y \frac{b^{-3/2}}{\sqrt{2\pi}} \left(\frac{e^{-b/2y} - e^{-b/2(y-z)}}{(y-z)^{-1/2} - y^{-1/2}}\right) \left( 1 - \sqrt{\frac{y-z}{y}}\right) \frac{a^{3/2}}{\sqrt{2\pi}z^{5/2}}e^{-a/2z}dz\right] db.$$
 Set $u= z/y$ and then $v = (1-u)/u$. Note that $1/(1-u) = 1+(1/v)$. Our integral becomes
 \begin{equation*}
  \frac{1}{2\pi y} \left(\frac{a}{b}\right)^{3/2}\exp\left(-\frac{a+b}{2y}\right)db \int_{0}^{\infty} \left(1 - \exp\left(-\frac{b}{2y}\frac{1}{v}\right)\right)\exp\left(-\frac{a}{2y}v\right)\sqrt{v}dv.
 \end{equation*}
 We distribute the difference and compute the two resulting integrals separately:
 \begin{equation*}
  \int_{0}^{\infty} \exp\left(-\frac{a}{2y}v\right)\sqrt{v}dv = \Gamma(3/2)\left( \frac{2y}{a} \right) ^{3/2} = \sqrt{2\pi}\left( \frac{y}{a} \right)^{3/2}
 \end{equation*}
 and, via \cite[Example 33.15]{Sato}, 
 \begin{equation*}
  \int_{0}^{\infty} \exp\left(-\frac{b}{2y}\frac{1}{v} - \frac{a}{2y}v \right)\sqrt{v}dv = \sqrt{2\pi}\left( \frac{y}{a} \right)^{3/2}\left( \frac{\sqrt{ab}}{y} + 1\right) \exp\left(-\frac{\sqrt{ab}}{y}\right).
 \end{equation*}
 As in \eqref{eq:BESQ_transn_2}, let $w:=\sqrt{ab}/y$. Subtracting the second component from the first and multiplying in all constants,
 \begin{equation}\label{eq:mass_given_clade_but_not_spindle}
  \mClade(m^y \in db,\,y \in [J^+,\life^+)\;|\;m^0 = a) = \frac{\sqrt{y}}{\sqrt{2\pi}} \frac{1}{b^{3/2}} e^{-(a+b)/2y} ( 1 - (w + 1)e^{-w})db.
 \end{equation}
 

 Via Proposition \ref{prop:clade:stats2} \ref{item:CS2:max:mass}, the denominator in \eqref{eq:LMB_2_cases} is $1-e^{-a/2y}$. Adding \eqref{eq:BESQ_transn_2} to \eqref{eq:mass_given_clade_but_not_spindle} and dividing by $1-e^{-a/2y}$, the expression in \eqref{eq:LMB_2_cases} equals
 $$\frac{\sqrt{y}}{\sqrt{2\pi}} \frac{1}{b^{3/2}} \frac{e^{-(a+b)/2y}}{1-e^{-a/2y}}(1 - \cosh(w) + w\sinh(w)),$$
 since $(1 - (w+1)e^{-w}) + (w\cosh(w) - \sinh(w)) = 1 - \cosh(w) + w\sinh(w)$.
\end{proof}

This lemma completes the proof of Proposition \ref{prop:type-1:transn}.

\bibliographystyle{plain}
\bibliography{AldousDiffusion}

\def\polhk#1{\setbox0=\hbox{#1}{\ooalign{\hidewidth
  \lower1.5ex\hbox{`}\hidewidth\crcr\unhbox0}}}
\begin{thebibliography}{10}

\bibitem{AbraSteg}
Milton Abramowitz and Irene~A. Stegun.
\newblock {\em Handbook of mathematical functions with formulas, graphs, and
  mathematical tables}, volume~55 of {\em National Bureau of Standards Applied
  Mathematics Series}.
\newblock For sale by the Superintendent of Documents, U.S. Government Printing
  Office, Washington, D.C., 1964.

\bibitem{AldousCRT1}
David Aldous.
\newblock The continuum random tree. {I}.
\newblock {\em Ann. Probab.}, 19(1):1--28, 1991.

\bibitem{AldousDiffusionProblem}
David Aldous.
\newblock Wright-{F}isher diffusions with negative mutation rate!
\newblock \url{http://www.stat.berkeley.edu/~aldous/Research/OP/fw.html}, 1999.

\bibitem{AldousExch}
David~J. Aldous.
\newblock Exchangeability and related topics.
\newblock In {\em \'{E}cole d'\'et\'e de probabilit\'es de {S}aint-{F}lour,
  {XIII}---1983}, volume 1117 of {\em Lecture Notes in Math.}, pages 1--198.
  Springer, Berlin, 1985.

\bibitem{Aldous00}
David~J. Aldous.
\newblock Mixing time for a {M}arkov chain on cladograms.
\newblock {\em Combin. Probab. Comput.}, 9(3):191--204, 2000.

\bibitem{BertoinLevy}
Jean Bertoin.
\newblock {\em L\'evy processes}, volume 121 of {\em Cambridge Tracts in
  Mathematics}.
\newblock Cambridge University Press, Cambridge, 1996.

\bibitem{Bertoin96}
Jean Bertoin.
\newblock On the first exit time of a completely asymmetric stable process from
  a finite interval.
\newblock {\em Bull. London Math. Soc.}, 28(5):514--520, 1996.

\bibitem{Bertoin02}
Jean Bertoin.
\newblock Self-similar fragmentations.
\newblock {\em Ann. Inst. H. Poincar\'e Probab. Statist.}, 38(3):319--340,
  2002.

\bibitem{BertChauPitm03}
Jean Bertoin, Lo{\"{\i}}c Chaumont, and Jim Pitman.
\newblock Path transformations of first passage bridges.
\newblock {\em Electron. Comm. Probab.}, 8:155--166 (electronic), 2003.

\bibitem{Billingsley}
Patrick Billingsley.
\newblock {\em Convergence of probability measures}.
\newblock Wiley Series in Probability and Statistics: Probability and
  Statistics. John Wiley \& Sons, Inc., New York, second edition, 1999.
\newblock A Wiley-Interscience Publication.

\bibitem{Boylan64}
Edward~S. Boylan.
\newblock Local times for a class of {M}arkoff processes.
\newblock {\em Illinois J. Math.}, 8:19--39, 1964.

\bibitem{BuraBuraIvan01}
Dmitri Burago, Yuri Burago, and Sergei Ivanov.
\newblock {\em A course in metric geometry}, volume~33 of {\em Graduate Studies
  in Mathematics}.
\newblock American Mathematical Society, Providence, RI, 2001.

\bibitem{ChanPoll97}
J.~T. Chang and D.~Pollard.
\newblock Conditioning as disintegration.
\newblock {\em Statist. Neerlandica}, 51(3):287--317, 1997.

\bibitem{Chaumont97}
L.~Chaumont.
\newblock Excursion normalis\'ee, m\'eandre et pont pour les processus de
  {L}\'evy stables.
\newblock {\em Bull. Sci. Math.}, 121(5):377--403, 1997.

\bibitem{CdBERS16}
C.~Costantini, P.~De~Blasi, S.~N. Ethier, M.~Ruggiero, and D.~Spano.
\newblock {W}right-{F}isher construction of the two-parameter
  {P}oisson-{D}irichlet diffusion.
\newblock arXiv:1601.06064 [math.PR], to appear in {\it Ann. Appl. Probab.},
  2016.

\bibitem{DaleyVereJones1}
D.~J. Daley and D.~Vere-Jones.
\newblock {\em An introduction to the theory of point processes. {V}ol. {I}}.
\newblock Probability and its Applications (New York). Springer-Verlag, New
  York, second edition, 2003.
\newblock Elementary theory and methods.

\bibitem{DaleyVereJones2}
D.~J. Daley and D.~Vere-Jones.
\newblock {\em An introduction to the theory of point processes. {V}ol. {II}}.
\newblock Probability and its Applications (New York). Springer, New York,
  second edition, 2008.
\newblock General theory and structure.

\bibitem{DelaporteII}
C{\'e}cile Delaporte.
\newblock L\'evy processes with marked jumps {II}: {A}pplication to a
  population model with mutations at birth.
\newblock arXiv:1305.6491 [math.PR], May 2013.

\bibitem{Delaporte15}
C{\'e}cile Delaporte.
\newblock L\'evy processes with marked jumps {I}: {L}imit theorems.
\newblock {\em J. Theoret. Probab.}, 28(4):1468--1499, 2015.

\bibitem{Dudley02}
R.~M. Dudley.
\newblock {\em Real analysis and probability}, volume~74 of {\em Cambridge
  Studies in Advanced Mathematics}.
\newblock Cambridge University Press, Cambridge, 2002.
\newblock Revised reprint of the 1989 original.

\bibitem{DuquLeGall02}
Thomas Duquesne and Jean-Fran{\c{c}}ois Le~Gall.
\newblock Random trees, {L}\'evy processes and spatial branching processes.
\newblock {\em Ast\'erisque}, (281):vi+147, 2002.

\bibitem{EiseKasp93}
Nathalie Eisenbaum and Haya Kaspi.
\newblock A necessary and sufficient condition for the {M}arkov property of the
  local time process.
\newblock {\em Ann. Probab.}, 21(3):1591--1598, 1993.

\bibitem{EthiKurt81}
S.~N. Ethier and Thomas~G. Kurtz.
\newblock The infinitely-many-neutral-alleles diffusion model.
\newblock {\em Adv. in Appl. Probab.}, 13(3):429--452, 1981.

\bibitem{Eth14}
Stewart Ethier.
\newblock A property of petrov's diffusion.
\newblock {\em Electron. Commun. Probab.}, 19:no. 65, 1--4, 2014.

\bibitem{EthKurtzBook}
Stewart~N. Ethier and Thomas~G. Kurtz.
\newblock {\em Markov processes : characterization and convergence}.
\newblock Wiley series in probability and mathematical statistics. J. Wiley \&
  Sons, New York, Chichester, 2005.

\bibitem{FengBook}
Shui Feng.
\newblock {\em The {P}oisson-{D}irichlet distribution and related topics}.
\newblock Probability and its Applications (New York). Springer, Heidelberg,
  2010.
\newblock Models and asymptotic behaviors.

\bibitem{FengSun10}
Shui Feng and Wei Sun.
\newblock Some diffusion processes associated with two parameter
  {P}oisson-{D}irichlet distribution and {D}irichlet process.
\newblock {\em Probab. Theory Related Fields}, 148(3-4):501--525, 2010.

\bibitem{FengSunWangXu11}
Shui Feng, Wei Sun, Feng-Yu Wang, and Fang Xu.
\newblock Functional inequalities for the two-parameter extension of the
  infinitely-many-neutral-alleles diffusion.
\newblock {\em J. Funct. Anal.}, 260(2):399--413, 2011.

\bibitem{FengWang07}
Shui Feng and Feng-Yu Wang.
\newblock A class of infinite-dimensional diffusion processes with connection
  to population genetics.
\newblock {\em J. Appl. Probab.}, 44(4):938--949, 2007.

\bibitem{Paper0}
Noah Forman, Soumik Pal, Douglas Rizzolo, and Matthias Winkel.
\newblock Uniform control of local times of spectrally positive stable
  processes.
\newblock arXiv:1609.06707 [math.PR], 2016.

\bibitem{Fulman05}
Jason Fulman.
\newblock Stein's method and {P}lancherel measure of the symmetric group.
\newblock {\em Trans. Amer. Math. Soc.}, 357(2):555--570, 2005.

\bibitem{Geiger95}
J.~Geiger.
\newblock Contour processes of random trees.
\newblock In {\em Stochastic partial differential equations ({E}dinburgh,
  1994)}, volume 216 of {\em London Math. Soc. Lecture Note Ser.}, pages
  72--96. Cambridge Univ. Press, Cambridge, 1995.

\bibitem{GeigKers97}
J.~Geiger and G.~Kersting.
\newblock Depth-first search of random trees, and {P}oisson point processes.
\newblock In {\em Classical and modern branching processes ({M}inneapolis,
  {MN}, 1994)}, volume~84 of {\em IMA Vol. Math. Appl.}, pages 111--126.
  Springer, New York, 1997.

\bibitem{GetoShar81}
R.~K. Getoor and M.~J. Sharpe.
\newblock Two results on dual excursions.
\newblock In {\em Seminar on {S}tochastic {P}rocesses, 1981 ({E}vanston,
  {I}ll., 1981)}, volume~1 of {\em Progr. Prob. Statist.}, pages 31--52.
  Birkh\"auser, Boston, Mass., 1981.

\bibitem{GnedPitm05}
Alexander Gnedin and Jim Pitman.
\newblock Regenerative composition structures.
\newblock {\em Ann. Probab.}, 33(2):445--479, 2005.

\bibitem{Gnedin97}
Alexander~V. Gnedin.
\newblock The representation of composition structures.
\newblock {\em Ann. Probab.}, 25(3):1437--1450, 1997.

\bibitem{GoinYor03}
Anja G{\"o}ing-Jaeschke and Marc Yor.
\newblock A survey and some generalizations of {B}essel processes.
\newblock {\em Bernoulli}, 9(2):313--349, 2003.

\bibitem{GreePitm80}
Priscilla Greenwood and Jim Pitman.
\newblock Construction of local time and poisson point processes from nested
  arrays.
\newblock {\em J. London Math. Soc.}, 22(2):182--192, 1980.

\bibitem{Helland78}
I.~S. Helland.
\newblock Continuity of a class of random time transformations.
\newblock {\em Stochastics Process. Appl.}, 7:79--99, 1978.

\bibitem{Ito72}
Kiyosi It{\^o}.
\newblock Poisson point processes attached to {M}arkov processes.
\newblock In {\em Proceedings of the {S}ixth {B}erkeley {S}ymposium on
  {M}athematical {S}tatistics and {P}robability ({U}niv. {C}alifornia,
  {B}erkeley, {C}alif., 1970/1971), {V}ol. {III}: {P}robability theory}, pages
  225--239. Univ. California Press, Berkeley, Calif., 1972.

\bibitem{JacodShiryaev}
Jean Jacod and Albert~N. Shiryaev.
\newblock {\em Limit theorems for stochastic processes}, volume 288 of {\em
  Grundlehren der Mathematischen Wissenschaften [Fundamental Principles of
  Mathematical Sciences]}.
\newblock Springer-Verlag, Berlin, second edition, 2003.

\bibitem{JagersBranching}
Peter Jagers.
\newblock {\em Branching processes with biological applications}.
\newblock Wiley-Interscience [John Wiley \& Sons], London-New York-Sydney,
  1975.
\newblock Wiley Series in Probability and Mathematical Statistics---Applied
  Probability and Statistics.

\bibitem{Kallenberg}
Olav Kallenberg.
\newblock {\em Foundations of modern probability}.
\newblock Probability and its Applications (New York). Springer-Verlag, New
  York, second edition, 2002.

\bibitem{Lambert10}
Amaury Lambert.
\newblock The contour of splitting trees is a {L}\'evy process.
\newblock {\em Ann. Probab.}, 38(1):348--395, 2010.

\bibitem{LambertBravo16}
Amaury Lambert and Ger\'onimo Uribe~Bravo.
\newblock Totally ordered measured trees and splitting trees with infinite
  variation.
\newblock arXiv:1607.02114v1 [math.PR], July 2016.

\bibitem{Li11}
Zenghu Li.
\newblock {\em Measure-valued branching {M}arkov processes}.
\newblock Probability and its Applications (New York). Springer, Heidelberg,
  2011.

\bibitem{Millar73}
P.~W. Millar.
\newblock Exit properties of stochastic processes with stationary independent
  increments.
\newblock {\em Trans. Amer. Math. Soc.}, 178:459--479, 1973.

\bibitem{Nerman81}
Olle Nerman.
\newblock On the convergence of supercritical general ({C}-{M}-{J}) branching
  processes.
\newblock {\em Z. Wahrsch. Verw. Gebiete}, 57(3):365--395, 1981.

\bibitem{Pal11}
Soumik Pal.
\newblock Analysis of market weights under volatility-stabilized market models.
\newblock {\em Ann. Appl. Probab.}, 21(3):1180--1213, 2011.

\bibitem{PalPreprint}
Soumik Pal.
\newblock On the {A}ldous diffusion on continuum trees. {I}.
\newblock arXiv:1104.4186v1[math.PR], 2011.

\bibitem{Pal13}
Soumik Pal.
\newblock Wright-{F}isher diffusion with negative mutation rates.
\newblock {\em Ann. Probab.}, 41(2):503--526, 2013.

\bibitem{PermPitmYor92}
Mihael Perman, Jim Pitman, and Marc Yor.
\newblock Size-biased sampling of {P}oisson point processes and excursions.
\newblock {\em Probab. Theory Related Fields}, 92(1):21--39, 1992.

\bibitem{Petrov09}
L.~A. Petrov.
\newblock A two-parameter family of infinite-dimensional diffusions on the
  {K}ingman simplex.
\newblock {\em Funktsional. Anal. i Prilozhen.}, 43(4):45--66, 2009.

\bibitem{CSP}
J.~Pitman.
\newblock {\em Combinatorial stochastic processes}, volume 1875 of {\em Lecture
  Notes in Mathematics}.
\newblock Springer-Verlag, Berlin, 2006.
\newblock Lectures from the 32nd Summer School on Probability Theory held in
  Saint-Flour, July 7--24, 2002.

\bibitem{Pitman03}
Jim Pitman.
\newblock Poisson-{K}ingman partitions.
\newblock In {\em Statistics and science: a {F}estschrift for {T}erry {S}peed},
  volume~40 of {\em IMS Lecture Notes Monogr. Ser.}, pages 1--34. Inst. Math.
  Statist., Beachwood, OH, 2003.

\bibitem{PitmWink09}
Jim Pitman and Matthias Winkel.
\newblock Regenerative tree growth: binary self-similar continuum random trees
  and {P}oisson-{D}irichlet compositions.
\newblock {\em Ann. Probab.}, 37(5):1999--2041, 2009.

\bibitem{PitmYor82}
Jim Pitman and Marc Yor.
\newblock A decomposition of {B}essel bridges.
\newblock {\em Z. Wahrsch. Verw. Gebiete}, 59(4):425--457, 1982.

\bibitem{PitmYor92}
Jim Pitman and Marc Yor.
\newblock Arcsine laws and interval partitions derived from a stable
  subordinator.
\newblock {\em Proc. London Math. Soc. (3)}, 65(2):326--356, 1992.

\bibitem{PitmYorPDAT}
Jim Pitman and Marc Yor.
\newblock The two-parameter {P}oisson-{D}irichlet distribution derived from a
  stable subordinator.
\newblock {\em Ann. Probab.}, 25(2):855--900, 1997.

\bibitem{RevuzYor}
Daniel Revuz and Marc Yor.
\newblock {\em Continuous martingales and {B}rownian motion}, volume 293 of
  {\em Grundlehren der Mathematischen Wissenschaften [Fundamental Principles of
  Mathematical Sciences]}.
\newblock Springer-Verlag, Berlin, third edition, 1999.

\bibitem{RogersWilliams}
L.~C.~G. Rogers and David Williams.
\newblock {\em Diffusions, {M}arkov processes, and martingales. {V}ol. 1}.
\newblock Wiley Series in Probability and Mathematical Statistics: Probability
  and Mathematical Statistics. John Wiley \& Sons Ltd., Chichester, second
  edition, 1994.
\newblock Foundations.

\bibitem{Ruggiero14}
Matteo Ruggiero.
\newblock Species dynamics in the two-parameter {P}oisson-{D}irichlet diffusion
  model.
\newblock {\em J. Appl. Probab.}, 51(1):174--190, 2014.

\bibitem{RuggWalk09}
Matteo Ruggiero and Stephen~G. Walker.
\newblock Countable representation for infinite dimensional diffusions derived
  from the two-parameter {P}oisson-{D}irichlet process.
\newblock {\em Electron. Commun. Probab.}, 14:501--517, 2009.

\bibitem{RuggWalkFava13}
Matteo Ruggiero, Stephen~G. Walker, and Stefano Favaro.
\newblock Alpha-diversity processes and normalized inverse-{G}aussian
  diffusions.
\newblock {\em Ann. Appl. Probab.}, 23(1):386--425, 2013.

\bibitem{Sato}
Ken-iti Sato.
\newblock {\em L\'evy processes and infinitely divisible distributions}.
\newblock Cambridge University Press, 1999.

\bibitem{Schweinsberg02}
Jason Schweinsberg.
\newblock An {$O(n^2)$} bound for the relaxation time of a {M}arkov chain on
  cladograms.
\newblock {\em Random Structures Algorithms}, 20(1):59--70, 2002.

\bibitem{ShigaWata73}
Tokuzo Shiga and Shinzo Watanabe.
\newblock {Bessel} diffusions as a one-parameter family of diffusion processes.
\newblock {\em Z. Wahrscheinlichkeitstheorie verw. Gebiete}, 27(1):37--46,
  1973.

\bibitem{Volkonski58}
V.~A. Volkonski.
\newblock Random substitution of time in strong markov processes.
\newblock {\em Theor. Probability Appl.}, 3:310--326, 1958.

\bibitem{WarrYor98}
J.~Warren and M.~Yor.
\newblock The {B}rownian burglar: conditioning {B}rownian motion by its local
  time process.
\newblock In {\em S\'eminaire de {P}robabilit\'es, {XXXII}}, volume 1686 of
  {\em Lecture Notes in Math.}, pages 328--342. Springer, Berlin, 1998.

\bibitem{Yor92}
Marc Yor.
\newblock On some exponential functions of {B}rownian motion.
\newblock {\em Adv. Appl. Prob.}, 24:509--531, 1992.

\end{thebibliography}
\end{document}